\xpatchcmd{\part}{\null\vfil}{\vspace*{.3\textheight}}{}{}
\newenvironment{partwithabstract}
  {\begingroup\let\@endpart\relax\part@withabstract}
  {\endquotation\endgroup\@endpart}
\newcommand{\part@withabstract}{\@dblarg\part@@withabstract}
\def\part@@withabstract[#1]#2{%
    \phantom{m}
  \part[#1]{#2}%
  \thispagestyle{empty}
  \begin{center}\bfseries\vspace{-.5em}\vspace{\z@}\end{center}
  \quotation
}
\numberwithin{equation}{chapter}
\newtheorem{theorem}[equation]{Theorem}
\newtheorem{corollary}[equation]{Corollary}
\newtheorem{lemma}[equation]{Lemma}
\newtheorem{proposition}[equation]{Proposition}
\newtheorem{claim}[equation]{Claim}
\newtheorem{assumption}[equation]{Assumption}
\theoremstyle{definition}
\newtheorem{definition}[equation]{Definition}
\newtheorem{remark}[equation]{Remark}
\newtheorem{example}[equation]{Example}
\newtheorem{problem}[equation]{Problem}
\DeclareMathOperator{\Spec}{Spec}
\DeclareMathOperator{\Proj}{Proj}
\DeclareMathOperator{\Supp}{Supp}
\DeclareMathOperator{\Ext}{Ext}
\DeclareMathOperator{\Hom}{Hom}
\DeclareMathOperator{\gr}{gr}
\DeclareMathOperator{\soc}{soc}
\DeclareMathOperator{\rank}{rank}
\DeclareMathOperator{\Ann}{Ann}
\DeclareMathOperator{\tmpAdd}{Add}
\DeclareMathOperator{\linearOp}{Lin}
\DeclareMathOperator{\Grass}{Gr}
\DeclareMathOperator{\Sym}{Sym}
\DeclareMathOperator{\Sing}{Sing}
\DeclareMathOperator{\inv}{inv}
\DeclareMathOperator{\Cat}{Cat}
\DeclareMathOperator{\Diff}{Diff}
\newcommand{\tensor}{\otimes}
\newcommand{\onto}{\twoheadrightarrow}
\newcommand{\into}{\hookrightarrow}
\newcommand{\hook}{\,\lrcorner\,}
\newcommand{\kdp}{\kk_{dp}}
\renewcommand{\geq}{\geqslant}%
\renewcommand{\leq}{\leqslant}%
\def\DPut#1#2{\global\csdef{D#1}{#2}#2}%
\def\DDef#1#2{\global\csdef{D#1}{#2}}%
\definecolor{labelblue}{RGB}{0, 146, 223}
\renewcommand{\AA}{\mathbb{A}}%
\newcommand{\PP}{\mathbb{P}}%
\newcommand{\UU}{\mathcal{U}}%
\newcommand{\RR}{\mathbb{R}}%
\newcommand{\OO}{\mathcal{O}}%
\newcommand{\CC}{\mathbb{C}}%
\newcommand{\KK}{\mathbb{K}}%
\newcommand{\DA}{A}%
\newcommand{\socleA}{\soc A}%
\newcommand{\grDA}{\gr A}%
\newcommand{\mm}{\mathfrak{m}}%
\newcommand{\kk}{\Bbbk}%
\newcommand{\chark}{\operatorname{char} \kk}%
\newcommand{\kkbar}{\overline{\kk}}%
\newcommand{\dimk}{\dim_{\kk}}%
\newcommand{\DS}{S}%
\newcommand{\DShat}{\hat{\DS}}%
\newcommand{\DP}{P}%
\newcommand{\canA}{\omega_A}%
\newcommand{\Dx}{\alpha}%
\newcommand{\Dy}{\beta}%
\newcommand{\Dz}{\gamma}%
\newcommand{\pp}[1]{\left( #1 \right)}%
\newcommand{\reduced}[1]{\pp{#1}_{red}}%
\newcommand{\Apolar}[1]{\operatorname{Apolar}\left(#1\right)}%
\newcommand{\sspan}[1]{\left\langle #1\right\rangle}%
\newcommand{\Speccal}{\mathcal{S}pec}%
\newcommand{\Dgrp}{\mathbb{G}}%
\newcommand{\floor}[1]{\left\lfloor #1 \right\rfloor}%
\newcommand{\ccX}{\mathcal{Z}}%
\begin{document}
\pagenumbering{roman}
\pagestyle{empty}
%--------------------------------------------------------------------------------
%--------------------------------------------------------------------------------

\vfill
\vspace*{6cm}

\begin{center}
    \Large\bfseries Long, long ago\footnote{That is, five years}, in a
    galaxy\footnote{Metaphorically speaking} far, far away\footnote{In terms
    of scientific development $\ddot\smile$}...
\end{center}
    The following is the author's PhD Thesis, defended in September 2017. It
    has not been edited since (except for the footnotes in the open problems section
    and for this note). In particular, there is nothing here about
    applications of Bia{\l}ynicki-Birula decomposition to Hilbert
    schemes~\cite{Jelisiejew__Elementary, Jelisiejew__Pathologies, Satriano_Staal}, the
    multigraded Hilbert schemes and border
    apolarity~\cite{Buczyska_Buczynski__border, Mandziuk}, fiber-full Hilbert
    schemes~\cite{CidRuiz_Ramkumar__Fiber_full,
    CidRuiz_Ramkumar__Fiber_full_local} and other recent developments.
    Philosophically speaking, is it reassuring that there are too many
    new interesting papers on Hilbert schemes of points to summarize here.
    It is also very interesting that many of the open problems listed below
    in~\S\ref{ssec:openquestions} remain actual.
    Today, the author would add also the more applied open problems
    (see~\cite{Jelisiejew_Landsberg_Pal__Concise_tensors_of_minimal_brank}): a large part of the classical problem of determining the
    complexity of matrix multiplication can be reduced fruitfully to the study
    of the Hilbert scheme of points.

    While the author is not aware of any errors, it is likely they exist here.
    This work is put on arXiv due/thanks to several people who decided to
    refer to it and only to grant this piece a quasi-permanent place to rest.
\vfill
\newpage
%--------------------------------------------------------------------------------
\vspace*{2cm}
\begin{center}

{\Large University of Warsaw}

\vspace{0.2cm}

{\large Faculty of Mathematics, Informatics and Mechanics}

\vspace{3cm}

{\large\textbf{Joachim Jelisiejew}}

\vspace{2cm}

{\Large\bfseries Hilbert schemes of points and their applications}

\vspace{0.2cm}
\emph{PhD dissertation}

\vspace{1cm}
\end{center}

{\raggedleft\vfill{%
Supervisor

\textbf{dr hab. Jaros{\l{}}aw Buczy{\'n}ski}

Institute of Mathematics

University of Warsaw

and Institute of Mathematics

Polish Academy of Sciences

\vspace{0.5cm}

Auxiliary Supervisor\\
\textbf{dr Weronika Buczy{\'n}ska}\\
Institute of Mathematics\\
University of Warsaw
}\par}

\thispagestyle{empty}
%--------------------------------------------------------------------------------
%--------------------------------------------------------------------------------
\newpage

\pagestyle{empty}

%--------------------------------------------------------------------------------

\phantom{m}
\vfill

\begin{flushleft}
Author’s declaration:

 I hereby declare that I have written this dissertation myself
 and all the contents of the dissertation have been obtained by legal means.

\vspace{0.5cm}

\hfill  May 9, 2017 \hspace*{2em}...................................................\\
 {\small \it \hfill  Joachim Jelisiejew}

\vspace{2cm}

Supervisors' declaration:

The dissertation is ready to be reviewed.

\vspace{0.5cm}

\hfill  May 9, 2017 \hspace*{2em}...................................................\\
{\small \it \hfill dr hab. Jaros{\l{}}aw Buczy{\'n}ski}

\vspace{1.5cm}

\hfill May 9, 2017\hspace*{2em}...................................................\\
\hfill {\small \it dr Weronika Buczy{\'n}ska}
\end{flushleft}

\vfill

\thispagestyle{empty}
%--------------------------------------------------------------------------------
%--------------------------------------------------------------------------------
\newpage
\thispagestyle{empty}
\phantom{m}

\vfill

{ {\LARGE \bf Abstract}
 \vspace{1cm}

 This thesis is concerned with deformation theory of finite subschemes of
 smooth varieties. Of central interest are the smoothable subschemes (i.e., limits of
 smooth subschemes). We prove that all Gorenstein subschemes of degree up to
 $13$ are smoothable. This result has immediate applications to finding equations of
 secant varieties.
 We also give a description of nonsmoothable Gorenstein subschemes
 of degree $14$, together with an explicit condition for smoothability.

 We prove that being smoothable is a local property, that it does not depend
 on the embedding and it is invariant under a base field extension. The above
 results are equivalently stated in terms of the Hilbert scheme of points,
 which is the moduli space for this deformation problem.

 We extensively use the combinatorial framework of Macaulay's inverse systems.
 We enrich it with a pro-algebraic group action and use this to
 reprove and extend recent classification results by Elias and Rossi.
 We provide a relative version of this framework and use it to give a local
 description of the universal family over the Hilbert scheme of points.

 We shortly discuss history of Hilbert schemes of points and provide a list of open
 questions.

 \vspace{0.5cm}
 {\textbf{Keywords:} deformation theory, Hilbert scheme, Gorenstein algebra,
 inverse system, apolarity, smoothability, classification of finite
 commutative algebras.

 \textbf{AMS MSC 2010 classification:} 14C05, 14B07, 13N10, 13H10.
 }
 %% 13H10 Gorenstein, 13N10 MIS

\vfill

%--------------------------------------------------------------------------------
%--------------------------------------------------------------------------------
\newpage

\thispagestyle{empty}
\tableofcontents
\thispagestyle{empty}

\chapter{Introduction}
\pagenumbering{arabic}
\pagestyle{plain}

        The Hilbert scheme of points on a smooth variety $X$ is of
    central interest for several branches of mathematics:
    \begin{itemize}
        \item in commutative algebra, it is a moduli space of finite
            algebras (presented as quotients of a fixed ring),
        \item in geometry and topology, it is a compact variety containing the space
            of tuples of points on $X$; in many cases this
            space is dense,
        \item in algebraic geometry, its construction (1960-61) is one of the
            advances of the Grothendieck school~\cite{Gro}, it found
            applications in constructing other moduli spaces and hyperk\"ahler
            manifolds, and also in McKay correspondence
            and theory of higher secant varieties,
        \item in combinatorics, the Hilbert
            scheme appears in Haiman's proofs of $n!$ and Macdonald positivity
            conjectures.
    \end{itemize}
    Consider the set of finite algebras, presented as quotients of a fixed polynomial
    ring. There is a \emph{unique} and \emph{natural} topological space structure on
    this set, together with a sheaf of regular functions. These structures
    jointly give a scheme structure, called the \emph{Hilbert scheme of points on affine space}, see
    Chapter~\ref{ssec:hilbertschemes} for precise definition.
%    These structures are \emph{unique} and \emph{natural}, for example invariant under
%    coordinate changes.
%    By definition, the Hilbert scheme of points is the \emph{unique natural
%    scheme structure} on the set of finite quotients of a fixed ring
%    (equivalently, finite subschemes of a given scheme,
%    see~).
    These structures are unique, but they are non-explicit and difficult to
    investigate; many open questions persist, despite continuous research, see
    Section~\ref{ssec:openquestions}.

%    Amusingly, the Hilbert scheme, a
%    development of the highly abstract French school which usually avoided
%    concentrating on a single object, is now studied for its own sake,
%    frequently using classical Italian school tools introduced well before
%    its construction.

    In this thesis we analyse the geometry of Hilbert schemes of points on
    smooth varieties, concentrating on the following question:
    \begin{center}
        \emph{What are the irreducible components of the Hilbert scheme of points?
        What are their intersections and singularities?}

        \smallskip
        An informal, intuitive view of geometry of the components is given on Figure~\ref{fig:bellis}
        below.
    \end{center}
%    describing its irreducible components, their intersections and singularities.
%    local properties, such as
%    irreducibility (the scheme is always connected) or singularities.
%    Therefore we usually consider Hilbert schemes of affine spaces, instead of
%    projective spaces; mathematically it makes no difference.
%    Among our results are \textbf{list: 14 points, local description}

    Our analysis of the Hilbert scheme as a moduli space of finite algebras
    requires tools for working with algebras themselves, which are developed
    in Part~\ref{part:algebras}.
    We then switch our attention to families of algebras (subschemes) in
    Part~\ref{part:families} and analyse Hilbert
    schemes for small numbers of points in Part~\ref{part:applications}.
    Almost all of our original results presented in this thesis
    are also found in~\cite{jelisiejew_1551, cjn13, jelisiejew_VSP,
    jabu_jelisiejew_smoothability, Jel_classifying, Michalek}.

%%%%%%%%%%%%%%%%%%%%%%%%%%%%%

\newpage
\begin{figure}[t]\label{fig:bellis}
    \centering
    \includegraphics[width=0.75\textwidth]{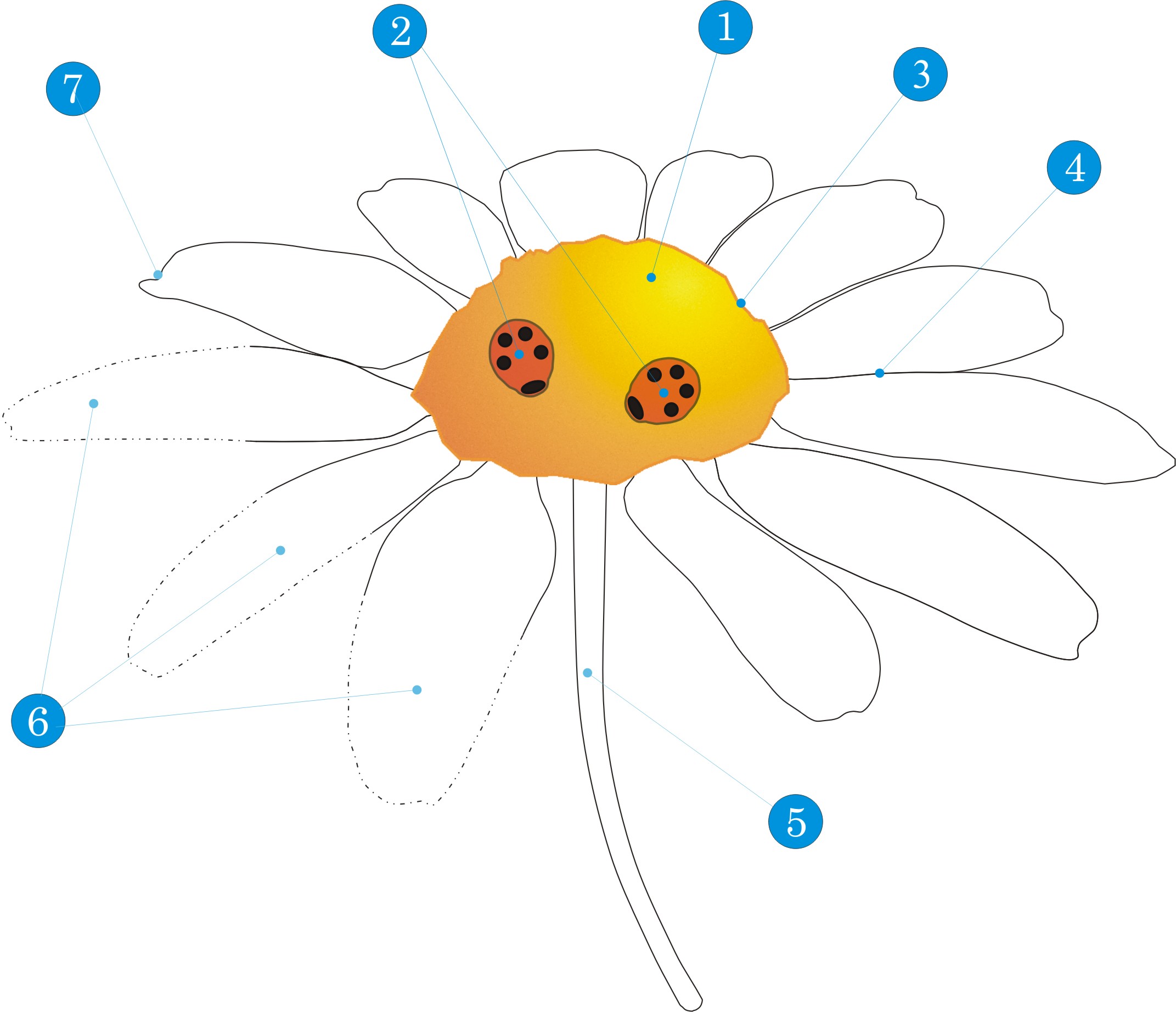}
    \caption{\emph{Bellis Hilbertis.} Components of the Hilbert scheme of $r$
    points on a smooth variety $X$.  Flower, petals and stem correspond to
    irreducible components of the Hilbert scheme of $X$. The analysis of their
    geometry is the main aim of this work.}
\end{figure}%

\newlength{\raising}%
\setlength{\raising}{0.4em}%
\newcommand{\myitem}[2]{
    \item[{\begin{tikzpicture}
            \filldraw[fill=labelblue, draw=labelblue!50!black] (0, 0) circle (0.6em);
            \node[color=white] (0, 0) {\large\textbf{#1}};
    \end{tikzpicture}}]\leavevmode\vadjust{\vspace{-1.7em}}\newline
{#2}}%

%\vspace*{-2.3em}%
\begin{enumerate}
        \myitem{1}{The \emph{smoothable component}, see
        Definition~\ref{ref:smoothablecomponent:def}. It compactifies
        the space of $r$-tuples of points on $X$, thus has dimension $r(\dim
        X)$.
        For
        small $r$, it is the only component -- there are no ``petals'' --
        see Section~\ref{ssec:examplesnonsmoothable},
        Theorem~\ref{ref:cjnmain:thm} and also
        Problems~\ref{prob:irreducibilityA3},~\ref{prob:irreducibilityA4Gor}.}
    \myitem{2}{Points of the Hilbert scheme correspond to finite subschemes of $X$.
        General points on the smoothable component (``ladybirds'') correspond to tuples of
        points on $X$. Thus all subschemes corresponding to points on this component are \emph{limits} of
        tuples of points; they are \emph{smoothable}.}
    \myitem{3}{
            Every component intersects the smoothable one,
            see~\cite{Reeves_radius}. Describing the
            intersection is subtle, see Problem~\ref{prob:iarrobino} and Theorem~\ref{ref:14pointsmain:thm} for
            an example involving cubic fourfolds.  It is even hard to decide
            whether a given subscheme of $X$ is smoothable, see
            Problem~\ref{prob:grobnerfan}.
        }
    \myitem{4}{
            There is a single example~\cite{erman_velasco_syzygetic_smoothability}, where
            components intersect away from the smoothable component. No
            singular points lying on a unique nonsmoothable component are
            known.
        }
    \myitem{5}{
            There are only few known components of dimension smaller than
            the smoothable one, see Section~\ref{ssec:examplesnonsmoothable}
            and
            Problems~\ref{prob:lowerbounddimension},~\ref{prob:smallcomponents}.
    }
    \myitem{6}{
        There are many examples of loci too large to fit inside the
        smoothable component, see Section~\ref{ssec:examplesnonsmoothable}, however the components
        containing these loci are not known.
    }
    \myitem{7}{
        Not much is known about the geometry and singularities of components
        other than the smoothable one, see Problem~\ref{prob:reducedness},
        Problem~\ref{prob:nonrational}.}
\end{enumerate}
\goodbreak

%%%%%%%%%%%%%%%%%%%%%%%%%%%%%

\section{Overview and main results}\label{ssec:intromainresults}

%    Throughout this thesis we strive to avoid technical language and to make
%    this text accessible, in particular for algebraists outside algebraic
%    geometry.

\newcommand{\Hilbfunc}{\mathcal{H}\hspace{-0.25ex}\mathit{ilb\/}}%
\newcommand{\Hilbarged}[2]{\Hilbfunc{}_{#1}\,(#2)}%
\newcommand{\ambient}{X}%
\newcommand{\Hilb}[1]{\Hilbfunc{}_{pts}\,(#1)}%
\newcommand{\Hilbr}[1]{\Hilbfunc{}_{r}\,(#1)}%
\newcommand{\Hilbsm}[1]{\Hilbfunc^{sm}\,(#1)}%
\newcommand{\Hilbsmarged}[2]{\Hilbfunc{}_{#1}^{sm}\,(#2)}%
\newcommand{\Hilbzeror}[1]{\Hilbfunc_r^{\circ}\,(#1)}%
\newcommand{\Hilbsmr}[1]{\Hilbfunc_{r}^{sm}\,(#1)}%
\newcommand{\HilbGorarged}[2]{\Hilbfunc{}^{Gor}_{#1}\,(#2)}%
\newcommand{\HilbGorsmarged}[2]{\Hilbfunc{}^{Gor, sm}_{#1}\,(#2)}%
\newcommand{\HilbGorr}[1]{\Hilbfunc_r^{Gor}\,(#1)}%
\newcommand{\HilbGorsmr}[1]{\Hilbfunc_{r}^{Gor, sm}\,(#1)}%
\newcommand{\HilbGorpuncarged}[2]{\Hilbfunc{}P^{Gor}_{#1}\,(#2)}%
\newcommand{\HilbGorpp}[1]{\Hilbfunc{}P^{Gor}_{r}\,(#1, p)}%
\newcommand{\HilbGorppleq}[1]{\Hilbfunc{}P^{Gor}_{\leq r}\,(#1, p)}%

    Part~\ref{part:algebras} gathers tools for studying finite local algebras
    over a field $\kk$,
    especially Gorenstein algebras. In this part, we speak the language of
    algebra; only basic background on commutative algebras and Lie theory is
    assumed. Part~\ref{part:algebras} is mostly prerequisite, although
    Sections~\ref{ssec:standardforms}-\ref{ssec:examplesthree} contain many
    original
    results published in~\cite{BJMR, Jel_classifying}.

    Theory of Macaulay's inverse systems (also known as apolarity) is a central tool in
    our investigation. To an element $f$ of a (divided
    power) polynomial ring $\DP$ it assigns a finite local Gorenstein algebra $\Apolar{f}$,
    see Section~\ref{ssec:Gorensteininapolarity}.
    A pro-algebraic group $\Dgrp$ acts on $\DP$ so that the
    orbits are isomorphism classes of $\Apolar{-}$, see
    Section~\ref{ssec:Gorensteininapolarity}. In
    Section~\ref{ssec:specialforms} we explicitly describe the action of
    $\Dgrp$ and investigate its Lie group, building an effective tool to
    investigate isomorphism classes of algebras. We then give applications; as a
    sample result, we reprove (with weaker assumptions on the base field) the
    following main theorems
    of~\cite{EliasRossiShortGorenstein, EliasRossi_Analytic_isomorphisms}.
    \begin{theorem}[Example~\ref{ex:1nn1}, Corollary~\ref{ref:eliasrossi:cor}]\label{ref:introeliasrossi:thm}%
        Let $\kk$ be a field of characteristic $\neq 2, 3$.
        Let $(\DA, \mm, \kk)$ be a finite Gorenstein local $\kk$-algebra with
        Hilbert function $(1, n, n, 1)$ or $(1, n, \binom{n+1}{2}, n, 1)$.
        Then $\DA$ isomorphic to its associated graded algebra
        $\grDA$.
    \end{theorem}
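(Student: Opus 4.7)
The plan is to translate the statement into apolarity and then use the orbit structure of the pro-algebraic group $\Dgrp$ acting on $\DP$. By the Macaulay correspondence (Section~\ref{ssec:Gorensteininapolarity}), we write $\DA \cong \Apolar{f}$ for some $f = F_d + F_{d-1} + \cdots + F_0 \in \DP$ with top degree $d$ equal to the socle degree ($3$ or $4$), and then $\grDA \cong \Apolar{F_d}$. Since isomorphism classes of $\Apolar{-}$ coincide with $\Dgrp$-orbits, the theorem reduces to showing that $f$ and $F_d$ lie in a common $\Dgrp$-orbit. I would achieve this by iteratively killing the components $F_{d-1}, F_{d-2}, \ldots$ via the exponential action of the weight-negative part of $\mathrm{Lie}(\Dgrp) = \mathfrak{g}_0 \oplus \mathfrak{g}_{-1} \oplus \mathfrak{g}_{-2} \oplus \cdots$ described in Section~\ref{ssec:specialforms}.

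The mechanism: an element of $\mathfrak{g}_{-k}$ lowers degree by $k$ on $\DP$ and, applied to the top form $F_d$, has image in $\DP_{d-k}$ that identifies with the subspace $\DP_1 \cdot \{(k{+}1)\text{-fold contractions of } F_d\}$. The crucial input is that the Hilbert function of the hypothesis forces this image to equal all of $\DP_{d-k}$ for each $k = 1, \ldots, d-1$. For $(1,n,n,1)$ with $d=3$: the values $h_1 = h_2 = n$ force second and third partials of $F_3$ to fill $\DP_1$ and $\DP_0$ respectively, and then $\DP_1 \cdot \DP_j = \DP_{j+1}$ (which holds for $j \leq 2$ when $\chark \neq 2,3$) closes the argument. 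For $(1,n,\binom{n+1}{2},n,1)$ with $d=4$: the compressed value $h_2 = \binom{n+1}{2} = \dim \DP_2$ forces second partials of $F_4$ to fill $\DP_2$, yielding $\DP_1 \cdot \DP_2 = \DP_3$; analogously $h_3 = n$ and $h_4 = 1$ handle $k = 2, 3$. Given surjectivity at each stage, one picks $\theta_k \in \mathfrak{g}_{-k}$ inductively and applies $\exp(\theta_k)$; since weight-$(-k)$ operators drop degree by $k$, each step preserves the previously-zeroed higher-degree components. After $k = 1, \ldots, d-1$ one arrives at $F_d$ plus a constant, which is harmless because $\Ann(F_d + c) = \Ann(F_d)$ whenever $F_d \neq 0$.

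The step I expect to be the main obstacle is $k=1$ in the quartic case. The naive attempt would be to reach $F_3$ through first partials of $F_4$, but these span only an $n$-dimensional subspace of $\DP_3$ --- far short of $\binom{n+2}{3}$ --- and indeed for less compressed Hilbert functions $(1,n,h,n,1)$ with $h < \binom{n+1}{2}$ the statement can fail. The resolution is that the true image of $\mathfrak{g}_{-1}$ on $F_4$ runs through \emph{second} partials multiplied by a linear form, and the maximal compression $h_2 = \binom{n+1}{2}$ is precisely the hypothesis that makes $\DP_1 \cdot (\text{second partials}) = \DP_3$. The assumption $\chark \neq 2, 3$ is used to keep the divided-power multiplications $\DP_1 \otimes \DP_i \to \DP_{i+1}$ surjective for $i \leq 3$, i.e., so that the coefficients $J_i+1 \in \{1,2,3\}$ appearing in $y_i \cdot y^{[J]}$ remain invertible.
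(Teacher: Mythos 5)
Your argument is correct and is essentially the paper's Lie-theoretic proof (Example~\ref{ex:1nn1Lie} for the cubic case, and Proposition~\ref{ref:sp:nosmallorder:prop} together with Corollaries~\ref{ref:compressedpoly:cor} and~\ref{ref:eliasrossi:cor} for both cases): one shows that the unipotent tangent space $\tangunip{F_d}$ fills $\DP_j$ for $j < d$ using the compressed Hilbert function, and then integrates up to a $\Dgrpunip$-orbit statement. The paper packages the Hilbert-function constraints via the notion of a $t$-compressed algebra and checks the tangent-space condition by showing the perpendicular space $\perpspace{\tangunip{f}}$ has no low-order elements (rather than directly computing the image of $\mathfrak{g}_{-k}$ on $F_d$), but the underlying mechanism and the role of $\chark \neq 2,3$ (surjectivity of $\DP_1 \cdot \DP_j \to \DP_{j+1}$ for $j \leq 2$ and invertibility of the factorials in $\exp$, cf.\ Proposition~\ref{ref:topdegreecomp:prop}) are exactly as you describe.
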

    Interestingly, Theorem~\ref{ref:introeliasrossi:thm} fails for small characteristics, see
    Example~\ref{ex:1nn1char2}.
    We also obtain genuine, down to earth classification results, such as the
    following.
    \begin{proposition}[Example~\ref{ex:13331}]
        Let $\kk$ be an algebraically closed field of characteristic $\neq 2, 3$.
        There are exactly eleven isomorphism types of finite local Gorenstein
        algebras with Hilbert function $(1, 3, 3, 3, 1)$, see
        Example~\ref{ex:13331} for their list.
    \end{proposition}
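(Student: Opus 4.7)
The plan is to apply the apolarity machinery from Part~\ref{part:algebras} directly. By the results of Section~\ref{ssec:Gorensteininapolarity}, every finite local Gorenstein $\kk$-algebra with embedding dimension three and socle degree four is isomorphic to $\Apolar{f}$ for some $f \in \kdp[x_1, x_2, x_3]$ with nonzero degree-four component $f_4$, and two such polynomials yield isomorphic algebras precisely when they lie in one orbit of the pro-algebraic group $\Dgrp$. Writing $f = f_4 + f_3 + f_2 + f_1 + f_0$, the Hilbert function $(1,3,3,3,1)$ translates into explicit rank conditions on the catalecticant matrices of $f$, which in particular force $f_4$ to be nondegenerate in three variables and impose $\dim_\kk \Ann(f_4)_2 \geq 3$.

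First I would stratify by the leading form. Using the decomposition $\Dgrp = \Dgrp_0 \ltimes \Dgrp_{>0}$ with $\Dgrp_0 = \operatorname{GL}_3$ described in Section~\ref{ssec:specialforms}, the action of $\Dgrp_0$ on $f_4$ is the classical $\operatorname{GL}_3$-action on plane quartics, while $\Dgrp_{>0}$ fixes $f_4$ and modifies only the lower-degree summands. Over $\kkbar$ one enumerates the $\operatorname{GL}_3$-orbits of quartic forms $f_4 \in \kdp[x_1,x_2,x_3]_4$ whose apolar algebra $\Apolar{f_4}$ has Hilbert function bounded above by $(1,3,3,3,1)$; this is a short explicit list, obtained by first choosing a normal form for a three-dimensional subspace of $\Ann(f_4)_2 \subseteq \DS_2$ and then classifying the compatible quartics under the subgroup of $\operatorname{GL}_3$ that preserves it.

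Next, for each canonical $f_4$ I would use $\Dgrp_{>0}$ to normalize the lower-order tail $f_{<4} := f_3 + f_2 + f_1 + f_0$. The explicit formulas of Section~\ref{ssec:specialforms} show that $\Dgrp_{>0}$ acts on $f_{<4}$ through shifts by elements of $\DS_{\geq 1} \cdot f$, so the residual moduli problem becomes the quotient of a finite-dimensional affine space by a computable algebraic group action, for which the Lie algebra is straightforward to write down. In most cases this quotient is a single point and one recovers $\DA \simeq \grDA$; in a handful of cases a genuinely nonhomogeneous isomorphism class survives. Summing orbit counts across the finite list of leading forms yields the eleven isomorphism types tabulated in Example~\ref{ex:13331}.

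The hardest step is the second one: producing the complete and non-redundant $\operatorname{GL}_3$-orbit list of quartic forms $f_4$ with $\dim \Ann(f_4)_2 \geq 3$, together with the accompanying stabilizers, since it is the stabilizer which controls how many nonhomogeneous refinements survive in the third step. Once this leading-form classification is in hand, the remaining verifications are a finite sequence of linear-algebra computations inside $\kdp[x_1, x_2, x_3]$, and the hypothesis $\chark \neq 2, 3$ is used both to guarantee that the exponential identification between $\Dgrp_{>0}$ and its Lie algebra is valid and to allow elimination of linear and quadratic tails by the expected coefficient comparisons.
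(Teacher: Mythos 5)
Your proposal follows essentially the same route as the paper's Example~\ref{ex:13331}: pass to a dual generator $f\in\kdp[x_1,x_2,x_3]$ via Theorem~\ref{ref:MacaulaytheoremGorenstein:thm}, interpret isomorphism as a $\Dgrp$-orbit equivalence (Proposition~\ref{ref:Gorenstein:prop}), stratify by the leading quartic $f_4$, reduce modulo the unipotent subgroup to normalize the tail, and finally quotient by the stabilizer of $f_4$ inside the linear group. So the structure is right and would lead to the same count of $11$.

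A few places where the execution is understated or off, and which would bite if the plan were carried out literally. First, the description of the unipotent action as ``shifts by elements of $\DS_{\geq 1}\cdot f$'' captures only the $(1+\DmmS)$ half of $\Dgrpunip$. By Proposition~\ref{ref:uniptangentspacepoly:prop}, $\tangunip{f}=\DmmS f+\sum_i\DmmS^2(x_i\cdot f)$; the second summand comes from $\DAutunip$ and is essential. For example, with $F_1=\DPel{x}{4}+\DPel{y}{4}+\DPel{z}{4}$ the space $\DmmS F_1\cap\DP_3$ is only $3$-dimensional, while $\tangunip{F_1}\cap\DP_3$ has codimension~$1$; dropping the automorphism contribution would inflate the residual parameter space and overcount isomorphism classes. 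Second, the narrative ``in most cases this quotient is a single point and one recovers $\DA\simeq\grDA$'' is wrong in this example: each of the three leading forms contributes more than one class ($2$, $4$, $5$ respectively), and $8$ of the $11$ classes are \emph{not} canonically graded. Third, the Hilbert function of $\Apolar{f_4}$ is forced to be exactly $(1,3,3,3,1)$, not merely bounded above, because symmetry of $(1,3,3,3,1)$ together with Corollary~\ref{ref:topdegreefirstrow:cor} and Corollary~\ref{ref:symmetricHf:cor} kills all $\Dhdvect{a}$ with $a>0$; you should use this to see $\grDA\simeq\Apolar{f_4}$, which is what makes the stratification by leading form faithful.

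One genuine point of difference from the paper: you propose to derive the three normal forms $F_1,F_2,F_3$ from first principles by normalizing the three-dimensional space $\Ann(f_4)_2$ and then working out compatible quartics. The paper instead imports the classification (from \cite{LO} or \cite[Proposition~4.9]{cjn13}, and Proposition~\ref{ref:thirdsecant:prop} contains a self-contained proof in this thesis). Your method is sound and is in fact close to how Proposition~\ref{ref:thirdsecant:prop} is proved (via the scheme $\Gamma\subset\mathbb{P}^2$ cut out by $\Ann(f_4)_2$), so this is a legitimate variant rather than a gap. Finally, the ``linear-algebra computations'' in the third step are not uniformly easy: the $F_3$ case requires computing the $2$-parameter stabilizer $\{t_{a,b}\}$ of $\sspan{F_3}$ in $\operatorname{GL}_3$ and its action on the $3$-dimensional quotient $P_3/\tangunip{F_3}$, which is the nontrivial combinatorial heart of the count and also the place where $\chark\neq 2,3$ is used concretely (constants like $3/2$, $39/4$ appear in the matrix of $t_{a,b}$).
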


    In Part~\ref{part:families}, our attention shifts towards families of
    algebras.  Accordingly, we change the language from algebra to
    algebraic geometry, from finite algebras to finite schemes.  The main
    object is the \emph{Hilbert scheme of points} on
    a scheme $X$, denoted by $\Hilbr{X}$, together with its
    degree $r$ finite flat \emph{universal family}
    \[
        \pi\colon \UU\to \Hilbr{X}.
    \]
    Intuitively, points of
    $\Hilbr{\AA^n}$ parameterize all finite algebras and the fiber of $\pi$
    over a point is the corresponding algebra;
    see~Section~\ref{ssec:hilbertschemes} for precise definition and
    discussion. Inside the Hilbert scheme of points, we have its
    \emph{Gorenstein locus} $\HilbGorr{\AA^n}$, which is the family of all finite
    \emph{Gorenstein} algebras. We have the restriction
    $\pi_{|\HilbGorr{\AA^n}}:\UU^{Gor} \to
    \HilbGorr{\AA^n}$, which we usually denote simply by $\pi$.

    We make the $\Apolar{-}$ construction relative in
    Section~\ref{ssec:relativemacaulaysystems},
    following~\cite{jelisiejew_VSP}, and prove that every family
    locally comes from this construction; this gives a very satisfactory local
    theory of the Hilbert scheme. In particular, for the Gorenstein locus, we
    obtain the following result.
    \begin{proposition}[Corollary~\ref{ref:localdescriptionGorenstein:cor}]
        Locally on $\HilbGorr{\AA^n}$, the universal family has the form
        $\Apolar{f} \to \Spec A$ for an $f\in A\tensor_{\kk} \DP$.
    \end{proposition}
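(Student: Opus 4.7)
The plan is to reduce to an affine open of $\HilbGorr{\AA^n}$ and then invoke the relative apolarity theory developed in Section~\ref{ssec:relativemacaulaysystems}. Pick a point $p \in \HilbGorr{\AA^n}$ and choose an affine neighborhood $\Spec A$. The universal family restricts there to $\Spec R \to \Spec A$ with $R$ a finite flat $A$-algebra whose geometric fibers are Gorenstein. The key structural input is that under these hypotheses, the relative dualizing module $\omega_{R/A} = \Hom_A(R, A)$ is a locally free $R$-module of rank one. After shrinking $\Spec A$, I would make this free with a chosen generator $\phi \colon R \to A$.

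Next, I would construct $f$ by transposing the $A[y_1,\dots,y_n]$-module structure on $R$ through $\phi$. Writing the surjection $A \tensor \DS = A[y_1, \ldots, y_n] \twoheadrightarrow R$ and setting
\[
    f \;=\; \sum_{\alpha} \phi\!\pp{y^{\alpha}} \, x^{(\alpha)},
\]
I would show that $f \in A \tensor \DP$ and that its relative annihilator in $A \tensor \DS$ coincides with the defining ideal of $R$. The first property (finiteness of the sum) needs the second-place care: since $R$ has constant rank $r$ over $\Spec A$, each fiber is annihilated by a uniformly bounded power of the augmentation ideal after a suitable translation in $\AA^n$, so all but finitely many $\phi(y^\alpha)$ vanish. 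The second property is then a direct consequence of the relative Gorenstein pairing: because $\phi$ generates $\omega_{R/A}$, an element $s \in A \tensor \DS$ maps to $0$ in $R$ if and only if $\phi(sr) = 0$ for all $r \in R$, which is exactly $s \hook f = 0$; hence $\Ann(f)$ equals the ideal of $R$ and $\Apolar{f} \cong R$.

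The step I expect to be the main obstacle is ensuring that $f$ lives in $A \tensor \DP$ and not merely in the completed module $A \tensor \DShat$-dual. This requires a uniform bound on the socle degree of fibers, as well as careful treatment of fibers supported at several points of $\AA^n$; for these one must shrink $\Spec A$ so that the support of the universal family stays within a single affine chart in which each $y_i$ acts with uniformly bounded nilpotency on $R$. Once this is arranged, the rest is a formal unwinding of the relative Macaulay construction from Section~\ref{ssec:relativemacaulaysystems}, so the proposition follows by combining the Gorenstein duality statement with the trivialization of $\omega_{R/A}$ over $\Spec A$.
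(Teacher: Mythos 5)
Your construction is, in substance, the paper's own proof. The paper first shows (Proposition~\ref{ref:globaldescription:prop}) that any embedded finite flat family is $\Apolar{\mathcal{F}}$ for $\mathcal{F}$ the pushforward of the relative canonical module $\Hom_A(R,A)$, the point being exactly your faithfulness argument: since $R$ is finite locally free over $A$, no nonzero element of $R$ is killed by all functionals, so the annihilator of $\mathcal{F}$ is the ideal of $R$. The Gorenstein refinement (Corollary~\ref{ref:localdescriptionGorenstein:cor}) is then obtained, as you do, by noting that Gorenstein fibers make this module principal fiberwise, lifting a generator $\phi$ of the fiber to a neighbourhood, and reading $\phi$ as a functional on $A[y_1,\dots,y_n]$, which in coordinates is precisely your $f=\sum_\alpha \phi(y^{\alpha})x^{(\alpha)}$ with $\Ann(f)$ equal to the defining ideal. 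Up to this point the proposal is correct and follows the same route as the paper.

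The step that fails is the final one, where you try to force $f$ into $A\tensor_{\kk}\DP$ by shrinking $\Spec A$ until the coordinates act nilpotently on $R$ with a uniform bound. No Zariski shrinking achieves this: the fibers of the universal family are generally not supported at the origin, their supports vary with the base point and may consist of several points, so no single translation over the base (nor any bound on socle degrees) makes the coordinates nilpotent, and the coefficients $\phi(y^{\alpha})$ need not vanish for large $\alpha$. Concretely, in Example~\ref{ex:doublecoversMacaulay} the family $\Spec \kk[\Dx,t]/(\Dx^2-\Dx t)\to \Spec\kk[t]$ has dual generator $F=\sum_{n\geq 0}\DPel{x}{n+1}t^{n}$, a genuine power series, and restricting to any dense open of the base does not help. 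This is why Corollary~\ref{ref:localdescriptionGorenstein:cor} produces $F_i\in B_i[[x_1,\dots,x_n]]$ in general and asserts polynomiality only under the additional hypothesis that all fibers are supported at the origin; the displayed statement ``$f\in A\tensor_{\kk}\DP$'' is to be read in this completed, power-series sense. If you drop the spurious finiteness claim and let $f$ live in $A[[x_1,\dots,x_n]]$, your argument is complete and agrees with the paper's.
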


    The Hilbert scheme of points on $X$ has a distinguished \emph{open} subset
    $\Hilbzeror{X}$, consisting of \emph{smooth} subschemes. Its closure is
    called the \emph{smoothable component} and denoted $\Hilbsmr{X}$, see
    Definition~\ref{ref:smoothablecomponent:def}. Tuples of points on
    smooth $X$ are smooth subschemes and in fact $\Hilbzeror{X}$ is naturally
    the space of tuples of points on $X$, see
    Lemma~\ref{ref:smoothpartdescription:lem}. Thus for proper $X$ the component
    $\Hilbsmr{X}$ is a compactification of the space of (unordered) tuples of
    points, also called the \emph{configuration space}.
    We provide examples of points in and outside $\Hilbsmr{X}$ in
    Sections~\ref{ssec:examplesnonsmoothable}-\ref{ssec:klimits}.
    Schemes $R = \Spec A$ corresponding to points of
    $\Hilbsmr{X}$ are called \emph{smoothable}. For $\kk =
    \kkbar$, i.e., over~an algebraically closed field, they correspond precisely to algebras $A$ which are limits of $\kk^{\times r}$.
    In Chapter~\ref{sec:smoothings} we investigate those limits, following~\cite{jabu_jelisiejew_smoothability}.
    They can be taken abstractly (see
    Definition~\ref{ref:abstractsmoothable:def}) or embedded into $X$, i.e.,
    in $\Hilbr{X}$. The dependence on $X$ is a bit artificial, fortunately it
    is superficial for smooth $X$, as the following shows.
    \begin{theorem}[Theorem~\ref{thm_equivalence_of_abstract_and_embedded_smoothings}]\label{intro_thm_equivalence_of_abstract_and_embedded_smoothings}
        Suppose $X$ is a smooth variety over a field $\kk$ and $R\subset X$ is
        a finite $\kk$-subscheme.  The following conditions are equivalent:
        \begin{enumerate}
            \item $R$ is abstractly smoothable,
            \item $R$ is embedded smoothable in $X$,
            \item every connected component of $R$ is abstractly smoothable,
            \item every connected component of $R$ is embedded smoothable in $X$.
        \end{enumerate}
    \end{theorem}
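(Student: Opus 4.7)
The plan is to prove the four-way equivalence by verifying the easy forgetful implications, then the connected-component equivalences $(1) \Leftrightarrow (3)$ and $(2) \Leftrightarrow (4)$, and finally the substantive implication $(3) \Rightarrow (4)$ in the connected case, where smoothness of $X$ is essential.

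The implications $(2) \Rightarrow (1)$ and $(4) \Rightarrow (3)$ are immediate by forgetting the embedding. For $(1) \Rightarrow (3)$ (and analogously $(2) \Rightarrow (4)$), given a smoothing $\mathcal{R} \to T$ of $R = \bigsqcup R_i$, I shrink $T$ around its distinguished point so that finiteness of $\mathcal{R} \to T$ together with the decomposition of the special fiber forces $\mathcal{R}$ to split as $\bigsqcup \mathcal{R}_i$, with each $\mathcal{R}_i$ a smoothing of $R_i$ (embedded in $X \times T$ if the original family was). Conversely, for $(3) \Rightarrow (1)$ and $(4) \Rightarrow (2)$, I pull back each smoothing $\mathcal{R}_i \to T_i$ along an irreducible curve through its distinguished point to reduce to a common base $T = \Spec \kk[[t]]$; the disjoint union $\bigsqcup_i \mathcal{R}_i$ over $T$ is then a smoothing of $R$, embedded in $X \times T$ when the pieces are.

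It remains to prove $(3) \Rightarrow (4)$, for which by the reductions above it suffices to assume $R$ is connected, supported at a single point $x \in X$. Let $\mathcal{R} = \Spec B \to T = \Spec A$ be an abstract smoothing with $A$ local (after further shrinking $T$), $B$ finite over $A$, and $B/\mm_A B = \OO_R$. Choose an affine open $U = \Spec C \subset X$ containing $x$; the immersion $R \hookrightarrow U$ is a surjection $\varphi\colon C \onto \OO_R$. Pick $\kk$-algebra generators $c_1, \ldots, c_N$ of $C$, lift each $\varphi(c_i)$ to $\tilde c_i \in B$, and define $\tilde \varphi\colon C \otimes_\kk A \to B$ by $c_i \otimes 1 \mapsto \tilde c_i$. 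Since $\tilde\varphi \otimes_A \kk = \varphi$ is surjective and $B$ is a finite $A$-module, Nakayama's lemma promotes this to surjectivity of $\tilde\varphi$; this yields a closed $T$-immersion $\mathcal{R} \hookrightarrow U \times T \subset X \times T$ restricting to the given embedding of $R$ on the special fiber, hence the desired embedded smoothing.

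The main obstacle is the lifting step in the last paragraph; this is where smoothness of $X$ enters, because after shrinking to a smooth affine $U$ the coordinate ring $C$ is a finitely generated $\kk$-algebra, so only finitely many lifts $\tilde c_i$ are required before Nakayama applies. A minor bookkeeping issue concerns the case where the residue field of $x$ properly extends $\kk$: one first base changes along the relevant finite extension, invoking invariance of smoothability under field extensions (as announced in the abstract), and then runs the argument above.
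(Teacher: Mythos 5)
There is a genuine gap in the crucial step, i.e.\ the implication from abstract to embedded smoothability. You claim that choosing $\kk$-algebra generators $c_1,\ldots,c_N$ of $C$, lifting their images $\varphi(c_i) \in \OO_R$ to elements $\tilde c_i \in B$, and setting $c_i \otimes 1 \mapsto \tilde c_i$ \emph{defines} a ring homomorphism $\tilde\varphi\colon C\otimes_\kk A \to B$. It does not. Writing $C = \kk[x_1,\ldots,x_N]/J$, the prescription $x_i \mapsto \tilde c_i$ factors through $C\otimes_\kk A$ only if $f(\tilde c_1,\ldots,\tilde c_N) = 0$ for every $f \in J$; but the $\tilde c_i$ were arbitrary lifts, and there is no reason they should satisfy these relations in $B$ just because their images satisfy them modulo $\mm_A B$. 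Constructing a genuine homomorphism $C \to B$ lifting $C \twoheadrightarrow \OO_R$ is exactly what requires the formal smoothness of $X$: since $B$ is $\mm_A B$-adically complete, one uses the infinitesimal lifting criterion to lift $\Spec B/(\mm_A B)^n \to U$ to $\Spec B/(\mm_A B)^{n+1} \to U$ step by step and then takes the inverse limit. The paper does precisely this (Theorem~\ref{ref:abstractvsembedded:thm}) and then invokes Lemma~\ref{ref:closedimmersions:lem}, which is the Nakayama argument you describe. You have correctly located where smoothness enters, but the role you assign to it---ensuring $C$ is finitely generated---is wrong; every finite-type affine scheme has a finitely generated coordinate ring, smooth or not. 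Without the formal-lifting property your $\tilde\varphi$ is not well-defined, so Nakayama has nothing to apply to.

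The rest of the structure matches the paper's: the reduction to connected components via passing to a complete local base (Theorem~\ref{ref:goodbaseofsmoothing:thm} and Proposition~\ref{ref:smoothingcomponents:prop}), and the converse via assembling smoothings over a common base. One small inaccuracy there: over a general (not algebraically closed) $\kk$ you cannot always arrange the common base to be $\Spec \kk[[t]]$; the paper instead either takes the product of the individual bases or, for the descent to a curve, allows an arbitrary one-dimensional Noetherian complete local domain with the right residue field (Lemma~\ref{lem_finding_a_curve_through_point_and_open_subset}). This is a minor point, but the $\kk[[t]]$ form is only guaranteed when the residue field is algebraically closed.
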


    In general, for non-smooth $X$, embedded smoothability of $R \subset X$
    depends purely on the local geometry of $X$ around the support of $R$,
    provided that $X$ is separated, see
    Proposition~\ref{prop_smoothability_depends_only_on_sing_type}.
%    Let $\hat{\OO}_{X, x}$ denote
%    the completion of the ring of local functions around $x\in X$.
%    \begin{proposition}[Proposition~\ref{prop_smoothability_depends_only_on_sing_type}]
%        \label{intro_prop_smoothability_depends_only_on_sing_type}%
%        Let $X$ be a separated scheme and $R \subset X$ be a finite scheme,
%        supported at points $x_1, \ldots ,x_k$ of $X$. Then $R$ is smoothable
%        in $X$ if and only if $R$ is smoothable in $\bigsqcup \Spec
%        \hat{\OO}_{X, x_i}$.
%    \end{proposition}

    Part~\ref{part:applications} applies previously developed machinery to
    investigate, for fixed $n$ and $r$, the question of irreducibility
    of the Gorenstein locus. It can be reformulated in the following
    equivalent ways:
    \begin{enumerate}
        \item Consider Gorenstein $\kkbar$-algebras of degree $r$ and embedding dimension at most $n$.
            Are they all limits of $\kkbar^{\times r}$?
        \item Is the Gorenstein locus $\HilbGorr{\AA^n}$ contained in
            $\Hilbsmr{\AA^n}$?
    \end{enumerate}
    In the following theorem, we answer these questions positively for small $r$. This has immediate
    applications for secant varieties, see Section~\ref{ssec:introsecants}.
    \begin{theorem}[Theorem~\ref{ref:cjnmain:thm}]
        Let $\kk$ be a field and $\chark \neq 2, 3$.
        Let $R$ be a finite Gorenstein scheme of degree at most $14$.  Then
        either $R$ is smoothable or it corresponds to a local algebra $(\DA,
        \mm, \kk)$ with $H_A = (1, 6, 6, 1)$. In particular, if $R$ has degree
        at most $13$, then $R$ is smoothable.
        \label{ref:cjnmainintro:thm}
    \end{theorem}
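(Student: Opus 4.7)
The plan is to reduce the theorem to a case analysis on the Hilbert function of a local Gorenstein algebra, and then handle each case using the apolarity machinery developed earlier in the thesis.

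First, I would reduce to the local, algebraically closed case. By Theorem~\ref{intro_thm_equivalence_of_abstract_and_embedded_smoothings}, smoothability is equivalent to smoothability of each connected component, so it suffices to treat a single local Gorenstein $\kk$-algebra $(A, \mm, \kk/\kk)$. Since smoothability is stable under base field extension (announced in the abstract and proved in Part~\ref{part:families}), I may pass to $\kkbar$ and assume $\kk = \kkbar$ with $\chark \neq 2, 3$. By apolarity (Section~\ref{ssec:Gorensteininapolarity}) I can write $A \cong \Apolar{f}$ for some $f \in \DP$, and the Hilbert function $H_A = (h_0, h_1, \dots, h_s)$ is symmetric of socle degree $s$, with $h_0 = 1$ and $h_1 = n$ equal to the embedding dimension.

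Next, I would enumerate all symmetric Hilbert functions with $\sum h_i \leq 14$. The embedding dimension $n \leq 3$ cases are known to be smoothable (references to Theorem~\ref{ref:cjnmain:thm} and the open Problems~\ref{prob:irreducibilityA3},~\ref{prob:irreducibilityA4Gor} clarify the scope), so I may assume $n \geq 4$. For small socle degree $s \leq 3$, every Hilbert function of the form $(1, n, n, 1)$ is handled by Theorem~\ref{ref:introeliasrossi:thm}: $A \cong \grDA$, and graded compressed Gorenstein algebras of this shape are smoothable by an explicit deformation argument available for $n \leq 6$. For $s = 4$, the symmetric shapes summing to $\leq 14$ are $(1, n, m, n, 1)$ with $m \leq n$ and total mass $\leq 14$, which leaves few nontrivial cases like $(1, 4, 4, 4, 1)$, $(1, 5, 3, 5, 1)$, $(1, 5, 4, \dots)$, etc. For $s \geq 5$, symmetry and $\sum h_i \leq 14$ force very restricted shapes of the form $(1, n, \dots, n, 1)$ with small $n$. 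The exceptional case $(1, 6, 6, 1)$ of socle degree $3$ and degree $14$ is precisely where the Iarrobino--Emsalem nonsmoothable examples live, so it is excluded in the statement.

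For each Hilbert function on the list, I would produce an explicit smoothing using the relative apolarity construction from Section~\ref{ssec:relativemacaulaysystems}: the local universal family has the form $\Apolar{F} \to \Spec B$ for $F \in B \tensor_{\kk} \DP$, so a one-parameter smoothing amounts to a family $F(t) \in \DP[t]$ with $F(0) = f$ and $\Apolar{F(t)}$ generically split, i.e., a sum of divided power monomials supported at distinct points. In many cases $f$ itself admits such a deformation (compressed or ``almost compressed'' forms) because one can explicitly write $f$ as a sum of powers of linear forms, up to terms of lower degree, and degenerate. In the remaining, more rigid cases one uses the $\Dgrp$-action on $\DP$ from Section~\ref{ssec:specialforms} to bring $f$ into a standard form, and then smooths that standard form. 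The harder cases (e.g., $(1, 5, 5, 1)$, $(1, 4, 4, 4, 1)$, and certain socle-degree-$4$ shapes with ``narrow'' middle) require more care: here one typically proves smoothability by exhibiting an embedding into a larger smoothable family, or by a Gr\"obner/comparison-of-deformations argument that realizes $A$ as a deformation of an algebra already shown smoothable.

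The main obstacle, as usual for such classification-type smoothability results, is the finite list of ``maximal'' Hilbert functions where the compressed Gorenstein algebra is itself close to the boundary of the smoothable locus; for degree $\leq 14$ the truly delicate shapes are those adjacent to $(1, 6, 6, 1)$, and one must rule out a nonsmoothable algebra existing there by dimension count of the corresponding $\Dgrp$-orbit against the smoothable component. The logic of the statement --- ``either smoothable or $H_A = (1, 6, 6, 1)$'' --- is dictated by exactly this boundary, so the proof structure is: (i) dispose of all Hilbert functions other than $(1, 6, 6, 1)$ by the above case analysis, and (ii) observe that any degree-$13$ Gorenstein algebra has socle degree $\leq 2$ or Hilbert function strictly smaller than $(1, 6, 6, 1)$, yielding the final clause.
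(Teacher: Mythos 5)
There is a genuine gap, and it starts at the foundation of your case analysis: you assert that the Hilbert function of a finite \emph{local} Gorenstein algebra is symmetric. This is false for non-graded local algebras (see Example~\ref{ex:trivialcubicHf}); what exists is only Iarrobino's \emph{symmetric decomposition} $\Dhdvect{\bullet}$ of $H_A$ (Section~\ref{ssec:hilbertfunctionGorenstein}). Consequently your enumeration of ``symmetric Hilbert functions with $\sum h_i \leq 14$'' misses precisely the hardest cases of the actual proof, such as $(1,4,4,3,1,1)$, $(1,4,3,3,2,1)$ or $(1,4,5,2,2,\ldots)$, which are non-symmetric and occupy most of the work (Lemmas~\ref{ref:144311Hilbfunc:lem}--\ref{ref:144311final:lem}, Lemma~\ref{ref:14521case:lem}, Theorem~\ref{ref:mainthmstretchedfive:thm}). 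Moreover, the dichotomy ``smoothable or $(1,6,6,1)$'' is not something you can read off an enumeration: in the paper it comes from the dichotomy on $\Dhdvect{A,d-2}$ --- if $\Dhdvect{d-2}=(0,q,0)$ with $q\neq 0$ one peels off a point (Proposition~\ref{ref:squares:prop}, Corollary~\ref{ref:squareshavedegenerations:cor}), and if $q=0$ then Lemma~\ref{ref:hilbertfunc:lem} forces $\deg A \geq 2(H_A(1)+1)$, hence $H_A=(1,6,6,1)$ or $H_A(1)\leq 5$. Your proposal contains no substitute for this mechanism.

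The second gap is the actual engine for proving smoothability in the delicate cases. The paper runs an induction on degree whose inductive step is \emph{cleavability}: exhibit $\Spec A$ as a flat limit of reducible schemes (via the square-removal trick and, crucially, the ray sums and ray families of Sections~\ref{ssec:rayfamilies}--\ref{ssec:rayfromMacaulay}, whose flatness is a nontrivial verification), then conclude by closedness of the smoothable locus (Proposition~\ref{ref:smoothabilityisclosed:prop}). Where cleaving is unavailable --- e.g.\ $(1,5,5,1)$, $(1,4,5,3,1)$, and the complete-intersection branch of $(1,4,4,3,1,1)$ --- the paper instead produces an \emph{unobstructed} point inside an irreducible locus of dual generators (tangent-preserving ray families, Theorem~\ref{ref:nonobstructedconds:thm} and Corollary~\ref{ref:CIarenonobstructed:cor}, combined with Lemma~\ref{ref:unobstructed:lem}). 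Your suggested tools --- writing $f$ as a sum of powers and degenerating, or comparing dimensions of $\Dgrp$-orbits with the smoothable component --- cannot close these cases: dimension counts only prove \emph{non}smoothability, and explicit power-sum degenerations do not exist for the non-graded shapes above. Also note that your claim that graded $(1,n,n,1)$ algebras are smoothable ``for $n\leq 6$'' contradicts the theorem itself, since the general $(1,6,6,1)$ algebra is nonsmoothable (Example~\ref{ex:1661}); the known positive range is $n\leq 5$ (and $n=7$ by Bertone--Cioffi--Roggero), and already $n=5$ requires the unobstructedness argument, not Elias--Rossi gradedness alone.
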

    \newcommand{\introHilbshortother}{\mathcal{H}_{1661}}%
    Although $r\leq 14$ might seem severely restrictive, the result above is
    thought of as a partial classification of algebras up to degree $14$, which
    is quite complex. In the proof (see Chapter~\ref{sec:Gorensteinloci}), we
    avoid most of the classifying work by carefully dividing algebras into several groups
    according to their Hilbert functions and ruling out several distinguished
    classes (e.g., Corollary~\ref{ref:squareshavedegenerations:cor}).

    The nonsmoothable Gorenstein schemes of degree $14$ form a component
    $\introHilbshortother \subset \HilbGorarged{14}{\AA^6}$.
    Such components are of interest, because few are described and because they
    arise ``naturally''; for example they are $SL_6$-invariant.
    The next theorem gives a full description of the component
    $\introHilbshortother$ and, even more
    importantly, of the intersection of $\introHilbshortother$ with the
    smoothable component; see the introduction of Chapter~\ref{sec:Gorensteinloci} for
    details. The most striking result is that the intersection is given by an
    object tightly connected with the theory of cubic fourfolds: the
    Iliev-Ranestad divisor. This is the unique $SL_6$-invariant divisor of
    degree $10$ on $\mathbb{P}(\Sym^3 \kk^6)$, see
    Section~\ref{ssec:14pointsproof} for an explanation.
    \begin{theorem}[Theorem~\ref{ref:14pointsmain:thm}]
        \label{ref:intro14degree:thm}%
        Let $\kk$ be a field of characteristic zero.
        The component $\introHilbshortother$ is a rank $21$ vector bundle over
        an open subset of the space of cubic fourfolds ($\simeq
        \mathbb{P}(\Sym^3 \kk^6)$). In particular, $\dim \introHilbshortother
        = 76$. The intersection of $\introHilbshortother$ with the smoothable
        component is the preimage of the \emph{Iliev-Ranestad} divisor  in
        this space.
    \end{theorem}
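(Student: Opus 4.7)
The plan is to treat $\introHilbshortother$ as the closure of the locus of local Gorenstein subschemes of $\AA^6$ with Hilbert function $(1,6,6,1)$; by Theorem~\ref{ref:cjnmainintro:thm} these exhaust the non-smoothable Gorenstein degree-$14$ schemes, hence form an open dense subset of $\introHilbshortother$. Via Macaulay's apolarity (Section~\ref{ssec:Gorensteininapolarity}), each such $R$ supported at a point $p\in\AA^6$ is, after recentering at $p$, cut out in $\kk[[x_1,\ldots,x_6]]$ by the annihilator $\Ann(f)$ of an element $f = f_3 + f_2 + f_1 + f_0$ of the dual polynomial ring with concise cubic part $f_3$; up to rescaling, $f_3$ is the socle generator of the associated graded algebra. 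The natural forgetful map
\[
    \introHilbshortother \longrightarrow \PP(\Sym^3 \kk^6),\qquad [R]\mapsto [f_3],
\]
is well-defined on the open stratum and upgrades to a morphism of schemes via the relative apolarity framework of Section~\ref{ssec:relativemacaulaysystems}.

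To exhibit the vector bundle and compute $\dim\introHilbshortother$, I would describe the fibers of this map. Two ideals $\Ann(f),\Ann(f')$ at the same point coincide iff $f,f'$ generate the same cyclic $\kk[[x_1,\ldots,x_6]]$-module, so the fiber over a fixed $[f_3]$ is parameterized by the triple $(p, f_2, f_1, f_0)$ modulo the action of unit power series $g = g_0 + g_1 + g_2 + g_3$ on the lower-degree parts of $f$. Concision of $f_3$ makes this action suitably free, and combining the $6$-dimensional positional freedom in $p$ with the induced quotient yields fibers of dimension $21 = \dim\Sym^2 \kk^6$. Adding the $55$-dimensional base gives $\dim \introHilbshortother = 76$, and the parameterization is Zariski-locally trivial, producing the rank-$21$ vector bundle over the open subset $U\subset\PP(\Sym^3\kk^6)$ cut out by concision.

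For the intersection with the smoothable component, smoothability is a closed, $SL_6$-invariant condition on $\introHilbshortother$, and the projection to $U$ is $SL_6$-equivariant. A tangent-obstruction analysis at a graded fiber point $\Apolar{f_3}$ should show that infinitesimal smoothability depends only on $[f_3]$ and not on the bundle coordinates, so the smoothable locus is the preimage of a closed $SL_6$-invariant subset $D\subset U$; a dimension count (using that $\introHilbshortother$ is an honest non-smoothable component) forces $D$ to be a divisor. To identify $D$ as the Iliev-Ranestad divisor I would invoke its characterization as the unique $SL_6$-invariant divisor of degree $10$ on $\PP(\Sym^3 \kk^6)$, so the task reduces to computing $\deg D = 10$, e.g.\ by restricting to a generic pencil of cubics and counting smoothable fibers.

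The main obstacle is the last step: showing both that the smoothable locus descends cleanly from the total space to a divisor on the base and that this divisor has degree exactly $10$. The bundle structure and dimension $76$ are essentially formal consequences of the apolar parameterization once the $(1,6,6,1)$-stratum is known to be dense, but the degree computation requires a delicate tangent-obstruction comparison between $\Hilbsmr{\AA^6}$ and $\introHilbshortother$ at a general $(1,6,6,1)$-point, which is precisely what links the question to the syzygy geometry of cubic fourfolds underlying the Iliev-Ranestad construction.
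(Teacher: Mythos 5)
Your bundle construction and dimension count are essentially the paper's: after translating the support to the origin, the apolar ideal of $f=f_3+f_2+f_1+f_0$ depends only on $f_3$ and on the class of $f_2$ modulo the quadratic partials of $f_3$, so over the open locus of concise cubics the fibers of $[R]\mapsto[f_3]$ have dimension $6+(21-6)=21$, and relative inverse systems make this a Zariski-locally trivial bundle. One point you pass over: with its natural scheme structure $\introHilbshortother=\overline{\HilbGorarged{14}{\AA^6}\setminus\Hilbsmarged{14}{\AA^6}}$ it is not a priori reduced, and ``is a rank $21$ vector bundle'' requires reducedness; in the paper this is extracted from the computation that the tangent space to the Hilbert scheme has dimension exactly $76$ at every non-smoothable point with Hilbert function $(1,6,6,1)$ (the lemmas showing $(I^2)^{\perp}_4=\kk^6\cdot F$ and that $(I^2)^{\perp}_{\geq 5}=0$ then), so the tangent-space analysis is needed already for the first half of the statement, not only for the intersection.

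The genuine gap is in the intersection statement, in three places. First, the descent of smoothability from the total space to the base is not a ``tangent-obstruction'' matter and your proposed infinitesimal argument is not the workable mechanism: the actual reason is that all schemes in a fiber of the projection are \emph{abstractly isomorphic}, because every local Gorenstein algebra with Hilbert function $(1,6,6,1)$ is canonically graded (Corollary~\ref{ref:eliasrossi:cor}), and smoothability does not depend on the embedding (Theorem~\ref{thm_equivalence_of_abstract_and_embedded_smoothings}); hence the smoothable locus is automatically a preimage. Second, no dimension count using only that $\introHilbshortother$ is a separate component forces the intersection to be a divisor --- that only shows it is a proper closed subset. The codimension-one lower bound comes from an explicit family: cones over general codimension-six linear sections of $\Grass(2,\kk^6)\subset\PP^{14}$ are smoothable (hyperplane sections of cones over $14$ points) and form a $54$-dimensional family, and that this family is divisorial rests on Iliev--Ranestad's theorem that the corresponding locus in the space of cubic fourfolds is a divisor; this is where the cited work and characteristic zero really enter, not merely at the final identification. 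Third, ``restrict to a pencil and count smoothable fibers'' is not an effective computation --- smoothability along a pencil is exactly what cannot be tested directly. The paper instead computes the degree of the determinantal divisor $E$ where the tangent space jumps (i.e.\ $\dim(I^2)^{\perp}_4>6$) via an explicit determinant of an evaluation map on a concrete pencil, obtaining $\deg E=10$, proves $E$ is prime because there are no $SL_6$-invariants of degree less than ten, and concludes from the sandwich $D_{IR}\subseteq(\text{smoothable locus})\cap(\text{graded locus})\subseteq E$ with $D_{IR}$ a nonzero divisor; no uniqueness of an $SL_6$-invariant degree-$10$ divisor is invoked, and if you wanted to use that uniqueness you would have to prove it separately.
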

    The Gorenstein assumption is very important for the proofs, since
    $\Apolar{-}$ construction is heavily applied. However, the methods can be applied also
    for non-Gorenstein schemes. For example, using the methods similar to the proof of
    Theorem~\ref{ref:cjnmainintro:thm}, one shows that
    $\Hilbarged{11}{\AA^3} = \Hilbsmarged{11}{\AA^3}$, see~\cite{DJNT} and
    the discussion in Section~\ref{ssec:examplesnonsmoothable}.

    \newcommand{\intropunc}{\ccX}%
    We also consider schemes supported at a point. Fix the origin $p\in \AA^n$
    and denote
    \[
        \HilbGorpp{\AA^n} = \left\{ [R] \in \HilbGorr{\AA^n}\ |\ \Supp(R) = \{p\}\right\}
    \]
    In the literature, this is called the \emph{Gorenstein locus of the
    punctual Hilbert scheme}. Inside, the family of \emph{curvilinear}
    schemes, those isomorphic to $\Spec\kk[x]/x^r$, forms a component of dimension
    $(n-1)(r-1)$. The following result is crucial for applications to
    constructing $r$-regular maps, see Section~\ref{ssec:introkregularity}. We
    do not know, whether it holds in all characteristics, but we expect so.
    \begin{theorem}[Theorem~\ref{ref:expecteddim:thm}]
        \label{ref:introsmallpunc:thm}%
        Let $r\leq 9$ and $\chark = 0$. Then
        \[
            \dim \HilbGorpp{\AA^n} = (r-1)(n-1).
        \]
    \end{theorem}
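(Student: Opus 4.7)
The lower bound $\dim \HilbGorpp{\AA^n} \geq (r-1)(n-1)$ is immediate: the curvilinear locus, parameterizing ideals $I \subset \hat{\OO}_{\AA^n, p}$ with $\hat{\OO}_{\AA^n, p}/I \cong \kk[x]/(x^r)$, is irreducible of exactly this dimension (one chooses a tangent direction and higher jet data modulo reparameterization). The content of the theorem is thus the opposite inequality $\dim \HilbGorpp{\AA^n} \leq (r-1)(n-1)$, which I would prove by stratifying $\HilbGorpp{\AA^n}$ by Hilbert function and bounding each stratum separately.

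A local Gorenstein algebra of degree $r \leq 9$ has a symmetric Hilbert function $H = (1, h_1, \ldots, h_s = 1)$ with $\sum h_i = r$ and $h_1 \leq n$; only finitely many $H$ occur, and the list is independent of $n$. Let $Z_H \subset \HilbGorpp{\AA^n}$ denote the locally closed stratum of ideals whose quotient has Hilbert function $H$. It suffices to prove $\dim Z_H \leq (r-1)(n-1)$ for each admissible $H$.

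By apolarity (Part~\ref{part:algebras}, made relative in Corollary~\ref{ref:localdescriptionGorenstein:cor}), every ideal in $Z_H$ is $\Ann(f)$ for some $f \in \DP_{\leq s}$ with $H_{\Apolar{f}} = H$; two such $f$'s give the same ideal iff they generate the same cyclic $\hat{\OO}_{\AA^n, p}$-submodule of $\DP$, which is a free orbit of the unit group $\DA^*$ of dimension $r = \dimk \DA$. Hence $\dim Z_H = \dim P_H - r$, where $P_H := \{f \in \DP_{\leq s} : H_{\Apolar{f}} = H\}$. I would then compute $\dim P_H$ for each admissible $H$ using the explicit $\Dgrp$-action from Section~\ref{ssec:specialforms} together with the standard-form normalizations of Sections~\ref{ssec:standardforms}--\ref{ssec:examplesthree}. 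For instance, in the short case $H = (1, h_1, 1)$ one finds $\dim P_H = h_1 n - \binom{h_1}{2} + n + 1$ (rank-$h_1$ quadratic forms in $n$ variables, together with arbitrary linear and constant terms), so that $\dim Z_H = (r-1)(n-1) - \binom{h_1}{2} \leq (r-1)(n-1)$.

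The main obstacle is the case-by-case verification for the remaining symmetric Hilbert functions with $\sum h_i \leq 9$, particularly the compressed strata such as $(1, h_1, h_1, 1)$, $(1, 2, 3, 2, 1)$, $(1, 3, 1, 3, 1)$, $(1, 1, 5, 1, 1)$, and their longer analogs with socle degree up to $8$. For strata of type $(1, h_1, h_1, 1)$, Theorem~\ref{ref:introeliasrossi:thm} reduces the count to the graded case, which is where the characteristic-zero hypothesis is essential; for the remaining types, a direct parameter count using the $\Dgrp$-action and standard forms should yield the required bound via an elementary polynomial inequality in $n$. Each such case requires a separate verification analogous in spirit to the estimates carried out in Chapter~\ref{sec:Gorensteinloci} in the proof of Theorem~\ref{ref:cjnmainintro:thm}, and collecting them into a uniform argument for $r \leq 9$ appears to require a tabulated case analysis rather than a single conceptual inequality.
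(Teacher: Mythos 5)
Your proposal starts from a mistaken premise: the Hilbert function of a \emph{local} Gorenstein algebra is \emph{not} symmetric in general (see Example~\ref{ex:trivialcubicHf} and Example~\ref{partialsofapolynomial}; e.g.\ $(1,4,3,2,1)$ and $(1,4,3,1)$ are perfectly admissible). Only graded Gorenstein algebras have symmetric Hilbert functions (Proposition~\ref{ref:symmetryGorensteingraded:prop}); for local ones the correct statement is the existence of a \emph{symmetric decomposition} (Definition~\ref{ref:symmetricdecomposition:def}), and the individual rows $\Dhdvect{a}$, not $H$ itself, are symmetric. Your stratification therefore omits a large class of algebras. Moreover, two of your listed example strata, $(1,3,1,3,1)$ and $(1,1,5,1,1)$, violate Macaulay's Growth Theorem~\ref{ref:MacaulayGrowth:thm} (once $H(i)\le i$ the function is nonincreasing, Corollary~\ref{ref:nonincreasinghf:cor}) and do not occur at all.

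Beyond this, you are missing the structural lemma that makes the paper's argument uniform in $n$: Proposition~\ref{ref:codiminv} (invariance of codimension) shows that the codimension of a family closed under isomorphisms inside $\HilbGorpp{\AA^n}$ is independent of the ambient dimension $n$, so one may always assume $n$ equals the embedding dimension. Without this, your "elementary polynomial inequality in $n$'' needs to hold for all $n$, which is exactly the uniformity problem. The paper also avoids your "tabulated case analysis'' for low embedding dimension by invoking Brian\c{c}on's theorem (Theorem~\ref{ref:briancon:obs}): in $\AA^2$ every finite local scheme is alignable, so all schemes of embedding dimension $\le 2$ are negligible at once (Corollary~\ref{ref:embeddingdimtwo:cor}). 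Finally, the paper's remove-the-square induction (Lemma~\ref{ref:negligesquares:lem}, via Proposition~\ref{ref:squares:prop} and Corollary~\ref{ref:squareshavedegenerations:cor}) reduces to the case $\Dhdvect{d-2}=(0,0,0)$, and then an elementary estimate on the symmetric decomposition shows that socle degree $\ge 5$ would force $r\ge 10$ (Lemma~\ref{ref:neglige:lem}). So the residual case analysis is confined to socle degree $\le 4$ with embedding dimension $\ge 3$ and $\Dhdvect{d-2}=0$ (Lemma~\ref{ref:neglige4:lem}), which is short. Your idea of parameter-counting dual generators modulo the rank-$r$ fiber is essentially the paper's Proposition~\ref{ref:apolarrepresentability:prop} and does appear in the dimension counts, but it is used only after the reductions above, not as the master strategy.
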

    Note that we do not claim that $\HilbGorpp{\AA^n}$ is irreducible, we
    only compute its dimension.

    In the following sections we present two applications of our results, a
    brief historical survey and a list of open problems.

\section{Application to secant varieties}\label{ssec:introsecants}

    \newcommand{\intropoly}{\CC[x_0, \ldots ,x_n]}%
    \newcommand{\intropolyd}{\CC[x_0, \ldots ,x_n]_d}%
    \newcommand{\intropolyshortd}{\DP_d}%
    \newcommand{\PPintropolyshortd}{\PP(\DP_d)}%
    \newcommand{\introlinear}{\sspan{x_0, \ldots ,x_n}}%
    \newcommand{\introsecant}[2]{\sigma_{#1}(\nu_{#2}(\PP^n))}%
    Consider homogeneous forms of degree $d$ in $n+1$ variables, i.e., elements
    of $\intropolyshortd := \intropolyd$. A classical \emph{Waring problem} for forms
    (e.g.~\cite{SylvesterWaring, vaughan_wooley_Warings_problem_survey,
    Landsberg__tensors}) asks, for a given form $F$, what is the minimal
    number $r$, such that
    \[
        F = \ell_1^d +  \ldots + \ell_r^d
    \]
    for some linear forms $\ell_i$. This is tightly related to finding
    polynomial functions $\intropolyshortd \to \CC$, vanishing of the subset
    \[
        \sigma^{\circ}_r := \left\{ \sum_{i=1}^r \ell_i^d\ |\ \ell_i\in
        \introlinear
    \right\} \subset \intropolyshortd.
    \]
    The Euclidean closure of $\sigma^\circ$ in $\intropolyshortd$ is an algebraic
    variety, the cone over the \emph{$r$-th secant variety
        $\introsecant{r}{d}$ of the $d$-th Veronese
    reembedding}.
    The problem of finding polynomial equations of $\introsecant{r}{d}$ is long
    studied and important for applications, see references
    in~\cite{Landsberg__tensors}.

    However, this problem is difficult, because $\introsecant{r}{d}$ is
    parameterized by tuples of $r$ points of $\CC^{n+1} = \introlinear$, which
    are difficult to describe in terms of equations.  A remedy for this, introduced
    in~\cite{bubu2010}, is to replace tuples of $r$ points by finite
    subschemes of degree $r$.

    First, we have a Veronese reembedding $\nu_d\colon\PP\introlinear\to
    \PPintropolyshortd$, which maps a form $[F]\in \PP\introlinear$
    to $[F^d]\in \PPintropolyshortd$.
    For a subscheme $X \subset \PPintropolyshortd$ by $\sspan{X}$ we denote the
    projective subspace spanned by $X$. In this language, the secant variety
    $\introsecant{r}{d}$ is the closure of
    \[
        \left\{ \sspan{\nu_d(\ell_1), \ldots ,\nu_d(\ell_r)}\ |\ \ell_i\in
            \introlinear \right\} \subset \PPintropolyshortd.
    \]
    A tuple $\{\ell_i\}$ is just a tuple of $r$ points of $\introlinear$; it is
    a smooth degree $r$ subscheme of $\PP\introlinear$. In~\cite{bubu2010}
    Buczy\'nska and Buczy\'nski introduced the
    \emph{cactus variety}, defined as the closure of \emph{all} degree $r$ subschemes
    of $\PP\introlinear$:
    \[
        \left\{ \sspan{\nu_d(R)}\ |\ R \mbox{ degree } r \mbox{ subscheme of
        }{\PP\introlinear} \right\}
        \subset \PPintropolyshortd,
    \]
    \newcommand{\cactus}[2]{\kappa_{#1}(\nu_{#2}(\PP^n))}%
    The cactus variety is denoted by $\cactus{r}{d}$.
    The idea may be summarized as follows: \emph{Since the Hilbert scheme is
        defined in a more natural way than the space of tuples of points,
        the equations of cactus variety,  parameterized by the Hilbert scheme,
        are easier than the equations of secant variety, parameterized by
    tuples of points}.

    Indeed, for $2r\leq d$, the variety $\cactus{r}{d}$ has an easy to describe set of
    equations. For a form $F\in \intropolyshortd$, consider all its partial
    derivatives of degree $a$, i.e. consider the linear space
    \[
        \Diff(F)_a := \sspan{\partial_1  \ldots \partial_{d-a}\circ F\ |\ \partial_i = \sum_j
            a_{ij}
            \frac{\partial}{\partial x_{j}},\ \ a_{ij}\in \CC}.
    \]
    The cactus variety is set-theoretically defined by the condition
    $\dim_{\CC} \Diff(F)_{\floor{\frac{d}{2}}} \leq r$, which corresponds to certain determinantal
    equations called \emph{minors of the catalecticant matrix},
    see~\cite[Theorem~1.5]{bubu2010} and, for special cases,
    Section~\ref{ssec:secants}.

    To get equations of $\introsecant{r}{d}$, we would like to have an
    equality $\introsecant{r}{d} = \cactus{r}{d}$.
    If all \emph{Gorenstein} schemes of degree $r$ are smoothable (see
    Section~\ref{ssec:intromainresults} or Chapter~\ref{sec:smoothings}), then
    indeed equality happens,
    see~\cite[Theorem~1.6]{jabu_jelisiejew_smoothability}.
     If $r\leq 13$, then all Gorenstein subschemes are smoothable by
     Theorem~\ref{ref:cjnmainintro:thm} and we obtain the following theorem.

     \begin{theorem}\label{ref:intro:secants:thm}
         Let $r, d$
         be integers such that $0 < r < 14$ and $d\geq 2r$. Then the $r$-th secant
         variety of the $d$-th Veronese reembedding of $\PP^n$ is cut out by minors of
         the middle catalecticant matrix.
         More explicitly, the Euclidean closure of the subset
         \[
             \left\{ \sum_{i=1}^r \ell_i^d\ |\ \ell_i\in \introlinear \right\}
             \subset \intropolyshortd.
         \]
         consists precisely of the forms $F\in \intropolyshortd$ such that
         $\Diff(F)_{\floor{\frac{d}{2}}} \leq r$.
     \end{theorem}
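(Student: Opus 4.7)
The plan is to sandwich the secant variety $\introsecant{r}{d}$ between two varieties that we already understand, and show that they all coincide. Concretely, I would prove the chain
\[
    \introsecant{r}{d} \;\subseteq\; \cactus{r}{d} \;\subseteq\; \{\,[F]\in\PPintropolyshortd : \dim_{\CC}\Diff(F)_{\lfloor d/2\rfloor}\leq r\,\} \;\subseteq\; \introsecant{r}{d},
\]
with each inclusion obtained from a separate input.

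The first inclusion is essentially tautological: a tuple of $r$ linear forms gives a smooth (hence in particular Gorenstein) degree-$r$ subscheme of $\PP\introlinear$, and $\sspan{\nu_d(\ell_1),\ldots,\nu_d(\ell_r)}$ is exactly the span of $\nu_d$ applied to this subscheme. Taking closures gives $\introsecant{r}{d}\subseteq\cactus{r}{d}$.

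For the middle inclusion, I would invoke Theorem~1.5 of~\cite{bubu2010}, which under the numerical hypothesis $d\geq 2r$ identifies $\cactus{r}{d}$ set-theoretically with the locus of forms whose middle catalecticant has rank at most $r$; this is exactly the vanishing of the $(r+1)\times(r+1)$ minors of the catalecticant matrix in degree $\lfloor d/2\rfloor$.

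The hard step is the last inclusion, which is where the Gorenstein smoothability result does its work. Given $[F]$ in the catalecticant locus, I would consider the apolar algebra $\Apolar{F}$ of Section~\ref{ssec:Gorensteininapolarity}. By construction $\Apolar{F}$ is a finite local Gorenstein algebra, and the catalecticant bound, together with $d\geq 2r$, forces $\dim_\CC \Apolar{F}\leq r$. Here $r\leq 13$ is used crucially: by Theorem~\ref{ref:cjnmainintro:thm} every Gorenstein scheme of degree at most $13$ is smoothable. The standard consequence (this is essentially~\cite[Theorem~1.6]{jabu_jelisiejew_smoothability}) is that smoothability of $\Apolar{F}$ translates into $[F]\in\introsecant{r}{d}$: a flat family degenerating $\Spec\Apolar{F}$ into a union of $r$ reduced points lifts, via its span under $\nu_d$, to a family of $r$-secants limiting to $[F]$.

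The main obstacle among these steps is the last one; the first two are cited almost verbatim from~\cite{bubu2010}, while the last hinges on the nontrivial smoothability input of Theorem~\ref{ref:cjnmainintro:thm}, and on the standard but careful argument that abstract smoothability of $\Apolar{F}$ yields an embedded approximation of $[F]$ by sums of $d$-th powers. All three hypotheses of the theorem ($r<14$, $d\geq 2r$, $\chark=0$) enter exactly once, in these three separate inputs.
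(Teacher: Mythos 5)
There is a genuine gap in your last (and crucial) inclusion. You claim that for $[F]$ in the catalecticant locus ``the catalecticant bound, together with $d\geq 2r$, forces $\dim_{\CC}\Apolar{F}\leq r$''. This can never hold: the apolar algebra of a nonzero form $F$ of degree $d$ has $\dim_{\CC}\Diff(F)_a\geq 1$ for every $0\leq a\leq d$, so $\dim_{\CC}\Apolar{F}=\sum_a\dim_{\CC}\Diff(F)_a\geq d+1>2r>r$; the rank condition constrains only the \emph{middle} graded piece of the Hilbert function, not the whole algebra (e.g.\ $F=x_0^d$ has middle catalecticant rank $1$ but apolar algebra of dimension $d+1$). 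Consequently the scheme to which the smoothability theorem must be applied is not $\Spec\Apolar{F}$; it is an auxiliary finite Gorenstein subscheme $R\subset\PP^n$ of degree at most $r$ that is apolar to $F$, i.e.\ with $[F]\in\sspan{\nu_d(R)}$. The existence of such an $R$ is exactly the hard content of \cite[Theorem~1.5]{bubu2010} (the set-theoretic equality of the cactus variety with the catalecticant locus for $d\geq 2r$), refined by \cite[Lemma~2.3]{bubu2010} to make $R$ Gorenstein — so you cannot bypass that theorem by passing to $\Apolar{F}$, and your chain as written only uses its easy direction.

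Once this is repaired, the remainder of your argument is the intended one and coincides with the paper's: given a Gorenstein $R$ of degree at most $r\leq 13$ with $[F]\in\sspan{\nu_d(R)}$, Theorem~\ref{ref:cjnmainintro:thm} makes $R$ smoothable, and the fact that spans of the Veronese images behave well in flat families (this is \cite[Theorem~1.6]{jabu_jelisiejew_smoothability}, relying on \cite[Section~2]{jabu_ginensky_landsberg_Eisenbuds_conjecture}) turns a smoothing of $R$ into a limit of $r$-secant planes through $[F]$, giving $[F]\in\sigma_r(\nu_d(\PP^n))$. So the overall sandwich
secant $\subseteq$ cactus $\subseteq$ catalecticant locus $\subseteq$ secant is the right skeleton; the fix is to route the last inclusion through the cactus variety (i.e.\ through both directions of \cite[Theorem~1.5]{bubu2010} plus the Gorenstein reduction), not through the apolar algebra of $F$ itself.
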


     Theorem~\ref{ref:intro:secants:thm} was proven, in the case $r\leq 10$,
     in~\cite{bubu2010}.  Our contribution to this theorem is the extension to
     $r\leq 13$ and to fields of arbitrary characteristic $\neq 2, 3$,
     see~\cite{jabu_jelisiejew_smoothability}.  Applications of
     Theorem~\ref{ref:intro:secants:thm}, for example to signal processing,
     are found in~\cite{Landsberg__tensors}.

\section{Application to constructing $r$-regular
maps}\label{ssec:introkregularity}

An Euclidean-continuous map $f\colon\RR^n \to \RR^N$ or $f\colon\CC^n \to \CC^N$ is called \emph{$r$-regular}
if the images of every $r$ points are linearly independent. The existence of $r$-regular maps to $\CC^N$ for
 given $(r, n)$ is highly nontrivial and has attracted the interest of algebraic
 topologists, including
 Borsuk~\cite{haar__kreg,kolmogorov__kreg,Borsuk,kreg1,kreg2,kreg4,kreg5,kreg6},
 and interpolation theorists~\cite{kreg3,wulbert1999interpolation,
 interpolation__Shekhtman__poly, interpolation__Shekhtman__ideal}. Their
 developments improved the lower bounds on $N$, depending on $n$ and $r$,
 see~\cite{blagojevic_cohen_luck_ziegler_On_highly_regular_embeddings_2}, but
 few examples or sharp upper bounds were known. Instead, many new examples are provided
 by~\cite{Michalek}. Below we outline the ideas of~this paper.

 First, we consider a Veronese map $\CC^n \to \CC^{N_0}$, given by all monomials of degree $\leq d$,
 for $d$ fixed.
 Such a map, when the degree $d$ of the monomials is sufficiently high, is known to be $r$-regular.
 Then, we project from a sufficiently high dimensional linear subspace $H$.
 It turns out that the dimension of possible $H$ is closely related to the numerical properties
 of the smoothable and Gorenstein loci of the punctual Hilbert scheme.
 We obtain the following results, see~\cite{Michalek}.

 \begin{theorem} \label{thm_main_bound_intro_any_k}
     There exist $r$-regular maps $\RR^n \to \RR^{(n+1)(r-1)}$ and $\CC^n \to \CC^{(n+1)(r-1)}$.
 \end{theorem}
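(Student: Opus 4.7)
Following the outline in Section~\ref{ssec:introkregularity}, I would realise $f$ as the composition of a Veronese embedding with a suitably chosen linear projection. Fix $d$ sufficiently large (say $d\geq 2r$) and let $\nu_d\colon\CC^n\to\CC^{N_0}$, $N_0=\binom{n+d}{n}$, be the map whose coordinates are all monomials of degree $\leq d$; a standard Vandermonde-type argument shows that $\nu_d$ itself is $r$-regular. To bring the target dimension down to $M=(n+1)(r-1)$ I would compose $\nu_d$ with a linear projection $\pi\colon\CC^{N_0}\onto\CC^M$ and show that a carefully chosen $\pi$ preserves the $r$-regularity property.

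The key move is to dualise: $\pi$ corresponds to an $M$-dimensional subspace $V=(\ker\pi)^\perp\subset\CC[x_1,\ldots,x_n]_{\leq d}$, and one verifies that $\pi\circ\nu_d$ is $r$-regular if and only if, for every $r$-tuple of distinct points $p_1,\ldots,p_r\in\CC^n$, the evaluation $V\to\CC^r$, $g\mapsto(g(p_1),\ldots,g(p_r))$, is surjective. By semicontinuity and density of smooth tuples in $\Hilbsmr{\AA^n}$, this is equivalent to surjectivity of $V\to\CC[R]$ for every length-$r$ smoothable subscheme $R\subset\AA^n$.

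Producing such a $V$ of the claimed dimension then reduces to a dimension count on the incidence
\[
    \mathcal{B}=\bigl\{(V,R)\in\Grass(M,\CC[x_1,\ldots,x_n]_{\leq d})\times\Hilbsmr{\AA^n}\ :\ V\to\CC[R]\text{ not surjective}\bigr\}.
\]
For fixed $R$ the fibre of $\mathcal{B}$ over $R$ is a Schubert cycle of codimension $M-r+1$ in the Grassmannian, so $\dim\mathcal{B}=\dim\Grass+rn-(M-r+1)$. It suffices to show that the projection $\mathcal{B}\to\Grass$ is not dominant: any $V$ outside the image will do. A naive dimension comparison yields only the weaker bound $M\geq r(n+1)$. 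The sharper threshold $M\geq(n+1)(r-1)$ is obtained by exploiting the $(n+1)$-dimensional group of affine translations and homotheties of $\AA^n$, which acts compatibly on both factors and leaves $\mathcal{B}$ invariant, combined with the dimension estimate $(n-1)(r-1)$ on the smoothable punctual Hilbert scheme (accounted for by its curvilinear component, discussed before Theorem~\ref{ref:introsmallpunc:thm}); these two ingredients together enlarge the generic fibre of $\mathcal{B}\to\Grass$ enough to force nondominance.

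This refined orbit-and-fibre count is the main obstacle of the proof; once it is in place, the real statement comes essentially for free. The bad locus in the complex Grassmannian is a proper complex subvariety, so its real points form a proper real-algebraic subvariety of the real Grassmannian, and a generic real $M$-dimensional subspace $V$ of real polynomials is good, yielding the required $r$-regular map $\RR^n\to\RR^{(n+1)(r-1)}$.
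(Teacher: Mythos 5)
Your interpolation reformulation and the Schubert count are fine as far as they go, but the step that is supposed to improve $M\geq r(n+1)$ to $M=(n+1)(r-1)$ is not a proof. Invariance of $\mathcal{B}$ under the diagonal action of translations and homotheties tells you nothing about the fibre dimension of the projection $\mathcal{B}\to\Grass(M,\CC[x_1,\ldots,x_n]_{\leq d})$, since the group moves $V$ together with $R$; and the estimate $(n-1)(r-1)$ for the punctual Gorenstein locus is not available at this generality -- that upper bound is precisely Theorem~\ref{ref:introsmallpunc:thm}, known only for $r\leq 9$ and $\chark=0$ (the curvilinear family gives only a lower bound, and for large $r$ the punctual Gorenstein locus is genuinely bigger, e.g.\ for compressed $(1,n,n,1)$ algebras with $n\geq 8$). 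Note also that if such an estimate were available and correctly wired in, it would produce the bound $n(r-1)+1$ of Theorem~\ref{thm_main_bound_intro_small_k}, not $(n+1)(r-1)$, which signals that the accounting is off. A smaller inaccuracy: your ``equivalence'' between $r$-regularity and surjectivity of $V\to\CC[R]$ for all smoothable length-$r$ schemes $R$ holds only in the direction you use; surjectivity is an open condition and does not pass from reduced tuples to their flat limits.

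The idea you are missing is \emph{localization at a point}, and it accounts exactly for the $n+1$ dimensions separating your bound from the theorem. Since a Euclidean ball around a fixed point $p$ is homeomorphic to $\CC^n$ (resp.\ $\RR^n$), it suffices to make the projected Veronese $r$-regular \emph{near} $p$ and then precompose with a homeomorphism onto that ball. A compactness argument in the Hilbert scheme (take bad $r$-tuples shrinking to $p$, pass to a flat limit, and use that spans of length-$r$ subschemes under $\nu_d$ have constant dimension $r$ once $d\geq r-1$, so spans vary continuously) shows that failure of $r$-regularity near $p$ forces the centre of projection $H$ to meet the span of a degree-$r$ \emph{smoothable} subscheme supported at $p$. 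Thus the bad locus is only the cone $\mathfrak{b}_r(p)$ swept out by such spans. The locus of points of the smoothable component supported at a single fixed point has dimension at most $n(r-1)-1$ (it is a proper closed subset of the irreducible $rn$-dimensional smoothable component, fibred over $\AA^n$ by translations), so $\dim\mathfrak{b}_r(p)\leq n(r-1)-1+r=(n+1)(r-1)$; and since $\mathfrak{b}_r(p)$ is a $\CC^*$-cone, one can choose $H$ of codimension exactly $(n+1)(r-1)$ meeting it only at the origin. Localizing at $p$ saves the $n$ translation parameters and the cone structure saves one more -- this, rather than a fibre-dimension argument on $\mathcal{B}\to\Grass$, is how the $(n+1)$-dimensional group you invoke actually enters. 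Your concluding remark about the real case (the bad locus is defined over $\RR$, so a generic real centre of projection works) is fine once the complex statement is in place.
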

 For small values of $r$ or $n$, we find $r$-regular maps into smaller
 dimensional spaces:
 \begin{theorem} \label{thm_main_bound_intro_small_k}
     If $r \leq 9$ or $n\leq 2$, then there exist $r$-regular maps $\RR^n \to \RR^{n(r-1)+1}$  and $\CC^n \to \CC^{n(r-1)+1}$.
 \end{theorem}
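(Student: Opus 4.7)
The plan, following the approach of \cite{Michalek}, is to realize the $r$-regular map as a composition $\pi_H \circ \nu_d$ of the degree-$d$ Veronese map $\nu_d\colon \CC^n \hookrightarrow \CC^{N_0}$ (where $N_0 = \binom{n+d}{d}$) with a linear projection $\pi_H\colon \CC^{N_0} \to \CC^{n(r-1)+1}$ from a carefully chosen linear center $H$ of codimension $n(r-1)+1$. For $d$ sufficiently large, $\nu_d$ itself is $r$-regular by classical interpolation, so the task reduces to choosing $H$ such that the composition remains $r$-regular; projectively, this means $H \subset \PP^{N_0-1}$ must miss every $(r-1)$-plane $\sspan{\nu_d(q_1),\ldots,\nu_d(q_r)}$ spanned by images of $r$ distinct points $q_i \in \CC^n$. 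Once such $H$ is found over $\CC$, the real version follows, since the locus of good $H$ is Zariski-open in $\Grass(N_0 - n(r-1)-1,\, N_0)$ and defined over $\RR$, hence has $\RR$-points (which are Euclidean-dense in the real Grassmannian).

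The existence of a good $H$ reduces to a dimension count on the incidence variety
\[
    I = \{([R],[H]) : H \cap \sspan{\nu_d(R)} \neq \emptyset\} \subset \Hilbr{\CC^n} \times \Grass(N_0 - n(r-1)-1,\, N_0).
\]
Parameterizing by the Hilbert scheme, rather than by tuples of distinct points, is essential: the bad locus is closed and the $(r-1)$-plane $\sspan{\nu_d(R)}$ degenerates continuously as points collide, which is exactly the cactus picture $\cactus{r}{d}$ of Section~\ref{ssec:introsecants}. We must show the image of $I$ in the Grassmannian is a proper subset. The key numerical input is Theorem~\ref{ref:introsmallpunc:thm}: for $r\leq 9$ and $\chark = 0$, one has $\dim \HilbGorpp{\CC^n} = (r-1)(n-1)$; combined with Theorem~\ref{ref:cjnmainintro:thm} (smoothability up to degree $13$), this controls every punctual stratum of $\Hilbr{\CC^n}$ via its Gorenstein degenerations and yields $\dim \Hilbr{\CC^n} \leq n + (r-1)(n-1) = n(r-1)+1$ after accounting for the $n$-dimensional support. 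For the $n \leq 2$ case, the same bound follows instead from Fogarty's theorem that $\Hilbr{\AA^2}$ is smooth irreducible of dimension $2r$ (with punctual part of dimension $r-1$), and the $n=1$ case is trivial.

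With these dimension bounds, a Schubert-calculus computation on the Grassmannian---using that for fixed $[R]$ the fiber $\{[H] : H \cap \sspan{\nu_d(R)} \neq \emptyset\}$ has codimension $(n-1)(r-1)+1$---combined with $\dim \Hilbr{\CC^n} \leq n(r-1)+1$ shows that for $d \gg 0$ the image of $I$ has dimension strictly smaller than $\dim \Grass(N_0 - n(r-1)-1, N_0)$. A generic $[H]$ in the complement yields the desired $r$-regular map.

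The hardest step is this dimension count: a naive bound using only $r$-tuples of distinct points is insufficient, because the secant variety of the Veronese is already too large to be avoided by a subspace of this codimension. What makes the argument succeed is precisely the sharpness of Theorem~\ref{ref:introsmallpunc:thm}, together with the fact that $r$-regularity requires avoiding only the \emph{linear spans} $\sspan{\nu_d(R)}$ rather than the full secant variety. Intuitively, the quantity $(r-1)(n-1)$ of Theorem~\ref{ref:introsmallpunc:thm} is exactly what measures the dimension improvement from the general bound $(n+1)(r-1)$ in Theorem~\ref{thm_main_bound_intro_any_k} to the sharper $n(r-1)+1$ here; the whole apolarity and Gorenstein-smoothability machinery of Parts~\ref{part:algebras}--\ref{part:applications} is what supplies that input.
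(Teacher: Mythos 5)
The central gap is that you have dropped the localization step that makes this strategy work at all. In the paper's argument (the sketch of Theorem~\ref{thm_bound_by_hilbert_scheme_intro}) one first fixes a point $p$ and observes that it suffices to produce a map that is $r$-regular \emph{near} $p$, because a Euclidean ball around $p$ is homeomorphic to $\CC^n$ (resp.\ $\RR^n$); the final $r$-regular map is the composition of that homeomorphism with the projected Veronese, and is only continuous, not polynomial. This reduction is precisely what allows one to replace the full Hilbert scheme by the punctual locus at $p$: if the projection fails to be $r$-regular near $p$, compactness of the Hilbert scheme produces a limit scheme $R$ of degree $r$ supported at $p$ with $\sspan{\nu(R)}\cap H\neq\{0\}$, and by \cite[Lemma~2.3]{bubu2010} one may further pass to a \emph{Gorenstein} subscheme of $R$. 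The set to be avoided is therefore the cone $\mathfrak{b}_r(p)$ of spans over the punctual Gorenstein loci, of dimension at most $\max_i(d_i+i)=(r-1)(n-1)+r=n(r-1)+1$ by Theorem~\ref{ref:introsmallpunc:thm} (and by Brian\c{c}on for $n\leq 2$), and being a cone it can be missed by a linear $H$ of exactly that codimension. Note the ``$+r$'' accounts for the dimension of each span $\sspan{\nu(R)}$, not for moving the support; the support never moves.

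Your global incidence count cannot be repaired. The asserted bound $\dim \operatorname{Hilb}_r(\CC^n)\leq n+(r-1)(n-1)=n(r-1)+1$ is wrong twice over: arithmetically, $n+(r-1)(n-1)=(r-1)(n-1)+n$ equals $n(r-1)+1=(r-1)(n-1)+r$ only when $n=r$; and substantively, the Hilbert scheme of $r$ points contains the smoothable component of dimension $rn$ (your ``$+n$ for the support'' is valid only on the locus of irreducible subschemes -- a scheme supported at $k$ points contributes $kn$ from its support, and $k=r$ gives $rn$). With the true dimension $rn$, the fiber codimension $(n-1)(r-1)+1$ you correctly compute is strictly smaller than the dimension of the parameter space, so nothing forces the image of $I$ to be a proper subset of the Grassmannian; indeed the closure of the union of all spans (the cone over the cactus variety) has dimension roughly $rn+r$, which no subspace of codimension $n(r-1)+1$ can avoid. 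This is also why no single global linear projection of the Veronese -- which would yield a \emph{polynomial} $r$-regular map -- can be extracted from this dimension count; the theorem only promises a continuous map, and the localization at $p$ is where that flexibility is used.
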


 Theorem~\ref{thm_main_bound_intro_small_k} is a consequence of the following
Theorem~\ref{thm_bound_by_hilbert_scheme_intro} together with our estimate of
the dimension of the punctual Hilbert scheme, obtained in Theorem~\ref{ref:introsmallpunc:thm}.
\begin{theorem}[{\cite[Theorem~1.13]{Michalek}}]\label{thm_bound_by_hilbert_scheme_intro}
   Suppose $n$ and $r\geq 2$ are positive integers.
   Let $d_i$ be the dimension of the locus of Gorenstein schemes in the
   punctual Hilbert scheme of degree
     $i$ subschemes of $\CC^n$.
   Then there exist $r$-regular maps $\RR^n \to \RR^N$ and $\CC^n \to \CC^N$,
      where $N = \max\left\{d_i + i \mid 2\le i \le r\right\}$.
\end{theorem}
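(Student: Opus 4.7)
The plan is to realize the desired $r$-regular map $\CC^n \to \CC^N$ as a linear projection of a Veronese-type embedding, with the projection center $H$ chosen via the structure of the Gorenstein locus of the punctual Hilbert scheme through Macaulay's apolarity.

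First I would start from the classical $r$-regular map $v_d\colon\CC^n \to \CC^{N_0}$ whose coordinates are all monomials of degree $\leq d$ in $n$ variables (so $N_0 = \binom{n+d}{n}$). For $d \geq r-1$ a Vandermonde-type determinantal computation shows $v_d$ is $r$-regular. Any surjective linear $\pi_H\colon \CC^{N_0} \twoheadrightarrow \CC^{N_0}/H \cong \CC^N$ yields a polynomial, hence Euclidean-continuous, composition $\pi_H \circ v_d$, and this composition is $r$-regular precisely when $H$ contains no nonzero vector of the form $\sum_{j=1}^r c_j v_d(p_j)$ with the $p_j \in \CC^n$ distinct. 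Equivalently, $H$ must meet the locally closed set $B \subset \CC^{N_0}$ of nonzero such sums only at the origin.

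Next, I would reformulate $B$ through apolarity. Identifying $\CC^{N_0}$ with the truncated divided-power polynomial space $\kdp_{\leq d}$ (so that $v_d(p)$ becomes the evaluation functional at $p$), each $\phi \in \CC^{N_0}$ produces a finite Gorenstein algebra $\Apolar{\phi}$, and $\phi \in B$ exactly when this algebra is reduced of degree $\leq r$, i.e., when $\phi$ is a linear combination of $r$ distinct point evaluations. The Zariski closure $\bar B$ consists of all $\phi$ whose apolar algebra is any smoothable Gorenstein algebra of degree $\leq r$; by Theorem~\ref{intro_thm_equivalence_of_abstract_and_embedded_smoothings} every such algebra is a disjoint union of smoothable punctual pieces, so $\bar B$ stratifies naturally by the degrees of these local Gorenstein components.

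Third I would estimate $\dim \bar B$ one stratum at a time using the relative apolarity description of the universal family (Proposition~\ref{ref:localdescriptionGorenstein:cor}). For each $i\in\{2,\ldots,r\}$, the stratum where the apolar scheme is punctual Gorenstein of degree exactly $i$ is parameterized by pairs (Gorenstein scheme in the punctual Hilbert scheme, form $\phi \in \sspan{v_d(R)}$), giving dimension at most $d_i + i$. The crucial point is that the dominant stratum is the one indexed by a single punctual piece, and that the $n$-dimensional freedom in translating the support of $R$ across $\CC^n$ is absorbed into the $i$-dimensional span data, rather than adding extra dimensions: the $\CC^n$-action by translation on $\CC^{N_0}$ identifies the spans arising from translates of a fixed punctual scheme with the same intrinsic family. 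Combining the strata gives $\dim \bar B \leq N = \max_{2\leq i \leq r}(d_i+i)$. A generic linear subspace $H \subset \CC^{N_0}$ of codimension $N$ then meets $\bar B$ (a cone through the origin) only at $0$ by the expected-dimension count in the Grassmannian, so in particular $H \cap B = \{0\}$. Choosing $H$ defined over $\RR$ simultaneously gives the real and complex $r$-regular maps.

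The main obstacle is precisely the dimension estimate in the third step. A naive parameter count would give $d_i + n + i$ rather than $d_i + i$, and the missing factor of $n$ has to be recovered by analyzing how the $\CC^n$-translation action on $\CC^{N_0}$ interacts with the stratification of $\bar B$ — this is where the relative apolarity machinery of Section~\ref{ssec:relativemacaulaysystems} enters in a nontrivial way, letting one package translates of a punctual Gorenstein scheme together with span coordinates into a single $(d_i+i)$-dimensional family.
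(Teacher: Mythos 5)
Your third step contains a gap that cannot be repaired within the global framework you set up. The locally closed set $B$ — sums $\sum_j c_j v_d(p_j)$ over tuples of $r$ distinct points anywhere in $\CC^n$ — has closure $\bar B$ of dimension $rn + r$ (for $d$ large the parameterization by tuples of points and coefficients is generically finite, so the dimension of the source persists), which exceeds $N = \max_i(d_i + i)$ by a wide margin. Already for $r=2$: $\dim \bar B = 2n+2$ while $N = d_2 + 2 = n+1$. A cone of dimension $m$ in $\CC^{N_0}$ cannot be met only at the origin by a linear subspace of codimension $N < m$, so a generic linear projection of $v_d$ to $\CC^N$ is \emph{not} globally $r$-regular, and no choice of $H$ makes your argument close. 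The claim that the $\CC^n$-translation freedom is ``absorbed into the $i$-dimensional span data'' is not correct: translation by a vector $w$ moves the span $\sspan{\nu(R)}$ to a genuinely different linear subspace of $\CC^{N_0}$ (this is already visible for $i=1$, where the union of the lines $\CC \cdot v_d(p)$ over $p\in\CC^n$ is the $(n+1)$-dimensional cone over $v_d(\CC^n)$, not a one-dimensional set). So the $n$ translation parameters genuinely add to the dimension of $\bar B$.

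The missing idea is localization. The paper fixes a single point $p$ and observes that it suffices to construct a map $\CC^n\to\CC^N$ that is $r$-regular only on a small ball around $p$, since every Euclidean ball centered at $p$ is homeomorphic to $\CC^n$. Once the problem is local, the set that $H$ must avoid is no longer $\bar B$ but the much smaller cone $\introareole$: the closure of $\bigcup\{\sspan{\nu(R)} : R\in\HilbGorppleq{\CC^n}\}$ taken only over Gorenstein schemes supported at the one point $p$. The translation parameter never enters, the degree-$i$ stratum of $\introareole$ has dimension at most $d_i + i$, and so $\dim\introareole \leq N$. To pass from ``$H\cap\introareole=\{0\}$'' to ``$r$-regular near $p$'' one uses a compactness argument in the Hilbert scheme of $\PP^n$ together with~\cite[Lemma~2.3]{bubu2010}, which covers the span of the limit scheme $\sspan{\nu(R)}$ by spans of Gorenstein subschemes of $R$; smoothability and Theorem~\ref{intro_thm_equivalence_of_abstract_and_embedded_smoothings} are not what is needed and your invocation of them is a red herring. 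Your use of the $\CC^*$-invariance of the cone to choose $H$ is the right final step, but only once the set has been shrunk by localizing.
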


\newcommand{\introareole}{\mathfrak{b}_{r}(p)}
\begin{proof}[Sketch of proof of Theorem~\ref{thm_bound_by_hilbert_scheme_intro}]
    \def\bigspace{\CC^{N_0}}%
    \def\bigspaceproj{\PP^{N_0}}%
    For $N_0$ large enough, an $r$-regular map $\CC^n \to \bigspace$ exists.
    In fact for $N_0 = \binom{r+n}{r}$ the Veronese map
    \[
        \nu:\CC^n \to \bigspace,
    \]
    given by all monomials of degree $\leq r$, is an example.
    Fix a point $p\in
    \CC^n$. Every Euclidean ball centered at $p$ is homeomorphic to $\CC^n$,
    so it is enough to find a map $\CC^n\to \CC^N$ which is \emph{$r$-regular
    near $p$}: there exists a ball $B$ centered at $p$ such that for every $r$
    points \emph{in $B$}, their images are linearly independent.

    We aim at finding a vector subspace $H \subset \bigspace$ such that
    the composition $\CC^n \to \bigspace\to \bigspace/H$ is also $r$-regular
    near $p$. Denote by $\sspan{R}$ the linear span of $R \subset
    \bigspace$ and consider the following subset of $\bigspace$:
    \begin{equation}
        \introareole := \overline{ \bigcup \left\{ \sspan{R} \mid
            {R\in \HilbGorppleq{\CC^n}}\right\}}.
        \label{eq:areole}
    \end{equation}
    Suppose $H$ is a linear subspace with $H\cap \introareole = \{0\}$.
    Consider the composition $f_H\colon \CC^n\to \bigspace/H$ and suppose it is
    not $r$-regular near $p$. Then, for every $\varepsilon > 0$ there exists
    a tuple $R_\varepsilon$ of $r$ points in $B(p, \varepsilon)$ whose images under $f_H$ are
    linearly dependent, so $\sspan{\nu(R_{\varepsilon})}\cap H\neq \{0\}$.
    Pick a subsequence of $R_{\varepsilon}$ which converges in the Hilbert
    scheme (to assure it exists we should replace $\CC^n$ with $\PP^n$, so
    that the Hilbert scheme is compact); its limit is a finite subscheme $R \subset \CC^n$ of degree $r$ supported
    at $p$ and such that $\sspan{\nu(R)}\cap H\neq\{0\}$. Here we
    implicitly use the fact that $\sspan{\nu(R)}$ behaves well in families,
    see~\cite[Section~2]{jabu_ginensky_landsberg_Eisenbuds_conjecture}. By~\cite[Lemma~2.3]{bubu2010},
    the span $\sspan{\nu(R)}$ is covered by spans of Gorenstein subschemes of
    $R$. For a scheme $R'$ among those subschemes, we have $\sspan{\nu(R')}\cap H\neq
    \{0\}$, so $H\cap \introareole \neq \{0\}$, a contradiction.
    It remains to see that $\dim \introareole \leq N$ and it is fixed under
    the usual $\CC^*$-action, so there exists a linear
    space $H$ not intersecting it and such that $\dim \bigspace/H =
    N$.
\end{proof}

\goodbreak
\section{A brief historical survey}\label{ssec:historical}

        Below we give a brief historical survey of the literature on
        Hilbert schemes of points and finite algebras. We hope that such a
        summary might be helpful for the reader primarily as suggestions for
        further reading.  We should note that there are many great
        introductions to Hilbert schemes from different angles,
        e.g.~\cite{fantechi_et_al_fundamental_ag,
            Gottsche_Hilbert_schemes_and_Betti_numbers,
            hartshorne_deformation_theory, Miller_Sturmfels,
            nakajima_lectures_on_Hilbert_schemes, stromme,
        literature_on_hilberts_workshop},~\cite[Appendix~C]{iarrobino_kanev_book_Gorenstein_algebras}.
        Our viewpoint is very specific; we are interested in
        an explicit, down-to-earth approach and on Hilbert schemes of higher
        dimensional varieties; thus we limit ourselves to connected results.
        For example, we omit the beautiful theory of Hilbert schemes of
        surfaces.

        Several distinguished researchers commented on this survey, however
        their suggestions are not yet incorporated.
        All errors and omissions are entirely due to the author's ignorance.

        Below $X$ is smooth projective variety over $\mathbb{C}$.

        \paragraph{Hilbert schemes.}
        \newcommand{\Hilbabstr}[1]{\Hilbfunc{}(#1)}
        \newcommand{\introHilbpX}{\Hilbfunc{}^p(X)}
        \newcommand{\introHilbpp}{\Hilbfunc{}^p(\PP^n)}

        The Hilbert scheme $\Hilbabstr{X}$ was constructed by Grothendieck~\cite{Gro}.
            It decomposes into a disjoint union of $\introHilbpX$, parameterized by
            the Hilbert polynomials $p\in \mathbb{Q}[t]$.
            Hartshorne~\cite{har66connectedness} proved that all $\introHilbpp$ are
            connected for all $n$. Soon after, Fogarty~\cite{fogarty} proved
            that for constant $p$ the Hilbert scheme $\introHilbpX$ of a smooth
            irreducible surface $X$ is smooth and irreducible.

            At the same time
            Mumford~\cite{Mumford_nonreduced_component} showed that the
            Hilbert scheme $\Hilbabstr{\PP^3}$ is non-reduced: it has a component parameterizing
            certain curves in three dimensional projective space, which is
            even generically non-reduced (see~\cite{Kleppe_Ottem}). Much later
            Vakil~\cite{Vakil_MurphyLaw} vastly generalized this by
            showing that every (up to smooth equivalence) singularity appears
            on $\Hilbabstr{\PP^n_{\mathbb{Z}}}$ for some $n$.

            Much work has been done on finding explicit equations
            of the Hilbert scheme. Grothendieck's proof of existence together
            with Gotzmann's bound on
            regularity~\cite{gotzmann_persistence_theorem} give explicit
            equations of $\introHilbpp$ inside a Grassmannian.  But
            the extremely large number of variables involved makes
            computational
            approach ineffective.
            There is an ongoing progress in simplifying equations and
            understanding the geometry, usually using Borel fixed points, see in
            particular Iarrobino, Kleiman~\cite[Appendix~C]{iarrobino_kanev_book_Gorenstein_algebras},
            Roggero, Lella~\cite{lella_roggero_Rational_Components},
            Bertone, Lella, Roggero~\cite{bertone_borel_open_cover} and
            Bertone, Cioffi, Lella~\cite{bertone_cioffi_roggero_division_algorithm}.
            Staal~\cite{Staal_smooth_hilbert_schemes} showed that over a half
            of Hilbert schemes has only one component: the
            Reeves-Stillman~\cite{reeves_stillman__component} component.
            Roggero and Lella~\cite{lella_roggero_Rational_Components} proved
            that every smooth component of the Hilbert scheme is rational and
            asked, whether \emph{each} component is rational~\cite[Problem
            list]{aimpl}.

            Haiman and Sturmfels~\cite{Haiman_Sturmfels__multigraded}
            introduced the multigraded Hilbert scheme. Independently,
            Huibregtse~\cite{Huibregtse_elementary_construction} and
            Peeva~\cite{Peeva_Stillman} gave
            similar constructions. Smoothness and irreducibility
            of this more general version of the Hilbert scheme for the plane are
            proven by Evain~\cite{Evain_equivariant_punctual} and Maclagan,
            Smith~\cite{Maclagan_Smith__multigraded}.

            Below we are exclusively interested in the \emph{Hilbert scheme of
            points}, which is the union of $\Hilbr{X}$ where $r\in \mathbb{Z}$
            denotes constant Hilbert polynomials. In other words, this scheme
            parameterizes zero-dimensional subschemes of degree $r$.
            This scheme has a distinguished component, called the
            \emph{smoothable component} or \emph{principal component}. It is
            the closure of the set of smooth subschemes (tuples of
            points). Schemes corresponding to points of this component are
            called \emph{smoothable}. In particular, $\Hilbr{X}$ is
            irreducible if and only if every
            subscheme is smoothable.

            Gustavsen, Laksov and Skjelnes~\cite{Gustavsen_Laksov_Skjelnes__Elementary_explicit_const}
            provided a construction of the Hilbert scheme of points for
            \emph{every} affine scheme. Rydh, Skjelnes~\cite{Skjelnes_an_intrinsic_princ_component_via_blowup}
            and Ekedahl, Skjelnes~\cite{Ekedahl_Skjelnes_construction}
            gave an intrinsic construction of the \emph{smoothable component} of
            $\Hilbr{X}$, without reference to $\Hilbr{X}$, while Lee~\cite{Lee_singularities_of_the_principal}
            proved that the smoothable component is not Cohen-Macaulay for $X
            = \mathbb{C}^9$.

            By Fogarty's result, if $\dim X \leq 2$, then $\Hilbr{X}$ is
            irreducible. In fact there is a beautiful theory of Hilbert schemes
            of points on surfaces, which we do not discuss here, see
            e.g.~\cite{Gottsche_Hilbert_schemes_and_Betti_numbers,
             Haiman_macdonald,
        haiman_factorial_conjecture, Kumar__Frobenius, Lederer_Components_of_Groebner_strata, nakajima_lectures_on_Hilbert_schemes}).
            In his paper, Fogarty~\cite[p.~520]{fogarty} asked whether all
            Hilbert schemes of points on smooth varieties are irreducible.
            Iarrobino~\cite{iarrobino_reducibility} disproved this entirely
            and showed that $\Hilbr{\PP^n}$ is reducible for every $n\geq 3$ and
            $r\gg 0$. Fogarty also asked whether Hilbert schemes of points on
            smooth varieties are always
            reduced. This question remains completely
            open, even though progress is made, see Erman~\cite{erman_Murphys_law_for_punctual_Hilb}.

            \newcommand{\Briancon}{Brian{\c{c}}on}%
            Schemes concentrated at a point are of special interest. Their
            locus inside $\Hilbr{X}$ is called the \emph{punctual Hilbert
            scheme}. Since $X$ is smooth, the punctual Hilbert scheme is, at
            the level of points, equal to $\Hilbr{\mathbb{C}[[x_1,
            \ldots ,x_{\dim X}]]}$.
            It has strong connections with germs of mappings~\cite{Galligo__mapping_germs} and topological
            flattenings~\cite{Galligo__flattener}.
            \Briancon{}
            proved~\cite{briancon} that for $\dim X \leq 2$ the punctual
            Hilbert scheme is irreducible: every degree
            $r$ quotient is a limit of quotients isomorphic to
            $\mathbb{C}[t]/t^r$, see also~\cite{iarrobino_punctual,
            iarrobino_10years, yameogo}. The state of the art for year 83 is
            nicely summarized in Granger~\cite{Granger_memoir}.
            Gaffney~\cite{Gaffney_chaining_upperbounds_onsmoothablepunctual}
            gave a lower bound for the dimensions of components of punctual Hilbert
            schemes whose points correspond to smoothable algebras and
            conjectured that it bounds the dimensions of all components.

            Much is known about schemes $\Hilbr{\PP^n}$ for small $r$.
            Mazzola~\cite{mazzola_generic_finite_schemes} proved that they are
            irreducible for $r\leq 7$.
            Emsalem and Iarrobino~\cite{emsalem_iarrobino_small_tangent_space}
            proved that $\Hilbarged{8}{\PP^n}$ is reducible for $n\geq 4$.
            Cartwright, Erman, Velasco and
            Viray~\cite{cartwright_erman_velasco_viray_Hilb8}
            proved that $\Hilbarged{8}{\PP^n}$ is irreducible for $n\leq 3$ and has
            exactly two components for $n\geq 4$; they also gave a full
            description of the non-smoothable component and the intersection.
            These result imply that $\Hilbr{\PP^n}$ is reducible for all
            $n\geq 4$ and $r\geq 8$, which leaves only the case $r = 3$ open.
            Borges dos Santos, Henni and Jardim~\cite{dosSantos} proved
            irreducibility of $\Hilbr{\PP^3}$ for $r\leq 10$, using the
            results of
            {\v{S}}ivic~\cite{Sivic__Varieties_of_commuting_matrices} on
            commuting matrices. Douvropoulos, Utst{\o{}}l
            N{\o{}}dland, Teitler and the author~\cite{DJNT} proved irreducibility of
            $\Hilbarged{11}{\PP^3}$. The case $\Hilbr{\PP^3}$ is interesting,
            because the Hilbert scheme can be presented as a singular locus of
			a hypersurface on a smooth manifold; such presentation restricts
            possible singularities, see~Dimca,
			Szendr{\H{o}}i~\cite{Dimca_Szendroi__Hilbert_of_A3} and Behrend,
            Bryan, Szendr{\H{o}}i~\cite{Behrend}.

            The \emph{Gorenstein locus} of $\Hilbr{X}$ is the open subset
            $\HilbGorr{X}$
            consisting of points corresponding to Gorenstein algebras.
            Casnati, Notari and the
            author~\cite{casnati_notari_irreducibility_Gorenstein_degree_9, cn10, cn11, cjn13} proved
            irreducibility of the Gorenstein locus of up to degree $13$ and
            investigated its singular locus. The author~\cite{jelisiejew_VSP}
            also
            described the geometry of the Gorenstein locus for degree
            $14$, the first reducible case, using results of Ranestad,
            Iliev and Voisin~\cite{Ranestad_Iliev__VSPcubic,
            Ranestad_Iliev__VSPcubic_addendum, Ranestad_Voisin__VSP} on Varieties of
            Sums of Powers.

            \vspace{-2mm}
        \paragraph{Finite algebras.}

			Algebraically, Hilbert schemes of points parameterize
			zero-dimensional quotients of polynomial rings.
			Historically those were considered far before Hilbert schemes;
			perhaps the first mention of zero-dimensional Gorenstein algebras
			is Macaulay's
            paper~\cite{macaulay_enumeration}, which describes possible
            Hilbert functions of complete intersections on $\AA^2$.
            Macaulay~\cite{Macaulay_inverse_systems} also gave his famous
            structure theorem, describing all local zero-dimensional
            algebras in terms of inverse systems and duality
            between functions and constant coefficients differential operators
            on affine space. This duality can be also viewed as a case of
            Matlis duality~\cite{Matlis} or in the language of Hopf
            algebras~\cite{Ehrenborg}.

            A new epoch started with the construction of the Hilbert scheme.
            Fogarty's result~\cite{fogarty} implies that,
            for every $r$, finite rank $r$ quotients of $\CC[x, y]$ are
            \emph{smoothable},~i.e., are limits of
            $\mathbb{C}^{\times r}$. Iarrobino's~\cite{iarrobino_reducibility}
            proves that there are non-smoothable quotients of $\mathbb{C}[x,
            y, z]$ and higher dimensional polynomial rings (there examples
            are not Gorenstein). Interestingly, the result is
            non-constructive: Iarrobino produces a family too large to fit
            inside the smoothable component, but no specific point of this
            family is known to be nonsmoothable; more generally no explicit example of a non-smoothable quotient of
            $\mathbb{C}[x, y, z]$ is known.
            Fogarty's smoothness result follows also from the Hilbert-Burch theorem,
            saying that deformations of zero-dimensional
            quotients of $\mathbb{C}[x, y]$ are controlled by deformations of
            a certain matrix (the ideal is generated by its maximal minors). The same result
            holds for codimension two Cohen-Macaulay algebras,
            see~Ellingsrud~\cite{ellingsrud} and
            Laksov~\cite{Laksov_defs_of_determinantal}.  Buchsbaum and
            Eisenbud~\cite{BuchsbaumEisenbudCodimThree} described resolutions
            of zero-dimensional Gorenstein quotients
            of $\mathbb{C}[x, y, z]$ and showed that they are controlled by an
            anti-symmetric matrix (the ideal is defined by its Pfaffians).
            This is used~\cite{kleppe_pfaffians} to show that zero-dimensional Gorenstein
            quotients of $\mathbb{C}[x, y, z]$ are
            smoothable.
            A classical and very accessible survey of these results
            is Artin's~\cite{artin_deform_of_sings}.
            Later Eisenbud-Buchsbaum result was
            generalized to arbitrary codimension three Gorenstein (or
            arithmetically Gorenstein)
            quotients, see Kleppe and Mir\`o-Roig \cite{roig_codimensionthreeGorenstein,kleppe__smoothness,
            kleppe_roig_codimensionthreeGorenstein}. Codimension four remains
            in progress, see Reid's~\cite{reid_codimension_four_Gorenstein},
            however one does not except as striking smoothness results as above.

            In the following years progress was made in several directions
            (state of the art for 1987 are nicely summarized in
            Iarrobino's~\cite{iarrobino_10years}).
            An influential article of Bass~\cite{Bass} discussed various
            appearances of Gorenstein algebras in literature.
            Schlessinger~\cite{Schlessinger} investigated deformations and
            asked for classifications of zero-dimensional rigid algebras (the question
            remains open).
            Emsalem and Iarrobino~\cite{emsalem_iarrobino_small_tangent_space} produced
            a nonsmoothable, degree $8$ quotient of $\mathbb{C}[x,y,z,t]$ and
            a nonsmoothable, degree $14$ Gorenstein quotient of
            $\mathbb{C}[x_1, \ldots ,x_6]$.
            Much later Shafarevich~\cite{Shafarevich_Deformations_of_1de}
            generalized the degree $8$ example and produced several new
            classes of nonsmoothable algebras with Hilbert function $(1, d,
            e)$ where $\binom{d+1}{2} \gg e > 2$.

            Emsalem~\cite{emsalem} announced several milestone results concerning
            deformations and classification of zero-dimensional Gorenstein
            algebras: he translated those problems into language of inverse
            systems, thus enabling a combinatorial approach.

            These developments prompted work on the classification.
            Sally~\cite{SallyStretchedGorenstein} classified Gorenstein algebras with Hilbert function
            $(1, n, 1,  \ldots , 1)$, though she was primarily interested in
            higher-dimensional case.
            Mazzola~\cite{Mazzola_CNdegreefive,
            mazzola_generic_finite_schemes} proved that all algebras of degree
            up to $7$ are smoothable and provided a table of their
            deformations up to degree $5$, assuming that the base field is
            algebraically closed of characteristic different from $2$, $3$.
            Poonen~\cite{Poonen} classified algebras of degree up to $6$
            without assumptions on the characteristic. He also
            investigated the moduli space of algebras with fixed
            basis~\cite{poonen_moduli_space}, calculating its asymptotic
            dimension.

            Iarrobino~\cite{ia_deformations_of_CI} considered deformations of complete
            intersections. His subsequent
            paper~\cite{iarrobino_compressed_artin} introduced
            \emph{compressed algebras} and proved that there are
            nonsmoothable degree $78$ quotients of $\mathbb{C}[x, y, z]$;
            this bound has not been sharpened since. He also proved that there
            are nonsmoothable Gorenstein quotients of $\mathbb{C}[x_1, \ldots
            , x_n]$ for all $n\geq 4$.

            Stanley~\cite{stanley_hilbert_functions_of_graded_algebras,
            StanleyCombinatoricsAndCommutative} used Buchsbaum-Eisenbud
            results and classified possible Hilbert
            functions of \emph{graded} Gorenstein quotients
            of $\mathbb{C}[x, y, z]$ (here
            and below
            an algebra is \emph{graded} if it is isomorphic to a quotient of
            polynomial ring by a homogeneous ideal with respect to the standard grading. This
            is usually named \emph{standard
            graded},~\cite{StanleyCombinatoricsAndCommutative}).
            Which Hilbert functions appear for the graded
            Gorenstein quotients of $\mathbb{C}[x_1, \ldots ,x_n]$ remains a
            hard problem, see Migliore, Zanello~\cite{Migliore_Hvectors}.
            Diesel~\cite{diesel__irreducibility} and
            Kleppe~\cite{kleppe__smoothness} showed that the locus of graded
            quotients of $\mathbb{C}[x, y, z]$ with given function is smooth
            and irreducible.
            Boij~\cite{Boij__components} showed that this is no longer true in
            higher number of variables, see
            also~\cite{kleppe__unobstructedness}. Boij~\cite{Boij_betti_numbers}
            investigated also the Betti tables of graded Gorenstein algebras,
            in particular compressed ones, conjectured that these tables are
            minimal (in the spirit of Minimal Resolution Conjecture) and proved
            this conjecture in several classes.
            Conca, Rossi and Valla~\cite{Conca_Rossi_Valla__Koszulness_of_general_1nn1}
            proved that general graded Gorenstein algebras with Hilbert function $(1, n, n, 1)$ are Koszul.

            Iarrobino~\cite{iarrobino_associated_graded} considered
            Hilbert functions of Gorenstein, not necessarily graded, algebras.
            He developed and investigated the notion of \emph{symmetric
            decomposition of the Hilbert function}; all subsequent
            classification work relies on this memoir.

            Graded Gorenstein algebras are intrinsically connected to secant
            varieties and Waring problem for forms, which enjoyed much
            research activity following the paper of Alexander and
            Hirschowitz~\cite{AlexHirsch}. We discuss this most briefly.
            A nice introduction is found in
            Geramita~\cite{geramita_inverse_systems_of_fat_points}.
            See also the papers of
            Bernardi, Gimigliano, Ida~\cite{BGIComputingSymmetricRank},
            Bernardi, Ranestad~\cite{BRonTheCactusRank},
            Buczyński, Ginensky, Landsberg~\cite{jabu_ginensky_landsberg_Eisenbuds_conjecture},
            Buczyński, Buczyńska~\cite{bubu2010},
            Carlini, Catalisano, Geramita \cite{carlini_monomials},
            Derksen, Teitler~\cite{derksen_teitler},
            Landsberg, Ottaviani~\cite{LO},
            Landsberg, Teitler~\cite{landsberg_teitler}, and
            Buczyński, the author~\cite{jabu_jelisiejew_smoothability} for an
            overview of possible directions and connections.
            Many of the aforementioned results on Gorenstein algebras, their
            loci, and about Waring problems are discussed in Iarrobino, Kanev
            book~\cite{iarrobino_kanev_book_Gorenstein_algebras}.

            Casnati and Notari analysed smoothability
            of finite Gorenstein algebras in~\cite{Casnati_Notari_6points}.
            In a subsequent series of
            papers~\cite{casnati_notari_irreducibility_Gorenstein_degree_9, cn10, cn11} they established smoothability of all
            Gorenstein algebras for degrees up to $11$ and then, jointly with
            the author~\cite{jelisiejew_1551, cjn13}, smoothability for all
            Gorenstein algebras of degree up to $14$,
            except those with Hilbert function $(1, 6, 6, 1)$, see
            also~\cite{jelisiejew_VSP}.
            Bertone, Cioffi and
            Roggero~\cite{bertone_cioffi_roggero_division_algorithm} proved
            smoothability of Gorenstein algebras with Hilbert function $(1, 7, 7, 1)$.
            Iarrobino's~\cite{iarrobino_compressed_artin} results show that a
            general Gorenstein algebra with Hilbert function $(1, n, n, 1)$, $n\geq 8$ is
            nonsmoothable.

            Some of the above results for $(1, n, n, 1)$ depended on Elias' and
            Rossi's~\cite{EliasRossiShortGorenstein}
            proof that all Gorenstein algebras with Hilbert functions $(1, n,
            n, 1)$ are \emph{canonically graded} (isomorphic to their associated
            graded algebra). Elias and Rossi also proved that algebras
            with Hilbert function~$(1, n, \binom{n+1}{2}, n, 1)$ are
            canonically graded, see~\cite{EliasRossi_Analytic_isomorphisms}.
            Fels, Kaup~\cite{Fels_Kaup_nilpotent_and_homogeneous} and
            Eastwood, Isaev~\cite{Eastwood_Isaev} showed that
            an algebra is canonically graded if and only if certain
            hypersurfaces, associated to this algebra, are affinely equivalent.
%            The problem of checking gradedness was also tackled from the angle
%            of differential geometry, in particular in articles by Fels,
%            Kaup~\cite{Fels_Kaup_nilpotent_and_homogeneous} and Eastwood,
%            Isaev~\cite{Eastwood_Isaev}.
            Jelisiejew~\cite{Jel_classifying} considered classification of
            algebras, extending the ideas of Emsalem. He proved that the above
            results of Elias and Rossi are
            consequences of a group action. He also conjectured when
            are ``general'' Gorenstein algebras graded and classified
            Gorenstein algebras
            with Hilbert function $(1, 3, 3, 3, 1)$, giving examples of nongraded algebras. Masuti and
            Rossi~\cite{masuti_rossi_Artinian_level_algebras_of_socle_degree_four}
            provided many examples of non-graded algebras for all Hilbert
            functions $(1, n, m, s, 1)$ with $s\neq n$ or $m\neq
            \binom{n+1}{2}$.

            Meanwhile, some more classification results were obtained,
            primarily using inverse systems.
            Casnati~\cite{Casnati__Isomorphisms_types_of_aG_to_nine} gave
            a complete classification of Gorenstein algebras of degree at most
            $9$. Elias, Valla~\cite{EliasVallaAlmostStretched} and Elias,
            Homs~\cite{EliasHoms} classified all Gorenstein
            quotients of $\mathbb{C}[[x, y]]$,
            which are \emph{almost stretched}: their Hilbert function $H$
            satisfies $H(2) \leq 2$. Casnati and
            Notari~\cite{CN2stretched} investigated Gorenstein algebras with $H(3) = 1$
            and classified those with Hilbert function $(1, 4, 4, 1, 1)$.
            The classification results were also used to prove rationality of the
            Poincar\`e series of zero-dimensional Gorenstein algebras,
            see~\cite{CN_PoincareUpto10, CENR_Poincare, CJNPoincare}.

            Outside the Gorenstein world, Erman and
            Velasco~\cite{erman_velasco_syzygetic_smoothability} gave new
            obstructions to smoothability of algebras with Hilbert function
            $(1, n, e)$ and obtained a complete picture for $(1, 5, 3)$.
            Huibregtse~\cite{Huibregtse_elementary} built a framework for
            finding nonsmoothable algebras
            and presented several of them.
            Many unsolved problems exist. We gather some of
            them in the following section.

\section{Open problems}\label{ssec:openquestions}

Hilbert schemes are rich in natural, but open
questions. We list some of them below. Many come from the~American Institute
of Mathematics workshop on Hilbert schemes~\cite{aimpl}. I am indebted to
organizers of this workshop, who made their problem list publicly available,
however any errors below or the selection of problems remain my sole
responsibility.

\begin{problem}[{\cite[p.~520]{fogarty}},
    {\cite[p.~794]{cartwright_erman_velasco_viray_Hilb8}},
    {\cite[p.~1349]{Lee_singularities_of_the_principal}}]\label{prob:reducedness}
    Is $\Hilbr{\AA^n}$ always reduced? If not, what are the
    examples?\footnote{Examples found in~\cite{Jelisiejew__Pathologies} and
    for $13$ points on $\mathbb{A}^6$, in~\cite{Szachniewicz}.}
\end{problem}

\begin{problem}[{\cite{aimpl}},
\cite{lella_roggero_Rational_Components}]\label{prob:nonrational}
    Does there exist a~nonrational component of $\Hilbr{\AA^n}$?
\end{problem}

\begin{problem}[{\cite{aimpl}}]\label{prob:rigid}
    Is there a~rigid local Artinian $\kk$-algebra besides $\kk^{\times n}$?
    An algebra $A$ is \emph{rigid} if all its nearby deformations are abstractly
    isomorphic to $A$, see~\cite{Schlessinger}.
\end{problem}

\begin{problem}[{\cite[p.~794]{cartwright_erman_velasco_viray_Hilb8}}]\label{prob:irreducibilityA3}
    What is the smallest $r$ such that $\Hilbr{\mathbb{A}^3}$ is
    reducible?
    We know
    that $12\leq r\leq 77$ at least in characteristic zero.
\end{problem}

\begin{problem}[{\cite[Section~6]{bubu2010},
\cite{jabu_jelisiejew_smoothability}}]\label{prob:irreducibilityA4Gor}
What is the smallest $r$ such that $\HilbGorr{\AA^4}$ is reducible?  We know
that $15\leq r \leq 140$ at least in characteristic $\neq 2, 3$.
\end{problem}

\begin{problem}[{\cite{aimpl}}]\label{prob:grobnerfan}
    Is the Gr\"obner fan a~discrete invariant that distinguishes the
    irreducible components of $\Hilbr{\AA^n}$? More generally, what are the
    components of $\Hilbr{\AA^n}$?\footnote{The Gr\"obner fan does not work,
        see~\cite{Jelisiejew__Elementary}. The question remains interesting.}
\end{problem}

\begin{problem}[{{\cite[9H,
    p.~258]{iarrobino_kanev_book_Gorenstein_algebras}}}]\label{prob:iarrobino}
    Can we produce components of the Hilbert scheme from special geometric
    configurations of points or schemes?
    See~\cite[Chapter~9]{iarrobino_kanev_book_Gorenstein_algebras} for
    several open questions.
\end{problem}

\begin{problem}[{\cite{aimpl}}]\label{prob:tangenttosmoothable}
    Can we describe the Zariski tangent space to the smoothable component of
    $\Hilbr{\AA^n}$?
\end{problem}

\begin{problem}[{\cite{aimpl}}]\label{prob:characteristic}
    Is there a~component of $\Hilbr{\AA_{\kk}^n}$ which exists only for
    $\kk$ of characteristic $p$, for some $p > 0$?\footnote{Yes, for every
        $p$, see~\cite{Jelisiejew__Pathologies}.}
\end{problem}

\begin{problem}[{\cite{iarrobino_10years}}]\label{prob:lowerbounddimension}
    Consider the scheme $\mathcal{Z} \subset \Hilbr{\AA^n}$ parameterizing
    subschemes supported at the origin. Is there a~component of $\mathcal{Z}$
    of dimension \emph{less} than $(n-1)(r-1)$?\footnote{There is such a component
        already for $n = 4$, see~\cite{Satriano_Staal}.}
\end{problem}

\begin{problem}[{\cite{jelisiejew_VSP}}]\label{prob:smallcomponents}
    Can we classify those irreducible components of $\Hilbr{\AA^n}$ which have
    dimension \emph{less} than $rn$?
\end{problem}

\begin{problem}[{\cite[9G, p.~257]{iarrobino_kanev_book_Gorenstein_algebras},
\cite{Huibregtse_elementary}}]\label{prob:elementarymanymany}
A component of $\Hilbr{\AA^n}$ is \emph{elementary} if its general point
corresponds to an irreducible subscheme. Are there infinitely many elementary
components of $\Hilbr{\AA^3}$?
\end{problem}

\newpage
\section*{Acknowledgements}

    I thank my advisors, Jarek and Weronika, for their help in preparing this
    thesis. During these years Jarek has taught me a lot of beautiful math,
    was always patient to answer my questions, and wise/busy enough to
    gradually give me freedom in research. Thanks a lot for making my PhD
    a happy and productive experience!

    I thank fellow PhD students from \emph{kanciapa}: Maciek Ga{\l{}}{\k{a}}zka,
    Maks Grab, {\L{}}ukasz Sienkiewicz and Maciej Zdanowicz for our creative
    and friendly small world, with its math discussions, problem solving and board game
    playing. I thank all my coauthors, especially Gianfranco Casnati and
    Roberto Notari, for our enjoyable work. Countless people have given me
    advice, suggestions and helped me in all possible mathematical and
    non-mathematical ways. Among them, I'd like to thank Piotr
    Grzeszczuk, Marcin Emil Kuczma, and Jarek Wi{\'s}niewski.

\begin{partwithabstract}{Finite algebras}
%\part{Finite algebras}
\label{part:algebras}

    We discuss finite algebras, their
    numerical properties and embeddings. We restrict ourselves to linear
    algebra and Lie theory tools, which are sufficient for our purposes. We
    also use the language of commutative algebra rather than algebraic geometry.
    This part is essentially a prerequisite of Part~\ref{part:families},
    however Sections~\ref{ssec:specialforms}--\ref{ssec:examplesthree} contain
    some original research results, published
    in~\cite{Jel_classifying} and~\cite{cjn13}.
\end{partwithabstract}

    \chapter{Basic properties of finite algebras}\label{sec:basicprops}

Throughout this thesis $\kk$ is a field (of arbitrary characteristic, not necessarily
algebraically closed) and $\DA$ is a finite
$\kk$-algebra,~i.e., a finite dimensional $\kk$-vector space equipped with an
associative and commutative multiplication $\DA\tensor_{\kk} \DA\to \DA$ and a
unity $1 \in \DA$. Every such algebra is isomorphic to a product of
\emph{local} algebras and in fact we will be mostly interested in local
algebras. We denote local algebra by $(\DA, \mm, \kk)$, where $\mm$ is the
maximal ideal of $\DA$. We make a global assumption that $\kk\to \DA/\mm$ is
an \emph{isomorphism}. This assumption is automatic if $\kk$ is algebraically
closed. It can also be satisfied by replacing $\kk$ with the residue field
$\kappa = \DA/\mm$; the fact that there is an embedding $\kappa\into\DA$ is an ingredient of
Cohen structure theorem, see~\cite[Section~7.4]{Eisenbud}. We should
be aware that while $\kappa$ and $\DA$ are $\kk$-algebras, it is not clear,
whether the embedding $\kappa\into \DA$ can be chosen $\kk$-linearly~\cite[Section~7.4]{Eisenbud}.
The assumption that $\kk \to \DA/\mm$ is an isomorphism implies that all numerical objects
defined later: Hilbert functions and their symmetric decompositions, socle
dimensions etc., are invariant under field extension.

We usually consider local algebras presented as quotients of power series
rings and we use the following observation.
\begin{lemma}\label{ref:embeddingdimension:lem}
    A finite local algebra $(\DA, \mm, \kk)$ can be presented as quotient of
    power series $\kk$-algebra $\DShat = \kk[[\Dx_1, \ldots ,\Dx_n]]$ if and
    only if $n = \dim \DShat
    \geq \dimk \mm/\mm^2$.
\end{lemma}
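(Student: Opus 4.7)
The plan is to prove both implications using Nakayama's lemma together with the fact that in a finite (hence Artinian) local algebra, the maximal ideal is nilpotent.

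For the forward direction, suppose $A \cong \DShat/I$ for some ideal $I$. The maximal ideal $\mm$ of $A$ is the image of $(\Dx_1, \ldots, \Dx_n)$, so $\mm/\mm^2$ is spanned over $\kk \cong A/\mm$ by the $n$ residue classes of $\Dx_1, \ldots, \Dx_n$. Hence $\dimk \mm/\mm^2 \leq n$.

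For the reverse direction, assume $n \geq \dimk \mm/\mm^2$. Choose elements $a_1, \ldots, a_n \in \mm$ whose images span $\mm/\mm^2$ (padding with zeros if $n$ strictly exceeds the embedding dimension). Since $A$ is Artinian local, $\mm$ is nilpotent, say $\mm^N = 0$. I would then define the ring homomorphism
\[
    \varphi\colon \DShat = \kk[[\Dx_1, \ldots, \Dx_n]] \to \DA, \qquad \Dx_i \mapsto a_i.
\]
This is well-defined on power series because any $f \in \DShat$ can be written, modulo $(\Dx_1, \ldots, \Dx_n)^N$, as a polynomial, and the substitution of nilpotents $a_i$ makes the higher-order terms vanish; more formally, $\varphi$ factors as $\DShat \to \DShat/(\Dx_1,\ldots,\Dx_n)^N \to \DA$, where the second map sends a polynomial to its obvious evaluation. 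Surjectivity follows from Nakayama's lemma: the $a_i$ generate $\mm$ as an ideal in $\DA$ because their residues span $\mm/\mm^2$, so $\im \varphi$ contains $\kk \cdot 1 \oplus \mm = \DA$ (using the assumption $\kk \xrightarrow{\sim} \DA/\mm$). Therefore $\DA \cong \DShat/\ker \varphi$, as desired.

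The main obstacle, such as it is, is checking that $\varphi$ is well-defined on power series rather than on polynomials; this uses the completeness of $\DShat$ with respect to its maximal ideal together with the nilpotence of $\mm$ in $\DA$, which together guarantee that the would-be infinite sum defining $\varphi(f)$ truncates. Everything else is a direct application of Nakayama's lemma and the isomorphism $\kk \xrightarrow{\sim} \DA/\mm$ that was fixed as a global assumption earlier in the chapter.
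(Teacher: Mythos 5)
Your proof is correct and complete; the paper states this lemma without proof, treating it as a standard fact, and your argument is the canonical one: the forward direction from the surjection $\DShat \onto A$ inducing a surjection on cotangent spaces, and the reverse from Nakayama plus nilpotence of $\mm$ to define and verify the evaluation map $\Dx_i \mapsto a_i$ on power series. The one point worth articulating slightly more carefully is the well-definedness of $\varphi$: it is cleanest to first define $\varphi_0$ on the polynomial subring $\kk[\Dx_1,\ldots,\Dx_n]$, observe $\varphi_0$ kills $(\Dx_1,\ldots,\Dx_n)^N$, and then extend to $\DShat$ by precomposing the induced map on $\DShat/(\Dx_1,\ldots,\Dx_n)^N \cong \kk[\Dx_1,\ldots,\Dx_n]/(\Dx_1,\ldots,\Dx_n)^N$ with the quotient $\DShat \onto \DShat/(\Dx_1,\ldots,\Dx_n)^N$ — your phrasing suggests this but presents it a bit circularly.
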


The phrase \emph{dimension of $\DA$} is ambiguous: the Krull dimension of $\DA$ is
zero, whereas its dimension as a linear space is finite and positive. We
resolve this as follows: we never refer to the Krull dimension and we refer to the
\emph{degree} of $\DA$ when speaking about $\dimk \DA$.

\section{Gorenstein algebras}

Gorenstein algebras are tightly connected with the notion of \emph{duality},
in fact they can be thought of as simplest algebras from the dual point of
view, as we explain below. In this presentation we
follow~\cite[Section~21.2]{Eisenbud}.

\begin{definition}\label{ref:canonicalmodule:def}
    Let $\DA$ be a finite $\kk$-algebra. Its \emph{canonical module} $\canA$ is
    the vector space $\Hom_{\kk}(\DA, \kk)$ endowed with an $A$-module
    structure via
    \begin{equation}
        (a \circ f)(a') = f(aa')\quad\mbox{for }a, a'\in A,\
        \mbox{ and } f\in \canA.
        \label{eq:dualmodule}
    \end{equation}
\end{definition}
The canonical module does not depend on the choice of $\kk$; only on the ring
structure of $A$, see~\cite[Proposition~21.1]{Eisenbud}. Also $\dimk \canA =
\dimk A$ for all $A$.
Note that $\canA$ is torsion-free. Indeed, $a\cdot \canA = 0$ implies that
$ 0 = (af)(1) = f(a)$ for every functional $f\colon\DA\to \kk$, so $a = 0$.

\begin{definition}\label{ref:Gorenstein:def}
    A finite $\kk$-algebra $\DA$ is \emph{Gorenstein} if and only if
    $\canA$ is isomorphic to $A$ as an $A$-module. In this case every $f\in
    \canA$ such that $Af = \canA$ is called a \emph{dual generator} of $A$.
\end{definition}

We observe that Definition~\ref{ref:Gorenstein:def} is local, as explained in the following lemma.
\begin{lemma}\label{ref:locality:lem}
    A finite $\kk$-algebra $\DA$ is Gorenstein if and only if all its
    localisations at maximal ideals are Gorenstein.
\end{lemma}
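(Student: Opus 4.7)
The plan is to reduce everything to the idempotent decomposition of a finite algebra. Since $\DA$ is finite-dimensional over $\kk$, it is Artinian, so $\DA$ splits as a finite product $\DA \cong \prod_i \DA_i$ of its localisations $\DA_i = \DA_{\mathfrak{m}_i}$, and this decomposition is realised by a complete system of orthogonal idempotents $e_1, \ldots, e_s \in \DA$ with $\DA_i = e_i \DA$.

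First I would unpack how the canonical module behaves under this product decomposition. Since $\canA = \Hom_{\kk}(\DA, \kk)$ is a finite-dimensional $\kk$-vector space, the natural decomposition $\DA = \bigoplus_i e_i \DA$ dualises to
\[
    \canA = \bigoplus_{i=1}^s e_i \canA,
\]
where $e_i \canA$ is identified, as an $\DA_i$-module, with $\omega_{\DA_i} = \Hom_{\kk}(\DA_i, \kk)$ equipped with the action from Definition~\ref{ref:canonicalmodule:def}. The key point is that $e_j$ acts as zero on $e_i \canA$ for $j \neq i$ (since $e_i e_j = 0$), so the decomposition is compatible with the $\DA$-module structures on both sides.

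From here the two directions are straightforward. For ($\Leftarrow$), if each $\DA_i \cong \omega_{\DA_i}$ as $\DA_i$-modules, taking the product gives an $\DA$-linear isomorphism $\DA \cong \canA$. For ($\Rightarrow$), given an $\DA$-linear isomorphism $\varphi\colon \DA \to \canA$, the element $f_i := \varphi(e_i)$ lies in $e_i \canA = \omega_{\DA_i}$ because $e_j f_i = \varphi(e_j e_i) = 0$ for $j \neq i$; moreover $\DA_i f_i = \varphi(e_i \DA) = \varphi(\DA_i)$ has $\kk$-dimension equal to $\dimk \DA_i = \dimk \omega_{\DA_i}$, so the restriction of $\varphi$ defines an $\DA_i$-module isomorphism $\DA_i \xrightarrow{\sim} \omega_{\DA_i}$.

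The one point that needs mild care, and which I would verify explicitly, is the identification $e_i \canA \cong \omega_{\DA_i}$ as $\DA_i$-modules: a functional $f \in \canA$ lies in $e_i \canA$ precisely when $f$ vanishes on $\bigoplus_{j\neq i} \DA_j$, in which case its restriction to $\DA_i$ is an element of $\omega_{\DA_i}$, and the $\DA$-action of~\eqref{eq:dualmodule} restricted to $\DA_i$ agrees with the canonical module action on $\omega_{\DA_i}$. No real obstacle appears; the whole proof is essentially bookkeeping with idempotents.
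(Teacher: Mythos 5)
Your proof is correct and takes essentially the same route as the paper's: both reduce to the decomposition of the finite (hence Artinian) algebra into its local factors. The paper phrases this via localization — it notes $\omega_{\DA_\mm} \simeq (\canA)_\mm$, so the forward direction is immediate, and for the converse observes that $\canA$ is then locally free of rank one, which for a finite $\kk$-algebra forces $\canA \simeq \DA$; your idempotent computation is precisely the explicit form of that last step (a locally-free-of-rank-one module over a product of local Artinian rings is free because there is nothing to glue). So the two arguments are the same in substance, with yours spelling out the bookkeeping the paper compresses into the phrase ``since $\DA$ is finite, this implies $\canA \simeq \DA$.''
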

\begin{proof}
    For every maximal ideal $\mm \subset \DA$, we have $\omega_{\DA_{\mm}}
    \simeq \left( \canA \right)_{\mm}$. Hence, if $\DA$ is Gorenstein, then
    every its localisation is Gorenstein.
    Conversely, if all localisations of $\DA$ at maximal ideals are
    Gorenstein, then $\canA$ is locally free of rank one. Since $\DA$ is
    finite, this implies that $\canA  \simeq \DA$, so $\DA$ is Gorenstein.
\end{proof}

We stress that dual generators are by no means unique: even in
the trivial case $\DA = \kk$ every non-zero functional on $\DA$ is its dual
generator.
Before we give examples, we present two equivalent but even more
explicit conditions on Gorenstein algebras. First, one can give a definition
not involving $\canA$.
\begin{proposition}\label{ref:perfectpairing:prop}
    Let $\DA$ be a finite $\kk$-algebra and $f\colon\DA\to \kk$ be a functional. Then $\DA$ is Gorenstein with dual
    generator $f$ if and only if the pairing
    \begin{equation}
        \DA \times \DA\ni (a, b) \to f(ab) \in \kk
        \label{eq:perfectpairing}
    \end{equation}
    is nondegenerate.
\end{proposition}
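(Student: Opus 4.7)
The plan is to reduce both statements to properties of a single linear map and use the dimension equality $\dim_\kk A = \dim_\kk \omega_A$ to conclude.

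First, I would introduce the $A$-linear map
\[
\phi\colon A \longrightarrow \omega_A, \qquad \phi(a) = a \circ f,
\]
and unwind the definition of the action in~\eqref{eq:dualmodule}: for all $a,b \in A$,
\[
\phi(a)(b) \;=\; (a\circ f)(b) \;=\; f(ab).
\]
Thus $\phi$ encodes exactly the bilinear pairing~\eqref{eq:perfectpairing}.

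Next I would translate each of the two conditions into a property of $\phi$. By Definition~\ref{ref:Gorenstein:def}, $A$ is Gorenstein with dual generator $f$ precisely when $Af = \omega_A$, i.e.\ when $\phi$ is \emph{surjective}. On the other hand, the kernel of $\phi$ is
\[
\ker \phi \;=\; \{\, a \in A \;:\; f(ab) = 0 \text{ for all } b \in A\,\},
\]
which is exactly the left radical of the pairing~\eqref{eq:perfectpairing}. Since the pairing is symmetric (because $A$ is commutative), the right radical coincides with $\ker\phi$, and nondegeneracy of~\eqref{eq:perfectpairing} is equivalent to $\phi$ being \emph{injective}.

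Finally, I would invoke the fact noted just after Definition~\ref{ref:canonicalmodule:def}, namely $\dim_\kk \omega_A = \dim_\kk A$. For a $\kk$-linear map between finite-dimensional vector spaces of equal dimension, injectivity, surjectivity and bijectivity all coincide. Hence
\[
\phi \text{ surjective} \iff \phi \text{ injective},
\]
which is exactly the equivalence asserted in the proposition. There is no real obstacle here; the only mild care needed is to observe that commutativity makes the pairing symmetric so that left-nondegeneracy suffices, and to be explicit that $\phi$ is indeed $A$-linear, which follows directly from the definition of the action on $\omega_A$.
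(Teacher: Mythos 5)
Your proof is correct and takes essentially the same route as the paper's: both introduce the $A$-module homomorphism $A \to \omega_A$, $a \mapsto a\circ f$, identify surjectivity with $f$ being a dual generator and injectivity with nondegeneracy of the pairing, and use $\dim_\kk A = \dim_\kk \omega_A$ to equate the two. Your explicit remark that commutativity makes the pairing symmetric (so left-nondegeneracy suffices) is a small point the paper leaves implicit, but the argument is the same.
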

\begin{proof}
    The pairing~\eqref{eq:perfectpairing} descents to $\DA\otimes \DA\to \kk$
    and can be rewritten as $\DA\to \Hom(\DA, \kk)$ sending unity to $f$ and
    sending $a$ to $a\circ f$; it is an $A$-module homomorphism $\DA\to
    \canA$. Now $f$ is a dual generator iff this homomorphism is onto
    iff this homomorphism is into iff there is no non-zero $a\in A$ such that
    $a \circ f = 0$ iff there is no non-zero $a\in A$ such that $f(Aa) =
    \{0\}$ iff the pairing~\eqref{eq:perfectpairing} is nondegenerate.
\end{proof}

\begin{definition}\label{ref:socle:def}
    Let $(\DA, \mm, \kk)$ be a local algebra.
    The \emph{socle} of $A$ is the annihilator of its maximal ideal. It is
    denoted by $\socleA$.
\end{definition}
Note that for every $a\in A$ and an appropriate exponent $r$ we have $a\cdot
\mm^r \neq 0$ and $a\cdot \mm^{r+1} = 0$, thus $a\cdot\mm^r \subset \socleA$.
Therefore $\socleA$ intersects every nonzero ideal in $A$.

\begin{proposition}[{\cite[Proposition~21.5]{Eisenbud}}]\label{ref:Gorchar:eis:prop}
    Let $(\DA, \mm, \kk)$ be a finite local $\kk$-algebra. The following conditions are
    equivalent:
    \begin{enumerate}
        \item $\DA$ is Gorenstein,
        \item $\DA$ is injective as an $\DA$-module,
        \item the socle of $\DA$ is a one-dimensional $\kk$-vector space,
        \item the $A$-module $\DA$ is principal.
    \end{enumerate}
\end{proposition}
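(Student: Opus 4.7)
The plan is to derive all equivalences from the $\kk$-linear duality functor $M \mapsto M^{*} := \Hom_{\kk}(M, \kk)$, regarded as an endofunctor on finite $A$-modules via the action of~\eqref{eq:dualmodule}. This functor is exact (since $\kk$ is a field), dimension-preserving, and involutive ($M^{**} \simeq M$), and it satisfies $A^{*} = \canA$ by definition.

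The equivalence (1) $\Leftrightarrow$ (4) (reading (4) as: $\canA$ is principal as an $A$-module) is essentially the definition of Gorenstein: a generator $f$ of $\canA$ produces a surjection $A \twoheadrightarrow \canA$ sending $1 \mapsto f$, which by the equality $\dimk A = \dimk \canA$ is automatically an isomorphism; the converse is immediate.

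For (1) $\Leftrightarrow$ (3), the key ingredient I would establish is the socle identity
\[
    \soc(M^{*}) \simeq (M/\mm M)^{*}
\]
for any finite $A$-module $M$: a functional $f \in M^{*}$ is killed by $\mm$ iff $f(\mm M) = 0$ iff $f$ factors through $M/\mm M$. Applying this with $M = \canA$ (so $M^{*} \simeq A$) gives $\dimk \soc A = \dimk \canA/\mm \canA$, and by Nakayama's lemma the right side equals the minimal number of generators of $\canA$. Hence $\dim_{\kk} \soc A = 1$ is equivalent to $\canA$ being principal, closing the loop with (1) and (4).

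The equivalence (1) $\Leftrightarrow$ (2) is the most delicate step. For the easy direction, I would show that $\canA$ is \emph{always} injective as an $A$-module, via the tensor–hom adjunction
\[
    \Hom_{A}(-, \canA) = \Hom_{A}(-, \Hom_{\kk}(A, \kk)) \simeq \Hom_{\kk}(-, \kk),
\]
whose right-hand side is exact; hence $A \simeq \canA$ implies $A$ injective. The converse is the hard part: assuming $A$ injective, I would use that the local ring $A$ is indecomposable as a module over itself and invoke the structure theorem for injective modules over an Artinian local ring to identify $A$ with the injective hull $E(\kk)$ of the residue field, which by Matlis duality is $\canA$. This Matlis-duality input is the main obstacle; everything else is bookkeeping with the functor $(-)^{*}$ and Nakayama's lemma.
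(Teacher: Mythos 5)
Your argument is correct. Note, however, that the thesis itself offers no proof of this statement: it is quoted verbatim from Eisenbud (Proposition 21.5), so there is no internal proof to compare against; your write-up is essentially the standard dualizing-functor argument from Eisenbud's Chapter 21, which the paper also leans on elsewhere (e.g.\ in Lemma~\ref{ref:dualizingfunctors:lem}). A few remarks. Your reading of item (4) as ``$\canA$ is principal'' is the right one (the printed $\DA$ is a typo; the later use in the Betti-table discussion, where $\sum_i \beta_{n,i}=1$ iff Gorenstein, confirms this). The chain (4)$\Leftrightarrow$(1)$\Leftrightarrow$(3) via the socle identity $\soc(M^{\vee})\simeq (M/\mm M)^{\vee}$, the $A$-linear double-duality $\canA^{\vee}\simeq A$, and Nakayama is clean and complete, and (1)$\Rightarrow$(2) via the adjunction $\Hom_A(-,\Hom_{\kk}(A,\kk))\simeq\Hom_{\kk}(-,\kk)$ is exactly right. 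The only place you import a black box is (2)$\Rightarrow$(1), where you call on indecomposability plus the Matlis classification of injectives to get $A\simeq E(\kk)\simeq\canA$; that is legitimate, but it can be avoided entirely: assuming $A$ injective and $s_1,s_2\in\soc A$ linearly independent, the map $\kk s_1=As_1\to A$ sending $s_1\mapsto s_2$ is $A$-linear (both elements are killed by $\mm$), so by injectivity it extends to $A\to A$, i.e.\ to multiplication by some $a=c+m$ with $c\in\kk$, $m\in\mm$; then $s_2=as_1=cs_1$, a contradiction. This gives (2)$\Rightarrow$(3) directly, closing the cycle (2)$\Rightarrow$(3)$\Rightarrow$(1)$\Rightarrow$(2) with only the elementary tools already developed in the chapter, which fits better with the paper's otherwise self-contained treatment of duality for finite algebras.
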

\begin{corollary}\label{ref:dlajarka:cor}
    Let $(\DA, \mm, \kk)$ be a finite local $\kk$-algebra. Then $\DA$ is
    Gorenstein if and only if there is a unique quotient $B = \DA/I(B)$ with
    $\dimk B = \dimk A - 1$.
\end{corollary}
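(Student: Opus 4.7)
The plan is to set up a bijection between codimension-one quotients of $\DA$ and one-dimensional ideals of $\DA$, then identify those ideals with lines in $\socleA$, and finally invoke Proposition~\ref{ref:Gorchar:eis:prop}.

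First, quotients $B = \DA/I$ with $\dimk B = \dimk \DA - 1$ are in bijection with ideals $I \subset \DA$ satisfying $\dimk I = 1$. So I must understand one-dimensional ideals of $\DA$.

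Next, I claim that if $I \subset \DA$ is a one-dimensional ideal, then $I \subset \socleA$. Write $I = \kk \cdot a$ for some $0 \neq a \in \DA$. Since $I$ is an ideal, $\mm \cdot a \subset \kk a$, so either $\mm \cdot a = 0$ or $\mm \cdot a = \kk a$. In the latter case, $a \in \mm a$, and Nakayama's lemma forces $a = 0$, a contradiction. Hence $\mm \cdot a = 0$, i.e., $a \in \socleA$ and $I \subset \socleA$. Conversely, every one-dimensional $\kk$-subspace of $\socleA$ is an ideal of $\DA$, because $\mm$ annihilates it and $\kk = \DA/\mm$ acts by scalars.

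Thus codimension-one quotients of $\DA$ are in bijection with one-dimensional $\kk$-subspaces (lines) of $\socleA$. If $\dimk \socleA = 1$, there is a unique such line, namely $\socleA$ itself. If $\dimk \socleA \geq 2$, then $\socleA$ contains at least two distinct lines (for example $\sspan{e_1}$ and $\sspan{e_2}$, or $\sspan{e_1}$ and $\sspan{e_1 + e_2}$ for any two $\kk$-linearly independent socle elements $e_1, e_2$), so there are at least two codimension-one quotients. Combining this with the equivalence (1) $\Leftrightarrow$ (3) of Proposition~\ref{ref:Gorchar:eis:prop}, $\DA$ is Gorenstein if and only if there is a unique codimension-one quotient.

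The main (small) obstacle is merely being careful about the Nakayama step and noting that any one-dimensional $\kk$-subspace of $\socleA$ is automatically an $\DA$-submodule, which uses the standing assumption $\kk \to \DA/\mm$ is an isomorphism.
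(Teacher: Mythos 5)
Your proof is correct and follows essentially the same route as the paper: identify codimension-one quotients with one-dimensional ideals, observe these are exactly the lines in $\socleA$, and apply Proposition~\ref{ref:Gorchar:eis:prop}. The only difference is that you justify the inclusion of a one-dimensional ideal in the socle via Nakayama, whereas the paper leans on its earlier remark (below Definition~\ref{ref:socle:def}) that $\socleA$ meets every nonzero ideal; both are fine.
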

\begin{proof}
    We have $\socleA \neq 0$. For each $B$
    as above its ideal $I(B)\subset A$ is given by a single element of
    $\socleA$.
    Thus the space of possible $B$'s is isomorphic to $\mathbb{P}(\socleA)$
    and $B$ is unique if and only if $\dimk\socleA = 1$. By
    Proposition~\ref{ref:Gorchar:eis:prop}, this is equivalent to $\DA$ being
    Gorenstein.
\end{proof}

As seen in the Proposition~\ref{ref:Gorchar:eis:prop}, Gorenstein property
depends only on the socle of $\DA$. We now show that dual generators are
distinguished among functionals on $\DA$ by their nonvanishing on the socle.
\begin{corollary}\label{ref:dualgens:cor}
    Let $(\DA, \mm, \kk)$ be a local Gorenstein $\kk$-algebra. Then $f\colon \DA\to
    \kk$ is a dual generator of $\DA$ if and only if $f(\socleA) \neq \{0\}$.
\end{corollary}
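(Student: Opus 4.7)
The plan is to reduce both directions to the characterization from Proposition~\ref{ref:perfectpairing:prop}: $f$ is a dual generator iff the bilinear pairing $(a,b)\mapsto f(ab)$ is nondegenerate. I will combine this with two structural facts about finite local Gorenstein algebras established just above: by Proposition~\ref{ref:Gorchar:eis:prop} the socle $\socleA$ is a one-dimensional $\kk$-vector space, and (as noted in the paragraph preceding Proposition~\ref{ref:Gorchar:eis:prop}) every nonzero ideal of $A$ meets $\socleA$ nontrivially.

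For the ``only if'' direction, suppose $f$ is a dual generator. Fix any nonzero $s\in\socleA$; since $\mm s=0$, for every $a\in A$ we have $as = \bar{a}\cdot s$ where $\bar{a}\in\kk$ denotes the image of $a$ in $A/\mm=\kk$. If $f(s)=0$, then $f(as) = \bar{a}f(s)=0$ for all $a\in A$, which means $s$ lies in the left radical of the pairing~\eqref{eq:perfectpairing}; by Proposition~\ref{ref:perfectpairing:prop} this contradicts $s\neq 0$. Hence $f(s)\neq 0$, so $f(\socleA)\neq\{0\}$.

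For the ``if'' direction, assume $f(\socleA)\neq\{0\}$ and suppose toward contradiction that the pairing is degenerate, i.e.\ there exists a nonzero $a\in A$ with $f(aA)=\{0\}$. The ideal $aA$ is nonzero, so it meets $\socleA$ nontrivially; because $\socleA$ is one-dimensional over $\kk$, in fact $\socleA\subset aA$. But then $f(\socleA)\subset f(aA)=\{0\}$, contradicting the hypothesis. Hence the pairing is nondegenerate and, again by Proposition~\ref{ref:perfectpairing:prop}, $f$ is a dual generator.

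The only subtlety is ensuring that the two ingredients really are available in this generality: the one-dimensionality of $\socleA$ (coming from the Gorenstein hypothesis) and the fact that every nonzero ideal hits the socle (valid for any finite local algebra by the descending-chain argument given just before Proposition~\ref{ref:Gorchar:eis:prop}). Both are already in hand, so there is no real obstacle; the argument is essentially a two-line consequence of the perfect-pairing reformulation.
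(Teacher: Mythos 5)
Your proof is correct and follows essentially the same route as the paper: both reduce to the perfect-pairing criterion of Proposition~\ref{ref:perfectpairing:prop} and use the same two structural facts (one-dimensionality of the socle and the fact that every nonzero ideal meets it). The paper states the degenerate implication in the contrapositive form $f(\socleA)=\{0\}\Rightarrow f(A\socleA)=\{0\}$ rather than via your explicit computation $as=\bar a\,s$, but the content is identical.
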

\begin{proof}
    We will use the characterization of dual generators from
    Proposition~\ref{ref:perfectpairing:prop}. Suppose first that $f(\socleA)
    = \{0\}$. Since $A\socleA \subset \socleA$, this implies that $f(A\socleA)
    = \{0\}$, thus the pairing~\eqref{eq:perfectpairing} is degenerate.

    Suppose $f(\socleA) \neq \{0\}$.
    Since $\socleA$ is one-dimensional, the condition $f(\socleA)\neq \{0\}$
    implies that $f(b)\neq 0$ for every non-zero $b\in \socleA$. Choose any
    $a\in A$. Then $Aa \cap \socleA\neq \{0\}$ so $f(Aa) \neq \{0\}$ and the
    pairing~\eqref{eq:perfectpairing} is nondegenerate.
\end{proof}

%As two important corollaries from Proposition~\ref{ref:Gorchar:eis:prop} we
%obtain that $\dimk \DA = \dimk \canA$.

\begin{example}
    The smallest degree non-Gorenstein
    algebra is $A = \kk[[\Dx, \Dy]]/(\Dx^2, \Dx\Dy, \Dy^2)$. Indeed, $\socleA
    = (\Dx, \Dy)$ is
    two dimensional.
\end{example}

\begin{example}\label{ex:firstGorenstein}
    The algebra $\DA = \kk[[\Dx, \Dy]]/(\Dx\Dy, \Dx^2-\Dy^2)$ is Gorenstein, with socle
    generated by the class of $\Dx^2+\Dy^2$. More generally, any complete
    intersection is Gorenstein~\cite[Corollary~21.19]{Eisenbud} and its socle
    is generated by an element which may be interpreted as the Jacobian of the
    minimal generating set, see~\cite[Exercise~21.23c]{Eisenbud}.
\end{example}

The following Proposition~\ref{ref:Gorensteinbasechange:prop} shows that we may investigate, whether an algebra is
Gorenstein, after arbitrary field extension, for example after a base change to
$\kkbar$.
\begin{proposition}\label{ref:Gorensteinbasechange:prop}
    \def\KK{\mathbb{K}}%
    Let $\DA$ be a finite $\kk$-algebra. Then the following conditions are
    equivalent
    \begin{enumerate}
        \item $\DA$ is a Gorenstein $\kk$-algebra,
        \item $\DA \tensor_{\kk} \KK$ is a Gorenstein $\KK$-algebra for every field extension
            $\kk \subset \KK$,
        \item $\DA \tensor_{\kk} \KK$ is a Gorenstein $\KK$-algebra for some field extension
            $\kk \subset \KK$.
    \end{enumerate}
\end{proposition}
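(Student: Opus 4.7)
My plan rests on the fact that forming the canonical module commutes with field extension: for any $\kk \subset \KK$, the identification $\Hom_\kk(A, \kk) \otimes_\kk \KK \simeq \Hom_\KK(A \otimes_\kk \KK, \KK)$ (valid because $\dimk A < \infty$) yields a natural isomorphism of $(A \otimes_\kk \KK)$-modules
\begin{equation*}
\omega_{A \otimes_\kk \KK} \simeq \canA \otimes_\kk \KK,
\end{equation*}
with the $A$-actions agreeing by construction. This reduces everything about the Gorenstein property under base change to an assertion about cyclicity of $\canA$.

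Given this identification, the implication $(1) \Rightarrow (2)$ is immediate: tensoring an $A$-module isomorphism $\canA \simeq A$ with $\KK$ over $\kk$ produces an $(A \otimes_\kk \KK)$-module isomorphism $\omega_{A \otimes_\kk \KK} \simeq A \otimes_\kk \KK$, so $A \otimes_\kk \KK$ is Gorenstein. The implication $(2) \Rightarrow (3)$ is trivial (apply (2) to the specific extension named in (3)).

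For the substantive direction $(3) \Rightarrow (1)$, my plan is to combine the socle criterion of Proposition~\ref{ref:Gorchar:eis:prop}(3) with flat base change of kernels. Decomposing $A = \prod_i A_i$ into local factors with maximal ideals $\mm_i$ and residue fields $\kappa_i = A_i/\mm_i$, it suffices to show each $A_i$ is Gorenstein, i.e.\ that $\dim_{\kappa_i} \soc A_i = 1$. Since $\soc A_i = \Ann_{A_i}(\mm_i)$ is the kernel of a $\kk$-linear multiplication map $A_i \to \Hom_\kk(\mm_i, A_i)$, flatness of $\KK/\kk$ gives $(\soc A_i) \otimes_\kk \KK \simeq \soc(A_i \otimes_\kk \KK)$. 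Hypothesis (3), applied to each local factor of $A_i \otimes_\kk \KK$, forces these factors all to have one-dimensional socles over their residue fields; a dimension count comparing $\dim_\kk \soc A_i$ with the total $\KK$-dimension of $\soc(A_i \otimes_\kk \KK)$ then pins down $\dim_{\kappa_i} \soc A_i = 1$.

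The main obstacle will be the bookkeeping in this dimension count when $\kappa_i$ properly extends $\kk$: then $A_i \otimes_\kk \KK$ need not remain local, its various local factors have residue fields $\lambda_{ij}$ extending $\KK$ nontrivially, and one must carefully match $[\kappa_i : \kk]\cdot \dim_{\kappa_i} \soc A_i = \dim_\kk \soc A_i$ against $\sum_j [\lambda_{ij} : \KK] \cdot \dim_{\lambda_{ij}} \soc\bigl((A_i \otimes_\kk \KK)_j\bigr)$, with the known dimensions of residue field extensions balancing correctly. Once this linear-algebra reconciliation is carried out, the implication follows.
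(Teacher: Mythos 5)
The identification $\omega_{A\otimes_\kk\KK}\simeq\omega_A\otimes_\kk\KK$ is correct, as are your arguments for $(1)\Rightarrow(2)\Rightarrow(3)$; the paper begins the same way, reducing to local $A$ via Lemma~\ref{ref:locality:lem}. The problem is in $(3)\Rightarrow(1)$: the claimed isomorphism $(\soc A_i)\otimes_\kk\KK\simeq\soc(A_i\otimes_\kk\KK)$ is false in general. Flat base change does give $(\soc A_i)\otimes_\kk\KK\simeq\Ann_{A_i\otimes\KK}(\mm_i\otimes\KK)$, but $\soc(A_i\otimes\KK)$ is the annihilator of the Jacobson radical of $A_i\otimes\KK$, and that radical can strictly contain $\mm_i\otimes\KK$: the quotient $(A_i\otimes\KK)/(\mm_i\otimes\KK)\simeq\kappa_i\otimes_\kk\KK$ has nilpotents whenever $\kappa_i/\kk$ is inseparable and $\KK$ is not linearly disjoint from it. For a concrete counterexample take $\kk=\mathbb{F}_p(t)$, $\kappa=\kk(t^{1/p})$, $A=\kappa[x]/(x^2)$, and $\KK=\kappa$: then $\soc A=(x)$ has $\kk$-dimension $p$, while $A\otimes_\kk\KK\simeq\kappa[z,x]/(z^p,x^2)$ is local with one-dimensional socle $(z^{p-1}x)$. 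So $(\soc A)\otimes_\kk\KK$ is $p$-dimensional over $\KK$ but $\soc(A\otimes_\kk\KK)$ is only $1$-dimensional; the two do not match. With only the (correct) inclusion $\soc(A_i\otimes\KK)\subseteq(\soc A_i)\otimes\KK$, your dimension count runs the wrong way and gives no upper bound on $\dim_{\kappa_i}\soc A_i$.

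The fix, and what the paper actually does, is to work with the \emph{top} of the canonical module rather than the \emph{bottom} of the ring. Taking the quotient $\omega_A/\mm\omega_A$ commutes cleanly with $-\otimes_\kk\KK$ regardless of separability, because $\mm\cdot(\omega_A\otimes\KK)=(\mm\omega_A)\otimes\KK$ needs no hypothesis. One then computes $\dim_\KK\bigl(\omega_{A'}/\mm\omega_{A'}\bigr)$ two ways: from $\omega_{A'}\simeq A'$ (hypothesis (3)) it equals $\dim_\KK(\kappa\otimes_\kk\KK)=[\kappa:\kk]$, and from $\omega_{A'}\simeq\omega_A\otimes\KK$ it equals $\dim_\kk(\omega_A/\mm\omega_A)$. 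Hence $\omega_A/\mm\omega_A$ is a one-dimensional $\kappa$-vector space, Nakayama gives a surjection $A\onto\omega_A$, and equality of $\kk$-dimensions makes it an isomorphism. If you want to salvage a socle-based argument, you would need to pass to $\Hom_\kk(-,\kk)$ before tensoring and keep track of minimal generators rather than annihilators — which is exactly this dual reformulation.
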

\begin{proof}
    \def\KK{\mathbb{K}}%
    \def\DApr{\DA'}%
    \def\canApr{\omega_{\DApr}}%
    By Lemma~\ref{ref:locality:lem}, we may assume $\DA$ is local, with
    maximal ideal $\mm$ and residue field $\kappa \supset\kk$.
    Fix a field extension $\kk \subset \KK$ and $\DApr := \DA\tensor_{\kk}
    \KK$.
    If $\DA$ is Gorenstein, then any
    isomorphism $A\to \canA$ induces an isomorphism $\DApr \to
    \canApr$ and $\DApr$ is Gorenstein.
    Suppose $\DApr$ is Gorenstein, so $\canApr  \simeq \DApr$ as
    $\DApr$-modules. Therefore,
    $\canApr/\mm \canApr  \simeq \DApr/\mm \DApr  \simeq
    \kappa\tensor_{\kk} \KK$ and $\dim_{\KK} \canApr/\mm\canApr = \dim_{\kk} \kappa$.
    Moreover, we have $\canApr  \simeq \canA
    \tensor_{\kk} \KK$ as $\DApr$-modules, so that
    \[
         \dim_{\kk} \canA/\mm\canA = \dim_{\KK} \canApr/ \mm\canApr = \dim_{\kk} \kappa.
    \]
    It follows that $\canA/\mm\canA$ is a one-dimensional $\kappa$-vector
    space, so there exists an epimorphism $\DA\to \canA/\mm\canA$ and hence, by
    Nakayama's lemma, an
    epimorphism $\DA\to \canA$. Since $\dimk \DA = \dimk \canA$, it follows
    that $\canA$ is isomorphic to $\DA$ as an $\DA$-module and $\DA$ is Gorenstein.
\end{proof}

\begin{example}\label{ex:smoothareGorenstein}
    Every finite smooth $\kk$-algebra $\DA$ is Gorenstein. Indeed,
    $\DA\tensor_{\kk} \kkbar$ is a smooth $\kkbar$-algebra, so it is
    isomorphic to $\kkbar^{\times \deg \DA}$. This algebra is Gorenstein by
    Proposition~\ref{ref:Gorchar:eis:prop}, so $\DA$ is Gorenstein as well
    by Proposition~\ref{ref:Gorensteinbasechange:prop}.
\end{example}

For further use, we note below in Lemma~\ref{ref:dualizingfunctors:lem} that
two natural notions of the dual module agree for
Gorenstein algebras.
\newcommand{\Homthree}[3]{\operatorname{Hom}_{#1}\pp{#2, #3}}%
\begin{lemma}\label{ref:dualizingfunctors:lem}
    Let $(\DA, \mm, \kk)$ be a local Gorenstein $\kk$-algebra.
    Then for every $\DA$-module $M$ we have $\Homthree{\DA}{M}{\DA}  \simeq \Homthree{\kk}{M}{\kk}$ naturally.
\end{lemma}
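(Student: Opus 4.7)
The plan is to deduce the isomorphism directly from the defining property of Gorenstein algebras, namely $\DA\simeq \canA = \Homthree{\kk}{\DA}{\kk}$ as $\DA$-modules (Definition~\ref{ref:Gorenstein:def}), combined with the standard tensor-hom adjunction.

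First I would fix an $\DA$-module isomorphism $\varphi\colon \DA\to \canA$, whose existence is guaranteed by the Gorenstein hypothesis. Post-composing a map $M\to \DA$ with $\varphi$ gives a natural isomorphism
\[
    \Homthree{\DA}{M}{\DA}\;\xrightarrow{\sim}\;\Homthree{\DA}{M}{\Homthree{\kk}{\DA}{\kk}}.
\]
Next, I would apply the tensor-hom adjunction for the ring homomorphism $\kk\to \DA$: for any $\DA$-module $M$ and $\kk$-module $N$, there is a natural isomorphism
\[
    \Homthree{\DA}{M}{\Homthree{\kk}{\DA}{N}}\;\simeq\;\Homthree{\kk}{M\tensor_{\DA}\DA}{N}.
\]
Taking $N=\kk$ and using $M\tensor_{\DA}\DA\simeq M$, the right-hand side becomes $\Homthree{\kk}{M}{\kk}$. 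Composing the two isomorphisms above produces the desired natural identification.

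The only subtle point is the word \emph{naturally}. The adjunction above is natural in both variables, and post-composition with $\varphi$ is functorial in $M$, so the composite is natural in $M$ as well; there is an ambiguity in the overall scaling corresponding to the choice of $\varphi$ (equivalently, to the choice of dual generator, see Corollary~\ref{ref:dualgens:cor}), but any two such choices differ by a unit in $\DA$ and so yield naturally isomorphic functors. I expect no genuine obstacle: the entire argument is a one-line application of Gorenstein duality plus the standard adjunction, and the finiteness hypothesis is only used implicitly through Definition~\ref{ref:Gorenstein:def}.
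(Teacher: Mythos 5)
Your proof is correct, and it takes a somewhat different route than the paper. The paper dispatches the lemma by invoking Eisenbud's abstract framework of dualizing functors from \cite[Section~21.1]{Eisenbud}: both $\Homthree{\DA}{-}{\DA}$ and $\Homthree{\kk}{-}{\kk}$ satisfy the axioms for a dualizing functor, and any two dualizing functors are naturally isomorphic (\cite[Propositions~21.1 and~21.2]{Eisenbud}). You instead construct the isomorphism directly: post-compose with a fixed isomorphism $\varphi\colon \DA\to\canA$ supplied by the Gorenstein hypothesis, then apply the tensor-hom adjunction for $\kk\to\DA$ to turn $\Homthree{\DA}{M}{\Homthree{\kk}{\DA}{\kk}}$ into $\Homthree{\kk}{M}{\kk}$. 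The two proofs rest on the same pillar --- $\DA\simeq\canA$ --- but yours unpacks the content of Eisenbud's uniqueness-of-dualizing-functors argument, making it explicit and self-contained, whereas the paper opts for a one-line citation. Your observation that the naturality is only up to the unit ambiguity in the choice of $\varphi$ is accurate, and the same caveat is implicitly present in the cited Eisenbud result (the natural equivalence there is likewise non-canonical). In short: same substance, but your version is the more elementary and more instructive argument, while the paper's is the shorter citation-based one.
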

\begin{proof}
    In the language of~\cite[Section~21.1]{Eisenbud}, both $\Homthree{\DA}{-}{\DA}$ and $\Homthree{\kk}{-}{\kk}$ are dualizing
    functors, so they are isomorphic, see Proposition~\cite[Proposition~21.1, Proposition~21.2]{Eisenbud}.
\end{proof}

\section{Hilbert function of a local
algebra}\label{ssec:hilbertfunctionGeneral}

Let $(\DA, \mm, \kk)$ be a finite local $\kk$-algebra. Its \emph{associated
graded algebra} is $\grDA = \bigoplus_{i\geq 0} \mm^i/\mm^{i+1}$. Of course,
$\grDA$ is also a finite local $\kk$-algebra.
    \begin{definition}
        The \emph{Hilbert function} of $\DA$ is defined as
        \[H_{\DA}(i) = \dim_{\kk} \mm^i/\mm^{i+1}.\]
    \end{definition}
    Note that $H_{\DA} = H_{\grDA}$.
    We have $H_{\DA}(i) = 0$ for $i\geq \dimk A$ and it is usual to write $H_{\DA}$ as a
    vector of its nonzero values.
    Lemma~\ref{ref:embeddingdimension:lem} proves that if $\DA$ is a quotient
    of a power series ring $\DShat$, then $\DShat$ has dimension at least $H_{\DA}(1)$.
    Therefore $H_{\DA}(1)$ is named the \emph{embedding dimension} of $\DA$.

    Hilbert functions are usually considered in the setting
    of standard graded algebras. A $\kk$-algebra $\DA$ is \emph{standard
    graded} if
    it is graded, $\DA = \oplus_{i\geq 0} \DA_i$, the map $\kk\to \DA_0$ is an isomorphism
    and $\DA$ generated by $\DA_1$ as a $\kk = A_0$-algebra. These intrinsic
    conditions are summarized by saying that $\DA$ has a presentation $\DA =
    \kk[\Dy_1, \ldots ,\Dy_r]/I$, where $I$ is a homogeneous ideal.
    We now note that $\grDA$ is standard graded.
    \begin{lemma}\label{ref:associatedStandardGraded:lem}
        Let $(\DA, \mm, \kk)$ be a finite algebra. Then $\grDA$ is a standard
        graded algebra.
    \end{lemma}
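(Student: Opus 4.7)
The plan is to verify the three defining properties of a standard graded algebra directly from the definition $\grDA = \bigoplus_{i \geq 0} \mm^i/\mm^{i+1}$. First I would note that $\grDA$ is graded by construction: the multiplication on $A$ sends $\mm^i \cdot \mm^j$ into $\mm^{i+j}$, hence descends to a well-defined $\kk$-bilinear map $\mm^i/\mm^{i+1} \times \mm^j/\mm^{j+1} \to \mm^{i+j}/\mm^{i+j+1}$, giving $\grDA$ the structure of a graded $\kk$-algebra with $(\grDA)_i = \mm^i/\mm^{i+1}$.

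Second, I would verify that the map $\kk \to (\grDA)_0$ is an isomorphism. By definition $(\grDA)_0 = \mm^0/\mm = A/\mm$, and this is identified with $\kk$ by the standing global assumption made at the beginning of the chapter, namely that $\kk \to A/\mm$ is an isomorphism.

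The main (and essentially only nontrivial) point is generation in degree one: we must show that $\grDA$ is generated as a $\kk$-algebra by $(\grDA)_1 = \mm/\mm^2$. For this I would argue by induction on $i$ that every element of $(\grDA)_i = \mm^i/\mm^{i+1}$ lies in the subalgebra generated by $(\grDA)_1$. The base cases $i = 0, 1$ are trivial. For $i \geq 2$, any element of $\mm^i$ is by definition a finite $\kk$-linear combination of products $m_1 m_2 \cdots m_i$ with $m_j \in \mm$. Reducing modulo $\mm^{i+1}$, the class of such a product equals the product of the classes $\overline{m_j} \in \mm/\mm^2 = (\grDA)_1$ (because the multiplication on $\grDA$ was defined precisely so that $\overline{m_1} \cdots \overline{m_i}$ in $(\grDA)_i$ is the class of $m_1 \cdots m_i$ in $\mm^i/\mm^{i+1}$). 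Hence $(\grDA)_i$ is spanned by $i$-fold products of elements of $(\grDA)_1$, completing the induction.

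There is no real obstacle here; the lemma is a matter of unwinding definitions and applying the hypothesis $\kk \cong A/\mm$. If one wishes to exhibit the standard presentation $\grDA \cong \kk[y_1, \ldots, y_r]/I$ with $I$ homogeneous explicitly, one picks $y_1, \ldots, y_r \in \mm$ whose classes form a basis of $\mm/\mm^2$ and observes that the surjection $\kk[y_1, \ldots, y_r] \onto \grDA$ sending $y_j$ to $\overline{y_j}$ is a graded homomorphism by the generation statement just proved, so its kernel is homogeneous.
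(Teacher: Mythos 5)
Your proof is correct and follows the same route as the paper's: the paper likewise checks that $\grDA$ is graded, that $\kk \to (\grDA)_0 = A/\mm$ is an isomorphism by the standing assumption, and that each $(\grDA)_i$ lies in the $\kk$-subalgebra generated by $(\grDA)_1 = \mm/\mm^2$. You have simply filled in the induction argument for degree-one generation and spelled out the polynomial-ring presentation, which the paper leaves implicit.
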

    \begin{proof}
        Clearly $\grDA$ is graded. The map $\kk \to A/\mm = \grDA_0$ is an
        isomorphism by assumption. Every homogeneous piece $(\grDA)_i =
        \mm^i/\mm^{i+1}$ is in the $\kk$-subalgebra generated by $(\grDA)_1 =
        \mm/\mm^2$, so that $\grDA$ is standard graded.
    \end{proof}
    The possible Hilbert
    functions of standard graded algebras are classified by Macaulay's growth theorem. Before stating it,
    we need to define binomial expansions. We follow the classical
    presentations, details are found in~\cite[Section~4.2]{BrunsHerzog}.

    Fix a positive integer $i$. For a positive integer $h$ there exist
    uniquely determined integers $a_{i} > a_{i-1} >  \ldots > a_{1} \geq 0$ such that
    \begin{equation}
        h = \binom{a_i}{i} + \binom{a_{i-1}}{i-1} +  \ldots  + \binom{a_1}{1}.
        \label{eq:binomialexpansion}
    \end{equation}
    Here we assume that $\binom{a_j}{j} = 0$ for $a_j < j$. We call the
    numbers $a_j$ the \emph{$i$-th binomial expansion of $h$}.
    These numbers can be determined by a greedy algorithm, choosing first
    $a_i$ largest possible, then $a_{i-1}$ etc. For $i$, $h$
    and~$a_j$'s determined as in Equation~\eqref{eq:binomialexpansion}, we
    define
    \newcommand{\binfunc}[2]{#1^{\langle #2\rangle}}%
    \begin{equation}
        \binfunc{h}{i} = \binom{a_i+1}{i+1} + \binom{a_{i-1}+1}{i} + \ldots +
        \binom{a_{1}+1}{2}.
        \label{eq:binomialshifted}
    \end{equation}

    \begin{theorem}[Macaulay's growth theorem, {\cite{macaulay_enumeration},
            \cite[p.~4.2.10]{BrunsHerzog}}]\label{ref:MacaulayGrowth:thm}
        Let $\DA$ be standard graded algebra and $H$ be its Hilbert function.
        Then
        \begin{equation}
            H(i+1) \leq \binfunc{H(i)}{i}\quad\mbox{for all } i.
            \label{eq:MacaulayBound}
        \end{equation}
    \end{theorem}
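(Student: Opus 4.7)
The plan is to reduce Macaulay's growth theorem to a combinatorial statement about monomial ideals, in three steps: reduction to monomial ideals via initial ideals, reduction to lex-segment ideals via a compression argument, and a direct binomial computation for lex-segments.

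First I would write $\DA = \kk[y_1,\ldots,y_r]/I$ with $I$ homogeneous and replace $I$ by its initial ideal $\init(I)$ with respect to the graded reverse lexicographic term order. This replacement preserves the Hilbert function, because the standard monomials of $\init(I)$ form a $\kk$-basis of each graded piece of $\kk[y_1,\ldots,y_r]/I$. Hence we may assume $I$ is a monomial ideal and work purely with monomials in each degree.

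Next I would invoke the central combinatorial lemma: among monomial ideals $I \subset \kk[y_1,\ldots,y_r]$ with $\dim_{\kk}(\kk[y]/I)_i = H(i)$, the \emph{lex-segment ideal} — the one whose degree $i$ standard monomials are the $H(i)$ lexicographically largest monomials of that degree — maximizes $\dim_{\kk}(\kk[y]/I)_{i+1}$. This is proved by a compression or shifting argument that successively replaces the set of $H(i)$ standard monomials in degree $i$ by more lex-heavy subsets without shrinking the size of the span of its product with $(y_1,\ldots,y_r)$; see \cite[\S4.2]{BrunsHerzog} for details.

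Finally, for a lex-segment a direct count yields the precise bound. Writing $H(i) = \binom{a_i}{i}+\binom{a_{i-1}}{i-1}+\cdots+\binom{a_1}{1}$ for the binomial expansion, the lex-largest $H(i)$ monomials of degree $i$ decompose into groups indexed by the leading-variable pattern, each group contributing one of the above binomial coefficients. Enumerating the degree $i+1$ standard monomials in the corresponding lex-segment (i.e.\ the monomials obtained by multiplying the chosen ones by the variables $y_1,\ldots,y_r$, counted without duplication) gives exactly $\binom{a_i+1}{i+1}+\binom{a_{i-1}+1}{i}+\cdots+\binom{a_1+1}{2} = H(i)^{\langle i \rangle}$.

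The main obstacle is the compression step. The passage to the initial ideal is standard Gröbner theory, and the binomial computation is bookkeeping once one parses the combinatorial meaning of the expansion~\eqref{eq:binomialexpansion}. But the claim that lex-segments achieve the maximal growth is genuinely substantive and requires a careful induction on the number of variables and on the degree; this is the heart of Macaulay's argument.
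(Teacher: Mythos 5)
The paper does not prove this theorem; it is stated as a black box with citations to Macaulay's original paper and to Bruns--Herzog, so there is no ``paper's own proof'' to compare against. Your sketch follows the standard route of the cited Bruns--Herzog proof: pass to an initial ideal (preserving the Hilbert function), reduce to a lex-segment ideal by a compression argument, and then do the binomial bookkeeping. That is the right skeleton, and you have correctly identified the compression step as the substantive core.

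There is, however, an orientation slip in the middle step. In the convention used by Bruns--Herzog (and most modern treatments), with $y_1 > y_2 > \cdots > y_r$, a \emph{lex segment} in degree $i$ consists of the lex-\emph{largest} monomials, and the \emph{lex-segment ideal} is the one whose graded pieces $I_i$ are spanned by lex segments. Consequently the \emph{standard} monomials of the quotient are the lex-\emph{smallest} ones, not the lex-largest as you wrote. The underlying inequality that drives the reduction is that the lex segment $L \subseteq S_i$ of a given dimension minimizes $\dim_{\kk}(S_1 L)$; passing to complements, the quotient by the lex-segment ideal maximizes $H(i+1)$ subject to a fixed $H(i)$, and this maximum equals $H(i)^{\langle i\rangle}$. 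A quick check with $r=3$, $H(2)=3$ makes the orientation concrete: taking standard monomials $\{y_1^2, y_1y_2, y_1y_3\}$ (lex-largest) yields $H(3)=3$, whereas taking $\{y_2^2, y_2y_3, y_3^2\}$ (lex-smallest) yields $H(3)=4=3^{\langle 2\rangle}$, which is the Macaulay bound. This does not invalidate your argument structure --- the three steps and the final count are as in the reference --- but in a complete writeup the extremal monomial set must be the lex-smallest one in the quotient, or the compression lemma will point in the wrong direction.
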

    Macaulay also proved that if $H:\mathbb{N} \to \mathbb{N}$ satisfies $H(0)
    = 1$ and $H(i+1) \leq \binfunc{H(i)}{i}$ for all $i$, then there exists a
    standard graded algebra with this Hilbert function. Therefore,
    Theorem~\ref{ref:MacaulayGrowth:thm} gives a full classification of Hilbert
    functions of standard graded algebras. By
    Lemma~\ref{ref:associatedStandardGraded:lem} also the Hilbert function of a
    local algebra satisfies Inequality~\eqref{eq:MacaulayBound} and every
    function $H:\mathbb{N}\to\mathbb{N}$ satisfying~\eqref{eq:MacaulayBound}
    and $H(0) = 1$ is a Hilbert function of a local algebra.

    \begin{corollary}\label{ref:nonincreasinghf:cor}
        Let $\DA$ be a standard graded $\kk$-algebra with Hilbert function $H$.
        If $i \geq 0$ is such that $H(i) \leq i$, then we have
        $H(i) \geq H(i+1) \geq H(i+2) \geq  \ldots$.
    \end{corollary}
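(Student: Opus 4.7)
The plan is to derive the claim as a direct consequence of Macaulay's growth theorem (Theorem~\ref{ref:MacaulayGrowth:thm}) once we establish the following key numerical fact: if $h$ and $i$ are integers with $0 \leq h \leq i$, then $\binfunc{h}{i} \leq h$ (in fact equality holds).

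The first step is to compute the $i$-th binomial expansion of $h$ under the assumption $h \leq i$. Since $\binom{i+1}{i} = i+1 > h$, we cannot have $a_i \geq i+1$ in the expansion~\eqref{eq:binomialexpansion}. Using the greedy description of the $a_j$'s, I would show by descending induction on $j$ that the unique expansion is
\[
    h = \binom{i}{i} + \binom{i-1}{i-1} + \ldots + \binom{i-h+1}{i-h+1},
\]
where the $h$ chosen indices are $a_i = i,\ a_{i-1}=i-1,\ \ldots,\ a_{i-h+1} = i-h+1$, and all subsequent $a_j$ satisfy $a_j < j$ (so that $\binom{a_j}{j}=0$). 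Each of the $h$ displayed binomial coefficients equals $1$, so the sum is indeed $h$.

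The second step is to apply the definition~\eqref{eq:binomialshifted}. Shifting each numerator up by one and each denominator up by one gives
\[
    \binfunc{h}{i} = \binom{i+1}{i+1} + \binom{i}{i} + \ldots + \binom{i-h+2}{i-h+2} = \underbrace{1 + 1 + \ldots + 1}_{h \text{ terms}} = h.
\]

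Combining this with Theorem~\ref{ref:MacaulayGrowth:thm} gives $H(i+1) \leq \binfunc{H(i)}{i} = H(i)$ whenever $H(i) \leq i$. To iterate, note that $H(i+1) \leq H(i) \leq i \leq i+1$, so the hypothesis $H(i+1) \leq i+1$ of Macaulay's bound is automatic, and a straightforward induction yields $H(i) \geq H(i+1) \geq H(i+2) \geq \ldots$ as required. The only mildly delicate point is justifying the uniqueness of the binomial expansion above (so that no other, smaller expansion competes); this follows from the standard greedy characterization of the $a_j$, but is the single place where care is needed.
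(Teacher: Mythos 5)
Your proof is correct and follows the same route as the paper: compute the $i$-th binomial expansion of $h \leq i$ (the paper phrases the key observation as "each $a_j$ is either $j$ or $j-1$"), deduce $\binfunc{h}{i} = h$, and invoke Theorem~\ref{ref:MacaulayGrowth:thm}. You spell out the iteration step (noting that $H(i+1) \leq H(i) \leq i+1$ keeps the hypothesis alive for the next degree) a bit more explicitly than the paper does, but the argument is the same.
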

    \begin{proof}
        In the $i$-th binomial equation of $H(i)$ each $a_j$ is either
        $j$ or $j-1$, thus $\binfunc{H(i)}{i} = H(i)$.
    \end{proof}

    Once the Macaulay bound is attained then it will also be attained for all
    higher degrees provided that no new generators of the ideal appear:
    \begin{theorem}[Gotzmann's Persistence Theorem, {\cite{gotzmann_persistence_theorem} or \cite[Theorem~4.3.3]{BrunsHerzog}}]
        \label{ref:Gotzmann:thm}
        Let $\DS$ by a polynomial ring, $I$ be a homogeneous ideal and $\DA =
        \DS/I$ be a standard graded algebra with Hilbert function $H$.
        If $i \geq 0$ is an integer such that $H(i+1) = H(i)^{\langle i
        \rangle}$ and $I$ is generated in degrees $\leq i$,
        then we have $H(j+1) = H(j)^{\langle j \rangle}$ for all $j \geq i$.
    \end{theorem}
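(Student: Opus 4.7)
The plan is to reduce the statement to a one-step inductive claim: if $H(j+1) = H(j)^{\langle j \rangle}$ and $I_{j+1} = S_1 \cdot I_j$ (no new generators in degree $j+1$), then $H(j+2) = H(j+1)^{\langle j+1 \rangle}$ and $I_{j+2} = S_1 \cdot I_{j+1}$. Iterating this starting from $j = i$ (where the second hypothesis follows from the assumption that $I$ is generated in degrees $\leq i$) yields the theorem. I would first pass to the case of $\kk$ infinite by flat base change, which preserves the Hilbert function, degrees of a minimal generating set of $I$, and the binomial operation $h^{\langle j \rangle}$.

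For the one-step induction the main tool is comparison with lex-segment ideals. By Macaulay's growth theorem (Theorem~\ref{ref:MacaulayGrowth:thm}) we already have $\dim_\kk A_{j+2} \leq H(j+1)^{\langle j+1 \rangle}$, so the content is the reverse inequality together with the statement about generators. Let $L \subset S$ be the lex-segment ideal with $\dim L_{j+1} = \dim I_{j+1}$; a direct manipulation of binomial expansions shows $\dim (S_1 \cdot L_{j+1}) = (\dim L_{j+1})^{\langle j+1 \rangle}$, so lex-segments propagate extremality. The crucial input is then the dual inequality
\[
\dim (S_1 \cdot I_{j+1}) \geq \dim (S_1 \cdot L_{j+1}),
\]
which says the ideal $I$ grows at least as fast under multiplication by $S_1$ as the lex-segment with the same degree-$(j{+}1)$ size. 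Combined with $S_1 \cdot I_{j+1} \subseteq I_{j+2}$ and Macaulay's upper bound, this forces $I_{j+2} = S_1 \cdot I_{j+1}$ and the Hilbert function equality.

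The hard step is the lex-segment comparison, and the natural tool is Green's hyperplane restriction theorem: for a generic $\ell \in S_1$, the Hilbert function of $A/\ell A$ is bounded by a contracted binomial operation applied to $H$. Equality in Macaulay's bound at degree $j$ forces equality in Green's bound in degree $j+1$, which in turn constrains $(0 :_A \ell)_{j+1}$ via the four-term exact sequence
\[
0 \to (0 :_A \ell)_{j+1} \to A_{j+1} \xrightarrow{\ell} A_{j+2} \to (A/\ell A)_{j+2} \to 0,
\]
and pushes extremality from degree $j+1$ to degree $j+2$. The main obstacle is invoking (or proving) Green's theorem itself; an alternative I would consider is replacing $I$ by its generic initial ideal in revlex order, which by Galligo's theorem is Borel-fixed and shares the same Hilbert function, reducing the claim to an explicit combinatorial bookkeeping on shadows of monomial sets and their lex-segments. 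Either route closes the inductive step and hence the theorem.
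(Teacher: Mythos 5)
The paper does not prove this theorem; it simply cites Gotzmann's original paper and Bruns--Herzog, Theorem~4.3.3, so there is no in-paper proof to compare against. Assessed on its own, your sketch has a genuine gap in the central step. The inequality you single out as ``the crucial input,'' namely $\dim_\kk(S_1 I_{j+1}) \geq \dim_\kk(S_1 L_{j+1})$ for the lex-segment $L_{j+1}$ of the same dimension, is precisely the subspace form of Macaulay's growth bound. Rewriting it in terms of the quotient it gives $\dim_\kk A_{j+2}\leq H(j+1)^{\langle j+1\rangle}$, which is the inequality you already concede is known. What persistence actually requires is the \emph{reverse} bound $H(j+2)\geq H(j+1)^{\langle j+1\rangle}$, equivalently an \emph{upper} bound on $\dim_\kk(S_1 I_{j+1})$ --- and nothing in your chain of inequalities (``combined with $S_1 I_{j+1}\subseteq I_{j+2}$ and Macaulay's upper bound'') produces one: both displayed estimates bound $\dim_\kk I_{j+2}$ from below, so there is no squeeze. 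Also note that $I_{j+2}=S_1 I_{j+1}$ is already immediate from the hypothesis that $I$ is generated in degrees $\leq i$, so ``forces $I_{j+2}=S_1 I_{j+1}$'' is not a conclusion; the entire content is the Hilbert-function equality, and that is exactly what your argument does not deliver.

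The Green's hyperplane-restriction route you gesture at is a sound way to close this, but your sketch of it has an off-by-one that masks where the real work is. Equality $H(j+1)=H(j)^{\langle j\rangle}$ together with the four-term sequence in degree~$j$, namely $0\to(0:_A\ell)_j\to A_j\xrightarrow{\ell} A_{j+1}\to (A/\ell A)_{j+1}\to 0$, and Green's bound in degree $j+1$ force $(0:_A\ell)_j=0$ and equality in Green's bound for $\overline A=A/\ell A$ in degree~$j+1$; this controls $(0:_A\ell)_j$, not $(0:_A\ell)_{j+1}$ as you wrote, and getting from there to $(0:_A\ell)_{j+1}=0$ (hence to extremality in degree~$j+2$) requires showing that $\overline A$ again satisfies the persistence hypotheses in one fewer variable --- including that $\overline I$ is generated in degrees $\leq i$ --- and running an induction on the number of variables. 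That inductive framework, not the lex-segment comparison, is the actual crux, and it is not set up in your proposal.
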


In the following we will mostly use the following consequence of
Theorem~\ref{ref:Gotzmann:thm}, for which we introduce some (non-standard) notation. Let
$I \subseteq \DS = \kk[\Dx_1, \ldots ,\Dx_n]$ be a graded ideal in a polynomial ring and
$m\geq 0$. We say that $I$ is \emph{$m$-saturated} if for all $l\leq m$ and
$\sigma\in \DS_l$ the condition  $\sigma\cdot (\Dx_1, \ldots ,\Dx_n)^{m - l}\subseteq I$
implies $\sigma\in I$.

\begin{lemma}\label{ref:P1gotzmann:lem}
    \def\Dnn{\mathfrak{n}}%
    Let $\DS = \kk[\Dx_1, \ldots ,\Dx_n]$ be a polynomial ring with
    maximal ideal $\Dnn = (\Dx_1,
    \ldots ,\Dx_n)$. Let $I \subseteq \DS$ be a graded ideal and $A =
    \DS/I$. Suppose that $I$ is $m$-saturated for some
    $m\geq 2$.
    Then
    \begin{enumerate}
        \item if $H_A(m) = m+1$ and $H_A(m+1) = m+2$, then $H_A(l) = l+1$ for
            all $l\leq m$, in particular $H_A(1) = 2$.
        \item if $H_A(m) = m+2$ and $H_A(m+1) = m+3$, then $H_A(l) = l+2$ for
            all $l\leq m$, in particular $H_A(1) = 3$.
    \end{enumerate}
\end{lemma}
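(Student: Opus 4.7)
The plan is to reduce the claim to Gotzmann's Persistence Theorem~\ref{ref:Gotzmann:thm} together with Gotzmann's regularity theorem, using the $m$-saturation of $I$ as a bridge carrying information from high degrees back down. First I would introduce the auxiliary ideal $I' := (I_{\leq m}) \subseteq \DS$ generated by $I$ in degrees $\leq m$; by construction $I'_l = I_l$ for all $l \leq m$. I would then verify that $I'$ attains Macaulay's growth bound in degree $m+1$. Indeed Theorem~\ref{ref:MacaulayGrowth:thm} gives $H_{\DS/I'}(m+1) \leq H_{\DS/I'}(m)^{\langle m\rangle}$, while the inclusion $I' \subseteq I$ gives $H_{\DS/I'}(m+1) \geq H_A(m+1)$. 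A direct calculation shows $(m+1)^{\langle m\rangle} = m+2$ in case~(1) and $(m+2)^{\langle m\rangle} = m+3$ in case~(2), so both inequalities are equalities and in particular $I'_{m+1} = I_{m+1}$.

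Next I would apply Gotzmann's Persistence Theorem to $I'$, which is generated in degrees $\leq m$ and attains Macaulay's bound at $m+1$, to conclude that $H_{\DS/I'}(l) = l+1$ in case~(1) and $H_{\DS/I'}(l) = l+2$ in case~(2), for all $l \geq m$. The Hilbert polynomial of $\DS/I'$ is therefore $P(t) = t+1$ or $P(t) = t+2$, whose Gotzmann numbers (read off from the Macaulay normal form) are $1$ and $2$ respectively. By Gotzmann's regularity theorem, the saturation $(I')^{\mathrm{sat}}$ with respect to $\mathfrak{n}$ has Castelnuovo--Mumford regularity at most its Gotzmann number, so $H_{\DS/(I')^{\mathrm{sat}}}(l) = P(l)$ for $l \geq 0$ in case~(1) and for $l \geq 1$ in case~(2). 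Since $m \geq 2$, this gives $H_{\DS/(I')^{\mathrm{sat}}}(m) = H_{\DS/I'}(m)$, and together with $I'_m \subseteq (I')^{\mathrm{sat}}_m$ and equal codimensions this forces $I_m = I'_m = (I')^{\mathrm{sat}}_m$.

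The final step is to propagate the equality $I_m = (I')^{\mathrm{sat}}_m$ down in degree via the $m$-saturation hypothesis. For any saturated graded ideal $J \subseteq \DS$ and $l \leq m$, the identity $(J : \mathfrak{n}^{m-l}) = J$ yields the colon-ideal description
\[ J_l \;=\; \{\sigma \in \DS_l : \sigma \cdot \DS_{m-l} \subseteq J_m\}, \]
which is precisely the formula the $m$-saturation hypothesis asserts for $I$. Using $I_m = (I')^{\mathrm{sat}}_m$ and comparing the two descriptions gives $I_l = (I')^{\mathrm{sat}}_l$ for every $l \leq m$, and hence $H_A(l) = H_{\DS/(I')^{\mathrm{sat}}}(l) = l+1$ (respectively $l+2$) in the stated range. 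The principal obstacle is the invocation of Gotzmann's regularity theorem, not stated in the excerpt; it is the essential external input converting numerical persistence of the Hilbert function into geometric control of the saturated ideal. With that theorem in hand, the interplay between colon ideals and the $m$-saturation is purely formal.
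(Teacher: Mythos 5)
Your proof is correct and, at the level of structure, tracks the paper's argument closely: form the ideal generated in degrees $\leq m$, check that the Macaulay bound is attained at $m+1$, apply Gotzmann persistence to control the high degrees, and then use the $m$-saturation hypothesis to carry the conclusion back down to degrees $\leq m$. Where you diverge is in the single step that turns ``Hilbert polynomial of the truncated ideal'' into ``Hilbert function of its saturation in every relevant degree.'' The paper does this by explicitly identifying $\Proj(\DS/J)$: in part~(1) it is a linearly embedded $\mathbb{P}^1$, and in part~(2) it is either $\mathbb{P}^1$ together with a reduced point or $\mathbb{P}^1$ with an embedded double point; one then verifies by hand that for each of these three schemes the Hilbert function coincides with the Hilbert polynomial in every degree. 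You replace this ad hoc case analysis with one uniform invocation of Gotzmann's regularity theorem, reading off the Gotzmann numbers $1$ and $2$ from the Macaulay normal forms of $t+1$ and $t+2$ and concluding that $(I')^{\mathrm{sat}}$ is $1$- (resp.\ $2$-)regular. This buys you a cleaner, classification-free treatment of the two cases (and would scale to analogous statements with larger constants), at the cost of invoking a theorem that the paper never cites, whereas the paper's route stays within the Macaulay--Gotzmann persistence circle plus elementary geometry of low-degree curves.

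Two small remarks. First, your colon-ideal reformulation of $m$-saturation --- $I_l = \{\sigma \in \DS_l : \sigma\cdot\DS_{m-l}\subseteq I_m\}$ --- silently uses that for a graded ideal, $\sigma\cdot\DS_{m-l}\subseteq I_m$ already forces $\sigma\cdot\DS_{m-l+k}\subseteq I_{m+k}$ for all $k\geq 0$, so that the apparently weaker degree-$m$ condition is equivalent to $\sigma\cdot\mathfrak{n}^{m-l}\subseteq I$; this is true and easy but worth saying, since it is exactly what lets you compare $I$ with the saturated ideal degree by degree. Second, in case~(2) your regularity bound only gives $H_{\DS/(I')^{\mathrm{sat}}}(l)=l+2$ for $l\geq 1$; that is in fact all that can be true, since $H_A(0)=1\neq 2$ for any proper graded quotient, so the lemma's ``for all $l\leq m$'' in part~(2) should be read as $1\leq l\leq m$ --- your proof is consistent with the correct statement.
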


\begin{proof}
    1. First, if $H_A(l) \leq l$ for some $l < m$, then by Macaulay's Growth
    Theorem $H_A(m)\leq l < m+1$, a contradiction. So it suffices to prove
    that $H_A(l) \leq l+1$ for all $l < m$.

    Let $J$ be the ideal generated by elements of degree at most $m$ in $I$.
    We will prove that the graded ideal $J$ of $\DS$  defines a
    $\mathbb{P}^1$ linearly embedded into $\mathbb{P}^{n-1}$.

    Let $B = \DS/J$. Then $H_B(m) = m+1$ and $H_B(m+1) \geq m+2$. Since $H_B(m)
    = m+1 =
    \binom{m+1}{m}$, we have $H_B(m)^{\langle m\rangle} = \binom{m+2}{m+1} =
    m+2$ and by Theorem \ref{ref:MacaulayGrowth:thm} we get $H_B(m+1)\leq m+2$, thus $H_B(m+1) = m+2$. Then by
    Gotzmann's
    Persistence Theorem $H_B(l) = l+1$ for all $l > m$. This implies that the
    Hilbert polynomial of $\Proj B \subseteq \mathbb{P}^{n-1}$ is $h_B(t) = t+1$, so
    that $\Proj B \subseteq \mathbb{P}^{n-1}$ is a linearly embedded $\mathbb{P}^1$. In
    particular the Hilbert function and Hilbert polynomial of $\Proj B$ are
    equal
    for all arguments.
    By assumption, we have $J_l = J^{sat}_l$ for all $l < m$. Then  $H_{A}(l)
     = H_{\DS/J}(l) = H_{\DS/J^{sat}}(l) = l+1$ for all $l < m$ and the claim of
    the lemma follows.

    2. The proof is similar to the above one; we mention only the points,
    where it changes. Let $J$ be the ideal generated
    by elements of degree at most $m$ in $I$ and $B = \DS/J$. Then $H_B(m) = m
    +2 = \binom{m+1}{m} + \binom{m-1}{m-1}$, thus $H_B(m+1) \leq
    \binom{m+2}{m+1} +
    \binom{m}{m} = m+3$ and $B$ defines a closed subscheme of $\mathbb{P}^{n-1}$ with
    Hilbert polynomial $h_B(t) = t+2$. There are two isomorphism types of such
    subschemes: $\mathbb{P}^1$ union a point and $\mathbb{P}^1$ with an
    embedded double point. One checks that for these schemes the Hilbert
    polynomial is equal to the Hilbert function for all arguments and then
    proceeds as in the proof of Point 1.
\end{proof}

\begin{remark}\label{ref:GorensteinSaturated:rmk}
    \def\Dnn{\mathfrak{m}_S}%
    \def\Dan#1{\Ann{#1}}%
    If $\DA = \DS/I$ is a finite graded Gorenstein algebra with socle
    concentrated in degree $d$, then
    $\DA$ is $m$-saturated for every $m\leq d$.
    Indeed, fix an $m\leq d$ and suppose that $\sigma\in \DS_l$ is such that $\sigma(\Dx_1, \ldots
    ,\Dx_n)^{m-l} \subset I$. Then also $\sigma(\Dx_1, \ldots
    ,\Dx_n)^{d-l} \subset I$. Let $\bar{\sigma}\in A$ be the image of
    $\sigma$, then $\bar{\sigma}(\Dx_1, \ldots
    ,\Dx_n)^{d-l} = 0$. Since the socle of $A$ is concentrated in degree $d$,
    this implies $\bar{\sigma}\DS \cap \socleA = 0$. But then $\bar{\sigma} =
    0$ because $\socleA$ intersect every nonzero ideal of $A$, see the
    discussion below~Definition~\ref{ref:socle:def}.
\end{remark}

\section{Hilbert function of a local Gorenstein
algebra}\label{ssec:hilbertfunctionGorenstein}

In contrast to the general case, the classification of Hilbert functions of
\emph{Gorenstein} algebras is not known. However a well-developed theory
exists. We first discuss standard graded algebras. As usually with Gorenstein
property, the slogan is duality and hence symmetry. To define the center
of symmetry, we first introduce the notion of socle degree.
\begin{definition}
    The \emph{socle degree} of a finite local Gorenstein algebra $\DA$ is the
    largest $d$ such that $H_{\DA}(d) \neq 0$.
\end{definition}
We will see that necessarily $H_{\DA}(d) = 1$.

\begin{proposition}[Symmetry of the Hilbert
    function]\label{ref:symmetryGorensteingraded:prop}
    Let $\DA$ be a standard graded Gorenstein algebra with Hilbert function $H$. Let $d$
    be the socle degree of $\DA$. Then $H(i) = H(d-i)$ for all $0\leq i\leq
    d$.
\end{proposition}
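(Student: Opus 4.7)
The plan is to use the perfect pairing given by a dual generator of $\DA$ (Proposition~\ref{ref:perfectpairing:prop}), chosen so that it is homogeneous with respect to the grading, which will force the pairing to match up pieces of degree $i$ with pieces of degree $d-i$.

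First I would establish that $\dimk A_d = 1$ and, more precisely, that $\socleA = A_d$. Since $\DA$ is standard graded with socle degree $d$, we have $A_j = 0$ for $j > d$, so every element of $A_d$ is annihilated by the maximal ideal $\mm = \bigoplus_{i\geq 1} A_i$; hence $A_d \subseteq \socleA$. On the other hand, $\socleA$ is a homogeneous ideal (the annihilator of a homogeneous ideal is homogeneous), and by Proposition~\ref{ref:Gorchar:eis:prop} it is one-dimensional. As $A_d \neq 0$, this forces $\socleA = A_d$ and $\dimk A_d = 1$.

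Next I would pick a $\kk$-linear isomorphism $A_d \to \kk$ and compose with the projection $\DA \to A_d$ to obtain a functional $f\colon \DA \to \kk$. By construction $f(\socleA) \neq \{0\}$, so by Corollary~\ref{ref:dualgens:cor} the functional $f$ is a dual generator; equivalently, by Proposition~\ref{ref:perfectpairing:prop}, the pairing
\[
    \DA \times \DA \ni (a, b) \mapsto f(ab) \in \kk
\]
is nondegenerate. The crucial feature of our chosen $f$ is that it is homogeneous of degree $-d$: for $a\in A_i$ and $b\in A_j$ we have $ab\in A_{i+j}$, and $f$ vanishes on $A_{i+j}$ unless $i+j = d$. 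Therefore the pairing restricts to bilinear pairings
\[
    A_i \times A_{d-i} \to \kk, \qquad (a, b) \mapsto f(ab),
\]
and the remaining pieces of the global pairing are zero.

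Finally, I would argue that each restricted pairing $A_i \times A_{d-i} \to \kk$ is itself nondegenerate. If $a\in A_i$ satisfies $f(ab) = 0$ for all $b\in A_{d-i}$, then for any $b'\in \DA$, decomposing $b' = \sum_j b'_j$ into homogeneous components, only the component $b'_{d-i}$ can contribute to $f(ab')$, so $f(ab') = 0$; by nondegeneracy of the global pairing, $a = 0$. The symmetric argument handles the other side. Hence $\dimk A_i = \dimk A_{d-i}$, that is, $H(i) = H(d-i)$.

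The only real subtlety is the first step, making sure that $\socleA$ really is concentrated in degree $d$ rather than spread across several degrees; but this follows immediately from the one-dimensionality given by the Gorenstein hypothesis combined with the obvious inclusion $A_d \subseteq \socleA$. Once that is in hand, the rest is a clean bookkeeping argument about homogeneous pairings.
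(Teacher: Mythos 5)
Your proof is correct and follows essentially the same approach as the paper's: identify $A_d$ with the socle (using the Gorenstein hypothesis to see the inclusion $A_d \subseteq \socleA$ is an equality), take a dual generator supported in degree $d$, and observe that the resulting perfect pairing restricts to nondegenerate pairings $A_i \times A_{d-i} \to \kk$. You just spell out a couple of details (homogeneity of $\socleA$, why nondegeneracy passes to the homogeneous pieces) that the paper's proof leaves implicit.
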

\begin{proof}
    The subspace $\DA_d$ is nonzero and annihilated by the maximal ideal of
    $\DA$, so it is the socle of $\DA$. Then the pairing $A_i \times
    A_{d-i} \to A_d  \simeq \kk$ is non-degenerate by
    Corollary~\ref{ref:dualgens:cor} and
    Proposition~\ref{ref:perfectpairing:prop}, hence the claim.
\end{proof}

\begin{remark}
Stanley~\cite{stanley_hilbert_functions_of_graded_algebras} gave the
following characterisation of Hilbert functions $H$ of graded Gorenstein
algebras of socle degree $d$ under the assumption $H(1)\leq 3$. He proved that $H$ is a Hilbert
function of such algebra if and only if $H(0)= 1$ and $H(d-i) = H(i)$ for all $i$ and the
sequence
\begin{equation}\label{eq:tmpfirsthalf}
    H(0), H(1) - H(0),  \ldots , H(t) - H(t-1)
\end{equation}
with $t = \lfloor \frac{d}{2} \rfloor$ consists of nonnegative integers and
satisfies Macaulay's Bound~\eqref{eq:MacaulayBound}. He also showed the
necessity of assumption $H(1) \leq 3$ by giving an example of a graded Gorenstein algebra with
Hilbert function $(1, 13, 12, 13, 1)$; then~\eqref{eq:tmpfirsthalf} becomes
$(1, 12, -1)$, which contradicts the assumption $H(2) - H(1) \geq 0$.
\end{remark}

Now we investigate the Hilbert function $H$ of a local Gorenstein algebra $(\DA,
\mm, \kk)$. This
Hilbert function need not be symmetric (Example~\ref{ex:trivialcubicHf}), however it admits a
decomposition into symmetric factors. The decomposition is canonically
obtained from $\DA$, but decompositions may be different for different
algebras with equal Hilbert functions.

\newcommand{\Dmmperp}[1]{(0:\mm^{#1})}%
\newcommand{\Loevy}{L\"ovy}%
Let $d$ be the socle degree of $\DA$.  Let us denote by $(0:I)$ the
annihilator of an ideal $I \subset A$ and assume that $\mm^i = A$ for all
$i\leq 0$. It follows from Proposition~\ref{ref:symmetryGorensteingraded:prop}
that for \emph{graded} $\DA$ we have $\Dmmperp{d-i} = \mm^{i+1}$. However this is
not true for all local algebras, see~Example~\ref{ex:trivialcubicHf}.
We thus obtain two canonical filtrations on $\DA$. One is a descending filtration
\[A \supset \mm \supset \mm^2  \ldots \supset \mm^d \supset \{0\}\]
by powers of $\mm$ and the other is an
ascending filtration
\[\{0\} \subset \Dmmperp{} \subset \Dmmperp{2} \subset  \ldots \subset
\Dmmperp{d+1} = A,\] called the \Loevy{} filtration.
We begin by relating the \Loevy{} filtration to duality.
\begin{lemma}\label{ref:dualitycomparison:lem}
    Let $(\DA, \mm, \kk)$ be a local Gorenstein algebra and $I \subset A$ be
    an ideal. Choose any dual generator $f\in \canA$ and consider the
    associated pairing $A \times A \ni (a, b)\to f(ab)\in \kk$. Then
    $I^{\perp} = (0:I)$.
\end{lemma}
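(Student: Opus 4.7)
The plan is to prove the two inclusions separately, using the fact (Proposition~\ref{ref:perfectpairing:prop}) that a functional $f$ is a dual generator of $A$ if and only if the bilinear form $A \times A \to \kk$ sending $(a,b)$ to $f(ab)$ is nondegenerate. I will also use that $I$ is an ideal, hence stable under multiplication by $A$.

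The inclusion $(0:I) \subseteq I^{\perp}$ is immediate: if $aI = 0$, then $f(ab) = f(0) = 0$ for every $b \in I$, so $a \in I^{\perp}$. This direction does not use Gorensteinness of $A$.

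For the reverse inclusion $I^{\perp} \subseteq (0:I)$, suppose $a \in I^{\perp}$, i.e., $f(ab) = 0$ for every $b \in I$. I will show that for a fixed $b \in I$, the element $ab$ annihilates against $f$ under the full pairing; since the pairing is nondegenerate, this forces $ab = 0$. Concretely, for any $c \in A$ we have $cb \in I$ because $I$ is an ideal, and therefore
\[
  f\bigl(c \cdot (ab)\bigr) = f\bigl(a \cdot (cb)\bigr) = 0,
\]
using commutativity and $cb \in I$. Thus $f(A \cdot (ab)) = 0$, and nondegeneracy of the pairing $(u,v) \mapsto f(uv)$ forces $ab = 0$. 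Since $b \in I$ was arbitrary, $a \in (0:I)$.

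This is a very short argument, so there is no real obstacle; the only substantive ingredient is the characterization of dual generators via the nondegenerate pairing, which is already available from Proposition~\ref{ref:perfectpairing:prop}. The point worth emphasizing in writing is that the argument uses the ideal property of $I$ (to conclude $cb \in I$), not merely that $I$ is a subspace; for a general subspace the conclusion would fail.
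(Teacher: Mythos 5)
Your proof is correct, and it takes a slightly different route from the paper's. You invoke the nondegeneracy of the pairing directly (Proposition~\ref{ref:perfectpairing:prop}): fixing $b\in I$ and noting that $cb\in I$ for all $c\in A$, you deduce $f(A\cdot ab)=\{0\}$ and conclude $ab=0$ by nondegeneracy. The paper instead argues by contrapositive through the socle: if $a$ does not annihilate $I$, then $aI$ is a nonzero ideal, hence meets $\soc A$ by the remark after Definition~\ref{ref:socle:def}, and since a dual generator does not vanish on the socle (Corollary~\ref{ref:dualgens:cor}) one gets $f(aI)\neq\{0\}$, i.e.\ $a\notin I^{\perp}$. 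Your argument sidesteps the socle entirely and leans on the ideal property of $I$ in a slightly different place (to conclude $cb\in I$ rather than to conclude $aI$ is an ideal). Both proofs ultimately rest on the same nondegeneracy statement --- Corollary~\ref{ref:dualgens:cor} is itself proved from Proposition~\ref{ref:perfectpairing:prop} --- but your version is a step closer to first principles, while the paper's phrasing via the socle makes visible the fact that the socle intersects every nonzero ideal, which the paper wants to have on record anyway for later use. Either is a fine proof.
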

\begin{proof}
    Clearly $(0:I) \subset I^{\perp}$. On the other hand, if $a\in A$ does
    not annihilate $I$, then $aI$ is a nonzero ideal, thus $aI \cap \socleA
    \neq 0$ by the discussion below Definition~\ref{ref:socle:def}, so
    $f(aI)\neq \{0\}$ by Corollary~\ref{ref:dualgens:cor} and hence $a\not\in I^{\perp}$.
\end{proof}

\begin{lemma}\label{ref:loewyHilbertfunc:lem}
    Let $(\DA, \mm, \kk)$ be a local Gorenstein $\kk$-algebra. Then
    \[H_{\DA}(i) = \dimk \frac{\Dmmperp{i+1}}{\Dmmperp{i}}.\]
\end{lemma}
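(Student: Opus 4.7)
The plan is to use the duality between powers of $\mm$ and the \Loevy{} filtration provided by Lemma~\ref{ref:dualitycomparison:lem}, together with the fact that the pairing $A\times A\to\kk$ coming from a dual generator is perfect.

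First I would fix any dual generator $f\in\canA$, which exists by the Gorenstein hypothesis, and consider the associated non-degenerate bilinear pairing $A\times A\ni(a,b)\mapsto f(ab)\in\kk$ from Proposition~\ref{ref:perfectpairing:prop}. For any $\kk$-subspace (in particular any ideal) $V\subseteq A$, denote by $V^\perp$ its orthogonal with respect to this pairing. Since the pairing is perfect on the finite-dimensional space $A$, one has the standard linear-algebra identity $\dimk V^\perp = \dimk A - \dimk V$.

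Next I would apply Lemma~\ref{ref:dualitycomparison:lem} to the ideal $I=\mm^i$, obtaining $(\mm^i)^\perp=(0:\mm^i)$, and similarly $(\mm^{i+1})^\perp=(0:\mm^{i+1})$. The inclusion $(0:\mm^i)\subseteq(0:\mm^{i+1})$ is automatic since $\mm^{i+1}\subseteq\mm^i$, so the quotient on the right-hand side of the claimed identity makes sense. Combining with the dimension identity above gives
\[
\dimk(0:\mm^i)=\dimk A-\dimk \mm^i,\qquad \dimk(0:\mm^{i+1})=\dimk A-\dimk\mm^{i+1}.
\]

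Finally I would subtract these two equalities to get
\[
\dimk\frac{(0:\mm^{i+1})}{(0:\mm^i)}=\dimk(0:\mm^{i+1})-\dimk(0:\mm^i)=\dimk\mm^i-\dimk\mm^{i+1}=\dimk\mm^i/\mm^{i+1}=H_A(i),
\]
which is the desired equality. There is no real obstacle here: the only nontrivial input is the identification of $(0:\mm^i)$ with the orthogonal complement, already handled by Lemma~\ref{ref:dualitycomparison:lem}; everything else is linear-algebra bookkeeping about perfect pairings on finite-dimensional vector spaces.
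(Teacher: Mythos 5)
Your proof is correct and follows essentially the same route as the paper: both arguments fix a dual generator, invoke Lemma~\ref{ref:dualitycomparison:lem} to identify $(0:\mm^i)$ with the orthogonal of $\mm^i$ under the perfect pairing, and conclude by comparing dimensions. The paper merely packages the dimension bookkeeping as the isomorphism $\left(\mm^i/\mm^{i+1}\right)^{\vee}\simeq (0:\mm^{i+1})/(0:\mm^i)$, which is exactly what your subtraction of dimensions establishes.
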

\begin{proof}
    Fixing any pairing as in Lemma~\ref{ref:dualitycomparison:lem} we have
        \[
        \left(\frac{\mm^i}{\mm^{i+1}}\right)^{\vee}  \simeq
        \frac{\Dmmperp{i+1}}{\Dmmperp{i}}.\qedhere
        \]
\end{proof}
The result of Lemma~\ref{ref:loewyHilbertfunc:lem} may be interpreted as a
duality between subquotients of the two filtrations.
Let $d$ be the socle degree of $\DA$.
\newcommand{\iaq}[2]{Q(#1)_{#2}}%
\newcommand{\iac}[2]{C(#1)_{#2}}%
\newcommand{\iainter}[2]{\mm^{#1}\cap \Dmmperp{#2}}%
\newcommand{\iasum}[2]{\mm^{#1} + \Dmmperp{#2}}%
\newcommand{\iaqvect}[1]{Q(#1)}%
\newcommand{\iacvect}[1]{C(#1)}%
We introduce the summands $\iacvect{a} = \bigoplus_i \iac{a}{i} \subset
\grDA$ and $\iaqvect{a} = \bigoplus_i \iaq{a}{i}$ by the following formulas
\begin{equation}
    \iac{a}{i} := \frac{\iainter{i}{d+1-a-i}}{\iainter{i+1}{d+1-a-i}},
    \label{eq:Csummands}
\end{equation}
\begin{equation}
    \iaq{a}{i} := \frac{\iac{a}{i}}{\iac{a+1}{i}} =
    \frac{\iainter{i}{d+1-a-i}}{\iainter{i+1}{d+1-a-i} +
    \iainter{i}{d-a-i}}.
    \label{eq:Qsummands}
\end{equation}
Since $\mm\cdot\Dmmperp{d+1-a-i} \subset \Dmmperp{d+1-a-(i+1)}$, each $\iacvect{a} \subset \grDA$ is an ideal, so that
$\iaqvect{a}$ are $\grDA$-modules.
If $i > d-a$ then $\Dmmperp{d-a-i} = \Dmmperp{d+1-a-i} = A$ and so
$\iaq{a}{i} = 0$. Also if $i < 0$ then $\iaq{a}{i} = 0$.
Therefore $\iaqvect{a}$ may be interpreted as a vector of length $d-a$.
The following result proves that this vector is symmetric up to taking duals.

\begin{lemma}\label{ref:Qduality:lem}
    Let $(\DA, \mm, \kk)$ be a local Gorenstein algebra and
    $\iaqvect{a}$ be defined as in~\eqref{eq:Qsummands}. Then
    $\iaq{a}{i}^{\vee} = \iaq{a}{d-a-i}$
    naturally. Hence $\iaqvect{a}^{\vee}  \simeq \iaqvect{a}$ as a $\grDA$-module.
\end{lemma}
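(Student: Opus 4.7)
The plan is to use the perfect pairing from a dual generator of $\DA$ to transport the $\mm$-adic filtration to the Löwy filtration, and then to read off the duality of $\iaqvect{a}$ directly. Concretely, fix a dual generator $f\in\canA$ and form the nondegenerate symmetric pairing $\langle x,y\rangle=f(xy)$ on $A$ (Proposition~\ref{ref:perfectpairing:prop}). By Lemma~\ref{ref:dualitycomparison:lem}, for any ideal $I\subset\DA$ we have $I^{\perp}=(0:I)$; applied to $I=\mm^j$ this gives the key identity
\[
(\mm^j)^{\perp}=\Dmmperp{j}\quad\text{for all }j\geq 0.
\]

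For a subquotient $V/W\subset A$ with $W\subset V$, the pairing induces a natural isomorphism $(V/W)^{\vee}\simeq W^{\perp}/V^{\perp}$. Applying this to $Q(a)_i$ with $V=\iainter{i}{d+1-a-i}$ and $W=\iainter{i+1}{d+1-a-i}+\iainter{i}{d-a-i}$, and using $(U_1\cap U_2)^{\perp}=U_1^{\perp}+U_2^{\perp}$ together with the key identity, gives
\[
\iaq{a}{i}^{\vee}\;\simeq\;\frac{(\Dmmperp{i+1}+\mm^{d+1-a-i})\cap(\Dmmperp{i}+\mm^{d-a-i})}{\Dmmperp{i}+\mm^{d+1-a-i}}.
\]

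Next I would exhibit an explicit natural isomorphism between this quotient and $\iaq{a}{d-a-i}=\frac{\mm^{d-a-i}\cap\Dmmperp{i+1}}{(\mm^{d-a-i+1}\cap\Dmmperp{i+1})+(\mm^{d-a-i}\cap\Dmmperp{i})}$ by a short modular law argument. The map is induced by the inclusion
\[
\mm^{d-a-i}\cap\Dmmperp{i+1}\;\hookrightarrow\;(\Dmmperp{i+1}+\mm^{d+1-a-i})\cap(\Dmmperp{i}+\mm^{d-a-i}),
\]
followed by projection. Well-definedness amounts to noting that both summands of the denominator of $\iaq{a}{d-a-i}$ already lie in $\Dmmperp{i}+\mm^{d+1-a-i}$. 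Surjectivity follows from splitting any element $x=y+z$ with $y\in\Dmmperp{i}$, $z\in\mm^{d-a-i}$ and then refining $z=z_1+z_2$ with $z_1\in\Dmmperp{i+1}$, $z_2\in\mm^{d+1-a-i}$, which forces $z_1\in\mm^{d-a-i}\cap\Dmmperp{i+1}$ and produces $x\equiv z_1$ mod the denominator. Injectivity is an analogous two-line splitting argument.

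Finally, for the module statement, since the pairing is $A$-balanced, $\langle ax,y\rangle=\langle x,ay\rangle$, the decomposition respects the $\grDA$-action on all pieces in sight; concretely, the map $\iaq{a}{d-a-i}\to\iaq{a}{i}^{\vee}$ above is $\grDA$-equivariant, so summing over $i$ gives the isomorphism $\iaqvect{a}^{\vee}\simeq\iaqvect{a}$ of $\grDA$-modules. The main obstacle is not conceptual but notational: one must be careful with the double filtration and keep the four indices $i$, $i+1$, $d-a-i$, $d+1-a-i$ straight throughout; once the identification $(\mm^j)^{\perp}=\Dmmperp{j}$ is in place, the rest is linear algebra of filtrations.
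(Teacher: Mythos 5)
Your proof is correct and follows essentially the same route as the paper: both fix the perfect pairing from a dual generator, use Lemma~\ref{ref:dualitycomparison:lem} to get $(\mm^j)^{\perp}=\Dmmperp{j}$, and then massage subquotients via the modular law. The only organizational difference is that you dualize $\iaq{a}{i}$ directly as a subquotient, whereas the paper first computes $\iac{a}{i}^{\vee}$ and then identifies $\iaq{a}{i}^{\vee}$ as the kernel of the dual of $\iac{a+1}{i}\to\iac{a}{i}$; both lead to the same filtration arithmetic.
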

\begin{proof}
    We note that if $M, N, P$ are $A$-modules and $N \subset M$, then
    \[\frac{M+P}{N+P}  \simeq \frac{M}{M\cap(N+P)} = \frac{M}{N+M\cap P}.\]
    Fix a dual generator of $\DA$ and hence a perfect pairing $A\times A\to \kk$. Lemma~\ref{ref:dualitycomparison:lem}
    implies that $\Dmmperp{d+1-a-i}^{\perp} = \mm^{d+1-a-i}$ and
    $\pp{\mm^i}^{\perp} = \Dmmperp{i}$ so that
    \[
        \iac{a}{i}^{\vee} =
        \pp{\frac{\iainter{i}{d+1-a-i}}{\iainter{i+1}{d+1-a-i}}}^{\vee} =
        \frac{\Dmmperp{i+1} + \mm^{d+1-a-i}}{\Dmmperp{i} + \mm^{d+1-a-i}}
        \simeq \frac{\Dmmperp{i+1}}{\Dmmperp{i} + \mm^{d+1-a-i}\cap
        \Dmmperp{i+1}}.
    \]
    Now $\iaq{a}{i}$ is the cokernel of $\iac{a+1}{i}\to \iac{a}{i}$, so
    $\iaq{a}{i}^{\vee}$ is the kernel of the natural map
    \[
        \frac{\Dmmperp{i+1}}{\Dmmperp{i} + \mm^{d+1-a-i}\cap \Dmmperp{i+1}}
        \to
        \frac{\Dmmperp{i+1}}{\Dmmperp{i} + \mm^{d-a-i}\cap \Dmmperp{i+1}}.
    \]
    Therefore
    \[
        \iaq{a}{i}^{\vee}  \simeq \frac{\Dmmperp{i} + \mm^{d-a-i}\cap
        \Dmmperp{i+1}}{\Dmmperp{i} + \mm^{d+1-a-i}\cap \Dmmperp{i+1}}  \simeq \frac{\mm^{d-a-i}\cap
        \Dmmperp{i+1}}{\mm^{d-a-i}\cap\Dmmperp{i} + \mm^{d+1-a-i}\cap
        \Dmmperp{i+1}},
    \]
    which is exactly $\iaq{a}{d-i-a}$.
\end{proof}

Let us note that $\iaq{a}{0} = \iaq{a}{d-a} = 0$ for all $a > 0$. Indeed,
since $a >0$ we have $d+1-a \leq d$. Therefore
$\Dmmperp{d+1-a} \subset \mm$ and $\iainter{0}{d+1-a} =
\iainter{1}{d+1-a}$. Thus $\iaq{a}{0} = 0$; then $\iaq{a}{d-a} = 0$ follows by symmetry.
Also $\iaq{0}{0} = A/\mm = \kk$ and $\iaq{0}{d}  \simeq \Dmmperp{}  \simeq \kk$.
\begin{example}[Nonzero $\iaq{i}{j}$ for socle degree $d =
    3$]\label{ex:tmpQsoclethree}
    If $d = 3$, then $\iaqvect{0} = (\kk,
    \iaq{0}{1}, \iaq{0}{1}^{\vee}, \kk)$ and $\iaqvect{1} = (0, \iaq{0}{2}, 0)$.
\end{example}

\newcommand{\Dhd}[2]{\Delta_{#1}\left(#2\right)}
\newcommand{\Dhdvect}[1]{\Delta_{#1}}
\begin{definition}\label{ref:symmetricdecomposition:def}
    Let $(\DA, \mm, \kk)$ be a local Gorenstein algebra of socle degree $d$. The \emph{symmetric
    decomposition of Hilbert function of $A$} is a tuple of $d-1$ vectors
    $\Dhdvect{A, a} = \Dhdvect{a}$, for $a = 0, 1,  \ldots , d-2$, defined by
    \begin{equation}
        \Dhd{a}{i} := \dimk \iaq{a}{i}.
        \label{eq:deltadefn}
    \end{equation}
    We call $\Dhdvect{a}$ the \emph{$a$-th symmetric summand} and identify it
    with the vector $(\Dhd{a}{0}, \ldots , \Dhd{a}{d-a})$.
\end{definition}
By Lemma~\ref{ref:Qduality:lem} the vector $\Dhdvect{a}$ is symmetric around
$d-a$; we have $\Dhd{a}{i} = \Dhd{a}{d-a-i}$.
We now briefly justify why $\Dhdvect{\bullet}$ form a decomposition of the
Hilbert function of $\DA$.

\begin{lemma}\label{ref:hilbertfromdecomposition:lem}
    Let $(\DA, \mm, \kk)$ be a local Gorenstein algebra of socle degree $d$
    and with Hilbert function $H$.
    Then $H(i) = \sum_{a=0}^{d-i} \Dhd{a}{i}$.
\end{lemma}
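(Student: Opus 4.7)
The plan is to realise the groups $\iacvect{a}$ as a descending filtration on the associated graded piece $\mm^i/\mm^{i+1}$, so that the $\iaqvect{a} = \iacvect{a}/\iacvect{a+1}$ become the successive quotients. Then the required identity is just the additivity of dimensions along a finite filtration.

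First I would verify that $\iac{a}{i}$ sits naturally as a subspace of $\mm^i/\mm^{i+1}$. The point is that one has the identification
\[
    \iac{a}{i} \;=\; \frac{\iainter{i}{d+1-a-i}}{\iainter{i+1}{d+1-a-i}}
    \;\simeq\; \frac{\iainter{i}{d+1-a-i}+\mm^{i+1}}{\mm^{i+1}},
\]
obtained from the second isomorphism theorem once one notes that $\mm^{i+1}\cap \bigl(\mm^i\cap\Dmmperp{d+1-a-i}\bigr)=\iainter{i+1}{d+1-a-i}$. Under this identification, increasing $a$ makes $\Dmmperp{d+1-a-i}$ smaller and therefore $\iac{a}{i}$ smaller, so we obtain a descending chain of subspaces of $\mm^i/\mm^{i+1}$ indexed by $a$.

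Next I would pin down the two endpoints. For $a=0$: since $\mm^i\cdot\mm^{d+1-i}\subseteq \mm^{d+1}=0$, one has $\mm^i\subseteq \Dmmperp{d+1-i}$, hence $\iainter{i}{d+1-i}=\mm^i$ and so $\iac{0}{i}=\mm^i/\mm^{i+1}$. For $a=d-i+1$: using the convention $\mm^j=A$ for $j\leq 0$, we get $\Dmmperp{0}=(0:A)=0$, hence $\iac{d-i+1}{i}=0$. Thus
\[
    \mm^i/\mm^{i+1}\;=\;\iac{0}{i}\supseteq\iac{1}{i}\supseteq\cdots\supseteq\iac{d-i}{i}\supseteq\iac{d-i+1}{i}=0,
\]
and the successive quotients are by definition $\iaq{a}{i}$. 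Taking $\kk$-dimensions yields
\[
    H(i)\;=\;\dimk\mm^i/\mm^{i+1}\;=\;\sum_{a=0}^{d-i}\dimk\iaq{a}{i}\;=\;\sum_{a=0}^{d-i}\Dhd{a}{i}.
\]

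I don't foresee a real obstacle: everything reduces to bookkeeping with the two filtrations $\{\mm^i\}$ and $\{\Dmmperp{j}\}$, together with the trivial but crucial inclusion $\mm^i\subseteq \Dmmperp{d+1-i}$ that pins down the top of the filtration. The only step requiring a little care is the identification of $\iac{a}{i}$ with a subspace of $\mm^i/\mm^{i+1}$; this is what makes the filtration structure on the pieces $\iacvect{a}$ concrete and usable. The Gorenstein hypothesis is in fact not needed for this particular lemma (it enters via Lemma~\ref{ref:Qduality:lem} for the symmetry statement), so no appeal to duality is necessary in the argument above.
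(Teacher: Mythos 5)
Your proof is correct and follows the same route as the paper's terse argument: you realise the $\iac{a}{i}$ as a descending filtration of $\mm^i/\mm^{i+1}$ with successive quotients $\iaq{a}{i}$ and then sum $\kk$-dimensions, merely supplying the endpoint identifications $\iac{0}{i}=\mm^i/\mm^{i+1}$ and $\iac{d-i+1}{i}=0$ that the paper leaves implicit. Your side remark that the Gorenstein hypothesis plays no role here (it is needed only for the duality/symmetry in Lemma~\ref{ref:Qduality:lem}) is also accurate.
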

\begin{proof}
    The spaces $\iacvect{a}_i$ form a filtration of
    $\mm^i/\mm^{i+1}$, so that $H(i) = \sum_{a=0}^{\infty} \Dhd{a}{i}$, but
    $\iaq{a}{i} \neq 0$ only when $a+i\leq d$, so it is enough to sum over $a\leq d-i$.
\end{proof}

The following example shows how the mere existence of the symmetric
decomposition forces some constraints on the Hilbert function of a Gorenstein
algebra. One obvious constraint is that $H(d) = \dimk \iaq{0}{d} =
\dimk\iaq{0}{0}^{\vee} = 1$.
\begin{example}\label{ex:hilbertfunctioncubics}
    Let $(\DA, \mm, \kk)$ be a local algebra of socle degree three. Then by
    Example~\ref{ex:tmpQsoclethree} we have $H_{\DA} = (1,
    \Dhd{0}{1}+\Dhd{1}{1}, \Dhd{0}{1}, 1)$, so $H_{\DA}(1) \geq H_{\DA}(2)$.
\end{example}
We may restrict this function even further, by restricting possible
$\Dhdvect{a}$. The key observation is the following lemma.
\begin{lemma}\label{ref:sumsofdhdaraOseqence:lem}
    Let $(\DA, \mm, \kk)$ be a finite local Gorenstein algebra and let
    $\Dhdvect{\bullet}$ be the symmetric decomposition of its Hilbert
    function. Fix $i\geq 0$. The partial sum $\sum_{a=0}^i\Dhdvect{i}$
    is the Hilbert function of a standard graded algebra
    $\grDA/\iacvect{i+1}$ defined in~\eqref{eq:Qsummands}.
    In particular $\sum_{a=0}^i\Dhdvect{i}$ satisfies Macaulay's
    Bound~\eqref{eq:MacaulayBound}.
\end{lemma}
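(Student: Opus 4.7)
The plan is to observe that the symmetric decomposition literally records the Hilbert functions of the successive quotients of a filtration of $\grDA$ by homogeneous ideals, and then to apply Macaulay's growth theorem to the quotient cut out at any chosen stage.

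First I would check that $\iacvect{0} = \grDA$. By definition $\iac{0}{i}$ is the image of $\iainter{i}{d+1-i}$ in $\mm^i/\mm^{i+1}$; since $\Dmmperp{d+1-i} \supseteq \Dmmperp{d+1} = \DA$, we have $\iainter{i}{d+1-i} = \mm^i$, so $\iac{0}{i} = (\grDA)_i$. Next, using $\Dmmperp{d-a-i} \subseteq \Dmmperp{d+1-a-i}$, one gets $\iainter{i}{d-a-i} \subseteq \iainter{i}{d+1-a-i}$ and hence $\iac{a+1}{i} \subseteq \iac{a}{i}$, so
\[
    \grDA = \iacvect{0} \supseteq \iacvect{1} \supseteq \iacvect{2} \supseteq \cdots
\]
is a descending filtration by homogeneous ideals (the fact that each $\iacvect{a}$ is an ideal was recorded right after~\eqref{eq:Qsummands}). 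By construction the successive quotients are precisely $\iaqvect{a} = \iacvect{a}/\iacvect{a+1}$, and their Hilbert functions are $\Dhdvect{a}$ by Definition~\ref{ref:symmetricdecomposition:def}.

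Now fix $i \geq 0$ and consider the quotient $B := \grDA/\iacvect{i+1}$. Since $\iacvect{i+1}$ is a homogeneous ideal of the standard graded algebra $\grDA$ (Lemma~\ref{ref:associatedStandardGraded:lem}), the quotient $B$ is also standard graded. The induced filtration
\[
    B \supseteq \iacvect{1}/\iacvect{i+1} \supseteq \iacvect{2}/\iacvect{i+1} \supseteq \cdots \supseteq \iacvect{i}/\iacvect{i+1} \supseteq 0
\]
has successive quotients $\iaqvect{0}, \iaqvect{1}, \ldots, \iaqvect{i}$, and taking Hilbert functions additively along the short exact sequences yields
\[
    H_B = \sum_{a=0}^{i} \Dhdvect{a}.
\]

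Finally, since $B$ is standard graded, Macaulay's growth theorem (Theorem~\ref{ref:MacaulayGrowth:thm}) applies directly to $H_B$, giving Macaulay's bound~\eqref{eq:MacaulayBound} for $\sum_{a=0}^{i} \Dhdvect{a}$. The only subtle point is verifying that $\iacvect{a+1} \subseteq \iacvect{a}$ as subspaces of $\grDA$ (rather than just as abstract quotients), which is what justifies reading the $\Dhdvect{a}$ as Hilbert functions of genuine subquotients of $\grDA$; once this is in hand, the remainder of the argument is essentially bookkeeping plus one invocation of Macaulay's theorem.
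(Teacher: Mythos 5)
Your argument is essentially the same as the paper's (which simply says ``Immediate, arguing as in Lemma~\ref{ref:hilbertfromdecomposition:lem}'': the $\iacvect{a}$ form a descending chain of homogeneous ideals in $\grDA$ with graded pieces $\iaqvect{a}$, and reading off Hilbert functions of the quotients $\grDA/\iacvect{i+1}$ gives the partial sums). One small slip in your opening step: you wrote $\Dmmperp{d+1-i} \supseteq \Dmmperp{d+1} = \DA$, but the \Loevy{} filtration $\Dmmperp{\bullet}$ is \emph{ascending} in its exponent, so for $i > 0$ this inclusion runs the wrong way and $\Dmmperp{d+1-i}$ is a proper subspace of $\DA$. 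The conclusion $\iainter{i}{d+1-i} = \mm^i$ is nevertheless correct, but for a different reason: since $d$ is the socle degree we have $\mm^i \cdot \mm^{d+1-i} = \mm^{d+1} = 0$, hence $\mm^i \subseteq \Dmmperp{d+1-i}$ directly. With that repair the rest of the argument — the nestedness $\iacvect{a+1}\subseteq\iacvect{a}$, the bookkeeping along the filtration of $B=\grDA/\iacvect{i+1}$, and the appeal to Macaulay's growth theorem — goes through exactly as you wrote it.
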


\begin{proof}
    Immediate, arguing as in Lemma~\ref{ref:hilbertfromdecomposition:lem}.
\end{proof}

    \begin{example}
        From Lemma~\ref{ref:sumsofdhdaraOseqence:lem}
        it follows that there does not exist a finite local Gorenstein algebra
        $\DA$ with Hilbert function decomposition
            \[\begin{matrix}
                \Dhdvect{0}=&(1,& 1,& 1,& 1, & 1, & 1)\\
                \Dhdvect{1}=&(0,& 0,& 1, & 0, & 0)\\
                \Dhdvect{2}=&(0,& 0,& 0, &0)\\
                \Dhdvect{3}=&(0,& 1,& 0).
            \end{matrix}\]
        Indeed, $\Dhdvect{0} + \Dhdvect{1} = (1, 1, 2, 1, 1, 1)$ violates
        Corollary~\ref{ref:nonincreasinghf:cor}.
        Note that $H_{\DA} = (1, 2, 2, 1, 1, 1)$ seems possible to obtain and
        indeed there exist Gorenstein algebras with such function and
        decomposition
            \[\begin{matrix}
                \Dhdvect{0}=&(1,& 1,& 1,& 1, & 1, & 1)\\
                \Dhdvect{1}=&(0,& 0,& 0, & 0, & 0)\\
                \Dhdvect{2}=&(0,& 1,& 1, &0)\\
                \Dhdvect{3}=&(0,& 0,& 0).
            \end{matrix}\]
    \end{example}

    \begin{example}
        The Hilbert function $H = (1, 3, 3, 2, 1, 1)$ has exactly two
        possible decompositions:
            \[\begin{matrix}
                \Dhdvect{0}=&(1,& 1,& 1,& 1, & 1, & 1)\\
                \Dhdvect{1}=&(0,& 1,& 1, & 1, & 0)\\
                \Dhdvect{2}=&(0,& 1,& 1, &0)\\
                \Dhdvect{3}=&(0,& 0,& 0).
            \end{matrix}\]
            \[\begin{matrix}
                \Dhdvect{0}=&(1,& 1,& 1,& 1, & 1, & 1)\\
                \Dhdvect{1}=&(0,& 1,& 2, & 1, & 0)\\
                \Dhdvect{2}=&(0,& 0,& 0, &0)\\
                \Dhdvect{3}=&(0,& 1,& 0).
            \end{matrix}\]
            We will later see in
            Examples~\ref{ex:133211firstdecomposition},~\ref{ex:133211snddecomposition}
            see that both decompositions are possible. In
            Proposition~\ref{ref:squares:prop}
            we will also see that $\Dhdvect{3} \neq (0, 0, 0)$ in the second
            decomposition constraints the corresponding algebra.
            This example is treated in depth
            in~\cite[Section~4B]{iarrobino_associated_graded}.
    \end{example}

    \section{Betti tables, Boij and S\"oderberg theory}\label{ssec:betti}

    The study of Betti tables is indispensable for analysis of graded
    algebras. We will use it only sparsely, mainly in
    Section~\ref{ssec:secants}, so we content ourselves with
    an informal discussion. An excellent reference is~\cite{eisenbud:syzygies}.

    \newcommand{\Dfreeres}{\mathcal{F}}%
    Let $(\DA, \mm, \kk)$ be a finite $\kk$-algebra of socle degree $d$. Suppose
    that $\DA = \DS/I$ is presented as a quotient of polynomial ring $\DS$ of
    dimension $n$ by a homogeneous ideal.
    If $\DA$ is Gorenstein and its Hilbert function is symmetric
    (Proposition~\ref{ref:symmetryGorensteingraded:prop}) and, as we now
    recall, this is a consequence of
    the symmetry of its resolution.
    First, by Hilbert's theorem, the $\DS$-module $\DS/I$ has a minimal graded free
    resolution of length $n$:
    \newcommand{\betti}[1]{\beta_{#1}}%
    \begin{equation}\label{eq:freeresolution}
        0\to \bigoplus_i \DS(-i)^{\oplus\betti{n, i}} \to \bigoplus_i \DS(-i)^{\oplus\betti{n-1,
        i}} \to  \ldots \to \bigoplus_i \DS(-i)^{\oplus\betti{1, i}} \to \DS.
    \end{equation}
    Here $M(i)$ denotes the module $M$ with grading shifted by $i$.
    Since $\DA$ is finite, it is Cohen-Macaulay, so $\Ext^i(\DA, \DS(-n)) = 0$ for
    all $i < n$. Moreover $\Ext^n(\DA, \DS(-n))  \simeq  \canA(d)$,
    see~\cite[Corollary~21.16]{Eisenbud} for details. Therefore, if we denote
    complex~\eqref{eq:freeresolution} by $\Dfreeres$, then $\Hom(\Dfreeres,
    S(-n-d))$ is the minimal free resolution of $\canA$. In particular, $\sum_i \betti{n,
    i}$ is the minimal number of generators of $\canA$; by
    Proposition~\ref{ref:Gorchar:eis:prop} it is equal to one if and only if
    $\DA$ is Gorenstein.

    Suppose now and for the remaining part of this section that $\DA$ is
    Gorenstein. Then $\canA  \simeq \DA$ so that $\Hom(\Dfreeres, S(-n-d))
    \simeq \Dfreeres$ by uniqueness.
    This implies that the Betti table
    \[
        \begin{bmatrix}
            1            & \betti{1, 1} &  \betti{2, 2} &  \ldots & \betti{n-1,n-1} & 0\\
            0            & \betti{1, 2} &  \betti{2, 3} & \ldots  & \betti{n-1,n}  & 0\\
                         &              &     \ldots    &         &                 &\\
            0            & \betti{1, d+1} & \betti{2,d+2}&  \ldots & \betti{n-1,n+d-1} & 1\\
        \end{bmatrix}
    \]
    is symmetric around its center.
    \begin{example}
        Let $\DA = \kk[\Dx_1, \ldots ,\Dx_4]/I$
        with $I$ generated by $\Dx_i\Dx_j, \Dx_i^3 - \Dx_j^3$ for $i\neq j$.
        We compute its Betti table
        \[
            \begin{bmatrix}
                1 & 0 & 0 & 0 & 0\\
                0 & 6 & 8 & 3 & 0\\
                0 & 3 & 8 & 6 & 0\\
                0 & 0 & 0 & 0 & 1\\
            \end{bmatrix}
        \]
        and see that in the last column there is a single one, so $\DA$ is
        Gorenstein and that indeed the table is symmetric.
    \end{example}

    Boij-S\"oderberg theory gives a beautiful description of the cone of all
    Betti tables of graded quotients of fixed $\DS$,
    see~\cite{boij_Soederberg}.
    In the following we never use this theory explicitly, but below we give an
    example showing how it restricts the possible shapes of Betti tables of
    finite Gorenstein algebras (for another example,
    see~\cite[Section~4.1]{erman_velasco_syzygetic_smoothability}).

    \begin{example}
        Let $\DA = \kk[\Dx_1, \ldots ,\Dx_7]/I$ be a finite graded Gorenstein algebra with
        $H_{\DA} = (1, 7, 7, 1)$. Then there exist $a, b, c\in
        \mathbb{Q}_{\geq 0}$
        with $a+b+c\leq 1$, such that the Betti table of $\DA$ is
        \[
            \begin{bmatrix}
                1 & 0  & 0        & 0                       & 0 & 0 & 0 & 0\\
                0 & 21 & 64 + 16a & 70+70a + 35b            & \frac{224}{5}(3a+2b+c)  & 70a+35b  &  16a & 0\\
                0 & 16a& 70a + 35b& \frac{224}{5}(3a+2b+c)  & 70+70a + 35b  &
                64 + 16a  & 21  & 0\\
                0 & 0  & 0        & 0                       & 0 & 0 & 0 & 1\\
            \end{bmatrix}.
        \]
    \end{example}

\chapter{Macaulay's inverse systems
(apolarity)}\label{sec:macaulaysinversesystems}

    \newcommand{\myN}{\mathbb{N}}%
    \newcommand{\DmmS}{\mathfrak{m}_S}%
    \newcommand{\Ddual}[1]{{#1}^{\vee}}%
    \newcommand{\ip}[2]{\left\langle #1, #2\right\rangle}%
    \newcommand{\ord}[1]{\operatorname{ord}\left(#1\right)}%

    So far we have analysed finite $\kk$-algebras abstractly. Now we switch to
    embedded setting; we consider finite quotients of a polynomial ring
    $\DS$
    over $\kk$.  In fact we restrict to finite local algebras, so we consider
    finite quotients of a power series ring $\DShat$, the
    completion of $\DS$.
    Macaulay's inverse systems view this situation through a dual setting.
    Namely, for each finite quotient $\DShat/I$ we have $\omega_{\DShat/I} =
    \Hom_{\kk}(\DShat/I, \kk) \subset \Hom_{\kk}(\DShat, \kk)$.
    We analyse the generators of $\omega_{\DShat/I}$ and, more generally, the
    action of $\DShat$ on $\Hom_{\kk}(\DShat, \kk)$.
    In our presentation we closely
    follow~\cite{Jel_classifying}.

    In the first two sections we develop the theory of $\DShat$ action on $\Hom_{\kk}(\DShat,
    \kk)$ and the theory of inverse systems. In Section~\ref{ssec:Gorensteininapolarity} we explain, how
    this theory gives examples and even classifies Gorenstein quotients
    of $\DShat$.

\section{Definition of contraction action}\label{ssec:contraction}
By $\myN$ we denote the set of non-negative integers.  Let $\DShat$ be a power
series ring over $\kk$ of dimension $\dim \DShat = n$ and let $\DmmS$ be its
maximal ideal.
By $\ord{\sigma}$ we denote the order of a non-zero $\sigma\in \DShat$ i.e.~the largest $i$
such that $\sigma\in \DmmS^i$. Then $\ord{\sigma} = 0$ if and only if
$\sigma$ is invertible.
Let $\Ddual{\DShat} = \Homthree{\kk}{\DShat}{\kk}$ be the space of functionals on $\DShat$.
We denote the pairing between $\DShat$ and $\Ddual{\DShat}$ by
\[
    \ip{-}{-}: \DShat \times \Ddual{\DShat} \to \kk.
\]

\begin{definition}\label{ref:contraction:propdef}
    The dual space $\DP \subset \Ddual{\DShat}$ is the linear subspace of functionals
    eventually equal to zero:
    \[
        \DP = \left\{ f\in \Ddual{\DShat}\ |\ \forall_{D\gg 0}\ \ip{\DmmS^D}{f} = 0\right\}.
    \]
    On $\DP$ we have a structure of $\DShat$-module via precomposition:
    for every $\sigma\in \DShat$ and $f\in \DP$ the element $\sigma\hook f\in
    \DP$ is defined via the equation
    \begin{equation}\label{eq:hookvsip}
        \ip{\tau}{\sigma\hook f} = \ip{\tau\sigma}{f}\quad \mbox{for
        every}\ \tau\in \DShat.
    \end{equation}
    This action is called \emph{contraction}.
\end{definition}
    We will soon equip $\DP$ with topology and a structure of a ring
    (Definition~\ref{ref:dividedpows:def}), but its vector space structure is
    sufficient for most purposes.

    The existence of contraction action is a special case of the following
    construction, which is basic and foundational for our approach.
    Let $L\colon \DShat \to \DShat$ be a $\kk$-linear map. Assume that $L$ is
    $\DmmS$-adically continuous: there is sequence of integers $o_i$
    such that $L(\DmmS^{i}) \subset \DmmS^{o_i}$ for all $i$ and
    $\lim_{i\to \infty} o_i = \infty$.
    Then the dual map $\Ddual{L}:\Ddual{\DShat}\to \Ddual{\DShat}$ restricts to
    $\Ddual{L}:\DP \to \DP$. Explicitly, $\Ddual{L}$ is given by the equation
    \begin{equation}
        \ip{\tau}{\Ddual{L}(f)} = \ip{L(\tau)}{f}\quad \mbox{for
        every}\ \tau\in \DShat,\ f\in \DP.
        \label{eq:dual}
    \end{equation}
    To obtain contraction with respect to $\sigma$ we use $L(\tau) =
    \sigma\tau$, the multiplication by $\sigma$. Later in this thesis we will also
    consider maps $L$ which are automorphisms or derivations of $\DShat$.

\renewcommand{\aa}{\mathbf{a}}%
\newcommand{\bb}{\mathbf{b}}%
\newcommand{\xx}{\mathbf{x}}%
\newcommand{\DPel}[2]{#1^{[#2]}}%
To get a down to earth description of $\DP$, choose $\Dx_1, \ldots
,\Dx_n\in \DShat$ such that $\DShat = \kk[[\Dx_1, \ldots ,\Dx_n]]$.
Write $\Dx^{\aa}$ to denote $\Dx_1^{a_1} \ldots
\Dx_n^{a_n}$. For every $\aa\in \myN^{n}$ there is a unique element
$\DPel{\xx}{\aa}\in \DP$ dual to $\Dx^{\aa}$, given by
\[
    \ip{\Dx^{\bb}}{\DPel{\xx}{\aa}} = \begin{cases}
        1 & \mbox{if }\aa = \bb\\
        0 & \mbox{otherwise.}
    \end{cases}
\]
Additionally, we define $x_i$ as the functional dual to $\Dx_i$, so that $x_i
= \DPel{\xx}{(0, \ldots 0, 1, 0,  \ldots , 0)}$ with one on $i$-th position.
Let us make a few remarks:
\begin{enumerate}
    \item The functionals $\DPel{\xx}{\aa}$ form a basis of $\DP$. We have a
        natural isomorphism
        \begin{equation}
            \Ddual{\DP} = \DShat.
            \label{eq:dualdual}
        \end{equation}
    \item The contraction action is given by the formula
        \[
            \Dx^{\aa} \hook \DPel{\xx}{\bb} =
            \begin{cases}
                \DPel{\xx}{\bb - \aa} & \mbox{if } \bb\geq \aa,\mbox{ that is, }\forall_{i}\ b_i \geq a_i\\
                0 & \mbox{otherwise}.
            \end{cases}
        \]
        Therefore our definition agrees with the one
        from~\cite[Definition~1.1,
        p.~4]{iarrobino_kanev_book_Gorenstein_algebras}.
\end{enumerate}
We say that $\DPel{\xx}{\aa}$ has degree $\sum a_i$. We will speak
about constant forms, linear forms, (divided) polynomials of bounded degree
etc. Note that the forms of degree $d$ are just those elements of $\Ddual{\DS}
\subset \Ddual{\DShat}$ which are perpendicular to all forms of degree $\neq
d$. Thus this notion is independent of choice of basis. However it depends on
$\DS$, so it is not intrinsic to $\DShat$. What is intrinsic is the space
$\DP_{\leq d}$; indeed it is the perpendicular of $\DmmS^{d+1}$.

We endow $\DP$ with a topology, which is the Zariski topology of an affine
space, in particular $\DP_{\leq d}$ inherits the usual Zariski topology of
finite dimensional affine space. This topology will be used when speaking about
general polynomials and closed orbits.

Now we will give a ring structure on $\DP$.
%It will be used
%crucially in Proposition~\ref{ref:dualautomorphism:prop}.
For multi-indices $\aa, \bb\in \myN^{n}$ we define $\aa! = \prod (a_i!)$, $\sum
\aa = \sum a_i$ and $\binom{\aa + \bb}{\aa} = \prod_i
\binom{a_i + b_i}{a_i} = \binom{\aa + \bb}{\bb}$.
\begin{definition}\label{ref:dividedpows:def}
    We define multiplication on $\DP$ by
    \begin{equation}\label{eq:divpowmult}
        \DPel{\xx}{\aa} \cdot \DPel{\xx}{\bb} := \binom{\aa + \bb}{\aa}
        \DPel{\xx}{\aa + \bb}.
    \end{equation}
    In this way $\DP$ is a \emph{divided power ring}. We denote it by $\DP =
    \kdp[x_1, \ldots ,x_n]$.
\end{definition}

The multiplicative structure on $\DP$ can be defined in a coordinate-free
manned using a natural comultiplication on $\DS$.
Since $\Spec \DS$ is an affine space, it has an group scheme structure and in
particular an addition map $\tmpAdd:\Spec\DS \times
\Spec\DS \to \Spec\DS$, which induces a comultiplication homomorphism
$\tmpAdd^{\#}:\DS \to \DS \otimes \DS$ and in turn a dual map
$\tmpAdd^{\vee}:(\DS \otimes \DS)^{\vee} \to \DS^{\vee}$ which can be
restricted to $\Ddual{\tmpAdd}:\DS^{\vee} \otimes \DS^{\vee} \to \DS^{\vee}$.
Explicitly, $\Ddual{\tmpAdd}(x) = 1\tensor x + x\tensor 1$ for all $x\in
\DS_1$, so $\Ddual{\tmpAdd}(\mm^d) \subset \sum \mm^i \tensor \mm^{d-i}$.
Therefore, if $f\in \DP$ is annihilated by $\mm^{d}$ and $g\in\DP$ by $\mm^e$,
then $\Ddual{\tmpAdd}(f, g)$ is annihilated by $\mm^{d+e}$. Hence
$\Ddual{\tmpAdd}$ restrict to a map $\DP \otimes \DP \to \DP$, which in
coordinates in given by~\eqref{eq:divpowmult}.
We refer to \cite[\S A2.4]{Eisenbud} for
details in much greater generality. See Ehrenborg, Rota~\cite{Ehrenborg} for
an interpretation in terms of Hopf algebras. Once more, we stress that the
multiplicative structure on $\DShat$ depends on $\DS$.
\begin{example}\label{ex:dividedpows}
    Suppose that $\kk$ is of characteristic $p > 0$. Then $\DP$ is not isomorphic to
    a polynomial ring. Indeed, $(x_1)^p =
    p!\DPel{x_1}{p} = 0$. Moreover, $\DPel{x_1}{p}$
    is not in the subring generated by $x_1, \ldots ,x_n$.
\end{example}

\newcommand{\ithpartial}[2]{#1^{(#2)}}%
 For an element
 $\sigma\in \DShat = \kk[\Dx_1, \ldots ,\Dx_n]$ denote its $i$-th
partial derivative by $\ithpartial{\sigma}{i}\in \DShat$, for example $\ithpartial{(\Dx_1^2)}{1} = 2\Dx_1$ and
$\ithpartial{(\Dx_1^2)}{2} = 0$.
Note that the linear forms of $\DShat$ act on $\DP$ as derivatives.
Therefore we can interpret $\DShat$ as lying inside the ring of differential
operators on $\DP$.  The following related fact is very useful in
computations.
\begin{lemma}\label{ref:commutator:lem}
    Let $\sigma\in \DShat$. For every $f\in \DP$ we have
    \begin{equation}\label{eq:weylalgebra}
        \sigma\hook (x_i\cdot f) - x_i\cdot (\sigma\hook f) =
        \ithpartial{\sigma}{i}\hook f.
    \end{equation}
\end{lemma}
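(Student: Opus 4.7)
The plan is to prove the identity by pairing both sides against an arbitrary $\tau \in \DShat$ and invoking nondegeneracy of the pairing $\ip{-}{-} \colon \DShat \times \DP \to \kk$: by~\eqref{eq:dualdual}, if $\ip{\tau}{g} = 0$ for every $\tau\in \DShat$ then $g=0$ in $\DP$. The key intermediate claim will be the adjoint relation
\[
  \ip{\tau}{x_i \cdot g} = \ip{\tau^{(i)}}{g}, \quad \text{for all } \tau \in \DShat \text{ and } g\in \DP,
\]
which expresses that multiplication by $x_i$ on $\DP$ is adjoint to the formal partial derivative $\partial/\partial\Dx_i$ on $\DShat$.

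To verify this adjoint relation, I would use bilinearity and continuity (both sides vanish for $\tau\in\DmmS^D$ once $D$ is large enough relative to $g$) to reduce to basis vectors $\tau = \Dx^{\bb'}$ and $g = \DPel{\xx}{\bb}$, where $e_i$ denotes the $i$-th standard basis vector of $\myN^n$. Then~\eqref{eq:divpowmult} gives $x_i\cdot\DPel{\xx}{\bb} = (b_i+1)\DPel{\xx}{\bb+e_i}$, so the left-hand side becomes $(b_i+1)\delta_{\bb', \bb+e_i}$, while $\tau^{(i)} = b'_i\Dx^{\bb' - e_i}$ produces $b'_i\delta_{\bb'-e_i,\bb}$; these agree because $b'_i = b_i+1$ precisely when $\bb' = \bb+e_i$.

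Given the adjoint relation, I would combine it with the defining identity~\eqref{eq:hookvsip} and the Leibniz rule $(\tau\sigma)^{(i)} = \tau^{(i)}\sigma + \tau\sigma^{(i)}$ (which holds for formal partial derivatives of power series in any characteristic) to compute, for arbitrary $\tau\in\DShat$,
\begin{align*}
\ip{\tau}{\sigma\hook (x_i\cdot f)}
  &= \ip{\tau\sigma}{x_i\cdot f} = \ip{(\tau\sigma)^{(i)}}{f} \\
  &= \ip{\tau^{(i)}\sigma}{f} + \ip{\tau\sigma^{(i)}}{f} \\
  &= \ip{\tau^{(i)}}{\sigma\hook f} + \ip{\tau}{\sigma^{(i)}\hook f} \\
  &= \ip{\tau}{x_i\cdot(\sigma\hook f)} + \ip{\tau}{\sigma^{(i)}\hook f},
\end{align*}
applying the adjoint relation in the last step with $g = \sigma\hook f$. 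Since the resulting difference $\sigma\hook(x_i\cdot f) - x_i\cdot(\sigma\hook f) - \sigma^{(i)}\hook f$ pairs to zero with every $\tau\in\DShat$, nondegeneracy finishes the proof. The only substantive computation is the adjoint relation itself, a short check on basis vectors; I do not anticipate any real obstacle beyond bookkeeping with divided-power conventions.
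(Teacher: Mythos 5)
Your proof is correct, and it takes a genuinely different route from the paper's. The paper works entirely on the $\DP$ side: it reduces by linearity to $\sigma$ and $f$ monomials, peels off from $\sigma$ the factor $\tau$ not involving $\Dx_i$ (using that $\tau\hook(x_i f) = x_i(\tau\hook f)$ when $\tau$ has no $\Dx_i$), and then carries out an explicit divided-power calculation for $\sigma = \Dx_i^r$ acting on $f = \DPel{x_i}{s}g$. You instead dualize: you pass to the pairing with arbitrary $\tau\in\DShat$, observe that nondegeneracy reduces everything to identities of scalars, and isolate the adjoint relation $\ip{\tau}{x_i\cdot g} = \ip{\tau^{(i)}}{g}$ as the single substantive input. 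Once that is in hand, the Weyl-algebra commutator falls out formally from the Leibniz rule on $\DShat$, with no further computation. Both proofs reduce at bottom to a one-line check on basis monomials; yours isolates that check in a cleaner conceptual form (``multiplication by $x_i$ on $\DP$ is the adjoint of $\partial/\partial\Dx_i$ on $\DShat$'') and exhibits~\eqref{eq:weylalgebra} as the dual of the Leibniz rule, which is more illuminating, while the paper's reduction has the merit of never leaving $\DP$. One small remark: you invoke~\eqref{eq:dualdual} for the nondegeneracy step, but this is not really needed --- since $g\in\DP\subset\Ddual{\DShat}$, the pairing $\ip{\tau}{g}$ is literally evaluation $g(\tau)$, so $\ip{\tau}{g}=0$ for all $\tau$ gives $g=0$ immediately.
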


\begin{proof}
    Since the formula is linear in $\sigma$ and $f$ we may assume these are
    monomials. Let $\sigma = \Dx_i^{r}\tau$, where $\Dx_i$ does not appear in
    $\tau$. Then $\ithpartial{\sigma}{i} = r\Dx_i^{r-1}\tau$.
    Moreover $\tau\hook (x_i \cdot f) = x_i\cdot (\tau\hook f)$. By replacing
    $f$ with $\tau\hook f$, we reduce to the case $\tau = 1$, $\sigma =
    \Dx_i^{r}$.

    Write $f = \DPel{x_i}{s}g$ where $g$ is a monomial in variables other
    than $x_i$. Then $x_i\cdot f = (s+1)\DPel{x_i}{s+1} g$ according
    to~\eqref{eq:divpowmult}.
    If $s+1 < r$ then both sides of~\eqref{eq:weylalgebra} are
    zero. Otherwise
    \[
        \sigma\hook (x_i \cdot f) = (s+1)\DPel{x_i}{s+1-r}g,\ \
        x_i\cdot (\sigma\hook f) = x_i\cdot\DPel{x_i}{s-r}g =
        (s-r+1)\DPel{x_i}{s-r+1}g,\ \ \ithpartial{\sigma}{i}\hook f =
        r\DPel{x_i}{s-(r-1)}g,
    \]
    so Equation~\eqref{eq:weylalgebra} is valid in this case also.
\end{proof}

\begin{remark}
    Lemma~\ref{ref:commutator:lem} applied to $\sigma = \Dx_i$ shows that
    $\Dx_i\hook (x_i\cdot f) - x_i\cdot (\Dx_i\hook f) = f$. This can be
    rephrased more abstractly by saying that $\Dx_i$ and $x_i$
interpreted as linear operators on $\DP$ generate a Weyl algebra. Since these
operators commute with other $\Dx_j$ and $x_j$, we see that $2n$ operators
$\Dx_i$ and $x_i$ for $i=1, \ldots ,n$ generate the $n$-th Weyl algebra.
\end{remark}

Example~\ref{ex:dividedpows} shows that $\DP$ with its ring structure has
certain properties distinguishing it from the polynomial ring, for example it contains nilpotent elements.
Similar phenomena do not occur in degrees lower than the characteristic or in
characteristic zero, as we show in Proposition~\ref{ref:divpowssmallarepoly:prop} and
Proposition~\ref{ref:divpowerispoly:prop} below.
\begin{proposition}\label{ref:divpowssmallarepoly:prop}
    Let $\DP_{\geq d}$ be the linear span of $\{\DPel{\xx}{\aa}\ |\  \sum \aa \geq d\}$.
    Then $\DP_{\geq d}$ is an ideal of $\DP$, for all $d$.
    Let $\kk$ be a field of characteristic $p$.
    The ring $\DP/\DP_{\geq p}$ is isomorphic to the truncated polynomial ring. In fact
    \[
        \Omega:\DP/\DP_{\geq p}\to \kk[x_1, \ldots ,x_n]/(x_1, \ldots ,x_n)^p
    \]
    defined by
    \[
        \Omega\left( \DPel{\xx}{\aa} \right) = \frac{x_1^{a_1}  \ldots
        x_n^{a_n}}{a_1! \ldots a_n!}.
    \]
    is an isomorphism.
\end{proposition}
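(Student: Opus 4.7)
The first assertion is immediate from the multiplication formula~\eqref{eq:divpowmult}: for arbitrary $\aa$ and any $\bb$ with $\sum\bb\geq d$, the product $\DPel{\xx}{\aa}\cdot\DPel{\xx}{\bb}$ is a $\kk$-multiple of $\DPel{\xx}{\aa+\bb}$, whose total degree is $\sum\aa+\sum\bb\geq d$, so it lies in $\DP_{\geq d}$. By $\kk$-bilinearity, this suffices to conclude $\DPel{\xx}{\aa}\cdot\DP_{\geq d}\subseteq\DP_{\geq d}$ and hence $\DP\cdot\DP_{\geq d}\subseteq\DP_{\geq d}$.

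For the second assertion I would proceed as follows. Both $\DP/\DP_{\geq p}$ and $\kk[x_1,\ldots,x_n]/(x_1,\ldots,x_n)^p$ carry distinguished $\kk$-bases indexed by $\{\aa\in\myN^n : \sum\aa < p\}$, namely the classes of $\DPel{\xx}{\aa}$ and of $x^{\aa}$ respectively. For such $\aa$, each $a_i<p$, so $a_i!$ (and hence $\aa!=\prod_i a_i!$) is a unit in $\kk$; this makes the defining formula for $\Omega$ meaningful, and $\Omega$ visibly sends the first basis to the second up to nonzero scalars. Hence $\Omega$ is automatically a $\kk$-linear isomorphism, and the only remaining task is to verify multiplicativity.

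By $\kk$-bilinearity it is enough to test on pairs of basis elements $\DPel{\xx}{\aa},\DPel{\xx}{\bb}$ with $\sum\aa,\sum\bb<p$. If $\sum(\aa+\bb)\geq p$, then $\DPel{\xx}{\aa+\bb}\in\DP_{\geq p}$ and $x^{\aa+\bb}\in(x_1,\ldots,x_n)^p$, so both sides vanish in the respective quotient rings. If instead $\sum(\aa+\bb)<p$, then $(\aa+\bb)!$ is also invertible in $\kk$, the integer identity $\binom{\aa+\bb}{\aa}=(\aa+\bb)!/(\aa!\bb!)$ passes to $\kk$, and~\eqref{eq:divpowmult} gives
\[
\Omega\left(\DPel{\xx}{\aa}\cdot\DPel{\xx}{\bb}\right) = \binom{\aa+\bb}{\aa}\frac{x^{\aa+\bb}}{(\aa+\bb)!} = \frac{x^{\aa+\bb}}{\aa!\,\bb!} = \Omega(\DPel{\xx}{\aa})\,\Omega(\DPel{\xx}{\bb}).
\]

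The only delicate point—and the main (minor) obstacle—is precisely the interaction between the divided-power coefficient $\binom{\aa+\bb}{\aa}$ and the factorials in the definition of $\Omega$: one must split into the two cases above so that the identity $\binom{\aa+\bb}{\aa}/(\aa+\bb)!=1/(\aa!\bb!)$ is invoked in $\kk$ only after verifying that every factorial appearing is invertible, namely precisely when $\sum(\aa+\bb)<p$; in the complementary range, the quotient structures of both rings force both sides to zero for free.
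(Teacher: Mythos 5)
Your proof is correct and follows essentially the same approach as the paper: $\Omega$ is a $\kk$-linear isomorphism because it matches up the monomial bases, and multiplicativity reduces to the binomial identity $\binom{\aa+\bb}{\aa}=(\aa+\bb)!/(\aa!\,\bb!)$. The paper's own proof is much terser (one sentence for each step) and does not explicitly flag the two points you handle carefully — that the factorials $a_i!$ are invertible only because $a_i<p$, and that when $\sum(\aa+\bb)\geq p$ the binomial identity need not make sense in $\kk$ but both sides die in the quotients anyway. Those are exactly the right details to supply, so your version is a faithful and slightly more careful rendering of the paper's argument, not a different one.
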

\begin{proof}
    Since $\Omega$ maps a basis of $\DP/I_p$ to a basis of $\kk[x_1, \ldots
    ,x_n]/(x_1, \ldots ,x_n)^p$, it is
    clearly well defined and bijective. The fact that $\Omega$ is a $\kk$-algebra homomorphism
    reduces to the equality $\binom{\aa + \bb}{\aa} = \prod \frac{(a_i +
    b_i)!}{a_i!b_i!}$.
\end{proof}

\paragraph{Characteristic zero case.}
In this paragraph we assume that $\kk$ is of characteristic zero.
This case is technically easier, but there are two competing conventions:
contraction and partial differentiation. These agree up to an
isomorphism. The main aim of this section is clarify this isomorphism and
provide a dictionary between divided power rings, used in this thesis, and
polynomial rings in characteristic zero.
Contraction was already defined above, now we define the action of $\DShat$ via
partial differentiation.
\begin{definition}\label{ref:partialdiff:def}
    Let $\kk[x_1, \ldots ,x_n]$ be a polynomial ring. There is a (unique) action
    of $\DShat$ on $\kk[x_1, \ldots ,x_n]$ such that the element $\Dx_i$ acts a
    $\frac{\partial}{\partial x_i}$. For $f\in \kk[x_1, \ldots ,x_n]$ and
    $\sigma\in \DShat$ we denote this action as $\sigma\circ f$.
\end{definition}

The following Proposition~\ref{ref:divpowerispoly:prop} shows that in
characteristic zero the ring $\DP$ is isomorphic to a
polynomial ring and the isomorphism identifies the $\DShat$-module structure on $\DP$
with that from Definition~\ref{ref:partialdiff:def} above.
\begin{proposition}\label{ref:divpowerispoly:prop}
    Suppose that $\kk$ is of characteristic zero. Let $\kk[x_1, \ldots ,x_n]$ be a
    polynomial ring with $\DShat$-module structure as defined
    in~\ref{ref:partialdiff:def}. Let $\DPut{stale}{\Omega}:\DP \to \kk[x_1,
    \ldots ,x_n]$ be defined via
    \[
        \Dstale\left(\DPel{\xx}{\aa}\right) = \frac{x_1^{a_1}  \ldots x_{n}^{a_n}}{a_1! \ldots
        a_n!}.
    \]
    Then $\Dstale$ is an isomorphism of rings and an isomorphism of $\DShat$-modules.
\end{proposition}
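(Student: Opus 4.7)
The plan is to establish in turn that $\Omega$ is a $\kk$-linear bijection, a ring homomorphism, and an $\DShat$-module homomorphism. Since each basis element $\DPel{\xx}{\aa}$ of $\DP$ is sent to the basis element $x^{\aa}/\aa!$ of $\kk[x_1,\ldots,x_n]$ rescaled by the nonzero (here we use $\chark = 0$) constant $1/\aa!$, the map $\Omega$ is clearly a $\kk$-linear isomorphism of vector spaces.

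For the ring structure, I would repeat verbatim the computation from the proof of Proposition~\ref{ref:divpowssmallarepoly:prop}: the identity
\[
    \binom{\aa+\bb}{\aa} \;=\; \prod_{i=1}^{n}\frac{(a_i+b_i)!}{a_i!\,b_i!}
\]
shows that
\[
    \Omega\bigl(\DPel{\xx}{\aa}\cdot \DPel{\xx}{\bb}\bigr)
    = \binom{\aa+\bb}{\aa}\,\frac{x^{\aa+\bb}}{(\aa+\bb)!}
    = \frac{x^{\aa}}{\aa!}\cdot\frac{x^{\bb}}{\bb!}
    = \Omega\bigl(\DPel{\xx}{\aa}\bigr)\,\Omega\bigl(\DPel{\xx}{\bb}\bigr).
\]
No truncation is required in characteristic zero, so this suffices.

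For the $\DShat$-module structure, the key observation is that both the contraction action of $\DShat$ on $\DP$ and the differential action from Definition~\ref{ref:partialdiff:def} are $\kk$-linear and $\DmmS$-adically continuous. Hence, to conclude that $\Omega$ intertwines the two actions, it is enough to check the compatibility on the topological generators $\Dx_1,\ldots,\Dx_n$ of $\DShat$ (and on basis elements of $\DP$). This reduces to the one-line verification that
\[
    \Omega(\Dx_i\hook \DPel{\xx}{\aa}) \;=\; \frac{x^{\aa-e_i}}{(\aa-e_i)!}
    \;=\; \frac{\partial}{\partial x_i}\!\left(\frac{x^{\aa}}{\aa!}\right)
    \;=\; \Dx_i\circ \Omega(\DPel{\xx}{\aa})
\]
when $a_i\geq 1$, with both sides equal to zero otherwise; here $e_i$ denotes the $i$-th standard basis vector. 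The only minor subtlety, and really the only thing to be careful about, is to justify the reduction to the generators $\Dx_i$; this is harmless because any $\sigma\in\DShat$ acts on a fixed $f\in\DP$ through only finitely many of its monomial components (all of sufficiently small order, by definition of $\DP$), and the same is true for the differential action on any polynomial.
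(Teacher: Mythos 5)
Your proposal is correct and follows the same approach as the paper: the ring-isomorphism part is proved by exactly the computation the paper cites (Proposition~\ref{ref:divpowssmallarepoly:prop}), and the $\DShat$-module-homomorphism part is simply the verification that the paper ``leaves to the reader.'' Your reduction to the generators $\Dx_i$ is the right way to do it — both the contraction action and the differentiation action are algebra actions, i.e.\ $(\sigma\tau)\hook f = \sigma\hook(\tau\hook f)$ and likewise for $\circ$, so agreement on the $\Dx_i$ propagates to all of $\DS$, and the $\DmmS$-adic-continuity observation then handles passage to $\DShat$.
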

\begin{proof}
    The map $\Dstale$ is an isomorphism of $\kk$-algebras by the same argument as in
    Proposition~\ref{ref:divpowssmallarepoly:prop}.
     We leave the check that $\Dstale$ is a $\DShat$-module
    homomorphism to the reader.
\end{proof}

Summarizing, we get the following corresponding notions.
\begin{center}

\begin{tabular}{@{}l l l @{}}
    Arbitrary characteristic && Characteristic zero \\ \midrule
    divided power series ring $\DP$ && polynomial ring $\kk[x_1, \ldots ,x_n]$\\
    $\DShat$-action by contraction (precomposition) denoted $\sigma\hook f$  && $\DShat$ action by
    derivations denoted $\sigma\circ f$\\
    $\DPel{\xx}{\aa}$ && $\xx^{\aa}/\aa!$\\
    $x_i = \DPel{\xx}{(0, \ldots 0, 1, 0,  \ldots , 0)}$ && $x_i$\\
\end{tabular}
\end{center}

\section{Automorphisms and derivations of the power series
ring}\label{ssec:automorphisms}

Let as before $\DShat = \kk[[\Dx_1, \ldots ,\Dx_n]]$ be a power series ring with
maximal ideal $\DmmS$. This ring has a huge automorphism group: for every
choice of elements $\sigma_1, \ldots ,\sigma_n\in \DmmS$ whose images span
$\DmmS/\DmmS^2$ there is a unique
automorphism $\varphi\colon\DShat\to \DShat$ such that $\varphi(\Dx_i) = \sigma_i$.
Note that $\varphi$ preserves $\DmmS$ and its powers. Therefore the dual
map $\Ddual{\varphi} : \Ddual{\DShat}\to \Ddual{\DShat}$ restricts to
$\DPut{phid}{\Ddual{\varphi}}:\DP\to \DP$.
The map $\Dphid$ is defined (using the pairing of
Definition~\ref{ref:contraction:propdef}) via the condition
\begin{equation}\label{eq:definitionofdual}
    \ip{\varphi(\sigma)}{f} = \ip{\sigma}{\Dphid(f)}\quad\mbox{for all
    }\sigma\in \DShat,\ f\in \DP.
\end{equation}

\def\DDD{\mathbf{D}}%
Now we will describe this action explicitly.
\begin{proposition}\label{ref:dualautomorphism:prop}
    Let $\varphi\colon\DShat\to\DShat$ be an automorphism. Let $D_i = \varphi(\Dx_i) -
    \Dx_i$. For $\aa\in \myN^{n}$ denote $\DDD^{\aa} = D_1^{a_1} \ldots
    D_n^{a_n}$.
    Let $f\in \DP$.
    Then
    \[
        \Ddual{\varphi}(f) = \sum_{\aa\in \myN^n} \DPel{\xx}{\aa}\cdot \left(
        \DDD^{\aa} \hook f \right) = f + \sum_{i=1}^n x_i\cdot (D_i
        \hook f) +  \ldots .
    \]
\end{proposition}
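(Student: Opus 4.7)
The plan is to verify the asserted equality by showing that both sides produce identical values when paired with every monomial $\Dx^{\bb}\in \DShat$; since an element of $\DP$ is determined by its values on this spanning set, this suffices. Before starting the comparison, I would check that the sum on the right-hand side is a well-defined element of $\DP$: any $f\in \DP$ is annihilated by $\DmmS^{D}$ for some $D$, and since $\varphi$ is an automorphism preserving $\DmmS$, each $D_i = \varphi(\Dx_i)-\Dx_i$ lies in $\DmmS$. Hence $\DDD^{\aa}\in \DmmS^{\sum\aa}$, so $\DDD^{\aa}\hook f = 0$ whenever $\sum\aa \geq D$, and only finitely many summands contribute.

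For the left-hand side, commutativity of $\DShat$ lets me apply the binomial theorem coordinate by coordinate,
\[
\varphi(\Dx^{\bb}) = \prod_{i=1}^{n} (\Dx_i + D_i)^{b_i} = \sum_{\aa\leq \bb} \binom{\bb}{\aa}\, \Dx^{\bb-\aa}\DDD^{\aa},
\]
where $\binom{\bb}{\aa} = \prod_i \binom{b_i}{a_i}$. Combining~\eqref{eq:definitionofdual} with the definition~\eqref{eq:hookvsip} of contraction then gives
\[
\ip{\Dx^{\bb}}{\Ddual{\varphi}(f)} = \ip{\varphi(\Dx^{\bb})}{f} = \sum_{\aa\leq \bb} \binom{\bb}{\aa}\, \ip{\Dx^{\bb-\aa}}{\DDD^{\aa}\hook f}.
\]

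For the right-hand side I would use the description of the multiplication on $\DP$ as dual to the comultiplication $\tmpAdd^{\#}\colon \DShat\to \DShat\tensor \DShat$ recalled after Definition~\ref{ref:dividedpows:def}. Because $\tmpAdd^{\#}(\Dx^{\bb}) = \sum_{\aa\leq \bb} \binom{\bb}{\aa}\, \Dx^{\aa}\tensor \Dx^{\bb-\aa}$ and $\ip{\Dx^{\aa}}{\DPel{\xx}{\aa_0}} = \delta_{\aa,\aa_0}$, one obtains
\[
\ip{\Dx^{\bb}}{\DPel{\xx}{\aa_0}\cdot g} = \binom{\bb}{\aa_0}\,\ip{\Dx^{\bb-\aa_0}}{g} \qquad (\aa_0\leq \bb),
\]
and $0$ otherwise. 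Specializing $g = \DDD^{\aa_0}\hook f$ and summing over $\aa_0$ reproduces precisely the sum displayed for $\ip{\Dx^{\bb}}{\Ddual{\varphi}(f)}$, so the two elements of $\DP$ coincide.

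The only non-routine ingredient is the pairing formula for $\Dx^{\bb}$ against a product $\DPel{\xx}{\aa_0}\cdot g$ in $\DP$; once this is in hand the proof reduces to matching binomial coefficients, and no other obstacle arises. If one preferred to avoid the coalgebra viewpoint, the same pairing formula can be obtained directly from the explicit product rule~\eqref{eq:divpowmult} by testing $g = \DPel{\xx}{\bb_0}$, whereupon both sides collapse to $\binom{\bb}{\aa_0}\delta_{\bb,\aa_0+\bb_0}$.
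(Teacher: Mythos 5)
Your proof is correct and uses essentially the same strategy as the paper: pair both sides against a monomial $\Dx^{\bb}$, expand $\varphi(\Dx^{\bb})=\prod_i(\Dx_i+D_i)^{b_i}$ via the binomial theorem, and reduce to the key pairing identity $\ip{\Dx^{\bb}}{\DPel{\xx}{\aa_0}\cdot g}=\binom{\bb}{\aa_0}\ip{\Dx^{\bb-\aa_0}}{g}$ --- which is exactly the paper's Equation~\eqref{eq:aux}, verified there directly on basis elements rather than through the comultiplication, a route you also note in your final remark. Your preliminary observation that only finitely many $\aa$ contribute (since $D_i\in\DmmS$ forces $\DDD^{\aa}\hook f=0$ for $\sum\aa$ large) is a worthwhile explicit addition that the paper leaves implicit.
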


\begin{proof}

    We need to show that
    \[
        \ip{\sigma}{\sum_{\aa\in \myN^n} \DPel{\xx}{\aa}\cdot \left(
        \DDD^{\aa} \hook f \right)} = \ip{\varphi(\sigma)}{f}
    \]
    for all $\sigma\in \DShat$. Since $f\in \DP$, it is
    enough to check this for all $\sigma\in \kk[\Dx_1, \ldots ,\Dx_n]$. By
    lineality, we may assume that $\sigma = \Dx^{\aa}$.
    For every $g\in \DP$ let $\varepsilon(g) = \ip{1}{g}\in \kk$.
    We have
    \def\parenthese#1{\left( #1 \right)}%
    \begin{align*}
        \ip{\varphi(\sigma)}{f} = \ip{1}{\varphi(\sigma)\hook f} &=
        \varepsilon\parenthese{\varphi(\sigma)\hook f} \\
        &=
        \varepsilon\parenthese{\sum_{\bb\leq \aa} \binom{\aa}{\bb}\left(\Dx^{\aa -
        \bb}\DDD^{\bb}\right)
    \hook f} = \sum_{\bb\leq \aa}
    \varepsilon\parenthese{\binom{\aa}{\bb}\Dx^{\aa -
    \bb}\hook (\DDD^{\bb}\hook f)}.
    \end{align*}
    Consider a term of this sum. Observe that for every $g\in \DP$
    \begin{equation}\label{eq:aux}
        \varepsilon\parenthese{\binom{\aa}{\bb}\Dx^{\aa - \bb}\hook g} =
        \varepsilon\parenthese{\Dx^{\aa} \hook \left( \DPel{\xx}{\bb}\cdot
        g \right)}.
    \end{equation}
    \def\cc{\mathbf{c}}%
    Indeed, it is enough to check the above equality for $g = \DPel{\xx}{\mathbf{c}}$
    and both sides are zero unless $\cc = \aa - \bb$, thus it is enough
    to check the case $g = \DPel{\xx}{\aa - \bb}$, which is straightforward.
    Moreover note that if $\bb \not\leq \aa$, then the right hand side is zero
    for all $g$, because $\varepsilon$ is zero for all $\DPel{\xx}{\cc}$ with
    non-zero $\cc$.
    We can use~\eqref{eq:aux} and remove the restriction $\bb\leq \aa$,
    obtaining
    \begin{align*}
        \sum_{\bb} \varepsilon\parenthese{\Dx^{\aa}\hook
        \left(\DPel{\xx}{\bb}\cdot (\DDD^{\bb}\hook f)\right)} =
        \varepsilon\parenthese{ \sum_{\bb} \Dx^{\aa}\hook
        \left(\DPel{\xx}{\bb}\cdot (\DDD^{\bb}\hook f)\right)} &=
        \ip{\Dx^{\aa}}{ \sum_{\bb}\DPel{\xx}{\bb}\cdot (\DDD^{\bb}\hook f)}
        =\\
        \ip{\Dx^{\aa}}{\Ddual{\varphi}(f)} &=
        \ip{\sigma}{\Ddual{\varphi}(f)}.\qedhere
    \end{align*}
\end{proof}

Consider now a derivation $D\colon \DShat\to \DShat$, i.e., a $\kk$-linear map
satisfying $D(\sigma\tau) = \sigma D(\tau) + D(\sigma)\tau$ for all $\sigma, \tau\in \DShat$. It gives rise to a dual map
$\Ddual{D}:\DP \to \DP$, which we now  describe explicitly.
\begin{proposition}\label{ref:dualderivation:prop}
    Let $D\colon \DShat\to \DShat$ be a derivation and $D_i := D(\Dx_i)$.
    Let $f\in \DP$. Then
    \[
        \Ddual{D}(f) = \sum_{i=1}^n x_i\cdot (D_i\hook f).
    \]
\end{proposition}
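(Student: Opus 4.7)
The plan is to imitate almost verbatim the proof of Proposition~\ref{ref:dualautomorphism:prop}, replacing the multiplicative Leibniz expansion of $\varphi(\Dx^{\aa})$ by the additive Leibniz rule for a derivation. First I would note that the formula $\Ddual{D}(f) = \sum_i x_i\cdot(D_i\hook f)$ is manifestly a well-defined element of $\DP$ (only finitely many $D_i\hook f$ are nonzero since $f\in\DP$), and by the defining equation~\eqref{eq:dual} of $\Ddual{D}$ it suffices to check
\[
\ip{\Dx^{\aa}}{\textstyle\sum_{i=1}^n x_i\cdot(D_i\hook f)} \;=\; \ip{D(\Dx^{\aa})}{f}
\]
for every multi-index $\aa\in\myN^n$ and every $f\in\DP$, since monomials span $\kk[\Dx_1,\ldots,\Dx_n]$, which is dense in $\DShat$ with respect to the pairing with $\DP$.

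Next, I would compute the left-hand side term by term using Lemma~\ref{ref:commutator:lem}:
\[
\Dx^{\aa}\hook\bigl(x_i\cdot(D_i\hook f)\bigr) \;=\; x_i\cdot\bigl(\Dx^{\aa}\hook(D_i\hook f)\bigr) + \ithpartial{(\Dx^{\aa})}{i}\hook(D_i\hook f).
\]
Set $\varepsilon(g):=\ip{1}{g}$ as in the previous proof. Since $\varepsilon$ picks out the constant coefficient and $x_i\cdot g$ has no constant term (the product $x_i\cdot\DPel{\xx}{\bb}$ is always proportional to $\DPel{\xx}{\bb+e_i}$ with $\sum(\bb+e_i)\geq 1$), the first summand is killed by $\varepsilon$. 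Therefore
\[
\ip{\Dx^{\aa}}{\textstyle\sum_{i} x_i\cdot(D_i\hook f)} \;=\; \sum_{i=1}^n \varepsilon\bigl(\ithpartial{(\Dx^{\aa})}{i}\hook(D_i\hook f)\bigr) \;=\; \varepsilon\Bigl(\bigl(\textstyle\sum_{i=1}^n \ithpartial{(\Dx^{\aa})}{i}\,D_i\bigr)\hook f\Bigr).
\]

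Finally I would invoke the Leibniz rule for the derivation $D$ on $\DShat$: by induction on $|\aa|$ one has $D(\Dx^{\aa}) = \sum_{i=1}^n \ithpartial{(\Dx^{\aa})}{i}\cdot D_i$ (this is just the chain rule, since $D$ is determined by its values $D_i$ on generators). Substituting this identity into the previous display yields $\varepsilon\bigl(D(\Dx^{\aa})\hook f\bigr) = \ip{D(\Dx^{\aa})}{f}$, which is the required equality. The only minor obstacle is book-keeping the Leibniz expansion for $D(\Dx^{\aa})$ cleanly; everything else follows mechanically from Lemma~\ref{ref:commutator:lem} and the vanishing of $\varepsilon$ on $x_i\cdot\DP$, exactly as in the proof of Proposition~\ref{ref:dualautomorphism:prop} but without the combinatorial binomial coefficients that arose from the multiplicative expansion.
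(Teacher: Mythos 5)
Your proof is correct and fills in exactly the argument the paper leaves implicit when it says this proof is ``similar to the proof of Proposition~\ref{ref:dualautomorphism:prop}, although it is easier'': you reduce to pairing against monomials $\Dx^{\aa}$, use Lemma~\ref{ref:commutator:lem} together with the vanishing of $\varepsilon$ on $x_i\cdot\DP$, and then apply the Leibniz identity $D(\Dx^{\aa})=\sum_i\ithpartial{(\Dx^{\aa})}{i}D_i$, which is precisely the additive replacement for the binomial expansion used in the automorphism case. No gaps.
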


\begin{proof}
    The proof is similar to the proof of
    Proposition~\ref{ref:dualautomorphism:prop}, although it is easier.
\end{proof}

\begin{remark}\label{ref:lowersdegree:rmk}
    Suppose $D\colon \DShat\to \DShat$ is a derivation such that $D(\DmmS) \subseteq
    \DmmS^2$. Then $\deg(\Ddual{D}(f)) < \deg(f)$. We say that $D$ lowers
    the degree.
\end{remark}

\newcommand{\Dglcot}{\operatorname{GL}(\DmmS/\DmmS^2)}%
A special class of automorphisms of $\DShat$ are linear automorphisms.
\begin{definition}\label{ref:linearautomorphism:defn}
    Under the identification of $\sspan{\Dx_1, \ldots ,\Dx_n}$ with
    $\DmmS/\DmmS^2$, every linear map $\varphi\in \Dglcot$ induces a linear
    transformation of $\sspan{\Dx_1, \ldots ,\Dx_n}$ and consequently an
    automorphism $\varphi\colon\DShat\to \DShat$. We call such automorphisms
    \emph{linear}.
\end{definition}
The group $\Dglcot$ acts also on $\Ddual{\sspan{\Dx_1, \ldots
,\Dx_n}} = \sspan{x_1, \ldots ,x_n}$, hence on
$\DP =\kdp[x_1, \ldots ,x_n]$. The action of $\varphi\in \Dglcot$ is precisely
$\Ddual{\varphi}$. In particular, in this special case, $\Ddual{\varphi}$ is
an automorphism of $\kdp[x_1, \ldots ,x_n]$.

\paragraph{Characteristic zero case.}
Let $\kk$ be a field of characteristic zero. By $\xx^{\aa}$ we denote the
monomial $x_1^{a_1} \ldots x_n^{a_n}$ in the polynomial ring $\kk[x_1, \ldots
,x_n]$. Then, in the notation of Proposition~\ref{ref:divpowerispoly:prop}, we
have
\[\Dstale\left(\DPel{\xx}{\aa}\right) = \frac{1}{\aa!}\xx^{\aa}.\]
Clearly, an automorphism of $\DShat$ gives rise to an linear map $\kk[x_1, \ldots
,x_n]\to \kk[x_1, \ldots ,x_n]$.
We may restate Proposition~\ref{ref:dualautomorphism:prop} and
Proposition~\ref{ref:dualderivation:prop} as
\begin{corollary}\label{ref:dualautcharzero:cor}
    Let $\varphi\colon\DShat\to\DShat$ be an automorphism. Let $D_i = \varphi(\Dx_i) -
    \Dx_i$. For $\aa\in \myN^{n}$ denote $\DDD^{\aa} = D_1^{a_1} \ldots
    D_n^{a_n}$.
    Let $f\in \kk[x_1, \ldots ,x_n]$.
    Then
    \[
        \Ddual{\varphi}(f) = \sum_{\aa\in \myN^n} \frac{\xx^\aa}{\aa!} \left(
        \DDD^{\aa} \circ f \right) = f + \sum_{i=1}^n x_i (D_i
        \circ f) +  \ldots .
    \]
    Let $D\colon \DShat\to \DShat$ be a derivation and $D_i := D(\Dx_i)$. Then
    \[
        \Ddual{D}(f) = \sum_{i=1}^n x_i(D_i\circ f).
    \]
\end{corollary}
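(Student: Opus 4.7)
The plan is simply to transport Propositions~\ref{ref:dualautomorphism:prop} and~\ref{ref:dualderivation:prop} across the isomorphism $\Omega\colon \DP \to \kk[x_1,\ldots,x_n]$ of Proposition~\ref{ref:divpowerispoly:prop}. Recall that $\Omega$ is simultaneously (a) an isomorphism of $\kk$-algebras sending $\DPel{\xx}{\aa}$ to $\xx^{\aa}/\aa!$, and (b) an isomorphism of $\DShat$-modules sending the contraction action $\hook$ to the differentiation action $\circ$. These two properties are exactly what is needed to translate every symbol in the formulas.

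For the automorphism part, I would start with the identity established in Proposition~\ref{ref:dualautomorphism:prop}, namely
\[
\Ddual{\varphi}(g) \;=\; \sum_{\aa \in \myN^n} \DPel{\xx}{\aa} \cdot \bigl(\DDD^{\aa} \hook g\bigr) \qquad \text{in } \DP,
\]
and apply $\Omega$ to both sides. Because $\Omega$ is multiplicative we have $\Omega(\DPel{\xx}{\aa}\cdot h) = (\xx^{\aa}/\aa!)\,\Omega(h)$, and because $\Omega$ intertwines $\hook$ with $\circ$ we have $\Omega(\DDD^{\aa}\hook g) = \DDD^{\aa}\circ \Omega(g)$. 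Setting $f := \Omega(g) \in \kk[x_1,\ldots,x_n]$ and noting that $\Ddual{\varphi}$ commutes with $\Omega$ (since $\Ddual\varphi$ is the $\kk$-linear dual of $\varphi$, intrinsic to the pairing $\ip{\cdot}{\cdot}$ which is identified via $\Omega$), we obtain precisely
\[
\Ddual{\varphi}(f) \;=\; \sum_{\aa \in \myN^n} \frac{\xx^{\aa}}{\aa!}\bigl(\DDD^{\aa}\circ f\bigr),
\]
which is the first claim. The expansion $f + \sum_i x_i(D_i\circ f) + \cdots$ is just isolating the $|\aa|=0$ and $|\aa|=1$ terms.

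The derivation part is identical: start from $\Ddual{D}(g) = \sum_i x_i \cdot (D_i \hook g)$ in $\DP$ given by Proposition~\ref{ref:dualderivation:prop}, apply $\Omega$, and use $\Omega(x_i) = x_i$ together with the intertwining of $\hook$ and $\circ$ to get $\Ddual{D}(f) = \sum_i x_i (D_i \circ f)$.

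There is essentially no obstacle here; the only thing to be careful about is making sure one is entitled to say ``$\Ddual{\varphi}$ commutes with $\Omega$''. The cleanest way is to observe that $\Ddual{\varphi}$ (resp.\ $\Ddual{D}$) is defined purely in terms of the dual pairing $\ip{-}{-}\colon \DShat \times \DP \to \kk$, and that the composite pairing $\DShat \times \kk[x_1,\ldots,x_n] \to \kk$ obtained from $\Omega$ coincides (by the $\DShat$-module isomorphism property of $\Omega$ applied to $\sigma \hook f$ evaluated at $1$) with the natural pairing in which $\Dx^{\aa}$ is dual to $\xx^{\aa}/\aa!$. Thus the formula obtained in $\DP$ transfers verbatim, and the corollary follows.
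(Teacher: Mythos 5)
Your proposal is correct and matches the paper's intent exactly: the paper presents this corollary as a direct restatement of Propositions~\ref{ref:dualautomorphism:prop} and~\ref{ref:dualderivation:prop} under the dictionary of Proposition~\ref{ref:divpowerispoly:prop}, giving no further argument. Your transport via $\Omega$, including the check that $\Ddual{\varphi}$ and $\Ddual{D}$ are intrinsic to the pairing and hence commute with the identification, simply spells out the details the paper leaves implicit.
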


\begin{example}
    Let $n = 2$, so that $\DShat = \kk[[\Dx_1, \Dx_2]]$  and consider an
    automorphism $\varphi\colon\DShat \to \DShat$ given by
    $\varphi(\Dx_1) = \Dx_1$ and $\varphi(\Dx_2) = \Dx_1 + \Dx_2$. Dually,
    $\Ddual{\varphi}(x_1) = x_1 + x_2$ and $\Ddual{\varphi}(x_2) = x_2$.
    Since $\varphi$ is linear, $\Ddual{\varphi}$ is an automorphism of
    $\kk[x_1, x_2]$.
    Therefore $\Ddual{\varphi}(x_1^3) = (x_1 + x_2)^3$.
    Let us check this equality using Proposition~\ref{ref:dualautcharzero:cor}.
    We have $D_1 = \varphi(\Dx_1) - \Dx_1 = 0$ and $D_2 = \varphi(\Dx_2) - \Dx_2 = \Dx_1$.
    Therefore $\DDD^{(a, b)} = 0$ whenever $a > 0$ and $\DDD^{(0, b)}
    = \Dx_1^b$.

    We have
    \[
        \Ddual{\varphi}\left( x_1^3 \right) = \sum_{(a, b)\in \mathbb{N}^2}
        \frac{x_1^{a}x_2^{b}}{a!b!}(\DDD^{(a, b)} \circ x_1^3) = \sum_{b\in
            \mathbb{N}} \frac{x_2^{b}}{b!}(\Dx_1^{b} \circ x_1^3) = x_1^3 +
            \frac{x_2}{1} \cdot(3x_1^2) + \frac{x_2^2}{2} \cdot(6x_1) +
            \frac{x_2^3}{6}\cdot(6) = (x_1 + x_2)^3,
    \]
    which indeed agrees with our previous computation.

    When $\varphi$ is not linear, $\Ddual{\varphi}$ is not an endomorphism of
    $\kk[x_1, x_2]$ and computing it directly from definition becomes harder. For
    example, if $\varphi(\Dx_1) = \Dx_1$ and $\varphi(\Dx_2) = \Dx_2 +
    \Dx_1^2$, then
    \[
        \Ddual{\varphi}(x_1) = x_1,\quad \Ddual{\varphi}(x_1^4) = x_1^4 + 12x_1^2x_2 + 12x_2^2.
    \]
\end{example}

\section{Classification of local embedded Gorenstein algebras\\ via
apolarity}\label{ssec:Gorensteininapolarity}

\newcommand{\DAut}{\operatorname{Aut}(\DShat)}%
In this section fix a power series ring $\DShat$ and consider finite local
algebras $\DA$ presented as quotients $\DA = \DShat/I$. Note that every $\DA$
can be embedded into every $\DShat$ of dimensions at least $\dimk \DA$.
This section is classical; Macaulay's theorem first appeared
in~\cite{Macaulay_inverse_systems}, while the classification using the group
$\Dgrp$ defined below in~\eqref{eq:groupdefinition}, was
first noticed, without proof, by Emsalem~\cite{emsalem}.

Let $\DP$ be defined as in Definition~\ref{ref:contraction:propdef}.
For every subset $X \subset \DP$ by $\Ann(X) \subset \DShat$ we denote the set of
all $\sigma\in \DShat$ such that $\sigma\hook x = 0$ for all $x\in X$. Note
that if $X$ is an ideal of $\DP$ then $\Ann(X) = X^{\perp} = \left\{ \sigma\in
\DS\ |\ \ip{\sigma}{X} = \{0\} \right\}$.

Consider a finite algebra $\DA = \DShat/I$, then $\DA$ is local with maximal
ideal $\mm$ which is the image of $\DmmS$.
The surjection $\DShat \onto \DA$ gives an inclusion
\begin{equation}\label{eq:dualinclusion}
    \canA = \Homthree{\kk}{\DA}{\kk} \subset \Homthree{\kk}{\DShat}{\kk} =
    \Ddual{\DShat}.
\end{equation}
We note the following fundamental lemma.

Recall that $\DP_{\leq d-1} \subset
\DP \subset \Ddual{\DShat}$ is the space of elements annihilated by
contraction with $\DmmS^d$.
\begin{lemma}\label{ref:agreement:lem}
    Let $\DA = \DShat/I$ be a finite algebra of degree $d$. Let $\canA \subset
    \Ddual{\DShat}$ be defined as in~\eqref{eq:dualinclusion}.
    The subspace $\canA$ lies in $\DP_{\leq d-1}$ and it is an $\DShat$-submodule of
    $\DP_{\leq d-1}$.
\end{lemma}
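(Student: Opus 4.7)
The plan is short and direct; both claims reduce to unwinding definitions together with the basic fact that in a finite local algebra the maximal ideal is nilpotent with controlled index.

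First I would bound the nilpotency of $\mathfrak{m} = \mathfrak{m}_S / I$. Since $A$ is local of degree $d$, the strictly decreasing chain $A \supsetneq \mathfrak{m} \supsetneq \mathfrak{m}^2 \supsetneq \cdots$ terminates within at most $d$ steps, so $\mathfrak{m}^d = 0$ and hence $\mathfrak{m}_S^d \subseteq I$. For any $f\in \omega_A \subset \hat{S}^\vee$ and any $\sigma\in \mathfrak{m}_S^d$ we then have, for every $\tau\in \hat{S}$,
\[
\langle \tau, \sigma\hook f\rangle = \langle \tau\sigma, f\rangle = f(\tau\sigma) = 0,
\]
because $\tau\sigma\in \mathfrak{m}_S^d\subseteq I$ and $f$ annihilates $I$. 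Thus $\sigma\hook f = 0$ for every $\sigma\in \mathfrak{m}_S^d$, which by the intrinsic description of $P_{\leq d-1}$ as the annihilator of $\mathfrak{m}_S^d$ (recalled just before Definition~\ref{ref:dividedpows:def}) gives $\omega_A\subseteq P_{\leq d-1}$.

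Second, to see stability under the contraction action of $\hat{S}$, I would take arbitrary $\sigma\in \hat{S}$ and $f\in \omega_A$, and check that $\sigma\hook f$ again vanishes on $I$. For every $\tau\in I$,
\[
\langle \tau, \sigma\hook f\rangle = \langle \sigma\tau, f\rangle = f(\sigma\tau) = 0,
\]
since $I$ is an ideal and so $\sigma\tau\in I$. Hence $\sigma\hook f \in \omega_A$, showing that $\omega_A$ is an $\hat{S}$-submodule of $P_{\leq d-1}$.

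There is no real obstacle; the only thing worth pointing out (and worth a line in the write-up for the reader) is that the $\hat{S}$-module structure on $\omega_A$ transported along the inclusion $\omega_A \hookrightarrow \hat{S}^\vee$ from Definition~\ref{ref:canonicalmodule:def} coincides with the contraction action of Definition~\ref{ref:contraction:propdef}, because both are defined by precomposition with multiplication in $\hat{S}$ and $f$ factors through $A$. This compatibility is implicit in the calculation $\langle \tau, \sigma \hook f\rangle = f(\tau\sigma)$ already used above.
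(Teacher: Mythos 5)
Your argument is correct and follows the same route as the paper's proof: establish $\mathfrak{m}_S^d\subseteq I$ from nilpotency of the maximal ideal of $A$, deduce $\omega_A\subseteq P_{\leq d-1}$ via the pairing, and then check that $(\sigma\hook f)(I)=f(\sigma I)=0$ to get stability under the $\hat{S}$-action. Your closing remark about the compatibility of the module structure from Definition~\ref{ref:canonicalmodule:def} with contraction is also exactly the observation the paper makes immediately after this lemma.
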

\begin{proof}
    According to Definition~\ref{ref:contraction:propdef}, we have
    $(\sigma\hook f)(\tau) = f(\sigma\tau)$ for all $\sigma, \tau\in \DShat$.
    In particular if $\sigma\in I$, then $\sigma\hook f = 0$.
    Since $\DA$ is
    finite of degree $d$, we have $\mm^d = 0$ and so $\DmmS^d \subset
    I$. Then we have $\DmmS^d \hook \canA$, so $\canA \subset \DP_{\leq d-1}$.
    Similarly, if $\sigma\in \DShat$ and $f\in \canA$, then $(\sigma\hook
    f)(I) = f(\sigma I) \subset f(I) = \{0\}$, so $\sigma\hook f$ is an
    element of $\canA$. Hence, $\canA$ is an $\DShat$-submodule of
    $\DP_{\leq d-1}$.
\end{proof}
Note that there are two actions applicable to element of $\canA$: one is the contraction action of
$\DShat$ on $\Ddual{\DShat}$, as defined in
Definition~\ref{ref:contraction:propdef} and the other is the action of $\DA$
on $\canA$ as in Definition~\ref{ref:canonicalmodule:def}.
These actions agree, as shown in the proof of Lemma~\ref{ref:agreement:lem}.
Below we consistently use contraction.

\begin{definition}\label{ref:apolar:def}
    Let $\mathcal{F} \subset \DP$ be a subset. The \emph{apolar algebra} of
    $\mathcal{F}$ is the
    quotient
    \[
        \Apolar{\mathcal{F}} := \DShat/\Ann(\mathcal{F}).
    \]
\end{definition}

\begin{theorem}[Macaulay's theorem~\cite{Macaulay_inverse_systems}]
    \label{ref:Macaulaytheorem:thm}
    Let $(\DA, \mm, \kk)$ be a finite local $\kk$-algebra. Fix $\DA =
    \DShat/I$. Then there exist $f_1, \ldots ,f_r\in \DP$ such that $\DA = \Apolar{f_1, \ldots ,f_r}$.
\end{theorem}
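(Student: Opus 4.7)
The plan is to identify the ideal $I$ with $\Ann(\canA)$, where $\canA\subset\DP$ is the $\DShat$-submodule from Lemma~\ref{ref:agreement:lem}, and then to exhibit a finite generating set of $\canA$ as an $\DShat$-module. Composition of these two steps gives $\DA=\Apolar{f_1,\ldots,f_r}$.

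First I would verify the key equality
\[
  I = \Ann(\canA) \subset \DShat.
\]
The inclusion $I\subset \Ann(\canA)$ is immediate: if $\sigma\in I$ and $f\in\canA$, then for every $\tau\in\DShat$ the element $f$ vanishes on $\sigma\tau\in I$, whence $\ip{\tau}{\sigma\hook f}=f(\sigma\tau)=0$ and so $\sigma\hook f=0$. For the reverse inclusion, suppose $\sigma\in\DShat\setminus I$ and let $\bar\sigma\in\DA$ denote its nonzero image. Since $\DA$ is a finite-dimensional $\kk$-vector space, there exists a $\kk$-linear functional $f\colon\DA\to\kk$ with $f(\bar\sigma)\neq 0$; viewed inside $\Ddual{\DShat}$ via~\eqref{eq:dualinclusion}, such an $f$ lies in $\canA$ and satisfies $(\sigma\hook f)(1)=f(\sigma)=f(\bar\sigma)\neq 0$, so $\sigma\hook f\neq 0$ and $\sigma\notin\Ann(\canA)$.

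Next I would produce the generators. Because $\dimk\canA = \dimk\DA<\infty$, the $\DShat$-module $\canA$ is in particular finitely generated; for instance one may take any $\kk$-basis $f_1,\ldots,f_r$ of $\canA$, which is trivially an $\DShat$-generating set. For any such choice one has
\[
  \Ann(f_1,\ldots,f_r) \;=\; \bigcap_{i=1}^r \Ann(f_i) \;=\; \Ann\Bigl(\sum_{i=1}^r \DShat \cdot f_i\Bigr) \;=\; \Ann(\canA),
\]
where the last equality uses that $\canA$ is generated by the $f_i$ as an $\DShat$-module (and here the $\DShat$-action on $\canA$ coincides with the contraction action, as noted immediately after Lemma~\ref{ref:agreement:lem}). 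Combining with the previous paragraph,
\[
  \Ann(f_1,\ldots,f_r)=\Ann(\canA)=I,
\]
so $\Apolar{f_1,\ldots,f_r}=\DShat/\Ann(f_1,\ldots,f_r)=\DShat/I=\DA$.

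The only genuinely nontrivial ingredient is the equality $I=\Ann(\canA)$; the rest is bookkeeping. This step rests on the non-degeneracy of the canonical pairing $\DA\times\canA\to\kk$, which in turn just reflects that $\kk$-linear functionals separate vectors of the finite-dimensional space $\DA$. I expect no further obstacle: the containment $\canA\subset\DP$ has already been established in Lemma~\ref{ref:agreement:lem}, and the minimal number of generators $r$ of $\canA$ is uncontrolled by the statement, so taking a basis suffices. (One may note in passing that if $\DA$ is Gorenstein, then $\canA$ is cyclic by Definition~\ref{ref:Gorenstein:def} and a single $f$ works; this refinement is what will be exploited later.)
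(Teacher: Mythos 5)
Your proof is correct and follows essentially the same route as the paper: identify $I$ with $\Ann(\canA)$ via the non-degeneracy of the pairing $\DA\times\canA\to\kk$ (the paper invokes the torsion-freeness of $\canA$ established just after Definition~\ref{ref:canonicalmodule:def}, which you re-derive directly), then pick a finite generating set of $\canA$ as an $\DShat$-module. The only cosmetic difference is that the paper chooses generators of $\canA$ as an $\DA$-module while you take a $\kk$-basis; both are valid since the theorem does not require $r$ to be minimal.
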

\begin{proof}
    Consider the subspace $\canA \subset \Ddual{\DShat}$. Since $\DA$ is local
    and finite, we have $\mm^d \subset I$ for $d$ large enough, so $\canA
    \subset \DP$. By discussion after Definition~\ref{ref:canonicalmodule:def}
    no element of $\DA$ annihilates $\canA$, so
    $\Ann(\canA) \subset \DShat$ is equal to $I$. Choose any set $f_1, \ldots
    ,f_r$ of generators of $\DA$-module $\canA$, then $\DA =\Apolar{f_1, \ldots ,f_r}$.
\end{proof}

\begin{theorem}[Macaulay's theorem for Gorenstein algebras]
    \label{ref:MacaulaytheoremGorenstein:thm}
    Let $(\DA, \mm, \kk)$ be a finite local Gorenstein $\kk$-algebra. Fix $\DA =
    \DShat/I$. Then there exists $f\in \DP$ such that $\DA = \Apolar{f}$.
    Conversely, if $f\in \DP$, then $\Apolar{f}$ is a finite local Gorenstein
    algebra.
\end{theorem}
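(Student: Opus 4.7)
The plan is to handle both directions using the cyclicity characterization of Gorenstein algebras from Definition~\ref{ref:Gorenstein:def}, combined with Lemma~\ref{ref:agreement:lem}, which realizes $\canA$ as an $\DShat$-submodule of $\DP$ on which the $A$-action agrees with contraction.

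For the forward direction, I would proceed as follows. Since $\DA$ is Gorenstein, $\canA$ is cyclic as an $\DA$-module, hence also cyclic as an $\DShat$-module. By Lemma~\ref{ref:agreement:lem}, a generator can be picked as some $f\in \DP_{\leq d-1} \subset \DP$, and moreover $\canA = \DShat \hook f$. It remains to show $I = \Ann(f)$. The inclusion $I \subseteq \Ann(f)$ is immediate, since $f\in \canA$ and $I$ acts as zero on $\canA$. For the reverse inclusion, if $\sigma\hook f = 0$ then for every $\tau\in \DShat$, $\sigma\hook(\tau\hook f) = (\sigma\tau)\hook f = \tau\hook(\sigma\hook f) = 0$, so the image $\bar\sigma\in \DA$ annihilates the entire module $\canA = \DShat\hook f$. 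Since $\canA$ is torsion-free (discussion below Definition~\ref{ref:canonicalmodule:def}), $\bar\sigma = 0$, i.e.\ $\sigma\in I$. Hence $I = \Ann(f)$ and $\DA = \Apolar{f}$.

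For the converse, fix a nonzero $f\in \DP$ (if $f = 0$ one obtains the zero ring, which is excluded) and set $\DA := \Apolar{f} = \DShat/\Ann(f)$. Since $f\in \DP$, there exists $d$ with $\DmmS^d \hook f = 0$, so $\DmmS^d \subseteq \Ann(f)$; this exhibits $\DA$ as a quotient of the finite algebra $\DShat/\DmmS^d$, so $\DA$ is finite. Moreover $\Ann(f) \subseteq \DmmS$, because $1\hook f = f \neq 0$, so $\DA$ is local with maximal ideal $\mm = \DmmS/\Ann(f)$. To see that $\DA$ is Gorenstein, I would show $\canA = \DShat \hook f$, which will identify $\canA$ with a cyclic $\DA$-module isomorphic to $\DA$. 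The inclusion $\DShat\hook f \subseteq \canA$ follows from Lemma~\ref{ref:agreement:lem} once we note that $\Ann(f)$ is an ideal of $\DShat$ and hence kills every element of $\DShat\hook f$. The evaluation map $\DShat \to \DShat\hook f$, $\sigma\mapsto \sigma\hook f$, has kernel exactly $\Ann(f)$, so it descends to a $\kk$-linear isomorphism $\DA \to \DShat\hook f$; in particular $\dimk(\DShat\hook f) = \dimk \DA = \dimk \canA$. Combined with the containment above this forces $\canA = \DShat\hook f$, and the same map is then an isomorphism $\DA \to \canA$ of $\DA$-modules.

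The main obstacle I anticipate is conceptual rather than computational: one must be careful to distinguish the intrinsic $\DA$-action on $\canA$ (Definition~\ref{ref:canonicalmodule:def}) from the contraction action of $\DShat$ on $\DP$ (Definition~\ref{ref:contraction:propdef}), and then invoke Lemma~\ref{ref:agreement:lem} to justify that they coincide on $\canA \subset \DP$. Once this agreement is in place, both directions reduce to a short dimension count combined with the torsion-freeness of $\canA$.
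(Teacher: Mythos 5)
Your proof is correct and follows essentially the same route as the paper's: use Lemma~\ref{ref:agreement:lem} to view $\canA$ inside $\DP$ with the two module actions agreeing, obtain the forward direction by taking a dual generator and using torsion-freeness of $\canA$ to get $\Ann(f)=I$, and obtain the converse by a dimension count showing $\DA\simeq \DShat\hook f = \canA$. Your write-up is slightly more explicit in spelling out the containments and the evaluation map, but there is no substantive difference from the argument in the text.
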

\begin{proof}
    Let $\DA = \DShat/I$ be Gorenstein and $f\in \canA$ be its dual generator.
    Since $Af = \canA$ is torsion-free, no non-zero element of $A$ annihilates
    $f$. If we interpret $f\in \canA \subset \DP$ as an element of $\DP$, then
    $\Ann(f) = I$, thus $\Apolar{f} = \DA$. Conversely, take $f\in \DP$. Then
    $f\in \DP_{\leq d-1}$ for some $d$, so that $\mm^{d}\hook f = 0$ and $A = \Apolar{f} = \DShat/\Ann(f)$
    is finite and local. By definition, no element of $A$ annihilates $f$, so
    $\dimk Af = \dimk A = \dimk \canA$, hence $Af = \canA$ and $f$ is a dual
    generator of $\DA$.
\end{proof}

Before we delve into deeper considerations, let us point out that
Theorem~\ref{ref:MacaulaytheoremGorenstein:thm} enables us to explicitly
describe Gorenstein algebras and in particular give examples.
\begin{example}
    The algebra $\Apolar{\DPel{x_1}{2}+\DPel{x_2}{2}} = \kk[[\Dx_1,
    \Dx_2]]/(\Dx_1^2 - \Dx_2^2, \Dx_1\Dx_2)$ already appeared in
    Example~\ref{ex:firstGorenstein}.
\end{example}
\begin{example}\label{ex:trivialcubicHf}
    Let $f = \DPel{x_1}{2} +  \ldots + \DPel{x_k}{2} + \DPel{x_{k+1}}{3} +
    \ldots + \DPel{x_{n}}{3}$. Then
    \[\DShat  f = \sspan{1, x_1, \ldots ,x_n, \DPel{x_{k+1}}{2}, \ldots
    ,\DPel{x_n}{2},
f}.\] Thus $\Ann(f) = (\Dx_i\Dx_j)_{i\neq j} + (\Dx_i^2 - \Dx_j^2)_{i, j \leq k} +
(\Dx_i^3 - \Dx_j^3)_{k < i, j\leq n} + (\Dx_i^2 - \Dx_{j}^3)_{i \leq k <
j}$. We compute that $\DA = \Apolar{f}$ has socle
    degree three and that $H_{\DA} = (1, n, n-k, 1)$. The maximal ideal of
    $\DA$ is generated by images of $\Dx_i$. The nonzero images of
    $\Dx_1, \ldots ,\Dx_k$ lie in $(0:\mm^2)$ but not in $\mm^2$, in
    contrast with the graded case.
\end{example}
\begin{remark}\label{ref:Macaulaygraded:rmk}
    If $\DA = \DShat/I$ is given by homogeneous ideal $I$, then $f_1, \ldots ,
    f_r\in \DP$ in Theorem~\ref{ref:Macaulaytheorem:thm} may be
    chosen homogeneous. Also, if $\DA$ is Gorenstein, then $f\in \DP$ in
    Theorem~\ref{ref:MacaulaytheoremGorenstein:thm} may be chosen homogeneous;
    indeed choose any $f'$ with leading form $f'_d$, then $I f_d' = 0$, since
    $I$ is homogeneous, so $f_d'\in \DShat f'\setminus \DmmS f'$ is a dual
    generator.
\end{remark}
\begin{example}\label{ex:monomialGorenstein}
    There are few finite monomial Gorenstein $\kk$-algebras. Indeed,
    such an algebra $\DA = \DS/I$ is graded, hence local and $\DA =
    \Apolar{f}$. Since $I$ is
    monomial, also $\canA \subset \DP$ is spanned by monomials, so that $f$ can be chosen
    to be monomial: $f = \DPel{x_1}{s_1}\cdot\DPel{x_2}{s_2}\cdot \ldots \cdot\DPel{x_n}{s_n}$. Then $I =
    (\Dx_1^{s_1+1}, \Dx_2^{s_2+1}, \ldots ,\Dx_{n}^{s_n+1})$ and $\DA$ is a complete intersection.
\end{example}

Every finite algebra of degree $d$ can be presented as a
quotient of a fixed power series algebra $\DShat$ by
Lemma~\ref{ref:embeddingdimension:lem}. We
now consider the question ``When are two Gorenstein quotients of
$\DShat$ isomorphic?''.
Let $\DShat^*$ denote the group of invertible elements of $\DShat$ and let
\begin{equation}\label{eq:groupdefinition}
\Dgrp := \DAut \ltimes \DShat^*
\end{equation}
be the group generated by $\DAut$ and $\DShat^*$ in the space
$\Homthree{\kk}{\DShat}{\DShat}$. As the notation suggests, the group $\Dgrp$ is a semidirect product
of those groups: indeed $\varphi\circ \mu_s \circ \varphi^{-1} =
\mu_{\varphi(s)}$, where $\varphi$ is an automorphism, $s\in \DShat$ is
invertible and $\mu_{s}$ denotes the multiplication by $s$. We have an action of
$\Dgrp$ on $\DP$ described by Equation~\eqref{eq:dual}. Here $\DShat^*$ acts
by contraction and $\DAut$ acts as described in
Proposition~\ref{ref:dualautomorphism:prop}.

\begin{proposition}\label{ref:Gorenstein:prop}
    Let $A = \DShat/I$ and $B = \DShat/J$ be two finite local Gorenstein
    $\kk$-algebras.
    Choose $f, g\in \DP$ so that $I = \Ann(f)$ and $J = \Ann(g)$.
    The following conditions are equivalent:
    \begin{enumerate}
        \item\label{it:isom} $A$ and $B$ are isomorphic,
        \item\label{it:conj} there exists an automorphism $\varphi\colon\DShat\to
            \DShat$ such that
            $\varphi(I) = J$,
        \item\label{it:dualconj} there exists an automorphism $\varphi\colon\DShat\to
            \DShat$ such that
            $\Ddual{\varphi}(f) = \sigma\hook g$, for an
            invertible element $\sigma\in \DShat$.
        \item\label{it:grp} $f$ and $g$ lie in the same $\Dgrp$-orbit of $\DP$.
    \end{enumerate}
\end{proposition}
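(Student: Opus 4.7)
The plan is to establish the four equivalences by proving $(1)\Leftrightarrow(2)$, $(2)\Leftrightarrow(3)$, and $(3)\Leftrightarrow(4)$ separately. The three key ingredients are: (a) the lifting principle for isomorphisms between finite quotients of $\DShat$; (b) the duality identity $\Ann(\Ddual{\varphi}(f)) = \varphi^{-1}(\Ann(f))$; and (c) the characterization of dual generators from Corollary~\ref{ref:dualgens:cor}, which together with principality of $\canA$ as an $\DA$-module implies that any two dual generators differ by multiplication by a unit.

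For $(1)\Leftrightarrow(2)$, the direction $(2)\Rightarrow(1)$ is immediate since $\varphi$ descends to $\DShat/I\to \DShat/J$. For the converse, given an abstract isomorphism $\psi\colon\DA\to B$, I pick lifts $\tilde{y}_i\in\DShat$ of $\psi(\bar{\Dx}_i)\in B$ and define $\varphi\colon\DShat\to\DShat$ by $\varphi(\Dx_i) = \tilde{y}_i$; this is a continuous $\kk$-algebra endomorphism lifting $\psi$ and in particular satisfies $\varphi(I)\subset J$. The main obstacle is ensuring $\varphi$ is an \emph{automorphism} when the embedding dimension $e$ of $\DA$ is strictly less than $n=\dim\DShat$: those $\Dx_i$ with $\bar{\Dx}_i\in\mm_A^2$ are sent to elements of $\mm_B^2$, so the $\tilde{y}_i$ may fail to span $\mm_S/\mm_S^2$. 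The fix is to adjust each such $\tilde{y}_i$ by adding a carefully chosen element of $J$ (which does not alter its image in $B$), using that $(J+\mm_S^2)/\mm_S^2$ has dimension $n-e$ and is complementary to the span of the lifts of $\psi(\bar{\Dx}_1),\dots,\psi(\bar{\Dx}_e)$. Once $\varphi$ is an automorphism with $\varphi(I)\subset J$, equality $\varphi(I)=J$ follows by comparing $\kk$-dimensions.

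For $(2)\Leftrightarrow(3)$, I first establish the identity $\Ann(\Ddual{\varphi}(f)) = \varphi^{-1}(I)$ directly from~\eqref{eq:definitionofdual} and the definition of contraction: $\sigma\in\Ann(\Ddual{\varphi}(f))$ is equivalent to $\ip{\tau\sigma}{\Ddual{\varphi}(f)}=\ip{\varphi(\tau\sigma)}{f}=0$ for all $\tau$, which since $\varphi$ is bijective translates to $\varphi(\sigma)\in\Ann(f)=I$. Now, since $\sigma\in\DShat^*$ acts by a bijection on $\DP$ that preserves annihilators (as $J\sigma=J$), the condition $\Ddual{\varphi}(f)=\sigma\hook g$ gives $\Ann(\Ddual{\varphi}(f))=J$, hence $\varphi^{-1}(I)=J$. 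Conversely, if $\varphi^{-1}(I)=J$, then $\Ddual{\varphi}(f)$ and $g$ are both dual generators of $B$ by Theorem~\ref{ref:MacaulaytheoremGorenstein:thm}, so they differ by a unit of $B$ by Corollary~\ref{ref:dualgens:cor}; lifting this unit to $\sigma\in\DShat^*$ (using that elements of $\DShat$ with nonzero constant term are units) yields $\Ddual{\varphi}(f)=\sigma\hook g$. The conditions $\varphi^{-1}(I)=J$ and $\varphi(I)=J$ define the same set of automorphisms up to inversion, so both existential statements coincide.

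Finally, $(3)\Leftrightarrow(4)$ is a direct unpacking of the definition~\eqref{eq:groupdefinition} of $\Dgrp = \DAut\ltimes\DShat^*$. Its action on $\DP$ is the dual of its action on $\DShat$, and a composite $\varphi\circ\mu_\sigma\in\Dgrp$ sends $f$ to $\sigma\hook\Ddual{\varphi}(f)$ by Proposition~\ref{ref:dualautomorphism:prop} and Definition~\ref{ref:contraction:propdef}. Hence $f$ and $g$ lie in the same orbit if and only if $g = \sigma\hook\Ddual{\varphi}(f)$ for some $\sigma\in\DShat^*$ and $\varphi\in\DAut$, which is condition~$(3)$ after replacing $\sigma$ by $\sigma^{-1}$ (still a unit).
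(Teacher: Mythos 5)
Your proof is correct and follows essentially the same route as the paper: lift an abstract isomorphism to an automorphism of $\DShat$ for (1)$\Leftrightarrow$(2), compare annihilators and use that two dual generators of a Gorenstein algebra differ by contraction with a unit for (2)$\Leftrightarrow$(3), and unwind the definition of $\Dgrp$ for (3)$\Leftrightarrow$(4). You merely fill in details the paper leaves implicit, namely the adjustment of the lifts by elements of $J$ when the embedding dimension is smaller than $\dim\DShat$, and the (harmless, since the condition is existential over automorphisms) observation that the identity reads $\Ann(\Ddual{\varphi}(f))=\varphi^{-1}(\Ann(f))$ rather than $\varphi(\Ann(f))$.
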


\begin{proof}
    Taking an isomorphism $A  \simeq B$, one obtains $\varphi':\DShat\to
    B = \DShat/J$, which can be lifted to an automorphism of $\DShat$ by choosing
    lifts of linear forms. This proves $\ref{it:isom}\iff \ref{it:conj}$.

    $\ref{it:conj}\iff\ref{it:dualconj}$. Let $\varphi$ be as
    in Point~\ref{it:conj}. Then $\Ann(\Ddual{\varphi}(f)) = \varphi(\Ann(f)) =
    \varphi(I) = J$. Therefore the principal $\DShat$-submodules of $\DP$ generated by
    $\Ddual{\varphi}(f)$ and $g$ are equal, so that there is an invertible
    element $\sigma\in \DShat$ such that $\Ddual{\varphi}(f) = \sigma\hook g$.
    The argument can be reversed.

    Finally, Point~\ref{it:grp}~is just a reformulation of
    \ref{it:dualconj}.
\end{proof}

\begin{remark}[Graded algebras]\label{ref:graded:rem}
    In the setup of Proposition~\ref{ref:Gorenstein:prop} one could specialize
    to homogeneous ideals $I$, $J$ and homogeneous polynomials $f, g\in \DP$.
    Then Condition 1.~is equivalent to the fact that $f$ and $g$ lie in
    the same $\operatorname{GL}(\DmmS/\DmmS^2)$-orbit. The proof of
    Proposition~\ref{ref:Gorenstein:prop} easily restricts to this case,
    see~\cite{geramita_inverse_systems_of_fat_points}.
\end{remark}

\begin{theorem}
    The set of finite local Gorenstein algebras of degree $r$ is naturally
    in bijection with the set of orbits of $\Dgrp$-action on $\DP =
    \kk_{dp}[x_1, \ldots ,x_r]$.
    \label{ref:classification:thm}
\end{theorem}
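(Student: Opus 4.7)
The plan is to assemble three earlier results: the embedding-dimension Lemma~\ref{ref:embeddingdimension:lem}, Macaulay's theorem for Gorenstein algebras (Theorem~\ref{ref:MacaulaytheoremGorenstein:thm}), and the orbit-isomorphism Proposition~\ref{ref:Gorenstein:prop}. The theorem is essentially a ``repackaging'' of Proposition~\ref{ref:Gorenstein:prop}, and the content of the proof is to verify that the parameter $r$ is really large enough for every degree-$r$ algebra to appear.

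First I would set up the map from orbits to algebras. Given $f\in \DP = \kk_{dp}[x_1,\ldots,x_r]$, Theorem~\ref{ref:MacaulaytheoremGorenstein:thm} produces a finite local Gorenstein $\kk$-algebra $\Apolar{f} = \DShat/\Ann(f)$. By Proposition~\ref{ref:Gorenstein:prop} (equivalence of \ref{it:isom} and \ref{it:grp}), two polynomials $f,g$ in the same $\Dgrp$-orbit yield isomorphic apolar algebras, so the assignment $[f]\mapsto [\Apolar{f}]$ is well defined on the set of $\Dgrp$-orbits.

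For injectivity, the same equivalence in Proposition~\ref{ref:Gorenstein:prop} gives the converse: if $\Apolar{f} \simeq \Apolar{g}$ then $f$ and $g$ lie in the same $\Dgrp$-orbit. For surjectivity onto algebras of degree exactly $r$, let $(\DA,\mm,\kk)$ be such an algebra. Because $\kk = \DA/\mm$ accounts for one dimension, one has
\[
\dimk \mm/\mm^2 \;\leq\; \dimk \mm \;=\; r-1 \;\leq\; r,
\]
so Lemma~\ref{ref:embeddingdimension:lem} allows the presentation $\DA = \DShat/I$ with $\DShat = \kk[[x_1,\ldots,x_r]]$. Theorem~\ref{ref:MacaulaytheoremGorenstein:thm} then supplies $f\in \DP$ with $\DA = \Apolar{f}$, hitting the class $[\DA]$.

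The only point requiring care---and arguably the ``main obstacle'', though it is minor---is to note that the $\Dgrp$-orbits relevant for degree $r$ are precisely those $[f]$ with $\dimk \Apolar{f} = r$, since a generic $f\in \DP$ can produce an algebra of smaller or even larger degree (e.g.\ $f = \DPel{x_1}{N}$ gives an algebra of dimension $N+1$, unrelated to $r$). This is a matter of restricting the bijection to the appropriate subset of orbits and involves no new argument beyond what is already assembled above.
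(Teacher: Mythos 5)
Your proposal is correct and takes essentially the same approach as the paper: the paper's own proof is a one-liner observing that every degree-$r$ local Gorenstein algebra embeds in $\kk[[\Dx_1,\ldots,\Dx_r]]$ (via Lemma~\ref{ref:embeddingdimension:lem}) and then invokes Proposition~\ref{ref:Gorenstein:prop}. You simply spell out the well-definedness, injectivity, and surjectivity steps explicitly, and you are right that the theorem's statement implicitly restricts to those orbits $[f]$ with $\dimk\Apolar{f}=r$.
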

\begin{proof}
    Every local Gorenstein algebra of degree $r$ can be presented as a
    quotient of $\DShat = \kk[[\Dx_1, \ldots ,\Dx_r]]$, so the
    claim follows from Proposition~\ref{ref:Gorenstein:prop}.
\end{proof}

Proposition~\ref{ref:Gorenstein:prop} shows the central role of
$\Dgrp$ in the classification. Having this starting point, we may consider at
least two directions. First, we may construct elements of $\Dgrp$ explicitly
and, using the explicit description of their action on $\DP$ given
in~Section~\ref{ssec:automorphisms}, actually classify some algebras or
prove some general statements. Second, we may consider $\Dgrp$ as a whole and
investigate the Lie theory of this group to gain more knowledge about its orbits.
We will illustrate the first path in Example~\ref{ex:1nn1} and the second path
in Section~\ref{ssec:specialforms}. Both paths are combined to obtain the
examples from Sections~\ref{ssec:examplesone}-\ref{ssec:examplesthree}.

Now we give one complete, non-trivial, explicit example to illustrate the core
ideas before they will be enclosed into a more formal apparatus.

\begin{example}[compressed cubics, {\cite[Theorem~3.3]{EliasRossiShortGorenstein}}]\label{ex:1nn1}
    Assume that $\kk$ has characteristic not equal to two. Let $(\DA, \mm,
    \kk)$ be a
    local Gorenstein $\kk$-algebra such that $H_{\DA} = (1, n, n, 1)$ for some $n$.
    By Macaulay's theorem~\ref{ref:MacaulaytheoremGorenstein:thm} there exists
    an $f\in \DP = \kdp[x_1, \ldots ,x_n]$ such that $\DA =
    \DShat/\Ann(f)$.
    Since $\mm^3\neq 0$ and $\mm^4 = 0$ we have $\deg f = 3$. Let $f_3$ be its
    leading form.

    We claim that there is an element $\varphi\in\Dgrp$
    such that $\Ddual{\varphi}(f_3) = f$.

    Since $\dimk \mm^3 = 1$, we have $\dimk \mm^2 = n+1$, so $\dim \DmmS^2 f =
    n+1$. But $\DmmS^2 f \subset \DP_{\leq 1}$ and $\dimk \DP_{\leq 1} = n+1$, so
    $\DmmS^2 f = \DP_{\leq 1}$; every linear form in $\DP$ is obtained as
    $\delta\hook f$ for some operator $\delta\in \DmmS^2$.
    We pick operators $D_1, \ldots ,D_n\in \DmmS^2$ so that $\sum x_i \cdot (D_i
    \hook f) = -(f_2 + f_1)$.
    Explicitly, $D_i$ is such that $D_i\hook f = -(\Dx_i \hook f_2)/2 -
    \Dx_i\hook f_1$. Here we
    use the assumption on the characteristic.

    Let $\varphi\colon\DShat\to \DShat$ be an automorphism defined via $\varphi(\Dx_i) =
    \Dx_i + D_i$. Since $(D_i D_j)\hook f = 0$ by degree reasons, the explicit formula in
    Proposition~\ref{ref:dualautomorphism:prop} takes the form
    \[
        \Ddual{\varphi}(f) = f + \sum x_i \cdot (D_i \hook f) = f
        - f_2 - f_1 = f_3 + f_0.
    \]
    The missing term $f_0$ is a constant, so that we may pick an order three
    operator $\sigma\in \DShat$ with $\sigma\hook \Ddual{\varphi}(f) = -f_0$.
    Then $(1 + \sigma)\hook (\Ddual{\varphi}(f)) = f_3$, so by
    Proposition~\ref{ref:Gorenstein:prop} we have $\Apolar{f}  \simeq  \Apolar{f_3}$
    as claimed.
    By taking associated graded algebras, we obtain
    \[
        \gr \Apolar{f}  \simeq  \gr \Apolar{f_3}  \simeq  \Apolar{f_3}  \simeq
        \Apolar{f},
    \]
    so in fact $\grDA \simeq \DA$, a
    rather rare property for a local algebra.
    The assumption $\chark\neq 2$ is necessary, as we later see in
    Example~\ref{ex:1nn1char2}.
\end{example}

\newcommand{\linear}[2]{\linearOp(#1)^{#2}}%
\newcommand{\DSf}{\Diff(f)}%
\newcommand{\DSfin}[2]{\DSf_{#1}^{#2}}
\section{Hilbert functions of Gorenstein algebras in terms of inverse
systems}\label{ssec:HFininversesystems}

In this section we translate the numerical data of
$\DA = \Apolar{f}$: its Hilbert function and symmetric decomposition, into
properties of $\DA f = \DShat f$. This is straightforward; we just use
the isomorphism $\DA\to \DA f$ of $\DA$-modules (or $\DShat$-modules). We
include this section mainly to explicitly state the results we later use
implicitly in examples.
We
follow~\cite{JelMSc} and~\cite{BJMR}.

Let $d$ be the socle degree of $\DA$.
It is equal to the degree of $f$.
Recall that $\DShat = \kk[[\Dx_1, \ldots,\Dx_n]]$.
Since the linear forms $\Dx_i$ act as derivatives on the divided power
polynomial ring $\DP$, we
call elements of $\DShat f$ the \emph{partials} of $f$ and we denote
\[\DSf := \DShat f.\]
The filtrations on $\DA$, the \Loevy{} filtration and the usual $\mm$-adic
filtration, translate respectively to filtration by degree and filtration by \emph{order}.
For a nonzero element $g\in \DSf$ the \emph{order} of $g$ is the maximal $i$ such
that $g\in\mm^i f$.
We define the following subspaces of $\DP$:
\begin{equation}\label{eq:diffdefs}
    \DSfin{i}{} = \DSf \cap \DP_{\leq i},\quad \DSfin{i}{a} =
    (\DmmS^{d-a-i} f)\cap \DP_{\leq i}.
\end{equation}
From Lemma~\ref{ref:loewyHilbertfunc:lem} in follows that $H_{\DA}(i) = \dimk
\DSfin{i}{} - \dimk \DSfin{i-1}{}$ is the dimension of the space of partials
of degree exactly $i$. Moreover $\DSfin{i}{a}$ is the image of $\mm^{d-a-i}
\cap \Dmmperp{i+1} \subset \DA$ in $\DSf$. Therefore, the space
$\DSfin{i}{a}/(\DSfin{i-1}{a} + \DSfin{i}{a+1})$ is the image of
\begin{equation}\label{eq:Qintermsoff}
    \frac{\mm^{d-a-i} \cap \Dmmperp{i+1}}{\mm^{d-a-i} \cap \Dmmperp{i} +
    \mm^{d-a-i+1} \cap \Dmmperp{i+1}} = \iaq{a}{d-a-i} = \iaq{a}{i}^{\vee};
\end{equation}
the equalities follow from~\eqref{eq:Qsummands} and Lemma~\ref{ref:Qduality:lem}. Hence, we have
\begin{equation}
    \Dhd{a}{i} = \dimk \frac{\DSfin{i}{a}}{\DSfin{i-1}{a} + \DSfin{i}{a+1}},
    \label{eq:diffdeltas}
\end{equation}
this may be thought of as the space of partials of $f$ which have degree
$i$ and order $a$. The spaces corresponding to
$\Dhd{a}{1}$ are of special importance. We define
$\linearOp(f):=\DSf \cap \DP_1$,
and its linear subspaces
${\linear{f}{a}= \{l\in \DP_1 \mid l \in \mm^{d-a-1}f \}} = \DSfin{1}{a}
    \cap \DP_1$.
We easily see that for each ${a\ge0}$, we have an isomorphism
${\linear{f}{a}\simeq \DSfin{1}{a}/\DSfin{0}{}}$ and $\DSfin{0}{} = \kk$, so
\[\Dhd{a}{1}=\dimk \linear{f}{a} - \dimk \linear{f}{a-1}.\] We
obtain a canonical flag of subspaces of $\DP_1$:
\begin{equation}\label{eq:flagsubspaces}
    \linear{f}{0} \subseteq \linear{f}{1} \subseteq  \cdots \subseteq
    \linear{f}{d-2}= \linearOp(f) \subseteq 
    \DP_1.
\end{equation}

\begin{example}\label{ex:133211firstdecomposition}
    Let ${f=\DPel{x_1}{5} + \DPel{x_2}{4} + \DPel{x_3}{3}}$. Its space of partials is generated by
    the elements in the following table, where the generators of each
    $\iaqvect{a}^{\vee}$
    are arranged by degree; next to it, we have the symmetric decomposition of its
    Hilbert function:

    \begin{center}
        \newcommand{\oldarraystretch}{\arraystretch} 
        \renewcommand{\arraystretch}{1.2}
        \begin{tabular}{c @{\hspace{4em}} c}
            Generators of the space of partials & Hilbert function decomposition \\[2mm]
            \begin{tabular}{ l cccccc }
                degree & $0$ &$1$ & $2$ & $3$ & $4$ & $5$\\
                \hline
                $\iaqvect{0}^\vee$ & $1$ & $x_1$  & $\DPel{x_1}{{2}}$ & $\DPel{x_1}{{3}}$ &
                $\DPel{x_1}{4}$ & $f$\\
                $\iaqvect{1}^\vee$ & & $x_2$ & $\DPel{x_2}{{2}}$ &
                $\DPel{x_2}{3}$ &  &\\
                $\iaqvect{2}^\vee$ & & $x_3$  & $\DPel{x_3}{2}$ & & & \\
                $\iaqvect{3}^\vee$ & &  & & & & \\
                & &  &  &  & & \\
            \end{tabular}
            &
            $\begin{array}{cccccccc}
                \mbox{degree} && 0 &1 & 2 & 3 & 4 & 5\\
                \hline
                \Dhdvect{0}& = & 1 & 1 & 1 & 1 & 1 & 1\\
                \Dhdvect{1}& = & 0 & 1 & 1 & 1 & 0 &\\
                \Dhdvect{2}& = & 0 & 1 & 1 & 0 & &\\
                \Dhdvect{3}& = & 0 & 0 & 0 &  & & \\
                \hline
                H_{\DA}            & = & 1 & 3 & 3 & 2 & 1 & 1\\
            \end{array}$
        \end{tabular}
        \renewcommand{\arraystretch}{\oldarraystretch}
    \end{center}
    We have $\linear{f}{0} = \sspan{x_1} \subset \linear{f}{1} = \sspan{x_1,
    x_2} \subset \linear{f}{2} = \linear{f}{3} = \sspan{x_1, x_2, x_3}$.
\end{example}
\begin{example}\label{ex:133211snddecomposition}
    Let ${f=\DPel{x_1}{5} + x_1\DPel{x_2}{3} + \DPel{x_3}{2}}$. Its space of partials is generated by
    the elements in the following table, where the generators of each
    $\iaqvect{a}^{\vee}$
    are arranged by degree; next to it, we have the symmetric decomposition of its
    Hilbert function:

    \begin{center}
        \newcommand{\oldarraystretch}{\arraystretch} 
        \renewcommand{\arraystretch}{1.2}
        \begin{tabular}{c @{\hspace{4em}} c}
            Generators of the space of partials & Hilbert function decomposition \\[2mm]
            \begin{tabular}{ l cccccc }
                degree & $0$ &$1$ & $2$ & $3$ & $4$ & $5$\\
                \hline
                $\iaqvect{0}^\vee$ & $1$ & $x_1$  & $\DPel{x_1}{{2}}$ & $\DPel{x_1}{{3}}$ &
                $\DPel{x_1}{4}+\DPel{x_2}{3}$ & $f$\\
                $\iaqvect{1}^\vee$ & & $x_2$ & $x_1x_2$,\ \,$\DPel{x_2}{{2}}$ &
                $x_1\DPel{x_2}{2}$ &  &\\
                $\iaqvect{2}^\vee$ & &  & & & & \\
                $\iaqvect{3}^\vee$ & & $x_3$  & & & & \\
                & &  &  &  & & \\
            \end{tabular}
            &
            $\begin{array}{cccccccc}
                \mbox{degree} && 0 &1 & 2 & 3 & 4 & 5\\
                \hline
                \Dhdvect{0}& = & 1 & 1 & 1 & 1 & 1 & 1\\
                \Dhdvect{1}& = & 0 & 1 & 2 & 1 & 0 &\\
                \Dhdvect{2}& = & 0 & 0 & 0 & 0 & &\\
                \Dhdvect{3}& = & 0 & 1 & 0 &  & & \\
                \hline
                H_{\DA}            & = & 1 & 3 & 3 & 2 & 1 & 1\\
            \end{array}$
        \end{tabular}
        \renewcommand{\arraystretch}{\oldarraystretch}
    \end{center}
    We have $\linear{f}{0} = \sspan{x_1} \subset \linear{f}{1} = \linear{f}{2}
    = \sspan{x_1,
    x_2} \subset \linear{f}{3} = \sspan{x_1, x_2, x_3}$.
\end{example}

\begin{example}\label{partialsofapolynomial}
Let ${f=\DPel{x_1}{{3}}x_2+\DPel{x_3}{{3}}+\DPel{x_4}{{2}}}$. Its space of partials is generated by
the elements in the following table, where the generators of each
$\iaqvect{a}^{\vee}$
are arranged by degree; next to it, we have the symmetric decomposition of its
Hilbert function:

\begin{center}
\newcommand{\oldarraystretch}{\arraystretch} 
\renewcommand{\arraystretch}{1.2}
\begin{tabular}{c @{\hspace{4em}} c}
    Generators of the space of partials & Hilbert function decomposition \\[2mm]
\begin{tabular}{ l ccccc }
degree & $0$ &$1$ & $2$ & $3$ & $4$ \\
\hline
$\iaqvect{0}^\vee$ & $1$ & $x_1$,\ \,$x_2$  & $\DPel{x_1}{{2}}$,\ \,$x_1x_2$ & $\DPel{x_1}{{3}}$,\ \,$\DPel{x_1}{{2}}x_2$ & $f$\\
$\iaqvect{1}^\vee$ & & $x_3$ & $\DPel{x_3}{{2}}$ & & \\
$\iaqvect{2}^\vee$ & & $x_4$ & & & \\
& &  &  &  & \\
\end{tabular}
&
$\begin{array}{ccccccc}
    \mbox{degree} && 0 &1 & 2 & 3 & 4 \\
\hline
\Dhdvect{0}& = & 1 & 2 & 2 & 2 & 1 \\
\Dhdvect{1}& = & 0 & 1 & 1 & 0 & \\
\Dhdvect{2}& = & 0 & 1 & 0 &  & \\
\hline
H_{\DA}            & = & 1 & 4 & 3 & 2 & 1\\
\end{array}$
\end{tabular}
\renewcommand{\arraystretch}{\oldarraystretch}
\end{center}
For instance $\DPel{x_3}{{2}}$ is a partial of order $1$, since it is obtained as
${\Dx_3\hook f=\DPel{x_3}{{2}}}$ and cannot be attained by a higher order element of $T$, so
it is a generator of $\iaq{1}{2}^\vee$. Here we have ${\linear{f}{0}=\langle x_1,x_2\rangle}$, ${\linear{f}{1}=\langle x_1,x_2,x_3\rangle}$, and ${\linear{f}{2}=\langle x_1,x_2,x_3,x_4\rangle}$.
\end{example}

From the above examples, we might notice the natural fact that the lower
degree terms of $f$ do not appear in $\iaqvect{a}$ for small $a$. The
following Proposition~\ref{ref:degreecomparison:prop} makes this observation precise.
\begin{proposition}\label{ref:degreecomparison:prop}
        Suppose that polynomials $f, g\in \DP$ of degree $d$ are such that
        $\deg (f - g) \leq d - \delta$.
        Then $\Dhdvect{\Apolar{f}, a} = \Dhdvect{\Apolar{g}, a}$ for all $a < \delta$.
\end{proposition}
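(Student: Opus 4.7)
Set $h := f-g$, so by hypothesis $\deg h \leq d-\delta$. The strategy is to build an explicit linear isomorphism between the subquotients of $\DSf$ and $\Diff(g)$ computing $\Dhd{A_f,a}{i}$ and $\Dhd{A_g,a}{i}$. The pivotal estimate is that for any $\sigma\in\DmmS^{d-a-i}$ with $a<\delta$,
$$
\deg(\sigma\hook h) \;\leq\; (d-\delta)-(d-a-i) \;=\; a+i-\delta \;\leq\; i-1,
$$
so $\sigma\hook f\equiv \sigma\hook g\pmod{\DP_{\leq i-1}}$; this is the single driving fact of the proof.

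Transporting~\eqref{eq:Qsummands} and Lemma~\ref{ref:Qduality:lem} through the $A$-module isomorphism $A_f\simeq \DSf$ (under which $\mm^j$ becomes $\DmmS^j\hook f$ and $\Dmmperp{j}$ becomes $\DSf\cap\DP_{\leq j-1}$), one obtains $\Dhd{A_f,a}{i}=\dimk N^f/D^f$, where
$$
N^f:=(\DmmS^{d-a-i}\hook f)\cap\DP_{\leq i},\qquad D^f:=(\DmmS^{d-a-i+1}\hook f)\cap\DP_{\leq i}\;+\;(\DmmS^{d-a-i}\hook f)\cap\DP_{\leq i-1},
$$
and analogously $N^g, D^g$ for $g$. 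I will then define
$$
\Phi\colon N^f\longrightarrow N^g/D^g,\qquad \sigma\hook f\longmapsto \sigma\hook g \bmod D^g,
$$
and check: (i) the image lies in $N^g$, because $\sigma\hook g=\sigma\hook f-\sigma\hook h\in\DP_{\leq i}$ by the estimate above while still $\sigma\hook g\in\DmmS^{d-a-i}\hook g$; (ii) $\Phi$ is well defined, because if $\sigma\hook f=\sigma'\hook f$ then $\sigma-\sigma'\in\Ann(f)\cap\DmmS^{d-a-i}$ and so $(\sigma-\sigma')\hook g=-(\sigma-\sigma')\hook h\in(\DmmS^{d-a-i}\hook g)\cap\DP_{\leq i-1}\subseteq D^g$.

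Finally I will verify $\Phi(D^f)\subseteq D^g$. For $\sigma_1\in\DmmS^{d-a-i+1}$ with $\sigma_1\hook f\in\DP_{\leq i}$, the sharper estimate $\deg(\sigma_1\hook h)\leq a+i-\delta-1\leq i-2$ keeps $\sigma_1\hook g$ in $\DP_{\leq i}$, so in the first summand of $D^g$; for $\sigma_2\in\DmmS^{d-a-i}$ with $\sigma_2\hook f\in\DP_{\leq i-1}$ the original estimate gives $\sigma_2\hook g\in\DP_{\leq i-1}$, hence in the second summand. Thus $\Phi$ descends to $N^f/D^f\to N^g/D^g$, and swapping the roles of $f$ and $g$ produces its inverse, yielding $\Dhdvect{A_f,a}=\Dhdvect{A_g,a}$ for all $a<\delta$. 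The only subtle step is the well-definedness (ii): it is precisely the hypothesis $a<\delta$ that forces $(\sigma-\sigma')\hook h$ into $\DP_{\leq i-1}$, and without this the construction would collapse; all remaining inclusions amount to direct bookkeeping with the degree and order filtrations.
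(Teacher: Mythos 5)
Your proof is correct and follows the same line of argument as the paper: the key estimate $\deg(\sigma\hook(f-g))\leq a+i-\delta < i$ for $\sigma\in\DmmS^{d-a-i}$ is exactly the observation the paper uses, and the paper delegates the remaining bookkeeping with the order and degree filtrations to \cite[Lemma~1.10]{iarrobino_associated_graded} and \cite[Lemma~4.34]{JelMSc}, whereas you spell it out explicitly via the map $\Phi$. This is a faithful (and self-contained) version of the same argument, not a different one.
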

\begin{proof}
    By~\eqref{eq:Qintermsoff} the spaces $\iaq{a}{\Apolar{f}}$ and
    $\iaq{a}{\Apolar{g}}$ are spanned by partials of degree $i$ and order $d-a-i$
    of $f$ and $g$, respectively. But $\deg
    \sigma(f-g) \leq a+i-\delta < i$ for all $\sigma\in \DmmS^{d-a-i}$, so
    that leading forms of elements of degree $i$ are equal for $f$ and $g$,
    see~\cite[Lemma~1.10]{iarrobino_associated_graded} or \cite[Lemma~4.34]{JelMSc} for details.
\end{proof}

\begin{corollary}\label{ref:topdegreefirstrow:cor}
    Let $f\in \DP$ and $\DA = \Apolar{f}$.
    The vector $\Dhdvect{0}$ is equal to the Hilbert function of
    $\Apolar{f_d}$. If the Hilbert function $H_{\DA}$ satisfies $H_{\DA}(d-i)
    = H_{\DA}(i)$ for all $i$, then $H_{\DA} = \Dhdvect{0}$.
\end{corollary}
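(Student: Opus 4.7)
The plan for the first claim is to apply Proposition~\ref{ref:degreecomparison:prop} to the pair $(f, g) = (f, f_d)$. Since $f - f_d$ has degree at most $d-1$, we may take $\delta = 1$, and the proposition gives $\Dhdvect{\Apolar{f}, 0} = \Dhdvect{\Apolar{f_d}, 0}$. It therefore suffices to show that for a standard graded Gorenstein algebra $B = \Apolar{f_d}$ of socle degree $d$, one has $\Dhdvect{B,0} = H_B$ and $\Dhdvect{B, a} = 0$ for all $a \geq 1$. This is a direct computation from definitions: graded duality (Proposition~\ref{ref:symmetryGorensteingraded:prop} combined with Lemma~\ref{ref:dualitycomparison:lem}) yields $\Dmmperp{k} = \mm^{d-k+1}$ inside $B$, so
\[
\iainter{i}{d+1-a-i} \;=\; \mm^{i} \cap \mm^{a+i} \;=\; \mm^{a+i},
\]
and hence $\iac{a}{i} = \mm^{a+i}/\mm^{a+i+1}$ collapses to $0$ whenever $a \geq 1$, while $\iac{0}{i} = \mm^{i}/\mm^{i+1}$ recovers $H_B(i)$.

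For the second claim, the plan is a downward induction using the two symmetries available. By Lemma~\ref{ref:hilbertfromdecomposition:lem}, $H_A = \sum_{a\geq 0} \Dhdvect{a}$, and by Lemma~\ref{ref:Qduality:lem} each $\Dhdvect{a}$ satisfies $\Dhd{a}{i} = \Dhd{a}{d-a-i}$, with $\Dhd{a}{0} = \Dhd{a}{d-a} = 0$ for $a \geq 1$. I will prove by induction on $k \geq 0$ that $\Dhd{a}{k} = 0$ for every $a \geq 1$. For the step, assume $\Dhd{a}{j} = 0$ for all $a \geq 1$ and $j < k$; the internal symmetry of each $\Dhdvect{a}$ then also gives $\Dhd{a}{d-a-j} = 0$. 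Evaluating at index $d-k$,
\[
H_A(d-k) \;=\; \Dhd{0}{d-k} + \sum_{a=1}^{k-1} \Dhd{a}{d-k},
\]
since terms with $a \geq k$ vanish (they are either boundary zeros or lie outside the support of $\Dhdvect{a}$). For $1 \leq a \leq k-1$ the internal symmetry converts $\Dhd{a}{d-k}$ into $\Dhd{a}{k-a}$, which is $0$ by the inductive hypothesis, so $H_A(d-k) = \Dhd{0}{d-k} = \Dhd{0}{k}$. The assumed symmetry $H_A(d-k) = H_A(k) = \Dhd{0}{k} + \sum_{a\geq 1} \Dhd{a}{k}$ then forces $\sum_{a\geq 1}\Dhd{a}{k} = 0$, and nonnegativity gives $\Dhd{a}{k} = 0$ for every $a \geq 1$, closing the induction. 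Consequently $H_A = \Dhdvect{0}$, as desired.

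The only mildly delicate point is bookkeeping the support of each $\Dhdvect{a}$ so that the sum in the induction step genuinely truncates at $a = k-1$; once the two symmetries (global around $d/2$ and internal around $(d-a)/2$) are set against each other, nonnegativity of the $\Dhd{a}{i}$ does the rest. I do not expect any real obstacle beyond this indexing.
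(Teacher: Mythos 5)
Your proposal is correct but takes a genuinely different route from the paper's proof in both parts, and in the process it fills in a step the paper leaves implicit.

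For the first claim, the paper applies Proposition~\ref{ref:degreecomparison:prop} to get $\Dhdvect{0} = \Dhdvect{\Apolar{f_d},0}$ and then relies, silently, on the corollary's own second claim applied to $\Apolar{f_d}$ (which is graded Gorenstein, so its Hilbert function is symmetric, so $H_{\Apolar{f_d}}=\Dhdvect{\Apolar{f_d},0}$) to finish. You instead verify directly from the definitions that the symmetric decomposition of a graded Gorenstein algebra is trivial, via $\Dmmperp{k}=\mm^{d-k+1}$. That is more self-contained and, in my view, clearer. One slip to fix: the denominator of $\iac{a}{i}$ is $\iainter{i+1}{d+1-a-i}=\mm^{i+1}\cap\mm^{a+i}$, not $\mm^{a+i+1}$; for $a\geq 1$ this intersection is $\mm^{a+i}$ (so the quotient is $\mm^{a+i}/\mm^{a+i}=0$), while for $a=0$ it is $\mm^{i+1}$ (giving $\mm^i/\mm^{i+1}$). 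The quotient $\mm^{a+i}/\mm^{a+i+1}$ you display does not collapse to zero, so the displayed formula is wrong even though the conclusion you draw from it is right. (Strictly one wants $\iaq{a}{i}=0$, but $\iac{a}{i}=0$ suffices since $\iaq{a}{i}$ is a quotient of it.)

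For the second claim, the paper uses a one-line ``center of gravity'' argument: each $\Dhdvect{a}$ with $a\geq 1$ is symmetric around $(d-a)/2 < d/2$, so if any is nonzero the center of $H_A$ drops below $d/2$, contradicting the assumed symmetry. Your downward induction on $k$, setting $H_A(d-k)=H_A(k)$ against the two symmetries and using nonnegativity of the $\Dhd{a}{i}$, is a more elementary and explicit rendering of the same mechanism; the support bookkeeping you carry out is exactly what the paper compresses into the center-of-gravity heuristic. Both are valid; yours trades brevity for transparency.
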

\begin{proof}
    Since $\deg (f - f_d) < d$, we have $\Dhdvect{0} = \Dhdvect{\Apolar{f_d}, 0}$ by
    Proposition~\ref{ref:degreecomparison:prop}.
    We also have $H_{\DA} = \sum_{a=0}^{d} \Dhdvect{a}$ by
    Lemma~\ref{ref:hilbertfromdecomposition:lem}. Each $\Dhdvect{a}$
    is symmetric around $d - a$, so if any $\Dhdvect{a}$ with $a\neq 0$ is
    non-zero, then the center of gravity of $H_{\DA}$ is smaller than $d/2$,
    so $H_{\DA}$ cannot be symmetric around $d/2$ as assumed.
\end{proof}

\begin{example}\label{ex:lowersummands}
Proposition~\ref{ref:degreecomparison:prop} might give an impression that if
$f$ with $d = \deg f$ has no homogeneous parts of degrees less that $d - i$,
then $\Dhdvect{a} = 0$ for all $a > i$. This is false for $f = \DPel{x_1}{4} +
\DPel{x_1}{2}x_2$. Indeed, $\DSf = \sspan{f,\ \DPel{x_1}{3} + x_1x_2,\ \DPel{x_1}{2},\ \DPel{x_1}{2} + x_2,\
x_1,\ 1}$, so that $H_{\Apolar{f}} = (1, 2, 1, 1, 1)$ with the unique
symmetric decomposition $\Delta_0 = (1, 1, 1, 1, 1)$ and $\Delta_2 = (0, 1,
0)$.
\end{example}

Conclusion of Corollary~\ref{ref:topdegreefirstrow:cor} may be lifted from
the level of Hilbert functions to algebras, as we present below in
Corollary~\ref{ref:symmetricHf:cor}. Recall the ideal $\iacvect{1}\subset
\grDA$ defined in~\eqref{eq:Csummands}.
\begin{corollary}\label{ref:symmetricHf:cor}
    Let $\DA = \Apolar{f}$ for $f\in \DP$ of degree $d$. Then $\grDA/\iacvect{1} \simeq \Apolar{f_d}$.
    If $H_{\DA}(d-i) = H_{\DA}(i)$ for all $i$, then $\grDA \simeq
    \Apolar{f_d}$ is also
    a Gorenstein algebra.
\end{corollary}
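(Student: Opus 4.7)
The plan is to construct a natural surjection $\Apolar{f_d} \onto \grDA/\iacvect{1}$ and show it is an isomorphism via a Hilbert function comparison; the second claim will then follow quickly.

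First I would consider the composition $\pi\colon \DShat \onto \grDA \onto \grDA/\iacvect{1}$, where the first arrow is the graded surjection induced by $\DShat \onto \DA$ (sending a homogeneous element $\sigma \in \DShat_i$ to the class of $\sigma + \Ann(f)$ in $\mm^i/\mm^{i+1}$). To factor $\pi$ through $\Apolar{f_d} = \DShat/\Ann(f_d)$, I will check that $\Ann(f_d) \subseteq \ker\pi$. Take any homogeneous $g \in \DShat$ of degree $i$ with $g\hook f_d = 0$. For $j < i$ we have $g\hook f_j = 0$ by degree reasons, and $g \hook f_d = 0$ by assumption, so
\[
    g\hook f = \sum_{j=i}^{d-1} g\hook f_j \in \DP_{\leq d-i-1}.
\]
Via the identification $\DA  \simeq \DShat f$, this says that the image $\bar g \in \DA$ is annihilated by $\mm^{d-i}$, so $\bar g \in \mm^i \cap (0:\mm^{d-i})$. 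Its image in $(\grDA)_i$ therefore lies in $\iacvect{1}_i$, and $\pi(g) = 0$. This produces a graded surjection $\varphi\colon \Apolar{f_d} \onto \grDA/\iacvect{1}$.

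Next I would compare Hilbert functions. Using that $(\grDA)_i = \iac{0}{i}$ and that the subspaces $\iac{a}{i}$ form a descending filtration with quotients $\iaq{a}{i}$, we get
\[
    \dimk (\grDA/\iacvect{1})_i = \dimk \iac{0}{i} - \dimk \iac{1}{i} = \dimk \iaq{0}{i} = \Dhd{0}{i}.
\]
By Corollary~\ref{ref:topdegreefirstrow:cor}, $\Dhdvect{0}$ is precisely the Hilbert function of $\Apolar{f_d}$. Thus $\varphi$ is a graded surjection between finite-dimensional $\kk$-vector spaces of equal dimension in every degree, hence an isomorphism.

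For the second claim, suppose $H_\DA$ is symmetric. By Corollary~\ref{ref:topdegreefirstrow:cor}, $H_\DA = \Dhdvect{0}$. Since $H_\DA = H_{\grDA}$ and we have just shown $H_{\grDA/\iacvect{1}} = \Dhdvect{0}$, the graded ideal $\iacvect{1} \subseteq \grDA$ vanishes, so $\grDA \simeq \grDA/\iacvect{1}  \simeq \Apolar{f_d}$ by the first claim; this algebra is Gorenstein by Theorem~\ref{ref:MacaulaytheoremGorenstein:thm}. There is no real obstacle here: the only substantive point is the identification that $\Ann(f_d)$ maps into $\grDA$ exactly inside the piece recorded by $\iacvect{1}$, and everything else is bookkeeping with the symmetric decomposition.
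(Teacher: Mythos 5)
Your argument is correct, though you build the surjection in the opposite direction from the paper's. The paper shows that every element of $\iacvect{1}$ annihilates $f_d$, producing a surjection $\grDA/\iacvect{1}\onto\Apolar{f_d}$; you instead show that every homogeneous $g\in\Ann(f_d)$ of degree $i$ satisfies $g\hook f\in\DP_{\leq d-i-1}$, hence its image $\bar g$ lies in $\mm^i\cap\Dmmperp{d-i}$ and lands in $\iacvect{1}$ after passing to $\grDA$, producing a surjection $\Apolar{f_d}\onto\grDA/\iacvect{1}$. These are dual readings of the same degree estimate. Your direction is slightly more economical: you never need the preliminary observation, present in the paper's proof, that $\init(\Ann f)\subseteq\Ann(f_d)$, which the paper requires in order to make sense of $\grDA$ acting on $f_d$. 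Once a surjection is in place, both arguments conclude identically: the filtration computation gives $H_{\grDA/\iacvect{1}} = \Dhdvect{0}$, which equals $H_{\Apolar{f_d}}$ by Corollary~\ref{ref:topdegreefirstrow:cor}, so the surjection is an isomorphism. Your treatment of the symmetric case coincides with the paper's.
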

\begin{proof}
%    Algebra $\DA$ is a quotient of $\DShat/\DmmS^{d+1}$, which is a graded
%    ring. Therefore $\grDA$ is also a quotient of $\DShat/\DmmS^{d+1}$.
    Let $I = \Ann(f)$. The algebra $\grDA$ is a quotient of $\DShat$ by the
    ideal generated by all lowest degree forms of elements of $I$. If $i\in I$
    and $i'$ is its lower degree form, then the top degree form of $i\hook
    f$ is $i'\hook f_d$. Since $i\hook f = 0$ also $i' \hook f_d = 0$. This proves that
    $\Apolar{f_d}$ is a graded quotient of $\grDA$ and that it makes sense to speak
    about the action of an element of $\grDA$ on $f_d$.
    Consider any nonzero element $a\in \iacvect{1}_i$.
    Then $a\hook f_d$ is of degree $d-i$. By definition $a\in
    \Dmmperp{d-i}$, so $\deg(a\hook f) < d-i$ and so $\deg(a\hook f_{d}) <
    d-i$. This implies that $a\hook f_d = 0$. Thus $\Apolar{f_d}$ is a
    quotient of $\grDA/\iacvect{1}$. The Hilbert functions of these algebras
    are equal to $\Dhdvect{0}$ by Corollary~\ref{ref:topdegreefirstrow:cor}
    and Equation~\eqref{eq:Qsummands},
    so $\grDA/\iacvect{1}  \simeq \Apolar{f_d}$.  If additionally
    $H_{\DA}(d-i) = H_{\DA}(i)$ for all $i$, then
    $H_{\DA} = \Dhdvect{0}$ by Corollary~\ref{ref:topdegreefirstrow:cor} again,
    so $\iacvect{1} = 0$ and $\grDA  \simeq \Apolar{f_d}$.
\end{proof}

\section{Standard forms of dual generators}\label{ssec:standardforms}

Let $(\DA, \mm, \kk)$ be a finite local Gorenstein algebra. We have seen that
$\DA$ may be presented as a quotient of $\DShat$ if and only if $H_{\DA}(1)
\leq \dim \DShat$, see Lemma~\ref{ref:embeddingdimension:lem}. This can be
rephrased as saying that if $\DA  \simeq \Apolar{f}$ then necessarily $f\in
\DP$ depends on at least $H_{\DA}(1)$ variables. In this section we refine
this statement by considering each homogeneous piece of such $f$ separately
and finding minimal number of variables it must depend on.
The existence of standard forms was proven by Iarrobino in~\cite[Theorem
5.3AB]{iarrobino_associated_graded}.

Recall from~\eqref{eq:flagsubspaces} the filtration of $\DP_1$ by
$\linear{f}{a} = \DP_1 \cap \DmmS^{d-a-1}f$. Let $n_a = \sum_{i=0}^a\Dhd{i}{1} = \dim
\linear{f}{a}$ and fix a basis of linear forms ${x_1, \ldots , x_n}$ in
$\DP_1$ that agrees with the filtration by $\linear{f}{i}$:
\begin{multline}\label{Linfiltration}
\linear{f}{0}=\langle x_1, \ldots , x_{n_0}\rangle \subseteq
    \linear{f}{1}=\langle x_1, \ldots , x_{n_1} \rangle \subseteq  \cdots\\
  \cdots \subseteq \linear{f}{d-2}=\langle x_1, \ldots , x_{n_{d-2}} \rangle
  \subseteq \DP_1 = \langle x_1, \ldots , x_n \rangle.
\end{multline}
None of the considerations below depend on this choice; it is done only to
improve presentation. The foundational property of standard forms is the following proposition.
\begin{proposition}\label{ref:linearpartsofdecomposition:prop}
    Fix $f\in \DP$ and $i\geq 0$ and let $f_{\geq d-i} = f_{d-i} +  \ldots  + f_{d-1} + f_d$.
    Then the linear forms from $\linear{f}{i}$ are partials of $f_{\geq d-i}$.
\end{proposition}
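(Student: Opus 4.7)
The plan is to unpack the definition of $\linear{f}{i}$ and then argue by a degree count that a contraction of $f$ producing a linear form cannot see the lowest-degree parts of $f$ except through a scalar, which can then be corrected away.

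Concretely, I would take $l\in \linear{f}{i}$, which by~\eqref{eq:flagsubspaces} and the definition in~\eqref{eq:diffdefs} means $l\in \DP_1$ and $l=\sigma\hook f$ for some $\sigma\in \DmmS^{d-i-1}$. Split $f = f_{<d-i} + f_{\geq d-i}$ where $f_{<d-i} = f_0 + \cdots + f_{d-i-1}$. For each homogeneous component $f_j$ with $j\leq d-i-1$ and each homogeneous piece $\sigma_k$ of $\sigma$ of order $k\geq d-i-1$, the element $\sigma_k\hook f_j$ is homogeneous of degree $j-k$ if nonzero; this forces $k=j=d-i-1$. Hence $\sigma\hook f_{<d-i}$ lies in $\DP_0$, i.e., is a constant $c\in\kk$. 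Consequently
\[
\sigma\hook f_{\geq d-i} = \sigma\hook f - \sigma\hook f_{<d-i} = l - c.
\]

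It then remains to absorb the constant $c$. Since $\deg f = d$, the top form $f_d$ is nonzero, and the perfect pairing between $\DmmS^d/\DmmS^{d+1}$ and $\DP_d$ yields a homogeneous $\sigma_d\in\DmmS^d$ of order exactly $d$ with $\sigma_d\hook f_d = 1$. Because $\sigma_d$ has order $d$, it annihilates every $f_j$ with $j<d$, so $\sigma_d\hook f_{\geq d-i} = 1$. Setting $\tau := \sigma + c\,\sigma_d$ we obtain
\[
\tau\hook f_{\geq d-i} = (l - c) + c\cdot 1 = l,
\]
so $l$ is a partial of $f_{\geq d-i}$, as desired.

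There is no serious obstacle here: the proof is essentially the observation that contraction of $f$ by an element of order $\geq d-i-1$ sees $f_{<d-i}$ only in degree $0$, so the only mismatch between being a partial of $f$ and of $f_{\geq d-i}$ is a scalar, which is correctable because $f_d\neq 0$. The only mildly delicate point is remembering to split $\sigma$ by order rather than treating it as homogeneous, but this is handled in a line of degree bookkeeping.
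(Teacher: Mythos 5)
Your proof is correct and follows essentially the same route as the paper's: both arguments observe that contraction by an operator of order $\geq d-i-1$ produces from $f_{<d-i}$ at most a constant, and then note that constants are themselves partials of $f_{\geq d-i}$. You merely make explicit (via $\sigma_d$) the step that the paper leaves as the one-line remark ``constant forms are also partials of $f_{\geq d-i}$.''
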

\begin{proof}
    Pick $\ell\in \linear{f}{i}$. By construction, $\ell = \sigma\hook f$ for
    an operator $\sigma\in \DmmS^{d-i-1}$. For such $\sigma$
    we have $\deg(\sigma\hook (f - f_{\geq d-i})) \leq 0$, so that
    $\ell = \sigma\hook f = \sigma\hook f_{\geq d-i} \mod \DP_0$; the form
    $\ell$ is a partial of $f_{\geq d-i}$ modulo a constant form. But constant
    forms are also partials of $f_{\geq d-i}$, so $\ell\in\DShat \hook
    f_{\geq d-i}$.
\end{proof}

\begin{definition}\label{ref:standardform:def}
Let ${f\in \DP}$ be a polynomial with homogeneous decomposition ${f = f_{d}+
\cdots + f_0}$. Let $\Delta$ be the symmetric decomposition of the Hilbert
function of $\DShat/\Ann(f)$. We say that $f\in S$ is in \emph{standard form}
if
\[
    f_{d - i} \in \kdp\left[\linear{f}{i}\right] \quad \mbox{for all } i,
\]
\end{definition}
This is equivalent to $f_{d-i}\in \kdp\left[x_1, \ldots , x_{n_i} \right]$,
where ${x_1, \ldots , x_n}$ is any choice of basis for $\DP_1$ as in \eqref{Linfiltration}.
The standard form is a way to write $f$ using as few variables as
possible, as we explain now. For $i$ and $f_{\geq d-i} = f_{d-i} +
f_{d-i+1} +  \ldots + f_d$ we have $\linear{f}{i} \subset \DShat f_{\geq
d-i}$, by Proposition~\ref{ref:linearpartsofdecomposition:prop}. The
polynomial $f$ is in standard form, if and only if for each $i$ we have
conversely $f_{\geq d-i} \in \kdp \left[ \linear{f}{i} \right]$, no additional
variables appear.

Now we prove that in the orbit $\DAut f \subset \Dgrp f$ of every $f\in \DP$ there is an element in the standard
form.

\begin{theorem}[Existence of standard forms]\label{ref:stform:thm}
    Let ${f\in \DP}$.  Then there is an automorphism ${\varphi\colon\DShat\to
    \DShat}$ such that $\Ddual{\varphi}(f)$ is in a standard form.
%    Moreover one can choose $\varphi$ of order two.
\end{theorem}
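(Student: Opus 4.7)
My plan is to build $\varphi$ as a composition $\cdots \circ \varphi_2 \circ \varphi_1 \circ \varphi_0$ that cleans up one homogeneous degree of $f$ at a time, starting from the top. Specifically, $\varphi_0$ will be a linear automorphism and, for $i \geq 1$, each $\varphi_i$ will satisfy $\varphi_i(x_k) - x_k \in \hat{m}^{i+1}$, so that $\Ddual{\varphi_i}$ preserves $f_d, \ldots, f_{d-i+1}$ and only modifies $f_{d-i}$ and below.

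After the preliminary $\varphi_0$, chosen so that $L(\Ddual{\varphi_0}(f))^i = \sspan{x_1, \ldots, x_{n_i}}$ for all $i$ (this uses transitivity of $\operatorname{GL}(\hat{m}/\hat{m}^2)$ on flags in $\DP_1$), the standard-form condition becomes the concrete requirement $f_{d-i} \in \kdp[x_1, \ldots, x_{n_i}]$. The degree-$d$ case is then automatic: if $x_j$ appears in $f_d$, say via a term $c_\alpha x^{[\alpha]}$ with $\alpha_j \geq 1$ and $c_\alpha \neq 0$, then $\partial^{\alpha - e_j} \hook f_d$ is a linear form lying in $L(f)^0 = \sspan{x_1, \ldots, x_{n_0}}$ whose $x_j$-coefficient is $c_\alpha$, which forces $c_\alpha = 0$ whenever $j > n_0$, hence $f_d \in \kdp[x_1, \ldots, x_{n_0}]$.

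For the inductive step, assume $f_{d-j} \in \kdp[x_1, \ldots, x_{n_j}]$ for all $j < i$ and write $f_{d-i} = g + h$ with $g \in \kdp[x_1, \ldots, x_{n_i}]$ and $h$ collecting the monomials involving some bad variable $x_j$, $j > n_i$. I seek $\varphi_i(x_k) = x_k + D_k$ with $D_k \in \hat{m}^{i+1}$. By Proposition~\ref{ref:dualautomorphism:prop},
\[
\Ddual{\varphi_i}(f) = f + \sum_k x_k \cdot (D_k \hook f) + \sum_{|\alpha| \geq 2} x^{[\alpha]} \cdot (D_1^{\alpha_1} \cdots D_n^{\alpha_n} \hook f);
\]
each summand in the last sum has degree at most $d - i|\alpha|$, hence at most $d - 2i < d - i$, so $f_d, \ldots, f_{d-i+1}$ are unchanged and the inductive hypothesis is maintained. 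The degree-$(d-i)$ correction is exactly $\sum_k x_k \cdot (D_k \hook f_d)$, and the task reduces to choosing $D_k \in \hat{m}^{i+1}$ so that this correction equals $-h$ modulo $\kdp[x_1, \ldots, x_{n_i}]$.

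The hard part is exhibiting such $D_k$'s, and this is where the defining property of $L(f)^i$ becomes essential. The condition $x_j \notin L(f)^i$ for $j > n_i$, dualized via the pairing of Definition~\ref{ref:contraction:propdef} and combined with the inductive hypothesis $f_{d-j} \in \kdp[x_1, \ldots, x_{n_{i-1}}]$ for $j < i$, forces every monomial of $h$ to lie in the image of the linear map $(D_k) \mapsto \sum_k x_k (D_k \hook f_d)$ modulo $\kdp[x_1, \ldots, x_{n_i}]$; in particular, no monomial of $h$ can involve two distinct bad variables, for then both would arise as $(d-i-1)$-order partials of $f$ with the degree-$2$ junk cancelled by the induction hypothesis, contradicting their exclusion from $L(f)^i$. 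Since each $D_k \hook f_d$ lies in $\kdp[x_1, \ldots, x_{n_0}] \subseteq \kdp[x_1, \ldots, x_{n_i}]$, the lower-degree corrections introduced by $\varphi_i$ stay compatible with the filtration and do not disturb subsequent iterations. Iterating from $i = 1$ up to $i = d - 1$ and composing yields the desired $\varphi$; a final dimension count using the invariance of $\dim L(\cdot)^i = n_i$ under the $\Dgrp$-action (Proposition~\ref{ref:Gorenstein:prop}) confirms that the result is indeed in standard form.
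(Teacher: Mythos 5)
Your strategy is genuinely different from the paper's. The paper builds a \emph{single} automorphism all at once: it lifts the decreasing filtration $\iacvect{0}_1 \supseteq \iacvect{1}_1 \supseteq \cdots$ of $\mm/\mm^2$ (coming from the $\Dmmperp{\bullet}$-filtration of $\DA$, see \eqref{eq:Qsummands} and \eqref{eq:standardform}) to a flag of order-one elements $z_i \in \DShat$, sets $\varphi(\Dx_i) = z_i$, and verifies the standard-form condition directly from the pairing identity $\ip{\mm^{d-a}\Dx_i}{\Ddual{\varphi}(f)} = \ip{\mm^{d-a}z_i}{f} = 0$. You instead try to clean $f$ degree by degree with unipotent automorphisms of increasing order. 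That is a legitimate and natural thing to attempt, and the outer structure of your induction (the observation that $\Ddual{\varphi_i}$ with $\varphi_i(x_k)-x_k \in \DmmS^{i+1}$ fixes $f_d, \ldots, f_{d-i+1}$ and preserves the filtration $\linear{f}{\bullet}$, since it acts trivially on $\DP_{\leq 1}$) is sound. Your degree-$d$ base case, using $\partial^{\alpha-e_j}\hook f_d \in \linear{f}{0}$, is also correct.

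The gap is precisely where you yourself flag ``the hard part.'' You need to show that the bad summand $h$ of $f_{d-i}$ lies in the image of
\[
    (D_k)_k \ \longmapsto \ \sum_k x_k \cdot (D_k \hook f_d) \quad \bmod \ \kdp[x_1, \ldots, x_{n_i}]_{d-i},\qquad D_k \in \DmmS^{i+1},
\]
and nothing in your proposal actually establishes this. You assert that ``the condition $x_j \notin L(f)^i$ \ldots\ forces every monomial of $h$ to lie in the image'' but this is a non sequitur: $x_j \notin \linear{f}{i}$ says that $x_j$ is \emph{not} an order-$(d-i-1)$ partial of $f$, which is a constraint on $\linear{f}{i}$, not a constructive description of how to express the monomials of $h$ as $x_k(D_k\hook f_d)$. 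The auxiliary claim that ``no monomial of $h$ can involve two distinct bad variables'' is also unsubstantiated: your argument says that both bad variables ``would arise as $(d-i-1)$-order partials of $f$ with the degree-$2$ junk cancelled by the induction hypothesis,'' but you never explain \emph{how} the induction hypothesis furnishes an operator whose contraction cancels the contamination coming from $f_{\geq d-i+1}$; that cancellation is exactly what ``$x_j \in \linear{f}{i}$'' would mean, and it is not automatic just because $f_{\geq d-i+1}$ happens to lie in a smaller coordinate subring. Finally, your proof ends by appealing to ``a final dimension count'' that is never carried out, which cannot substitute for the missing argument.

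The reason the paper's route is technically smoother is worth noting: by lifting the ideal-theoretic flag $\iacvect{a}_1$ in $\DA$ to order-one elements of $\DShat$ \emph{before} dualizing, it never needs to analyze what the degree-$(d-i)$ correction operator can or cannot hit; the desired vanishing $\deg(\Dx_i\hook\Ddual{\varphi}(f)) < d - a$ drops out of a two-line pairing computation, and the conclusion that $\Ddual{\varphi}(f)_{d-i}\in\kdp[x_1,\ldots,x_{n_i}]$ follows by a straightforward downward induction. If you want to salvage your iterative version, the essential missing lemma is a precise statement identifying, at each stage, the image of $(D_k)\mapsto\sum_k x_k(D_k\hook f_d)$ in $\DP_{d-i}/\kdp[\linear{f}{i}]_{d-i}$ with the bad summand of $f_{d-i}$; you would need to relate this image to $\tangunip{f}$ as in Proposition~\ref{ref:uniptangentspacepoly:prop} and argue via the definition of $\linear{f}{i}$ rather than around it.
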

\begin{proof}
    Choose a basis of $\DP_1$ as in~\eqref{Linfiltration} and the dual basis
    $\Dx_1, \ldots ,\Dx_n$.
        Consider $\DA= \Apolar{f}$ and the sequence of ideals as defined
        in~\eqref{eq:Qsummands}:
        \def\iaintersnd#1{\mm\cap \Dmmperp{#1}}%
        \def\fracced#1{\frac{#1 + \mm^2}{\mm^2}}
        \begin{equation}
            \frac{\mm^{\phantom{1}}}{\mm^2} =
            \iacvect{0}_1 =
            \fracced{\iaintersnd{d}} \supseteq
            \iacvect{1}_1 = \fracced{\iaintersnd{d-1}} \supseteq \dots
            \supseteq \iacvect{d}_1 \supseteq 0.
            \label{eq:standardform}
        \end{equation}
%        Note that elements of $\iacvect{1+a}$ annihilate $\linear{f}{a}$.
%        Note that $\dimk \iacvect{a} = H(1) - n_a = H(1) -\sum_{i=0}^a
%        \Dhdvect{i}(1)$.
        We choose lifts of $\kk$-vector spaces $\iacvect{a}$
        to $\DS$. This gives a flag of subspaces
        \begin{equation}\label{eq:tmpflag}
            \sspan{z_1, \ldots , z_n} \supset \sspan{z_{n-n_0+1}, \ldots
            ,z_{n}} \supset \sspan{z_{n-n_1+1}, \ldots
                ,z_{n}} \supset  \ldots \supset \{0\}
        \end{equation}
        spanned by elements $z_i$ of order one. Take an automorphism
        $\varphi\colon\DShat\to \DShat$ sending $\Dx_i$ to $z_i$.
        We claim that $\Ddual{\varphi}(f)$ is in the standard
        form. Indeed, for every $i$ and $a$ such that $i > n - n_a$ we have $z_i$
        in the lift of $\iacvect{a}$ so
        \[
            \ip{\mm^{d-a}\Dx_i}{\Ddual{\varphi}(f)} = \ip{\mm^{d-a}z_i}{f} =
            \ip{1}{\mm^{d-a}z_i\hook f} = 0.
        \]
        This implies that $\deg(\Dx_i\hook \Ddual{\varphi}(f)) < d - a$.
        Let $\Ddual{\varphi}(f) = g$ and $g = g_d +  \ldots  + g_0$ be
        decomposition into homogeneous summands.
        By induction we conclude that $g_{d-i}\in \kdp[x_1, \ldots ,x_{n_i}]$
        for all $i=0,1 \ldots , d$.
\end{proof}

Standard form of a given polynomial is by no means unique: if $f\in \DP$ is a
polynomial in a standard form and $\varphi$ is linear (see
Definition~\ref{ref:linearautomorphism:defn}), then $\Ddual{\varphi}(f)$
is also in the standard form. Note also that $z_i$
in the proof of Theorem~\ref{ref:stform:thm} may be chosen such that $z_i
\equiv \Dx_i \mod \DmmS^2$, so that $\varphi(\Dx) - \Dx\in \DmmS^2$ for all $\Dx\in \DmmS$.

The following example illustrates how to obtain the standard form
of a given polynomial.
\begin{example}
    Take $f = \DPel{x_1}{4} + \DPel{x_1}{2}x_2$ from Example~\ref{ex:lowersummands}. The
    nonzero summands of the symmetric decomposition of $H_{\Apolar{f}}$ are
    $\Dhdvect{0} = (1, 1, 1, 1, 1)$ and $\Dhdvect{2} = (0, 1,
    0)$, so we have $\linear{f}{0} = \linear{f}{1} = \sspan{x_1}$ and
    $\linear{f}{2} = \sspan{x_1, x_2}$. Since $f_3 = \DPel{x_1}{2}x_2\not\in
    \kdp[x_1] = \kdp[\linear{f}{1}]$, the polynomial $f$ is not in a standard form. The flag
    from~\eqref{eq:tmpflag} for $f$ is equal to
    \[
        \iacvect{0} = \sspan{\Dx_1, \Dx_2} \supset \iacvect{1} = \iacvect{2} = \sspan{\Dx_2
        - \Dx_1^2}  \supset \iacvect{3} = \iacvect{4} = \{0\}.
    \]
    Take an automorphism $\varphi\colon\DShat\to \DShat$ defined by $\varphi(\Dx_1)
    = \Dx_1$ and $\varphi(\Dx_2) = \Dx_2 - \Dx_1^2$. By Proposition~\ref{ref:dualautomorphism:prop}, we have
    \[
        \Ddual{\varphi}(f) = f + x_2\cdot (-\Dx_1^2 \hook f) + \DPel{x_2}{2}\cdot
        (\Dx_1^4 \hook f) = f - \pp{\DPel{x_1}{2}x_2 + 2\DPel{x_2}{2}} + \DPel{x_2}{2} = \DPel{x_1}{4} - \DPel{x_2}{2},
    \]
    which is in a standard form.
\end{example}

\section{Simplifying dual generators}\label{ssec:specialforms}

\newcommand{\orbit}[1]{\Dgrp\!\cdot\!#1}%
\newcommand{\tang}[1]{\Dgrptang #1}%
\newcommand{\perpspace}[1]{\left( #1 \right)^{\perp}}%
    While the standard form of $f\in \DP$ is highly useful, it is not unique.
    In this section we investigate refinements of the standard form,
    using the Lie theory of $\Dgrp$. Our aim is to remove or simplify the
    lower degree homogeneous components of $f$ by replacing it with another
    element of $\orbit f$; ideally, we would like to show that $f\in \Dgrp
    f_d$, as in Example~\ref{ex:1nn1}; of course this is not always true. All
    results of this subsection first appeared in~\cite{Jel_classifying}.

    Let $\DPut{Der}{\mathfrak{aut}}$ denote the space of derivations of
    $\DShat$ preserving $\DmmS$, i.e.~derivations such that $D(\DmmS)
    \subseteq \DmmS$. Let $\DShat \subset \Homthree{\kk}{\DShat}{\DShat}$ be
    given by sending $\sigma\in \DShat$ to multiplication by $\sigma$.
    Let
    \[
        \DPut{grptang}{\mathfrak{g}} := \DDer  + \DShat,
    \]
where the sum is taken in the space of linear maps from $\DShat$ to $\DShat$.
Then $\Dgrptang$ acts on $\DP$ as defined in Equation~\eqref{eq:dual}. The
space $\Dgrptang$ is actually the tangent space to the group scheme $\Dgrp$,
see Serre~\cite[Theorem~5, p.~4 and discussion below]{Serre__Lie_algebras_and_Lie_groups}.
Similarly, $\tang{f}$ is
naturally contained in the tangent space of the orbit $\orbit{f}$ for every
$f\in \DP$. There is a subtlety here, though. The space $\tang{f}$ is the image of
the tangent space $\Dgrptang$ under $\Dgrp \to \orbit{f}$. If $\kk$ is of
characteristic zero then this map, being a map of homogeneous spaces, is
smooth, so $\tang{f}$ is the tangent space to $\orbit{f}$.
However the map need not be smooth in positive characteristic, so in principle it may happen
that $\tang{f}$ is strictly contained in the tangent space of
$\orbit{f}$. Presently we do not have an example of such behavior.

Sometimes it is more convenient to work with
equations in $\DShat$ than with subspaces of $\DP$. Recall from~\eqref{eq:dualdual} that $\Ddual{\DP} =
\DShat$. For each subspace $W\subset \DP$ we may consider the orthogonal space
\[
    W^{\perp} = \{\sigma\in \DShat \ |\ \forall_{f\in W} f(\sigma) = 0 \}\subset
\DShat.
\]
Below we describe the linear space
$\perpspace{\tang{f}}$.
For $\sigma\in \DShat$ by $\ithpartial{\sigma}{i}$ we denote the $i$-th partial
derivative of $\sigma$. We use the convention that $\deg(0) <
0$.

\begin{proposition}[tangent space description]\label{ref:tangentspacepoly:prop}
        Let $f\in \DP$. Then
    \[
        \DDer \cdot f = \sspan{ x_i \cdot (\delta\hook f)\ |\ \delta\in \DmmS,\
        i=1, \ldots ,n},\qquad \tang{f} = \DShat f + \sum_{i=1}^n \DmmS (x_i\cdot
        f).
    \]
    Moreover
    \[
        \perpspace{\tang{f}} = \left\{ \sigma\in \DShat\ |\ \sigma\hook f = 0,\
            \ \forall_i\ \ \deg(\ithpartial{\sigma}{i}\hook f) \leq 0\right\}.
    \]
    Suppose further that $f\in \DP$ is homogeneous of degree $d$. Then  $\perpspace{\tang{f}}$ is
    spanned by homogeneous operators and
    \[
        \perpspace{\tang{f}}_{\leq d} = \left\{ \sigma\in \DShat\ |\ \sigma\hook f = 0,\
            \ \forall_i\ \ \ithpartial{\sigma}{i}\hook f = 0 \right\}.
    \]

\end{proposition}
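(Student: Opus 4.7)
The plan is to handle the four claims in order, using Proposition~\ref{ref:dualderivation:prop} (dual of a derivation), Lemma~\ref{ref:commutator:lem} (the commutator formula), and the fact that $(\DShat\, f)^{\perp} = \Ann(f) = \{\sigma\mid \sigma\hook f = 0\}$, which follows from $\langle \sigma, \tau\hook f\rangle = \langle \sigma\tau, f\rangle$.

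For the first identity, I would note that a derivation $D \in \DDer$ preserves $\DmmS$, so it is completely determined by its values $D_i := D(\Dx_i)\in \DmmS$; Proposition~\ref{ref:dualderivation:prop} then gives $\Ddual{D}(f) = \sum_i x_i\cdot(D_i\hook f)$, and conversely any $\delta\in\DmmS$ arises as $D(\Dx_i)$ for a suitable derivation. This yields $\DDer\cdot f = \sspan{x_i\cdot(\delta\hook f)\mid\delta\in\DmmS,\ i=1,\ldots,n}$. For the second identity, $\tang{f} = \DDer\cdot f + \DShat f$, and the key move is the commutator formula
\[
\delta\hook(x_i\cdot f) = x_i\cdot(\delta\hook f) + \ithpartial{\delta}{i}\hook f,
\]
which lets us rewrite $x_i\cdot(\delta\hook f)$ modulo $\DShat f$ as an element of $\DmmS(x_i\cdot f)$, and vice-versa. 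Since $\ithpartial{\delta}{i}\hook f \in \DShat f$, both inclusions follow at once, giving $\tang f = \DShat f + \sum_i \DmmS(x_i\cdot f)$.

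For the orthogonal, the crucial computational lemma is that for any $g\in\DP$ and $\sigma\in\DShat$,
\[
\langle \sigma,\, x_i\cdot g\rangle \;=\; \langle \ithpartial{\sigma}{i},\, g\rangle.
\]
This I would obtain by applying the augmentation $\varepsilon(h) := h(1) = \langle 1,h\rangle$ to both sides of Lemma~\ref{ref:commutator:lem}, observing that $\varepsilon(x_i\cdot h)=0$ for every $h\in\DP$ since $x_i\cdot h$ has no constant term in the divided power ring. Combining with $(\DShat\,f)^{\perp} = \Ann(f)$ and applying the Leibniz rule $(\sigma\delta)^{(i)} = \ithpartial{\sigma}{i}\delta + \sigma\ithpartial{\delta}{i}$, a $\sigma$ with $\sigma\hook f = 0$ automatically satisfies $\langle \sigma \ithpartial{\delta}{i}, f\rangle = 0$, so that
\[
\langle \sigma,\, \delta\hook(x_i\cdot f)\rangle \;=\; (\ithpartial{\sigma}{i}\hook f)(\delta)
\]
for $\delta\in\DmmS$. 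Requiring this to vanish for every $\delta\in\DmmS$ is precisely the condition $\deg(\ithpartial{\sigma}{i}\hook f)\leq 0$, yielding the claimed description of $\perpspace{\tang f}$.

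Finally, if $f$ is homogeneous of degree $d$, then $\DShat f$ and each $\DmmS(x_i\cdot f)$ are graded subspaces of $\DP$ (they live in degrees $\leq d$ and $\leq d$ respectively), so $\tang f$ is graded. Since the pairing $\DShat\times\DP\to\kk$ pairs $\DShat_k$ with $\DP_k$ non-degenerately, the orthogonal $\perpspace{\tang f}$ splits as a (formal) direct sum of its homogeneous components, which is the precise meaning of ``spanned by homogeneous operators.'' For a homogeneous $\sigma$ of degree $k\leq d$, the element $\ithpartial{\sigma}{i}\hook f$ is homogeneous of degree $d-k+1\geq 1$, so the degree bound $\leq 0$ forces it to be zero; this yields the final identity. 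The main subtlety is the clean identification $\langle \sigma, x_i\cdot g\rangle = \langle \ithpartial{\sigma}{i}, g\rangle$, which hinges on the vanishing of $\varepsilon$ on $x_i\cdot\DP$; once this is in hand, the rest is formal manipulation.
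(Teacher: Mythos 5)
Your proof is correct and follows essentially the same route as the paper: the description of $\DDer\cdot f$ via Proposition~\ref{ref:dualderivation:prop}, the rewriting of $\tang f$ using Lemma~\ref{ref:commutator:lem}, and the degree argument in the homogeneous case all match. The only cosmetic difference is that the paper computes the orthogonal via $\sigma\hook(\tang f)=0$ (using that $\tang f$ is an $\DShat$-submodule), whereas you work directly with the pairing and the adjoint identity $\ip{\sigma}{x_i\cdot g}=\ip{\ithpartial{\sigma}{i}}{g}$; these are equivalent, and you are in fact slightly more explicit than the paper about why $\perpspace{\tang f}$ is spanned by homogeneous operators when $f$ is homogeneous.
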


\begin{proof}
    Let $D\in \DDer$ and $D_i := D(\Dx_i)$.
    By Proposition~\ref{ref:dualderivation:prop} we have $\Ddual{D}(f) =
    \sum_{i=1}^n x_i\cdot (D_i\hook f)$. For any $\delta\in \DmmS$ we may choose
    $D$ so that $D_i = \delta$ and all other $D_j$ are zero. This proves the
    description of $\DDer \cdot f$.
    Now $\tang{f} = \DShat f + \sspan{ x_i \cdot (\delta\hook f)\ |\ \delta\in \DmmS,\
        i=1, \ldots ,n}$. By Lemma~\ref{ref:commutator:lem} we have
        $x_i(\delta\hook f) \equiv \delta\hook(x_i f) \mod \DShat f$. Thus
        \[\tang{f} = \DShat f + \sspan{ \delta\hook (x_i\cdot f)\ |\ \delta\in \DmmS,\
        i=1, \ldots ,n} = \DShat f + \sum \DmmS (x_i f).
    \]
    Now let $\sigma\in \DShat$ be an operator such that $\ip{\sigma}{\tang{f}}
    = 0$. This is equivalent to $\sigma\hook (\tang{f}) = 0$, which
    simplifies to $\sigma\hook f = 0$ and $(\sigma\DmmS)\hook (x_i f) = 0$ for all $i$.
    By Lemma~\ref{ref:commutator:lem}, we have $\sigma\hook (x_if) = x_i (\sigma\hook f) +
    \DPut{tmp}{\ithpartial{\sigma}{i}}\hook f = \Dtmp\hook f$, thus
    we get equivalent conditions:
    \[
        \sigma\hook f = 0\quad \mbox{and} \quad \DmmS\hook (\Dtmp \hook f) = 0,
    \]
    and the claim follows. Finally, if $f$ is homogeneous of degree $d$ and $\sigma\in \DShat$
    is homogeneous of degree at most $d$ then $\ithpartial{\sigma}{i}\hook
    f$ has no constant term and so $\deg(\ithpartial{\sigma}{i}\hook f) \leq 0$
    implies that $\ithpartial{\sigma}{i}\hook f = 0$.
\end{proof}

\begin{remark}\label{ref:cotangent:rmk}
    Let $f\in \DP$ be homogeneous of degree $d$.
    Let $j \leq d$ and $K_j := \perpspace{\tang{f}}_{j}$.
    Proposition~\ref{ref:tangentspacepoly:prop} gives a connection of $K_j$
    with the conormal sequence.
    Namely, let $I = \Ann(f)$ and $\DA = \Apolar{f} = \DShat/I$. We have $(I^2)_j \subseteq
    K_{j}$ and the quotient space fits into the conormal sequence of $\DShat\to
    \DA$, making that sequence exact:
    \begin{equation}\label{eq:cotangent}
        0\to \left(K/I^2\right)_j\to \left( I/I^2 \right)_j \to
        \left(\Omega_{\DShat/\kk}\tensor
        \DA\right)_j\to \left(\Omega_{\DA/\kk}\right)_{j}\to
        0.
    \end{equation}
    This is expected from the point of view of deformation theory.
    Recall that by \cite[Theorem~5.1]{hartshorne_deformation_theory} the
    deformations of $\DA$ over $\kk[\varepsilon]/\varepsilon^2$ are in one-to-one
    correspondence with elements of a $\kk$-linear space $T^1(\DA/\kk, \DA)$. On the other hand,
    this space fits \cite[Proposition~3.10]{hartshorne_deformation_theory} into the sequence
    \def\Hom#1#2{\operatorname{Hom}(#1, #2)}%
    \[
        0\to \Homthree{\DA}{\Omega_{\DA/\kk}}{\DA} \to
        \Homthree{\DA}{\Omega_{S/\kk}\tensor \DA}{\DA}\to \Homthree{\DA}{I/I^2}{\DA}
        \to T^1(\DA/\kk, \DA)\to 0.
    \]
    Since $\DA$ is Gorenstein, $\Homthree{\DA}{-}{\DA}$ is exact and we have
    $T^1(\DA/\kk, \DA)_j  \simeq \Hom{K/I^2}{\DA}_j$ for all $j \geq 0$.
    The restriction $j\geq 0$ appears because $\Hom{K/I^2}{\DA}$ is the tangent
    space to deformations of $\DA$ inside $\DShat$, whereas $T^1(\DA/\kk, \DA)$
    parameterizes all deformations.
\end{remark}

    Below we use terminology concerning Lie groups. From the onset we note
    that $\Dgrp$ is not a Lie group, because it is not even a finitely
    dimensional algebraic group when we take it as a subgroup of linear
    transformations of $\DShat$. However, $\Dgrp$ is a projective limit of
    algebraic groups.
    Namely, for every $r$ we have a map $\DAut \to
    \operatorname{Aut}(\DShat/\DmmS^r)$ and the image of $\Dgrp$ under this
    map is an algebraic group $\Dgrp_r$. Moreover $\Dgrp = \projlim_r
    \Dgrp_r$.
    In all considerations involving orbits of $f\in \DP$ such that $\deg \Apolar{f} \leq
    r$ we see that $\deg f < r$, the ideal $\DmmS^r$ acts trivially on $f$. Therefore
    we can replace $\Dgrp$ by $\Dgrp_r$.

\newcommand{\DAutunip}{\operatorname{Aut}^+(\DShat)}%
\newcommand{\Dgrpunip}{\Dgrp^+}
\newcommand{\DDerunip}{\mathfrak{aut}^+}%

Now we introduce a subgroup $\Dgrpunip$ of $\Dgrp$, which is an
analogue of the unipotent radical of an algebraic group.
In particular:
\begin{enumerate}
    \item $\Dgrp/\Dgrpunip$ is a finitely dimensional reductive algebraic
        group,
    \item the image $\Dgrpunip_r \subset \Dgrp_r$ of $\Dgrpunip$ is a
        unipotent subgroup and $\Dgrpunip = \projlim_r
        \Dgrpunip_r$.
\end{enumerate}
By Kostant-Rosenlicht
theorem~\cite[Theorem~2,~p.~221]{rosenlicht_unipotent_closures}, over an algebraically
closed field $\kk$ every $\Dgrpunip$-orbit is Zariski closed in $\DP$.
The subgroup $\Dgrpunip$ is also useful in applications, because it preserves the top
degree form, which allows induction on the degree.

Each automorphism of $\DShat$ induces a linear
map on the cotangent space: we have a restriction $\DAut\to
\Dglcot$. Let us denote by $\DAutunip$
the group of automorphisms which act as identity on the tangent space:
$\DAutunip = \left\{ \varphi\in \DShat\ |\ \forall_{i}\ \varphi(\Dx_i) -
\Dx_i\in\DmmS^2 \right\}$. We have the following sequence of groups:
\begin{equation}\label{eq:groups}
    1 \to \DAutunip \to \DAut \to \Dglcot\to 1.
\end{equation}
We define
\[\Dgrpunip = \DAutunip \ltimes (1 + \DmmS) \subseteq \Dgrp.\]
Note that we have the following exact sequence:
\begin{equation}\label{eq:unipradical}
    1\to \Dgrpunip \to \Dgrp \to \Dglcot \times \kk^*\to 1.
\end{equation}

Correspondingly, let $\DPut{Der}{\mathfrak{aut}}$ denote the
space of derivations preserving $\DmmS$, i.e.~derivations such that $D(\DmmS) \subseteq
\DmmS$.
Let $\DDerunip$ denote the space of derivations
such that $D(\DmmS) \subseteq \DmmS^2$. Denoting by
$\DPut{glliecot}{\mathfrak{gl}\left( \DmmS/\DmmS^2 \right)}$ the space of
linear endomorphisms of $\DmmS/\DmmS^2$, we have we following sequence of linear
spaces:
\begin{equation}\label{eq:derivations}
    0\to \DDerunip \to \DDer \to \Dglliecot\to 0.
\end{equation}
We define
\[
    \DPut{grpuniptang}{\Dgrptang^+} = \DDerunip + \DmmS.
\]
Following the proof of Proposition~\ref{ref:tangentspacepoly:prop} we get the
following proposition.
\def\tangunip#1{\Dgrpuniptang #1}%
\def\orbitunip#1{\Dgrpunip\!\cdot\!#1}%
\begin{proposition}\label{ref:uniptangentspacepoly:prop}
    Let $f\in \DP$. Then $\tangunip{f} = \DmmS f + \sum \DmmS^2(x_i f)$
    so that
    \[
        \perpspace{\tangunip{f}} = \left\{ \sigma\in \DShat\ |\
            \deg(\sigma\hook f) \leq 0,\ \ \forall_i\ \deg(\ithpartial{\sigma}{i}\hook f)\leq 1\right\}.
    \]
    If $f$ is homogeneous of degree $d$ then $\tangunip{f}$ is spanned
    by homogeneous polynomials and
    \[
        \perpspace{\tangunip{f}}_{<d} = \left\{ \sigma\in \DShat\ |\
            \sigma\hook f = 0,\ \ \forall_i\ \ithpartial{\sigma}{i}\hook f = 0
        \right\} = \perpspace{\tang{f}}_{<d}.\hfill\qed
    \]
\end{proposition}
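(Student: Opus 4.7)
The plan is to mirror the proof of Proposition~\ref{ref:tangentspacepoly:prop}, keeping careful track of the two differences between $\Dgrpuniptang=\DDerunip+\DmmS$ and the full $\Dgrptang$: a derivation $D\in\DDerunip$ satisfies the stronger condition $D_i:=D(\Dx_i)\in\DmmS^2$ (not merely $D_i\in\DmmS$), and only $s\in\DmmS$ (rather than arbitrary $s\in\DShat$) enters as an infinitesimal multiplication direction in $(1+\DmmS)$.

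\textbf{Computing $\tangunip{f}$.} By Proposition~\ref{ref:dualderivation:prop}, the image of $D\in\DDerunip$ acting on $f$ is $\sum_i x_i\cdot(D_i\hook f)$, with each $D_i$ ranging over $\DmmS^2$, while $s\in\DmmS$ contributes $s\hook f$. Thus
\[
  \tangunip{f} = \DmmS f + \sspan{x_i\cdot(\delta\hook f)\ |\ \delta\in\DmmS^2,\ i=1,\dots,n}.
\]
Applying Lemma~\ref{ref:commutator:lem} in the form $x_i\cdot(\delta\hook f) = \delta\hook(x_i\cdot f) - \ithpartial{\delta}{i}\hook f$, the key observation is that $\delta\in\DmmS^2$ forces $\ithpartial{\delta}{i}\in\DmmS$, so the correction term $\ithpartial{\delta}{i}\hook f$ already belongs to $\DmmS f$ and can be absorbed. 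This yields $\tangunip{f}=\DmmS f + \sum_i\DmmS^2(x_i f)$.

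\textbf{Computing the perp.} Using the adjunction $\ip{\sigma}{\tau\hook g}=\ip{\sigma\tau}{g}$, the condition $\sigma\in(\DmmS f)^{\perp}$ becomes $\DmmS\hook(\sigma\hook f)=0$, equivalently $\deg(\sigma\hook f)\leq 0$. Likewise, $\sigma\in(\DmmS^2(x_if))^{\perp}$ is equivalent to $\deg(\sigma\hook(x_if))\leq 1$. Applying Lemma~\ref{ref:commutator:lem} again in the form $\sigma\hook(x_if)=x_i(\sigma\hook f)+\ithpartial{\sigma}{i}\hook f$, and noting that the previous bound makes $x_i(\sigma\hook f)$ automatically of degree at most one, this last condition collapses to $\deg(\ithpartial{\sigma}{i}\hook f)\leq 1$. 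Combining, $\sigma\in\perpspace{\tangunip{f}}$ if and only if $\deg(\sigma\hook f)\leq 0$ and $\deg(\ithpartial{\sigma}{i}\hook f)\leq 1$ for every $i$.

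\textbf{Homogeneous case.} When $f$ is homogeneous of degree $d$, both $\DmmS f$ and each $\DmmS^2(x_if)$ are graded subspaces of $\DP$, so $\tangunip{f}$ is graded and hence so is $\perpspace{\tangunip{f}}$. For $\sigma$ homogeneous of degree $e<d$, the element $\sigma\hook f$ is homogeneous of positive degree $d-e\geq 1$, so $\deg\leq 0$ forces $\sigma\hook f=0$; similarly $\ithpartial{\sigma}{i}\hook f$ is homogeneous of degree $d-e+1\geq 2$, so $\deg\leq 1$ forces $\ithpartial{\sigma}{i}\hook f=0$. Comparing with the description supplied by Proposition~\ref{ref:tangentspacepoly:prop} then gives the final equality $\perpspace{\tangunip{f}}_{<d}=\perpspace{\tang{f}}_{<d}$. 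No real obstacle is expected; the only delicate step is the degree bookkeeping in Step~2, where one must verify that the summand $x_i(\sigma\hook f)$ does not tighten the condition on $\ithpartial{\sigma}{i}\hook f$.
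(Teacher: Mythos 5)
Your proof is correct and follows exactly the route the paper has in mind when it says ``following the proof of Proposition~\ref{ref:tangentspacepoly:prop}'': replace $\DShat$ by $\DmmS$ and $\DmmS$ by $\DmmS^2$ at the right spots, absorb the commutator correction term $\ithpartial{\delta}{i}\hook f$ into $\DmmS f$ using $\ithpartial{\delta}{i}\in\DmmS$ for $\delta\in\DmmS^2$, and in computing the perp note that the extra summand $x_i(\sigma\hook f)$ is already of degree $\leq 1$ once $\deg(\sigma\hook f)\leq 0$. The degree bookkeeping in the homogeneous case is also right.
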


We end this section by illustrating above theory with an example.
In Example~\ref{ex:1nn1} we presented a proof of
\cite[Theorem~3.3]{EliasRossiShortGorenstein}.  Below we give a different,
more conceptual proof under additional restrictions on $\kk$.
\begin{example}[compressed cubics, using Lie theoretic ideas]\label{ex:1nn1Lie}
    Assume that $\kk$ be algebraically closed of characteristic different than
    $2$.
    Let $(\DA, \mm, \kk)$ be a local Gorenstein $\kk$-algebra such that
    $H_{\DA} = (1, n, n, 1)$ for some $n$.  By Macaulay's
    theorem~\ref{ref:MacaulaytheoremGorenstein:thm} there exists a degree
    three polynomial $f\in \DP
    = \kdp[x_1, \ldots ,x_n]$ such that $\DA = \DShat/\Ann(f)$. Let $f_3$ be its
    leading form, then $\Apolar{f_3} \simeq \grDA$ by
    Corollary~\ref{ref:symmetricHf:cor} and so $H_{\Apolar{f_3}} = (1, n, n,
    1)$.

    We claim that there is an element $\varphi$ of $\Dgrp$
    such that $\Ddual{\varphi}(f_3) = f$. This proves that $\Apolar{f}
    \simeq \Apolar{f_{3}} = \gr\Apolar{f}$. We say that the
    apolar algebra of $f$ is \emph{canonically graded}.

    In fact, we claim that already $\orbitunip{f_3}$ is the whole
    space:
    \begin{equation}\label{eq:tmpcubics}
        \orbitunip{f_3} = f_3 + \DP_{\leq 2}.
    \end{equation}

    From the explicit formula in
    Proposition~\ref{ref:dualautomorphism:prop} we see that $\orbitunip{f_3}
    \subseteq f_3 + \DP_{\leq 2}$. It is a Zariski closed
    subset by the Kostant-Rosenlicht theorem. To prove
    equality~\eqref{eq:tmpcubics} it enough to
    check that
    $\tangunip{f_3} = \DP_{\leq 2}$.
    Let $\sigma\in \perpspace{\tangunip{f_3}}_{\leq 2}$ be non-zero. Since
    $\perpspace{\tangunip{f_3}}$ is spanned by homogeneous elements, we take
    $\sigma$ homogeneous. By
    Proposition~\ref{ref:uniptangentspacepoly:prop} we get that $\sigma\hook
    f_3 = 0$ and $\ithpartial{\sigma}{i} \hook f_3 = 0$ for all $i$.
    Since $\chark \neq
    2$, there exists $i$ such that $\ithpartial{\sigma}{i}\neq 0$. Either
    $\sigma$ has degree one or $\ithpartial{\sigma}{i}$ has degree one, so
    there is a nonzero degree one operator annihilating $f_3$. But this
    contradicts the fact that $H_{\Apolar{f_3}}(1) = n = H_{\DShat}(1)$.
    Therefore $\perpspace{\tangunip{f_3}}_{\leq 2} = 0$ and the claim
    follows.

    The assumption $\chark \neq 2$ is necessary, as Example~\ref{ex:1nn1char2}
    shows.
\end{example}

\section{Examples I --- compressed algebras}\label{ssec:examplesone}

In this section we gather some corollaries of the machinery from
Section~\ref{ssec:specialforms} and present the theory of compressed algebras
as in~\cite{Jel_classifying}. In particular, we prove that certain local algebras
are isomorphic to their associated graded algebras.
\begin{assumption}
    Throughout Section~\ref{ssec:examplesone} we assume that $\kk =
    \kkbar$ is algebraically closed and, if $\chark$ is positive, then it is
    greater that the degrees of all considered polynomials.
\end{assumption}

We begin we a closer comparison between the orbits of $\Dgrpunip$ and
$\Dgrpuniptang$.
\newcommand{\tdf}[1]{\operatorname{tdf}\pp{#1}}%
For every $f\in \DP$ let $\tdf{f}$ denote the top degree form of
$f$,~so that $\tdf{\DPel{x_1}{3} + \DPel{x_2}{2}x_3 + \DPel{x_4}{2}} = \DPel{x_1}{3} + \DPel{x_2}{2}x_3$.
\begin{proposition}\label{ref:topdegreecomp:prop}
    Let $f\in \DP$.
    Suppose that $\chark > d = \deg(f)$.
    Then the top degree form of every element of $\orbitunip{f}$ is equal to the top degree form of $f$.
    Moreover,
    \begin{equation}\label{eq:tdf}
        \left\{ \tdf{g - f}\ |\ g\in \orbitunip{f} \right\} = \left\{ \tdf{h}\ |\ h\in
    \tangunip{f} \right\}.
    \end{equation}
    If $f$ is homogeneous, then both sides of~\eqref{eq:tdf} are equal to
    the set of homogeneous elements of $\tangunip{f}$.
\end{proposition}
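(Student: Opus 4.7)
The plan is to establish the three assertions in turn via an explicit expansion of $\Ddual{\varphi}(f)$ using Proposition~\ref{ref:dualautomorphism:prop}. I write a general $g\in\orbitunip{f}$ as $g=u\hook\Ddual{\varphi}(f)$ with $\varphi\in\DAutunip$ and $u\in 1+\DmmS$, and set $\sigma:=u-1\in\DmmS$ and $D_i:=\varphi(\Dx_i)-\Dx_i\in\DmmS^2$. By Proposition~\ref{ref:dualautomorphism:prop},
\[
    \Ddual{\varphi}(f) = f + \sum_{|\aa|\geq 1}\DPel{\xx}{\aa}\cdot(\DDD^{\aa}\hook f),
\]
and since contraction by an element of $\DmmS^2$ drops the order by at least two, each term with $|\aa|\geq 1$ has degree at most $|\aa|+(d-2|\aa|)=d-|\aa|\leq d-1$. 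Consequently $\Ddual{\varphi}(f)-f$ has degree at most $d-1$, and $u\hook\Ddual{\varphi}(f)-\Ddual{\varphi}(f)=\sigma\hook\Ddual{\varphi}(f)$ has degree at most $d-1$ as well. This gives claim $(1)$.

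For the inclusion $\subseteq$ of~\eqref{eq:tdf}, I would further isolate the top-degree contribution. Writing
\[
    g-f = \bigl(\Ddual{\varphi}(f)-f\bigr) + \sigma\hook\Ddual{\varphi}(f),
\]
within the first summand only the $|\aa|=1$ terms reach degree $d-1$, and they combine to $\sum_i x_i\cdot(D_i\hook f)=\Ddual{D}(f)$, where $D\in\DDerunip$ is defined by $D(\Dx_i):=D_i$, via Proposition~\ref{ref:dualderivation:prop}; every remaining contribution has degree at most $d-2$. Similarly $\sigma\hook\Ddual{\varphi}(f)\equiv\sigma\hook f\pmod{\DP_{\leq d-2}}$. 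Therefore
\[
    \tdf{g-f}=\tdf{\sigma\hook f+\Ddual{D}(f)},
\]
and by Proposition~\ref{ref:uniptangentspacepoly:prop} the argument on the right lies in $\tangunip{f}$. For the reverse inclusion $\supseteq$, given $h\in\tangunip{f}$ I would decompose $h=\sigma\hook f+\Ddual{D}(f)$ using $\Dgrpuniptang=\DmmS+\DDerunip$, then set $u:=1+\sigma$ and let $\varphi\in\DAutunip$ be the automorphism defined by $\varphi(\Dx_i):=\Dx_i+D(\Dx_i)$ (legitimate since $D(\Dx_i)\in\DmmS^2$). The previous computation applied to $g:=u\hook\Ddual{\varphi}(f)\in\orbitunip{f}$ yields $\tdf{g-f}=\tdf{h}$.

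Finally, when $f$ is homogeneous of degree $d$, both $\DmmS\hook f$ and $\Ddual{\DDerunip}(f)$ are spanned by homogeneous elements of degree strictly below $d$, so $\tangunip{f}\subseteq\DP_{<d}$ is a graded subspace; the top-degree form of any $h\in\tangunip{f}$ is then literally the top nonvanishing graded piece of $h$, itself lying in $\tangunip{f}$, while every homogeneous element of $\tangunip{f}$ equals its own top-degree form. Hence both sides of~\eqref{eq:tdf} coincide with the set of homogeneous elements of $\tangunip{f}$, proving $(3)$. The main obstacle is the careful accounting of degrees throughout: the decisive numerical input is that every $D_i\in\DmmS^2$ forces $\DDD^{\aa}\hook f$ to lose at least $2|\aa|$ in degree, which dominates the $|\aa|$ gained by multiplying with $\DPel{\xx}{\aa}$. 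The hypothesis $\chark>d$ ensures the divided-power structure introduces no accidental vanishings in the relevant degree range, so that these counts faithfully separate the top-degree contribution from the remainder.
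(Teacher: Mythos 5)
Your degree bookkeeping establishes the first assertion correctly, and your observation that $\tangunip{f}$ is graded when $f$ is homogeneous is right. The gap is in passing from the congruence $g - f \equiv \sigma\hook f + \Ddual{D}(f) \pmod{\DP_{\leq d-2}}$ to the identity $\tdf{g-f} = \tdf{\sigma\hook f + \Ddual{D}(f)}$: the latter only follows when the right-hand side has degree exactly $d-1$, and nothing forces this. If $\sigma\hook f + \Ddual{D}(f)$ has degree $< d-1$ --- which can happen, because the summands $x_i\cdot(D_i\hook f)$ can cancel --- then the remainder, although contained in $\DP_{\leq d-2}$, may dominate in degree, and your candidate $h_0$ is simply the wrong element of $\tangunip{f}$. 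For instance with $n = 2$, $f = \DPel{x_1}{2}\DPel{x_2}{2}$, $\sigma = 0$, $D_1 = \Dx_1\Dx_2$ and $D_2 = -2\Dx_2^2$ one finds $\Ddual{D}(f) = x_1\cdot(x_1x_2) + x_2\cdot(-2\DPel{x_1}{2}) = 2\DPel{x_1}{2}x_2 - 2\DPel{x_1}{2}x_2 = 0$, yet $\Ddual{\varphi}(f) - f = \DPel{x_1}{2}\cdot(D_1^2\hook f) = \DPel{x_1}{2} \neq 0$. So $\tdf{g-f} = \DPel{x_1}{2}$ while $\tdf{h_0} = 0$. (The proposition is of course still true here, since $\DPel{x_1}{2} = \Dx_2^2\hook f \in \DmmS f \subset \tangunip{f}$; it is the specific $h_0$ that fails.) The same defect sits in your $\supseteq$ argument, which invokes the same claimed equality.

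The paper's proof sidesteps this by working with $D' := \log(\varphi)$ rather than the linearisation $D$ given by $D(\Dx_i) = \varphi(\Dx_i) - \Dx_i$. Since $\varphi = \exp(D')$, one obtains
\[
    \Ddual{\varphi}(f) = f + \Ddual{D'}(f) + \left(\sum_{i\geq 1}\frac{(\Ddual{D'})^{i}}{(i+1)!}\right)\Ddual{D'}(f),
\]
and because $\Ddual{D'}$ strictly lowers degree (Remark~\ref{ref:lowersdegree:rmk}), the trailing term has degree \emph{strictly below} $\deg\Ddual{D'}(f)$, whatever that degree happens to be. Coupled with the bijectivity of $\exp\colon\Dgrpuniptang\to\Dgrpunip$, this gives both inclusions of~\eqref{eq:tdf} at once. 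Note finally that the hypothesis $\chark > d$ is exactly what makes $\log$ and $\exp$ well-defined on the relevant truncations; in your argument it plays no real role beyond vague remarks about the divided-power structure, which should itself be a warning sign: a proof that did not use the hypothesis would purport to prove more than the statement claims.
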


\begin{proof}
    Consider the $\DShat$-action on
    $\DP_{\leq d}$. This action descents to an $\DPut{Strunc}{\DShat/\DmmS^{d+1}}$ action.
    Further in the proof we implicitly replace $\DShat$ by $\DStrunc$, thus also
    replacing $\DAut$ and $\Dgrp$ by appropriate truncations.
    Let $\varphi\in \Dgrpunip$. Since $(\operatorname{id} - \varphi)\left( \DmmS^i \right) \subseteq \DmmS^{i+1}$ for
    all $i$, we have $(\operatorname{id} - \varphi)^{d+1} = 0$. By our
    assumption on the characteristic of $\kk$, the element $D :=
    \log(\varphi)$ is well-defined and $\varphi = \exp(D)$.
    We get an injective map $\exp:\Dgrpuniptang \to
    \Dgrpunip$ with left inverse $\log$. Since $\exp$ is algebraic we see by
    dimension count that its image is open in $\Dgrpunip$. Since $\log$ is
    Zariski-continuous, we get that $\log(\Dgrpunip) \subseteq \Dgrpuniptang$, then
    $\exp:\Dgrpuniptang \to \Dgrpunip$ is an isomorphism.

    Therefore
    \[
        \Ddual{\varphi}(f) = f + \sum_{i=1}^{d}
        \frac{\left(\Ddual{D}\right)^i(f)}{i!} = f + \Ddual{D}(f) +
        \left(\sum_{i=1}^{d-1} \frac{\left(\Ddual{D}\right)^i}{(i+1)!}\right) \Ddual{D}(f).
    \]
    By Remark~\ref{ref:lowersdegree:rmk} the derivation $D\in \Dgrpuniptang$
    lowers the degree, we see that $\tdf{\Ddual{\varphi}f} =
    \tdf{f}$ and
        $\tdf{\Ddual{\varphi}f - f} = \tdf{\Ddual{D}(f)}$. This proves~\eqref{eq:tdf}.
    Finally, if $f$ is homogeneous then $\tangunip{f}$ is equal to the $\sspan{ \tdf{h}\ |\ h\in
    \tangunip{f}}$ by
    Proposition~\ref{ref:uniptangentspacepoly:prop}, and the last claim
    follows.\qedhere

    For an elementary proof, at least for the subgroup $\DAutunip$,
    see~\cite[Proposition~1.2]{Matczuk_unipotent_derivations}.
\end{proof}

The following almost tautological
Corollary~\ref{ref:leadingformremoval:cor} enables one to prove that a
given apolar algebra is canonically graded inductively, by lowering the degree
of the remainder.
\begin{corollary}\label{ref:leadingformremoval:cor}
    Let $F$ and $f$ be polynomials. Suppose that the leading
    form of $F - f$ lies in $\tangunip{F}$. Then there is an element $\varphi\in
    \Dgrpunip$ such that $\deg(\Ddual{\varphi}f - F) < \deg(f - F)$.
\end{corollary}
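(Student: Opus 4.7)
\medskip

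Set $d=\deg(f-F)$; if $f=F$ there is nothing to prove, so assume $d\geq 0$. Let $h=\tdf{F-f}$, a homogeneous polynomial of degree $d$ that by assumption lies in $\tangunip{F}$. The plan is to produce a $\varphi\in\Dgrpunip$ whose action on $F$ produces exactly this leading form, and then observe that the ``same'' action applied to $f$ cancels $h$ from $f-F$ to top order.

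First, I would invoke Proposition~\ref{ref:topdegreecomp:prop} (which applies because the global assumption on $\chark$ in this section covers $\deg F$ and $\deg f$). Since $h$ is homogeneous with $\tdf{h}=h$ and $h\in\tangunip{F}$, the equality
\[
\{\tdf{g-F}\ |\ g\in\orbitunip{F}\} = \{\tdf{h'}\ |\ h'\in\tangunip{F}\}
\]
provides a $\varphi\in\Dgrpunip$ with $\tdf{\Ddual{\varphi}F-F}=h$, i.e.\ $\Ddual{\varphi}F=F+h+r_1$ with $\deg r_1<d$. Writing $f-F=-h+r_0$ with $\deg r_0<d$, I then expand
\[
\Ddual{\varphi}f-F = \bigl(\Ddual{\varphi}F-F\bigr) + \Ddual{\varphi}(f-F) = (h+r_1) - \Ddual{\varphi}h + \Ddual{\varphi}r_0.
\]
The key point is that every element of $\Dgrpunip$ acts as identity on the top-degree form: inspecting the explicit formula in Proposition~\ref{ref:dualautomorphism:prop} (for the $\DAutunip$ factor, $D_i\in\DmmS^2$ contributes $x_i\cdot(D_i\hook -)$ which lowers degree by at least one) and the formula for multiplication by $1+s$ with $s\in\DmmS$, one sees that $\Ddual{\varphi}$ preserves or strictly lowers degree. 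Therefore $\Ddual{\varphi}h=h+(\text{degree}<d)$ and $\Ddual{\varphi}r_0$ has degree $<d$. Substituting, the two copies of $h$ cancel and everything remaining has degree strictly less than $d$, which gives $\deg(\Ddual{\varphi}f-F)<d=\deg(f-F)$.

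The statement is essentially a tautological corollary of Proposition~\ref{ref:topdegreecomp:prop}, so there is no serious obstacle; the only thing to be careful about is the bookkeeping. In particular one must check that $\Ddual{\varphi}$ really does not raise degree on $r_0$ and does preserve the top degree of $h$ (both follow from the explicit formulas in Propositions~\ref{ref:dualautomorphism:prop} and~\ref{ref:dualderivation:prop} together with the definition of $\Dgrpunip=\DAutunip\ltimes(1+\DmmS)$), and that the leading forms $h$ produced by $\Ddual{\varphi}F-F$ and by $\Ddual{\varphi}h$ are genuinely identical, not merely equal up to a scalar --- here the linearity of $\tangunip{F}$ and the homogeneity of $h$ ensure the match is on the nose.
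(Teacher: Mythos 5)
Your proof is correct and mirrors the paper's argument: both first invoke Proposition~\ref{ref:topdegreecomp:prop} to produce $\varphi\in\Dgrpunip$ with $\tdf{\Ddual{\varphi}F-F}$ equal to the leading form of $F-f$, and both then rely on the fact that $\Dgrpunip$ preserves top-degree forms to show the leading term cancels. The paper is marginally more compact, applying the degree-preservation statement of Proposition~\ref{ref:topdegreecomp:prop} directly to the single polynomial $f-F$ rather than splitting it into $-h+r_0$ and handling the two pieces separately, but the underlying mechanism is identical.
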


\begin{proof}
    Let $G$ be the leading form of $f - F$ and $e$ be its degree.
    By Proposition~\ref{ref:topdegreecomp:prop} we may find $\varphi\in
    \DAutunip$ such that $\tdf{\Ddual{\varphi}(F) - F} = -G$, so that
    $\Ddual{\varphi}(F) \equiv F - G \mod \DP_{\leq e-1}$.
    By the same proposition we have $\deg(\Ddual{\varphi}(f - F) - (f -
    F)) < \deg(f - F) = e$, so that $\Ddual{\varphi}(f - F) \equiv f - F \mod
    \DP_{\leq e-1}$. Therefore
    $\Ddual{\varphi}(f) -F = \Ddual{\varphi}(F) + \Ddual{\varphi}(f - F) -F \equiv f
    - G - F \equiv 0 \mod \DP_{\leq e-1}$, as claimed.
\end{proof}

Example~\ref{ex:1nn1} is concerned with a degree three polynomial
$f$ such that the Hilbert function of $\Apolar{f}$ is maximal i.e. equal to
$(1, n, n, 1)$ for $n = H_{\DShat}(1)$.  Below we generalize the results obtained
in this example to polynomials of arbitrary degree.

Recall that a finite local Gorenstein algebra $A$ of socle degree $d$ is called \emph{compressed} if
\[H_A(i) = \min\left(H_{\DShat}(i), H_{\DShat}(d - i)\right) =
    \min\left(\binom{i+n-1}{i}, \binom{d-i+n-1}{d-i}
    \right)\quad\mbox{for all}\ \
i=0, 1,  \ldots , d.\]
Here we introduce a slightly more general notation.

\begin{definition}[$t$-compressed]\label{ref:compressed:def}
    Let $A = S/I$ be a finite local Gorenstein algebra of socle degree $d$. Let
    $t\geq 1$.
    Then $A$ is called \emph{$t$-compressed} if the following conditions are
    satisfied:
    \begin{enumerate}
        \item $H_A(i) = H_{\DShat}(i) = \binom{i+n-1}{i}$ for all $0\leq i \leq t$,
        \item $H_{A}(d-1) = H_{\DShat}(1)$.
    \end{enumerate}
\end{definition}

\begin{example}
    Let $n = 2$. Then $H_A = (1, 2, 2, 1, 1)$ is not $t$-compressed, for any
    $t$. The function $H_A = (1, 2, 3, 2, 2, 2, 1)$ is $2$-compressed. For any
    sequence $*$ the function $(1, 2, *, 2, 1)$ is $1$-compressed.
\end{example}

Note that it is always true that $H_{A}(d-1) \leq H_{A}(1) \leq H_{\DShat}(1)$,
thus both conditions above assert that the Hilbert function is maximal
possible. Therefore they are open in $\DP_{\leq d}$.

\begin{remark}\label{ref:compressedtrivia:rmk}
    The maximal value of $t$, for which $t$-compressed algebras exists, is $t
    = \left \lfloor d/2\right\rfloor$.  Every compressed algebra is
$t$-compressed for $t = \floor{d/2}$ but not
    vice versa. If $A$ is graded, then $H_A(1) = H_A(d-1)$, so the condition $H_{A}(d-1) = H_{\DShat}(1)$
    is satisfied automatically.
\end{remark}

The following technical Remark~\ref{ref:socleminusone:rmk} will be useful later. Up to some extent, it
explains the importance of the second condition in the definition of
$t$-compressed algebras.
\begin{remark}\label{ref:socleminusone:rmk}
    \def\DmmA{\mathfrak{m}_A}%
    Let $A = \Apolar{f}$ be a $t$-compressed algebra with maximal ideal $\DmmA$. We have
    $\dim \DP_{\leq 1} = H_{A}(d-1) + H_{A}(d) = \dim \DmmA^{d-1}/\DmmA^d + \dim \DmmA^d =
    \dim \DmmA^{d-1}$. Moreover $\DmmA^{d-1}  \simeq
    \DmmS^{d-1} f$ as linear spaces and $\DmmS^{d-1} f \subseteq
    \DP_{\leq 1}$. Thus
    \[
        \DmmS^{d-1} f = \DP_{\leq 1}.
    \]
\end{remark}

The definition of $t$-compressed algebras explains itself in the following
Proposition~\ref{ref:sp:nosmallorder:prop}.
\begin{proposition}\label{ref:sp:nosmallorder:prop}
    Let $f\in \DP$ be a polynomial of degree $d\geq 3$ and $A$ be its
    apolar algebra.
    Suppose that $A$ is $t$-compressed.
    Then the $\Dgrpunip$-orbit of $f$ contains $f + \DP_{\leq t+1}$. In particular
    $f_{\geq t+2} \in \orbitunip{f}$, so that $\Apolar{f}  \simeq \Apolar{f_{\geq t+2}}$.
\end{proposition}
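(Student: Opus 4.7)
The plan is first to establish the inclusion $\DP_{\leq t+1}\subseteq\tangunip{f}$ via Proposition~\ref{ref:uniptangentspacepoly:prop}, and then to iterate Corollary~\ref{ref:leadingformremoval:cor} to deduce $f+\DP_{\leq t+1}\subseteq\orbitunip{f}$. By Proposition~\ref{ref:uniptangentspacepoly:prop} an operator $\sigma\in\DShat$ lies in $\perpspace{\tangunip{f}}$ if and only if $\DmmS\sigma\subseteq\Ann(f)$ and $\DmmS^2\ithpartial{\sigma}{i}\subseteq\Ann(f)$ for each $i$; writing $\DA=\Apolar{f}$ and $\bar\sigma$ for the image of $\sigma$ in $\DA$, these conditions translate to $\bar\sigma\in\socleA$ and $\overline{\ithpartial{\sigma}{i}}\in(0:\mm^2)$ for every $i$.

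The central computation is the Matlis-duality identity $(0:\mm^2)=\mm^{d-1}$, which I expect to be the main obstacle since it is the step using both parts of $t$-compressedness. Indeed, since $\DA$ is Gorenstein of socle degree $d$, the one-dimensional socle $\socleA$ coincides with $\mm^d$; Matlis duality (in the form of Lemma~\ref{ref:dualizingfunctors:lem}) gives $\dim_{\kk}(0:\mm^2)=\dim_{\kk}\DA/\mm^2=1+H_\DA(1)=1+n$ using $H_\DA(1)=n$ (condition~1 of $t$-compressedness), while $\dim_{\kk}\mm^{d-1}=H_\DA(d-1)+H_\DA(d)=n+1$ using $H_\DA(d-1)=n$ (condition~2); combined with the trivial inclusion $\mm^{d-1}\subseteq(0:\mm^2)$ this forces equality. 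Condition~1 also yields $\Ann(f)\subseteq\DmmS^{t+1}$, since the natural maps $\DmmS^i/\DmmS^{i+1}\to\mm^i/\mm^{i+1}$ are isomorphisms for $i\leq t$. Taking preimages in $\DShat$, the two conditions on $\sigma$ become $\sigma\in\DmmS^{d}+\Ann(f)\subseteq\DmmS^{t+1}$ and $\ithpartial{\sigma}{i}\in\DmmS^{d-1}+\Ann(f)\subseteq\DmmS^{t+1}$ for every $i$, using $d-1\geq t+1$ (guaranteed by $t\leq\lfloor d/2\rfloor$ and $d\geq 3$). If $\sigma$ were in $\DmmS^{t+1}\setminus\DmmS^{t+2}$, then its lowest homogeneous component $\sigma_{(t+1)}$ would be nonzero and each $\ithpartial{\sigma_{(t+1)}}{i}$, being the degree-$t$ component of $\ithpartial{\sigma}{i}\in\DmmS^{t+1}$, would vanish; Euler's identity $\sum_i\Dx_i\ithpartial{\sigma_{(t+1)}}{i}=(t+1)\sigma_{(t+1)}$ together with $\chark\nmid(t+1)$ (valid since $\chark>d$) would force $\sigma_{(t+1)}=0$, a contradiction. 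Hence $\perpspace{\tangunip{f}}\subseteq\DmmS^{t+2}$, i.e.\ $\DP_{\leq t+1}\subseteq\tangunip{f}$.

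For the iteration, fix $F\in f+\DP_{\leq t+1}$ and apply Corollary~\ref{ref:leadingformremoval:cor} repeatedly with target $f$ starting from $F$ (reversing the roles of the corollary). At each stage $F_k\in\orbitunip{F}$ satisfies $\deg(F_k-f)\leq t+1$, so the leading form of $f-F_k$ lies in $\DP_{\leq t+1}\subseteq\tangunip{f}$ and the corollary produces $F_{k+1}\in\orbitunip{F}$ with strictly smaller $\deg(F_{k+1}-f)$. After finitely many steps $F_N=f$, so $f\in\orbitunip{F}$ and hence $F\in\orbitunip{f}$. Specializing to $F=f_{\geq t+2}$ then yields $\Apolar{f}\simeq\Apolar{f_{\geq t+2}}$.
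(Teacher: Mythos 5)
Your proof is correct and takes the same overall strategy as the paper's: first establish $\DP_{\leq t+1}\subseteq\tangunip{f}$, then iterate Corollary~\ref{ref:leadingformremoval:cor} to pull any element of $f+\DP_{\leq t+1}$ into $\orbitunip{f}$. The iteration step matches the paper's exactly (modulo the slightly confusing parenthetical about ``reversing the roles'' --- your use of the corollary has the same roles as the paper's). For the first step your argument is a reorganization of the paper's rather than a genuinely new route: the identity $(0:\mm^2)=\mm^{d-1}$ you prove by a dimension count is precisely Remark~\ref{ref:socleminusone:rmk} (which states $\DmmS^{d-1}\hook f=\DP_{\leq 1}$) transported to the algebra side, so you could have cited that remark instead of rederiving it. Where the paper then chooses a nonzero partial $\sigma'$ of $\sigma$ and modifies it by a $\delta\in\DmmS^{d-1}$ to obtain a nonzero low-order element of $\Ann(f)$, contradicting Condition~1 of $t$-compressedness, you show all the $\ithpartial{\sigma}{i}$ lie in $\DmmS^{t+1}$ and then invoke Euler's identity to kill the degree-$(t+1)$ part of $\sigma$. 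These package the same computation differently, and the characteristic hypothesis enters at the same point --- explicitly through Euler in your version, implicitly through the existence of a nonzero partial in the paper's. One small citation nit: the identity $\dim_\kk(0:\mm^2)=\dim_\kk A/\mm^2$ follows most directly from Lemma~\ref{ref:dualitycomparison:lem} or Lemma~\ref{ref:loewyHilbertfunc:lem}, not from Lemma~\ref{ref:dualizingfunctors:lem}.
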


\begin{proof}
\DDef{tmplowdeg}{\DP_{\leq t+1}}%
    First we show that $\Dtmplowdeg\subseteq\tangunip{f}
    $,~i.e. that no non-zero operator of order at most $t+1$ lies in
    $\perpspace{\tangunip{f}}$.
    Pick such an operator. By Proposition~\ref{ref:uniptangentspacepoly:prop} it
    is not constant. Let $\sigma'$ be any of its non-zero partial
    derivatives. Proposition~\ref{ref:uniptangentspacepoly:prop} asserts that
    $\deg(\sigma'\hook f)\leq 1$.
    Let $\ell := \sigma'\hook f$.
    By Remark~\ref{ref:socleminusone:rmk} every linear polynomial is contained
    in $\DmmS^{d-1} f$. Thus we may choose a $\delta\in \DmmS^{d-1}$ such that
    $\delta\hook f = \ell$. Then $(\sigma' - \delta)\hook f = 0$. Since
    $d\geq 3$, we have $d - 1 > \floor{d/2} \geq t$, so that $\sigma - \delta$ is an operator of order
    at most $t$ annihilating $f$. This contradicts the fact that $H_{A}(i) =
    H_{\DShat}(i)$ for all $i\leq t$. Therefore $\Dtmplowdeg\subseteq\tangunip{f}$.

    Second, pick a polynomial $g\in f + \DP_{\leq t+1}$. We prove that $g\in
    \orbitunip{f}$ by induction on $\deg(g - f)$.
    The top degree form of $g - f$ lies
    in $\tangunip{f}$. Using Corollary~\ref{ref:leadingformremoval:cor} we
    find $\varphi\in \Dgrpunip$ such that $\deg(\Ddual{\varphi}(g) - f) <
    \deg(g - f)$.
\end{proof}

For completeness, we state the following consequence of the previous result.
\begin{corollary}\label{ref:compressedpoly:cor}
    Let $f\in \DP$ be a polynomial of degree $d\geq 3$ and $A$ be its
    apolar algebra. Suppose that $A$ is compressed. Then $A  \simeq
    \Apolar{f_{\geq \floor{d/2} + 2}}$.
\end{corollary}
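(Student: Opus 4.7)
The plan is to derive this corollary as an immediate specialization of Proposition~\ref{ref:sp:nosmallorder:prop}. That proposition already gives, for any $t$-compressed apolar algebra of a polynomial of degree $d\geq 3$, the isomorphism $\Apolar{f} \simeq \Apolar{f_{\geq t+2}}$; so the entire task is to identify the right value of $t$ for a compressed algebra.

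First, I would invoke Remark~\ref{ref:compressedtrivia:rmk}, which records that a compressed algebra in the classical sense is automatically $t$-compressed for $t = \lfloor d/2 \rfloor$ (the maximal value of $t$ admissible by the definition). One needs only to check both defining conditions of $t$-compressedness: the condition $H_A(i) = H_{\DShat}(i)$ for $i \leq \lfloor d/2 \rfloor$ follows immediately because for such $i$ we have $\binom{i+n-1}{i} \leq \binom{d-i+n-1}{d-i}$, so the compressed condition $H_A(i) = \min(H_{\DShat}(i), H_{\DShat}(d-i))$ reduces to $H_A(i) = H_{\DShat}(i)$; and the condition $H_A(d-1) = H_{\DShat}(1)$ is just the compressed condition at degree $d-1$, where $\min(H_{\DShat}(d-1), H_{\DShat}(1)) = H_{\DShat}(1) = n$.

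Second, I would apply Proposition~\ref{ref:sp:nosmallorder:prop} directly, with $t = \lfloor d/2 \rfloor$, to conclude $\Apolar{f} \simeq \Apolar{f_{\geq \lfloor d/2 \rfloor + 2}}$. Since the proposition does all the real work (exhibiting a $\Dgrpunip$-element carrying $f$ to $f_{\geq t+2}$ via the vanishing of $\perpspace{\tangunip{f}}_{\leq t+1}$), no further argument is needed.

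There is essentially no obstacle here: the corollary is purely a restatement of the proposition in the compressed case, and the only verification is the trivial unpacking of definitions in the first step.
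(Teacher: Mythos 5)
Your proposal is correct and is exactly the paper's proof: the paper also simply notes that a compressed algebra is $\floor{d/2}$-compressed (as recorded in Remark~\ref{ref:compressedtrivia:rmk}) and then applies Proposition~\ref{ref:sp:nosmallorder:prop}. Your extra unpacking of why the compressed condition gives $t$-compressedness at $t=\floor{d/2}$ is accurate and harmless.
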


\begin{proof}
    The algebra $A$ is
    $\floor{d/2}$-compressed and the claim follows from
    Proposition~\ref{ref:sp:nosmallorder:prop}.
\end{proof}

As a corollary we reobtain the result of Elias and Rossi, see \cite[Theorem~3.1]{EliasRossi_Analytic_isomorphisms}.
\begin{corollary}\label{ref:eliasrossi:cor}
    Suppose that $A$ is a finite compressed Gorenstein local $\kk$-algebra of
    socle degree $d\leq 4$. Then $A$ is canonically graded~i.e. isomorphic to
    its associated graded algebra $\operatorname{gr} A$.
\end{corollary}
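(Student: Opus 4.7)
The plan is to reduce to the apolar setup and then observe that Corollary~\ref{ref:compressedpoly:cor} already does almost all of the work, once one checks that the arithmetic $\lfloor d/2\rfloor+2=d$ holds precisely in the range $d\in\{3,4\}$.

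First, by Macaulay's theorem (Theorem~\ref{ref:MacaulaytheoremGorenstein:thm}), write $A\simeq\Apolar{f}$ for some $f\in\DP$ of degree $d$, and decompose $f=f_d+f_{d-1}+\cdots+f_0$. A compressed Gorenstein algebra has a symmetric Hilbert function, since $H_A(i)=\min(H_{\DShat}(i),H_{\DShat}(d-i))=H_A(d-i)$. Hence, by Corollary~\ref{ref:symmetricHf:cor}, $\gr A\simeq\Apolar{f_d}$, so it suffices to prove $A\simeq\Apolar{f_d}$.

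For $d=3$ and $d=4$, one computes $\lfloor d/2\rfloor+2=d$, so Corollary~\ref{ref:compressedpoly:cor} gives $A\simeq\Apolar{f_{\geq d}}=\Apolar{f_d}$, which combined with the previous paragraph yields $A\simeq\gr A$. (Implicitly this uses the running hypothesis that $\chark$ is either zero or strictly larger than $d$, which is in force for the results of Section~\ref{ssec:examplesone}.)

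It remains to dispose of the small socle degrees $d\leq 2$, which are essentially trivial: if $d\leq 1$ then $A$ is $\kk$ or $\kk[x]/(x^2)$, both graded, while if $d=2$ the compressed condition forces $H_A=(1,n,1)$, and one checks directly that the lower-order terms $f_1,f_0$ can be removed by an element of $\Dgrpunip$ (indeed, $\tangunip{f_2}$ contains $\DP_{\leq 1}$ by a short argument paralleling the proof of Proposition~\ref{ref:sp:nosmallorder:prop}, using that $\DmmS f_2=\DP_{\leq 1}$). The only real step here is the arithmetic observation that brings $\lfloor d/2\rfloor+2$ up to $d$ exactly when $d\in\{3,4\}$; this is precisely why Elias and Rossi's theorem is sharp at socle degree $4$, and why the compressed hypothesis cannot be weakened without additional input (compare Example~\ref{ex:1nn1char2}). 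There is no genuine obstacle: all the difficulty was absorbed into Proposition~\ref{ref:sp:nosmallorder:prop} and Corollary~\ref{ref:symmetricHf:cor}.
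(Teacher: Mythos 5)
Your proof is correct and follows essentially the same approach as the paper's: reduce to the apolar setting, observe $\lfloor d/2\rfloor+2=d$ for $d\in\{3,4\}$, and invoke Corollary~\ref{ref:compressedpoly:cor} to get $\Apolar{f}\simeq\Apolar{f_d}$. The one small difference is in the final step identifying $\Apolar{f_d}$ with $\gr A$: you invoke Corollary~\ref{ref:symmetricHf:cor} (via symmetry of the compressed Hilbert function), whereas the paper argues more directly that $\Apolar{f_d}$ is a quotient of $\gr\Apolar{f}$ and then matches dimensions using $\dimk\Apolar{f_d}=\dimk\Apolar{f}=\dimk\gr\Apolar{f}$; both are valid and equivalent in effort. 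Your treatment of $d\leq 2$ is also fine and a bit more explicit than the paper's, which simply leaves that range to the reader.
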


\begin{proof}
    The case $d\leq 2$ is easy and left to the reader. We assume $d\geq 3$, so
    that $3\leq d\leq 4$.

    Fix $n = H_A(1)$ and choose $f\in \DP = \kk_{dp}[x_1, \ldots ,x_n]$ such
    that $A  \simeq \Apolar{f}$.
    Let $f_d$ be the top degree part of $f$. Since $\floor{d/2} + 2 = d$,
    Corollary~\ref{ref:compressedpoly:cor} implies that $f_d\in \orbitunip{f}$. Therefore the
    apolar algebras of $f$ and $f_d$ are isomorphic.
    The algebra $\Apolar{f_d}$ is a quotient of $\operatorname{gr}
    \Apolar{f}$. Since $\dimk \operatorname{gr}\Apolar{f} = \dimk \Apolar{f} =
    \dimk \Apolar{f_d}$ it follows that
    \[
        \Apolar{f}  \simeq \Apolar{f_{d}}  \simeq
        \operatorname{gr}\Apolar{f}.\qedhere
    \]
\end{proof}

The above Corollary~\ref{ref:eliasrossi:cor} holds under the assumptions that
    $\kk$ is algebraically closed and of characteristic not equal to $2$ or $3$. The
    assumption that $\kk$ is algebraically closed is unnecessary as proven for
    cubics in Example~\ref{ex:1nn1}, the cases of quartics is similar.

    The assumption on the characteristic is necessary.

    \begin{example}[compressed cubics in characteristic two]\label{ex:1nn1char2}
        Let $\kk$ be a field of characteristic two.
        Let $f_3\in \DP_{3}$ be a cubic form such that $H_{\Apolar{f_3}} = (1,
        n, n, 1)$ and $\Dx_1^2\hook f_3 = 0$.
        Then there is a degree three polynomial $f$ with leading form $f_3$,
        whose apolar algebra is compressed but not canonically graded.

        Indeed, take $\sigma = \Dx_1^2$. Then all derivatives of $\sigma$ are
        zero because the characteristic is two. By Proposition~\ref{ref:tangentspacepoly:prop} the element
        $\sigma$ lies in $\perpspace{\tang{f_3}}$. Thus $\tang{f_3}$ does not
        contain $\DP_{\leq 2}$ and so $\orbit{f_3}$ does not contain $f_3 +
        \DP_{\leq 2}$. Taking any $f\in f_3 + \DP_{\leq 2}$ outside the orbit
        yields the desired polynomial. For example, the polynomial $f = f_3 +
        \DPel{x_i}{2}$ lies outside the orbit.

        A similar example shows that over a field of characteristic three
        there are compressed quartics which are not canonically graded.
    \end{example}

\section{Examples II --- $(1, 3, 3, 3,
1)$}\label{ssec:examplestwo}
In this section we present an example (\cite{Jel_classifying}), where we
actually explicitly classify up to isomorphism finite Gorenstein
algebras with Hilbert function $(1, 3, 3, 3, 1)$.
\begin{example}[Hilbert function $(1, 3, 3, 3, 1)$]\label{ex:13331}
    Assume $\chark \neq 2, 3$ and $\kk = \kkbar$.
    Consider a polynomial $f\in \DP = \kk_{dp}[x, y, z]$ whose Hilbert function is
    $(1, 3, 3, 3, 1)$.
    Let $F$ denote the leading form of $f$.
    By \cite{LO} or \cite[Proposition~4.9]{cjn13} the form $F$ is linearly equivalent to
    one of the following:
    \[
        F_1 = \DPel{x}{4} + \DPel{y}{4} + \DPel{z}{4},\quad F_2 = \DPel{x}{3}y + \DPel{z}{4},\quad F_3 = \DPel{x}{3}y + \DPel{x}{2}\DPel{z}{2}.
    \]

    Since $\Apolar{f}$ is $1$-compressed, we have
    $\Apolar{f}  \simeq \Apolar{f_{\geq 3}}$; we may assume that the quadratic part
    is zero. In fact by the explicit description of top degree form in
    Proposition~\ref{ref:topdegreecomp:prop} we see that
    \[\orbitunip{f} = f + \tangunip{F} + P_{\leq 2}.\]
    Recall that $\Dgrp/\Dgrpunip$ is the product of the group of linear
    transformations and $\kk^*$ acting by multiplication.

    \paragraph{The case $F_1$.} Since $\Ann(F)_{\leq 3} = (\alpha\beta,
    \alpha\gamma, \beta\gamma)$, we see that $\perpspace{\tangunip{F}}_{\leq 3}$ is spanned by $\alpha\beta\gamma$.
    Therefore we may assume $f = F_1 + c\cdot xyz$ for some $c\in \kk$. By
    multiplying variables
    by suitable constants and then multiplying whole $f$ by a constant, we may
    assume $c = 0$ or $c = 1$. As before, we get
    two non-isomorphic algebras. Summarizing, we got two isomorphism types:
    \[
        f_{1,0} = \DPel{x}{4} + \DPel{y}{4} + \DPel{z}{4},\quad f_{1,1} = \DPel{x}{4} + \DPel{y}{4} + \DPel{z}{4} + xyz.
    \]
    Note that $f_{1, 0}$ is canonically graded, whereas $f_{1,1}$ is a
    complete intersection.

    \paragraph{The case $F_2$.} We have $\Ann(F_2)_2 = (\alpha\gamma,
    \beta^2, \beta\gamma)$, so that $\perpspace{\tangunip{F_2}}_{\leq
    3} = \sspan{\beta^3, \beta^2\gamma}$. Thus we may assume $f =
    F_2 + c_1 \DPel{y}{3} + c_2 \DPel{y}{2}z$. As before, multiplying $x$, $y$ and $z$ by
    suitable constants we may assume $c_1, c_2\in \{0, 1\}$. We get four
    isomorphism types:
    \[
        f_{2, 00} = \DPel{x}{3}y + \DPel{z}{4},\ \ f_{2, 10} = \DPel{x}{3}y +
        \DPel{z}{4} + \DPel{y}{3},\ \ f_{2,
        01} =
        \DPel{x}{3}y + \DPel{z}{4} + \DPel{y}{2}z,\ \ f_{2, 11} = \DPel{x}{3}y + \DPel{z}{4} + \DPel{y}{3} + \DPel{y}{2}z.
    \]
    To prove that the apolar algebras are pairwise non-isomorphic one shows
    that the only linear maps preserving $F_2$ are diagonal and argues
    as described in the case of $F_3$ below.

    \def\complement{\sspan{\DPel{y}{3},\, \DPel{y}{2}z,\, y\DPel{z}{2}}}%
    \paragraph{The case $F_3$.} We have $\Ann(F_3)_2 = (\beta^2,\,
    \beta\gamma,\,\alpha\beta - \gamma^2)$ and
    \[\perpspace{\tangunip{F_3}}_{\leq 3} = \sspan{\beta^2\gamma,\ \beta^3,\
        \alpha\beta^2 -
    2\beta\gamma^2}.\]
    We choose $\complement$ as the complement of $\tangunip{F_3}$
    in $P_3$. Therefore the apolar algebra of each $f$ with top degree form $F_3$ is
    isomorphic to the apolar algebra of
    \[
        f_{3, *} = \DPel{x}{3}y + \DPel{x}{2}\DPel{z}{2} + c_1\DPel{y}{3} + c_2\DPel{y}{2}z + c_3y\DPel{z}{2}
    \]
    and two distinct such polynomials $f_{3, *1}$ and $f_{3, *2}$ lie in
    different $\Dgrpunip$-orbits.
    We identify the set of $\Dgrpunip$-orbits with
    $P_3/\tangunip{F_3}  \simeq \complement$.
    We wish to determine isomorphism classes, that is,
    check which such $f_{3, *}$ lie in the same $\Dgrp$-orbit. A little care
    should be taken here, since $\Dgrp$-orbits will be bigger than in the
    previous cases.

    Recall that $\Dgrp/\Dgrpunip  \simeq \Dglcot \times \kk^*$ preserves the
    degree. Therefore, it is enough to look at the operators stabilizing
    $F_3$. These are $c\cdot g$, where $c\in \kk^*$ is a constant and $g\in
    \Dglcot$ stabilizes $\sspan{F_3}$, i.e.~$\Ddual{g}(\sspan{F_3}) =
    \sspan{F_3}$. Consider such a $g$.
    It is a linear automorphism of $\DP$ and maps $\Ann(F)$ into itself.
    Since $\beta(\lambda_1\beta +
    \lambda_2\gamma)$ for $\lambda_i\in \kk$ are the only reducible quadrics in $\Ann(F_3)$ we see that
    $g$ stabilizes $\sspan{\beta, \gamma}$, so that $\Ddual{g}(x) = \lambda x$ for a
    non-zero $\lambda$. Now it is straightforward to check directly that the
    group of linear maps stabilizing $\sspan{F_3}$ is generated by the following
    elements
\def\DtmpA{a}%
\def\DtmpB{b}%
    \begin{enumerate}
        \item homotheties: for a fixed $\lambda\in \kk$ and for all linear
            forms
            $\ell\in \DP$ we have $\Ddual{g}(\ell) = \lambda \ell$.
        \item for every $\DtmpA , \DtmpB \in \kk$ with $\DtmpB \neq 0$,
            the map $\DPut{tab}{t_{\DtmpA , \DtmpB }}$ given by
            \[
                \Dtab(x) = x,\ \ \Dtab(y) = -\frac{3}{2}\DtmpA ^2x + \DtmpB ^2
                y - 3\DtmpA \DtmpB  z,\ \
                \Dtab(z) = \DtmpA x + \DtmpB z.
            \]
            which maps $F_3$ to $\DtmpB ^2 F_3$.
    \end{enumerate}
    The action of $\Dtab$ on $P_3/\tangunip{F_3}$ in the basis $(\DPel{y}{3}, \DPel{y}{2}z, y\DPel{z}{2})$
    is given by the matrix (its entries slightly differ from~\cite[p.~23]{Jel_classifying} due to a different choice of
    basis):
    \[
        \begin{pmatrix}
            \DtmpB ^6 & 0 & 0\\
            -3\DtmpA \DtmpB ^5 & \DtmpB ^5 & 0\\
            \frac{39}{4}\DtmpA ^2\DtmpB ^4 & -13\DtmpA \DtmpB ^4 & \DtmpB ^4\\
        \end{pmatrix}
    \]
    Suppose that $f_{3, *} = \DPel{x}{3}y + \DPel{x}{2}\DPel{z}{2} + c_1\DPel{y}{3} + c_2\DPel{y}{2}z + c_3y\DPel{z}{2}$ has
    $c_1 \neq 0$. The above matrix shows that we may choose $\DtmpA $ and
    $\DtmpB $ and a homothety $h$ so that
    \[(h\circ \Dtab)(f_{3, *}) = c(\DPel{x}{3}y + \DPel{x}{2}\DPel{z}{2} + \DPel{y}{3} + c_3 y\DPel{z}{2}),\quad
    \mbox{where}\ c\neq 0,\ c_3\in \{0, 1\}.\]
    Suppose $c_1 = 0$. If $c_2 \neq 0$ then we may choose $\DtmpA $,
    $\DtmpB $ and $\lambda$ so
    that $(h\circ \Dtab)(f_{3, *}) = \DPel{x}{3}y + \DPel{x}{2}\DPel{z}{2} + \DPel{y}{2}z$. Finally, if $c_1 = c_2 =
    0$, then we may choose $\DtmpA = 0$ and $\DtmpB$, $\lambda$ so that $c_3 = 0$ or $c_3 = 1$. We get
    at most five isomorphism types:
    \begin{align*}
        &f_{3, 100} = \DPel{x}{3}y + \DPel{x}{2}\DPel{z}{2} +
        \DPel{y}{3},\quad\ f_{3, 101} = \DPel{x}{3}y + \DPel{x}{2}\DPel{z}{2} +
        \DPel{y}{3} + y\DPel{z}{2},\\
        &f_{3, 010} = \DPel{x}{3}y + \DPel{x}{2}\DPel{z}{2} +
        \DPel{y}{2}z,\quad f_{3, 001} =
        \DPel{x}{3}y + \DPel{x}{2}\DPel{z}{2} + y\DPel{z}{2},\\
        &f_{3, 000} = \DPel{x}{3}y + \DPel{x}{2}\DPel{z}{2}.
    \end{align*}
    By using the explicit description of the $\Dgrp$ action on $P_3/\tangunip{F_3}$
    one checks that the apolar algebras of the above polynomials are pairwise
    non-isomorphic.

    \paragraph{Conclusion:} There are $11$ isomorphism types of algebras with
    Hilbert function $(1, 3, 3, 3, 1)$.
    We computed the tangent spaces to the corresponding orbits in
    characteristic zero, using a computer implementation of the description in
    Proposition~\ref{ref:tangentspacepoly:prop}.
    The dimensions of the orbits are as follows:
\begin{center}

\begin{tabular}{@{}l l c @{}}
    orbit && dimension\\ \midrule
    $\orbit{(\DPel{x}{4} + \DPel{y}{4} + \DPel{z}{4} + xyz)}$   && $29$\\
    $\orbit{(\DPel{x}{4} + \DPel{y}{4} + \DPel{z}{4})}$ && $28$\\
    $\orbit{\left( \DPel{x}{3}y + \DPel{z}{4} + \DPel{y}{3} + \DPel{y}{2}z \right)}$ && $28$\\
    $\orbit{\left( \DPel{x}{3}y + \DPel{z}{4} + \DPel{y}{3} \right)}$ && $27$\\
    $\orbit{\left(\DPel{x}{3}y + \DPel{z}{4} + \DPel{y}{2}z\right)}$ && $27$\\
    $\orbit{\left(\DPel{x}{3}y + \DPel{z}{4}\right)}$ && $26$\\
\end{tabular}
\hspace{.5cm}\begin{tabular}{@{}l l c @{}}
    orbit && dimension\\ \midrule
    $\orbit{\left(\DPel{x}{3}y + \DPel{x}{2}\DPel{z}{2} + \DPel{y}{3} + y\DPel{z}{2} \right)}$ && $27$\\
    $\orbit{\left(\DPel{x}{3}y + \DPel{x}{2}\DPel{z}{2} + \DPel{y}{3} \right)}$ && $26$\\
    $\orbit{\left( \DPel{x}{3}y + \DPel{x}{2}\DPel{z}{2} + \DPel{y}{2}z \right)}$ && $26$\\
    $\orbit{\left( \DPel{x}{3}y + \DPel{x}{2}\DPel{z}{2} + y\DPel{z}{2} \right)}$ && $25$\\
    $\orbit{\left( \DPel{x}{3}y + \DPel{x}{2}\DPel{z}{2}\right)}$ && $24$\\
    &&
\end{tabular}
\end{center}
\def\DGLthree{\operatorname{GL}_3}
    The closure of the orbit of $f_{1, 1} = \DPel{x}{4} + \DPel{y}{4} + \DPel{z}{4} + xyz$ is
    contained in $\DGLthree(\DPel{x}{4} + \DPel{y}{4} + \DPel{z}{4}) + \DP_{\leq 3}$, which is
    irreducible of dimension $29$. Since the orbit itself has dimension $29$
    it follows that it is dense inside. Hence the orbit closure contains
    $\DGLthree(\DPel{x}{4} + \DPel{y}{4} + \DPel{z}{4}) + \DP_{\leq 3}$.
    Moreover, the set $\operatorname{GL}_{3}(\DPel{x}{4} + \DPel{y}{4} + \DPel{z}{4})$ is dense inside
    the set $\sigma_3$ of forms $F$ whose apolar algebra has Hilbert function
    $(1, 3, 3, 3, 1)$.
    Thus the orbit of $f_{1, 1}$ is dense inside the set of polynomials with
    Hilbert function $(1, 3, 3, 3, 1)$. Therefore, the latter set is
    irreducible and of dimension $29$.

    It
    would be interesting to see which specializations between different
    isomorphism types are possible. There are some obstructions. For example,
    the $\DGLthree$-orbit of $\DPel{x}{3}y + \DPel{x}{2}\DPel{z}{2}$ has smaller dimension than the
    $\DGLthree$-orbit of $\DPel{x}{3}y + \DPel{z}{4}$. Thus $\DPel{x}{3}y + \DPel{x}{2}\DPel{z}{2} + \DPel{y}{3} + y\DPel{z}{2}$ does
    not specialize to $\DPel{x}{3}y + \DPel{z}{4}$ even though its $\Dgrp$-orbit has higher
    dimension.
\end{example}

\vspace{-5mm}
\section{Examples III --- preliminaries for
    Chapter~\ref{sec:Gorensteinloci}}\label{ssec:examplesthree}

In this subsection we gather several technical results which we later
use when discussing ray families in Section~\ref{ssec:rayfamilies}. They imply
that a given dual generator $f$ can be transformed, using nonlinear change of
coordinates, to an ``easier'' form, e.g., which some monomials absent.
Geometrically, we change the embedding of $\Spec\Apolar{f}$ inside $\Spec
\DShat$. These results first appeared, in a somewhat partial form,
in~\cite{cjn13} and here are recast using the presentation
of~\cite{Jel_classifying}, given in Section~\ref{ssec:automorphisms}.

\begin{lemma}\label{ref:topdegreetwist:lem}
    \def\hatf{g}%
    Fix $d$ and assume $\chark = 0$ or $\chark > d$.
    Let $f\in \DP$ be a polynomial of degree $d$ and $\Dx_1\in \DmmS$ be such
    that $\Dx_1^d \hook f \neq 0$. Then in $\orbit{f}$ there is a polynomial
    $\hatf$, such that
    \begin{enumerate}
        \item $\Dx_1^d \hook \hatf = 1$,
        \item polynomial $\hatf$ contains no monomials of the form $\DPel{x_1}{i}$ with $i< d$.
        \item polynomial $\hatf$ contains no monomials of the form $\DPel{x_1}{i}x_j$ for $j\neq
            1$ and $i$ arbitrary,
    \end{enumerate}
\end{lemma}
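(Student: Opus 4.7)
The plan is first to normalize the top coefficient and then to eliminate the forbidden monomials of $f$ by a downward induction on degree, applying at each stage an explicit element of $\Dgrp = \DAut \ltimes \DShat^{*}$. For the normalization, since $\Dx_1^d \hook f$ is a nonzero scalar $c \in \kk^{*}$, contracting $f$ by $c^{-1}\in \DShat^{*}$ puts us in the case where the coefficient of $\DPel{x_1}{d}$ in $f$ equals $1$, which is property 1; I henceforth write $f$ for the normalized polynomial.

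The elimination is a downward induction on $i = d{-}1,\,d{-}2,\ldots,0$. At stage $i$ I assume inductively that every forbidden monomial of level $>i$ has already been killed, and I denote the current coefficients of $\DPel{x_1}{i}$ and $\DPel{x_1}{i} x_j$ by $c_i$ and $c_{i,j}$. First, for each $j=2,\dots,n$ I apply the automorphism $\psi_{i,j}\colon \Dx_j \mapsto \Dx_j - c_{i,j}\Dx_1^{d-i}$, fixing every other generator. By Proposition~\ref{ref:dualautomorphism:prop},
\[
    \Ddual{\psi_{i,j}}(f) - f \;=\; \sum_{m\geq 1} \DPel{x_j}{m}\bigl((-c_{i,j}\Dx_1^{d-i})^m \hook f\bigr),
\]
and since $\Dx_1^{d-i}\hook \DPel{x_1}{d} = 1$ the $m{=}1$ summand subtracts precisely $c_{i,j}$ from the coefficient of $\DPel{x_1}{i}x_j$. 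Next I contract by the unit $1 - c_i\Dx_1^{d-i}\in \DShat^{*}$; the perturbation $-c_i(\Dx_1^{d-i}\hook f)$ subtracts $c_i$ from the coefficient of $\DPel{x_1}{i}$. Reaching $i=0$ yields the desired $g \in \orbit{f}$.

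The main obstacle is the bookkeeping on supports and degrees, ensuring that no previously cleared forbidden monomial is revived and that the operations inside stage $i$ do not interfere with each other. Every summand of $\Ddual{\psi_{i,j}}(f) - f$ is divisible by $\DPel{x_j}{m}$ with $m\geq 1$, so no $\DPel{x_1}{k}$ is affected and, among monomials $\DPel{x_1}{k'}x_{j'}$, only the case $j'=j$ via the $m{=}1$ piece can contribute; this piece lives in degree $\leq i+1$ and touches $\DPel{x_1}{k'}x_j$ only for $k'\leq i$, so higher-level forbidden monomials stay zero and the operations for distinct $j$ commute. Similarly $-c_i(\Dx_1^{d-i}\hook f)$ has degree $\leq i$; its potential contribution to $\DPel{x_1}{k'}x_j$ equals $-c_i$ times the current coefficient of $\DPel{x_1}{k'+d-i}x_j$, which for $k'=i-1$ is zero because $\DPel{x_1}{d-1}x_j$ was cleared in stage $d-1$, and for $k'\geq i$ vanishes on degree grounds; forbidden monomials of level strictly less than $i-1$ may be altered but will be handled in the subsequent stages.
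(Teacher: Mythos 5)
Your argument is correct, and it reaches the same goal — inductively clearing the forbidden monomials one level at a time by applying explicit elements of $\Dgrp$ — but it is genuinely more elementary in presentation than the paper's. The paper first normalizes with $\Dglcot$ (which simultaneously clears the top level $\DPel{x_1}{d-1}x_j$), then for each lower $i$ interprets the unwanted part $\DPel{x_1}{i}\ell$ (with $\ell = \sum_j\lambda_j x_j$) as an element of $\tangunip{f}$ via Proposition~\ref{ref:uniptangentspacepoly:prop} and applies $\exp(-D)$; the $\DPel{x_1}{i}$ terms are then removed at the very end with a single partial. You work directly with the explicit automorphisms $\psi_{i,j}$ and unit contractions, invoking the formula of Proposition~\ref{ref:dualautomorphism:prop} rather than the tangent-space/exponential machinery. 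Your $\psi_{i,j}$ is in fact exactly $\exp$ of the relevant derivation, so the constructions agree, but bypassing $\exp$ means the hypothesis $\chark = 0$ or $\chark > d$ is never used in your argument — your route proves the lemma in arbitrary characteristic. Your bookkeeping (that already-cleared monomials are not revived, that operations at the same level commute) is also spelled out more carefully than in the paper. One small slip: you write ``$\Dx_1^{d-i}\hook\DPel{x_1}{d} = 1$,'' but this contraction equals $\DPel{x_1}{i}$; the correct assertion, which is what your conclusion actually uses, is that the coefficient of $\DPel{x_1}{i}$ in $\Dx_1^{d-i}\hook f$ equals $1$, so that the $m=1$ piece $x_j\cdot(-c_{i,j}\Dx_1^{d-i}\hook f)$ subtracts exactly $c_{i,j}\DPel{x_1}{i}x_j$.
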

\begin{proof}
    Acting with $\Dglcot$ on $f$ we may assume $\Dx_1^d \hook f = 1$ and
    $\Dx_1^{d-1}\Dx_j \hook f = 0$ for all $j \neq 1$.
    We will modify $f$, so that it satisfies Condition~3.
    Suppose $i$ is the largest exponent such that a monomial $\DPel{x_1}{i}x_j$ with
    $j\neq 1$ appears in $f$. We argue by downward induction on $i$.
    Considers all terms of $f$ having form
    $\lambda_j \DPel{x_1}{i} x_j$
    with $\lambda_j\in \kk$. Let $\ell = \sum \lambda_j x_j$.
    Then $\DPel{x_1}{i}\ell = \Dx_1^{d-i}\hook(\ell \DPel{x_1}{d})$ is the sum of all terms of
    $\Dx_1^{d-i}\hook (\ell f)$ which have the form $\DPel{x_1}{i}x_j$. Also $\Dx_1^{d-i}(x_jf) \in \tangunip{f}$ by
    Proposition~\ref{ref:uniptangentspacepoly:prop}. Let $D\in \Dgrpuniptang$ be
    any element such that $\Dx_1^{d-i}(x_jf) = Df$, then $\exp(-D)f = f - Df +
     \ldots \in
     \orbitunip{f}$ contains no terms of the form $\DPel{x_1}{i}x_j$ with $j\neq 1$. We
    replace $f$ by $\exp(-D)f$ and continue by induction. Hence, we obtain $f$
    satisfying Conditions 1.~and~3. An appropriate partial of $f$ satisfies
    all three conditions.
\end{proof}

\begin{example}\label{ref:standardformofstretched:ex}
    Suppose that a finite local Gorenstein algebra $A$ of socle degree $d$ has
    Hilbert function equal to $(1, H_1, H_2, \dots, H_c, 1, \dots, 1)$. The
    standard form of the dual generator of $A$ is
    \[f = \DPel{\DPut{yd}{x_1}}{d} + \kappa_{d-1}\DPel{\Dyd}{d-1} + \dots +
    \kappa_{c+2}\DPel{\Dyd}{c+2} + g,\] where $\deg g\leq c+1$ and
    $\kappa_{\bullet}\in \kk$. By adding a suitable
    derivative we may furthermore make all $\kappa_{i} = 0$ and assume
    that $\Dx_1^{c+1}\hook g = 0$. Using Lemma \ref{ref:topdegreetwist:lem} we
    may also assume that $g$ contains no monomials of the form $\DPel{x_1}{c}x_j$ with
    $j\neq 1$. By adding a suitable derivative of $f$ again, we may assume
    that $g$ does not contain the monomial $\DPel{x_1}{c}$, so in fact
    $\Dx_{1}^c \hook g = 0$. This gives a dual generator
    \[f = \DPel{x_1}{d} + g,\]
    where $\deg g \leq c+1$ and $g$ does not contain monomials divisible by
    $\DPel{\Dyd}{c}$.
\end{example}

The following is a seemingly easy yet subtle enhancement of the symmetric
decomposition. It was proven in a slightly weaker form in \cite{CN2stretched}
and in~\cite{cjn13}.
\begin{proposition}\label{ref:squares:prop}
    Let $\kk = \kkbar$ be a field of $\chark \neq 2$.
    Let $A$ be finite local Gorenstein algebra of socle degree $d\geq 2$ whose Hilbert function decomposition
    has $\Dhdvect{d-2} =
    (0, q, 0)$. Then $A$ is isomorphic to the apolar
    algebra of a polynomial $f$ such that $f$ is in the standard form and the
    quadric part $f_2$ of $f$ is a sum of
    $q$ squares of variables not appearing in $f_{\geq 3}$.
\end{proposition}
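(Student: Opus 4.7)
The plan is to normalize $f$ within its $\Dgrp$-orbit through a sequence of moves. First, by Theorem~\ref{ref:stform:thm}, replacing $f$ by $\Ddual{\varphi}(f)$ for a suitable $\varphi\in\DAut$, I may assume $f$ is in standard form, so $f_{d-i}\in\kdp[\linear{f}{i}]$ for all $i$. Set $m:=\dim\linear{f}{d-3}$ and $q:=\Dhd{d-2}{1}=\dim\linear{f}{d-2}/\linear{f}{d-3}$. Choose a basis $x_1,\ldots,x_m$ of $\linear{f}{d-3}$, extend by $y_1,\ldots,y_q$ to a basis of $\linear{f}{d-2}$, and then by $z_1,\ldots,z_s$ to a basis of $\DP_1$. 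Standard form gives $f_{\geq 3}\in\kdp[x]$ and $f_2\in\kdp[x,y]$, so I decompose
\[
    f_2 \;=\; Q(x)\;+\;B(x,y)\;+\;R(y),
\]
with $Q$ quadratic in $x$'s, $B$ bilinear, and $R$ quadratic in $y$'s.

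The key structural claim is that, after adjusting the $y$-basis within its coset in $\DP_1/\linear{f}{d-3}$, the quadratic form $R$ has full rank $q$. Indeed, since $f_{\geq 3}$ has no $y$-dependence, $\Dy_j\hook f=\Dy_j\hook f_2$, and modulo $\linear{f}{d-3}=\kdp[x]_1$ this equals $\Dy_j\hook R\in\kdp[y]_1$. The quotient $\linear{f}{d-2}/\linear{f}{d-3}$ has dimension $q$, and using the interpretation~\eqref{eq:Qintermsoff} together with the hypothesis $\Dhdvect{d-2}=(0,q,0)$ (including the symmetric vanishings $\Dhd{d-2}{0}=\Dhd{d-2}{2}=0$), one arranges for the $\Dy_j$'s alone to span this quotient, forcing the images $\Dy_j\hook R$ to be linearly independent and hence $\rank R=q$. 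With $R$ invertible, solve for $\lambda_{jk}$ so that the linear substitution $y_j\mapsto y_j+\sum_k\lambda_{jk}x_k$ eliminates $B$. This substitution is an automorphism of $\DP_1$ fixing $\linear{f}{d-3}$ and preserving $\linear{f}{d-2}$ (hence preserving standard form), and leaves $f_{\geq 3}\in\kdp[x]$ untouched. After the substitution, $f_2=Q'(x)+R(y')$ for a new quadratic form $Q'$ in $x$.

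The main technical step is to eliminate $Q'(x)$. For any $\delta\in\DmmS^{d-2}$, the element $1-\delta\in\DShat^*$ acts by $\Dgrp$ on $f$: $(1-\delta)\hook f=f-\delta\hook f$, and since $\delta\hook f$ has degree at most $2$, this leaves $f_{\geq 3}$ untouched while modifying $f_2$ by $-\delta\hook f_d\in\DmmS^{d-2}\hook f_d\subset\kdp[x]_2$. Supplementing this with derivations $D\in\DDer$ satisfying $D(\Dx_i)\in\DmmS^{d-1}$---which by Proposition~\ref{ref:tangentspacepoly:prop} contribute $x_i\cdot(D(\Dx_i)\hook f_d)$ to $f_2$ while also preserving $f_{\geq 3}$---produces enough tangent moves in $\kdp[x]_2$ to cancel $Q'(x)$ iteratively. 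The hypothesis $\Dhdvect{d-2}=(0,q,0)$, specifically $\Dhd{d-2}{2}=0$, rules out the graded obstructions to this cancellation, guaranteeing that $Q'(x)$ lies in the reachable subspace $\DmmS^{d-2}\hook f_d+\sum_i x_i\cdot(\DmmS^{d-1}\hook f_d)\subseteq\kdp[x]_2$. The cancellation proceeds by induction along a term order on $\kdp[x]_2$ compatible with the filtration $\linear{f}{0}\subseteq\cdots\subseteq\linear{f}{d-3}$.

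Finally, since $\kk=\kkbar$ and $\chark\neq 2$, a linear change among the $y'$-variables diagonalizes the non-degenerate form $R$ to $\sum_{j=1}^q\DPel{y_j''}{2}$. The resulting $f$ is in standard form with $f_2=\sum_{j=1}^q\DPel{y_j''}{2}$ and each $y_j''\in\linear{f}{d-2}\setminus\linear{f}{d-3}$, so none of the $y_j''$ appears in $f_{\geq 3}\in\kdp[x]$, proving the proposition. The main obstacle throughout is the $Q'$-elimination step, which requires carefully combining the $\DShat^*$- and derivation-type tangent moves in $\Dgrp$ while exploiting the structural information encoded in $\Dhdvect{d-2}=(0,q,0)$ and the flag $\linear{f}{\bullet}$.
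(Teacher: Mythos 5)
Your normalizations of $f_2$ -- bringing $R$ to a non-degenerate diagonal form and killing the cross block $B$ -- track the paper's iterative argument. The gap is in the $Q'$-elimination: you claim the achievable degree-$2$ moves preserving $f_{\geq 3}$ form the subspace $\DmmS^{d-2}\hook f_d+\sum_i x_i\cdot(\DmmS^{d-1}\hook f_d)$, i.e.~only the contributions generated by the top form $f_d$. This is too small. The tangent space $\tangunip{f}=\DmmS f+\sum_i\DmmS^2(x_if)$ of Proposition~\ref{ref:uniptangentspacepoly:prop} is built from the \emph{entire} polynomial, and an operator $\sigma$ of order less than $d-2$ that annihilates the high homogeneous parts $f_d,f_{d-1},\ldots$ yields a tangent vector of top degree $2$ coming from a lower homogeneous component. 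Concretely, with $f_d=\DPel{x_1}{d}$ and $\DPel{x_2}{d-1}$ appearing in $f_{d-1}$ (so $x_2\in\linear{f}{1}\subseteq\linear{f}{d-3}$), the quadric $\DPel{x_2}{2}$ is the top form of $\Dx_2^{d-3}\hook f\in\tangunip{f}$, yet it lies outside your subspace. So there is no ground for $Q'$ to lie in the subspace you describe, and the cancellation has no basis. Your appeal to $\Dhd{d-2}{2}=0$ ``ruling out graded obstructions'' is also vacuous: $\Dhd{a}{0}=\Dhd{a}{d-a}=0$ for every $a>0$ by the remark following Lemma~\ref{ref:Qduality:lem}, so for $d\geq 3$ the hypothesis $\Dhdvect{d-2}=(0,q,0)$ is automatic and merely names $q$.

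The paper proves $\kdp[x_1,\dots,x_e]_2\subseteq\tangunip{f}$ (with $e=\dim\linear{f}{d-3}$, your $m$) by the \emph{dual} description in Proposition~\ref{ref:uniptangentspacepoly:prop}: if some $\sigma\in\perpspace{\tangunip{f}}$ contained a monomial $\Dx_i\Dx_j$ with $i,j\leq e$, then the constraint $\deg(\ithpartial{\sigma}{i}\hook f)\leq 1$ forces the image $\tau$ of $\ithpartial{\sigma}{i}$ into $\mm\cap\Dmmperp{2}$, hence into the top graded piece $\iaq{d-2}{1}\subset\mm/\mm^2$. In standard form this piece is spanned by the classes of $\Dx_{e+1},\dots,\Dx_n$, whereas the linear part of $\ithpartial{\sigma}{i}$ involves $\Dx_j$ with $j\leq e$; together with $H_{\Apolar{f}}(1)=n$ this is a contradiction. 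Working with $\perpspace{\tangunip{f}}$ automatically incorporates contributions from all graded parts of $f$ -- precisely the information your primal formula drops -- and this dual step is the actual content of the proposition. You would need to reproduce some version of it; the ``iterative cancellation along a term order'' you sketch does not do so.
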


\begin{proof}
    Take a dual generator $f\in \DP := \kdp[x_1,\dots,x_n]$
    of algebra $A$ in the standard form. We will twist $f$ to obtain the required form of
    $f_2$. We may assume that $H_{\Apolar{f}}(1) = n$.

    If $d = 2$, then the theorem follows from the fact that the quadric $f$ may be
    diagonalized. Assume $d\geq 3$.
    Let
    $\DPut{parte}{e} := n_{d-3} = \sum_{a=0}^{d-3} \Dhd{a}{1}$. We have
    $\DPut{tote}{n} = n_{d-2} = e + q$,
    so that $f_{\geq 3}\in \kdp[x_1,\dots,x_{\Dparte}]$ and $f_2\in
    \kdp[x_1,\dots,x_{\Dtote}]$. Note that $f_{\geq 3}$ is also in the standard
    form, so that every linear form in $x_1, \ldots ,x_{\Dparte}$ is a
    derivative of $f_{\geq 3}$.

    If
    $\Dx_{\Dtote}\hook f\in
    \kdp[x_1,\dots,x_{\Dparte}]$ then there exists an operator $\partial\in \DmmS^2$ such
    that $\pp{\Dx_{\Dtote} - \partial}\hook f = 0$. This contradicts the fact
    that $f$ was in the standard form.
    So we get that $\Dx_{\Dtote} \hook f$ contains some $x_r$ for $r >
    \Dparte$, i.e.~$f$
    contains a monomial $x_rx_{\Dtote}$. A linear change
    of variables involving only $x_r$ and $x_{\Dtote}$ preserves the standard form and gives
    $\Dx_{\Dtote}^2 \hook f \neq 0$. Another change asserts that
    $\Dx_{\Dtote}^2\hook f =1$ and $\Dx_{\Dtote}\Dx_j \hook f = 0$ for $j\neq
    \Dtote$. Repeating, we obtain $f_2 = f_{2, 0} + \DPel{x_{\Dparte+1}}{2} +  \ldots
    + \DPel{x_{\Dtote}}{2}$ with $f_{2, 0}\in \kdp[x_1, \ldots ,x_{\Dparte}]$.

    It remains to prove that $f - f_{2, 0}\in \orbit{f}$. By
    Proposition~\ref{ref:topdegreecomp:prop} it is enough to prove that
    $x_ix_j\in \tangunip{f}$ for all $i, j\leq \Dparte$. Suppose that this is
    not the case and pick $\sigma\in \perpspace{\tangunip{f}}$ containing a
    monomial $\Dx_i\Dx_j$. By Proposition~\ref{ref:uniptangentspacepoly:prop}
    we have $\deg(\ithpartial{\sigma}{i}\hook f) \leq 1$ and clearly
    $\ithpartial{\sigma}{i}$ is of order one. Let $\tau\in \DA$ be the image
    of $\ithpartial{\sigma}{i}$, then $\tau\in \mm\cap \Dmmperp{}$.

    Since $f$ is in the standard form, the images of operators
    $\Dx_{\Dparte+1}, \ldots ,\Dx_{\Dtote}$ in $\DA$ span
    $\frac{\mm\cap\Dmmperp{}}{\mm^2\cap \Dmmperp{}} = \iaq{d-2}{1}$. Therefore
    the image of $\tau\in \iaq{d-2}{1}$ is zero, so $\tau\in \mm^2\cap
    \Dmmperp{}$. This means that there is an operator $\tau_2\in \DmmS^2$ such
    that $\ithpartial{\sigma}{i} - \tau_2$ annihilates $f$. But
    $\ithpartial{\sigma}{i} - \tau_2$ is of order one;
    this is a contradiction with $H_{\Apolar{f}} = n$. Hence we conclude that
    no $\tau$ exists, so that $x_ix_j\in \tangunip{f}$ for all $i, j \leq
    \Dparte$ and hence $f - f_{2, 0} = f_{\geq 3} + \DPel{x_{\Dparte+1}}{2} +  \ldots
    + \DPel{x_{\Dtote}}{2}\in \orbitunip{f}$.\qedhere
    \vspace*{-1cm}
\end{proof}

\begin{partwithabstract}{Hilbert schemes}\label{part:families}%
In this part we shift our attention from single algebras to families, specifically
to families of quotients of a polynomial ring (and others rings).
We change the language from algebras to schemes, so we speak about
families of finite subschemes of affine space (and other varieties) over $\kk$.
We investigate the geometry of the ``largest'' such family, which is called
\emph{the Hilbert scheme of points} of an affine space.

Its geometry is given naturally and uniquely, but remains to a large extent
unknown, see the Open Problems in Section~\ref{ssec:openquestions}.  After
the introduction, we review an
abstract framework of smoothings.  We compare abstract smoothings, embedded
smoothings and the geometry of the smoothable component of the Hilbert scheme.
We give examples of smoothings (Section~\ref{ssec:kollar},
Section~\ref{ssec:klimits}) and of nonsmoothable schemes
(Section~\ref{ssec:examplesnonsmoothable}).
We follow~\cite{jabu_jelisiejew_smoothability} and include some
folklore or unpublished results and examples.

The language of schemes, although necessary, is notably technical.
A good introduction is, for
instance, \cite{eisenbud_harris}. Much more details are provided in
\cite{hartshorne}, \cite{gortz_wedhorn_algebraic_geometry_I},
\cite{Vakil_foag} or \cite{stacks_project}. Most of the notions needed for our
purposes are briefly summarized in~\cite{jabu_jelisiejew_smoothability}. Our
main interest lies in the local theory; an algebraically minded reader is free
to, for example, replace the central notion of finite flat family
(Definition~\ref{ref:finiteflatfamily:def}) with a flat and finite
homomorphism of $\kk$-algebras.
\end{partwithabstract}

\chapter{Preliminaries}

\section{Moduli spaces of finite algebras, Hilbert
schemes}\label{ssec:hilbertschemes}

\newcommand{\OT}{\OO_T}%
\newcommand{\OX}{\OO_{\ccX}}%
A \emph{finite $\kk$-scheme} is an affine scheme $\Spec\DA$ for a finite $\kk$-algebra $\DA$.
We do not impose any conditions on the residue fields of $\DA$.
We transfer properties of $\DA$ to $\Spec \DA$, for example we say that $\Spec
\DA$ is \emph{Gorenstein} if and only if $\DA$ is Gorenstein
(Definition~\ref{ref:Gorenstein:def}).

A morphism
$\pi\colon \ccX\to T$ is \emph{affine} if for every affine open subset $U =
\Spec B$ of
$T$ the preimage $\pi^{-1}(U)$ is affine: $\pi^{-1}(U) = \Spec B'$.
An affine morphism $\pi\colon \ccX\to T$ is \emph{finite} (resp.~\emph{flat})
if $B'$ above is a
finite $B$-module (resp.~a flat $B$-module). Note that if $\pi\colon\ccX\to T$
is both finite and flat, then $B'$ is a locally free $B$-module
(see~\cite[Exercise~6.2]{Eisenbud}).
\begin{definition}\label{ref:finiteflatfamily:def}
    A \emph{family} of finite $\kk$-schemes over $T$ is a finite flat morphism $\ccX\to
    T$.
\end{definition}
Flatness and finiteness of a family $\ccX\to T$ together imply that
the sheaf $\pi_*\OX$ is a vector bundle on $T$. If $T$ is connected, this bundle has constant
rank $r$, so that each fiber of $\pi$ is an algebra of degree $r$, we then say
that $\pi$ has degree $r$. See Example~\ref{ex:properness} for some
pathologies without flatness or finiteness assumptions.

Since a finite algebra is a vector space with multiplication, a family,
intuitively, should be a vector space with continuously varying
multiplication. We explain why it is (locally) so under our definition.
Locally on $T$, the bundle $\pi_*\OX$ is free, so it is
isomorphic to $\OT\tensor_{\kk} V$ for an $r$-dimensional $\kk$-vector space
$V$. The multiplication on $\OX$ gives rise to
a $\OT$-linear map $\mu:(\OT\tensor_{\kk} V) \tensor_{\OT}
(\OT\tensor_{\kk} V)\to \OT\tensor_{\kk}
V$, which is equivalent to a $\kk$-linear map
\[
    \mu:V\otimes_{\kk} V\to \OT\tensor_{\kk} V.
\]
Fixing a basis $e_1, \ldots ,e_r$ of $V$ gives $\mu$ a form
$\mu(e_i\tensor e_j) = \sum_k a_{ijk} e_k$ for $a_{ijk}\in \OT$, which
precisely reflects the intuition of a continuously varying multiplication.
Conversely, given a map $\mu$, we obtain a family $\Speccal_T \mathcal{A} \to T$,
where $\mathcal{A}$ is the algebra $\OT\tensor_{\kk} V$ with multiplication $\mu$.

\begin{example}\label{ex:jumphenonmenon}
    An example of a finite flat family above is $\pi\colon \ccX\to T$, where
    \[\ccX = V(x^2-tx) \subset \mathbb{A}^2 =
        \Spec \kk[t, x]\quad\mbox{and}\quad T = \mathbb{A}^1 = \Spec \kk[t].\]
        The fiber $\pi^{-1}(\lambda)$ over every $\lambda\in \kk^*$ is
    \[
        \Spec \frac{\kk[x, t]}{(x^2-tx, t-\lambda)}   \simeq  \Spec
        \frac{\kk[x]}{(x(x-\lambda))}\simeq \Spec (\kk^{\times 2})
    \]
    and the fiber over zero is
    \[
        \Spec \frac{\kk[x, t]}{(x^2-tx, t)} \simeq \Spec\frac{\kk[x]}{(x^2)},
    \]
    see Figure~\ref{fig:doublecover}.
    The bundle $\pi_*\OX$ is free, $\pi_*\OX = \OT\tensor_{\kk} V$ for
    $V = \sspan{1, x}$. The corresponding
    $\mu:V\tensor_{\kk} V\to \OT\tensor_{\kk} V$ is given by $\mu(x\tensor x)
    = tx$ and $\mu(1\tensor x) = \mu(x\tensor 1) = x$, $\mu(1\tensor 1) = 1$.
    \begin{figure}[h]
        \label{fig:doublecover}%
        \centering
        \includegraphics{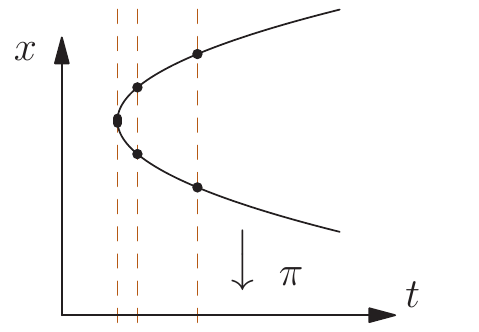}%
        \caption{Ramified double cover as a family of degree two schemes, see
            Example~\ref{ex:jumphenonmenon}.}%
    \end{figure}
\end{example}

\begin{example}\label{ex:properness}
    The morphism $\Spec \kk[x, t]/(xt -1)\to \Spec \kk[t]$ is flat but not
    finite. All fibers over $\kk$-points are isomorphic to $\Spec \kk$
    expect for the fiber over $t=0$, which is empty.

    The morphism $\Spec \kk[x, t]/(x, t)\to \Spec \kk[t]$ is finite but not
    flat. All fibers are zero except for
    the fiber over $t=0$, which is $\Spec \kk$.

    Taking the union of these morphisms we obtain $\Spec \kk[x,
    t]/(x^2t -x, xt^2 - t)\to \Spec \kk[t]$ which is neither
    finite nor flat and such that each fiber is isomorphic to $\Spec \kk$.
\end{example}

\newcommand{\SchOp}{\operatorname{Sch_{\kk}}^{op}}%
\newcommand{\Set}{\operatorname{Set}}%
\newcommand{\moduli}{\mathcal{M}}%
\newcommand{\univ}{\mathcal{U}}%
    Even with the notion of (finite flat) family we
    still lack some geometry. For example, we would like to compare families
    and think about the largest family, containing all possible finite schemes.
    We do this using the notion of representable functors. We do not
    really use the strength of this theory and the language of functors is
    notably technical, so below we slightly change the presentation:
    while everything which we say is precise, it differs from the usual
    presentation of representable functors. For a nice presentation of those,
    we refer to~\cite[Chapter~VI]{eisenbud_harris} and, specifically for the
    Hilbert functor, to~\cite{stromme}.
    \newcommand{\Artbare}{\mathcal{F}in\mathcal{S}ch}%

    Let us aggregate families as follows. For each
    $\kk$-scheme $T$ we define the set
    \[
        \Artbare(T) = \left\{ \ccX\to T \mbox{ finite flat} \right\}.
    \]
    For every morphism $\varphi\colon T'\to T$ we have a pullback map $\Artbare(\varphi):\Artbare(T) \to \Artbare(T')$
    pulling back each family $\ccX\to T$ to an element $\ccX \times_{T} T'\to
    T'$ of $\Artbare(T')$. Formally, $\Artbare:\SchOp\to \Set$ defined in this
    way is a \emph{functor}.

    The intuition about a largest family is encoded into the notion of
    functor represented by a scheme (a \emph{representable functor}).
    \newcommand{\Func}{\mathcal{F}un}
    \begin{definition}
        Let $\Func\colon \SchOp \to \Set$ be a functor. We say that $\Func$ is
        \emph{represented} by a \mbox{$\kk$-scheme} $\moduli$ if there exists a \emph{universal
        family} $\univ\in \Func(\moduli)$, such that for every $T$ and
        $\ccX\in \Func(T)$ there is a \emph{unique} morphism $\varphi\colon T\to \moduli$
        such that $\ccX = \Func(\varphi)(\univ)$, i.e., the family $\ccX$ is a
        pullback via $\varphi$ of $\univ$.
    \end{definition}

    The intuition behind $\Func$ being represented by $\moduli$ is that $\univ$ is a
    largest family, containing every family. A crucial additional part
    is that every family comes from a \emph{unique} pullback of
    universal family. For example, an element of $\Func(\kk)$ corresponds to
    a unique $\kk$-point of $\moduli$ and more generally, $\Func(T)  \simeq
    \Homthree{}{T}{\moduli}$ naturally. This natural isomorphism is usually
    taken as a definition of representability.
    Under this isomorphism, the element $\univ$ corresponds to
    $\operatorname{id}_{\moduli}\in
    \Homthree{}{\moduli}{\moduli}$.

    If there exists a scheme
    $\moduli$ representing $\Func$, it is uniquely determined up to a unique isomorphism: if $\moduli_1$
    and $\moduli_2$ are two representing schemes, then they
    induce unique maps $\moduli_1\to \moduli_2$ and $\moduli_2\to \moduli_1$.
    By uniqueness, the compositions $\moduli_1\to \moduli_2\to \moduli_1$ and
    $\moduli_2\to \moduli_1\to \moduli_2$ are identities, so $\moduli_1 \simeq
    \moduli_2$.

    If $\Artbare$ were represented by $\moduli$, then the $\kk$-points of
    $\moduli$ would correspond bijectively to elements of $\Artbare(\kk)$, i.e., to
    finite $\kk$-schemes.
%    The challenge is to construct a representing space $\moduli$ for
%    $\Artbare$.
    We
    will now show in Example~\ref{ex:moduliofartinnonexistent} that in fact
    such $\moduli$ does
    not exist; $\Artbare$
    is \emph{not} represented by any scheme. This example does not render the
    above discussion irrelevant, it just motivates the
    advantage of changing $\Artbare$ by adding more information.

\begin{example}\label{ex:moduliofartinnonexistent}
    \def\moduli{\mathcal{M}}%
    The scheme representing $\Artbare$ does not exist.
    More precisely, there is no scheme $\moduli$ over $\kk$ such that:
    \begin{enumerate}
        \item the $\kk$-points of $\moduli$ correspond to finite
            $\kk$-schemes and only finitely many points correspond to a given
            scheme.
        \item every finite flat family $\ccX \to C$ over $\kk$ induces a
            morphism $\varphi\colon C\to \moduli$ sending each $\kk$-point $c\in C$
            to a $\kk$-point $\varphi(c)$ corresponding to scheme $\ccX_c$; no uniqueness
            assumed.
    \end{enumerate}
    Indeed, in Example~\ref{ex:jumphenonmenon} we have seen a family $\ccX\to \mathbb{A}^1$ such that the fibers over
    $\mathbb{A}^1\setminus \{0\}$ are all isomorphic and not isomorphic to the fiber $\ccX_0$.
    If $\moduli$ existed, the induced morphism $\mathbb{A}^1 \to \moduli$
    would map $\mathbb{A}^1 \setminus \{0\}$ to a closed $\kk$-point and $0\in
    \mathbb{A}^1$
    to another $\kk$-point; this is impossible.

    The presented problem persists across deformation theory and is known as
    \emph{jump phenomenon}, see~\cite[Section~23, in particular
    Remark~23.0.4]{hartshorne_deformation_theory}.
\end{example}

\newcommand{\famil}{\mathcal{Z}}%
In view of Example~\ref{ex:moduliofartinnonexistent} we refine $\Artbare$ to
a functor $\SchOp\to \Set$ parameterizing families embedded as closed subschemes of the
product of base and a fixed ambient variety $\ambient$:
\begin{equation}\label{eq:hilbertfunc}
    \Hilb{\ambient}(T) = \left\{ \ccX \subset T \times
        \ambient,\ \ccX\to T \mbox{ finite flat} \right\}.
\end{equation}
This functor parameterizes families of finite schemes which are subschemes of a given
ambient scheme $\ambient$. For every flat family $\pi\colon \ccX\to T$ over
connected $T$ the degree
of fibers is constant (as $\pi_*\OX$ is a vector bundle), so we subdivide
$\Hilb{\ambient}$ into a family of functors parameterized by
$r\geq 1$:
\[
    \Hilbr{\ambient}(T) = \left\{ \ccX \subset T \times
        \ambient,\ \ccX\to T \mbox{ finite flat of degree }r \right\}.
\]
\begin{theorem}\label{ref:representability:thm}
    If $\ambient$ is either quasi-projective or affine, then $\Hilb{\ambient}$
    is represented by a scheme $\Hilb{\ambient}$. Also, for all $r\geq
    1$, the functor $\Hilbr{\ambient}$ is represented by a scheme
    $\Hilbr{\ambient}$ and
    \begin{equation}\label{eq:subdivision}
        \Hilb{\ambient} = \coprod_{r\geq 1} \Hilbr{\ambient}.
    \end{equation}
    If $\ambient$ is projective then each $\Hilbr{\ambient}$ is also
    projective.
\end{theorem}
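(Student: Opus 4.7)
The plan is to reduce everything to the case $X=\mathbb{P}^N$, construct $\Hilbr{\mathbb{P}^N}$ explicitly as a closed subscheme of a Grassmannian, then descend to quasi-projective and affine $X$ by open/locally closed restriction. The decomposition~\eqref{eq:subdivision} is essentially formal: for any family $\pi\colon \ccX\to T$ of finite flat schemes, the pushforward $\pi_*\OX$ is a vector bundle and hence its rank is locally constant on $T$. So $T$ canonically decomposes as a disjoint union $\coprod_{r\geq 1} T_r$ where $T_r$ is the clopen locus where $\pi$ has degree $r$, giving $\Hilb{X}(T) = \coprod_r \Hilbr{X}(T_r)$; once each $\Hilbr{X}$ is represented, this yields $\Hilb{X}=\coprod_r \Hilbr{X}$.

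For the projective case $X=\mathbb{P}^N$, fix $r\geq 1$. The first step is a uniform regularity bound: by Gotzmann's persistence theorem (Theorem~\ref{ref:Gotzmann:thm}) applied to the constant Hilbert polynomial $p(t)=r$, there is an integer $d_0=d_0(r,N)$ such that for every closed subscheme $R\subset \mathbb{P}^N$ of degree $r$ and saturated ideal $I\subset S=\kk[x_0,\ldots,x_N]$, the ideal $I$ is generated in degrees $\leq d_0$ and $\dimk(S/I)_d = r$ for all $d\geq d_0$. This lets me assign to each such $R$ the subspace $I_{d_0}\subset S_{d_0}$, which has codimension exactly $r$. Relatively, a family $\ccX\subset T\times \mathbb{P}^N$ with $\ccX\to T$ finite flat of degree $r$ produces a locally free quotient $(S_{d_0}\tensor \OT) \onto \pi_*\OO_\ccX(d_0)$ of rank $r$, i.e.\ a $T$-point of the Grassmannian $\Grass(r, S_{d_0})$. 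This defines a natural transformation $\Hilbr{\mathbb{P}^N}\to \Grass(r,S_{d_0})$.

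The next step is to cut out the image by explicit equations. Over the universal quotient $S_{d_0}\tensor \OO_{\Grass}\onto \mathcal{Q}$ on the Grassmannian, one forms for each $e\geq 0$ the composite $S_e\tensor (S_{d_0}\tensor \OO_{\Grass}) \to S_{d_0+e}\tensor \OO_{\Grass} \to \mathcal{Q}_e$, where $\mathcal{Q}_e$ is the cokernel of the image in $S_{d_0+e}\tensor \OO_{\Grass}$ of the kernel $\mathcal{K}\subset S_{d_0}\tensor \OO_{\Grass}$ multiplied by $S_e$. The locus where $\mathcal{Q}_e$ has rank $\leq r$ is closed (defined by minors), and the Gotzmann bound says that imposing this condition for $e=1$ (and automatically for all higher $e$ by the persistence theorem) carves out precisely the locus where $\mathcal{K}$ is the degree $d_0$ piece of a saturated ideal with Hilbert polynomial $r$. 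Checking that the resulting closed subscheme $H\subset \Grass(r,S_{d_0})$ represents $\Hilbr{\mathbb{P}^N}$ reduces to showing that the universal quotient pulls back to a flat family, which follows because on $H$ the vector bundle $\mathcal{Q}_{d_0+e}$ has constant rank $r$ for all $e\geq 0$; this is the main technical obstacle, handled by Gotzmann regularity together with standard flatness criteria over the locus of constant Hilbert function. Since $H$ is closed in the projective scheme $\Grass(r,S_{d_0})$, $\Hilbr{\mathbb{P}^N}$ is projective.

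For quasi-projective $X$, fix a locally closed embedding $X\subset \mathbb{P}^N$ with $X=\bar X\setminus Y$ where $\bar X,Y$ are closed. The subfunctor $\Hilbr{X}\subset \Hilbr{\mathbb{P}^N}$ picks out the families $\ccX\subset T\times\mathbb{P}^N$ factoring through $T\times X$, equivalently, those whose image in $T\times \mathbb{P}^N$ is disjoint from $T\times Y$ and contained in $T\times \bar X$. Containment in $T\times \bar X$ is a closed condition (it is the vanishing of the pullback of the ideal of $\bar X$ in the finite flat $\pi_*\OO_\ccX$), while disjointness from $T\times Y$ is an open condition (the support of a finite morphism is closed, so its complement in $T$ is open). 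Hence $\Hilbr{X}$ is represented by a locally closed subscheme of $\Hilbr{\mathbb{P}^N}$. The affine case is a special instance, using $\mathbb{A}^N\subset \mathbb{P}^N$. Finally, summing over $r$ gives representability of $\Hilb{X}$, and projectivity of each $\Hilbr{X}$ for projective $X$ follows from the closed embedding into the Grassmannian established above.
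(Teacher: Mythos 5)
Your proposal is correct, and for the formal decomposition into clopen pieces and for the quasi-projective and projective cases it is essentially the same route the paper takes: the paper simply cites Grothendieck's construction via Gotzmann's bound and a Grassmannian embedding (Theorem~5.14 of \cite{fantechi_et_al_fundamental_ag}) rather than sketching it. The genuine difference is in the affine case. You obtain it by restricting along $\mathbb{A}^N \subset \mathbb{P}^N$ and exhibiting $\Hilbr{X}$ as a locally closed subscheme of $\Hilbr{\mathbb{P}^N}$ (containment in $\bar{X}$ closed, disjointness from $Y$ open), whereas the paper invokes the separate construction of Gustavsen--Laksov--Skjelnes and then gives its intuition for $\mathbb{A}^n$ directly: cover the functor by subfunctors $\Hilbr{\mathbb{A}^N,\lambda}$ indexed by finite monomial ideals $\lambda$, and represent each by structure constants expressing monomials of $\lambda$ in the basis $B_\lambda$ of complementary monomials. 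Your reduction is more uniform and self-contained modulo Gotzmann, but it only covers affine schemes embeddable in $\mathbb{P}^N$, i.e.\ of finite type over $\kk$; the GLS construction the paper cites works for arbitrary affine schemes and is more elementary in that it avoids Castelnuovo--Mumford regularity entirely. Two small imprecisions in your write-up: the uniform bound $d_0$ is Gotzmann's \emph{regularity} theorem (for the constant Hilbert polynomial $r$ one may take $d_0 = r$), not the persistence theorem quoted as Theorem~\ref{ref:Gotzmann:thm}, which you then correctly use to propagate the rank condition from $e=1$; and the verification that the closed subscheme of $\Grass(r, S_{d_0})$ genuinely represents the functor (not merely has the right $\kk$-points) is the delicate heart of Grothendieck's argument, which you acknowledge but compress --- reasonable here, since the paper itself defers to the literature at exactly this point.
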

The scheme $\Hilb{\ambient}$ is called \emph{the Hilbert scheme of
points of $\ambient$}, while $\Hilbr{\ambient}$ is called \emph{the Hilbert
scheme of $r$ points of $\ambient$}. By representability, there is
a unique up to isomorphism $\UU \subset \ambient \times \Hilbr{\ambient}$, such that $\pi\colon \UU
\to \Hilbr{\ambient}$ is (finite flat) a family.
Every finite subscheme $R \subset \ambient$ gives a family $R\to \Spec \kk$,
hence a \emph{unique} $\kk$-point of $\Hilb{a\ambient}$, which we denote by
$[R]$. Conversely, any $\kk$-point of $\Hilb{\ambient}$ gives a finite
$\kk$-scheme $\pi^{-1}([R]) \subset \ambient$. Thus $\kk$-points of
$\Hilb{\ambient}$ bijectively correspond to finite subschemes of $\ambient$.
\begin{proof}[Proof of Theorem~\ref{ref:representability:thm}]
    Once we prove the existence of $\Hilbr{\ambient}$, the existence
    of $\Hilb{\ambient}$ and~\eqref{eq:subdivision} follow formally.
    The existence of $\Hilbr{\ambient}$ for quasi-projective $\ambient$ was proven by
    Grothendieck, see~\cite[Theorem~5.14]{fantechi_et_al_fundamental_ag}. We
    also showed that if
    $\ambient$ is projective, then $\Hilbr{\ambient}$ is also
    projective.
The
existence for affine $\ambient$ was proven by
Gustavsen-Laksov-Skjelnes~\cite{Gustavsen_Laksov_Skjelnes__Elementary_explicit_const}.
Grothendieck's and Gustavsen-Laksov-Skjelnes' proofs are quite technical, so we do not reproduce them
here, but we discuss the intuition in a very special case of $\ambient =
\mathbb{A}^n$.

\def\Oone{\OO(1)}
%\begin{proof}[Proof of existence of $\Hilb{\mathbb{A}^N}$ -- sketch]
    \def\IX{\mathcal{I}_{\ccX}}%
    For a monomial ideal $\lambda$ or degree $r$ let $B_{\lambda}$ denote
    the set of monomials not in $\lambda$. Consider the
    subfunctor
    \[
        \Hilbr{\mathbb{A}^N, \lambda}(T) = \left\{ \ccX \subset T \times
            \mathbb{A}^N,\ \pi\colon \ccX\to T \mbox{ finite flat, } \OT\cdot
            B_{\lambda}\mbox{ spans } \pi_*\OX\right\}.
    \]
    For every family $\famil\to T$ we find, at least locally on $T$,
    a $\lambda$ such that $[\famil\to T]\in \Hilbr{\mathbb{A}^n, \lambda}$.
    Therefore it is enough to prove that $\Hilbr{\mathbb{A}^N, \lambda}$
    is representable.
    Since $\OT\cdot B_{\lambda}$ spans $\pi_*\OX$, every monomial $m\in
    \lambda$ can be presented as $m - \sum_{b\in B_{\lambda}} a_{b,m} b\in
    \IX$, where $a_{b, m}\in
    \OT$. These structure constants give a map $T\to \mathbb{A}^{\aleph_0}$ to
    a countably dimensional
    affine space.

    Conversely given a scheme $T$ and a morphism $T\to \mathbb{A}^{\aleph_0}$
    we obtain a set of coefficients $a_{b,m}\in \OT$ such that
    $\sspan{m-\sum a_{b,
    m}b}$ is an ideal and we recover a family $\ccX\to T$ which is
    automatically flat, even free with basis $B_{\lambda}$. Thus $\Hilbr{\mathbb{A}^N,
    \lambda}$ is represented by a subscheme of $\mathbb{A}^{\aleph_0}$.
    This concludes the proof, however the embedding into $\mathbb{A}^{\aleph_0}$
    is far from minimal, as we explain now. Note that for every variable $x_i$ its
    powers $1, x_i,  \ldots , x_i^r$ are dependent over $\OT$ modulo $\IX$, thus there
    are elements $x_i^{r_0} - \sum_{j<r_0} c_j x_i^j\in \IX$ with $c_j\in \OT$
    and $r_0 \leq r$. Using these
    equations, we may reduce modulo $\IX$ all monomials of degree $\geq Nr$ to
    monomials of smaller degree. Therefore we actually need only finitely many
    coefficients $[a_{b,m}]$ to recover $\IX$ and the embedding into
    $\mathbb{A}^{\aleph_0}$ refines to an embedding into a finitely
    dimensional affine space.
\end{proof}

The above construction, while explicit and constructive, embeds the Hilbert
scheme into a very large affine space, thus preventing any direct computation.
There is much work done in lowering the embedding dimension and working with
the Hilbert schemes explicitly, see~for example
\cite{bertone_borel_open_cover, lella_roggero_Rational_Components}.

There are only a few cases when the Hilbert scheme of points can be described explicitly. We
present them below.

\begin{example}\label{ex:rone}
    Let $\ambient$ be quasi-projective or affine, so that $\Hilbr{\ambient}$
    exists.
    A family $\ccX \subset T \times \ambient\to T$ of degree $r
    = 1$ is isomorphic to $T$, so it induces a section $T  \simeq  \ccX\to
    \ambient$. It follows that $\Hilbarged{1}{\ambient}  \simeq \ambient$.
    The universal family is the diagonal $\Delta \subset \ambient \times
    \ambient$ together with a projection onto a factor.
\end{example}

\begin{example}
    If $\ambient$ is a curve, then
    $\Hilbr{\ambient}$ parameterizes hypersurfaces of degree $r$ in
    $\ambient$. For example, $\Hilbr{\PP^1}  \simeq \mathbb{P}^{r}$
    by~\cite[(4), p.~111]{fantechi_et_al_fundamental_ag}. More invariantly, if $\ambient = \mathbb{P}V$ for a
    two-dimensional vector space $V$, then $\Hilbr{\ambient}  \simeq
    \mathbb{P}\pp{\Sym^r \Ddual{V}}$.
\end{example}

%\paragraph{Tangent spaces.}
\newcommand{\Dtangspace}[1]{\mathbb{T}_{#1}}%
\newcommand{\Homthreesh}[3]{\mathcal{H}om_{#1}(#2, #3)}%
\newcommand{\Oamb}{\OO_{\ambient}}%
One information about the Hilbert scheme, which is easily obtained, is the
tangent space. For a $\kk$-point $[R]\in \Hilbr{\ambient}$, corresponding to a
subscheme $R \subset X$, we denote the tangent space by $\Dtangspace{\Hilbr{\ambient}, [R]}$.
\begin{proposition}[tangent space
    description]\label{ref:tangentspaceHilb:prop}
        \def\IR{\mathcal{I}_R}
        Let $\ambient$ be a scheme such that $\Hilbr{\ambient}$ exists and $R
        \subset \ambient$ be
        a finite $\kk$-subscheme of degree $r$ given by ideal sheaf $\IR
        \subset \Oamb$. Then $
        \Dtangspace{\Hilbr{\ambient}, [R]} = H^0(\ambient,
        \Homthreesh{\Oamb}{\IR}{\Oamb/\IR})$.
\end{proposition}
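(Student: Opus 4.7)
The plan is to exploit the representability of $\Hilbr{\ambient}$ established in Theorem~\ref{ref:representability:thm}. By definition, $\Dtangspace{\Hilbr{\ambient}, [R]}$ is the set of morphisms $\Spec D \to \Hilbr{\ambient}$ extending the point $[R]$, where $D := \kk[\varepsilon]/(\varepsilon^2)$. By representability, this set is naturally in bijection with elements of $\Hilbfunc{}_r(\ambient)(\Spec D)$ whose closed fiber is $R$, i.e.\ with closed subschemes $\tilde R \subset \ambient \times \Spec D$ that are finite flat of degree $r$ over $\Spec D$ and satisfy $\tilde R \times_{\Spec D} \Spec \kk = R$. So the task reduces to classifying \emph{embedded first-order deformations} of $R$.

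First I would reformulate the problem in terms of ideal sheaves. Any such $\tilde R$ is cut out by an ideal sheaf $\tilde{\IR} \subset \Oamb \oplus \varepsilon\Oamb$ whose reduction modulo $\varepsilon$ equals $\IR$. The local criterion for flatness over $D$ shows that flatness of $(\Oamb \oplus \varepsilon\Oamb)/\tilde{\IR}$ is equivalent to exactness of
\[
0 \to \IR \xrightarrow{\;\varepsilon\,\cdot\;} \tilde{\IR} \to \IR \to 0,
\]
that is, to the condition $\tilde{\IR} \cap \varepsilon\Oamb = \varepsilon\IR$.

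Next I would build the correspondence $\tilde{\IR} \leftrightarrow \varphi \in \Homthree{\Oamb}{\IR}{\Oamb/\IR}$ locally. Given $\tilde{\IR}$, every $f \in \IR$ admits a lift $f + \varepsilon g_f \in \tilde{\IR}$; any two such lifts differ by $\varepsilon h$ with $h \in \IR$ by the flatness condition above. Hence $f \mapsto g_f \bmod \IR$ yields a well-defined map $\varphi\colon \IR \to \Oamb/\IR$, which is $\Oamb$-linear because $\tilde{\IR}$ is an $(\Oamb\oplus\varepsilon\Oamb)$-submodule. Conversely, given $\varphi$, pick any set-theoretic lift $\tilde\varphi\colon \IR \to \Oamb$ and define
\[
\tilde{\IR} := \{\,f + \varepsilon\bigl(\tilde\varphi(f) + h\bigr) : f \in \IR,\ h \in \IR\,\}.
\]
One checks this is an ideal, is independent of the choice of $\tilde\varphi$, reduces to $\IR$ modulo $\varepsilon$, and satisfies the flatness condition; the two constructions are mutually inverse.

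The main obstacle is globalization: to upgrade this local bijection to the stated identification with $H^0(\ambient, \Homthreesh{\Oamb}{\IR}{\Oamb/\IR})$, I would verify that the assignment $\tilde{\IR} \rightsquigarrow \varphi$ is canonical and commutes with restriction to open affines, so that local $\varphi$'s glue into a global section of $\Homthreesh{\Oamb}{\IR}{\Oamb/\IR}$. Conversely, a global section determines the ideal $\tilde{\IR}$ locally via the construction above, and naturality in $\Oamb$ ensures that these local ideals glue into a single global ideal sheaf on $\ambient \times \Spec D$. This sheafification step is routine but requires the careful verification of canonicity; no choices of lifts $\tilde\varphi$ survive in the end, which is precisely what makes the gluing unambiguous.
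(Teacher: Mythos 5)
Your proposal is correct and follows the same approach the paper takes, namely classifying embedded first-order deformations of $R$ over $\Spec \kk[\varepsilon]/\varepsilon^2$ via representability, the flatness criterion, and the local-to-global bijection with the Hom sheaf; the paper itself simply refers to Hartshorne's deformation theory text rather than spelling out the argument. The details you supply (the characterization $\tilde{\IR}\cap\varepsilon\Oamb = \varepsilon\IR$, the well-definedness of $\varphi$ via flatness, the gluing) are all accurate and amount to what that reference provides.
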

\begin{proof}
    The proof goes by classifying families over $\Spec
    \kk[\varepsilon]/\varepsilon^2$,
    see~\cite[Section~1.2]{hartshorne_deformation_theory} for an accessible and
    expanded presentation.
\end{proof}

\begin{example}[tangent space locally]\label{ex:tangentforGorenstein}
    In the setting of Proposition~\ref{ref:tangentspaceHilb:prop}, suppose
    additionally that $R =
    \Spec \DA \subset
    U=\Spec B$. Then $R \subset U$ is given by an ideal $I \subset B$ and
    $\Dtangspace{\Hilbr{\ambient}, [R]} = \Homthree{B}{I}{B} = \Homthree{A}{I/I^2}{A}$.
    If furthermore the scheme $R$ is supported on a single
    $\kk$-point and Gorenstein, then
    we have
    $\Homthree{A}{I/I^2}{A}  \simeq  \Homthree{\kk}{I/I^2}{\kk}$ by Lemma~\ref{ref:dualizingfunctors:lem}, so
    \[
        \dimk \Dtangspace{\Hilbr{\ambient}, [R]} = \dimk\Homthree{A}{I/I^2}{A} = \dimk \Homthree{\kk}{I/I^2}{\kk} = \dimk
        I/I^2.
    \]
\end{example}

\begin{example}[tangent space for $\kk$-points]\label{ex:smoothpoints}
    Suppose $r = 1$ and $x \in \ambient$ is a $\kk$-point. Then
    $\Hilbr{\ambient} = \ambient$, see Example~\ref{ex:rone}, and
    $\Dtangspace{\Hilbr{\ambient}, x} = \Dtangspace{\ambient, x}$. Similarly for general $r$, if $x_1,
    \ldots ,x_r\in \ambient$ are pairwise different $\kk$-points and $R = \{x_1,
    \ldots ,x_r\}$, then $\Dtangspace{\Hilbr{\ambient}, [R]}  \simeq
    \bigoplus \Dtangspace{\ambient, x_i}$.
\end{example}

When considering the Hilbert scheme from the perspective of classifying finite
subschemes (or, equivalently, finite algebras), it is of prime importance to
understand, which properties are independent of the choice of embedding.
Fortunately, the rough answer is: all properties are independent provided that the
ambient space $\ambient$ is smooth. We will justify this  later (see
Theorem~\ref{thm_equivalence_of_abstract_and_embedded_smoothings},
Proposition~\ref{prop_smoothability_depends_only_on_sing_type}). Now
we show that the tangent space dimension is independent of the embedding.
Consider the baby case of a tuple $R$ of smooth $\kk$-points on a variety
$\ambient$. By
Example~\ref{ex:smoothpoints}, the dimension of $\Dtangspace{\Hilbr{\ambient}, [R]}$
is $(\deg R)(\dim \ambient)$. Thus $0 = \dimk\Dtangspace{[R]} - (\deg R)(\dim
\ambient)$ is
independent of $\ambient$. We show that this independence generalizes to
arbitrary $R$. This result is known and appeared, for example, in the
arXiv version of~\cite{cartwright_erman_velasco_viray_Hilb8} and, for
Gorenstein subschemes,
in~\cite[Lemma~2.3]{casnati_notari_irreducibility_Gorenstein_degree_9}.

\begin{proposition}[invariance of tangent
    space]\label{ref:invarianceoftangentspace:prop}
    Let $\ambient$ be an affine or quasi-projective scheme.
    Let $R \subset \ambient$ a finite $\kk$-scheme of degree $r$. Then the number
    \begin{equation}\label{eq:tangentinvariant}
        \dimk \Dtangspace{\Hilbr{\ambient}, [R]} - r(\dim \ambient)
    \end{equation}
    does not depend on the embedding of $R$ into a quasi-projective variety
    $\ambient$, provided that
    $R$ does not intersect the singular locus of $\ambient$.
\end{proposition}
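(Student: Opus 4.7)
The plan is to reduce the statement to a purely intrinsic invariant of $R$ via the conormal exact sequence, exploiting the hypothesis that $\Supp(R)$ misses $\Sing(\ambient)$.

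First, I would localise. Since $R$ is finite, $\Supp(R)$ consists of finitely many closed points, so we can choose an affine open $U = \Spec B \subset \ambient$ containing $\Supp(R)$ and disjoint from $\Sing(\ambient)$. By Example~\ref{ex:tangentforGorenstein}, if $I \subset B$ is the ideal of $R$ and $A = B/I$, then
\[
    \Dtangspace{\Hilbr{\ambient}, [R]} = \Homthree{B}{I}{A} = \Homthree{A}{I/I^2}{A}.
\]
Thus only the pair $(B, I)$ with $B$ smooth of dimension $n = \dim \ambient$ over $\kk$ at every point of $\Supp(R)$ matters; after possibly shrinking $U$ we may assume $B$ is smooth of dimension $n$ everywhere on $U$.

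Second, I would apply the conormal sequence to $B \onto A$. Because $B$ is smooth over $\kk$, the module $\Omega_{B/\kk}$ is locally free of rank $n$, so $\Omega_{B/\kk} \tensor_B A$ is a free $A$-module of rank $n$, of $\kk$-dimension $nr$. Applying $\Homthree{A}{-}{A}$ to
\[
    I/I^2 \to \Omega_{B/\kk}\tensor_B A \to \Omega_{A/\kk} \to 0
\]
and using \cite[Proposition~3.10]{hartshorne_deformation_theory} (exactly as in Remark~\ref{ref:cotangent:rmk}) yields a four-term exact sequence
\[
    0 \to \Homthree{A}{\Omega_{A/\kk}}{A} \to \Homthree{A}{\Omega_{B/\kk}\tensor_B A}{A} \to \Homthree{A}{I/I^2}{A} \to T^1(A/\kk, A) \to 0.
\]
Taking $\kk$-dimensions, the middle term equals $nr$ and the outer two terms depend only on the abstract $\kk$-algebra $A$, hence only on the abstract scheme $R$. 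Therefore
\[
    \dimk \Dtangspace{\Hilbr{\ambient}, [R]} - rn \;=\; \dimk T^1(A/\kk, A) - \dimk \Homthree{A}{\Omega_{A/\kk}}{A},
\]
and the right-hand side is an intrinsic invariant of $R$, proving independence of the embedding.

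The main subtlety, and the only step that uses the hypothesis on the singular locus, is the freeness of $\Omega_{B/\kk}\tensor_B A$. Without smoothness of $\ambient$ along $\Supp(R)$, the middle term in the conormal sequence need not have $\kk$-dimension $rn$, and a correction term involving the deviation of $\Omega_{B/\kk}$ from being locally free at $\Supp(R)$ would appear; then~\eqref{eq:tangentinvariant} can genuinely depend on the embedding. All other parts of the argument are routine once the reduction to a smooth affine neighbourhood is made, so I do not expect any further obstacle.
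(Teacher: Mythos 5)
Your proof is correct, but it takes a genuinely different route from the paper's. The paper argues by a chain of explicit geometric reductions: it first checks that the quantity~\eqref{eq:tangentinvariant} is unchanged when $\ambient$ is replaced by $\ambient \times \AA^m$ (a direct dimension count with the ideal $J = (I) + (\Dx_1, \ldots, \Dx_m)$), and then compares any two embeddings into equidimensional smooth varieties by passing to completions of local rings at the support, which are power series rings. Your argument instead exhibits the invariant~\eqref{eq:tangentinvariant} as an intrinsic quantity of the algebra $A = H^0(R, \OO_R)$ in one stroke: from the four-term exact sequence
\[
    0 \to \Homthree{A}{\Omega_{A/\kk}}{A} \to \Homthree{A}{\Omega_{B/\kk}\tensor_B A}{A} \to \Homthree{A}{I/I^2}{A} \to T^1(A/\kk, A) \to 0
\]
and freeness of $\Omega_{B/\kk} \tensor_B A$ of rank $n$, you read off
\[
    \dimk \Homthree{A}{I/I^2}{A} - nr = \dimk T^1(A/\kk, A) - \dimk \Homthree{A}{\Omega_{A/\kk}}{A},
\]
whose right-hand side is manifestly independent of the embedding. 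This is shorter and more conceptual, and it explains \emph{why} the statement is true (the invariant is literally the obstruction/tangent difference of the abstract deformation functor). The trade-off is that it leans on Hartshorne's cotangent machinery, whereas the paper's approach is elementary and, as the paper itself notes, ``can be easily augmented to prove other independence properties.''

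Two small points worth cleaning up. First, the paper's Example~\ref{ex:tangentforGorenstein} actually writes $\Homthree{B}{I}{B}$ where $\Homthree{B}{I}{A}$ is meant (as in Proposition~\ref{ref:tangentspaceHilb:prop}); you silently corrected this, which is right, but you should say so rather than cite it verbatim. Second, \cite[Proposition~3.10]{hartshorne_deformation_theory} and the paper's Remark~\ref{ref:cotangent:rmk} are stated for a polynomial (or power series) ring presentation, while your $B$ is only an arbitrary smooth $\kk$-algebra. This is harmless --- one can pass to the completion at each point of $\Supp R$ (which does not change $A$, nor any of the $\Hom$'s over $A$, since $A$ is Artinian), at which point $\hat B$ is a power series ring and the cited result applies verbatim; alternatively, one appeals to the presentation-independence of $T^1$. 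Either way, this one-line remark should be included, since it is the only place the cited proposition is being stretched slightly beyond its literal statement.
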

\begin{proof}
    Irreducibility of $\ambient$ is used only to assert that the dimension
    near each point is independent of the point chosen.
    \def\ccY{\mathcal{Y}}%
    \def\speceps{\Spec \kk[\varepsilon]/\varepsilon^2}%
    We give a slightly involved proof, which, however, can be easily augmented
    to prove other independence properties. The proof goes by series of
    reductions.
    We consider each reducible component of $R$ separately, hence we reduce to the
    case $R$ irreducible, supported at smooth $x\in \ambient $.

    \emph{First, we check that~\eqref{eq:tangentinvariant} is invariant
        under a change from $X$ to $X \times \AA^m$.}
%    Note that since $\ambient $ is
%    quasi-projective over $\kk$, the Hilbert scheme of $\ambient $ exists.
    Fix an embedding
    $i\colon R\into \ambient $ and $m\geq 0$. Consider $i' = (i, 0)\colon R \into \ambient  \times
    \AA^m$. We claim that
    \begin{equation}\label{eq:step}
        \Dtangspace{\Hilbr{\ambient  \times \AA^m}, [i'(R)]} = \Dtangspace{\Hilbr{\ambient },
    [i(R)]} + m\cdot(\deg R).
    \end{equation}
    Indeed, pick an neighbourhood $U = \Spec B$ of $x\in \ambient$ and
    coordinates $\Dx_i$ on $\AA^m$. Let $I
    = I(i(R)) \subset B$ and
    \[
        J = I(i'(R)) = (I) + (\Dx_1, \ldots ,\Dx_m) \subset B[\Dx_1, \ldots ,\Dx_m].
    \]
    Let $\DA = B[\Dx_1, \ldots ,\Dx_m]/J$. By
    Proposition~\ref{ref:tangentspaceHilb:prop}, an element of
    $\Dtangspace{\Hilbr{\ambient \times \AA^m}, [i'(R)]}$ corresponds to a
    homomorphism $\Homthree{B[\Dx_1, \ldots ,\Dx_m]}{J}{\DA}$. Let
    $\sspan{\Dx_1, \ldots ,\Dx_m}$ denote the $\kk$-linear span. We have a
    restriction map
    \begin{equation}\label{eq:restriction}
        \Homthree{B[\Dx_1, \ldots ,\Dx_m]}{J}{\DA} \into
        \Homthree{B}{I}{\DA} \oplus \Homthree{\kk}{\sspan{\Dx_1, \ldots
        ,\Dx_m}}{\DA}.
    \end{equation}
    The only relations between the generators of $J$ involving elements
    $\Dx_i$ have the form $\sum_i \Dx_i J \in (I)$, so the
    map~\eqref{eq:restriction} is in fact onto. Counting dimensions of both
    sides of~\eqref{eq:restriction}, we obtain
    \[
        \dimk \Homthree{B[\Dx_1, \ldots ,\Dx_m]}{J}{\DA} = \dimk
        \Homthree{B}{I}{\DA} + (\dimk \sspan{\Dx_1, \ldots ,\Dx_m})(\dimk
        \DA),
    \]
    which is precisely~\eqref{eq:step}.

    \def\Ohatone{\hat{\OO}_{\ambient, x}}%
    \def\Ohattwo{\hat{\OO}_{\ambient', x'}}%
        \emph{Second, we compare embeddings into two varieties of the same
    dimension.} Consider two embeddings $i\colon R\into \ambient$ and $i'\into
    \ambient'$, with $R$, $R'$ supported at $x$, $x'$ respectively.
    By assumption, $x\in \ambient$ and $x'\in \ambient'$ are smooth points.
    Therefore, the completions $\Ohatone$ and $\Ohattwo$ of local rings are
    isomorphic, in fact isomorphic to power series rings in $\dim n$
    variables. As in
    Proposition~\ref{ref:Gorenstein:prop}, we fix an isomorphism
    $\varphi\colon \Spec\Ohatone \to \Spec\Ohattwo$, such that $i' = \varphi\circ i$.
    Let $I \subset \Ohatone$, $I' \subset \Ohattwo$ be the ideals of $i(R)$,
    $i'(R)$ respectively, then $I' = (\varphi^{\#})^{-1}(I)$.
    Then
    \begin{align}\label{eq:tmpinvariance}
        \Dtangspace{\Hilbr{\ambient}, x}  &\simeq
        \Homthree{\OO_{i(R)}}{I}{\OO_{i(R)}}  \simeq
        \Homthree{(\varphi^{\#})^{-1}(\OO_{i(R)})}{(\varphi^{\#})^{-1}(I)}{(\varphi^{\#})^{-1}(\OO_{i(R)})}
        \simeq \\
         &\simeq
        \Homthree{\OO_{i'(R)}}{I'}{\OO_{i'(R)}} =
        \Dtangspace{\Hilbr{\ambient'}, x'}.\nonumber
    \end{align}
    \emph{Third and final, we conclude.} Choose two embeddings of $R$.
    By~\eqref{eq:step} we may assume that the ambient varieties have the same
    dimension, then~\eqref{eq:tmpinvariance} proves
    that~\eqref{eq:tangentinvariant} is equal for both spaces.
\end{proof}

\newcommand{\ArtWithBasis}{\mathcal{F}in\mathcal{S}ch\mathcal{B}ased}%
Now we briefly discuss one more natural candidate for a parameter space of
finite algebras, to conclude that it gives a topology
equivalent to the Hilbert scheme. If we recall the definition of a finite
algebra $\DA$ as a ``vector space with multiplication'', then a natural
solution to the problems with $\Artbare$ is to fix a basis.
Let
\[
    \ArtWithBasis(T) = \left\{ (\pi, \phi)\ |\ \pi\colon \ccX \to T \mbox{ finite
    flat of degree }r,\
    \phi\colon \OT^{\oplus r} \to \pi_*\OX \mbox{ isomorphism}\right\}.
\]
For every element of $\ArtWithBasis(T)$ the multiplication on $\OX$ is
translates by $\phi$ to a map $\OT^{\oplus r} \tensor \OT^{\oplus r} \to
\OT^{\oplus r}$ which gives $r^3$ structure constants. The unity of
$\OX$ translates into a vector of $r$ constants, so we obtain a map $T\to
\AA^{r^3 + r}$. Commutativity, associativity and properties of the
unity translate into algebraic equations inside $\AA^{r^3+ r}$, so that
$\ArtWithBasis$ is represented by an affine subscheme of $\AA^{r^3+r}$,
see~\cite[Proposition~1.1]{poonen_moduli_space} for details.
The functors $\ArtWithBasis$ and $\Hilbfunc$ can be compared by means of a
common refinement parameterizing finite subschemes with a fixed basis and thus
their topology is essentially the same,
see~\cite[Chapter~4]{poonen_moduli_space} for details.

\section{Base change of Hilbert schemes}

\newcommand{\ambientbasechanged}{\ambient_{\KK}}%
\newcommand{\UUbasechanged}{\UU_{\KK}}%
    The Hilbert scheme behaves well with respect to field extensions. This
    property is very important, as it enables us, for example, to reduce to an
    algebraically closed base field $\kk$.

    Let $\ambient $ be a $\kk$-scheme such that the Hilbert scheme of points
    on $\ambient$ exists. Recall, that the functor~\eqref{eq:hilbertfunc} is defined
    for $\kk$-schemes, hence the obtained scheme $\Hilb{\ambient}$
    depends on $\kk$. In this section we stress this by writing
    $\Hilb{\ambient/\kk}$ instead of $\Hilb{\ambient}$. For a field extension
    $\kk \subset \KK$ we also
    write $\kk$ and $\KK$ instead of $\Spec \kk$ and $\Spec \KK$, respectively.

\begin{proposition}\label{prop_base_change_for_Hilb_r}
    Suppose $\ambient$ is a scheme such that $\Hilb{\ambient/\kk}$ exists.
    Let $\kk \subset \KK$ be a field extension and $\ambientbasechanged :=
    \ambient  \times_{\kk} \KK$. Then
    $\Hilb{\ambientbasechanged/\KK}$ exists and
    \begin{equation}\label{eq:basechangehilb}
        \Hilb{\ambientbasechanged/\KK} \simeq \Hilb{\ambient/\kk} \times_{\kk} \KK.
    \end{equation}
    Let $\UU\to \Hilb{\ambient/\kk} \subset \Hilb{\ambient/\kk} \times_{\kk}
    \ambient$ be the universal family for $\Hilb{\ambient/\kk}$, then the universal
    family for $\Hilb{\ambientbasechanged/\KK}$ is
    \[
        \UUbasechanged = \UU \times_{\kk} \KK \subset
        \Hilb{\ambientbasechanged/\KK} \times_{\KK}
    \ambientbasechanged.
\]
   Moreover, all the relevant maps are commutative:
   \begin{equation}\label{eq:basechangediagram}
     \begin{tikzcd}
         \UUbasechanged \arrow[r]\arrow[d,hook,"cl"]& \UU\arrow[d,hook, "cl"] \\
         \Hilb{\ambientbasechanged/\KK}\times_{\KK}\ambientbasechanged
         \arrow[r]\arrow[d] & \Hilb{\ambient/\kk} \times_{\kk} \ambient
         \arrow[d] \\
         \Hilb{\ambientbasechanged/\KK} \arrow{r} & \Hilb{\ambient/\kk}
     \end{tikzcd}
 \end{equation}
   The same applies with $\Hilbfunc_{pts}$ replaced by $\Hilbfunc_{r}$.
\end{proposition}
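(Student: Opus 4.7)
The plan is to verify the isomorphism by showing that the fiber product $\Hilb{\ambient/\kk} \times_{\kk} \KK$ represents the functor $\Hilb{\ambientbasechanged/\KK}$, and then appeal to Yoneda to get uniqueness and also to identify the universal family.

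First I would unpack what it means to give an element of $\Hilb{\ambientbasechanged/\KK}(T)$ for a $\KK$-scheme $T$. By definition this is a finite flat subscheme $\ccX \subset T \times_{\KK} \ambientbasechanged$. Because $\ambientbasechanged = \ambient \times_{\kk} \KK$ and $T$ is already a $\KK$-scheme, the projection formula gives a canonical identification
\[
T \times_{\KK} \ambientbasechanged \;=\; T \times_{\KK} (\KK \times_{\kk} \ambient) \;\simeq\; T \times_{\kk} \ambient,
\]
where on the right $T$ is regarded as a $\kk$-scheme via the composite $T \to \KK \to \kk$. Under this identification, a closed subscheme finite and flat over $T$ in one presentation is the same as in the other. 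Hence $\Hilb{\ambientbasechanged/\KK}(T)$ is canonically in bijection with $\Hilb{\ambient/\kk}(T)$, for $T$ a $\KK$-scheme.

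Next I would translate this into morphisms to $\Hilb{\ambient/\kk} \times_{\kk} \KK$. By the universal property of $\Hilb{\ambient/\kk}$, the set $\Hilb{\ambient/\kk}(T)$ is in bijection with $\kk$-morphisms $T \to \Hilb{\ambient/\kk}$. On the other hand, by the universal property of the fiber product, giving a $\KK$-morphism $T \to \Hilb{\ambient/\kk} \times_{\kk} \KK$ is the same as giving a $\kk$-morphism $T \to \Hilb{\ambient/\kk}$ together with the structure morphism $T \to \KK$ (the latter being given automatically since $T$ is a $\KK$-scheme). Composing the bijections, for every $\KK$-scheme $T$ we obtain a functorial bijection
\[
\Hilb{\ambientbasechanged/\KK}(T) \;\simeq\; \Homthree{\KK}{T}{\Hilb{\ambient/\kk} \times_{\kk} \KK}.
\]
By Yoneda this proves that $\Hilb{\ambient/\kk} \times_{\kk} \KK$ represents $\Hilb{\ambientbasechanged/\KK}$, establishing both the existence of $\Hilb{\ambientbasechanged/\KK}$ and the isomorphism \eqref{eq:basechangehilb}.

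Finally, to identify the universal family I would apply the chain of bijections above to the identity morphism of $\Hilb{\ambient/\kk} \times_{\kk} \KK$. The identity corresponds (via the fiber product description) to the projection $\Hilb{\ambient/\kk} \times_{\kk} \KK \to \Hilb{\ambient/\kk}$, and this projection corresponds to the pullback of the universal family $\UU$, namely $\UU \times_{\Hilb{\ambient/\kk}} \left(\Hilb{\ambient/\kk} \times_{\kk} \KK\right) = \UU \times_{\kk} \KK$, embedded in $\Hilb{\ambient/\kk} \times_{\kk} \KK \times_{\kk} \ambient \simeq \Hilb{\ambientbasechanged/\KK} \times_{\KK} \ambientbasechanged$. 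Commutativity of the diagram \eqref{eq:basechangediagram} then follows tautologically from the construction as base change squares. The case of $\Hilbfunc_r$ is identical, since finiteness of a fixed degree $r$ is preserved under base change and detected fiberwise. The only subtle point — and the main thing to verify carefully — is that flatness, finiteness, and the closed embedding property really do transfer along the identification $T \times_{\KK} \ambientbasechanged \simeq T \times_{\kk} \ambient$, which is a standard consequence of faithfully flat descent and the stability of these properties under base change.
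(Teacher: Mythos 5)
Your argument is correct and is essentially the same as the one the paper relies on — the paper's proof is just the citation to \cite[(5) p.~112]{fantechi_et_al_fundamental_ag}, and what you write out is precisely the standard argument behind it: the canonical identification $T\times_{\KK}\ambientbasechanged \simeq T\times_{\kk}\ambient$ shows that $\Hilb{\ambientbasechanged/\KK}$ is the restriction of the functor $\Hilb{\ambient/\kk}$ to $\KK$-schemes, which is represented by $\Hilb{\ambient/\kk}\times_{\kk}\KK$, with the universal family obtained by pulling back $\UU$ along the projection. One minor remark: the closing appeal to faithfully flat descent is unnecessary, since the identification above is a canonical isomorphism of $T$-schemes, so finiteness, flatness, fixed degree and closedness of the embedding transfer tautologically rather than by descent.
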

\begin{proof}
    See~\cite[(5) pg. 112]{fantechi_et_al_fundamental_ag}.
\end{proof}

\newcommand{\RRK}{R_{\KK}}%
Let us briefly discuss, how the maps of Diagram~\ref{eq:basechangediagram}
behave at the level of points.
Let $\ambient$ be as in Proposition~\ref{prop_base_change_for_Hilb_r} and $R
\subset \ambient$ be a finite $\kk$-scheme. It corresponds to a $\kk$-point
$[R]\in \Hilb{\ambient}$. The scheme $\RRK := R \times_{\kk} \KK$ is a closed
subscheme of $\ambientbasechanged$ corresponding to a $\KK$-point $[\RRK] \in
\Hilb{\ambientbasechanged}  \simeq \Hilb{\ambient} \times_{\kk} \KK$. The
projection of $[\RRK]$ to $\Hilb{\ambient}$ is equal to $[R]$.

\section{Loci of Hilbert schemes of points}

Fix $r$ and let $\ambient $ be a $\kk$-scheme such that $\Hilbr{\ambient }$ exists. In this section we
define various loci of the Hilbert scheme of points, in particular the
Gorenstein locus.
Recall that the Hilbert
scheme of $r$ points comes with a universal finite flat family
\[
    \pi\colon \UU \to \Hilbr{\ambient}
\]
such that the fiber over a $\kk$-point $[R]\in \Hilbr{\ambient }$ is exactly
$R \subset \ambient $.

Perhaps the easiest example of a finite subscheme $R \subset \ambient $ of degree $r$ is a
tuple of $\kk$-points of $\ambient $; such subscheme is smooth. If $\ambient $ is defined over an algebraically closed
field $\kk = \kkbar$ then all finite smooth subschemes are such
tuples (if not, we also have $\kappa$-points, for $\kk \subset \kappa$
separable).
\begin{lemma}
    Let $\ambient $ be a scheme over $\kk = \kkbar$. Then a finite subscheme $R \subset
    \ambient $ is smooth over $\kk$ if and only if $R$ is a disjoint union of
    $\kk$-points.
\end{lemma}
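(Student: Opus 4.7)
The plan is to reduce to the local case and then use that a finite reduced local algebra over an algebraically closed field must be $\kk$ itself.

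First I would decompose $R = \Spec A$ using the fact that any finite $\kk$-algebra $A$ splits canonically as a finite product of local $\kk$-algebras $A \simeq A_1 \times \cdots \times A_s$ (as recalled in Chapter~\ref{sec:basicprops}). Correspondingly, $R$ is a disjoint union of finite local $\kk$-schemes $\Spec A_i$. Smoothness is preserved and reflected by this decomposition, so it suffices to prove the statement for each component, i.e.\ to show that a finite local $\kk$-algebra $(A, \mm, \kappa)$ with $\kk = \kkbar$ is smooth over $\kk$ if and only if $A \simeq \kk$.

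For the nontrivial direction, assume $(A, \mm, \kappa)$ is smooth over $\kk$. Smoothness over a field is equivalent to geometric regularity, which in dimension zero forces $A$ to be reduced even after base change to $\kkbar = \kk$; hence $A$ itself is reduced. A finite reduced local ring has $\mm = \{0\}$ (any nilpotent-free element of $\mm$ would have to be zero because $\mm$ is nilpotent in a finite local algebra), so $A = \kappa$ is a field. Since $A$ is a finite field extension of $\kk = \kkbar$, we conclude $A \simeq \kk$, so $\Spec A$ is a $\kk$-point.

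Conversely, if $R$ is a disjoint union of $\kk$-points, each component is $\Spec \kk \to \Spec \kk$, which is tautologically smooth, and disjoint unions of smooth schemes are smooth. The only step that requires any care is verifying that the smoothness of $\Spec A$ forces $A$ to be reduced; I would handle this either by invoking the Jacobian criterion applied to a presentation $A = \kk[x_1,\dots,x_n]/I$ (which, combined with $\dim A = 0$, forces $I$ to define a reduced scheme) or by citing the standard characterization \emph{smooth over a field $=$ geometrically regular}. No serious obstacle is expected; the only subtlety is not confusing "smooth over $\kk$" with "regular", which coincide here because $\kk = \kkbar$.
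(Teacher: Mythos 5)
Your proof is correct, and it proves the same statement the paper does, but via a slightly different route. The paper reduces to the local case exactly as you do (citing the product decomposition of a finite algebra), but then argues more directly: $R$ smooth over $\kk$ is equivalent to $\Omega_{A/\kk} = 0$, which (using $A/\mm = \kk$ and Nakayama) is equivalent to $\mm/\mm^2 = 0$, hence $\mm = 0$ and $A = \kk$. You instead route through regularity and reducedness: smooth $\Rightarrow$ geometrically regular $\Rightarrow$ reduced $\Rightarrow$ $\mm = 0$ (since $\mm$ is nilpotent), and then finiteness over $\kk = \kkbar$ gives $A = \kk$. Both arguments are standard and essentially equivalent in length; the cotangent-module computation is arguably the cleaner mechanism here because it avoids invoking geometric regularity, and it also foregrounds the object $\Omega_{A/\kk}$ that reappears throughout the smoothings chapter. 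Your approach would generalize more visibly to the statement over a non-algebraically-closed $\kk$ (where smooth finite $\kk$-schemes are disjoint unions of $\Spec$ of finite \emph{separable} extensions rather than $\kk$-points), which is worth noticing even though the lemma doesn't need it. One small phrasing nit: "any nilpotent-free element of $\mm$ would have to be zero" reads oddly — what you mean is that \emph{every} element of $\mm$ is nilpotent (since $\mm^N = 0$ in a finite local algebra), so reducedness forces $\mm = 0$.
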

\begin{proof}
    Smoothness is a local property, so we may assume $R$ is irreducible,
    corresponding to finite local $\kk$-algebra $(\DA, \mm, \kk)$. Then $R$ is smooth over
    $\kk$ if and only if the cotangent module $\Omega_{\DA/\kk}$ vanishes.
    This happens if and only if $\mm/\mm^2 = 0$
    if and only if $\mm = 0$ if and only if $\DA = \kk$.
\end{proof}
We now gather all smooth subschemes into a locus.
Let $\Hilbzeror{\ambient } \subset \Hilbr{\ambient }$ denote the subset of points $[R]\in
\Hilbr{\ambient }$ corresponding to smooth subschemes $R \subset \ambient $. This is
precisely the image of all smooth fibers of $\pi$, so that $\Hilbzeror{\ambient }$ is
an \emph{open} subset of $\Hilbr{\ambient }$ by~\cite[Theorem~12.1.1]{ega4-3} and we can endow
it with an open scheme structure.
\begin{definition}\label{ref:smoothablecomponent:def}
    The \emph{smoothable component} of $\Hilbr{\ambient }$ is the closure of
    $\Hilbzeror{\ambient }$ in $\Hilbr{\ambient }$. It is denoted by $\Hilbsmr{\ambient }$.
\end{definition}

\begin{example}
    If $r = 1$, then $\Hilbr{\ambient } = \ambient $ and $\pi$ is an isomorphism (see
    Example~\ref{ex:rone}), so that $\Hilbzeror{\ambient } = \Hilbsmr{\ambient } =
    \Hilbr{\ambient } = \ambient $. This shows that $\Hilbsmr{\ambient }$ may be arbitrary pathological
    (provided that $\ambient $ is): it may be nonreduced, not irreducible etc.
\end{example}

\begin{proposition}\label{ref:basechangehilb:prop}
    Let $\kk \subset \KK$ be a field extension. Then
    \begin{align}\label{eq:basechangehilbzeror}%
        \Hilbzeror{\ambientbasechanged} & \simeq  \Hilbzeror{\ambient} \times_{\kk}
        \KK,\\
        \label{eq:basechangehilbsmr}%
        \Hilbsmr{\ambientbasechanged} &  \simeq \Hilbsmr{\ambient} \times_{\kk}
        \KK.
    \end{align}
\end{proposition}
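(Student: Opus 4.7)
The plan is to deduce both equalities from Proposition~\ref{prop_base_change_for_Hilb_r}, combined with standard compatibilities of smoothness and of the scheme-theoretic image under flat base change.

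For~\eqref{eq:basechangehilbzeror}, I would recall that $\Hilbzeror{\ambient}\subseteq\Hilbr{\ambient}$ is, by construction, the maximal open locus over which the universal family $\pi\colon\UU\to\Hilbr{\ambient}$ is smooth; openness is precisely the appeal to~\cite[Theorem~12.1.1]{ega4-3} used in Definition~\ref{ref:smoothablecomponent:def}. By Proposition~\ref{prop_base_change_for_Hilb_r} and Diagram~\eqref{eq:basechangediagram}, the universal family over $\Hilbr{\ambientbasechanged/\KK}$ is the pullback of $\pi$ along the flat morphism $\Spec\KK\to\Spec\kk$. Since smoothness of a morphism is preserved under arbitrary base change and descends along faithfully flat base change, the smooth locus of the pulled-back family coincides with the preimage of the smooth locus of $\pi$. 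This gives the asserted identification of open subschemes.

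For~\eqref{eq:basechangehilbsmr}, I would interpret $\Hilbsmr{\ambient}$ as the scheme-theoretic image of the open immersion $j\colon\Hilbzeror{\ambient}\hookrightarrow\Hilbr{\ambient}$. The morphism $j$ is quasi-compact in our setting, since $\Hilbr{\ambient}$ is locally Noetherian whenever $\ambient$ is quasi-projective or affine of finite type over $\kk$. The standard fact that the formation of the scheme-theoretic image commutes with flat base change for quasi-compact morphisms then applies to the flat morphism $\Spec\KK\to\Spec\kk$, giving $\Hilbsmr{\ambientbasechanged}\simeq\Hilbsmr{\ambient}\times_{\kk}\KK$ once~\eqref{eq:basechangehilbzeror} is invoked to identify the open immersions being pushed forward.

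The main subtlety, and the step I would most want to pin down, is the choice of scheme structure on $\Hilbsmr{\ambient}$. With the reduced induced structure the statement can fail when $\KK/\kk$ is inseparable, since reducedness need not survive inseparable base change; whereas with the scheme-theoretic image structure the argument above goes through unconditionally. Fortunately the two agree whenever $\kk$ is perfect (so in particular in characteristic zero or for $\kk=\kkbar$), which covers all of the applications made later in the thesis; the careful reader should either restrict to that case, adopt the scheme-theoretic image as the definition, or pass to the reduction of both sides.
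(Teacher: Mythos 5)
Your proof is correct and takes essentially the same route as the paper: for~\eqref{eq:basechangehilbzeror}, smoothness of a finite scheme over a field is insensitive to, and detected by, base change, so the identification of Proposition~\ref{prop_base_change_for_Hilb_r} restricts to the smooth loci; for~\eqref{eq:basechangehilbsmr}, the closure commutes with base change because the ideal cutting it out does. The one place where you add something genuinely useful is the final paragraph. The paper's proof writes that the ideal sheaf $\mathcal{I}$ ``consists of functions vanishing on $\Hilbzeror{\ambient}$'' and then concludes $\mathcal{J}\simeq\mathcal{I}\otimes_{\kk}\KK$; as you observe, for that last step to hold unconditionally one must read this as the kernel of $\OO_{\Hilbr{\ambient}}\to j_*\OO_{\Hilbzeror{\ambient}}$ (the scheme-theoretic image, for which flat base change is immediate once $j$ is quasi-compact), not as the vanishing ideal of the underlying closed set (the reduced structure), since radicals do not commute with inseparable extensions. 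You have correctly identified that the definition of $\Hilbsmr{\ambient}$ should be taken with the scheme-theoretic image structure for the proposition to hold as stated, which is implicit in the paper's phrasing but worth making explicit.
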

\begin{proof}
    \def\II{\mathcal{I}}%
    \def\JJ{\mathcal{J}}%
    Let $\kk \subset \KK$ be a field extension. A scheme $R$
    is smooth if and only if $\RRK = R \times_{\kk} \KK$ is smooth, so
    Isomorphism~\ref{eq:basechangehilb} restricts to
    Isomorphism~\ref{eq:basechangehilbzeror}. The ideal sheaf $\II$ of $\Hilbsmr{\ambient}
    \subseteq \Hilbr{\ambient}$ consists of functions vanishing on
    $\Hilbzeror{\ambient}$. The ideal sheaf $\JJ$ likewise consists of
    functions vanishing on $\Hilbzeror{\ambientbasechanged}  \simeq
    \Hilbzeror{\ambient} \times_{\kk} \KK$. Therefore, $\JJ  \simeq  \II
    \otimes_{\kk} \KK \subset \OO_{\ambient} \otimes_{\kk} \KK  \simeq
    \OO_{\ambientbasechanged}$, which proves
    Isomorphism~\eqref{eq:basechangehilbsmr}.
\end{proof}

The scheme $\Hilbzeror{\ambient }$ has an explicit description, at least for
quasi-projective $\ambient $. Let $\ambient ^{r} = \ambient  \times
\ambient \times \ldots  \times \ambient $ be the $r$-fold product of $\ambient $ over $\kk$ and let
$\Delta_{ij} \subset \ambient ^r$ be the subscheme where the $i$-th and $j$-th
coordinates are equal. Let $\Delta = \bigcup_{i\neq j} \Delta_{ij}$ and
$\ambient ^{r, \circ} = \ambient ^{r} \setminus \Delta$. On $\ambient ^r$ we
have an action of the symmetric
group $\Sigma_r$ by permuting coordinates. This action
preserves $\Delta$ and restricts to a \emph{free} action on $\ambient ^{r, \circ}$.
\begin{lemma}\label{ref:smoothpartdescription:lem}
    Let $\ambient $ be quasi-projective.
    We have a natural isomorphism $\ambient ^{r, \circ}/\Sigma_r \to \Hilbzeror{\ambient }$.
\end{lemma}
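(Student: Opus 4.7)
The plan is to produce the isomorphism by constructing mutually inverse morphisms using the universal property of the Hilbert scheme of points on one side and an étale descent argument on the other. Since $X$ is quasi-projective, so is $X^r$ and hence $X^{r,\circ} \subset X^r$; the free $\Sigma_r$-action then admits a geometric quotient $X^{r,\circ}/\Sigma_r$ in the category of schemes (every $\Sigma_r$-orbit lies in an affine open, so the Chevalley–Serre-style construction applies). I will freely use this below.

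First, I would construct the morphism $X^{r,\circ}/\Sigma_r \to \Hilbzeror{X}$. The $r$ projections $p_1,\ldots,p_r \colon X^{r,\circ} \to X$ define $r$ sections $\sigma_i \colon X^{r,\circ} \hookrightarrow X^{r,\circ} \times X$ of the second factor. Let $Z \subset X^{r,\circ} \times X$ be their scheme-theoretic union. Because we have deleted all diagonals, the images of the $\sigma_i$ are pairwise disjoint, so $Z \to X^{r,\circ}$ is finite étale of degree $r$; in particular it is finite, flat, and its fibers are smooth $\kk$-schemes of degree $r$. By the representability of $\Hilbzeror{X}$ (an open subscheme of $\Hilbr{X}$), the family $Z$ induces a unique morphism $\tau \colon X^{r,\circ} \to \Hilbzeror{X}$. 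The subscheme $Z$ is manifestly invariant under the $\Sigma_r$-action (permutation of coordinates merely relabels the sections), so by uniqueness in the universal property $\tau$ is $\Sigma_r$-invariant and descends to $\bar\tau \colon X^{r,\circ}/\Sigma_r \to \Hilbzeror{X}$.

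For the inverse direction I would use the restricted universal family. Denote by $\pi^\circ \colon \UU^\circ \to \Hilbzeror{X}$ the restriction of $\pi$ to $\Hilbzeror{X}$. Every fiber is smooth of dimension $0$ and degree $r$, so $\pi^\circ$ is finite étale of degree $r$. Form the $r$-fold fibered product $(\UU^\circ)^{\times_{\Hilbzeror{X}} r}$ and remove all partial diagonals; the resulting $\mathcal{T} \to \Hilbzeror{X}$ is an étale $\Sigma_r$-torsor. Using the closed embedding $\UU^\circ \hookrightarrow \Hilbzeror{X} \times X$ followed by the projection to $X$, the $r$ factors of $\mathcal{T}$ assemble into a $\Sigma_r$-equivariant morphism $\mathcal{T} \to X^{r}$ whose image lands in $X^{r,\circ}$ since the factors are distinct by construction. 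Passing to the $\Sigma_r$-quotient yields a morphism $\iota \colon \Hilbzeror{X} \to X^{r,\circ}/\Sigma_r$.

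Finally I would verify that $\bar\tau$ and $\iota$ are mutually inverse. Both compositions are checked after passing to étale covers: $\bar\tau \circ \iota$ sends $[R] \in \Hilbzeror{X}$ to the class of the underlying $r$ points of $R$, whose scheme-theoretic union recovers $R$; conversely $\iota \circ \bar\tau$ sends an unordered tuple back to itself because the pullback of $\mathcal{T}$ along $\tau$ is tautologically $\coprod \sigma_i \cong \Sigma_r \times X^{r,\circ}$. Both verifications reduce to a functor-of-points computation on $T$-valued points that factor through a $\Sigma_r$-torsor. I expect the main technical obstacle to be the bookkeeping in this last step, in particular verifying that the étale torsor $\mathcal{T}$ produced from $\UU^\circ$ agrees with the trivial torsor $\Sigma_r \times X^{r,\circ}$ after pullback via $\tau$; the quasi-projectivity of $X$ enters only in guaranteeing that the quotient $X^{r,\circ}/\Sigma_r$ is an honest scheme so that the universal property statements make sense.
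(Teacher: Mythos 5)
Your construction of the forward map $\bar\tau\colon X^{r,\circ}/\Sigma_r\to\Hilbzeror{X}$ is exactly the paper's: you take the scheme-theoretic union of the $r$ tautological sections over $X^{r,\circ}$, observe the family is finite flat (indeed finite \'etale since the sections are disjoint), invoke the universal property, and descend to the $\Sigma_r$-quotient by uniqueness. Where you genuinely diverge is in the inverse. The paper invokes the Hilbert--Chow morphism $\Hilbr{X}\to X^{r}/\Sigma_r$ (citing FGA Explained and Mumford's GIT) and simply restricts it to $\Hilbzeror{X}$; you instead build the inverse from scratch, by forming the $\Sigma_r$-torsor of orderings $\mathcal{T}$ from the restricted universal family $\UU^\circ\to\Hilbzeror{X}$, mapping it $\Sigma_r$-equivariantly to $X^{r,\circ}$, and quotienting. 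The paper's route is shorter but leans on a nontrivial external construction (and one that is usually stated for projective $X$, so one has to be a bit careful extending it to the quasi-projective case before restricting); your route is more self-contained and only uses that $\pi^\circ$ is finite \'etale, at the cost of carrying out the descent bookkeeping you flag at the end. Both are correct. Two small remarks on your write-up: it is worth saying explicitly that $\pi^\circ$ is \'etale (not just flat with smooth zero-dimensional fibers) so that $\mathcal{T}$ really is an \'etale $\Sigma_r$-torsor, and that the $r$ points of a fiber of $\mathcal{T}$ map to distinct points of $X$ precisely because $\UU^\circ\hookrightarrow\Hilbzeror{X}\times X$ is a closed immersion; and in the final verification the key computation is that $\tau^*\mathcal{T}$ is canonically trivialized by the tautological ordering $(\sigma_1,\dots,\sigma_r)$, which is exactly what your last paragraph says in slightly compressed form.
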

\begin{proof}
    Over $\ambient ^{r}$ we have a natural degree $r$ family given as follows: let
    $\Delta_i \subset \ambient ^r \times \ambient $ be the locus where $i$-th and the last
    coordinates agree. Each projection $\Delta_i\to \ambient ^r$ is an isomorphism.
    Let $U = \bigcup \Delta_i$ and let $U^{\circ} \subset \ambient ^{r, \circ} \times
    \ambient $ be its restriction to $\ambient ^{r, \circ}$. Over $\ambient ^{r, \circ}$ all
    $\Delta_i$ are disjoint, so $U^{\circ}$ is flat of
    degree $r$ over $\ambient ^{r, \circ}$ and induces a unique map $\ambient ^{r, \circ} \to
    \Hilbzeror{\ambient }$. This map is, by uniqueness, $\Sigma_r$-invariant and so
    factors as $\ambient ^{r, \circ} \to \ambient ^{r, \circ}/\Sigma_r \to \Hilbzeror{\ambient }$.
    Now, there is a Hilbert-Chow morphism $\Hilbr{\ambient } \to \ambient ^{r}/\Sigma_r$, which sends
    a scheme $R$ to its support counted with multiplicities,
    see~\cite[Section~7.1]{fantechi_et_al_fundamental_ag}
    and~\cite[Corollary~5.10]{Mumford__GIT}.
    It restricts to a map $\Hilbzeror{\ambient } \to \ambient ^{r, \circ}/\Sigma_r$, which is the
    inverse of $\ambient ^{r, \circ}/\Sigma_r \to \Hilbzeror{\ambient }$.
\end{proof}

We now describe $\Hilbsmr{\ambient }$ for a nicely behaved scheme. We say that $\ambient $
is \emph{geometrically irreducible} if $\ambient ' = \ambient  \times_{\Spec\kk} \Spec
\kkbar$ is irreducible. Similarly we define \emph{geometrically reduced}
schemes.
A geometrically irreducible scheme $\ambient $ is automatically irreducible as an image of
irreducible $\ambient '$.
For $\kk = \kkbar$ a scheme is geometrically irreducible if and only
if it is irreducible. The $\RR$-scheme $\Spec \CC$ is irreducible, but it is
not geometrically irreducible: $\Spec
\CC \times_{\Spec \RR} \Spec \CC  \simeq \Spec \CC \sqcup \Spec \CC$.
\begin{proposition}\label{ref:smoothablecomponentdescription:prop}
    Let $\ambient $ be a geometrically irreducible, smooth scheme over $\kk$. Then
    $\Hilbsmr{\ambient }$ is geometrically reduced and geometrically irreducible for all $r$. If $\ambient $
    is additionally a quasi-projective variety, then $\Hilbsmr{\ambient }$ is a
    quasi-projective variety of dimension $r \dim \ambient $.
\end{proposition}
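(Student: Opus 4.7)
The plan is to deduce everything from the description $\Hilbzeror{\ambient} \simeq \ambient^{r,\circ}/\Sigma_r$ of Lemma~\ref{ref:smoothpartdescription:lem}, using base change to reduce to the algebraically closed case.

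First I would invoke Proposition~\ref{ref:basechangehilb:prop}: since $\Hilbsmr{\ambientbasechanged} \simeq \Hilbsmr{\ambient} \times_{\kk} \KK$ for every field extension $\kk \subseteq \KK$, the adjectives ``geometrically reduced'' and ``geometrically irreducible'' for $\Hilbsmr{\ambient}$ reduce to the corresponding non-geometric adjectives for $\Hilbsmr{\ambient \times_\kk \kkbar}$. Because smoothness and geometric irreducibility of $\ambient$ are preserved under base change to $\kkbar$, I may therefore assume $\kk = \kkbar$ and that $\ambient$ is smooth and irreducible.

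Next I would analyze $\ambient^{r,\circ}/\Sigma_r$. Being a product of smooth irreducible $\kkbar$-schemes, $\ambient^r$ is smooth and irreducible, and so is its open subscheme $\ambient^{r,\circ}$; its dimension is $r\dim\ambient$. The symmetric group $\Sigma_r$ acts freely on $\ambient^{r,\circ}$, so the quotient map $\ambient^{r,\circ}\to \ambient^{r,\circ}/\Sigma_r$ is \'etale, and the quotient is again smooth and irreducible of dimension $r\dim\ambient$. Via the isomorphism of Lemma~\ref{ref:smoothpartdescription:lem}, this shows that $\Hilbzeror{\ambient}$ is smooth, irreducible and of dimension $r\dim\ambient$, hence in particular reduced.

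Then I would pass from $\Hilbzeror{\ambient}$ to its closure $\Hilbsmr{\ambient}$. Irreducibility is preserved under taking closure in any topological space. For reducedness I would use the standard fact that the scheme-theoretic closure of a reduced open subscheme is reduced: writing $j\colon \Hilbzeror{\ambient}\hookrightarrow \Hilbr{\ambient}$, the ideal sheaf of $\Hilbsmr{\ambient}$ is $\ker(\OO_{\Hilbr{\ambient}}\to j_*\OO_{\Hilbzeror{\ambient}})$, and any nilpotent local section of the quotient sheaf restricts to $0$ on the dense reduced open $\Hilbzeror{\ambient}$, hence is already $0$ in $\OO_{\Hilbsmr{\ambient}}$. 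Thus $\Hilbsmr{\ambient}$ is reduced and irreducible of dimension $r\dim\ambient$.

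Finally, for quasi-projectivity, I would embed a quasi-projective $\ambient$ as an open subscheme of a projective $\bar{\ambient}$; by Theorem~\ref{ref:representability:thm}, $\Hilbr{\bar{\ambient}}$ is projective, and $\Hilbr{\ambient}$ is an open (hence quasi-projective) subscheme of it, so the closed subscheme $\Hilbsmr{\ambient}\subseteq \Hilbr{\ambient}$ is itself quasi-projective. The main obstacle, if any, is purely bookkeeping: one must be careful that the ``closure'' in Definition~\ref{ref:smoothablecomponent:def} is the scheme-theoretic closure (so that reducedness of $\Hilbzeror{\ambient}$ transfers), and that all the inputs --- smoothness of the free quotient, base change compatibility, geometric irreducibility of $\ambient^r$ --- really are preserved in the generality claimed; no genuinely hard argument is required.
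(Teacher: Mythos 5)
Your proof is correct and takes essentially the same route as the paper's: base change to $\kkbar$ via Proposition~\ref{ref:basechangehilb:prop}, identification of $\Hilbzeror{\ambient}$ with $\ambient^{r,\circ}/\Sigma_r$ via Lemma~\ref{ref:smoothpartdescription:lem}, and closure plus embedding in a projective compactification. The only cosmetic difference is that you deduce smoothness and reducedness of $\Hilbzeror{\ambient'}$ from the free action and the \'etale quotient map $\ambient^{r,\circ}\to \ambient^{r,\circ}/\Sigma_r$, whereas the paper invokes the tangent-space computation of Example~\ref{ex:smoothpoints} directly; both are valid and the rest of the argument coincides.
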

\begin{proof}
    By assumption $\ambient ' = \ambient  \times_{\Spec \kk} \Spec \kkbar$ is irreducible and
    smooth over
    $\kkbar$. By Proposition~\ref{ref:basechangehilb:prop} we have
    $\Hilbsmr{\ambient '} = \Hilbsmr{\ambient } \times \Spec \kkbar$ and the
    claim is that $\Hilbsmr{\ambient '}$ is reduced and irreducible. First,
    since $\ambient '$
    is smooth over $\kkbar$, each its $\kkbar$-point is smooth, so that
    $\Hilbzeror{\ambient '}$ is smooth  by Example~\ref{ex:smoothpoints}. Moreover,
    the variety $(\ambient ')^{\times r}$ is irreducible as a product of irreducible
    varieties over $\kkbar$, so also $\Hilbzeror{\ambient '}$ is irreducible
    by the proof of Lemma~\ref{ref:smoothpartdescription:lem}.
    If $\ambient $ is quasi-projective, then $\dim\Hilbzeror{\ambient '} = r \dim \ambient $ again by
    Lemma~\ref{ref:smoothpartdescription:lem}. Then $\Hilbsmr{\ambient '}$ is reduced
    and irreducible as the closure of $\Hilbzeror{\ambient '}$.
    If $\ambient $ is quasi-projective, then it is dense inside a projective
    $\bar{\ambient }$. In this case $\Hilbsmr{\ambient }$ is an open subset of
    the projective scheme $\Hilbsmr{\bar{\ambient }}$, so it is quasi-projective.
\end{proof}

\begin{example}
    In the setting of Example~\ref{ex:smoothpoints}, let $X$ be a smooth,
    geometrically irreducible scheme over $\kk$. Then for each tuple $x_1,
    \ldots ,x_r$ of $\kk$-points of $X$ and $R = \{x_1, \ldots ,x_r\}
    \subset X$, we have
    \[
        \dimk \Dtangspace{\Hilbr{\ambient}, [R]} = \sum \dimk
        \Dtangspace{\ambient, x_i} = r\cdot (\dim X) = \dim \Hilbsmr{X},
    \]
    so $[R]\in \Hilbsmr{X}$ is a smooth point.
\end{example}

\newcommand{\dualT}{\pi^{!}\OT}%
As before, we are especially interested in Gorenstein subschemes and their
families. To define them naturally, we introduce a relative version of
canonical modules (Definition~\ref{ref:canonicalmodule:def}).
\begin{definition}\label{ref:relativecanonical:def}
    The \emph{relative canonical sheaf} of a (finite flat) family $\pi\colon \ccX\to T$ is
    the $\OX$-module
    \[
       \dualT := \Homthreesh{\OT}{\pi_*\OX}{\OT},
    \]
    where the
    multiplication is by precomposition.
\end{definition}
The relative canonical sheaf is also known as the \emph{relative dualizing
sheaf}, see~\cite[Exercise~6.10]{hartshorne}
or~\cite[Tag~0BZI]{stacks_project}.
Since $\pi$ is flat and finite, the $\OT$-module $\pi_*\OX$ is
locally free, so for every section of $\OX$ there is a nonzero functional
in $\Homthreesh{\OT}{\pi_*\OX}{\OT}$ nonvanishing on this section.
Consequently, $\dualT$ is
torsion-free. For every point $t\in T$ with residue field $\kappa(t)$ we have
\[
    (\dualT)_{|t} = (\dualT)\tensor_{\OT} \kappa(t) =
    \Homthreesh{\kappa(t)}{\pi_*\OX\tensor_{\OT} \kappa(t)}{\kappa(t)} =
    \Homthreesh{\kappa(t)}{\pi_*{\OO_{\ccX|t}}}{\kappa(t)},
\]
which is precisely the canonical sheaf of
the fiber $\ccX_{|t}$, see Definition~\ref{ref:canonicalmodule:def}.
\begin{definition}\label{ref:Gorensteinfamily:def}
    A (finite flat) family $\pi\colon \ccX\to T$ is \emph{Gorenstein} if $\dualT$ is a line
    bundle.
\end{definition}
Fibers
of a Gorenstein family are Gorenstein.
Conversely, a family with Gorenstein fibers is
Gorenstein because a generator of $(\dualT)_{|t}$ may be lifted to a neighborhood of
$t$. Similarly, if $\pi\colon \ccX\to T$ is any family, then
the set $\{t\in T \ |\ (\dualT)_{|t}\mbox{ invertible}\}$ is open in $T$.
Applying the above considerations to $\UU\to \Hilbr{\ambient }$ we make the following
definition.
\begin{definition}
    The \emph{Gorenstein locus} of $\Hilbr{\ambient }$ is the open subset consisting
    of points $[R]$ corresponding to Gorenstein subschemes $R \subset \ambient $. We
    endow it with open subscheme structure and denote by $\HilbGorr{\ambient }$.
\end{definition}
Similarly, we define $\HilbGorsmr{\ambient } = \HilbGorr{\ambient } \cap \Hilbsmr{\ambient }$.
A smooth scheme is Gorenstein by
Example~\ref{ex:smoothareGorenstein}, so $\HilbGorsmr{\ambient } \supset
\Hilbzeror{\ambient }$. The Gorenstein locus and $\HilbGorsmr{\ambient}$ behave well with respect to base
change, thanks to Proposition~\ref{ref:Gorensteinbasechange:prop}.
We investigate the local structure of $\HilbGorr{\ambient }$ for $\ambient  = \mathbb{A}^n$
much more closely in Section~\ref{ssec:relativemacaulaysystems} and in
Chapter~\ref{sec:Gorensteinloci}.

We conclude this section with two fundamental results on $\Hilbr{\ambient }$ for
low-dimensional $\ambient $.
These results may be thought of as global versions of Hilbert-Burch
and Eisenbud-Buchsbaum structure theorems for ideals,
see~\cite[Chapters~8 and~9]{hartshorne_deformation_theory}, together with
connectedness arguments. We do not prove
those deep results here, but we encourage the reader to consult Fogarty's
paper, which is, in our opinion, highly enlightening.
\begin{theorem}[\cite{fogarty}]
    The Hilbert scheme of $r$ points on a smooth, geometrically irreducible
    quasi-projective surface is
    smooth and irreducible for all $r$.
    \label{ref:fogarty:thm}
\end{theorem}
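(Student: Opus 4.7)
The strategy is to prove that $\Hilbr{S}$ is everywhere smooth of pure dimension $2r$ and then invoke connectedness to deduce irreducibility. After reducing to $\kk = \kkbar$ via Proposition~\ref{prop_base_change_for_Hilb_r} (smoothness and irreducibility being geometric properties, preserved under the base change to the algebraic closure), the heart of the proof is the tangent space computation
\[
\dim_{\kk} T_{[R]}\Hilbr{S} \;=\; \dim_{\kk} \Hom_{\OO_S}(\mathcal{I}_R, \OO_R) \;=\; 2r
\]
for every finite $R\subset S$ of degree $r$ with ideal sheaf $\mathcal{I}_R$, where the first equality is Proposition~\ref{ref:tangentspaceHilb:prop}. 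This uses that $\OO_{S,p}$ is regular of Krull dimension two at every $p\in \Supp R$: by the Auslander-Buchsbaum formula, $\mathcal{I}_R$ has projective dimension one locally and admits a length-one locally free resolution (Hilbert-Burch). Applying $\Hom(-,\OO_R)$ to $0\to \mathcal{I}_R\to \OO_S\to \OO_R\to 0$ and invoking the local Serre duality $\Ext^i_{\OO_S}(\OO_R,\OO_R)\simeq \Ext^{2-i}_{\OO_S}(\OO_R,\OO_R)^\vee$, which is available on the smooth surface $S$ (whose $\omega_S$ is a line bundle) for finite length sheaves, an Euler-characteristic count collapses the long exact sequence to $\dim_{\kk}\Hom_{\OO_S}(\mathcal{I}_R,\OO_R)=2r$.

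To upgrade this upper bound on $\dim T$ to smoothness, I would establish the matching lower bound $\dim_{[R]}\Hilbr{S}\geq 2r$ directly from the Hilbert-Burch structure: $\mathcal{I}_R$ is locally the ideal of maximal minors of an $(a{+}1)\times a$-matrix over a two-dimensional smooth chart, and first-order deformations of this matrix surject onto first-order deformations of the ideal. A parameter count (with free entries in the matrix modulo the row-column action) gives a $2r$-dimensional smooth family of deformations that integrates to a genuine flat family, so the local ring at $[R]$ has Krull dimension at least $2r$. Together with $\dim T\leq 2r$, this forces each local ring of $\Hilbr{S}$ to be regular of dimension $2r$.

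Once smoothness is established, the smoothable component $\Hilbsmr{S}$ is an irreducible smooth subvariety of dimension $2r$ by Proposition~\ref{ref:smoothablecomponentdescription:prop}, and $\Hilbr{S}$ is connected by Hartshorne's connectedness theorem applied after a smooth projective compactification (or, in the quasi-projective case, by Reeves' theorem alluded to around Figure~\ref{fig:bellis}, which states that every component meets the smoothable one). A smooth connected scheme is irreducible, so $\Hilbr{S}=\Hilbsmr{S}$ and the theorem follows.

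The main obstacle will be the tangent space computation, which relies essentially on $\dim S = 2$: Auslander-Buchsbaum yields a length-one resolution only in codimension two, and the local Serre duality on a smooth surface is precisely the self-pairing that makes the alternating sum of $\Ext$-dimensions collapse to $2r$. In dimension three or higher the mechanism breaks down, which is consistent with the failure of both smoothness and irreducibility recorded elsewhere in the paper via Iarrobino's construction and the Emsalem-Iarrobino examples.
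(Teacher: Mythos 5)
The paper does not prove Fogarty's theorem at all: it is stated with a bare citation, and the surrounding text explicitly says ``We do not prove those deep results here, but we encourage the reader to consult Fogarty's paper.'' So there is no proof in the paper to compare against, and your sketch has to stand on its own.

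Your outline is broadly the right one (and is indeed close to how Fogarty's original argument and its modern reworkings go): the heart is the tangent space identity $\dim_{\kk}\Hom_{\OO_S}(\mathcal{I}_R,\OO_R)=2r$, obtained from the long exact sequence, the isomorphism $\Hom(\mathcal{I}_R,\OO_R)\simeq\Ext^1(\OO_R,\OO_R)$, local Serre duality on the smooth surface, and the vanishing of the Euler characteristic because $\OO_R$ is a torsion sheaf. That computation is correct and produces an exact value, not merely an upper bound; you might phrase it that way rather than saying ``upgrade this upper bound.''

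Two steps need more care. First, the Hilbert--Burch lower bound as you state it is only a sketch: ``first-order deformations of the matrix surject onto first-order deformations of the ideal'' and ``a parameter count \dots that integrates to a genuine flat family'' is a plausible slogan for unobstructedness, but the precise content of the relative Hilbert--Burch theorem (that the full deformation functor of $R\subset S$ is smoothly dominated by deformations of the presentation matrix) is exactly the nontrivial input, and glossing over it hides the main work. A route that avoids this entirely, using machinery already in the paper, is: by Proposition~\ref{ref:smoothablecomponentdescription:prop} the smoothable component $\Hilbsmr{S}$ is irreducible of dimension $2r$; by the tangent space identity every point of $\Hilbsmr{S}$ is therefore a smooth point of $\Hilbr{S}$, so $\Hilbsmr{S}$ is open as well as closed; and by Theorem~\ref{ref:connectednessFogarty:thm} (connectedness for projective, geometrically connected $X$) $\Hilbr{S}=\Hilbsmr{S}$. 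Second, your treatment of the quasi-projective case has a genuine gap: connectedness of $\Hilbr{\bar S}$ for a smooth projective compactification $\bar S$ does not descend to the open subscheme $\Hilbr{S}$, since open subsets of connected spaces need not be connected; and Reeves' result on components meeting the smoothable one is proved for $\PP^n$, not for arbitrary quasi-projective surfaces. The correct reduction is to first prove smoothness and irreducibility for projective $\bar S$ (via the argument above), then observe that $\Hilbr{S}$ is the open subscheme of $\Hilbr{\bar S}$ parametrizing subschemes with support in $S$, so it inherits both smoothness and irreducibility.
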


\begin{theorem}[\cite{kleppe_pfaffians,
            roig_codimensionthreeGorenstein, kleppe_roig_codimensionthreeGorenstein}]
    The Gorenstein locus of the Hilbert scheme of $r$ points on a smooth,
    geometrically irreducible quasi-projective threefold is smooth and
    irreducible for all~$r$.
    \label{ref:kleppe:thm}
\end{theorem}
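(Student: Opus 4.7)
The plan is to reduce the theorem to a local, structural statement about codimension three Gorenstein ideals, and then apply the Buchsbaum--Eisenbud structure theorem to get both smoothness and smoothability uniformly.

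First I would localize. Since smoothness of $\HilbGorr{X}$ is a local property and irreducibility of $\HilbGorr{X}$ follows once one shows every Gorenstein point lies in the smoothable component (which is irreducible by Proposition~\ref{ref:smoothablecomponentdescription:prop}), it suffices to work \'etale-locally around a point $[R] \in \HilbGorr{X}$. Writing $R = \Spec A$ and pulling back to the completion of the local ring of $X$ at each point of $\Supp(R)$, we reduce to studying finite Gorenstein quotients $A = S/I$ of a three-dimensional regular local ring $S$. Because $R$ is zero-dimensional in the three-dimensional $X$, the ideal $I$ has codimension exactly three. This is the essential input: \emph{codimension three Gorenstein ideals}, not arbitrary finite Gorenstein quotients.

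Second, I would apply the Buchsbaum--Eisenbud structure theorem: every codimension three Gorenstein ideal in a regular local ring is generated by the $2k \times 2k$ Pfaffians of a $(2k+1) \times (2k+1)$ skew-symmetric matrix $M$, and conversely the maximal Pfaffian ideal of a sufficiently generic such matrix is codimension three Gorenstein. This produces, for each point of the Gorenstein locus, an \'etale neighbourhood parameterized by an affine space of skew-symmetric matrices modulo the action of changing generators. Concretely, given $M_0$ presenting $I$, every small deformation of the Pfaffian ideal comes from deforming the entries of $M_0$, and conversely every such matrix deformation yields a flat Gorenstein deformation (by the persistence of the Buchsbaum--Eisenbud complex under flat deformation of the matrix). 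This gives a smooth morphism from an open subset of an affine space of skew-symmetric matrices to $\HilbGorr{X}$ locally around $[R]$, proving smoothness.

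Third, the same parameterization gives irreducibility: I would choose a one-parameter deformation of $M_0$ inside the space of skew-symmetric matrices to a matrix $M_1$ whose Pfaffian ideal defines a reduced tuple of smooth points on the threefold; such an $M_1$ exists because within a connected space of skew-symmetric matrices of the given size the locus yielding smooth subschemes is open and nonempty (one can explicitly build a block-diagonal example whose Pfaffians cut out $r$ distinct reduced points). This shows that $[R]$ lies in $\Hilbsmr{X}$; since $\Hilbsmr{X}$ is irreducible and $\HilbGorr{X}$ is contained in it, irreducibility of $\HilbGorr{X}$ follows, and combined with the smoothness above the theorem is proved. The main obstacle is the second step: verifying carefully that the Buchsbaum--Eisenbud parameterization is \emph{representable and smooth} over the Hilbert functor in families, i.e., that flat deformations of the Pfaffian ideal are in bijection (up to a smooth group action accounting for change of generators) with deformations of the skew-symmetric matrix. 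Once this unobstructedness statement is established rigorously, both conclusions follow by a short argument. The details of this step are exactly the content of~\cite{kleppe_pfaffians, roig_codimensionthreeGorenstein, kleppe_roig_codimensionthreeGorenstein}.
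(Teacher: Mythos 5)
The paper does not give its own proof of this theorem; it states it as a cited external result and remarks only that it is a global version of the Buchsbaum--Eisenbud structure theorem combined with connectedness arguments (see the paragraph preceding the statement and the citation of \cite[Chapters~8 and~9]{hartshorne_deformation_theory}). Your sketch fleshes out precisely that intended route — reduction to codimension-three Gorenstein ideals, unobstructedness via the skew-symmetric matrix parameterization, and smoothability by deforming the matrix to one cutting out reduced points — and you correctly identify the technical heart (that matrix deformations surject smoothly onto deformations of the Pfaffian ideal) as the content of \cite{kleppe_pfaffians, roig_codimensionthreeGorenstein, kleppe_roig_codimensionthreeGorenstein}, so the approach matches what the paper alludes to.
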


The above results generalize to arbitrary codimension for local complete intersections.
\begin{theorem}[{\cite[Theorem~3.10]{huneke_ulrich_cis}}]\label{ref:hunekeulrich:thm}
    Let $R \subset \mathbb{A}^n$ be a local complete intersection. Then $R$ is
    smoothable and $[R]\in \Hilbsmr{\mathbb{A}^n}$ is a smooth point.
\end{theorem}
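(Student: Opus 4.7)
The plan has three main steps: a reduction to the case of a single support point, a tangent-space computation via the Koszul complex, and an explicit Bertini-type smoothing. Throughout, smoothability may be tested after field extension thanks to results already established, so I may assume $\kk = \kkbar$ when convenient.

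First, by Theorem~\ref{intro_thm_equivalence_of_abstract_and_embedded_smoothings} smoothability is determined by the connected components of $R$, so I reduce to the case that $R$ is supported at a single closed point. Completing at that point, $R = \Spec \DA$ with $\DA = \DShat/(f_1, \ldots, f_n)$, where $f_1, \ldots, f_n \in \DmmS$ is a regular sequence: the number of generators is exactly $n$ since a $0$-dimensional local complete intersection in $\mathbb{A}^n$ has codimension $n$.

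Second, I would compute the Zariski tangent space at $[R]$. Because $(f_1, \ldots, f_n)$ is a regular sequence, the Koszul complex gives a free resolution of $\DA$ over $\DShat$ and, in particular, the conormal module satisfies
\[
    I/I^2 \simeq \DA^{\oplus n}.
\]
Combining Proposition~\ref{ref:tangentspaceHilb:prop} with Example~\ref{ex:tangentforGorenstein},
\[
    \Dtangspace{\Hilbr{\mathbb{A}^n}, [R]} \simeq \Homthree{\DA}{I/I^2}{\DA} \simeq \DA^{\oplus n},
\]
which has $\kk$-dimension $nr$. By Proposition~\ref{ref:smoothablecomponentdescription:prop}, $\dim \Hilbsmr{\mathbb{A}^n} = nr$ as well.

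Third, I would exhibit an explicit smoothing. Choose general linear forms $\ell_1, \ldots, \ell_n$ and consider the family
\[
    \ccX = \Spec \frac{\kk[t][\Dx_1, \ldots, \Dx_n]}{(f_1 - t\ell_1, \ldots, f_n - t\ell_n)} \longrightarrow \Spec \kk[t].
\]
The elements $f_i - t\ell_i$ form a regular sequence in $\kk[t][\Dx_1, \ldots, \Dx_n]$ (the specialization at $t = 0$ is regular and regularity is open), so after localizing at $t$ the family is flat and finite of degree $r$ over a neighborhood of $0$, with central fiber $R$. A Bertini/Jacobian computation shows that for generic $(\ell_1, \ldots, \ell_n)$ and general $t \ne 0$ the Jacobian $(\partial(f_i - t\ell_i)/\partial \Dx_j)$ has full rank at every closed point of the fiber, so the generic fiber is smooth. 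This provides an arc in $\Hilbr{\mathbb{A}^n}$ from $\Hilbzeror{\mathbb{A}^n}$ to $[R]$, placing $[R]$ in $\Hilbsmr{\mathbb{A}^n}$.

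Combining these, $[R]$ lies on the irreducible component $\Hilbsmr{\mathbb{A}^n}$ of dimension $nr$, but by Step 2 the local tangent dimension at $[R]$ is also $nr$; hence $[R]$ is a smooth point of $\Hilbr{\mathbb{A}^n}$, and the smoothable component is the unique component through $[R]$. The main obstacle I expect is the Bertini step: verifying that a \emph{generic} perturbation of a regular sequence produces an actually smooth (not merely reduced) fiber. The cleanest route is to base change to $\kkbar$ (smoothability is invariant under field extension by Proposition~\ref{ref:basechangehilb:prop} applied to $\Hilbsmr{}$) and then apply a standard Bertini argument to the jacobian ideal cut out on the incidence variety parameterizing pairs $(\ell_1, \ldots, \ell_n; \text{singular point})$, showing it has positive codimension.
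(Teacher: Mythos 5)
The thesis itself contains no proof of this statement: it is quoted directly from \cite[Theorem~3.10]{huneke_ulrich_cis} (where it is a special case of a much more general result on algebras in the linkage class of a complete intersection), so you are attempting a self-contained argument rather than reproducing one from the paper. Your tangent-space half is correct: for a local complete intersection $I/I^2$ is locally free of rank $n$ over the Artinian ring $A = \DS/I$, hence free, so $\dimk \Homthree{A}{I/I^2}{A} = nr$; combined with $[R]\in\Hilbsmr{\AA^n}$ and $\dim\Hilbsmr{\AA^n} = nr$ this forces the Hilbert scheme to be smooth of dimension $nr$ at $[R]$, lying only on the smoothable component.

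The smoothing half, however, has genuine gaps. First, the characteristic: the theorem carries no characteristic hypothesis, but perturbing by general \emph{linear} forms and invoking Bertini/generic smoothness is a characteristic-zero argument and fails in characteristic $p$. Concretely, for $R = \Spec\kk[y]/(y^{p+1})\subset\AA^1$ with $\chark = p$, every fiber of $y^{p+1} - tcy = y\,(y^{p}-tc)$ over $t\neq 0$ contains a length-$p$ non-reduced point at the $p$-th root of $tc$, so no linear perturbation ever smooths $R$ (whereas $y^{p+1}-t$ does; dually, for $y^{p}$ constants never work but linear forms do). Thus no fixed shape of perturbation works uniformly, Sard/generic smoothness is unavailable, and your proposed fix (base change to $\kkbar$ plus a standard Bertini argument) does not address this; it is exactly why the characteristic-free statement is nontrivial and cited. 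Second, the family is not well defined, let alone finite flat, as written: after completing at the support point the $f_i$ are power series, so $\ccX\subset \Spec\kk[t]\times\AA^n$ does not literally make sense, and after replacing them by polynomial generators of the ideal near the point (possible by truncation and Nakayama) the global scheme $V(f_1,\dots,f_n)$ may acquire extra, even positive-dimensional, components away from the support, so $\ccX\to\Spec\kk[t]$ need not be finite or flat of degree $r$, and ``localizing at $t$'' does not repair this. To make the construction rigorous one should pass to a formal neighbourhood, where $\kk[[t,\alpha_1,\dots,\alpha_n]]/(f_i - t\ell_i)$ is finite flat over $\kk[[t]]$ by miracle flatness plus completeness, and then invoke the thesis's reductions (Theorem~\ref{ref:goodbaseofsmoothing:thm}, Proposition~\ref{ref:smoothingcomponents:prop}, Proposition~\ref{prop_smoothability_depends_only_on_sing_type}); even then, smoothness of the generic fibre still has to be proved, which in characteristic zero is cleanest via generic smoothness of $(f_1,\dots,f_n)\colon\AA^n\to\AA^n$ with perturbation by general constants, but not by the argument as you wrote it.
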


Beyond those theorems there are almost no results of comparable generality;
see Section~\ref{ssec:examplesnonsmoothable} for some counterexamples on irreducibility.
What is true though, is that $\Hilbr{\ambient }$ is connected for every connected
projective scheme $\ambient $ over $\kk = \kkbar$.
We reproduce Fogarty's proof~\cite{fogarty} of this result. It uses unipotent group
actions.
See~\cite[Proposition~18.12]{Miller_Sturmfels} for a combinatorial approach
or~\cite[Section~4.6.5]{Sernesi__Deformations} for an induction using
Quot-schemes.

\newcommand{\Gadd}{\mathbb{G}_a}%
The affine line $\AA^1$ is an algebraic group with addition. We
denote it by $\Gadd$ to stress the group structure.
\begin{lemma}\label{ref:action:lem}
    Let $C$ be a projective curve over $\kk = \kkbar$ with a non-trivial $\Gadd$ action. Then
    there is exactly one fixed $\Gadd$ point on $C$.
\end{lemma}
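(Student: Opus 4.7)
The strategy is to reduce to the case $\tilde C\simeq\PP^1$ by normalization and then use the fact that non-trivial $\Gadd$-actions on $\PP^1$ have a unique fixed point. Throughout, I interpret ``curve'' as ``irreducible projective curve'', since the conclusion fails for reducible $C$ on which $\Gadd$ acts trivially on some component.

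First I would lift the action to the normalization $\nu\colon\tilde C\to C$. Since $\Gadd\times\tilde C$ is normal, the composition $\Gadd\times\tilde C\to\Gadd\times C\to C$ factors uniquely through $\nu$, yielding a $\Gadd$-action on $\tilde C$ making $\nu$ equivariant; the lifted action is non-trivial because $\nu$ is an isomorphism over a dense open. Thus $\tilde C$ is a smooth projective irreducible curve with a non-trivial $\Gadd$-action.

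Next I would identify $\tilde C\simeq\PP^1$. For a smooth projective curve of genus $g$, the identity component $\mathrm{Aut}^0$ is trivial if $g\geq 2$, and is an elliptic curve if $g=1$; in both cases it contains no non-trivial $\Gadd$, since elliptic curves are abelian varieties and thus contain no additive subgroup. Therefore $g=0$ and $\tilde C\simeq\PP^1$. Every non-trivial homomorphism $\Gadd\to\mathrm{PGL}_2$ is conjugate to the standard unipotent one $t\mapsto (x\mapsto x+t)$, whose only fixed point on $\PP^1$ is $\infty$. Consequently $\tilde C$ has a unique $\Gadd$-fixed point $\tilde p$, and the only $\Gadd$-invariant proper closed subsets of $\tilde C$ are $\emptyset$ and $\{\tilde p\}$.

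Finally I would descend to $C$. The image $\nu(\tilde p)$ is fixed by equivariance, giving existence. For uniqueness, if $p\in C$ is $\Gadd$-fixed, then $\nu^{-1}(p)$ is a finite $\Gadd$-invariant closed subset of $\tilde C$, hence either empty or $\{\tilde p\}$; surjectivity of $\nu$ rules out the empty option, so $p=\nu(\tilde p)$. The main obstacle is the identification $\tilde C\simeq\PP^1$, which rests on the classification of $\mathrm{Aut}^0$ for smooth projective curves of positive genus; the rest is formal, and the argument is characteristic-free.
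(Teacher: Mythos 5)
Your proof is correct, but it takes a genuinely different route from the paper's. The paper first proves \emph{existence} of a fixed point directly on $C$ without normalizing: since every nontrivial subgroup of $\Gadd$ is dense, stabilizers are $0$ or $\Gadd$, so if there were no fixed point every orbit would be a copy of $\Gadd$, hence dense and open, forcing $C\simeq\Gadd$ and contradicting projectivity. Only for \emph{uniqueness} does the paper pass to the normalization $\tilde C$, identify it as $\PP^1$ (via rationality coming from the dense $\Gadd$-orbit), and observe that $\tilde C\setminus\{c\}\simeq\AA^1$ is filled by a single dense $\Gadd$-orbit, so no other fixed point exists. You instead handle both existence and uniqueness on $\tilde C$ at once: you lift the action, force $\tilde C\simeq\PP^1$ by ruling out $g\geq 1$ through the classification of $\mathrm{Aut}^0$ of smooth projective curves, reduce to the unipotent subgroup structure of $\mathrm{PGL}_2$, and then descend via the finiteness and equivariance of $\nu$. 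Your approach is more uniform and leans on heavier standard machinery (Hurwitz finiteness, abelianness of $\mathrm{Aut}^0$ for elliptic curves, conjugacy of one-parameter unipotents in $\mathrm{PGL}_2$); the paper's is more elementary and self-contained, relying only on the density of orbits and the non-affineness of projective curves. Your flag that $C$ should be taken irreducible is apt --- the paper's proof also tacitly uses irreducibility when it asserts that each orbit is Zariski-dense in $C$, and in the application (Proposition~\ref{ref:Gaddaction:prop}) the lemma is only ever invoked on irreducible curves.
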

\begin{proof}
        Every nontrivial subgroup of $\Gadd$ is infinite, thus dense in the
        Zariski topology. Therefore the stabilizer of $x\in C$ is either
        $\Gadd$ or $0$.
        Suppose that there are no fixed points. Then the orbit of each point
        is isomorphic to $\Gadd$, thus Zariski-dense; hence it contains an
        open subset of $C$. Therefore there is only
        one orbit and $C  \simeq \Gadd$. This is a contradiction since $C$ is
        projective and $\Gadd$ has non-constant global functions.
        The uniqueness of the fixed point is a bit more subtle.
        Consider the normalization $\tilde{C} \to C$. By its universal
        property, the action of $\Gadd$ lifts to an action on $\tilde{C}$.
        Now, $\tilde{C}$ is smooth and rational over $\kkbar$, so it is isomorphic to a $\mathbb{P}^1$.
        Take a fixed point $c\in \tilde{C}$. Then $\Gadd$ acts on
        $\tilde{C}\setminus\left\{ c \right\} = \mathbb{A}^1$ with a dense
        orbit which is also isomorphic to $\mathbb{A}^1$. Hence
        $\tilde{C}\setminus\left\{ c \right\}$ coincides with this orbit and
        there are no other fixed points.
\end{proof}

\begin{proposition}\label{ref:Gaddaction:prop}
    Let $\ambient $ be a connected projective scheme over $\kk = \kkbar$ and assume that the
    group $\Gadd$ acts on $\ambient $. Then the fixed point set $\ambient ^{\Gadd}$ is
    also projective and connected.
\end{proposition}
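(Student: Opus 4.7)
The projectivity part is almost immediate: $\ambient^{\Gadd}$ is the equalizer of the action morphism $\Gadd\times\ambient\to\ambient$ and the second projection, hence a closed subscheme of $\ambient$; since $\ambient$ is projective, so is $\ambient^{\Gadd}$.

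For connectedness, I would argue by contradiction: assume $\ambient^{\Gadd}=F_1\sqcup F_2$ is a disjoint union of nonempty closed subsets. The central idea is to construct a ``retraction'' $\lambda\colon\ambient\to\ambient^{\Gadd}$ and show that the preimages $\lambda^{-1}(F_1)$ and $\lambda^{-1}(F_2)$ are closed, disjoint, and together cover $\ambient$, contradicting its connectedness. At the level of closed points the map $\lambda$ is defined as follows: for each $x\in\ambient$ the orbit closure $C_x:=\overline{\Gadd\cdot x}$ is an irreducible, $\Gadd$-stable closed subscheme of $\ambient$, either a single fixed point or a projective curve with nontrivial $\Gadd$-action; in the latter case Lemma~\ref{ref:action:lem} furnishes a unique fixed point, which I set to be $\lambda(x)$. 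Equivalently, applying the valuative criterion of properness to the orbit morphism $\phi_x\colon \Gadd\to\ambient$, $t\mapsto t\cdot x$, one obtains a unique extension $\bar\phi_x\colon\mathbb{P}^1\to\ambient$, and by $\Gadd$-equivariance (with $\Gadd$ acting on $\mathbb{P}^1$ by translation and fixing $\infty$) the point $\bar\phi_x(\infty)=\lambda(x)$ lies in $\ambient^{\Gadd}$. Clearly $\lambda$ restricts to the identity on $\ambient^{\Gadd}$, so $\lambda$ is surjective.

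The remaining, and decisive, step is to upgrade $\lambda$ from a set-theoretic map to something genuinely continuous, so that $\lambda^{-1}(F_1)$ and $\lambda^{-1}(F_2)$ are closed. My plan is to realize $\lambda$ geometrically via a proper correspondence. Let $\Gamma\subset\mathbb{P}^1\times\ambient\times\ambient$ be the closure of the action graph $\{(t,x,t\cdot x):t\in\Gadd,\,x\in\ambient\}$. Consider the proper morphism $q\colon\Gamma\to\ambient$ given by the second projection; its fiber over $x$ is the closure of the graph of $\phi_x$, hence is isomorphic to $\mathbb{P}^1$ and contains the distinguished point $(\infty,x,\lambda(x))$. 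Extracting the connected component of $\Gamma_\infty:=\Gamma\cap(\{\infty\}\times\ambient\times\ambient)$ that contains all pairs $(x,\lambda(x))$ — equivalently, applying Stein factorization to $q$ and identifying the section at $\infty$ — yields a closed subscheme $\tilde\ambient\subset\ambient\times\ambient$ mapping homeomorphically onto $\ambient$ via the first projection and into $\ambient^{\Gadd}$ via the second. The image of $\tilde\ambient$ under the second projection is then connected (as the continuous image of the connected scheme $\ambient$) and contains every fixed point, hence equals $\ambient^{\Gadd}$, giving the desired contradiction.

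The main obstacle will be this last step: as the elementary example of $\Gadd$ acting by translation on $\mathbb{P}^1$ shows, the naive limit scheme $\Gamma_\infty$ typically picks up spurious components (coming from letting $x$ and $t$ degenerate simultaneously), so one cannot simply project $\Gamma_\infty$ to obtain $\lambda$. The argument must carefully isolate the component of $\Gamma_\infty$ corresponding to fixing $x$ first and then letting $t\to\infty$; I expect this to be done either via Stein factorization of $q$, or by passing to a normalization and invoking Zariski's main theorem to identify the ``good'' component with $\ambient$. Once this is in place, the connectedness of $\ambient^{\Gadd}$ follows immediately.
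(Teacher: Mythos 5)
The projectivity part is fine. The connectedness argument has a fatal gap: the limit map $\lambda\colon\ambient\to\ambient^{\Gadd}$ you want to use is \emph{not} Zariski-continuous, even in very simple examples, and this cannot be repaired by graph closure or Stein factorization. Take $\ambient=\PP^3$ with $\Gadd$ acting by $\exp(tN)$ where $N$ has two Jordan blocks of size $2$, say $Ne_2=e_1$, $Ne_4=e_3$, $Ne_1=Ne_3=0$. The fixed locus is the line $\{x_2=x_4=0\}$, and for $(x_2,x_4)\neq(0,0)$ one computes $\exp(tN)[x_1:x_2:x_3:x_4]=[x_1+tx_2:x_2:x_3+tx_4:x_4]\to[x_2:0:x_4:0]$ as $t\to\infty$, while $\lambda$ is the identity on the fixed line. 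Along $p_\varepsilon=[1:0:0:\varepsilon]\to p_0=[1:0:0:0]$ this gives $\lambda(p_\varepsilon)=[0:0:1:0]$ for $\varepsilon\neq 0$ but $\lambda(p_0)=p_0$, so $\lambda$ jumps and $\lambda^{-1}([0:0:1:0])$ is not closed. In particular the closure of the graph of $\lambda$ --- the scheme you call $\tilde\ambient$ --- contains both $(p_0,p_0)$ and $(p_0,[0:0:1:0])$, so the first projection is not injective and $\tilde\ambient$ is not the graph of any map; there is no component of $\Gamma_\infty$ mapping homeomorphically onto $\ambient$ to extract. Note also that the conclusion you are trying to prove is genuinely specific to $\Gadd$: the same limit-map strategy, if it worked, would prove that $\Gmult$-fixed loci are connected, which is false (e.g.\ $\Gmult$ on $\PP^2$ with weights $0,1,2$ has three isolated fixed points). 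So the argument must use unipotence more than just via the existence of limits.

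The paper's proof avoids retractions altogether. Given two fixed points $x_0,x_1$, it cuts a one-dimensional subscheme $C_0$ through both by hyperplane sections, passes to the $\Gadd$-orbit closure of $[C_0]$ inside the Hilbert scheme of curves, and uses Lemma~\ref{ref:action:lem} to replace $C_0$ by a $\Gadd$-\emph{invariant} one-dimensional $C_1$ still containing $x_0,x_1$. It then applies Lemma~\ref{ref:action:lem} once more to each irreducible component of (a minimal chain inside) $C_1$: each such component carries at least two $\Gadd$-fixed points, hence $\Gadd$ acts trivially on it, giving a chain in $\ambient^{\Gadd}$ from $x_0$ to $x_1$. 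The reason the argument must be run through honest $\Gadd$-invariant curves rather than through orbit limits is precisely the failure of continuity of $\lambda$.
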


\begin{proof}
    Since $\Gadd$ acts Zariski-continuously on $\ambient $, $\ambient ^{\Gadd}$ is
    Zariski-closed
    in $\ambient $, thus projective.  Choose two $\Gadd$-invariant points $x_0,
    x_1\in \ambient $. We will find a chain of curves in $\ambient ^{\Gadd}$ linking those
    points, which will prove connectedness.

    By taking successive hyperplane sections through $x_0$, $x_1$ we find
    a dimension one subscheme $C_0 \subset \ambient $ containing these points.
    Consider the point $[C_0]$ on the Hilbert scheme of
    curves (which exists and is projective,
    see~\cite{fantechi_et_al_fundamental_ag}) and the projective curve
    $C' = \overline{\Gadd[C_0]}$.  By Lemma~\ref{ref:action:lem}, this curve
    has a fixed point $[C_1]$, corresponding to a one-dimensional $C_1
    \subset \ambient $. Each point $x_i$ is $\Gadd$-fixed, thus it lies on each
    element of $\Gadd[C_0]$, hence also on $C_1$. Summarizing, we have a
    one-dimensional $C_1$ which is preserved under the action of $\Gadd$
    and links $x_0, x_1$.

    We replace $C_1$ by its reduction. The group
    $\Gadd$ is connected, hence it acts on each irreducible component of
    $C_1$ and preserves the intersections of components.
    Let $C_1^i$ be the components and assume that $C_1^1 \ldots,
    C_1^m$ give the shortest (in terms of number of irreducible curves)
    path from $x_0$ to $x_1$. On each $C_1^i$ we see at
    least two points from the set $\{C_1^i \cap C_1^j\}_{j\neq i} \cup
    \{x_0, x_1\}$. Then $\Gadd$ has two invariant points on $C_1^i$ and
    thence by Lemma~\ref{ref:action:lem} it acts trivially. The chain of
    curves $C_1^1\cup  \ldots C_1^m$ is contained in $\ambient ^{\Gadd}$ and gives
    the required link.
\end{proof}

\begin{proposition}\label{ref:connectedforfinite:prop}
    Assume $\kk = \kkbar$ and consider a finite local $\kk$-algebra
    $(\DA, \mm, \kk)$ of degree $e$.
    The scheme $\Hilbr{\Spec A}$ is a connected closed subscheme of the
    Grassmannian $\Grass(e-r, \DA)$.
\end{proposition}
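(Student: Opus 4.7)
My plan is to treat the two assertions separately.

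For the closed embedding into $\Grass(e-r, \DA)$, I would argue functorially: a $T$-point of $\Hilbr{\Spec \DA}$ corresponds to a subsheaf $\mathcal{I} \subset \DA \otimes_\kk \OT$ whose quotient is locally free of rank $r$. Since $\DA \otimes_{\kk} \OT$ itself is locally free of rank $e$, such $\mathcal{I}$ is automatically a rank-$(e-r)$ subbundle, giving a natural, injective transformation to $\Grass(e-r, \DA)$. Its image consists of those subbundles $\mathcal{W}$ with $\DA \cdot \mathcal{W} \subset \mathcal{W}$, which is cut out by the vanishing of the composite $\OT$-linear map $\DA \otimes_{\kk} \mathcal{W} \to \DA \otimes_{\kk} \OT \to (\DA \otimes_{\kk} \OT)/\mathcal{W}$ — a closed condition on the Grassmannian. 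Hence $\Hilbr{\Spec \DA}$ is a closed subscheme.

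For connectedness I would proceed by induction on $e = \dimk \DA$. The case $e = 1$ is vacuous, and the extreme values $r \in \{0,e\}$ give single points. For $r = e-1$, a codim-$(e-1)$ ideal is a one-dimensional subspace $\kk a$; the ideal condition $\mm a \subset \kk a$, together with Nakayama's lemma ruling out $\mm a = \kk a$, forces $\mm a = 0$ and hence $a \in \soc \DA$. Thus the underlying set of $\Hilbarged{e-1}{\Spec \DA}$ is $\mathbb{P}(\soc \DA)$, which is connected.

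The core case is $1 \leq r \leq e-2$, which I would handle through a cover indexed by $\mathbb{P}(\soc\DA)$. For any nonzero $m \in \soc\DA$, the surjection $\pi\colon \DA \twoheadrightarrow \DA/(m)$ induces a closed embedding $\Hilbr{\Spec \DA/(m)} \hookrightarrow \Hilbr{\Spec \DA}$ sending $J$ to $\pi^{-1}(J)$, with image $U_m = \{[I] : m \in I\}$ connected by the inductive hypothesis applied to the degree-$(e-1)$ algebra $\DA/(m)$. Because the socle meets every nonzero ideal (the discussion below Definition~\ref{ref:socle:def}), the sets $U_m$ cover $\Hilbr{\Spec \DA}$ as $[m]$ ranges over $\mathbb{P}(\soc \DA)$. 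If $\dim \soc \DA = 1$ this cover collapses to a single $U_m$ and we are done; otherwise, for linearly independent $m_1, m_2 \in \soc\DA$, the intersection $U_{m_1} \cap U_{m_2}$ is the image of $\Hilbr{\Spec \DA/(m_1, m_2)}$, which is nonempty since $\deg \DA/(m_1, m_2) = e-2 \geq r$ and connected by induction. A topological space covered by connected subsets any two of which share a point is itself connected, closing the induction. I expect the most delicate bookkeeping to be checking that $J \mapsto \pi^{-1}(J)$ defines a scheme-theoretic closed embedding with image exactly $U_m$ (and the analogous statement for the two-generator quotient); this should follow formally by unwinding the functors of points and observing that an ideal of $\DA$ contains $m$ iff it descends through $\pi$.
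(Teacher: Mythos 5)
Your proposal is correct but takes a genuinely different route from the paper on the central point, connectedness.

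For the closed-embedding claim the two arguments are close cousins: you cut out the image in $\Grass(e-r,\DA)$ explicitly by the vanishing of the multiplication composite, whereas the paper notes that $\Spec A$ is projective over $\kk$, invokes Theorem~\ref{ref:representability:thm} to deduce that $\Hilbr{\Spec A}$ is projective, and concludes that the image of the (monomorphic) map to the Grassmannian is closed. Your version is more self-contained; the paper's is shorter given the machinery already in place.

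For connectedness the approaches diverge. The paper identifies $\Hilbr{\Spec A}$ with the fixed locus $\Grass(e-r,A)^{1+\mm}$ and then exploits the composition series $1+\mm \supset 1+\mm^2 \supset \cdots$ with successive quotients isomorphic to sums of $\Gadd$, applying Proposition~\ref{ref:Gaddaction:prop} (that the $\Gadd$-fixed locus of a connected projective scheme is connected) repeatedly. Your argument is instead an induction on $\dimk A$, covering the Hilbert scheme by the closed subsets $U_m$ of ideals through a socle element $m$, observing that each $U_m$ is a copy of $\Hilbr{\Spec A/(m)}$ (connected by induction) and that any two $U_{m_1}$, $U_{m_2}$ overlap in $\Hilbr{\Spec A/(m_1,m_2)}$, which is nonempty for $r\le e-2$ and again connected by induction. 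Both are correct; yours is more elementary and avoids the group-theoretic apparatus, at the cost of a case split at $r=e-1$ where the $U_m$ are singletons and the overlap argument degenerates --- you handle that separately via $\mathbb{P}(\soc A)$, which is fine. The paper's route has the virtue that the $\Gadd$-fixed-locus lemma is reused for the connectedness of $\Hilbr{X}$ for general projective $X$ (Theorem~\ref{ref:connectednessFogarty:thm}), so the investment amortizes; your induction is tailored to the local-algebra case. One small thing worth saying explicitly in a written-up version: since $\kk=\kkbar$ and the scheme is of finite type, connectedness of the underlying topological space is equivalent to connectedness of the $\kk$-point set, which is what your covering argument directly addresses.
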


Here, $\Grass(e-r, \DA)$ parameterizes $e-r$-dimensional $\kk$-subspaces of $\DA$.

\begin{proof}
    A subscheme of $A$ is just an ideal $I \subset A$ and an ideal is just
    a vector space preserved by multiplication by $\mm$. The map
    $i\colon \Hilbr{\Spec A}\to \Grass(e-r, A)$ maps this ideal to the associated
    vector space and it is an embedding. Since $\Spec A$ is finite over $\kk$, it
    is projective over $\kk$, so the scheme $\Hilbr{\Spec A}$ is
    also projective by Theorem~\ref{ref:representability:thm}. Hence
    the image of $i$ is closed.
    It remains to prove connectedness. We will use
    Proposition~\ref{ref:Gaddaction:prop} for $\Grass(e-r,A)$, which is
    projective and connected. A vector space $V \subset A$ is an ideal iff
    it is preserved by the action of the group $1+\mm$, so that
    \[\Hilbr{\Spec A}=\Grass(e-r,A)^{1+\mm}.\]
    The group $1+\mm$ has a composition series
    \[
        1 + \mm \supset 1 + \mm^2 \supset 1 + \mm^3 \supset  \ldots \supset 1
        + \mm^e = \{1\}
    \]
    with quotients
    isomorphic to direct sums of $\Gadd$ and connectedness of the fixed locus follows
    from Proposition~\ref{ref:Gaddaction:prop}.
\end{proof}

A mentioned before, for a projective scheme $\ambient $ there
exists~(\cite[Section~7.1]{fantechi_et_al_fundamental_ag} and \cite[Corollary~5.10]{Mumford__GIT}) the Hilbert-Chow
morphism
\begin{equation}\label{eq:hilbertchow}
    \rho:\Hilbr{\ambient }\to \ambient ^{\times r}/\Sigma_r
\end{equation}
which maps a scheme $R$
to its support counted with multiplicities. Note that $\rho$ is proper as
a morphism between projective schemes.
Recall that $\ambient $ is \emph{geometrically connected} if and only if
$\ambient \times_{\Spec \kk} \Spec \kkbar$ is connected.
We need a small topological lemma.
\begin{lemma}\label{ref:topologyconnected:lem}
    Let $f\colon X\to Y$ be a closed map of topological spaces with $Y$
    connected and fibers of $f$ connected. Then $X$ is connected.
\end{lemma}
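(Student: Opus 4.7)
The plan is a standard point-set topology argument by contradiction, assuming $f$ is surjective (as it will be in the intended application to the Hilbert--Chow morphism restricted to a fiber of a suitable map, or we implicitly take $Y = f(X)$; note that $f(X)$ is closed in $Y$ by closedness of $f$, but connectedness transfers only under surjectivity, so I shall assume fibers are non-empty).

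First I would suppose for contradiction that $X$ admits a disconnection $X = U \sqcup V$ into two non-empty disjoint open (equivalently clopen) subsets. Then for every $y \in Y$ the fiber $f^{-1}(y)$ splits as the disjoint union $(f^{-1}(y) \cap U) \sqcup (f^{-1}(y) \cap V)$ of two sets that are open in $f^{-1}(y)$. Since the fiber is connected (and non-empty), exactly one of these two sets is empty, so the fiber lies entirely in $U$ or entirely in $V$.

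Next I would translate this into a decomposition of $Y$. Setting $Y_U = f(U)$ and $Y_V = f(V)$, the previous step shows that these two subsets of $Y$ are disjoint and cover $Y$. Now I invoke the closedness hypothesis on $f$: since $U$ and $V$ are each closed in $X$ (being complements of each other in $X$), their images $f(U)$ and $f(V)$ are closed in $Y$. Thus $Y = Y_U \sqcup Y_V$ is a partition into two disjoint closed sets, both non-empty since $U$ and $V$ are non-empty and $f$ is surjective on them (fibers being non-empty). This contradicts the connectedness of $Y$ and finishes the proof.

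The only subtlety, and what I would flag as the main point to get right, is the handling of fibers: we need that the projection of the partition $X = U \sqcup V$ down to $Y$ is itself a partition, which requires precisely that no fiber meets both $U$ and $V$, i.e., uses connectedness of fibers. The closed map hypothesis is used only once, to ensure that the images $f(U)$ and $f(V)$ are closed in $Y$ (so that we indeed produce a disconnection of $Y$ rather than merely a set-theoretic partition). No harder point-set or categorical input is needed.
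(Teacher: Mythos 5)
Your proof is correct and takes essentially the same route as the paper's: both argue by contradiction from a disconnection $X = Z_1 \sqcup Z_2$, use closedness of $f$ to conclude $f(Z_1)$, $f(Z_2)$ are closed, and use both connectedness hypotheses to reach a contradiction. The only difference is cosmetic — the paper uses connectedness of $Y$ first (to force the two images to intersect) and lands the contradiction on a disconnected fiber, while you use connectedness of fibers first (to force the images to be disjoint) and land the contradiction on a disconnected $Y$; your explicit remark that surjectivity is implicitly required is a fair observation that the paper leaves tacit.
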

\begin{proof}
    Suppose $X$ is disconnected, $X = Z_1 \sqcup Z_2$ for nonempty $Z_i$ open and
    closed. The images $\varphi(Z_1)$, $\varphi(Z_2)$ are closed and
    $\varphi(Z_1) \cup \varphi(Z_2) = Y$. Since $Y$ is connected, there is a
    point $y\in Y$ belonging to both $\varphi(Z_i)$. Then the fiber
    $F = f^{-1}(y)$ is covered by nonempty disjoint closed subsets $Z_i\cap
    F$, hence is not connected; a contradiction.
\end{proof}
\begin{theorem}\label{ref:connectednessFogarty:thm}
    Let $\ambient $ be a geometrically connected projective scheme. Then
    $\Hilbr{\ambient }$ is also connected.
\end{theorem}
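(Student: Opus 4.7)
The plan is to apply the topological Lemma~\ref{ref:topologyconnected:lem} to the Hilbert--Chow morphism $\rho\colon\Hilbr{\ambient}\to \ambient^{\times r}/\Sigma_r$, after first reducing to an algebraically closed base field. Since $\rho$ is a morphism between projective $\kk$-schemes it is automatically proper, hence closed, so Lemma~\ref{ref:topologyconnected:lem} applies provided the target is connected and the fibers are connected. The result then follows from the two facts that products and continuous images of connected spaces are connected, together with Proposition~\ref{ref:connectedforfinite:prop}, which handles connectedness of the fibers.

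First I would reduce to the case $\kk=\kkbar$. Geometric connectedness of $\ambient$ means by definition that $\ambient_{\kkbar}=\ambient\times_{\kk}\Spec\kkbar$ is connected. By Proposition~\ref{prop_base_change_for_Hilb_r} we have $\Hilbr{\ambient_{\kkbar}}\simeq \Hilbr{\ambient}\times_{\kk}\Spec\kkbar$, and since $\Spec\kkbar\to\Spec\kk$ is surjective, any disconnection of $\Hilbr{\ambient}$ would pull back to a disconnection of $\Hilbr{\ambient_{\kkbar}}$; so it suffices to treat $\kk=\kkbar$ with $\ambient$ connected.

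Next I would verify connectedness of the target. Since $\ambient$ is connected over $\kkbar$, the product $\ambient^{\times r}$ is connected (products of connected $\kkbar$-schemes are connected), and the quotient $\ambient^{\times r}/\Sigma_r$ is connected as a continuous image of a connected space under the quotient map. For the fibers: a $\kkbar$-point of $\ambient^{\times r}/\Sigma_r$ corresponds to a formal sum $\sum_{i=1}^s r_i\,p_i$ with distinct $p_i\in\ambient(\kkbar)$ and $\sum r_i=r$. A subscheme $R\subset\ambient$ mapping to this point decomposes uniquely as a disjoint union of its local components at the $p_i$, so the fiber of $\rho$ equals $\prod_{i=1}^{s}\Hilbarged{r_i}{\Spec \OO_{\ambient,p_i}}^{\{p_i\}}$. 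Any ideal $I\subset\OO_{\ambient,p_i}$ of colength $r_i$ satisfies $\mathfrak{m}_{p_i}^{r_i}\subseteq I$ (because $\OO_{\ambient,p_i}/I$ is a local ring of degree $r_i$, whose maximal ideal is then nilpotent of index at most $r_i$), so the fiber is canonically identified with $\prod_{i=1}^{s}\Hilbarged{r_i}{\Spec(\OO_{\ambient,p_i}/\mathfrak{m}_{p_i}^{r_i})}$, a product of Hilbert schemes of points on finite local $\kkbar$-algebras.

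Each factor is connected by Proposition~\ref{ref:connectedforfinite:prop}, hence the fiber is connected as a product of connected schemes. Together with properness of $\rho$ and connectedness of $\ambient^{\times r}/\Sigma_r$, Lemma~\ref{ref:topologyconnected:lem} yields that $\Hilbr{\ambient_{\kkbar}}$ is connected, and the base-change reduction finishes the proof. The main subtlety I expect is bookkeeping around the fiber description: one must argue, working with $\kkbar$-points, that the set-theoretic fiber has the claimed product structure and that the containment $\mathfrak{m}_{p_i}^{r_i}\subseteq I$ reduces the local punctual Hilbert scheme to the situation of Proposition~\ref{ref:connectedforfinite:prop}, which requires that $\ambient$ be only a scheme (not necessarily smooth or irreducible), since $\OO_{\ambient,p_i}/\mathfrak{m}_{p_i}^{r_i}$ is a finite local $\kkbar$-algebra regardless.
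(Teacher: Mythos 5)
Your proof is correct and follows essentially the same route as the paper's: reduce to $\kk=\kkbar$ by base change, apply the topological Lemma~\ref{ref:topologyconnected:lem} to the proper Hilbert--Chow morphism, and reduce connectedness of a fiber to Proposition~\ref{ref:connectedforfinite:prop} via the observation that a colength-$r_i$ ideal at $p_i$ contains $\mathfrak{m}_{p_i}^{r_i}$ (the paper uses the coarser bound $\mathfrak{m}_{p_i}^{r}$, but this is immaterial). The only noteworthy detail you handle more explicitly than the paper is the base-change reduction, and your citation of Proposition~\ref{prop_base_change_for_Hilb_r} is the correct one.
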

\begin{proof}
    By assumption, $\ambient ' = \ambient  \times_{\Spec \kk} \Spec \kkbar$ is connected.
    By Proposition~\ref{ref:basechangehilb:prop} it is enough to show that $\Hilbr{\ambient '}$ is
    connected. Hence we may assume $\kk = \kkbar$, $\ambient = \ambient'$.

    Clearly, $\ambient ^r$ and $\ambient ^r/\Sigma_r$ are connected and $\rho$
    is proper, so $\rho$ is closed. By Lemma~\ref{ref:topologyconnected:lem}, it is
    enough to show that the fibers of $\rho$ are connected.
    Pick a point $P\in \ambient ^r/\Sigma_r$ and its preimage in $\ambient ^r$; suppose that
    points $x_1, \ldots ,x_k$ appear in this preimage with multiplicities
    $n_1, \ldots, n_k$.
    The fiber of $\rho^{-1}(P)$ is the set of schemes whose
    components are supported of $x_i$ with multiplicity $n_i$. Thus is
    equal, at the level of topological spaces, to $\prod_i
    \Hilbarged{n_i}{\Spec \OO_{\ambient }/\mm_{x_i}^{r}}$, which is connected by
    Proposition~\ref{ref:connectedforfinite:prop}.
\end{proof}

\section{Relative Macaulay's inverse
systems}\label{ssec:relativemacaulaysystems}
\newcommand{\DST}{\DS_{T}}%
\newcommand{\DSTt}{\DS_{T, t}}%
\newcommand{\DPT}{\hat{\DP}_{T}}%
\newcommand{\DPTpoly}{\DP_{T}}%
\newcommand{\OTt}{\OO_{T, t}}%
\newcommand{\DSTF}{\DST\,\mathcal{F}}%
\newcommand{\Fsheaf}{\mathcal{F}}%
In this section we generalize Macaulay's inverse systems to cover reducible
subschemes of $\mathbb{A}^n = \Spec \DS$ and their families.
We prove in Proposition~\ref{ref:globaldescription:prop} that every finite flat
family is locally equal to $\Apolar{F_1, \ldots
,F_r}$, where $F_i$ are power series with varying coefficients. In case the family has Gorenstein fibers
it is locally equal to $\Apolar{F}$, see
Corollary~\ref{ref:localdescriptionGorenstein:cor}. This is the relative version of Macaulay's
Theorem~\ref{ref:MacaulaytheoremGorenstein:thm} and this gives an alternative
description of the local structure of $\Hilbr{\AA^n}$.

When all the fibers are supported at the origin, the considered power series
are in fact polynomials. This special case first appeared
in~\cite[Proposition~18]{emsalem}, unfortunately without a proof. We
follow~\cite{jelisiejew_VSP}.

Let $T$ be a $\kk$-scheme. We define quasi-coherent sheaves
\[\DST := \OT \tensor_{\kk} \DS\quad \mbox{and} \quad\DPT :=
\Homthreesh{\OT}{\DST}{\OT}.\] They
play the roles of $\DShat$ and $\DP$ from
Section~\ref{ssec:Gorensteininapolarity} respectively, but there is a slight
change: in Section~\ref{ssec:Gorensteininapolarity} the ring $\DShat$ is a
power series ring and here it is a sheaf of polynomial rings, see
Example~\ref{ex:relativerings}. Also, in
Section~\ref{ssec:Gorensteininapolarity} the space $\DP$ is some subspace of
functional on $\DShat$ and here $\DPT$ is the space of all functionals on
$\DST$.

We have an action of $\DST$ on $\DPT$
by precomposition; for a section $f\in H^0(U, \DPT)$ and $s, t\in H^0(U,
\DST)$ we have $(s\hook f)(t) = f(st)$. Let $\DmmS \subset \DS$ denote
the ideal of the origin.
We define $\DPTpoly \subset \DPT$ as the subsheaf of functionals which are
locally polynomials. More precisely, for open $U \subset T$, a point $t\in U$, and $f\in
H^0(U, \DPT)$ let $f_t$ denote the image of $f$ in
$\Homthreesh{\OTt}{\DSTt}{\OTt}$. We define the subsheaf $\DPTpoly$ on
sections by
\begin{equation}\label{eq:DPTpolydef}
    H^0(U, \DPTpoly) = \left\{ f\in H^0(U, \DPT)\ |\
    \forall_{t\in T} \forall_{D\gg 0} \ip{\DmmS^{D}}{f_t} = 0 \right\}.
\end{equation}
\begin{example}\label{ex:relativerings}
    If $T = \Spec A$ is affine, then we have, after choosing coordinates,
    \[
        H^0(T, \DST) = A[\Dx_1, \ldots , \Dx_n],\quad
        H^0(T, \DPT) = A[[x_1, \ldots ,x_n]]\quad\mbox{and}\quad H^0(T,
        \DPTpoly) = A[x_1, \ldots ,x_n].
    \]
%    Note a difference with Section~\ref{ssec:Gorensteininapolarity}: here
%    $\DST$ is a polynomial ring and $\DPT$ is the set of all functionals on
%    $\DST$, so, as a linear space, it is identified with the power series ring.
\end{example}

For a subsheaf $\Fsheaf \subset \DPT$ by $\Ann(\Fsheaf) \subset \DST$ we denote
its annihilator. We now introduce the relative version of apolar algebras
(Definition~\ref{ref:apolar:def}).
\begin{definition}
    Let $T$ be scheme and let $\Fsheaf \subset \DPT$ be a
    quasi-coherent subsheaf. The
    \emph{apolar family} of
    $\Fsheaf$ is a subscheme
    \[
        \Apolar{\Fsheaf} := \Speccal_T\,\left(\DST/\Ann(\Fsheaf)\right) \subset T
        \times \Spec \DS
    \]
    with a canonical projection
    $\pi_{\Fsheaf}\colon\Apolar{\Fsheaf}\to T$.
\end{definition}

The morphism $\pi_{\Fsheaf}$ is affine by construction, but it need not be finite or flat. The following
notion guarantees these properties.
\begin{definition}\label{ref:flatness:def}
    Suppose that $T$ is locally Noetherian.
    The sheaf $\Fsheaf \subset \DPT$ is \emph{finitely flatly embedded} if
    $\DSTF$ is a finitely generated $\OT$-module and the sequence
    \[
        0 \to \DSTF \to \DPT \to \DPT/(\DSTF) \to 0
    \]
    is a locally split sequence of $\OT$-modules.
\end{definition}
    Definition~\ref{ref:flatness:def} is similar to the definition of a
    subbundle of a vector bundle, where we require that the cokernel is locally free.
    Example~\ref{ex:notionofflatness} provides a family which is finite flat but not
    finitely flatly embedded.

    \begin{lemma}\label{ref:flatness:lem}
        If $T$ is locally Noetherian and $\Fsheaf \subset \DPT$ is
        finitely flatly embedded, then $\DSTF$ and $\DST/\Ann(\Fsheaf)$
        are flat over $T$. The morphism $\pi_{\Fsheaf}$ is
    finite and flat.
    \end{lemma}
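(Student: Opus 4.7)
The statement splits into two claims: that $\DSTF$ is $\OT$-flat, and that $\DST/\Ann(\Fsheaf)$ is locally free of finite rank over $\OT$. From the latter, finiteness and flatness of $\pi_{\Fsheaf} = \Speccal_T(\DST/\Ann(\Fsheaf))\to T$ follow immediately, since $\pi_{\Fsheaf}$ is the relative spectrum of a finite flat $\OT$-algebra. I will establish the two claims in turn.

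For the first, I will observe that locally on $T$ the sheaf $\DPT$ is a power series ring $\OT[[x_1,\ldots,x_n]]$, which is flat over $\OT$ under the locally Noetherian hypothesis (being the $(x_1,\ldots,x_n)$-adic completion of the free $\OT$-algebra $\OT[x_1,\ldots,x_n]$). Since $\DSTF$ is by assumption locally a direct summand of $\DPT$, it inherits flatness, and being also finitely generated over a Noetherian base, it is locally free of some finite rank $r$.

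For the second claim, the plan is to consider the $\OT$-linear map induced by the natural evaluation pairing $\DST\otimes_{\OT}\DPT\to\OT$,
\[
\alpha\colon \DST\to\Hom_{\OT}(\DSTF,\OT),\qquad \sigma\mapsto\bigl(g\mapsto g(\sigma)\bigr).
\]
A short computation using $\DST$-stability of $\DSTF$ will identify $\ker\alpha$ with $\Ann(\Fsheaf)$, so $\alpha$ descends to an injection $\bar\alpha\colon \DST/\Ann(\Fsheaf)\hookrightarrow \Hom_{\OT}(\DSTF,\OT)$ into a locally free sheaf of rank $r$. It will then suffice to show $\bar\alpha$ is surjective, for then $\DST/\Ann(\Fsheaf)$ is itself locally free of rank $r$ and the lemma follows.

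The hard part will be the surjectivity of $\alpha$. The na\"ive approach --- extend a functional $\phi\colon\DSTF\to\OT$ via the local splitting to a functional on $\DPT$ and try to recognize it as an element of $\DST$ --- will not work, because $\Hom_{\OT}(\DPT,\OT)$ is strictly larger than $\DST$ in general (the linear dual of an infinite product is not a direct sum of duals). Instead, I plan to argue fiberwise and conclude by Nakayama: the cokernel of $\alpha$ is a quotient of the locally free sheaf $\Hom_{\OT}(\DSTF,\OT)$, hence finitely generated, so it suffices to show $\alpha\otimes_{\OT}\kappa(t)$ is surjective at each $t\in T$. Setting $\kappa=\kappa(t)$ and $V = \DSTF\otimes_{\OT}\kappa\subset \kappa[[x]]$, the fiber is the map $\DST\otimes_{\OT}\kappa\to V^{\vee}$ induced by the restricted pairing, which is nondegenerate on both sides: on the polynomial side by the definition of the annihilator, and on the $V$-side because any nonzero element of $\kappa[[x]]$ has a nonzero coefficient, detected by pairing with the corresponding monomial. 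Since $V$ is finite-dimensional and the image $R$ of this map acts faithfully on $V$ (so is also finite-dimensional), a dimension count will force $R\xrightarrow{\sim}V^{\vee}$, yielding the desired fiberwise surjection.
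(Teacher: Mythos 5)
Your proposal follows the same overall route as the paper: identify $\DST/\Ann(\Fsheaf)$ with the finite locally free module $\Hom_{\OT}(\DSTF,\OT)$ via the pairing, and then read off flatness and finiteness. Where you diverge is in \emph{justifying} that the pairing map $\alpha\colon\DST\to\Hom_{\OT}(\DSTF,\OT)$ is surjective. The paper's proof simply asserts that the composite $\DST\to\Hom_{\OT}(\DPT,\OT)\to\Hom_{\OT}(\DSTF,\OT)$ is surjective; this is terse, since the first arrow (the biduality map for the infinitely generated free module $\DST$) is far from surjective, so the surjectivity of the composite is not automatic from the surjectivity of the restriction map. You correctly flag this issue and supply a genuine argument: finite generation of the cokernel plus Nakayama reduces to the fibers, where the Noetherian hypothesis gives $\DPT\otimes_{\OT}\kappa(t)\simeq\kappa(t)[[x]]$ (needed so that the direct summand $V=\DSTF\otimes\kappa(t)$ really sits inside $\kappa(t)[[x]]$), and the two-sided nondegeneracy of the truncated pairing closes the dimension count. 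So your proof is a correct refinement of the paper's: same strategy and same key intermediate isomorphism, but with the one assertion that deserves an argument actually argued. One small point worth spelling out if you write this up in full: the equality $\ker\alpha=\Ann(\Fsheaf)$ does require the $\DST$-stability of $\DSTF$ (to pass from $\ip{\sigma}{g}=0$ for all $g\in\DSTF$ to $\ip{\tau\sigma}{f}=0$ for all $\tau,f$), which you correctly invoke but should make explicit.
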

    \begin{proof}
    Since $T$ is locally
    Noetherian, the $\OT$-module $\DPT$ is flat by~\cite[4.47,
    p.~139]{Lam__rings_and_modules}; then also its (locally) direct summand $\Fsheaf$ is
    flat over $T$. Since $\DSTF$ is finite over $\OT$, it is even locally
    free, see~\cite[Exercise~6.2]{Eisenbud}.
    Then the composition map $s\colon\DST\to
    \Homthreesh{\OT}{\DPT}{\OT} \to \Homthreesh{\OT}{\DSTF}{\OT}$
    is surjective.
    By local freeness, for each section of $\DSTF$ there is a nonzero functional in
    $\Homthreesh{\OT}{\DSTF}{\OT}$ nonvanishing on this section. Then
    $\Ann(\Homthreesh{\OT}{\DSTF}{\OT}) = \Ann(\DSTF) \subset \DSTF$, so
    that $\ker s = \Ann(\Fsheaf)$ and
    \begin{equation}\label{eq:tmpfiniteness}
        \DST/\Ann(\Fsheaf)  \simeq  \Homthreesh{\OT}{\DSTF}{\OT},
    \end{equation}
    as $\OT$-modules. The $\OT$-module $\Homthreesh{\OT}{\DSTF}{\OT}$
    is locally free (of finite rank) as well, hence flat. Finally,
    $\pi = \pi_{\Fsheaf}$ is affine and
    $\pi_*\OO_{\Apolar{\Fsheaf}} = \DST/\Ann(\Fsheaf)$ is
    flat over $T$, hence $\pi$ is flat. Also, $\pi$ is finite
    by~\eqref{eq:tmpfiniteness}. Summarizing, we obtain the following
    diagram of $\DST$-sheaves
    \begin{equation}\label{eq:diagram}
        \begin{tikzcd}
            0 \arrow{r}{}& \mathcal{I}_{\Apolar{\Fsheaf}}
            \arrow{r}{}\arrow{d}{}& \DST \arrow{r}{}\arrow{d}{}&
            \OO_{\Apolar{\Fsheaf}} \arrow{r}{}\arrow{d}{ \simeq }& 0\\
            0 \arrow{r}{}& \Homthreesh{\OT}{\DPT/(\DSTF)}{\OT} \arrow{r}{}&
            \Homthreesh{\OT}{\DPT}{\OT} \arrow{r}{}& \Homthreesh{\OT}{\DSTF}{\OT} \arrow{r}{}& 0
        \end{tikzcd}
    \end{equation}

\end{proof}

We now prove that each (finite flat) family $\pi\colon \ccX\to T$ is actually an apolar family of a
finitely flatly embedded subsheaf $\Fsheaf \subset \DPT$. We need an
equivalent of a canonical module (Definition~\ref{ref:canonicalmodule:def})
for families. We have already defined it in
Definition~\ref{ref:relativecanonical:def}; it is $\pi^{!}\OT =
\Homthreesh{\OT}{\pi_*\OX}{\OT}$ with multiplication by precomposition.

\begin{proposition}[Description of families]\label{ref:globaldescription:prop}
    Let $T$ be locally Noetherian. Let $i\colon\ccX \into T \times \Spec \DS$ be
    such that $\pi\colon \ccX\to T$ is finite flat.
    Then $\ccX = \Apolar{\Fsheaf}$ for a finitely flatly embedded $\Fsheaf
    \subset \DPT$, in fact for $\Fsheaf = i_*\pi^!\OT$.  If for every $t\in T$ the subscheme $\ccX_t$ is supported
    at the origin, then $\Fsheaf \subset \DPTpoly$.
\end{proposition}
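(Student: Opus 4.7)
The natural candidate is $\Fsheaf := i_*\pi^!\OT = \Homthreesh{\OT}{\pi_*\OX}{\OT}$, embedded into $\DPT = \Homthreesh{\OT}{\DST}{\OT}$ by precomposition with the surjection $q\colon\DST \onto \pi_*\OX$ coming from $i\colon\ccX \into T \times \Spec\DS$. The first task is to verify that the $\DST$-module structure on $\Fsheaf$ inherited from the contraction action on $\DPT$ coincides with the natural $\pi_*\OX$-action (pulled back along $q$); this is immediate from the definitions since $(s \hook f)(t) = f(st) = f(q(s)\,q(t))$ for $f \in \Fsheaf$, $s,t \in \DST$. In particular $\Fsheaf$ is $\DST$-stable, so $\DST\Fsheaf = \Fsheaf$.

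\textbf{Finitely flatly embedded.} Let $\mathcal{I} \subset \DST$ denote the ideal sheaf of $\ccX$, so that we have an exact sequence $0 \to \mathcal{I} \to \DST \to \pi_*\OX \to 0$ of $\OT$-modules. Because $\pi$ is finite and flat and $T$ is locally Noetherian, $\pi_*\OX$ is a locally free $\OT$-module of finite rank, hence the sequence splits locally on $T$ as a sequence of $\OT$-modules. Applying $\Homthreesh{\OT}{-}{\OT}$ yields the locally split short exact sequence
\[
    0 \to \Fsheaf \to \DPT \to \Homthreesh{\OT}{\mathcal{I}}{\OT} \to 0,
\]
with $\Fsheaf$ locally free of finite rank over $\OT$. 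Since $\DST\Fsheaf = \Fsheaf$, this shows $\Fsheaf$ is finitely flatly embedded in the sense of Definition~\ref{ref:flatness:def}.

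\textbf{Identification $\ccX = \Apolar{\Fsheaf}$.} The inclusion $\mathcal{I} \subseteq \Ann(\Fsheaf)$ is clear from the construction. For the reverse, suppose $s \in \DST$ satisfies $s \hook f = 0$ for every local section $f$ of $\Fsheaf$. This means $f(q(s)\cdot q(t)) = 0$ for all $t \in \DST$ and all $f \in \Homthreesh{\OT}{\pi_*\OX}{\OT}$. Since $\pi_*\OX$ is locally free, the evaluation pairing $\pi_*\OX \otimes_{\OT} \Fsheaf \to \OT$ is perfect, so $q(s)\cdot q(t) = 0$ for every $t$; taking $t = 1$ gives $q(s) = 0$, i.e.~$s \in \mathcal{I}$. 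Hence $\Ann(\Fsheaf) = \mathcal{I}$ and $\Apolar{\Fsheaf} = \ccX$.

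\textbf{The punctual case.} Assume every fiber $\ccX_t$ is supported at the origin. Then for each $t \in T$ the finite-dimensional $\kappa(t)$-algebra $\pi_*\OX \otimes_{\OTt} \kappa(t)$ is annihilated by $\DmmS^{D}$ for some $D = D(t)$, so $\DmmS^D\pi_*\OX \otimes_{\OTt}\kappa(t) = 0$. Since $\pi_*\OX$ is a finitely generated $\OTt$-module, Nakayama's lemma (using that $T$ is locally Noetherian) yields $\DmmS^D (\pi_*\OX)_t = 0$, and this persists in an open neighborhood of $t$. Consequently for every local section $f$ of $\Fsheaf$ and every $t \in T$ we have $\DmmS^{D(t)} \hook f_t = 0$, which is exactly the defining condition~\eqref{eq:DPTpolydef} of $\DPTpoly$, so $\Fsheaf \subseteq \DPTpoly$.

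The only delicate point is the compatibility between the two $\DST$-actions in Step 1 (and hence the equality of annihilators in Step 3); once it is in place, the rest is bookkeeping with locally split sequences and a standard Nakayama argument in the punctual case.
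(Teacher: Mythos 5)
Your construction of $\Fsheaf = i_*\pi^!\OT$, the verification that its two $\DST$-module structures agree, the identification $\Ann(\Fsheaf) = \mathcal{I}_{\ccX}$ (the paper invokes torsion-freeness of $\pi^!\OT$ where you use perfectness of the evaluation pairing --- the same content), and the locally split sequence obtained by applying $\Homthreesh{\OT}{-}{\OT}$ to $0 \to \mathcal{I}_{\ccX} \to \DST \to \pi_*\OX \to 0$ all match the paper's proof.

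The punctual case, however, contains a genuine gap at the Nakayama step. From ``$\DmmS^D$ annihilates $\pi_*\OX\otimes_{\OTt}\kappa(t)$'' you pass to ``$\DmmS^D\pi_*\OX\otimes_{\OTt}\kappa(t)=0$'' and then invoke Nakayama to conclude $\DmmS^D(\pi_*\OX)_t=0$. But annihilating the fiber only says that the \emph{image} of $\DmmS^D(\pi_*\OX)_t$ inside $(\pi_*\OX)_t\otimes\kappa(t)$ vanishes, i.e.\ $\DmmS^D(\pi_*\OX)_t\subseteq\mathfrak{m}_t(\pi_*\OX)_t$; whereas Nakayama applied to the submodule $N:=\DmmS^D(\pi_*\OX)_t$ requires $N\otimes\kappa(t)=0$, i.e.\ $N\subseteq\mathfrak{m}_tN$, a strictly stronger condition that does not follow. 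Already over $A=\kk[[s]]$ with $M=A[\Dx]/(\Dx^2-s)$ one has $\Dx^2 M=sM\subseteq\mathfrak{m}M$ yet $\Dx^2 M\neq0$, so the inference you draw is not a general commutative-algebra fact (that family fails the punctual hypothesis at the generic point, so it does not contradict the proposition --- it only refutes the Nakayama step). The paper instead observes that over a connected affine Noetherian $U_i$ the fibers have constant degree $r_i$, so $\DmmS^{r_i}$ vanishes on every fiber; its image in the Noetherian ring $H^0(\pi^{-1}(U_i),\OO_{\ccX})$ therefore lies in the nilradical, which has finite exponent $e_i$, giving $\DmmS^{r_ie_i}\pi_*\OX|_{U_i}=0$. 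Substitute that nilradical/Noetherianness argument --- or a Cayley--Hamilton argument showing the non-leading coefficients of the characteristic polynomial of each multiplication operator are nilpotent --- for the Nakayama appeal.
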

\begin{proof}
    \def\OX{\OO_{\ccX}}%
    \def\IX{\mathcal{I}_{\ccX}}%
    \def\prone{\operatorname{pr}}%
    Denote by $\prone:T \times \Spec\DS\to T$ the projection and by $\omega$
    the $\OX$-module $\pi^!\OT$, it is torsion free by discussion below
    Definition~\ref{ref:relativecanonical:def}.
    Now if $i\colon \ccX \into T \times \Spec \DS$ is the embedding, then $i_*\omega \subset
    \Homthreesh{\OT}{\DST}{\OT} = \DPT$. Let $\Fsheaf = i_*\omega$. Since
    $\omega$ is torsion-free, we have $\Ann(\Fsheaf) = \IX$ and $\ccX =
    \Apolar{\Fsheaf}$.
    In particular, $\pi_*\OX = \DST/\Ann(\Fsheaf)$ is flat and finite, so the
    sequence $0\to \prone_*\IX \to \DST \to \pi_*\OX \to 0$ is locally
    split.
    Applying $\Homthreesh{\OT}{-}{\OT}$ to this sequence we obtain
    \[
        0\to \Homthreesh{\OT}{\pi_*\OX}{\OT} \to \DPT \to
        \Homthreesh{\OT}{\prone_*\IX}{\OT}
        \to 0,
    \]
    which is also locally split. It remains to note that
    $\Homthreesh{\OT}{\pi_*\OX}{\OT} \to \DPT$ is equal to the embedding $\Fsheaf \to \DPT$.
    This proves that $\Fsheaf$ is flatly embedded.
    Suppose now that all fibers are supported at the origin. This means that
    $\DmmS$ is nilpotent in each fiber. Choose a covering of
    $T$ by affine Noetherian schemes $U_i$. Each $U_i$ is quasi-compact, so there exist $d_i$ such that
    $\DmmS^{d_i}$ vanishes on each fiber over $U_i$. Then $\DmmS^{d_i}$
    consists of nilpotent functions on $U_i$. Each $U_i$ is Noetherian,
    so there
    exist $e_i$ such that ${\DmmS^{d_ie_i}}_{|U_i} = 0$. The integers $D_i =
    d_ie_i$ satisfy~\eqref{eq:DPTpolydef} for all $f\in H^0(U_i, \Fsheaf)$
    and assess that $\Fsheaf \subset \DPTpoly$.
\end{proof}

\begin{corollary}[Local description of
        families]\label{ref:tmplocaldescription:cor}
    Let $T$ be locally Noetherian. Let $i\colon \ccX \into T \times \Spec \DS$ be
    such that $\pi\colon \ccX\to T$ is finite flat. For every affine cover $U_i =
    \Spec B_i$ of $T$ we have
    \[
        \pi^{-1}(U_i) = \Apolar{F_{i1}, \ldots ,F_{is_i}}
    \]
    for some $F_{ij}\in \hat{\DP}_{\Spec B_i} \simeq B_i[[x_1, \ldots ,x_n]]$.
    If all fibers of $\pi$ are supported at the origin, we necessarily have $F_{ij}\in
    B_i[x_1, \ldots ,x_n]$.
\end{corollary}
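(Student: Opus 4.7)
The plan is to read this off directly from the global Proposition~\ref{ref:globaldescription:prop}, which already provides a finitely flatly embedded subsheaf $\Fsheaf = i_*\pi^!\OT \subseteq \DPT$ with $\ccX = \Apolar{\Fsheaf}$. Restricting to an affine open $U_i = \Spec B_i$ of the cover reduces the corollary to finding finitely many elements $F_{i1},\ldots,F_{is_i}$ of $M_i := \Fsheaf(U_i) \subseteq B_i[[x_1,\ldots,x_n]]$ whose common annihilator in $\DST(U_i) = B_i[x_1,\ldots,x_n]$ equals $\Ann(M_i)$, since the latter equals $\Ann(\Fsheaf)|_{U_i}$ by the sheaf definition.

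The crucial observation is that $M_i$ is finitely generated as a $B_i$-module, not merely as a $B_i[x_1,\ldots,x_n]$-module. Indeed, $\pi$ is affine, so the identification $\Fsheaf = \Homthreesh{\OT}{\pi_*\OO_\ccX}{\OT}$ established in the proof of Proposition~\ref{ref:globaldescription:prop} gives
\[
    M_i \simeq \Hom_{B_i}\pp{\pi_*\OO_\ccX(U_i),\, B_i}.
\]
Since $\pi$ is finite and flat, $\pi_*\OO_\ccX(U_i)$ is locally free of finite rank over $B_i$; its $B_i$-dual $M_i$ is therefore also finitely generated (using that $B_i$ is Noetherian, which follows from $T$ being locally Noetherian and $U_i$ affine). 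I then pick any generators $F_{i1},\ldots,F_{is_i}$ of $M_i$ as a $B_i$-module. If moreover every fiber of $\pi$ is supported at the origin, the second half of Proposition~\ref{ref:globaldescription:prop} places $\Fsheaf$ inside $\DPTpoly$, forcing $M_i \subseteq \DPTpoly(U_i) = B_i[x_1,\ldots,x_n]$, so the $F_{ij}$ are automatically polynomial, as required by the last sentence of the corollary.

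It then remains to verify that $\Ann(F_{i1},\ldots,F_{is_i}) = \Ann(M_i)$ inside $B_i[x_1,\ldots,x_n]$. The inclusion $\supseteq$ is immediate; for $\subseteq$, any $s$ annihilating each $F_{ij}$ also annihilates every $B_i$-linear combination of the $F_{ij}$ thanks to the $B_i$-linearity identity $s \hook (bF) = b\,(s \hook F)$ for $b\in B_i$ and $F\in M_i$, and hence kills all of $M_i$. The ``main obstacle'' here is conceptual rather than technical: one must trust that the global sheaf-theoretic $\Fsheaf$ from Proposition~\ref{ref:globaldescription:prop} really does behave well on affine opens, and the only genuine input is the finite generation of $M_i$ over $B_i$, which is a direct consequence of the finiteness of $\pi$ together with the Noetherian hypothesis.
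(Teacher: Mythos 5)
Your argument is correct and matches the paper's proof almost verbatim: both invoke Proposition~\ref{ref:globaldescription:prop} to write $\ccX = \Apolar{\Fsheaf}$ and then take finitely many $\OO_{U_i}$-module generators of $\Fsheaf_{|U_i}$. Your extra remark that $\Fsheaf(U_i) \simeq \Hom_{B_i}\pp{(\pi_*\OO_\ccX)(U_i),\,B_i}$ is finitely generated because $(\pi_*\OO_\ccX)(U_i)$ is locally free of finite rank is a slightly more self-contained justification of the finite generation over $B_i$ that the paper simply asserts via the finitely-flatly-embedded property, but it is the same reasoning, not a different route.
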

\begin{proof}
    By Proposition~\ref{ref:globaldescription:prop} we have $\ccX =
    \Apolar{\Fsheaf}$. The sheaf $\Fsheaf_{|U_i}$ is globally generated and
    finitely generated; we take $F_{i1}, \ldots ,F_{is_i}$ as its generators.
\end{proof}
\begin{corollary}[Local description, Gorenstein case]\label{ref:localdescriptionGorenstein:cor}
    Let $T$ be locally Noetherian and $i\colon \ccX \into T \times \Spec \DS$ be
    such that $\pi\colon \ccX\to T$ is finite flat with Gorenstein fibers.
    Every affine cover of $T$ can be refined to a cover $U_i = \Spec B_i$ such
    that
    \[
        \pi^{-1}(U_i) = \Apolar{F_{i}}
    \]
    for some $F_i\in \hat{\DP}_{\Spec B_i}  \simeq B_i[[x_1, \ldots ,x_n]]$.
    If all fibers of $\pi$ are supported at the origin, we necessarily have $F_{i}\in
    B_i[x_1, \ldots ,x_n]$.
\end{corollary}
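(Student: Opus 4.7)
The plan is to deduce the Gorenstein-case corollary from the more general Corollary~\ref{ref:tmplocaldescription:cor}, by showing that in the Gorenstein setting the sheaf $\Fsheaf = i_*\pi^!\OT$ of Proposition~\ref{ref:globaldescription:prop} is \emph{locally} generated, as a $\DST$-module, by a \emph{single} section. The $\DST$-action on $\Fsheaf$ factors through $\pi_*\OX$ (since $\Ann(\Fsheaf)$ equals the ideal of $\ccX$), so it suffices to produce, locally on $T$, a single $\pi_*\OX$-generator of $\Fsheaf = \pi_*\omega$, where $\omega = \pi^!\OT$. Once this is done, the concluding polynomiality statement (for fibers supported at the origin) follows verbatim from the corresponding statement in Proposition~\ref{ref:globaldescription:prop}, i.e., from the inclusion $\Fsheaf \subset \DPTpoly$.

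First I would observe that the Gorenstein hypothesis on the fibers is, by Definition~\ref{ref:Gorensteinfamily:def}, exactly the assertion that $\omega$ is an invertible $\OX$-module. So the task reduces to finding, for each $t \in T$ and each affine neighborhood $U = \Spec B$ of $t$, a smaller affine neighborhood $V \subset U$ on which $\omega|_{\pi^{-1}(V)}$ is \emph{free}. Fix $t\in T$ and write $A = H^0(\pi^{-1}(U),\OX)$, a finite flat $B$-algebra. Its module of sections $M = H^0(\pi^{-1}(U),\omega)$ is a finitely generated $A$-module, hence a finitely generated $B$-module since $A$ is finite over $B$. The fiber $M \otimes_B \kappa(t)$ is the canonical module of the Gorenstein fiber $\ccX_t$, so by Proposition~\ref{ref:Gorchar:eis:prop} it is generated by a single element over $A \otimes_B \kappa(t)$; pick any lift $F \in M$ of such a generator. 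The quotient $N := M/AF$ is a finitely generated $B$-module satisfying $N \otimes_B \kappa(t) = 0$, so its support is a closed subset of $\Spec B$ avoiding $t$; after replacing $U$ by a principal open $V \subset U$ containing $t$ and disjoint from $\operatorname{Supp}(N)$, we obtain $M|_V = A|_V \cdot F$, i.e., $\omega|_{\pi^{-1}(V)}$ is free on $F$.

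Since principal opens form a basis, the resulting neighborhoods $V$ can be chosen to refine any prescribed affine open cover of $T$. On such a $V = \Spec B_i$, the section $F_i := F$ lies in $H^0(V,\Fsheaf) \subset H^0(V,\DPT) \simeq B_i[[x_1,\ldots,x_n]]$, and the fact that $F_i$ generates $\Fsheaf|_V$ as an $A|_V$-module, combined with the surjection $\DST|_V \to \pi_*\OX|_V$, shows that $F_i$ generates $\Fsheaf|_V$ as a $\DST|_V$-module. Hence $\Ann(F_i) = \Ann(\Fsheaf|_V)$ and
\[
    \pi^{-1}(V) = \Speccal_V (\DST|_V / \Ann(F_i)) = \Apolar{F_i}.
\]
If all fibers are supported at the origin, then $\Fsheaf \subset \DPTpoly$ by Proposition~\ref{ref:globaldescription:prop}, which forces $F_i \in B_i[x_1,\ldots,x_n]$, giving the polynomial case.

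The main obstacle is the Nakayama-style lifting of a single fiberwise generator of $\omega$ to an honest single generator of $\omega$ on a neighborhood of $t$. This is not a difficulty at a \emph{closed} point, but some care is needed because $A$ is not local over $B$: the argument succeeds because $M/AF$ is finitely generated over $B$ and the locus where it vanishes is therefore \emph{open} in $\Spec B$, so shrinking to a principal open is sufficient. No other step is nontrivial; everything else is an immediate application of the already-established machinery, in particular the identification $\ccX = \Apolar{i_*\omega}$ from Proposition~\ref{ref:globaldescription:prop} and the finite flat embedding formalism of Definition~\ref{ref:flatness:def} and Lemma~\ref{ref:flatness:lem}.
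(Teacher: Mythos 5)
Your proposal is correct and follows essentially the same route as the paper: apply Proposition~\ref{ref:globaldescription:prop} to write $\ccX = \Apolar{i_*\pi^!\OT}$, use the Gorenstein fibers to see that each fiber of this sheaf is a principal module, and lift a fiberwise generator to a generator on a neighborhood. The paper compresses the lifting step into one sentence, whereas you spell out the standard Nakayama/support argument (and the refinement via principal opens), but the underlying argument is identical.
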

\begin{proof}
    \def\OTt{\OO_{T, t}}%
    By Proposition~\ref{ref:globaldescription:prop} we have $\ccX =
    \Apolar{\Fsheaf}$ where $\Fsheaf = i_*\pi^{!}\OT =
    i_*\Homthreesh{\OT}{\OO_{\ccX}}{\OT}$.
    In particular for $t\in T$ we have $\Fsheaf(t)  \simeq
    \Homthreesh{\kappa(t)}{\OO_{\ccX|t}}{\kappa(t)}$. The morphism $\pi$ has Gorenstein fibers, so each $\Fsheaf(t)$ is
    a principal $\DST$-module. We pick a lift $F_i$ of its generator to a
    neighborhood $U_i$ of $t$ and obtain $\pi^{-1}(U_i) = \Apolar{F_i}$.
\end{proof}

\begin{example}\label{ex:doublecoversMacaulay}
    Let $\Spec \DS = \mathbb{A}^1 = \Spec \kk[x]$.
    As in Example~\ref{ex:jumphenonmenon} consider a branched double cover
    \[\ccX = \Spec \frac{\kk[\Dx, t]}{(\Dx^2 - \Dx t)} \to T = \Spec \kk[t].\]
    Then $\ccX = \Apolar{F}$ for $F = \sum_{n\geq 0}
    \DPel{x}{n+1}t^n\in \kk[t][[x]]$.
    Similarly, consider another branched double cover
    \[\ccX = \Spec \frac{\kk[\Dx, t]}{(\Dx^2 - t)} \to T = \Spec \kk[t].\]
    Then $\ccX = \Apolar{F}$ for $F = \sum_{n\geq 0}
    \DPel{x}{2n+1}t^n\in\kk[t][[x]]$.
\end{example}

\begin{remark}\label{ref:localdescriptiongraded:rmk}
        In the setting of Corollary~\ref{ref:tmplocaldescription:cor}
        or Corollary~\ref{ref:localdescriptionGorenstein:cor}, if the base $T$ is
        reduced and all the fibers are defined by homogeneous polynomials,
        then $F_{i1}, \ldots , F_{is_i}$ or $F_i$ may be chosen homogeneous.
        Indeed, by these assumptions the sheaf $\Fsheaf$ is invariant under
        the dilation action.
\end{remark}

\newcommand{\Tpr}{T'}
\newcommand{\DSTpr}{\DS_{\Tpr}}%
\newcommand{\DPTpr}{\hat{\DP}_{\Tpr}}%
\newcommand{\DPTpolypr}{\DP_{\Tpr}}%
\newcommand{\OTpr}{\OO_{\Tpr}}%
\newcommand{\DSTFpr}{\DSTpr\,\varphi(\Fsheaf)}%
\newcommand{\Pvarphi}{\hat{\DP}_{\varphi}}%
Description~\ref{ref:globaldescription:prop} above reduces the study of
finite flat families to the study of $\Apolar{\Fsheaf}$. Now we reduce this
study to the study of $\Fsheaf$ itself. For this, we need to check that
$\Apolar{-}$ is compatible with base change.
Every morphism $\varphi\colon T'\to T$ induces an isomorphism
$\varphi^*\DST = \DSTpr$ and consequently a natural map $\varphi^*\Homthreesh{\OT}{\DST}{\OT} \to
\Homthreesh{\varphi^*\OT}{\varphi^*\DST}{\varphi^*\OT}$, denoted
\begin{equation}
    \Pvarphi\colon\varphi^*\DPT \to \DPTpr.
\end{equation}
We abbreviate $\Pvarphi(\Fsheaf)$ to $\varphi(\Fsheaf)$. For $\Tpr = \{t\}$ we
denote $\Fsheaf(t) := \Pvarphi(\Fsheaf)$.

\begin{proposition}[base change for apolar]\label{ref:basechangeforapolar:prop}
    Let $\varphi\colon T'\to T$ be a morphism of locally Noetherian schemes and
    $\Fsheaf \subset \DPT$ be finitely flatly embedded. Then
    $\Apolar{\Fsheaf} \times_{T} T' = \Apolar{\varphi(\Fsheaf)}$.
\end{proposition}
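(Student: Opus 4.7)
The plan is to reduce the proposition to an equality of ideal sheaves in $\DSTpr$. By the compatibility of relative $\Speccal$ with base change and right-exactness of tensor product,
\[\Apolar{\Fsheaf}\times_T T' = \Speccal_{T'}\bigl((\DST/\Ann(\Fsheaf))\otimes_{\OT}\OTpr\bigr) = \Speccal_{T'}\bigl(\DSTpr/(\Ann(\Fsheaf)\cdot\DSTpr)\bigr),\]
while by definition $\Apolar{\varphi(\Fsheaf)} = \Speccal_{T'}(\DSTpr/\Ann(\varphi(\Fsheaf)))$. Hence the statement reduces to the equality $\Ann(\Fsheaf)\cdot\DSTpr = \Ann(\varphi(\Fsheaf))$ of ideals in $\DSTpr$. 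The inclusion $\subseteq$ is immediate: the comparison map $\Pvarphi\colon\varphi^*\DPT\to\DPTpr$ is $\varphi^*\DST$-equivariant, so if $\sigma\hook f=0$ for every $f\in\Fsheaf$ then $\varphi^*\sigma\hook\Pvarphi(\varphi^*f)=0$, and by definition $\varphi(\Fsheaf) = \Pvarphi(\varphi^*\Fsheaf)$.

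For the reverse inclusion, I would first verify that $\varphi(\Fsheaf)$ is itself finitely flatly embedded. The locally split sequence $0\to\DSTF\to\DPT\to\DPT/\DSTF\to 0$ of $\OT$-modules is preserved by $\varphi^*$, and composing with $\Pvarphi$ identifies the image of $\varphi^*\DSTF$ in $\DPTpr$ with $\DSTpr\cdot\varphi(\Fsheaf)$, exhibiting the latter as a locally free $\OTpr$-summand. Then I would apply Lemma~\ref{ref:flatness:lem} to both $\Fsheaf$ and $\varphi(\Fsheaf)$, producing
\[\DST/\Ann(\Fsheaf) \simeq \Homthreesh{\OT}{\DSTF}{\OT}, \qquad \DSTpr/\Ann(\varphi(\Fsheaf)) \simeq \Homthreesh{\OTpr}{\DSTpr\cdot\varphi(\Fsheaf)}{\OTpr}.\]
Since $\DSTF$ is locally free of finite rank, pulling back the first isomorphism along $\varphi$ commutes with $\Homthreesh{\OT}{-}{\OT}$ and yields
\[\DSTpr/(\Ann(\Fsheaf)\cdot\DSTpr) \simeq \Homthreesh{\OTpr}{\varphi^*\DSTF}{\OTpr}.\]
The canonical surjection $\varphi^*\DSTF\onto\DSTpr\cdot\varphi(\Fsheaf)$ from the first step is a map of locally free $\OTpr$-modules of the same rank, namely the fiber rank of $\pi_{\Fsheaf}$, which is preserved under base change; hence it is an isomorphism. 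The right-hand sides of the two displays therefore coincide canonically, and chasing through the identifications shows that the resulting iso of $\DSTpr$-quotients is induced by the identity of $\DSTpr$, forcing the two ideals to agree.

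The main obstacle is the rank argument at the end, i.e.~showing that $\varphi^*\DSTF\to\DSTpr\cdot\varphi(\Fsheaf)$ is an isomorphism and not merely a surjection. The comparison map $\Pvarphi$ is generally far from injective (the $\OT$-module $\DPT$ is not of finite presentation), so one must invoke the finite flat embeddedness hypothesis seriously: it supplies a local $\OT$-complement of $\DSTF$ in $\DPT$ that remains a direct summand after $\varphi^*$, which is precisely what prevents $\Pvarphi$ from collapsing the $\varphi^*\DSTF$-part and what makes the rank comparison legitimate. All other steps are formal consequences of the definitions and Lemma~\ref{ref:flatness:lem}.
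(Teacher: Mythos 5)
Your reduction to the equality of ideals $\Ann(\Fsheaf)\cdot\DSTpr=\Ann(\varphi(\Fsheaf))$ and the easy inclusion are fine, but the decisive step is not proved. You need the composite $\varphi^*\DSTF\to\varphi^*\DPT\xrightarrow{\ \Pvarphi\ }\DPTpr$ to be injective (so that $\DSTpr\,\varphi(\Fsheaf)\simeq\varphi^*\DSTF$ is locally free of the correct rank, and so that Lemma~\ref{ref:flatness:lem} can even be applied to $\varphi(\Fsheaf)$), and your justification --- that the local complement of $\DSTF$ in $\DPT$ stays a direct summand after $\varphi^*$, which ``prevents $\Pvarphi$ from collapsing the $\varphi^*\DSTF$-part'' --- is a non sequitur: $\Pvarphi$ is a map out of $\varphi^*\DPT$ which is in general neither injective nor surjective, and a direct-sum decomposition of its source puts no constraint on where its kernel sits; a priori the kernel can meet $\varphi^*\DSTF$. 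This is exactly the failure mode of Example~\ref{ex:notionofflatness}, where the image of $\varphi^*\DSTF$ in the special fiber collapses; nothing in your argument converts the splitting hypothesis into the needed injectivity. For the same reason the rank comparison in your final step is circular: that $\DSTpr\,\varphi(\Fsheaf)$ is locally free of the fiber rank of $\pi_{\Fsheaf}$ is precisely the point at issue, not something you may quote.

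The way to close the gap --- and it is the route the paper takes --- is to use diagram~\eqref{eq:diagram}: inside $\DPT$ one has $\DSTF=\Homthreesh{\OT}{\pi_*\OO_{\Apolar{\Fsheaf}}}{\OT}$, the relative canonical module. Since $\pi_*\OO_{\Apolar{\Fsheaf}}$ is finite locally free, dualizing commutes with base change, so $\varphi^*\DSTF\simeq\Homthreesh{\OTpr}{\pi'_*\OO_{\ccX'}}{\OTpr}$ with $\ccX'=\Apolar{\Fsheaf}\times_T T'$, and under this identification the composite $\varphi^*\DSTF\to\DPTpr$ becomes the dual of the surjection $\DSTpr\onto\pi'_*\OO_{\ccX'}$, hence is manifestly injective. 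Its image $\DSTpr\,\varphi(\Fsheaf)=\Homthreesh{\OTpr}{\pi'_*\OO_{\ccX'}}{\OTpr}$ is torsion-free over $\OO_{\ccX'}$ (because $\pi'_*\OO_{\ccX'}$ is locally free, being a base change of a locally free module), so its annihilator is exactly the ideal of $\ccX'$, i.e.\ $\Ann(\varphi(\Fsheaf))=\Ann(\Fsheaf)\cdot\DSTpr$. This finishes the proof directly, without the subsequent Hom- and rank-juggling and without having to verify that $\varphi(\Fsheaf)$ is finitely flatly embedded. Alternatively, you could extract the injectivity from the surjectivity of the evaluation map $\DST\to\Homthreesh{\OT}{\DSTF}{\OT}$ established in the proof of Lemma~\ref{ref:flatness:lem}: pull that surjection back along $\varphi$ and use that an element of a finite locally free module killed by every functional is zero --- but some explicit argument of this kind is indispensable.
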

If $\Fsheaf$ is not finitely flatly embedded, then the claim is false, see
Example~\ref{ex:notionofflatness}.
\begin{proof}
    \def\OX{\OO_{\ccX}}%
    \def\OXpr{\OO_{\ccX'}}%
    \def\IX{\mathcal{I}_{\ccX}}%
    \def\IXpr{\mathcal{I}_{\ccX'}}%
    Let $\ccX = \Apolar{\Fsheaf}$ with $\pi\colon \ccX\to T$ and $\ccX' = \ccX
    \times_T T'$.
    By Lemma~\ref{ref:flatness:lem} the $\OT$-modules $\pi_*\OX$ and $\DSTF$ are locally
    free. Moreover by~\eqref{eq:diagram} we have $\OX  \simeq
    \Homthreesh{\OT}{\DSTF}{\OT}$, so $\Homthreesh{\OT}{\OX}{\OT} =
    \DSTF$ as $\DST$-submodules of $\DPT$.
    The diagram
    \[\begin{tikzcd}
            \varphi^*\DSTF = \varphi^*\Homthreesh{\OT}{\OX}{\OT}\arrow{r}{ \simeq
            }\arrow{d}{} & \Homthreesh{\OTpr}{\varphi^*\OX}{\OTpr} =
            \Homthreesh{\OTpr}{\OXpr}{\OTpr}\arrow[d, hook]\\
            \varphi^*\DPT =
            \varphi^*\Homthreesh{\OT}{\DST}{\OT}\arrow{r}{\Pvarphi} &
            \Homthreesh{\OTpr}{\DSTpr}{\OTpr} = \DPTpr
    \end{tikzcd}\]
    implies that $\varphi(\DSTF) = \Homthree{\OTpr}{\OXpr}{\OTpr}$.
    The $\OTpr$-module $\pi'_*\OXpr$ is free as a base change of the free
    module $\pi_*\OX$, hence the $\OXpr$-module
    $\Homthree{\OTpr}{\OXpr}{\OTpr}$ is torsion-free, so that
    $\Ann(\varphi(\DSTF)) = \IXpr$ and $\ccX' = \Apolar{\varphi(\DSTF)} =
    \Apolar{\varphi(\Fsheaf)}$.
\end{proof}
\begin{corollary}\label{ref:apolarfibers:cor}
    Let $\Fsheaf \subset \DPT$ be finitely flatly embedded. Then the fiber of
    $\Apolar{\Fsheaf}$ over a point $t\in T$ is equal to
    $\Apolar{\Fsheaf(t)}$.
\end{corollary}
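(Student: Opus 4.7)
The plan is to apply Proposition~\ref{ref:basechangeforapolar:prop} directly, taking $T' = \Spec \kappa(t)$ and $\varphi \colon T' \to T$ the inclusion of the point $t$. The scheme $\Spec \kappa(t)$ is locally Noetherian (being the spectrum of a field), so the hypotheses of the proposition are satisfied.

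First I would observe that by definition the fiber of $\Apolar{\Fsheaf} \to T$ over $t$ is the base change $\Apolar{\Fsheaf} \times_T \Spec \kappa(t)$. Applying Proposition~\ref{ref:basechangeforapolar:prop} to $\varphi$ yields
\[
    \Apolar{\Fsheaf} \times_T \Spec \kappa(t) = \Apolar{\varphi(\Fsheaf)}.
\]
By the notational convention introduced immediately before the proposition, namely $\Fsheaf(t) := \Pvarphi(\Fsheaf) = \varphi(\Fsheaf)$ when $T' = \{t\}$, the right-hand side is precisely $\Apolar{\Fsheaf(t)}$, which gives the claim.

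There is essentially no obstacle: the entire content has already been absorbed into Proposition~\ref{ref:basechangeforapolar:prop}, whose hypotheses (finite flat embedding, local Noetherianity) are manifestly preserved under restriction to a point. The only small issue worth mentioning is verifying that the definition of $\Fsheaf(t)$ in the paragraph preceding the proposition really matches $\Pvarphi(\Fsheaf)$ for $\varphi$ the inclusion of the residue field, but this is tautological from the definition.
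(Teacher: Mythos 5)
Your proof is correct and is essentially identical to the paper's, which simply cites Proposition~\ref{ref:basechangeforapolar:prop} with $T' = t$; you have merely spelled out the same one-line argument in more detail.
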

\begin{proof}
    Follows from Proposition~\ref{ref:basechangeforapolar:prop} for $T' = t$.
\end{proof}

In general, it is difficult to construct finitely flatly embedded subsheaves.
In view of Corollary~\ref{ref:apolarfibers:cor} one necessary condition, at
least over connected $T$, is that
\begin{equation}\label{eq:rankcondition}
    \dim_{\kappa(t)} \DSTF(t)\mbox{ is independent of the choice of }t\in T.
\end{equation}
Over reduced $T$ this condition is sufficient.
\begin{proposition}\label{ref:apolarflatness:prop}
    Let $T$ be reduced and $\Fsheaf \subset \DPT$ be such that $\DSTF$ is a
    finitely generated $\OT$-module. Suppose that~\eqref{eq:rankcondition} holds. Then
    $\Fsheaf$ is finitely flatly embedded.
\end{proposition}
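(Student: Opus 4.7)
The plan is to construct, locally on $T$, a retraction of the inclusion $\DSTF \hookrightarrow \DPT$ by producing a ``fiberwise basis'' $f_1, \ldots , f_r$ of $\DSTF$ together with a dual family $s_1, \ldots , s_r \in \DST$ satisfying $f_i(s_j) = \delta_{ij}$, and then setting $\rho(g) = \sum_j g(s_j) f_j$.

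Fix $t_0 \in T$, pick a $\kappa(t_0)$-basis $\bar{f}_1, \ldots , \bar{f}_r$ of the $r$-dimensional image $\DSTF(t_0) \subseteq \DPT(t_0)$, and lift it to sections $f_1, \ldots , f_r$ of $\DSTF$ over a neighborhood $U$ of $t_0$. After shrinking $U$, the elements $f_1(t), \ldots , f_r(t)$ remain linearly independent in $\DPT(t)$ — independence is an open condition on the coefficients of the $f_i$ in $\DPT = \prod_\alpha \OT$ and holds at $t_0$ — and, together with $\dim \DSTF(t) = r$, they form a basis of $\DSTF(t)$ at every $t \in U$. Since $T$ is reduced, any $\OT$-relation $\sum a_i f_i = 0$ forces each $a_i(t) = 0$ pointwise and hence $a_i = 0$; thus $f_1, \ldots , f_r$ generate a free $\OT$-submodule $\mathcal{G} \subseteq \DSTF$ of rank $r$. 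For the dual family, the linear independence of $f_1(t_0), \ldots , f_r(t_0)$ inside $\DPT(t_0) = \Hom_{\kappa(t_0)}(\DST \otimes \kappa(t_0), \kappa(t_0))$ provides $s_j(t_0) \in \DST \otimes \kappa(t_0)$ with $f_i(t_0)(s_j(t_0)) = \delta_{ij}$. Lift to sections $s_j$ over $U$, note that $M := \bigl(f_i(s_j)\bigr)_{i,j} \in \mathrm{Mat}_{r \times r}\bigl(\Gamma(U, \OT)\bigr)$ reduces to $I_r$ at $t_0$ and is therefore invertible on a smaller neighborhood, and replace $s_j$ by $\sum_k (M^{-1})_{jk} s_k$ to achieve $f_i(s_j) = \delta_{ij}$ identically on $U$.

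Now define $\rho\colon \DPT|_U \to \mathcal{G}$ by $\rho(g) = \sum_j g(s_j) f_j$; then $\rho(f_i) = f_i$, so $\rho|_{\mathcal{G}} = \operatorname{id}$. The key remaining step is $\rho|_{\DSTF} = \operatorname{id}$, which forces $\DSTF \subseteq \mathcal{G}$ and hence $\DSTF = \mathcal{G}$ locally, so that $\rho$ is a retraction of $\DSTF \hookrightarrow \DPT$. For any local section $f$ of $\DSTF$ and any $t \in U$, the image $f(t) \in \DSTF(t)$ equals $\sum c_i f_i(t)$, so $f(t)(s_j) = c_j$ and consequently $\rho(f)(t) = \sum_j c_j f_j(t) = f(t)$ in $\DPT(t)$. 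Thus $\rho(f) - f$ vanishes at every fiber; each of its coordinates in $\DPT = \prod_\alpha \OT$ is a section of $\OT$ vanishing at every point of $U$, hence nilpotent, and thus zero by reducedness of $T$.

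The main obstacle is the subtle distinction between the tensor fiber $\DSTF \otimes_{\OT} \kappa(t)$ and the image fiber $\DSTF(t) \subseteq \DPT(t)$: the hypothesis controls only the latter, so one cannot directly invoke Nakayama to produce a free local basis of $\DSTF$. The argument circumvents this by constructing the free module $\mathcal{G}$ inside $\DPT$ from a fiber basis of $\DSTF(t_0)$ and only afterwards identifying $\mathcal{G}$ with $\DSTF$ through the retraction $\rho$. Reducedness is used exactly once, but crucially, to pass from ``vanishes at every fiber'' to ``zero as a section of $\DPT$''; Example~\ref{ex:notionofflatness} shows that this passage fails without reducedness, so the conclusion of the proposition genuinely requires the hypothesis.
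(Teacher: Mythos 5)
Your argument is correct, and it takes a distinct but closely related route to the paper's. The paper localizes at a point $t$, chooses $D$ large enough that a basis of $\DSTF(t)$ remains independent modulo degree $D$, and observes that the cokernel $C$ of $\DSTF \hookrightarrow \DPT/(\DPT)_{\geq D}$ has locally constant fiber dimension by the rank hypothesis; since $T$ is reduced this makes $C$ locally free, so the finite exact sequence $0 \to \DSTF \to \DPT/(\DPT)_{\geq D} \to C \to 0$ splits, and the splitting is transported back through $\DPT \simeq (\DPT)_{\geq D} \oplus \DPT/(\DPT)_{\geq D}$. You instead build the retraction $\rho(g) = \sum_j g(s_j)\,f_j$ directly from a biorthogonal system $(f_i, s_j)$ and verify $\rho|_{\DSTF} = \operatorname{id}$ fiberwise, invoking reducedness only to promote ``vanishes on every fiber'' to ``vanishes''. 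The driving mechanism is the same in both proofs -- cut the infinite-rank $\DPT$ down to finite-rank data governed by the fibers of $\DSTF$ (the paper by truncating at degree $D$, you by selecting the finitely many $s_j$) and then exploit reducedness -- but your version is more self-contained: you never call on the ``constant fiber rank over a reduced base implies locally free'' fact, and the injectivity of $\DSTF \to \DPT/(\DPT)_{\geq D}$, which the paper asserts without further comment, drops out of your construction (once $\rho$ is a retraction, any $g \in \DSTF$ killed by all the $s_j$ satisfies $g = \rho(g) = 0$). The one step worth tightening is the lift of the $s_j(t_0) \in \DST \otimes \kappa(t_0)$ to sections over a neighbourhood: as written it silently assumes $t_0$ is a closed point, so that elements of $\kappa(t_0)$ can be lifted. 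Passing first to the local ring $\OO_{T,t_0}$, which surjects onto $\kappa(t_0)$, and then spreading out the finitely many lifted coefficients fixes this, and is exactly the localization step the paper performs at the outset.
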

\begin{proof}
    By assumption, $\DSTF$ is finitely generated, it remains to prove that
    \begin{equation}\label{eq:tmptosplit}
        0\to \DSTF\to \DPT \to \DPT/\DSTF\to 0
    \end{equation}
    is locally split. For this we may
    assume $T$ is a spectrum of a local ring with closed point $t$. Pick a
    basis of $\DSTF(t)$ and $D$ large enough, so that this basis is
    linearly independent in $\hat{\DP}_{t}/(\hat{\DP}_{t})_{\geq D}$. Then the
    map $\DSTF \to
    \DPT/(\DPT)_{\geq D}$ is injective. Consider the following diagram
    \[
        \begin{tikzcd}
            &   & 0 \arrow[d]& 0\arrow[d] & \\
            & 0 \arrow[d]\arrow[r]& (\DPT)_{\geq D}\arrow[d]\arrow{r}{ \simeq } &
            (\DPT)_{\geq D}\arrow{d} & \\
            0 \arrow[r]& \DSTF\arrow[r]\arrow{d}{ \simeq } &
            \DPT\arrow[r]\arrow[d] & \DPT/\DSTF\arrow[r]\arrow[d] & 0\\
            0 \arrow[r] & \DSTF\arrow[r]\arrow[d] & \DPT/(\DPT)_{\geq D}\arrow[r]\arrow[d] &
            C\arrow[r]\arrow[d] & 0\\
             & 0 & 0 & 0 &
        \end{tikzcd}
    \]
    The $\OT$-module $\DPT/(\DPT)_{\geq D}$ is free of finite rank, so that
    for each point $s\in T$ we have $\dim_{\kappa(s)} C(s) = \rank
    \DPT/(\DPT)_{\geq D} - \dim_{\kappa(s)} \DSTF(s)$ locally constant.
    Since $T$ is reduced, $C$ is a locally free $\OT$-module. Then $\DPT/\DSTF \simeq
    (\DPT)_{\geq D} \oplus C$.
    Also $\DPT  \simeq (\DPT)_{\geq D} \oplus \DPT/(\DPT)_{\geq D}$. Any
    choice of splitting of $\DPT/(\DPT)_{\geq D}\onto C$ yields the desired
    splitting of~\eqref{eq:tmptosplit}.
\end{proof}

\begin{example}\label{ex:tangentvectors}
    \def\divexp#1{\operatorname{exp}_{dp}(#1)}%
    \def\ww{\mathbf{w}}%
    \def\xx{\mathbf{x}}%
    Let $\ww\in \Spec \DS$ be a $\kk$-point. It corresponds to a linear map
    $\DS_1\to \kk$, hence gives a point in $\DP_1$. Denote $\divexp{\ww} := \sum_{i\geq 0} \ww^i\in
    \hat{\DP}$.

    Pick a polynomial $f\in \DP$. It defines a subscheme $R = \Spec
    \Apolar{f}$ supported at zero. We now construct a
    family which moves the support of $R$ along the line
    $\sspan{\ww} \subset \Spec \DS$. The line is chosen for clarity only, the same
    procedure works for arbitrary subvariety.

    For every linear form $\Dx\in \DS$ we have
    \[\Dx\hook (f\divexp{\ww}) = (\Dx\hook f)\divexp{\ww} + c\cdot
        f\divexp{\ww}
    = ((\Dx + c)\hook f)\cdot \divexp{\ww},\] where $c = \Dx\hook \ww\in \kk$.
    Hence for every polynomial $\sigma(\xx)\in \DS$ we have
    $\sigma(\xx)\hook (f\divexp{\ww}) = 0$ if and only if $\sigma(\xx +
    \ww)\hook f = 0$.
    It follows that $\Spec \Apolar{f\divexp{\ww}}$ is the scheme
    $R$ translated by the vector $\ww$; in particular it is supported on $\ww$ and abstractly
    isomorphic to $R$.
    By Proposition~\ref{ref:apolarflatness:prop}, the family $\Spec \Apolar{f\divexp{t\ww}}\to \Spec \kk[t]$ is
    finitely flatly embedded and geometrically corresponds to deformation by moving the support of
    $R$ along the line spanned by $\ww$.
    Its restriction to $\kk[\varepsilon] = \kk[t]/t^2$ gives $\Spec \Apolar{f
    + \varepsilon \ww f}$ corresponding to the tangent vector pointing
    towards this deformation.
\end{example}

For the next proposition recall, that a subset $V$ of an affine space is
called \emph{constructible} (in Zariski topology) if it is a finite union of locally
closed subsets. Each constructible subset possesses a (reduced) scheme structure.
\begin{proposition}[Families from constructible subsets]\label{ref:flatfamiliesforconstructible:prop}
    Suppose $V \subset \DP_{\leq d}$ is a constructible subset such that
    $V\ni f\to \dimk\Apolar{f}$ is constant. Then there exists a (finite flat)
    Gorenstein family with irreducible fibers
    \[
        \pi\colon \ccX \to V,
    \]
    such that $\ccX \subset V \times \Spec \DS$ and for every $f\in V$ we have
    $\pi^{-1}(f) = \Spec \Apolar{f} \subset V$.
\end{proposition}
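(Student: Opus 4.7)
The plan is to build $\ccX$ as the apolar family $\Apolar{\Fsheaf}$ associated with a ``tautological'' polynomial on $V$, the flatness coming from Proposition~\ref{ref:apolarflatness:prop}. First I would endow $V$ with its reduced induced subscheme structure, decomposing it if necessary as a finite disjoint union of reduced locally closed pieces of $\DP_{\leq d}$, so that $V$ becomes a reduced locally Noetherian scheme. Choose a basis $\{x^{\aa}\}_{|\aa|\leq d}$ of $\DP_{\leq d}$ consisting of divided monomials, and let $f_{\aa}$ be the dual affine coordinates on $\DP_{\leq d} \simeq \AA^{\binom{n+d}{d}}$. The tautological polynomial
\[
    F_0 \;:=\; \sum_{|\aa|\leq d} f_{\aa}\otimes x^{\aa} \;\in\; H^0\bigl(\DP_{\leq d},\,\DP_{\DP_{\leq d}}\bigr)
\]
specializes at every point $f\in \DP_{\leq d}$ to $f$ itself, since $f_{\aa}(f)$ is by definition the $x^{\aa}$-coefficient of $f$. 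Let $F\in H^0(V,\DP_V)$ denote the restriction of $F_0$ to $V$ and set $\Fsheaf := \OO_V\cdot F \subseteq \hat{\DP}_V$.

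The crux is checking that $\Fsheaf$ is finitely flatly embedded. Because $F$ is annihilated by $\DmmS^{d+1}$ at each fiber (all fibers lie in $\DP_{\leq d}$), the $\OO_V$-module $\DS_V\cdot F$ is a quotient of the free $\OO_V$-module $(\DS_V/\DmmS^{d+1}\DS_V)\cdot F$, hence finitely generated. For each $f\in V$ the specialization $(\DS_V\cdot F)\otimes \kappa(f)$ equals the image of the $\kk$-linear map $\DS\to \DP$, $\sigma\mapsto \sigma\hook f$, whose kernel is $\Ann(f)$; thus its dimension equals $\dimk \DS/\Ann(f) = \dimk \Apolar{f}$, which is constant on $V$ by hypothesis. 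Since $V$ is reduced, Proposition~\ref{ref:apolarflatness:prop} applies and $\Fsheaf$ is finitely flatly embedded. By Lemma~\ref{ref:flatness:lem}, the morphism $\pi\colon \ccX := \Apolar{\Fsheaf}\to V$ is then finite and flat, and by Corollary~\ref{ref:apolarfibers:cor} its fiber over $f\in V$ is $\Apolar{\Fsheaf(f)} = \Apolar{f}$. Each fiber is a finite local Gorenstein $\kk$-algebra by Theorem~\ref{ref:MacaulaytheoremGorenstein:thm}, so fibers are irreducible (their spectra being singletons) and $\ccX\to V$ is Gorenstein: the relative canonical sheaf $\pi^!\OO_V \cong \Fsheaf$ is globally generated by $F$, hence a line bundle.

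No single step is truly an obstacle --- the relative Macaulay machinery of Section~\ref{ssec:relativemacaulaysystems} was designed for precisely this application. The only mildly delicate point is that $V$ is merely constructible rather than locally closed; this is absorbed by working with the reduced structure on a stratification of $V$ into locally closed pieces, as the tautological construction is canonical and so the apolar families over the strata glue to a family on $V$.
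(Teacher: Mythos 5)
Your proof is correct and follows the same route as the paper: both construct the tautological universal element $\mathcal{V} = F \in \kk[V]\tensor \DP_{\leq d}$ specializing to $f$ at each point, invoke Proposition~\ref{ref:apolarflatness:prop} (using reducedness of $V$ and constancy of $\dimk\Apolar{f}$) to conclude $\Fsheaf$ is finitely flatly embedded, and then read off the fibers via Corollary~\ref{ref:apolarfibers:cor}. You simply make explicit the coordinate form of $\mathcal{V}$, the finite-generation and constant-rank checks, and the reason the resulting family is Gorenstein with irreducible fibers, all of which the paper's terser proof leaves implicit.
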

Intuitively, the family $\pi$ is the incidence scheme $\{(f,
    \Spec\Apolar{f})\ |\ f\in V\}\to V$.
\begin{proof}
    \def\eluniv{\mathcal{V}}%
    Let $\kk[V]$ be the coordinate ring of $V$ and consider an universal element
    $\eluniv \in \kk[V] \tensor \DP_{\leq d}$ such that $\eluniv(f) = f\in
    \DP_{\leq d}$ for all $f\in V$. By
    Proposition~\ref{ref:apolarflatness:prop} the sheaf $\Fsheaf$ on $V$ generated by $\eluniv$ is finitely
    flatly embedded, hence gives a finite flat family $\pi\colon \Apolar{\Fsheaf}\to V$.
    By Corollary~\ref{ref:apolarfibers:cor} for every $f\in V$ we have
    $\pi^{-1}(f) = \Spec
    \Apolar{f}$, which proves that the family is Gorenstein
    (Definition~\ref{ref:Gorensteinfamily:def}) and has
    irreducible fibers.
\end{proof}

\begin{remark}\label{ref:fixedhilbertfunctioncontructible:remark}
    \def\DPH#1{\DP_{#1}}%
    For a fixed vector $H$ of length $d+1$, denote by $\DPH{H}$ the subset of forms $f\in \DP_{\leq d}$ such
    that $H$ is the Hilbert function of $\Apolar{f}$. Then $\DP_H$ is a
    constructible subset. Indeed, we have $H_{\Apolar{f}}(i) = \dim
    (\DmmS^i\hook {f})/(\DmmS^{i+1} \hook f)$.
    By picking a basis of $\DmmS^i$ we can rewrite the equality
    $H_{\Apolar{f}}(i) = H(i)$ as a rank condition on a matrix whose entries
    are coefficients of $f$. Hence, we obtain algebraic (both open and closed) conditions on
    $\DPH{H}$.
\end{remark}
%\begin{example}
%    Suppose $V \subset \DP_{\leq d}$ is a constructible subset such that
%    $V\ni f\to \dimk\Apolar{f}$ is constant. Consider an universal element
%    $\Fsheaf \subset \kk[V] \tensor \DP_{\leq d}$ such that $\Fsheaf(f) = f\in
%    \DP_{\leq d}$ for all $f\in V$. By Proposition~\ref{ref:apolarflatness:prop} it is finitely
%    flatly embedded, hence gives a finite flat family $\Spec\Apolar{\Fsheaf}\to V$.
%    By
%    Corollary~\ref{ref:apolarfibers:cor} the fiber over $f\in V$ is $\Spec
%    \Apolar{f}$, so we view this family as an incidence 
%\end{example}

\begin{example}[Proposition~\ref{ref:basechangeforapolar:prop} fails for
        $\Fsheaf$ not finitely flatly embedded]\label{ex:notionofflatness}
    Take $B = \kk[t]$ and $F = tx\in
    B[x]$. Then $\Ann(F) = (\Dx^2)$, so that the fibers of the associated
    apolar family are all equal to $\kk[\Dx]/\Dx^2$.
    For $t = \lambda$ non-zero we have $\kk[\Dx]/\Dx^2 = \Apolar{F_{\lambda}}$,
    but for $t = 0$ we have $F_{0} = 0$, so the fiber is \emph{not} the
    apolar algebra of $F_0$. It follows that $F$ is not finitely flatly
    embedded even though $\Apolar{F}$ is finite flat.
\end{example}

\begin{example}[Proposition~\ref{ref:apolarflatness:prop} fails for nonreduced base]
    Let use restrict the $B$ from Example~\ref{ex:notionofflatness} to $B =
    \kk[t]/t^2$ and take once more $F = tx\in B[x]$. Then $\Ann(F) = (t, \Dx^2)$, so
    $\Apolar{F}$ is not flat over $B$, even though $\Spec B$ has only one
    point, so~\eqref{eq:rankcondition} holds trivially.
\end{example}

\newcommand{\DualGens}{\mathcal{D}ual\mathcal{G}ens}%
Finitely flatly embedded subsheaves can be put into a functor. We discuss only the Gorenstein
case. Recall that finitely flatly embedded $\Fsheaf$ induces a finite flat
family $\pi_{\Fsheaf}\colon \Apolar{\Fsheaf} \to T$.
We define
\[
    \DualGens_r(T) = \left\{ \Fsheaf\in \DPT \mbox{ finitely flatly
    embedded, } \deg(\pi_{\Fsheaf}) = r\right\}.
\]
\newcommand{\HH}{\mathcal{H}}%
Let $\HH = \Hilbr{\mathbb{A}^n}$ and $\UU \subset \HH \times \mathbb{A}^n\to
\HH$ be the universal family, let $\pi\colon \UU\to \HH$.
\newcommand{\counivbundle}{\Speccal_\HH \Sym \pi_*\OO_\UU}%
\begin{proposition}\label{ref:apolarrepresentability:prop}
    The functor $\DualGens_r$ is representable by an open subset of the vector
    bundle $\Speccal_\HH \Sym \pi_*\OO_\UU$.
\end{proposition}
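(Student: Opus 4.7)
The plan is to realize $\DualGens_r$ as an open subfunctor of the functor of $T$-points of $E := \counivbundle$, via an explicit natural isomorphism.

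First, I would unwind what a $T$-point of $E$ is. By the universal property of $\Speccal\Sym$, such a point consists of a morphism $g\colon T\to \HH$ together with an $\OT$-linear map $(\pi_{*}\OO_{\UU})_{T} \to \OT$. Pulling the universal family back along $g$ yields a finite flat family $\pi_{T}\colon \ccX_{T}\to T$ of degree $r$ sitting inside $T\times \AA^{n}$, with $(\pi_{*}\OO_{\UU})_{T} = (\pi_{T})_{*}\OO_{\ccX_{T}}$; so the $\OT$-linear map is precisely a global section $F$ of the relative canonical sheaf $\pi_{T}^{\,!}\OT = \Homthreesh{\OT}{(\pi_{T})_{*}\OO_{\ccX_{T}}}{\OT}$. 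Dualizing the surjection $\DST \twoheadrightarrow (\pi_{T})_{*}\OO_{\ccX_{T}}$ produces a canonical embedding $\pi_{T}^{\,!}\OT \hookrightarrow \DPT$, placing $F$ inside $\DPT$.

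Next, I would define $U\subset E$ as the locus where $F$ generates $\pi_{T}^{\,!}\OT$ as an $\OO_{\ccX_{T}}$-module. Letting $\mathcal{C}$ be the cokernel of the map $\OO_{\ccX_{T}}\to \pi_{T}^{\,!}\OT$ sending $1\mapsto F$, the set $\pi_{T}(\Supp\mathcal{C})$ is closed in $T$ because $\pi_{T}$ is finite, and by Nakayama its complement is exactly the locus where $F$ generates; hence $U$ is open. On $U$ one has $\DST\cdot F = \pi_{T}^{\,!}\OT$, so $\Ann(F) = \mathcal{I}_{\ccX_{T}}$ and therefore $\Apolar{F} = \ccX_{T}$. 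The proof of Lemma~\ref{ref:flatness:lem} (in particular diagram~\eqref{eq:diagram} and isomorphism~\eqref{eq:tmpfiniteness}) then shows that $\DST\cdot F$ is finitely flatly embedded of degree $r$, giving a natural transformation $\Phi\colon U\to \DualGens_{r}$.

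Conversely, given $F\in \DualGens_{r}(T)$, Proposition~\ref{ref:globaldescription:prop} together with Lemma~\ref{ref:flatness:lem} produces a finite flat degree $r$ family $\Apolar{F}\to T$ embedded in $T\times \AA^{n}$, inducing by representability of $\HH$ a unique classifying morphism $g\colon T\to \HH$ with $\UU\times_{\HH} T \simeq \Apolar{F}$. The finitely flatly embedded hypothesis, via the argument of Lemma~\ref{ref:flatness:lem}, gives $\DST\cdot F = \pi_{T}^{\,!}\OT$, so $F$ determines a generating section of $\pi_{T}^{\,!}\OT$, hence a $T$-point of $U$. This produces the inverse $\Psi\colon \DualGens_{r}\to U$; that $\Phi\circ\Psi$ and $\Psi\circ\Phi$ are the identity follows from uniqueness of the classifying map and the construction.

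\textbf{Main obstacle.} The technical core lies in (i) the canonical identification of $T$-points of $\counivbundle$ with pairs consisting of a classifying morphism together with a section of $\pi_{T}^{\,!}\OT\subset \DPT$, which requires careful bookkeeping with pullbacks of the relative dualizing sheaf, and (ii) the openness of the generating locus, which is where one uses Nakayama and the finiteness of $\pi_{T}$ to push the closed support of the cokernel $\mathcal{C}$ down to $T$. Once these are in place, the fact that $U$ represents $\DualGens_{r}$ is a formal consequence.
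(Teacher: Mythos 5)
Your proposal is correct and follows essentially the same route as the paper's proof: both identify $T$-points of $\counivbundle$ with pairs consisting of a classifying morphism $T\to\HH$ together with a section of the relative dualizing sheaf, take the open locus where that section generates, and match this against $\DualGens_r(T)$ via the finitely flatly embedded condition. The only difference is one of detail --- you spell out the Nakayama/finiteness argument for openness of the generating locus, which the paper asserts without proof.
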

\begin{proof}
    \def\Repr{\mathcal{R}}
    Points of the bundle $\Speccal_\HH \Sym \pi_*\OO_\UU$ correspond to
    sections of the dual bundle
    $\omega := \Homthree{\OO_{\HH}}{\pi_*\OO_{\UU}}{\OO_{\HH}} =
    \pi_*\pi^!\OO_{\HH}$.
    Let $\Repr \subset \Speccal_\HH \Sym \pi_*\OO_\UU$ be the open subset
    parameterizing sections of $\omega$ which generate it as a $(\OO_{\HH}
    \otimes \DS)$-module.

    For each $T$ and $\Fsheaf\in\DualGens_r(T)$ we obtain a finite flat family
    \[
        \pi_{\Fsheaf}\colon\Apolar{\Fsheaf} \to T
    \]
    of degree $r$, hence a unique map $\varphi\colon T\to
    \HH$. Then $\Fsheaf$ is a generator of the $\DST$-module
    \[
        \Homthree{\OT}{(\pi_{\Fsheaf})_{*}\OO_{\Apolar{\Fsheaf}}}{\OT} = \varphi^*\omega
    \]
    and we get a natural map $T\to
    \Repr$. Conversely, a morphism $T\to \Repr$ gives a morphism
    $\varphi\colon T\to
    \HH$ and a generator $\Fsheaf$ of
    $\varphi^*\omega$, hence a finitely flatly embedded $\Fsheaf$.
\end{proof}

As an example, we provide the following irreducibility result,
useful later in Chapter~\ref{sec:Gorensteinloci}.
\begin{proposition}\label{ref:irreducibleintwovariables:prop}
    Let $\kk = \kkbar$ and $H = (1, 2, 2, \dots, 1)$ be a vector of length
    $d+1$. The set of
    polynomials $f\in \kdp[x_1, x_2]$ such that $H_{\Apolar{f}} = H$ constitutes
    an irreducible, locally closed subscheme of the affine space $\kdp[x_1, x_2]_{\leq d}$.
    A general member of
    this set has, up to the action of the group $\Dgrp$ defined in
    Section~\ref{ssec:Gorensteininapolarity},
    the form $\DPel{x_1}{d} + \DPel{x_2}{{d_2}}$ for some
    $d_2 \leq d$ depending only on $H$.
\end{proposition}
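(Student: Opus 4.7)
The plan is to reduce the irreducibility of $V_H := \{f \in \kdp[x_1, x_2]_{\leq d} : H_{\Apolar{f}} = H\}$ to that of a stratum $W_H$ of the Hilbert scheme of $\AA^2$ via the relative apolar construction, and then identify the generic orbit using the $\Dgrp$-action. First, local-closedness of $V_H$ in $\kdp[x_1, x_2]_{\leq d}$ is immediate from Remark~\ref{ref:fixedhilbertfunctioncontructible:remark}. To produce the generic form, I observe that $H$ has the shape $(1, 2, \ldots, 2, 1, \ldots, 1)$ with some number $k \geq 1$ of twos, by Corollary~\ref{ref:nonincreasinghf:cor} applied to the standard graded algebra $\gr \Apolar{f}$. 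Setting $d_2 := k + 1 \leq d$, a direct computation of partials verifies that $\Apolar{f_0}$ with $f_0 := \DPel{x_1}{d} + \DPel{x_2}{d_2}$ has Hilbert function $H$, so $f_0 \in V_H$.

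Next, I would apply Proposition~\ref{ref:flatfamiliesforconstructible:prop} to the constructible set $V_H$ to obtain a finite flat Gorenstein family $\pi \colon \ccX \to V_H$ with $\pi^{-1}(f) = \Spec \Apolar{f}$. By the universal property of the Hilbert scheme, this induces a morphism $V_H \to \HilbGorpp{\AA^2}$ factoring through the locally closed stratum $W_H$ of subschemes supported at the origin with Hilbert function $H$. By Corollary~\ref{ref:dualgens:cor}, the fiber over $[\Spec A] \in W_H$ is a nonempty principal open subset of the $r$-dimensional vector space $A^\vee$ (with $r = \sum H$), hence irreducible of constant dimension; therefore $V_H$ is irreducible if and only if $W_H$ is. For the latter, I would invoke Fogarty's Theorem~\ref{ref:fogarty:thm} (the Hilbert scheme of $\AA^2$ is smooth and irreducible) together with Hilbert--Burch, which presents each codimension-two Gorenstein ideal in $\kk[[\Dx_1, \Dx_2]]$ as a complete intersection $(g_1, g_2)$ with $\ord(g_i)$ determined by $H$; the resulting parameter space of such pairs --- modulo the ambiguity in the choice of generators --- is an irreducible affine variety surjecting onto $W_H$.

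Finally, the orbit $\Dgrp \cdot f_0 \subseteq V_H$ is irreducible as the image of the irreducible truncation $\Dgrp_{d+1}$ of $\Dgrp$, and is $\Dgrp$-invariant. To conclude it is dense in $V_H$, I would compute the tangent space $\tang{f_0}$ via Proposition~\ref{ref:tangentspacepoly:prop} and match its dimension against that of $V_H$ obtained from the fibration $V_H \to W_H$; density then yields the generic-form statement that a general member of $V_H$ is $\Dgrp$-equivalent to $f_0$. The main obstacle will be the rigorous irreducibility of the Hilbert-function stratum $W_H$: Fogarty's theorem gives irreducibility of the full Hilbert scheme, but Hilbert-function strata are generically reducible, and the two-variable case becomes tractable only because monomial ideals in two variables are determined by their Hilbert function, enabling a Gr\"obner-stratum argument --- making this precise for the Gorenstein locus (whose initial ideals need not themselves be Gorenstein) requires care, and is where the bulk of technical work lies.
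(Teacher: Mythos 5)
Your opening moves match the paper's: local-closedness from Remark~\ref{ref:fixedhilbertfunctioncontructible:remark}, the explicit candidate $f_0 = \DPel{x_1}{d} + \DPel{x_2}{d_2}$ (and your determination $d_2 = k+1$ from the number of twos is a correct, useful computation that the paper leaves implicit), and the reduction of irreducibility of $V_H$ to irreducibility of its image in the Hilbert scheme via Propositions~\ref{ref:flatfamiliesforconstructible:prop} and~\ref{ref:apolarrepresentability:prop}, with the fiber correctly described as the set of dual generators.

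The genuine gap is the one you flag yourself: irreducibility of the Hilbert-function stratum in the punctual Hilbert scheme of $\AA^2$. The paper cites Iarrobino~\cite[Theorem~3.13]{iarrobino_punctual} for exactly this, and then observes that the Gorenstein part $\varphi(V_H)$ is a nonempty open subset of that stratum, hence irreducible. Fogarty's theorem (irreducibility of the whole $\Hilbr{\AA^2}$) does not transfer to a Hilbert-function stratum, and your Hilbert--Burch generator-pair parameterization is essentially a restatement of the desired conclusion rather than a proof: that the locus of pairs $(g_1,g_2)$ cutting out an ideal with Hilbert function $H$ is irreducible, and that the induced map to $W_H$ is surjective with irreducible fibers, is the content of what needs to be shown. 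Your Gr\"obner-stratum alternative is likewise only gestured at, and as you note it is complicated by the fact that an initial ideal of a Gorenstein ideal need not be Gorenstein. Without Iarrobino's theorem or a worked-out substitute, the irreducibility part remains unproved.

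For the generic-form statement, your route and the paper's genuinely diverge. You propose a dimension count: compute $\dim \tang{f_0}$ via Proposition~\ref{ref:tangentspacepoly:prop}, compare with $\dim V_H$ obtained from the fibration over $W_H$, and conclude density of $\Dgrp f_0$. This is plausible but not carried out. The paper instead takes a \emph{general} $f$, notes that $\Ann(f)=(q_1,q_2)$ is a complete intersection by~\cite[Corollary~21.20]{Eisenbud} with $q_1$ of order $2$ (since $H(2)=2$), uses genericity to assume the quadric part of $q_1$ has full rank so $q_1 \equiv \Dx_1\Dx_2 \mod \DmmS^3$, factors $q_1 = \delta_1\delta_2$ with $\delta_i \equiv \Dx_i \mod \DmmS^2$ by the local factorization theorem~\cite[Theorem~16.6]{Kunz_plane_algebraic_curves}, and changes coordinates so that $\Dx_1\Dx_2 \hook f = 0$; this forces $f$ to be a sum of divided powers of $x_1, x_2$, from which $f \in \Dgrp(\DPel{x_1}{d}+\DPel{x_2}{d_2})$ follows directly. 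The paper's argument is constructive and avoids tangent spaces entirely; if you want to complete your route you would need to carefully establish both dimensions, which is more work and gives less information.
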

\begin{proof}
    \def\Dan#1{\Ann{#1}}%
    Let $V \subseteq \DP = \kdp[x_1, x_2]$ denote the set of $f$ such that
    $H_{\Apolar{f}} = H$. The subset $V$ is constructible by
    Remark~\ref{ref:fixedhilbertfunctioncontructible:remark}.
    Proposition~\ref{ref:flatfamiliesforconstructible:prop} yields a finite flat
    family $\{(f, \Apolar{f})\}\to V$ and a map $\varphi\colon V\to
    \Hilbr{\mathbb{A}^2}$. By
    Proposition~\ref{ref:apolarrepresentability:prop}, the map $\varphi(V)$
    induces an embedding of $V$ as an open subset of a bundle over
    $\varphi(V)$. The set $\varphi(V)$ is irreducible
    by~\cite[Theorem~3.13]{iarrobino_punctual}.
    Therefore, also $V$ is irreducible.

    Let us take a general polynomial $f$ such that $H_{\Apolar{f}} = H$. Then
    $\Dan{f} = (q_1, q_2)$ is a complete intersection
    by~\cite[Corollary~21.20]{Eisenbud}. Since $H(2) = 2$, we assume that
    $q_1\in\DS$ has
    order $2$, i.e.~$q_1\in \DmmS^2\setminus \DmmS^3$. Since $f$ is general,
    we may assume that the quadric part of $q_1$ has maximal rank, i.e. rank
    two, see also \cite[Theorem~3.14]{iarrobino_punctual}. Then after a change of
    variables $q_1 \equiv \Dx_1\Dx_2 \mod \DmmS^3$.
    Since the leading form $\Dx_1\Dx_2$ of $q_1$ is reducible, $q_1 = \delta_1
    \delta_2$ for some $\delta_1, \delta_2\in \DShat$ such that $\delta_i \equiv
    \Dx_i \mod \DmmS^2$ for $i=1,2$,
    see~e.g.~\cite[Theorem~16.6]{Kunz_plane_algebraic_curves}. After an
    automorphism of $\DShat$ we may assume $\delta_i = \Dx_i$, then $\Dx_1\Dx_2 =
    q_1$ annihilates $f$, so that $f$ is a sum of (divided) powers of $x_1$ and $x_2$.
    Let $d_2$ be largest number such that $\DPel{x_2}{d_2}$ appears in $f$,
    then $f = \DPel{x_1}{d} + \DPel{x_{2}}{d_2} + \partial\hook\pp{\DPel{x_1}{d}
    + \DPel{x_{2}}{d_2}}$
    for some $\partial\in \DmmS$, so that $f\in \Dgrp \pp{\DPel{x_1}{d} +
    \DPel{x_2}{d_2}}$.
\end{proof}

\section{Homogeneous forms and secant varieties}\label{ssec:secants}

    In this section we give an easy application of relative Macaulay's inverse
    defined in Section~\ref{ssec:relativemacaulaysystems}.

    In Chapter~\ref{sec:macaulaysinversesystems} the main emphasis is on
    nonhomogeneous polynomials, since the classification of graded ones (of socle
    degree $d$) is just the classification of orbits of general linear group
    acting on $\DP_d$, see Remark~\ref{ref:Macaulaygraded:rmk}. In this section
    we discuss classifications of homogeneous polynomials corresponding to
    algebras with Hilbert functions $(1, 3, 3,  \ldots , 3, 1)$ and $(1, 4, 4,
    \ldots , 4, 1)$. These results are used in
    Chapter~\ref{sec:Gorensteinloci} and are also of independent interest. In
    the end of the section we discuss connections with secant
    varieties of Veronese reembeddings, precisely with
    $\sigma_{3}(\nu_d(\PP^2))$
    and $\sigma_{4}(\nu_d(\PP^3))$ for $d\geq 4$,
    see~\cite{geramita_inverse_systems_of_fat_points,
    iarrobino_kanev_book_Gorenstein_algebras, bubu2010}.

    Most of this material is algebraic in nature and refers to
    Part~\ref{part:algebras}. An exception is the
    correspondence between families of forms and families of
    finite graded Gorenstein subschemes, which is a special case of relative
    Macaulay's inverse systems from Section~\ref{ssec:relativemacaulaysystems}.

    \begin{proposition}[(1, 3, 3, 3,  \ldots , 3, 1)]\label{ref:thirdsecant:prop}
    Suppose that $\DPut{ff}{F}\in \kdp[x_1, x_2, x_3]$ is a homogeneous polynomial
    of degree $d\geq 4$.
    The following conditions are equivalent
    \begin{enumerate}
        \item the algebra $\Apolar{\Dff}$ has Hilbert function $H$ beginning
            with $H(1) = H(2) = H(3) = 3$, i.e.~$H = (1, 3, 3, 3,
             \ldots 3, 1)$,
        \item after a linear change of variables, $\Dff$ is equal to one of the forms
            \[\DPel{x_1}{d} + \DPel{x_2}{d} + \DPel{x_3}{d},\qquad \DPel{x_1}{d -
            1}x_2 + \DPel{x_3}{d},\qquad \DPel{x_1}{d - 1} x_3 + \DPel{x_1}{d-2}\DPel{x_2}{2}.\]
    \end{enumerate}
    Furthermore, the set of forms in $\kdp[x_1, x_2, x_3]_{d}$ satisfying
    the above conditions is irreducible.
\end{proposition}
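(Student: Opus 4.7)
The plan is to associate to each such $F$ an apolar length-$3$ subscheme $R \subset \mathbb{P}^2$, classify these subschemes up to $\mathrm{PGL}_3$, and then read off the normal form of $F$ from $R$. Given $F$ with $H_{\Apolar{F}} = (1,3,3,\ldots,3,1)$, the annihilator $I = \Ann(F)$ satisfies $\dim I_2 = 6-3 = 3$; set $J = (I_2) \subset \DS$. Since $A = \Apolar{F}$ is graded Gorenstein of socle degree $d$, the ideal $I$ is $m$-saturated for all $m \leq d$ (Remark~\ref{ref:GorensteinSaturated:rmk}). Combined with the constancy $H_A(k) = 3$ for $1 \leq k \leq d-1$, I will show that $J^{sat}$ defines a length-$3$ subscheme $R \subset \mathbb{P}^2$, and then by the Apolarity Lemma~\cite[Lemma~2.3]{bubu2010}, $F \in \sspan{\nu_d(R)}$. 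The main obstacle lies precisely here: Macaulay's bound alone permits $H_{\DS/J}(3) \leq 4$, allowing a priori that $J$ cuts a curve. Ruling this out uses Gotzmann's Persistence Theorem (Theorem~\ref{ref:Gotzmann:thm}) together with the containment $J^{sat} \subseteq I$, since a $1$-dimensional $\Proj(\DS/J^{sat})$ would force $H_{\DS/J^{sat}}(k) \sim k$, contradicting $H_A(k) = 3$ for $k$ up to $d-1$.

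Next I would classify length-$3$ subschemes $R \subset \mathbb{P}^2$ which arise this way, up to $\mathrm{PGL}_3$. The condition $H_A(1) = 3$ rules out both subschemes contained in a line and the fat point $\Spec(\mathcal{O}_p/\mathfrak{m}_p^2)$: a direct check shows that the apolar span of the fat point at $[1{:}0{:}0]$ is $\sspan{\DPel{x_1}{d},\,\DPel{x_1}{d-1}x_2,\,\DPel{x_1}{d-1}x_3}$, whose elements are linearly equivalent to two-variable forms (the pencil $\sspan{\Dx_2,\Dx_3}$ maps onto $\sspan{\DPel{x_1}{d-1}}$, producing a linear annihilator). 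The remaining $\mathrm{PGL}_3$-orbits of length-$3$ subschemes of $\mathbb{P}^2$ are exactly three: (i) three non-collinear reduced points; (ii) a reduced length-$2$ scheme at a point $p$ with tangent $\ell$, plus a disjoint point $q \notin \ell$ non-collinear with $p$; (iii) a curvilinear length-$3$ scheme supported at a single point whose osculating direction lies transverse to the tangent (i.e.\ an osculating conic).

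For each orbit I compute $\sspan{\nu_d(R)}$ in explicit coordinates and extract a normal form using the $\Dgrp$-action of Section~\ref{ssec:Gorensteininapolarity}. Placing three points at the coordinate vertices gives $\sspan{\DPel{x_1}{d},\DPel{x_2}{d},\DPel{x_3}{d}}$, yielding $F_1 = \DPel{x_1}{d}+\DPel{x_2}{d}+\DPel{x_3}{d}$ after rescaling variables. Placing a length-$2$ scheme at $[1{:}0{:}0]$ tangent to $\Dx_2$, and a third point at $[0{:}0{:}1]$, gives $\sspan{\DPel{x_1}{d},\DPel{x_1}{d-1}x_2,\DPel{x_3}{d}}$; removing the inessential $\DPel{x_1}{d}$ via Corollary~\ref{ref:leadingformremoval:cor} yields $F_2 = \DPel{x_1}{d-1}x_2+\DPel{x_3}{d}$. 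Parametrizing a curvilinear scheme at $[1{:}0{:}0]$ by $t \mapsto [1{:}t{:}t^2]$, a routine Veronese calculation produces
\[
\sspan{\DPel{x_1}{d},\ \DPel{x_1}{d-1}x_2,\ \DPel{x_1}{d-1}x_3 + \tfrac{1}{2}\DPel{x_1}{d-2}\DPel{x_2}{2}};
\]
after clearing the $\DPel{x_1}{d}$ and $\DPel{x_1}{d-1}x_2$ summands (again via Corollary~\ref{ref:leadingformremoval:cor}) this becomes $F_3 = \DPel{x_1}{d-1}x_3 + \DPel{x_1}{d-2}\DPel{x_2}{2}$. Finally, a direct partials computation verifies that each $F_i$ has Hilbert function $(1,3,3,\ldots,3,1)$.

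For irreducibility, the set $W \subset \kdp[x_1,x_2,x_3]_d$ is constructible by Remark~\ref{ref:fixedhilbertfunctioncontructible:remark}. The association $F \mapsto R$ gives a morphism from $W$ onto an open subset $U$ of $\Hilbarged{3}{\mathbb{P}^2}$, with fibers open subsets of the $3$-dimensional affine space $\sspan{\nu_d(R)}$ (minus the loci giving degenerate Hilbert functions). By Fogarty's theorem (Theorem~\ref{ref:fogarty:thm}), $\Hilbarged{3}{\mathbb{P}^2}$ is smooth and irreducible of dimension $6$, so $U$ is irreducible; since the total space of an affine bundle over an irreducible base is irreducible, so is $W$. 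The generic orbit corresponds to $F_1$, so the orbit of $F_1$ is dense in $W$.
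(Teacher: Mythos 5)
Your overall architecture matches the paper's: set $J=(I_2)$ with $I=\Ann(F)$, show $J$ cuts out a degree-$3$ subscheme $\Gamma\subset\PP^2$, classify $\Gamma$ up to projectivities, and read off the normal forms. However, the step you yourself single out as the main obstacle is argued incorrectly. Suppose $H_{\DS/J}(3)=4$, so that Gotzmann makes $\Proj(\DS/J)$ a line. Granting your containment $J^{sat}\subseteq I$, the surjection $\DS/J^{sat}\onto \DS/I$ gives $H_{\DS/J^{sat}}(k)\geq H_A(k)$, so the linear growth $H_{\DS/J^{sat}}(k)=k+1$ is perfectly compatible with $H_A(k)=3$ for $2\leq k\leq d-1$: there is no contradiction where you claim one. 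The only contradiction available sits in degree one: $J^{sat}=(\ell)$ for a linear form $\ell$, so $H_{\DS/J^{sat}}(1)=2<3=H_A(1)$, i.e.\ $\ell$ would annihilate $F$. But getting $\ell\in I$ is exactly the nontrivial point and is not formal: one needs $J_2=(\ell)_2$ (from the Hilbert function in degrees $2,3$ together with persistence) and then the $2$-saturation of $I$ provided by Remark~\ref{ref:GorensteinSaturated:rmk} to push $\ell\DmmS\subseteq I_2$ down to $\ell\in I$ --- this is precisely what Lemma~\ref{ref:P1gotzmann:lem} packages and what the paper invokes. As written, you assume the containment $J^{sat}\subseteq I$ (which hides this saturation argument) and then extract the contradiction from the wrong degrees.

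A second gap: you need $\Gamma$ to have degree exactly $3$, i.e.\ $H_{\DS/J}(4)=3$ so that persistence applies from degree $3$ onward. For $d\geq 5$ this follows from $H_{\DS/J}(4)\geq H_A(4)=3$, but for $d=4$ one has $H_A(4)=1$ and no such bound; the paper excludes $H_{\DS/J}(4)\leq 2$ via the Buchsbaum--Eisenbud structure theorem (the codimension-three Gorenstein ideal $I$ has an odd number of generators, forcing two quartic generators). Your outline is silent on this case. Two smaller points: Corollary~\ref{ref:leadingformremoval:cor} is not the right tool for discarding the summands $\DPel{x_1}{d}$ and $\DPel{x_1}{d-1}x_2$ from an element of $\sspan{\nu_d(\Gamma)}$, since these are homogeneous of top degree; what you need is only an explicit linear substitution absorbing them into the $\DPel{x_1}{d-1}x_3$ term. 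Finally, your irreducibility argument, fibering $W$ over an open subset of $\Hilbarged{3}{\PP^2}$ and quoting Fogarty, is a legitimate alternative to the paper's route (which degenerates $F$ along a smoothing of $\Gamma$ using Corollary~\ref{ref:localdescriptionGorenstein:cor} and Remark~\ref{ref:localdescriptiongraded:rmk}), provided you justify that $F\mapsto\Gamma$ is a morphism and that $\dim\sspan{\nu_d(\Gamma)}=3$ is constant.
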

For the characteristic zero case see \cite{LO} or \cite[Theorem 4]{BGIComputingSymmetricRank} and references therein.
    See also \cite{bubu2010} for a generalization of this method.

\begin{proof}
    \def\Ddegf{d}%
    \def\span#1{\langle #1 \rangle}%
    \def\Dtmpy{\theta}%
    \def\Dan#1{\Ann{#1}}%
    Let $\DS = \kk[\Dx_1, \Dx_2, \Dx_3]$.
    Let $I := \Ann(\Dff)$ and $I_2 := \span{\Dtmpy_1, \Dtmpy_2,
    \Dtmpy_3}
    \subseteq \DS_2$ be the linear space of operators of degree $2$
    annihilating $\Dff$. Let  $A := \DS/I$, $J := (I_2)
    \subseteq \DS$ and $B := \DS/J$.
    Since $A$ has degree greater than $3\cdot 3 > 2^3$, the ideal $J$ is not a complete
    intersection. Let us analyse the Hilbert function of $A$. By
    Proposition~\ref{ref:symmetryGorensteingraded:prop} we have $H_A(d-1) = H_A(1) = 3$. By
    Corollary~\ref{ref:nonincreasinghf:cor} we have $3 = H_A(3) \geq H_A(4)
    \geq  \ldots \geq H_A(d-1) = 3$, thus
    \[
        H_A(i) = 3\quad\mbox{for all}\quad i = 1, 2,  \ldots , d-1.
    \]
    We will prove that the graded ideal $J$ is saturated and defines a finite scheme of degree
    $3$ in $\Proj \DS = \mathbb{P}^2$. First, $3 = H_A(3)
    \leq H_{B}(3) \leq 4$ by Macaulay's Growth Theorem.
     By Remark~\ref{ref:GorensteinSaturated:rmk}, the algebra $A$
    is $2$-saturated. Since $A_i = B_i$ for $i\leq 2$, also $B$ is
    $2$-saturated. If $H_B(3) = 4$, then applying Lemma~\ref{ref:P1gotzmann:lem}.1 to $B$, we
    obtain $H_B(1) = 2 = H_A(1)$, a contradiction. We have proved that $H_{B}(3) = 3$.

    Now we want to prove that $H_B(4) = 3$. By Macaulay's Growth Theorem applied to
    $H_B(3) = 3$ we have $H_B(4) \leq 3$. If $\Ddegf > 4$ then $H_A(4) = 3$,
    so $H_B(4) \geq 3$. Suppose $\Ddegf = 4$.  By Buchsbaum-Eisenbud result
    \cite[p.~448]{BuchsbaumEisenbudCodimThree} we know that the minimal number of
    generators of $I$ is odd. Moreover, we know that $H_A(i) = H_B(i)$ for $i < 4$,
    thus the generators of $I$ have degree two or four. Since $I_2$ is not a
    complete intersection, there are at least two generators of degree
    $4$, so $H_B(4) \geq H_A(4) + 2 = 3$.

    \def\Jsat{J^{sat}}%
    From $H_B(3) = H_B(4) = 3$ by Gotzmann's Persistence Theorem we see that $H_B(i) =
    3$ for all $i\geq 1$. Thus the scheme $\Gamma := V(J) \subseteq \Proj
    \kk[\Dx_1, \Dx_2, \Dx_3]$ is finite of degree $3$. Let $\Jsat \supset J$
    denote its saturated ideal. Then $H_{S/\Jsat}(i) = H_{S/J}(i) = 3$ for $i\gg 0$. By
    Macaulay's Theorem~\ref{ref:MacaulayGrowth:thm} we have $H_{\DS/\Jsat}(i)
    = 3$ for all $i\geq 2$, hence $\Jsat_i = J_i$ for these $i$. Moreover $\Jsat_1 =
    J_1$, since $B = \DS/J$ is $2$-saturated. Therefore, $J$ is saturated. In
    particular, the ideal $J = I(\Gamma)$ is contained in $I$.

    We will use $\Gamma$ to compute the possible forms of $F$, in the spirit
    of Apolarity Lemma, see
    \cite[Lemma~1.15]{iarrobino_kanev_book_Gorenstein_algebras}. There are four possibilities for $\Gamma$:
    \begin{enumerate}
        \item $\Gamma$ is a union of three distinct, non-collinear points. After a change of basis $\Gamma
            = \left\{ [1:0:0] \right\} \cup \left\{ [0:1:0] \right\} \cup
            \left\{ [0:0:1] \right\}$, then $I_2 = (\Dx_1\Dx_2, \Dx_2\Dx_3,
            \Dx_3\Dx_1)$ and $\Dff = \DPel{x_1}{\Ddegf} + \DPel{x_2}{\Ddegf} +
            \DPel{x_3}{\Ddegf}$.
        \item $\Gamma$ is a union of a point and scheme of degree two, such
            that $\span{\Gamma} = \mathbb{P}^2$. After
            a change of basis $I_{\Gamma} = (\Dx_1^2, \Dx_1\Dx_2, \Dx_2\Dx_3)$,
            so that $\Dff = \DPel{x_3}{\Ddegf-1}x_1 + \DPel{x_2}{\Ddegf}$.
        \item $\Gamma$ is irreducible with support $[1:0:0]$ and it is not a
            $2$-fat point. Then $\Gamma$ is Gorenstein and so $\Gamma$ may
            be taken as the curvilinear scheme defined by $(\Dx_3^2, \Dx_2\Dx_3,
            \Dx_1\Dx_3 - \Dx_2^2)$. Then, after a linear change of variables,
            $\Dff = \DPel{x_1}{\Ddegf-1}x_3 +
            \DPel{x_2}{2}\DPel{x_1}{\Ddegf-2}$.
        \item $\Gamma$ is a $2$-fat point supported at $[1:0:0]$. Then
            $I_{\Gamma} = (\Dx_2^2, \Dx_2\Dx_3, \Dx_3^2)$, so $F =
            \DPel{x_1}{\Ddegf-1}(\lambda_2 x_2 + \lambda_3 x_3)$ for some $\lambda_2,
            \lambda_3\in \kk$. But then there is a degree one operator in $\DS$
            annihilating $F$, a contradiction.
    \end{enumerate}
    The set of forms $\Dff$ which are sums of three powers of
    linear forms is irreducible. To see that the forms satisfying the
    assumptions of the Proposition constitute an irreducible
    subset of $\DP_{\Ddegf}$ we observe that every $\Gamma$ as above is
    smoothable, by
    \cite{cartwright_erman_velasco_viray_Hilb8}. By Local
    Description~\ref{ref:localdescriptionGorenstein:cor} and
    Remark~\ref{ref:localdescriptiongraded:rmk}, the flat family
    proving the smoothability of $\Gamma$ induces a family $\Dff_t \to \Dff$,
    such that $\Dff_{\lambda}$ is a sum of three powers of linear forms for
    $\lambda\neq 0$.
\end{proof}
The claim of Proposition~\ref{ref:thirdsecant:prop} is false for $d = 3$,
i.e., for cubics $F\in \kdp[x_1,x_2,x_3]$ with Hilbert functions $(1, 3, 3, 1)$. Indeed, a general
cubic has such Hilbert function, while a general cubic is not a sum of three
cubes (or a limit of such); the third secant variety of third Veronese
reembedding is a hypersurface given by the Aronhold invariant, see~\cite{LO}.

\begin{proposition}\label{ref:fourthsecant:prop}
    \def\Ddegf{d}%
    Let $\Ddegf \geq 4$.
    Consider the set $\DPut{set}{\mathcal{S}}$ of all forms $\DPut{ff}{F}\in \kdp[x_1,
    x_2, x_3, x_4]$ of degree $\Ddegf$
    such that the apolar algebra of $\Dff$ has Hilbert function $(1, 4, 4,
    4, \dots, 4, 1)$. This set is irreducible and its general member has the
    form $\DPel{\ell_1}{\Ddegf}
    + \DPel{\ell_2}{\Ddegf} + \DPel{\ell_3}{\Ddegf} + \DPel{\ell_4}{\Ddegf}$, where $\ell_1$,
    $\ell_2$, $\ell_3$, $\ell_4$
    are linearly independent linear forms.
\end{proposition}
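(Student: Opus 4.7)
The strategy parallels that of Proposition~\ref{ref:thirdsecant:prop}, with $\PP^3$ in place of $\PP^2$ and degree-$4$ subschemes replacing degree-$3$ ones. Fix $F \in \mathcal{S}$; put $\DS = \kk[\Dx_1,\Dx_2,\Dx_3,\Dx_4]$, $I = \Ann(F)$, $A = \DS/I$. Since $H_A(2) = 4$, we have $\dim I_2 = \binom{5}{2} - 4 = 6$; let $J := (I_2) \subseteq I$, $B := \DS/J$, and $\Gamma := \Proj B \subseteq \PP^3$. The plan is to show that $\Gamma$ is a finite subscheme of degree $4$, and then appeal to irreducibility of $\Hilbarged{4}{\PP^3}$ together with apolarity.

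First I would establish $H_B(i) = 4$ for every $i \geq 1$. One has $H_B(i) = H_A(i) = 4$ for $i \leq 2$ and $H_B \geq H_A$ throughout; Macaulay's Growth Theorem gives $H_B(3) \leq 5$. The main obstacle is ruling out $H_B(3) = 5$: if this held, Gotzmann's Persistence Theorem (applicable since $J$ is generated in degree $2$) would force $H_B(i) = i + 2$ for $i \geq 2$, so $\Gamma$ would contain a line $\ell \subseteq \PP^3$. After a linear change of coordinates one may arrange $I_\ell = (\Dx_3, \Dx_4)$, so the $6$-dimensional space $I_2$ sits as a hyperplane in the $7$-dimensional $I_\ell \cap \DS_2$; analysing the unique complementary quadric together with Remark~\ref{ref:GorensteinSaturated:rmk} and the equality $H_A(d-1) = 4$ would show that the leading form $f_d$ of $F$ must lie in $\kdp[x_1, x_2]$, contradicting $H_A(1) = 4$. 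In three variables the corresponding step used the Buchsbaum--Eisenbud structure theorem; here I would rely instead on this explicit apolarity computation, which is where the technical work sits. Once $H_B \equiv 4$ for $i \geq 1$, $J$ is saturated and $\Gamma$ is a finite subscheme of $\PP^3$ of degree $4$, and the apolarity lemma places $F$ inside $\sspan{\nu_d(\Gamma)} \subseteq \PP(\DP_d)$.

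Since all degree-$4$ subschemes of $\PP^3$ are smoothable (for instance $\Hilbarged{4}{\PP^3}$ is irreducible, by the results referenced in Section~\ref{ssec:historical}), a general $\Gamma$ consists of four points in general linear position. After a coordinate change one may take $\Gamma = \{[e_1],\ldots,[e_4]\}$, whence $I_2 = (\Dx_i \Dx_j)_{i\neq j}$ and $F = \sum c_i \DPel{x_i}{d}$ with $c_i \neq 0$; rescaling yields $F = \sum \DPel{\ell_i}{d}$ for linearly independent $\ell_i$. For irreducibility of $\mathcal{S}$: by Remark~\ref{ref:fixedhilbertfunctioncontructible:remark}, $\mathcal{S}$ is constructible, and Proposition~\ref{ref:flatfamiliesforconstructible:prop} produces a finite flat family of Gorenstein apolar algebras over it. Combining smoothability of every degree-$4$ $\Gamma$ with Corollary~\ref{ref:localdescriptionGorenstein:cor} and Remark~\ref{ref:localdescriptiongraded:rmk}, every $F \in \mathcal{S}$ arises as a flat limit of some $\sum \DPel{\ell_i}{d}$; hence $\mathcal{S}$ is the closure of the image of the irreducible parameter space of ordered quadruples $(\ell_1,\ldots,\ell_4)$ of linearly independent linear forms and is therefore irreducible.
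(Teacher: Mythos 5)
The decisive gap is that you never establish $H_B(4)=4$ and $H_B(5)=4$, and this is where the real work of the paper's proof lies. Knowing $H_B(3)=4$ is not enough to conclude $H_B(i)=4$ for all $i$: the growth from degree $3$ to $4$ is not maximal ($4^{\langle 3\rangle}=5$), so Macaulay only gives $H_B(4)\leq 5$, and Gotzmann's persistence can only be invoked once one has the maximal growth $H_B(4)=H_B(5)=4$ (indeed $4^{\langle 4\rangle}=4$). For $d\geq 6$ one can squeeze these values between $H_A$ and the Macaulay/Gotzmann bounds, but for $d=4$ and $d=5$ the inequality $H_B\geq H_A$ is useless there ($H_A(4)=1$, resp.\ $H_A(5)=1$), and the paper computes $H_B(4)$ and $H_B(5)$ from the graded minimal free resolution of $A$, using its self-duality (the Gorenstein symmetry from Section~\ref{ssec:betti}) to pin down the Betti numbers. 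This is exactly the four-variable analogue of the Buchsbaum--Eisenbud step in Proposition~\ref{ref:thirdsecant:prop}; you misplace that step (in three variables it was used to bound $H_B(4)$ when $d=4$, not to control the growth at degree $3$) and then drop it entirely, writing ``Once $H_B\equiv 4$ for $i\geq 1$'' as if ruling out $H_B(3)=5$ were the only obstacle. Without the $d=4,5$ computations your $\Gamma$ is not known to be finite of degree $4$, and the rest of the argument does not start.

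Secondly, your substitute for ruling out $H_B(3)=5$ is both unnecessary and, as written, unjustified. The paper disposes of this case in one line by Lemma~\ref{ref:P1gotzmann:lem} (with $m=2$, using $2$-saturatedness of $I$ from Remark~\ref{ref:GorensteinSaturated:rmk}), which gives $H_B(1)=3$, a contradiction. Your line-containment picture is correct up to the inclusion $I_2\subseteq(\Dx_3,\Dx_4)_2$, but the claimed conclusion that $F$ must lie in $\kdp[x_1,x_2]$ does not follow from that inclusion and is stronger than what the geometry provides: what the configuration (line plus a point, or a line with an embedded point) actually yields is a single linear form in the \emph{saturated} ideal of $\Gamma$, hence $H_{S/J^{sat}}(1)=3$, and one must then use $2$-saturatedness to transfer this to $H_B(1)$ --- which is precisely the content of Lemma~\ref{ref:P1gotzmann:lem}. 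So this step is fixable by citing the lemma, but the ``explicit apolarity computation'' you place the technical weight on is not where the difficulty is. Your concluding paragraph (smoothability of degree-$4$ schemes via \cite{cartwright_erman_velasco_viray_Hilb8}, the relative inverse systems of Corollary~\ref{ref:localdescriptionGorenstein:cor} and Remark~\ref{ref:localdescriptiongraded:rmk}, and deducing irreducibility from $\mathcal{S}_0\subseteq\mathcal{S}\subseteq\overline{\mathcal{S}_0}$) does match the paper, modulo the minor point that $\mathcal{S}$ is contained in, not equal to, that closure, and that the ``general member'' claim should be deduced after irreducibility rather than before.
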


\begin{proof}
    \def\Dan#1{\Ann{#1}}%
    \def\Ddegf{d}%
    First, the set $\DPut{setzero}{\mathcal{S}_0}$ of forms equal to $\DPel{\ell_1}{\Ddegf}
    + \DPel{\ell_2}{\Ddegf} + \DPel{\ell_3}{\Ddegf} + \DPel{\ell_4}{\Ddegf}$, where $\ell_1$, $\ell_2$, $\ell_3$, $\ell_4$
    are linearly independent linear forms, is irreducible and contained in
    $\Dset$. Then, it is enough
    to prove that $\Dset$ lies in the closure of $\Dsetzero$.

    We follow the proof of Proposition~\ref{ref:thirdsecant:prop}, omitting some details which can be found there.
    Let $\DS = \kk[\Dx_1, \Dx_2, \Dx_3, \Dx_4]$, $I := \Dan{\Dff}$ and $J :=
    (I_2)$. Set $A = S/I$ and $B = S/J$. Then
    $H_B(2) = 4$ and $H_B(3)$ is either $4$ or $5$. If $H_B(3) = 5$, then by
    Lemma~\ref{ref:P1gotzmann:lem} we have $H_B(1) = 3$, a contradiction. Thus
    $H_B(3) = 4$.

    Now we would like to prove $H_B(4) = 4$. By
    Macaulay's Growth Theorem we have $H_B(4) \leq 5$. By
    Lemma~\ref{ref:P1gotzmann:lem} we have $H_B(4) \neq 5$, thus $H_B(4) \leq 4$. If
    $\Ddegf > 4$ then $H_B(4) \geq H_A(4) \geq 4$, so we concentrate on
    the case $\Ddegf = 4$.
    Let us write the minimal free resolution of $A$, which is symmetric, as
    mentioned in Section~\ref{ssec:betti}.
    \[
        0\to S(-8) \to S(-4)^{\oplus a}\oplus S(-6)^{\oplus 6} \to
        S(-3)^{\oplus b}\oplus S(-4)^{\oplus c} \oplus S(-5)^{\oplus b}\to
        S(-2)^{\oplus 6} \oplus S(-4)^{\oplus a}\to S.
    \]
    Calculating $H_A(3) = 4$ from the resolution, we get $b = 8$. Calculating
    $H_A(4) = 1$ we obtain $6 - 2a + c = 0$. Since $1 + a = H_B(4) \leq 4$ we have $a\leq 3$,
    so $a = 3$, $c = 0$ and $H_B(4) = 4$.

    Now we calculate $H_B(5)$. If $\Ddegf > 5$ then $H_B(5) = 4$ as before.
    If $\Ddegf = 4$ then extracting syzygies of $I_2$ from the above resolution
    we see that $H_B(5) = 4
    + \gamma$, where $0\leq \gamma\leq 8$, thus
    $H_B(5) = 4$ and $\gamma = 0$.
    If $\Ddegf = 5$, then the resolution of $A$ is
    \[
        0\to S(-9)\to S(-4)^{\oplus 3}\oplus S(-7)^{\oplus 6}\to S(-3)^{\oplus
        8}\oplus S(-6)^{\oplus 8}\to S(-5)^{\oplus 3}\oplus S(-2)^{\oplus 6}
        \to S.
    \]
    %56 - (20*6) + 8*
    So $H_B(5) = 56 - 20\cdot 6 + 8 = 4$. Thus, as in the previous case we see that $J$ is the
    saturated ideal of a scheme $\Gamma$ of degree $4$. Then $\Gamma$
    is smoothable by \cite{cartwright_erman_velasco_viray_Hilb8} and its smoothing induces a family $\Dff_t\to
    \Dff$, where $\Dff_{\lambda}\in \Dsetzero$ for $\lambda\neq 0$.
\end{proof}

The following Corollary~\ref{ref:equationforsecant:cor} is a consequence of
Proposition~\ref{ref:fourthsecant:prop}. This corollary strengthens
the connection with secant varieties. For simplicity and to refer to some
results from \cite{LO}, we assume that $\kk = \mathbb{C}$, but
the claim holds for all fields of characteristic either zero or large
enough.

\newcommand{\catal}{\Cat_{a, d-a}}%
To formulate the claim we introduce catalecticant matrices.
Let $\catal: \DS_{a}\times \DP_{d}\to\DP_{d-a}$ be the contraction mapping
applied to homogeneous polynomials of degree $d$. For $F\in \DP_d$
we obtain $\catal(F): \DS_a \to \DP_{d-a}$, whose matrix is called the \emph{$a$-catalecticant
matrix}. It is straightforward to see that $\rank \catal(F) =
H_{\Apolar{F}}(a)$.

\begin{corollary}\label{ref:equationforsecant:cor}
    \def\Ddegf{d}%
    Let $\Ddegf \geq 4$ and $\kk = \mathbb{C}$.
    The fourth secant variety to the $\Ddegf$-th Veronese reembedding of
    $\mathbb{P}^n$ is a subset $\sigma_4(\nu_\Ddegf(\mathbb{P}^n)) \subseteq \mathbb{P}(P_d)$ set-theoretically defined
    by the condition $\rank \catal \leq 4$, where $a = \floor{\Ddegf/2}$.
\end{corollary}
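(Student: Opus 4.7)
The plan is to prove the asserted set-theoretic equality by two opposite inclusions; the hard direction will be reduced to Proposition~\ref{ref:fourthsecant:prop} via the Apolarity Lemma and the fact that length-$\leq 4$ subschemes of $\PP^n$ are smoothable.

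For the easy inclusion $\sigma_4(\nu_d(\PP^n)) \subseteq \{[F] : \rank \catal(F) \le 4\}$, I would first observe that the right-hand side is Zariski-closed, being cut out by the $5\times 5$ minors of the catalecticant matrix. It therefore suffices to check the containment on the dense subset $\{F = \sum_{i=1}^{4} \ell_i^d : \ell_i \in \DP_1\}$. For such an $F$, the space of partials satisfies $\Diff(F) \subseteq \sum_{i=1}^{4} \Diff(\ell_i^d)$ in each degree, and each $\Diff(\ell_i^d)$ has Hilbert function $(1,1,\ldots,1)$, so $\rank \catal(F) = H_{\Apolar{F}}(a) \leq 4$.

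For the reverse inclusion, given $F$ with $H_{\Apolar{F}}(a) \leq 4$ I would construct, in analogy with the proof of Proposition~\ref{ref:fourthsecant:prop}, a finite subscheme $\Gamma\subset \PP^n$ of length at most $4$ with $F \in \sspan{\nu_d(\Gamma)}$. Concretely, let $I = \Ann(F)$, let $J$ be the saturation of the homogeneous ideal generated by $I_{\leq a}$, and set $\Gamma = V(J) \subset \PP^n$. Using the symmetry of $H_{\Apolar{F}}$ (Proposition~\ref{ref:symmetryGorensteingraded:prop}), Macaulay's Growth Theorem~\ref{ref:MacaulayGrowth:thm}, and Gotzmann's Persistence Theorem~\ref{ref:Gotzmann:thm}, one checks that $J$ is saturated and that $\Gamma$ is zero-dimensional of degree at most $4$. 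Apolarity then forces $F \in \sspan{\nu_d(\Gamma)}$, so $F$ lies in the cactus variety $\cactus{4}{d}$. Since every length-$\leq 4$ subscheme of $\PP^n$ is smoothable (classical in these low degrees), the same degeneration argument used at the end of Propositions~\ref{ref:thirdsecant:prop} and~\ref{ref:fourthsecant:prop} gives $\cactus{4}{d} = \sigma_4(\nu_d(\PP^n))$, completing the proof.

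The main obstacle is to control $\Gamma$ when the Hilbert function of $\Apolar{F}$ does not have the uniform middle shape $(1,4,4,\ldots,4,1)$ treated in Proposition~\ref{ref:fourthsecant:prop}. When $d\geq 8$ one has $a\geq 4$, and then $H(a) \leq 4 \leq a$, so Corollary~\ref{ref:nonincreasinghf:cor} combined with the symmetry $H(i) = H(d-i)$ forces $H(i) \leq 4$ for every $1 \leq i \leq d-1$; this returns us directly to the setup of Proposition~\ref{ref:fourthsecant:prop} after trimming the embedding dimension. The small degrees $d \in \{4,5,6,7\}$ require a finite case-analysis over the symmetric Gorenstein Hilbert functions with $H(a)\leq 4$: one uses Macaulay's bound on $\DS/(I_{\leq a})$ together with Gotzmann persistence to control $H(i)$ for $i > a$, and rules out shapes incompatible with the ideal-theoretic construction of $\Gamma$ above.
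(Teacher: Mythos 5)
Your high-level strategy coincides with the paper's: reduce to Proposition~\ref{ref:fourthsecant:prop}. However, there is a genuine gap in the reduction, even when $d\geq 8$, which the paper closes but your proposal does not.

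Proposition~\ref{ref:fourthsecant:prop} requires the \emph{exact} Hilbert function $(1,4,4,\dots,4,1)$. Your argument for $d\geq 8$ only establishes the upper bound $H(i)\leq 4$ (via Corollary~\ref{ref:nonincreasinghf:cor} at $i=a$ and symmetry). The remark about ``trimming the embedding dimension'' does not fix this: you still need to rule out, for example, $H(1)=4$ with $H(2)=3$, which is compatible with the symmetry and with Macaulay's bound in the middle degrees but would not let you invoke the proposition. The paper first establishes $H(i)\geq 4$ for all $0<i<d$, and the technical tool that makes this work is Lemma~\ref{ref:P1gotzmann:lem} (combined with the fact from Remark~\ref{ref:GorensteinSaturated:rmk} that the apolar ideal of a form is $m$-saturated for $m\leq d$): if $H(2)=3$ and $H(3)=4$, that lemma forces $H(1)=2$, contradicting $H(1)\geq 4$. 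Without this step, or some replacement for it, the Hilbert function is not pinned down and Proposition~\ref{ref:fourthsecant:prop} does not apply. The paper also handles the complementary case $H(1)\leq 3$ by citing~\cite[Theorem~3.2.1~(2)]{LO}; your proposal does not make the split on $H(1)$ at all.

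Your second paragraph, the proposed direct construction of $\Gamma = V(J)$ with $J$ the saturation of $(I_{\leq a})$, is not a shortcut: showing that this $\Gamma$ is zero-dimensional of degree $\leq 4$ when one only assumes $H(a)\leq 4$ (and not the full shape $(1,4,\dots,4,1)$) is itself the substantive problem, and is essentially the content of the general cactus-variety theorem of~\cite{bubu2010}. One cannot ``check this'' by citing Macaulay and Gotzmann without performing the growth estimates, which is exactly what the paper's Propositions~\ref{ref:thirdsecant:prop} and~\ref{ref:fourthsecant:prop} do --- but only after the Hilbert function is known. The easy inclusion and the observation that all degree-$\leq 4$ Gorenstein schemes are smoothable are fine. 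The cases $d\in\{4,5,6,7\}$ are left entirely to a promised case analysis; the paper handles them individually, again via Lemma~\ref{ref:P1gotzmann:lem}.
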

We assume $\kk = \mathbb{C}$ only to refer to~\cite[Theorem~3.2.1~(2)]{LO}, we
believe that this assumption can be removed.
\begin{proof}
    \def\Dh#1{H(#1)}%
    \def\Ddegf{d}%
    Since $H_{\Apolar{F}}(a) \leq 4$ for $F$ which is a sum of four powers of
    linear forms, by semicontinuity every $F\in \sigma_4(\nu_\Ddegf(\mathbb{P}^n))$ satisfies the
    above condition.

    Let $F\in \DP_\Ddegf$ be a form satisfying $\rank \catal(F) \leq 4$. Let $A = \Apolar{F}$ and $H = H_A$ be the Hilbert
    function of $A$. We will reduce to the case where $\Dh{i} = 4$ for all $0 < i < \Ddegf$.

    First we prove that $\Dh{i} \geq 4$ for all $0 < i < \Ddegf$.
    If $\Dh{1}\leq 3$, then the claim follows from~\cite[Theorem~3.2.1~(2)]{LO},
    so we assume $\Dh{1} \geq 4$.
    Suppose that for some $i$ satisfying  $4 \leq i < \Ddegf$ we
    have $\Dh{i} < 4$. Then by Corollary~\ref{ref:nonincreasinghf:cor} we
    have $\Dh{j} \leq \Dh{i}$ for
    all $j \geq i$, so that $\Dh{1} = \Dh{\Ddegf-1} < 4$, a contradiction. Thus $\Dh{i}\geq
    4$ for all $i\geq 4$. Moreover, $\Dh{3}\geq 4$ by Macaulay's Growth Theorem. Suppose now
    that $\Dh{2}
    < 4$. By Theorem~\ref{ref:MacaulayGrowth:thm} the
    only possible case is $\Dh{2} = 3$ and $\Dh{3} = 4$. But then $\Dh{1} =2 <
    4$ by Lemma~\ref{ref:P1gotzmann:lem}, a contradiction. Thus we have proved
    that
    \begin{equation}\label{eq:lowerbound}
        \Dh{i} \geq 4\quad \mbox{for all}\quad 0 < i < \Ddegf.
    \end{equation}
    We now prove that $\Dh{i} = 4$ for all $0< i < \Ddegf$.
    By assumption, $\Dh{a} = 4$. If $\Ddegf\geq 8$, then $a\geq 4$, so by
    Corollary~\ref{ref:nonincreasinghf:cor} we have $\Dh{i}
    \leq 4$ for all $i > a$. Then by the symmetry $\Dh{i} = \Dh{\Ddegf - i}$ we have $\Dh{i}
    \leq 4$ for all $i$. Together with $\Dh{i}\geq 4$ for $0 < i < \Ddegf$, we have
    $\Dh{i} = 4$ for $0 < i < \Ddegf$. Then $F\in \sigma_4(\nu_\Ddegf(\mathbb{P}^n))$ by
    Proposition~\ref{ref:fourthsecant:prop}.
    If $a = 3$ (i.e.~$\Ddegf = 6$ or $\Ddegf = 7$), then $\Dh{4}\leq 5$ by
    Macaulay~s Theorem~\ref{ref:MacaulayGrowth:thm} and $\Dh{4} = 5$ would
    contradict
    Lemma~\ref{ref:P1gotzmann:lem}, hence $\Dh{4}\leq 4$ and we finish the proof as in the case
    $\Ddegf
    \geq 8$.
    If $\Ddegf = 5$, then $a = 2$ and the Hilbert function of $A$ is $(1, e, 4, 4,
    e, 1)$. Again arguing using Lemma~\ref{ref:P1gotzmann:lem}, we have $e\leq 4$, thus
    $e = 4$ by \eqref{eq:lowerbound} and
    Proposition~\ref{ref:fourthsecant:prop} applies.
    If $\Ddegf = 4$, then $H = (1, e, 4, e, 1)$. Suppose $e\geq 5$, then
    Lemma~\ref{ref:P1gotzmann:lem} gives $e\leq 3$, a contradiction. Thus
    $e = 4$ and Proposition~\ref{ref:fourthsecant:prop} applies also to this case.
\end{proof}

Note that for $d\geq 8$ and $\kk = \CC$, Corollary~\ref{ref:equationforsecant:cor} was
proved in \cite[Theorem~1.1]{bubu2010}.

\chapter{Smoothings}\label{sec:smoothings}

\newcommand{\amb}{\ambient}%
    One geometric way to obtain a finite scheme of degree $r$ embedded into an
    ambient scheme $\amb$
    is to take $r$ points over $\kk$ of $\amb$ and collide them (we
    make this precise in Section~\ref{ssec:abstractpropsofsmoothability}). The result
    is a \emph{smoothable} finite scheme and the family describing
    the trajectories of points is called an \emph{embedded smoothing}. We also
    consider \emph{abstract smoothings}. In the following
    subsections we formally develop the theory of smoothings. Our main aim is
    to prove the following theorem.
\begin{theorem}\label{thm_equivalence_of_abstract_and_embedded_smoothings}
  Suppose $X$ is a smooth variety over a field $\kk$ and
  $R\subset X$ is a finite $\kk$-subscheme.
  The following conditions are equivalent:
  \begin{enumerate}
      \item\label{it:Rabssm} $R$ is abstractly smoothable,
      \item\label{it:Rembsm} $R$ is embedded smoothable in $X$,
      \item\label{it:Rconabssm} every connected component of $R$ is abstractly smoothable,
      \item\label{it:Rconembsm} every connected component of $R$ is embedded smoothable in $X$.
  \end{enumerate}
\end{theorem}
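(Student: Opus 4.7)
The plan is to establish the cycle $(2)\Rightarrow(1)\Rightarrow(3)\Rightarrow(4)\Rightarrow(2)$. The implication $(2)\Rightarrow(1)$ is immediate by forgetting the embedding. The substantive step is $(3)\Rightarrow(4)$; the remaining two implications, which split apart and then recombine the connected components, are essentially bookkeeping once the local step is in hand.

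For $(1)\Rightarrow(3)$, I would take an abstract smoothing $\pi\colon\mathcal{R}\to T$ of $R$ and, after restricting to a pointed smooth affine curve $(T,t_0)$ and localising at $t_0$, use that $\pi_*\OO_{\mathcal{R}}$ is a finite free $\OO_T$-module to lift the orthogonal idempotents in $\OO_R=\pi_*\OO_{\mathcal{R}}/\mathfrak{m}_{t_0}$ corresponding to the decomposition $R=R_1\sqcup\cdots\sqcup R_s$. The lift exists by the henselian property of the finite $\OO_{T,t_0}$-algebra $\pi_*\OO_{\mathcal{R}}$ and produces a decomposition $\mathcal{R}=\mathcal{R}_1\sqcup\cdots\sqcup\mathcal{R}_s$ with each $\mathcal{R}_i\to T$ finite, flat and smooth on the generic fibre, hence an abstract smoothing of $R_i$. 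Dually, for $(4)\Rightarrow(2)$, given embedded smoothings $\mathcal{R}_i\hookrightarrow T_i\times X$ of the components $R_i\subset X$ supported at distinct points $p_i\in X$, I would pass to a common base $T$ by fibre product and shrink around the distinguished point: the image supports are pairwise disjoint on the special fibre and therefore remain pairwise disjoint in a neighbourhood, so the disjoint union of the $\mathcal{R}_i$ sits as a closed subscheme of $T\times X$ that is finite and flat over $T$ and restricts to $R\hookrightarrow X$ at~$t_0$.

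The heart of the argument is $(3)\Rightarrow(4)$, which reduces to the case of a connected $R$ supported at a single point $p\in X$. Since $X$ is smooth at $p$, I would pick an affine open $p\in U\subset X$ together with an \'etale morphism $\varphi\colon U\to\AA^n_\kk$ sending $p$ to $0$. Writing an abstract smoothing as $\pi\colon\mathcal{R}=\Spec B\to T=\Spec A$ with $A$ local (after localising at $t_0$), I note that $B$ is a finite flat local $A$-algebra, local because its closed fibre $\OO_R$ is. I would lift the nilpotent images $\bar x_i\in\OO_R$ of the coordinate functions on $\AA^n$ to elements $y_i\in B$, extending the composite $R\hookrightarrow U\xrightarrow{\varphi}\AA^n$ to a morphism $\mathcal{R}\to T\times\AA^n$ over $T$. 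Using that $T\times U\to T\times\AA^n$ is \'etale and that the finite local $A$-algebra $B$ is henselian, this morphism would lift uniquely to a morphism $\mathcal{R}\to T\times U$ whose restriction to the closed fibre is $R\hookrightarrow U$. Finally, I would apply Nakayama's lemma to the induced ring homomorphism $\OO_{T\times U,(t_0,p)}\to B$: modulo $\mathfrak{m}_A$ it becomes the surjection $\OO_{U,p}\to\OO_R$, and $B$ is finite as a module over $\OO_{T\times U,(t_0,p)}$, so surjectivity lifts and $\mathcal{R}\hookrightarrow T\times X$ is a closed immersion.

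The hardest part will be the \'etale-lifting together with the closed-immersion verification in $(3)\Rightarrow(4)$; this is where the smoothness of $X$ is used decisively, both to produce the \'etale chart $\varphi$ and to supply the finiteness needed for Nakayama. A minor additional subtlety is passing from a local base $\Spec A$ back to a smoothing over a genuine smooth curve $T$, which I would handle by a standard spreading-out argument that algebraises the local construction in a Zariski neighbourhood of~$t_0$.
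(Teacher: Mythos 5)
Your overall route—decompose into components, reduce to a connected scheme supported at one point, and then use an \'etale chart plus Nakayama to produce the embedding—is essentially the paper's proof in a slightly different order: the paper packages (1)$\iff$(3) as Corollary~\ref{ref:smoothingcomponentsresult:cor} and (1)$\iff$(2), (3)$\iff$(4) as Theorem~\ref{ref:abstractvsembedded:thm}, invoking formal smoothness of $X$ directly rather than an \'etale chart $U\to\AA^n$; the closed-immersion step via Nakayama is exactly Lemma~\ref{ref:closedimmersions:lem}.

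There is, however, a genuine gap running through both of your technical steps: you localise at $t_0$ and then treat the resulting rings as if they were Henselian, but they are not. In $(1)\Rightarrow(3)$ you invoke ``the henselian property of the finite $\OO_{T,t_0}$-algebra $\pi_*\OO_{\mathcal{R}}$'' to lift the orthogonal idempotents of $\OO_R$; yet for $T$ an affine curve, $\OO_{T,t_0}$ is merely local, and idempotents do not lift along the non-nilpotent ideal $\mathfrak{m}_{t_0}\pi_*\OO_{\mathcal{R}}$. A finite algebra over a non-Henselian local ring need not split as a product of local rings even when its closed fibre does. Likewise in $(3)\Rightarrow(4)$ you assert that ``the finite local $A$-algebra $B$ is henselian'' in order to lift $\mathcal{R}\to T\times\AA^n$ along the \'etale morphism $T\times U\to T\times\AA^n$; again this fails for $A$ a mere localisation. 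The thickening $\mathfrak{m}_A B\subset B$ is not nilpotent, so formal \'etaleness alone gives you nothing, and Henselianity of $B$ requires Henselianity of $A$. The fix is the one the paper makes precise in Theorem~\ref{ref:goodbaseofsmoothing:thm}: after an allowed base change you may take $T=\Spec A$ with $A$ a \emph{complete} one-dimensional Noetherian local domain. Over a complete (hence Henselian) base, finite algebras do decompose into products of complete local rings (\cite[Theorem~7.2a, Corollary~7.6]{Eisenbud}), and the successive lifting along nilpotent thickenings $B/(\mathfrak{m}_A B)^n$ works as you want. Your closing remark about ``a standard spreading-out argument that algebraises the local construction'' does not rescue the argument, because the completion is not an afterthought to be undone at the end—it is what makes the lifting steps valid at all. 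Once you replace ``localise'' with ``complete'' and keep the base change in the record (a smoothing over $\Spec\hat{\OO}_{T,t_0}$ is still a smoothing in the sense of Definition~\ref{ref:abstractsmoothable:def}), the proof goes through and coincides with the paper's argument up to the cosmetic choice of \'etale chart versus formal smoothness.
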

We closely follow~\cite{jabu_jelisiejew_smoothability}.

\section{Abstract smoothings}\label{ssec:abstractpropsofsmoothability}

In this subsection we introduce smoothings as (finite
flat) generically smooth families with prescribed special fiber. Most
importantly, we prove that once a smoothing exists, also a smoothing over a
small base (one-dimensional complete local ring) exists as well. We deduce
that smoothability can be checked independently on each component of a finite
scheme.

    \begin{definition}[abstract smoothing]\label{ref:abstractsmoothable:def}
        Let $R$ be a~finite scheme over $\kk$. We say that \emph{$R$ is abstractly
        smoothable} if there exist an irreducible scheme
        $T$ and a finite flat family $\famil \to T$
        such that
        \begin{enumerate}
            \item $T$ has a $\kk$-rational point $t$, such that $\famil _t \simeq R$. We call
                $t$ the \emph{special point} of $T$.
            \item $\famil _{\eta}$ is a~smooth scheme over $\eta$, where $\eta$
                is the generic point of $T$.
        \end{enumerate}
        The $T$-scheme $\famil $ is called an \emph{abstract smoothing} of $R$. We
        sometimes
        denote it by $(\famil , R)\to (T, t)$, which means that $t$ is the $\kk$-rational point of $T$, such that
        $\famil _t  \simeq R$.
    \end{definition}

    An abstract smoothing $\famil\to T$ is finite, so the relative cotangent
    sheaf $\Omega_{\famil/T}$ is coherent. Therefore the set of fibers which are smooth is
    \emph{open} and, by assumption, non-empty. For example, if $T$ is a curve, then all but
    finitely many fibers are smooth. A fiber over a $\kk$-point is smooth if
    and only if it is a union of $\deg R$ points.

    \begin{example}
        Any finite smooth scheme $R$ has a trivial smoothing
        $\famil =R$, $T = \Spec \kk$.
    \end{example}

    \begin{example}\label{ex:fieldExtensions}
        \def\KK{\mathbb{K}}%
        For every finite field extension $\kk\subset \KK$ the $\kk$-scheme $R =
        \Spec \KK$ is smoothable.  Suppose first that $\kk \subset
        \KK$ is  a separable extension.  Then $R = \Spec \KK$ is smooth over
        $\kk$, so it is trivially smoothable.
%    To prove this, we work by induction on the degree of extension.
        Suppose now that $\kk \subset \KK$ is not separable.
        Every extension of a finite field is separable, so we may assume $\kk$ is infinite.
        We may decompose $\kk \subset \KK$ as a chain of one-element extensions
        $\kk \subset \kk(t_1) \subset \kk(t_1,t_2) \subset  \ldots
        \subset \kk(t_1, \ldots ,t_n) = \KK$.
        Then $\KK = \kk[\Dx_1, \ldots ,\Dx_n]/(f_1, \ldots ,f_n)$ where $f_i$ is the lifting of minimal polynomial of $t_i$; in
        particular $f_i = \Dx_i^{d_i} + a_{i,d_i-1}\Dx_i^{d_i-1} +  \ldots  + a_{i, 0}$, where
        $a_{i,j}\in \kk[\Dx_1, \ldots ,\Dx_{i-1}]$.
        We now inductively construct, for $i=1, \ldots ,n$, polynomials $F_i\in \kk[\Dx_1, \ldots
        ,\Dx_n][t]$ such that:
        \begin{enumerate}
            \item\label{it:cond1} The family $\famil  = \Spec \kk[\Dx_1, \ldots ,\Dx_n,t]/(F_1, \ldots ,F_n)\to
                \Spec \kk[t]$ is flat and finite,
            \item\label{it:cond2} $F_i(0) = f_i$, so that $\famil _{|t=0} = \Spec \KK$,
            \item\label{it:cond3} The fiber $\famil _{|t=1}$ is a disjoint union of copies of $\kk$.
        \end{enumerate}
        First, we construct polynomials $g_i$ such that $g_i$ has degree $d_i =
        \deg(f_i)$ and
        \begin{equation}\label{eq:formofgi}
            g_i = \Dx_i^{d_i} + b_{i,d_i-1}\Dx_i^{d_i-1} +  \ldots  + b_{i, 0}\quad
            \mbox{where}\quad
            b_{i,j}\in \kk[\Dx_1, \ldots ,\Dx_{i-1}]
        \end{equation}
        and $\kk[\Dx_1, \ldots ,\Dx_n]/(g_1,
        \ldots ,g_n)$ is a product of $\kk$. This is done inductively. We choose $g_1$ as any
        polynomial of degree $d_1$ having $d_1$ distinct roots in $\kk$, then
        $\kk[\Dx_1]/g_1  \simeq \kk^{\times d_1}$.
        We choose $g_2$ as a polynomial of degree $d_2$ in $\kk[\Dx_1,\Dx_2]/g_1 \simeq
        (\kk[\Dx_2])^{\times d_1}$ such that after projecting to each factor $g_i$
        has $d_2$ distinct roots in $\kk$, then $\kk[\Dx_1,\Dx_2]/(g_1, g_2)  \simeq
        \kk^{\times d_1d_2}$ and so we continue.

        Define $F_i = (1-t)f_i + t g_i$, then $F_i =
        \Dx_i^{d_i}+((1-t)a_{i,d_{i}-1}+tb_{i,d_i-1})\Dx_i^{d_i-1} + \ldots
        +((1-t)a_{i,0}+tb_{i,0})$ where $(1-t)a_{j,d_{i}-1}+tb_{j,d_i-1}\in
        \kk[\Dx_1, \ldots ,\Dx_{i-1}][t]$.
        From this ``upper-triangular'' form of $F_i$ we see that the quotient
        \[
            \kk[\Dx_1, \ldots ,\Dx_n,t]/(F_1, \ldots ,F_n)
        \]
        is a free $\kk[t]$-module with basis consisting of monomials
        $\Dx_1^{s_1} \ldots \Dx_n^{s_n}$ such that $s_i < d_i$ for all $i$.
        Hence, Condition~\ref{it:cond1} is satisfied;
        Conditions~\ref{it:cond2} and~\ref{it:cond3} are satisfied by construction. In
        particular, a fiber of $\famil $ is smooth, so the generic fiber is also smooth, thus
        the family $\famil $ is a smoothing of $\kk$-scheme $\Spec
        \KK$.
    \end{example}

We now introduce \emph{embedded smoothings}.
The difference between the previous setting is in the presence of the ambient
scheme $\amb$, where the whole smoothing must be embedded.
According to Theorem~\ref{thm_equivalence_of_abstract_and_embedded_smoothings}
we eventually prove that for smooth $\amb$ the notions are equivalent.

    \begin{definition}[embedded smoothing]\label{ref:smoothable:def}
        Let $\amb$ be a~scheme and $R$ be a~finite closed subscheme of $\amb$. We say that
        $R$ is \emph{smoothable in $\amb$} if there exist an irreducible scheme
        $T$ and a~closed subscheme $\famil  \subseteq \amb \times T$ such that $\famil \to T$ is
        an abstract smoothing of $R$.
        The scheme $\famil $ is called an \emph{embedded
        smoothing} of $R \subseteq \amb$.
    \end{definition}

\begin{example}
   Let $\amb= \Spec \kk[\Dx,\Dy,\Dz]/(\Dx\Dy,\Dx\Dz,\Dy\Dz)$, i.e.~$\amb$ is the union of three coordinate lines 
     in the three dimensional affine space $\AA_{\kk}^3$.
   Let $R= \Spec \kk[\Dx,\Dy,\Dz]/(\Dx-\Dy, \Dx-\Dz, \Dx^2) \simeq \kk[\epsilon]/\epsilon^2$
   be the degree two subscheme of $\amb$,
     which is the intersection of $\amb$ with the affine line $x=y=z$. 
   Then $R$ is abstractly smoothable, but $R$ is not smoothable in $\amb$.
\end{example}

    Smoothings behave well under base-change, modulo the existence of
    a $\kk$-point.

    \begin{lemma}[Base change for smoothings]\label{ref:basechange:lem}
        Let $T$ be an irreducible scheme with the generic point $\eta$ and a
        $\kk$-rational point $t$.  Let $(\famil , R)\to (T, t)$ be an abstract~smoothing of a~finite scheme $R$.

        Suppose $T'$ is another irreducible scheme with a~morphism $f\colon T' \to T$ such that
        $\eta$ is in the image of $f$ and there
        exists a~$\kk$-rational point of $T'$ mapping to $t$. Then
        the base change $\famil ' = \famil  \times_T T' \to T'$ is an abstract~smoothing
        of $R$.
        Moreover, if $R$ is embedded into some $\amb$ and
        $\famil \subseteq \amb\times T$ is an embedded smoothing, then $\famil ' \subseteq
        \amb\times T'$ is also an embedded smoothing of $R\subseteq \amb$.
    \end{lemma}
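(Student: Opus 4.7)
The plan is to verify the three conditions in Definition~\ref{ref:abstractsmoothable:def} for the base-changed family $\famil' = \famil \times_T T' \to T'$, and then handle the embedded case separately as a simple formal consequence. Finiteness and flatness are stable under arbitrary base change, so since $\famil \to T$ is finite flat, so is $\famil' \to T'$. Thus the only nontrivial points are (i) identifying the fiber of $\famil'$ over the chosen $\kk$-rational point $t' \in T'$ with $R$, and (ii) showing that the generic fiber of $\famil' \to T'$ is smooth.

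For (i), let $t' \in T'$ be the $\kk$-rational point with $f(t') = t$. Since both $\kappa(t)$ and $\kappa(t')$ are equal to $\kk$ and the induced map $\kappa(t) \to \kappa(t')$ is the identity, the transitivity of fibre products gives
\[
  \famil'_{t'} \;=\; \famil \times_T \Spec\kappa(t') \;=\; \bigl(\famil \times_T \Spec\kappa(t)\bigr) \times_{\Spec \kk} \Spec \kk \;=\; \famil_t \;\simeq\; R,
\]
which is what we need.

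For (ii), the essential observation is that since $\eta$ lies in the image of $f$, the generic point $\eta'$ of $T'$ must map to $\eta$. Indeed, $T'$ is irreducible, so every point specializes from $\eta'$; hence every point in the image of $f$ is a specialization of $f(\eta')$. If $\eta$ (the generic point of $T$) is in the image, it can only be a specialization of $f(\eta')$ if $f(\eta') = \eta$. It then follows, again by transitivity of fibre products, that $\famil'_{\eta'} = \famil_{\eta} \times_{\Spec \kappa(\eta)} \Spec \kappa(\eta')$. Since smoothness is stable under base change of the base field and $\famil_\eta$ is smooth over $\kappa(\eta)$ by hypothesis, $\famil'_{\eta'}$ is smooth over $\kappa(\eta')$.

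The embedded statement is essentially tautological: if $\famil \subseteq \amb \times T$ is a closed subscheme, then
\[
  \famil' \;=\; \famil \times_T T' \;\subseteq\; (\amb \times T) \times_T T' \;=\; \amb \times T'
\]
is still a closed embedding, so the base-changed family realizes $R \subseteq \amb$ as an embedded smoothing over $T'$. There is no serious obstacle here; the only delicate point is the observation that $f(\eta') = \eta$, which relies crucially on the irreducibility of $T'$ together with the assumption that $\eta$ is hit by $f$.
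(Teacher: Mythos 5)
Your proof is correct and follows the same route as the paper: base change preserves finiteness, flatness, and smoothness of fibers, and the key topological observation is that $f(\eta')=\eta$. The paper simply asserts that $\eta'$ maps to $\eta$ without spelling out the specialization argument, and omits the (straightforward) verification of the fiber over $t'$, both of which you supply explicitly.
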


    \begin{proof}
        $\famil '\to T'$ is finite and flat. The generic point $\eta'$ of $T'$ maps
        to $\eta$ under $f$ so that $\famil '_{\eta'}\to \eta'$ is a~base change of a
        smooth morphism $\famil _{\eta}\to \eta$. In particular it is smooth, so
        that $\famil '\to T'$ is a~smoothing of $R$. If $\famil  \subseteq \amb\times T$ was
        a~closed subscheme, then $\famil '\subseteq \amb\times T'$ is also a~closed
        subscheme.
    \end{proof}

    The next lemma can be informally summarised as follows: if $U$ is an open
    subset of a scheme $T$, and $t$ is a point in the closure of $U$, then
    there exists a curve in $T$ through $t$ intersecting $U$.
    \begin{lemma}\label{lem_finding_a_curve_through_point_and_open_subset}
        Suppose $T$ is a scheme, $U \subset T$ is an open subset and $t\in T$
        is a point in $\overline{U}$.
        Suppose the residue field of $t$ is $\kappa$.
        Then there exists a one-dimensional Noetherian complete local domain
        $A'$ with residue field $\kappa$,
           and a morphism $T'= \Spec A' \to T$, such that the closed point $t'
           \in T'$ is $\kappa$-rational and it is mapped to $t$ and the generic point $\eta' \in T'$ is mapped into $U$.

        If in addition $\kappa$ is algebraically closed, we may furthermore
        assume that $A' = \kappa[[x]]$.
   \end{lemma}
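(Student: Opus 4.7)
\noindent\textit{Proof plan.} First I would reduce to the case when $T$ is integral by choosing $u\in U$ with $t\in\overline{\{u\}}$ (which exists since $t\in\overline{U}$) and replacing $T$ by the reduced closed subscheme $\overline{\{u\}}$ (and $U$ by $U\cap\overline{\{u\}}$). Then $R:=\OO_{T,t}$ is a Noetherian local domain of some dimension $d$ with residue field $\kappa$, and the ideal $I\subset R$ cutting out $\Spec R\setminus U$ is nonzero. (If $d=0$ then $T=\{t\}\subset U$ and taking $A':=\kappa[[x]]$ with its structural map to $T$ already works, so assume $d\geq 1$.)

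Next I would construct inductively a chain of primes $(0)=\mathfrak{q}_0\subsetneq\cdots\subsetneq\mathfrak{q}_{d-1}$ in $R$ with $\operatorname{ht}(\mathfrak{q}_i)=i$ and $I\not\subset\mathfrak{q}_i$ for all $i$. At the inductive step, working in the local domain $R/\mathfrak{q}_i$ of dimension $d-i\geq 2$, I pick a nonzero element $\bar f$ in the image of $I$; the minimal primes $\mathfrak{p}_1,\dots,\mathfrak{p}_k$ over $(\bar f)$ are of height one by Krull's Hauptidealsatz, so the maximal ideal (of height $\geq 2$) is not contained in $\bigcup_j\mathfrak{p}_j$. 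Prime avoidance produces $g$ in the maximal ideal outside this union; any minimal prime over $(g)$ is of height one, differs from every $\mathfrak{p}_j$, and hence does not contain $\bar f$. Lifting yields $\mathfrak{q}_{i+1}$. Set $\mathfrak{q}:=\mathfrak{q}_{d-1}$ and $R':=R/\mathfrak{q}$, a one-dimensional local domain with $I\not\subset(0)\subset R'$. Now complete: let $\mathfrak{p}'$ be a minimal prime of $\widehat{R'}$ with $\dim\widehat{R'}/\mathfrak{p}'=1$, and set $A':=\widehat{R'}/\mathfrak{p}'$, a complete Noetherian local domain of dimension one with residue field $\kappa$. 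The composition $\Spec A'\to\Spec\widehat{R'}\to\Spec R'\hookrightarrow\Spec R\to T$ maps the closed point to $t$; by faithful flatness of $R'\to\widehat{R'}$ and the fact that $R'$ is a domain, the minimal prime $\mathfrak{p}'$ contracts to $(0)$ in $R'$, so the generic point of $\Spec A'$ maps to the point of $T$ corresponding to $\mathfrak{q}\in\Spec R$, which lies in $U$ since $I\not\subset\mathfrak{q}$.

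If in addition $\kappa$ is algebraically closed, then since $T$ is a $\kk$-scheme $A'$ is equicharacteristic. Its normalization $\widetilde A$ is finite over $A'$ because complete Noetherian local rings are Nagata, and hence $\widetilde A$ is a complete discrete valuation ring whose residue field, being a finite extension of $\kappa$, equals $\kappa$. Cohen's structure theorem then gives $\widetilde A\simeq\kappa[[x]]$, and composing $\Spec\widetilde A\to\Spec A'\to T$ provides the required morphism with $A'=\kappa[[x]]$. The main technical obstacle is controlling the prime-avoidance induction --- specifically, ensuring at each step that the witness $\bar f$ for $I\not\subset\mathfrak{q}_i$ survives in $R/\mathfrak{q}_{i+1}$; this reduces to the standard fact that the maximal ideal of a Noetherian local domain of dimension at least two is not contained in any finite union of height-one primes.
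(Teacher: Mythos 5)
Your argument is correct but organized in a different order from the paper's. The paper first completes $\OO_{T,t}$, quotients by a minimal prime whose generic point lands in $U$ (obtaining a complete Noetherian local domain), and then cuts dimension by iteratively replacing the spectrum with $\overline{\{v\}}$ for a suitable $v\in U$; termination of that loop invokes the Artin--Tate theorem (a Noetherian local domain whose generic point is open has dimension at most one). You instead cut dimension first inside $\OO_{T,t}$ by building an explicit prime chain via Krull's Hauptidealsatz and prime avoidance, and only then complete and pass to a quotient by a minimal prime of the completion. What your ordering buys is an elementary replacement for the Artin--Tate citation: your construction of $\mathfrak{q}_{i+1}$ whenever $\dim R/\mathfrak{q}_i\ge 2$ automatically rules out $U=\{\eta\}$ in dimension $\ge 2$, which is precisely the content of that theorem. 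One imprecision worth flagging: since you cut before completing, $R=\OO_{T,t}$ need not be catenary, so the equalities $\operatorname{ht}(\mathfrak{q}_i)=i$ and $\dim R/\mathfrak{q}_i=d-i$ may fail and the chain may terminate at some index smaller than $d-1$; this is harmless (the induction runs as long as $\dim R/\mathfrak{q}_i\ge 2$ and stops as soon as the quotient has dimension one), but the paper's complete-first ordering avoids the question entirely because completions of Noetherian local rings are excellent, hence catenary. Likewise, your assertion that $\widetilde A$ is local (needed to conclude it is a DVR) deserves one extra sentence: it holds because the complete local ring $A'$ is Henselian, so the integral closure of the domain $A'$ in its fraction field is again local; this same observation collapses the paper's ``finite product'' $\prod B_i$ to a single factor. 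Finally, both you and the paper tacitly use that the complete local ring is equicharacteristic (true in the paper's standing context where $T$ is a $\kk$-scheme) in order to get $\kappa[[x]]$ from Cohen's structure theorem.
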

    \begin{proof}
        \def\gotp{\mathfrak{p}}%
        First, we may replace $T$ with $\Spec A_1 := \Spec \OO_{T, t}$, and $U$ with the preimage under $\Spec A_1 \to T$.
        The ring $A_1$ is Noetherian by our global assumption.
        Let $A$  be the completion of $A_1$ at the maximal ideal $\mathfrak{m}$ of $A_1$.
        Then $A$ is Noetherian and flat over $A_1$, so that
        $\Spec A\to \Spec A_1$ is surjective
        \cite[Tag~0316, Tag~0250, items~(6)\&(7)]{stacks_project}.
        Let any prime ideal $ \gotp \subset A$ mapping to the generic point of $T$.
        Then $\Spec A/\mathfrak{p}$ is integral and satisfies the assertions on $T'$
        except, perhaps, one-dimensionality.
        Replace $T$ with $\Spec A/\mathfrak{p}$ and $U$ with the preimage under $\Spec A/\mathfrak{p} \to T$.

        If $\dim T =0$, then $T = \{t\} = \Spec \kappa$ and $U = T$, and $T'
        = \Spec \kappa[[x]]$ with a morphism $T' \to T$ corresponding to
        $\kappa \to \kappa[[x]]$ will satisfy the claim of the lemma.
        So suppose $\dim T>0$.

        Let $\eta$ be a generic point of $T$.
        If $U = \{\eta\}$, then $T$ is at most one-dimensional by the Theorem of
        Artin-Tate, see~\cite[Corollary~B.62]{gortz_wedhorn_algebraic_geometry_I}.
        If not, then we may take an irreducible closed subset $V \subsetneq T$ such that the generic point of $V$ is in $U$ and again replace $T$ with $V$. 
        Since $\dim V < \dim T <\infty$, after a finite number of such replacements we obtain that $\dim T = 1$.
        Thus $T$ is a spectrum of a Noetherian complete local domain with
        quotient field $\kappa$ and we may take the identity $T'=T$ to finishes the proof of the first part.

        Suppose now that $\kappa$ is algebraically closed. 
        We may assume that $T = \Spec A$, is as above. 
        Let $\mathfrak{m}$ be the maximal ideal of $A$.
        The normalisation $\tilde{A}$ of $A$ is a finite $A$-module, see
        e.g.~\cite[Appendix 1, Corollary~2]{nagata_algebraic_geo_over_Dedekind_two}.
        Then $\tilde{T} = \Spec \tilde{A} \to T$ is finite and dominating, thus it is onto.
        Since $\kappa$ is algebraically closed, any point in the preimage of the
        special point is a $\kappa$-rational point, thus $\tilde{T}\to T$ satisfies claim of the lemma.
        Now $\tilde{A}$ is a one-dimensional normal Noetherian domain which is a finite $A$-module.
        By \cite[Corollary~7.6]{Eisenbud} the algebra $\tilde{A}$ 
          is a finite product $\tilde{A} = \prod B_i$,
          where each $B_i$ is local and complete, and the residue field of
          each $B_i$ is $\kappa$.
        From the first part of the proof it follows, that
        we may replace $\tilde{A}$ by one of the factors $B_i$, 
        which is a one-dimensional
        Noetherian normal local complete domain with quotient field $\kappa$.
        Thus $B_i$ is regular by Serre's criterion~\cite[Theorem~11.5]{Eisenbud}, so from
        the Cohen Structure Theorem~\cite[Theorem~7.7]{Eisenbud}, it follows
        that $B_i$ is isomorphic to $\kappa[[x]]$.	
    \end{proof}

 \begin{example}
    Suppose $\kk=\RR$ and consider the $\RR$-algebra
    $A:=\RR \oplus x \CC[[x]] \subset \tilde{A} := \CC[[x]]$.
    Then the normalisation of $A$ is $\tilde{A}$, which has no $\RR$-points. 
    This illustrates that in the proof of the final part of
    Lemma~\ref{lem_finding_a_curve_through_point_and_open_subset}
    the assumption that $\kappa$ is algebraically closed is necessary.
\end{example}

    The following Theorem~\ref{ref:goodbaseofsmoothing:thm} is the key
    result of this section. It allows us to shrink the base of smoothing to an
    algebraic analogue of a small one-dimensional disk.
    \begin{theorem}\label{ref:goodbaseofsmoothing:thm}
        Let $\famil \to T$ be an abstract or embedded smoothing of some scheme.
        Then, after a~base change, we may assume that $T  \simeq \Spec A$, where $A$
        is a one-dimensional Noetherian complete local domain with quotient field $\kk$.

        If $\kk$ is algebraically closed, we may furthermore
        assume that $A = \kk[[x]]$.
    \end{theorem}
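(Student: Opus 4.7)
The plan is to combine the two preceding lemmas: Lemma~\ref{lem_finding_a_curve_through_point_and_open_subset} provides a suitable one-dimensional base $T'$ mapping into $T$, and Lemma~\ref{ref:basechange:lem} guarantees that pulling back the smoothing along this map preserves the smoothing property.

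First I would identify the relevant open subset of $T$. Let $\pi\colon \famil \to T$ be the given (abstract or embedded) smoothing. Since $\pi$ is finite, the relative cotangent sheaf $\Omega_{\famil/T}$ is coherent, so the locus
\[
U := \{s \in T \mid \pi \text{ is smooth over a neighborhood of } s\}
\]
is open in $T$. By the definition of a smoothing, the generic point $\eta$ of $T$ lies in $U$, so $U$ is nonempty; since $T$ is irreducible, $U$ is dense, and in particular $t \in \overline{U}$.

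Next I would invoke Lemma~\ref{lem_finding_a_curve_through_point_and_open_subset} with the pair $(T, U)$ and the distinguished point $t$. The special point $t$ is $\kk$-rational, so its residue field is $\kappa = \kk$. The lemma then produces a one-dimensional Noetherian complete local domain $A'$ with residue field $\kk$, together with a morphism $f\colon T' := \Spec A' \to T$ sending the closed point $t'$ to $t$ (with $t'$ being $\kk$-rational) and the generic point $\eta'$ into $U$. In the case $\kk = \kkbar$, the second part of the same lemma additionally allows us to take $A' = \kk[[x]]$.

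Finally I would apply Lemma~\ref{ref:basechange:lem} to the map $f\colon T' \to T$. The generic point $\eta'$ maps into $U$, in particular its image lies in the smooth locus of $\pi$, and $t'$ is a $\kk$-rational point mapping to $t$. Hence the hypotheses of the base change lemma are satisfied and $\famil' := \famil \times_T T' \to T'$ is again a smoothing of $R$; moreover, if the original smoothing was embedded in $\amb \times T$, then $\famil' \subseteq \amb \times T'$ is an embedded smoothing as well. This yields the required smoothing over a base of the desired form, and the proof is complete.

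There is no real obstacle here once the two preparatory lemmas are in place: the only subtlety is confirming that the image of $\eta'$ lies in $U$ rather than merely in the closure of $U$, but this is exactly what the conclusion of Lemma~\ref{lem_finding_a_curve_through_point_and_open_subset} guarantees, since we applied it to the open set $U$ itself.
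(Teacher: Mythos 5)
Your proof is correct and takes essentially the same route as the paper: identify the open locus $U$ of smooth fibers (nonempty because it contains the generic point, since $\Omega_{\famil/T}$ is coherent by finiteness), apply Lemma~\ref{lem_finding_a_curve_through_point_and_open_subset} to produce the one-dimensional complete local base mapping $t'\mapsto t$ and $\eta'$ into $U$, and finish with the base-change Lemma~\ref{ref:basechange:lem}. The paper compresses this into a few lines but the argument is the same.
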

    \begin{proof}
        Since $\famil \to T$ is finite, the relative differentials sheaf is coherent
        over $T$, so that there exists an open neighbourhood $U$ of the generic point $\eta$
        such that $\famil _{u}$ is smooth for any $u\in U$.
        Thus the claim is a combination of Lemmas~\ref{ref:basechange:lem} and \ref{lem_finding_a_curve_through_point_and_open_subset}.
    \end{proof}

    Now we recall the correspondence between the smoothings of $R$ and of its
    connected components. Intuitively, by Theorem~\ref{ref:goodbaseofsmoothing:thm} we
    may choose such a small basis of the smoothings, that smoothings of connected components are
    connected components of the smoothing.
    \begin{proposition}\label{ref:smoothingcomponents:prop}
        Let $R = R_1\sqcup R_2\sqcup  \ldots  \sqcup R_k$ be a~finite scheme.
        If $(\famil _i, R_i)\to (T, t)$ are abstract smoothings of $R_i$ over some
        base $T$, then $\famil  =
        \bigsqcup \famil _i\to T$ is an abstract~smoothing of $R$.

        Conversely, let $(\famil , R)\to (T, t)$ be an abstract smoothing of $R$
        over $T=\Spec A$, where $A = (A,\mathfrak{m}, \kk)$ is a~local
        complete $\kk$-algebra. Then $\famil  = \famil _1\sqcup  \ldots \sqcup \famil _k$, where
        $(\famil _i, R_i)\to (T, t)$ is an abstract smoothing of $R_i$.
    \end{proposition}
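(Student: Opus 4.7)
The forward direction is essentially formal. Given abstract smoothings $\famil_i \to T$ of each $R_i$, the disjoint union $\famil := \bigsqcup \famil_i \to T$ is finite and flat (both properties are local on the source), its fiber over $t$ is $\bigsqcup R_i = R$, and its generic fiber is smooth as a disjoint union of smooth schemes over $\eta$. So I would dispose of this direction in one or two lines.

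For the converse, the idea is to decompose $B := \pi_*\OO_{\famil}$ as a product of complete local $A$-algebras and take the corresponding open-and-closed subschemes of $\famil$. The key input is that $A$ is a Noetherian complete local ring with residue field $\kk$ and $B$ is a finite $A$-algebra. Then $B$ is semilocal and its maximal ideals are exactly the preimages of the maximal ideals of $B/\mathfrak{m}B = \OO_R$, which in turn correspond bijectively to the connected components $R_1,\dots,R_k$ of $R$ (since $R$ is a finite $\kk$-scheme, a product of local Artinian $\kk$-algebras). By the structure theorem for finite algebras over a complete local Noetherian ring --- the same result (\cite[Corollary~7.6]{Eisenbud}) already invoked in the proof of Lemma~\ref{lem_finding_a_curve_through_point_and_open_subset} --- one obtains a canonical decomposition $B = \prod_{i=1}^k B_i$ where each $B_i$ is a complete local $A$-algebra with residue $\OO_{R_i}$.

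Setting $\famil_i := \Spec B_i$ gives a decomposition $\famil = \bigsqcup \famil_i$ into open and closed subschemes. Each $\famil_i \to T$ is automatically finite, and it is flat over $A$ because $B_i$ is a direct factor of the flat $A$-module $B$. By construction the fiber $(\famil_i)_t$ equals $R_i$. It remains to verify that the generic fiber $(\famil_i)_\eta$ is smooth over $\eta$; this is immediate because $(\famil_i)_\eta$ is open and closed inside $\famil_\eta$, which is smooth over $\eta$ by hypothesis. Thus each $(\famil_i, R_i) \to (T,t)$ is an abstract smoothing.

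The only conceptual step is the decomposition $B = \prod B_i$; I expect no genuine obstacle since completeness of $A$ combined with finiteness of $B$ reduces this to lifting the orthogonal idempotents of $B/\mathfrak{m}B$ to $B$, which is a standard consequence of Hensel-type arguments and is exactly the cited Eisenbud result. Note also that no hypothesis on $\kk$ or on the residue field structure of $A$ is needed beyond what is given, because the connected components of $R$ correspond to maximal ideals of $\OO_R$ regardless of separability considerations.
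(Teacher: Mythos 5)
Your proposal is correct and takes essentially the same route as the paper: both directions are handled identically, and the converse uses the exact same ingredient (the decomposition $B=\prod B_i$ of the finite $A$-algebra $B$ via \cite[Corollary~7.6]{Eisenbud}, with $A$ complete local Noetherian), followed by the same formal verifications of finiteness, flatness, the special fiber, and smoothness of the generic fiber. The only cosmetic difference is that you justify flatness of $B_i$ as a direct factor of the flat module $B$ while the paper phrases it as a localisation; these are interchangeable here.
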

    \begin{proof}
        The first claim is clear, since we may check that $\famil  = \bigsqcup \famil _i$
        is flat and finite locally on connected components of $\famil $.
        Let $\eta$ be the generic point of $T$, then
        $\famil _{\eta} = \bigsqcup (\famil _i)_{\eta}$ is smooth over $\eta$ since
        $(\famil _i)_{\eta}$ are all smooth.

        \def\nn{\mathfrak{n}}
        \def\mm{\mathfrak{m}}
        For the second part, note that $\famil $ is affine by definition.
        Let $\famil  = \Spec B$, then
        $B$ is a finite $A$-module and $R = \Spec B/\mm B$.
        Let $\nn_i$ be the maximal ideals in $B$ containing $\mm$. They
        correspond bijectively to maximal ideals of $B/\mm B$ and thus to
        components of $R$. Namely, $R_i = (B/\mm B)_{\nn_i}$ for appropriate indexing of $\nn_i$.
        Since $A$ is complete Noetherian
        $\kk$-algebra, by~\cite[Theorem~7.2a,~Corollary~7.6]{Eisenbud} we get that $B =
        B_{\nn_1} \times  \ldots \times B_{\nn_n}$.
        Then $B_{\nn_i}$ is a flat $A$-module, as a localisation of $B$,
        and also a finite $A$-module, since it may be regarded as a quotient
        of $B$. The fiber of $B_{\nn_i}$ over the generic point of $\Spec A$ is a
        localisation of the fiber of $B$. Therefore $\Spec B_{\nn_i} \to \Spec
        A$ is a smoothing of $\Spec (B_{\nn_i})/\mm B_{\nn_i} = \Spec (B/\mm
        B)_{\nn_i} = R_i$.
    \end{proof}

    \begin{corollary}\label{ref:smoothingcomponentsresult:cor}
        Let $R = R_1\sqcup R_2\sqcup  \ldots  \sqcup R_k$ be a~finite scheme.
        Then $R$ is abstractly smoothable if and only if each $R_i$ is
        abstractly smoothable.
    \end{corollary}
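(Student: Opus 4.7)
The forward direction is almost immediate from the tools already developed. Suppose $R$ admits an abstract smoothing $\famil\to T$. By Theorem~\ref{ref:goodbaseofsmoothing:thm} I may, after a base change, take $T=\Spec A$ with $A$ a one-dimensional Noetherian complete local $\kk$-domain. Then the second half of Proposition~\ref{ref:smoothingcomponents:prop} directly produces a disjoint-union decomposition $\famil=\famil_1\sqcup\cdots\sqcup\famil_k$ in which each $\famil_i\to\Spec A$ is an abstract smoothing of $R_i$.

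The reverse direction requires combining individual smoothings $(\famil_i,R_i)\to(T_i,t_i)$ of each $R_i$ into a single smoothing of their disjoint union. The strategy is to put them all over a common irreducible base $T$ and then invoke the first half of Proposition~\ref{ref:smoothingcomponents:prop}. Applying Theorem~\ref{ref:goodbaseofsmoothing:thm} first, I may assume $T_i=\Spec A_i$ with each $A_i$ a one-dimensional Noetherian complete local $\kk$-domain with residue field $\kk$. I would then construct the common base as the spectrum of $B:=\widehat{B}/\mathfrak{p}$, where $\widehat{B}$ is the completion of $A_1\otimes_\kk\cdots\otimes_\kk A_k$ at the ideal $\sum_i\mathfrak{m}_{A_i}$, and $\mathfrak{p}\subset\widehat{B}$ is a minimal prime. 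By Cohen's structure theorem each $A_i$ is a quotient of a formal power series ring, so $\widehat{B}$ is itself a quotient of $\kk[[x_1,\ldots,x_N]]$ for $N=\sum n_i$, hence Noetherian complete local with residue field $\kk$, and consequently $B$ is a Noetherian complete local $\kk$-domain with residue field $\kk$.

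The critical point is that each composite $A_i\hookrightarrow B$ should be injective, so that the generic point of $T:=\Spec B$ maps to the generic point of $T_i$. Since $A_i\to A_1\otimes_\kk\cdots\otimes_\kk A_k$ is faithfully flat, being the base change of the faithfully flat map $\kk\to\bigotimes_{j\ne i}A_j$, and since the composition with the map into $\widehat{B}$ remains faithfully flat (it is a local homomorphism landing in a Noetherian complete local ring), going-down for faithfully flat extensions forces $\mathfrak{p}\cap A_i=(0)$ for every $i$, giving the desired injectivity. Lemma~\ref{ref:basechange:lem} now certifies that each base change $\famil_i\times_{T_i}T\to T$ is an abstract smoothing of $R_i$ over $T$, and the first half of Proposition~\ref{ref:smoothingcomponents:prop} assembles them into an abstract smoothing of $R=\bigsqcup R_i$ over $T$.

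The principal technical hurdle is precisely this simultaneous injectivity, which rests on Noetherianity of $\widehat{B}$ together with faithful flatness of each $A_i\to\widehat{B}$. In the algebraically closed case the entire construction collapses: Theorem~\ref{ref:goodbaseofsmoothing:thm} permits the uniform choice $A_i=\kk[[x]]$ for every $i$, and the naive disjoint union $\bigsqcup\famil_i\to\Spec\kk[[x]]$ already serves as a smoothing, bypassing the tensor product construction entirely.
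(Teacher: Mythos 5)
Your structure mirrors the paper's: the forward direction uses Theorem~\ref{ref:goodbaseofsmoothing:thm} followed by the second half of Proposition~\ref{ref:smoothingcomponents:prop} exactly as the paper does, and the reverse direction base-changes the individual smoothings to a common irreducible base extracted from the ``product'' of the $T_i$, then glues via the first half of Proposition~\ref{ref:smoothingcomponents:prop}. The paper merely says ``take the product of all the bases''; you correctly identify that one must pass to an integral scheme through the closed point and check that its generic point dominates each $T_i$, which is the right instinct.

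There is, however, a gap in the justification of faithful flatness of $A_i\to\widehat{B}$: the parenthetical ``(it is a local homomorphism landing in a Noetherian complete local ring)'' is a non sequitur --- $\kk[[x]]\to\kk$ is such a homomorphism and is not flat. The trouble is that $B_0:=A_1\otimes_\kk\cdots\otimes_\kk A_k$ is in general non-Noetherian, so the completion map $B_0\to\widehat{B}$ is not automatically flat, and you cannot conclude by composing with the (genuinely) faithfully flat $A_i\to B_0$. Flatness of $A_i\to\widehat{B}$ is in fact true (completed tensor products of complete Noetherian local $\kk$-algebras are flat over each factor), but it is a nontrivial fact and, more to the point, you do not need it. Work directly with $B_0$: the ideal $\mathfrak{m}:=\sum_i\mathfrak{m}_{A_i}$ satisfies $B_0/\mathfrak{m}\cong\kk$ and so is prime, hence contains a minimal prime $\mathfrak{p}$ of $B_0$. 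Setting $T:=\Spec\bigl(B_0/\mathfrak{p}\bigr)$ gives an integral scheme with a $\kk$-rational point $\mathfrak{m}/\mathfrak{p}$ lying over each $t_i$, and going-down for the faithfully flat map $A_i\to B_0$ (base change of $\kk\to\bigotimes_{j\ne i}A_j$, with nonzero closed fibre) forces $\mathfrak{p}\cap A_i=(0)$, so the generic point of $T$ maps to each $\eta_i$. Lemma~\ref{ref:basechange:lem} and the first half of Proposition~\ref{ref:smoothingcomponents:prop} then finish the argument exactly as you intended, without any appeal to the completion.
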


    \begin{proof}
        If each $R_i$ is abstractly smoothable, then we may choose smoothings
          over the same base $T$, for instance by taking the product of the all bases of the individual smoothings.
        The claim follows from
        Proposition~\ref{ref:smoothingcomponents:prop}.

        Conversely, if $R$ is
        smoothable, then we may choose a smoothing over a one-dimensional
        Noetherian complete local domain by
        Theorem~\ref{ref:goodbaseofsmoothing:thm}. Again the result is implied by Proposition~\ref{ref:smoothingcomponents:prop}.
    \end{proof}

\section{Comparing abstract and embedded
smoothings}\label{sec:smoothings_abstract_embedded}

    Now we will compare the notion of abstract smoothability and embedded
    smoothability of a scheme $R$ and prove
    Theorem~\ref{thm_equivalence_of_abstract_and_embedded_smoothings}.
    We begin with a technical lemma.
    \begin{lemma}\label{ref:closedimmersions:lem}
        Let $(\DA, \mm, \kk)$ be a local $\kk$-algebra and $T = \Spec A$ with a
        $\kk$-rational point $t$ corresponding to $\mm$.
        Let $\famil $ be a scheme with a unique closed point and $\famil \to T$ be a finite flat morphism.
        Let $\amb$ be a separated scheme and $f\colon \famil \to \amb \times T$ be a
        morphism such that the following diagram is commutative:
        \[
          \begin{tikzcd}
             \famil  \arrow{rr}{}\arrow{dr}{} & & \amb \times T\arrow{dl}{}\\
             & T
          \end{tikzcd}
        \]
        If $f_t:\famil _t\to \amb$ is a closed
        immersion, then $f$ is also a closed immersion.
    \end{lemma}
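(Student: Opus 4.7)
\medskip

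The plan is to reduce to the affine case and then apply Nakayama's lemma. Since $\pi\colon \famil \to T$ is finite and $T=\Spec A$ is affine, $\famil $ is affine as well; say $\famil  = \Spec B$, where $B$ is a finite $A$-module. The hypothesis that $\famil $ has a unique closed point translates into $B$ being local, with some maximal ideal $\mathfrak{n}$ lying above $\mathfrak{m}$. Let $z \in \famil $ be the closed point; because $f$ is a $T$-morphism, $z$ maps to a point of $\amb \times \{t\} \cong \amb$, namely to $x := f_t(z)$.

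Next I would reduce to $\amb$ affine. Pick an affine open neighbourhood $U = \Spec R$ of $x$ in $\amb$; then $U \times T$ is an affine open of $\amb \times T$ containing $f(z)$. The preimage $f^{-1}(U \times T)$ is open in $\famil  = \Spec B$ and contains the unique closed point of $\Spec B$, hence equals $\famil $ (in $\Spec$ of a local ring, any open set containing the closed point is everything). So $f$ factors through the affine open $U \times T$, and we may assume $\amb = \Spec R$ from the start. The separatedness of $\amb$ is not used directly in this step, but it guarantees that $f$ is itself a finite morphism (being a $T$-morphism from a $T$-proper source to a $T$-separated target).

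In this affine setting, $f$ corresponds to a ring homomorphism $\varphi\colon R \tensor_{\kk} A \to B$, and being a closed immersion amounts to $\varphi$ being surjective. The hypothesis that $f_t$ is a closed immersion says that the induced map on the special fiber
\[
    R = (R \tensor_{\kk} A)/\mm(R \tensor_{\kk} A) \longrightarrow B/\mm B
\]
is surjective, i.e.~$\varphi(R \tensor_{\kk} A) + \mm B = B$. Setting $M := B/\varphi(R \tensor_{\kk} A)$, which is a finitely generated $A$-module because $B$ is, this reads $\mm M = M$. Since $A$ is local, Nakayama's lemma yields $M = 0$, so $\varphi$ is surjective and $f$ is a closed immersion.

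The argument has no real obstacle; the only subtle point to get right is the reduction to an affine $U \subset \amb$, which relies crucially on $\famil $ having a unique closed point so that $f^{-1}(U \times T) = \famil $.
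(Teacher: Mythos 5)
Your argument follows the same route as the paper's proof (factor through an affine chart of the ambient scheme around the image of the unique closed point, then apply Nakayama to the finite $A$-module $B$), and the Nakayama step itself is correct. The gap is in the sentence ``we may assume $X = \Spec R$ from the start.'' Being a closed immersion cannot be tested after shrinking the target to an open subset that merely contains the image: what your affine computation proves is that $\mathcal{Z} \to U \times T$ is a closed immersion, hence only that $f\colon \mathcal{Z} \to X \times T$ is a \emph{locally closed} immersion. To upgrade this to a closed immersion you must also know that the image of $\mathcal{Z}$ is closed in $X \times T$ (compare $\mathbb{A}^1\setminus\{0\} \hookrightarrow \mathbb{A}^1$, which is a closed immersion onto an open chart of the target but not a closed immersion into it).

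This is exactly where separatedness of $X$ enters, and it is the step you explicitly set aside. Since $X \times T \to T$ is separated and $\mathcal{Z}\to T$ is finite, the cancellation property gives that $f$ itself is finite, hence universally closed, so its image is closed in $X \times T$ and the locally closed immersion is in fact closed. You state this finiteness in a parenthetical, but then declare that separatedness ``is not used directly'' and that the only subtle point is the factorization through $U\times T$; that mislocates the role of the hypothesis and leaves the final implication ``$\varphi$ surjective $\Rightarrow$ $f$ is a closed immersion into $X\times T$'' unjustified. The paper's proof begins with precisely this reduction (finite by cancellation, so the image is closed, so it suffices to prove $f$ is a locally closed immersion) before carrying out the same chart-plus-Nakayama computation you perform; inserting that one observation into your write-up closes the gap.
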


    \begin{proof}
        Since $\amb\times T\to T$ is separated and $\famil \to T$ is finite, from the
        cancellation property (\cite[Theorem~10.1.19]{Vakil_foag} or \cite[Exercise~II.4.8]{hartshorne}) it follows that
        $f\colon \famil \to \amb\times T$ is
        finite, thus the image of $\famil $ in $\amb\times T$ is closed.
        Then it is enough to prove that $\famil \to \amb\times T$ is a~locally closed
        immersion.

        Let $U\subseteq \amb$ be an open affine neighbourhood of $f_t(p)$, where
        $p$ is the unique
        closed point of $\famil $. Since the preimage of $U \times T$ in $\famil $ is open
        and contains $p$, the morphism $\famil \to \amb\times T$ factors through
        $U\times T$.
        We claim that $\famil \to U\times T$ is a~closed immersion.
        Note that it is a morphism of affine schemes. 
        Let $B$, $C$ denote the coordinate rings of $\famil $ and $U\times T$, respectively.
        Then the morphism of schemes $\famil \to \amb\times T$ corresponds to a~morphism of $A$-algebras $C\to B$. 
        Since the base change $A\to A/\mm$
        induces an isomorphism $C/\mm C\to B/\mm B$, we have $B = \mm B + C$,
        thus $C\to B$ is onto by Nakayama Lemma and the fact that $B$ is
        a finite $A$-module. Hence, the morphism $f\colon \famil \to U \times T\to \amb\times T$ is
        a~locally closed immersion.
    \end{proof}

    The following Theorem~\ref{ref:abstractvsembedded:thm} together with its
    immediate Corollary~\ref{ref:smoothingeverytwhere:cor} is a generalization
    of \cite[Lemma~2.2]{casnati_notari_irreducibility_Gorenstein_degree_9} and
    \cite[Prop.~2.1]{bubu2010}.  Similar ideas are mentioned in
    \cite[Lemma~4.1]{cartwright_erman_velasco_viray_Hilb8} and in
    \cite[p.~4]{artin_deform_of_sings}.

    The theorem uses the notion of \emph{formal smoothness}, see \cite[Def~17.1.1]{ega4-4}.
    A scheme $X$ is \emph{formally smooth} if for every affine scheme $Y$
    and every closed subscheme $Y_0 \subset Y$ defined by a nilpotent ideal of
    $\OO_Y$,
    every morphism $Y_0\to X$ extends to a morphism $Y\to X$.

    \begin{theorem}[Abstract smoothing versus embedded smoothing]\label{ref:abstractvsembedded:thm}
        Let $R$ be a~finite scheme over $\kk$ which is embedded into
        a~formally smooth, separated scheme $X$.
        Then $R$ is smoothable in $X$ if and only if it is abstractly
        smoothable.
    \end{theorem}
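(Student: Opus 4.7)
The direction ``embedded smoothable implies abstractly smoothable'' is immediate: the embedded smoothing $\famil \subseteq \amb \times T$ is itself an abstract smoothing. For the converse, suppose $(\famil, R) \to (T, t)$ is an abstract smoothing of $R \subseteq \amb$. By Theorem~\ref{ref:goodbaseofsmoothing:thm}, together with Lemma~\ref{ref:basechange:lem}, I may assume $T = \Spec A$ with $A$ a one-dimensional Noetherian complete local domain whose residue field is $\kk$. Proposition~\ref{ref:smoothingcomponents:prop} then decomposes $\famil = \bigsqcup_i \famil_i$, where $\famil_i \to T$ is an abstract smoothing of the connected component $R_i \subseteq R$; let $p_i \in \amb$ denote the (unique) point in the support of $R_i$, and set $B_i := \OO(\famil_i)$. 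Since $B_i$ is a finite indecomposable $A$-module, by the decomposition theorem for finite algebras over complete local rings (\cite[Corollary~7.6]{Eisenbud}) it is a complete local $A$-algebra, and $\famil_i$ has a unique closed point.

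The plan is to construct, for each $i$ independently, a morphism $\famil_i \to \amb$ extending $R_i \hookrightarrow \amb$. Consider the infinitesimal neighborhoods $R_{i,n} := \Spec B_i/\mathfrak{m}_A^n B_i$; these are finite $\kk$-schemes (since $A/\mathfrak{m}_A^n$ has finite $\kk$-length), and each $R_{i,n} \hookrightarrow R_{i,n+1}$ is defined by the nilpotent ideal $\mathfrak{m}_A^n B_i/\mathfrak{m}_A^{n+1} B_i$. Starting from $R_{i,1} = R_i \hookrightarrow \amb$, formal smoothness of $\amb$ allows me to extend inductively to compatible morphisms $R_{i,n} \to \amb$ for all $n$. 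Because $A$ is complete and $B_i$ is finite over $A$, the ring $B_i$ is $\mathfrak{m}_A$-adically complete, and the $\mathfrak{m}_A$-adic and $\mathfrak{m}_{B_i}$-adic topologies are cofinal; this cofinality, together with the fact that the closed point of $\famil_i$ has a fixed image $p_i \in \amb$, ensures that the inverse system $\{R_{i,n} \to \amb\}$ assembles to a single morphism $\famil_i \to \amb$.

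Combining $\famil_i \to \amb$ with the structure map $\famil_i \to T$ yields $f_i \colon \famil_i \to \amb \times T$. Since $\amb$ is separated, so is $\amb \times T \to T$; together with $\famil_i$ having a unique closed point and $(f_i)_t = (R_i \hookrightarrow \amb)$ being a closed immersion, Lemma~\ref{ref:closedimmersions:lem} forces each $f_i$ to be a closed immersion. Finally, using separatedness of $\amb$ I pick pairwise disjoint affine opens $U_i \ni p_i$; because $\famil_i$ is a local scheme, each $f_i$ factors through $U_i \times T$, so the images of the $f_i$ are pairwise disjoint and the disjoint union $f \colon \famil \to \amb \times T$ is a closed immersion, providing the required embedded smoothing.

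The main technical obstacle is the passage from the compatible tower $\{R_{i,n} \to \amb\}$ to a single morphism $\famil_i \to \amb$; this is precisely where formal smoothness and the completeness of $B_i$ interact. Everything else is bookkeeping with the reductions enabled by Theorem~\ref{ref:goodbaseofsmoothing:thm}, Proposition~\ref{ref:smoothingcomponents:prop}, and Lemma~\ref{ref:closedimmersions:lem}.
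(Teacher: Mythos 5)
Your argument follows the same route as the paper's proof: reduce by Theorem~\ref{ref:goodbaseofsmoothing:thm} to a complete local base, split into connected components via Proposition~\ref{ref:smoothingcomponents:prop}, lift each component inductively using formal smoothness, assemble the tower into a morphism to $X$ by completeness, and conclude with Lemma~\ref{ref:closedimmersions:lem}. (The paper does the irreducible case first and then decomposes; you decompose first — that is an immaterial rearrangement.)

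There is, however, a genuine gap in the final step. You write that, ``using separatedness of $X$,'' you ``pick pairwise disjoint affine opens $U_i\ni p_i$.'' Separatedness of a scheme does not imply that distinct closed points admit disjoint open neighbourhoods; indeed no two nonempty Zariski-opens in $\AA^1_\kk$ (or $\PP^1_\kk$, or $\Spec\mathbb{Z}$) are disjoint, since every nonempty open contains the generic point. So this step, as stated, simply fails. The correct argument for disjointness of the closed subschemes $\famil_i\subset X\times T$ is the one the paper uses, and it leans on the fact that each $\famil_i=\Spec B_i$ is a \emph{local} scheme rather than on any topological separation in $X$: if $q\in\famil_i\cap\famil_j$, then $\overline{\{q\}}$ is a nonempty closed subset of both $\famil_i$ and $\famil_j$ (both being closed in $X\times T$); any nonempty closed subset of $\Spec B_i$ contains its unique closed point, which maps to $(p_i,t)$, and likewise for $\famil_j$, so $(p_i,t)=(p_j,t)$ — contradicting $p_i\neq p_j$. (One uses here that $(p_i,t)$ is a closed point of $X\times T$, which holds because $\{t\}$ is closed in $T$ and $X\times\{t\}\cong X$.) Everything else in your write-up is sound and in line with the paper, including the observations that $B_i$ is $\mathfrak{m}_A$-adically complete and that all the morphisms $R_{i,n}\to X$ factor through a common affine open around $p_i$, which is what licenses the passage to the inverse limit.
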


    \begin{proof}
        Clearly from definition, if $R$ is smoothable in $X$, then it is
        abstractly smoothable. It remains to prove the other implication.

        Let us consider first the case when $R$ is irreducible.
        Let $(\famil , R)\to (T, t)$ be an abstract smoothing of $R$. Using
        Theorem~\ref{ref:goodbaseofsmoothing:thm} we may assume that $T$ is
        a~spectrum of a~complete local ring $(A, \mm, \kk)$. Since $\famil \to T$ is
        finite, $\famil  \simeq \Spec B$, where $B$ is a~finite $A$-algebra. In
        particular, since $B$ is irreducible, it is complete
        by~\cite[Corollary~7.6]{Eisenbud},
          and by~\cite[Theorem~7.2a]{Eisenbud}, the algebra $B$ is the inverse limit of Artinian $\kk$-algebras $B/(\mm B)^n$, where $n\in \mathbb{N}$.

        By definition of $X$ being formally smooth, the morphism $R = \Spec
        B/\mm B \to X$ lifts to $\Spec B/(\mm B)^2\to X$
          and subsequently $\Spec B/(\mm B)^n\to X$ lifts to $\Spec B/(\mm B)^{n+1}\to X$ for every $n \in \mathbb{N}$.
        Together these morphisms give a morphism $\famil  = \Spec B \to X$, which in
        turn gives rise to a~morphism of $T$-schemes $\famil \to X\times
        T$. This morphism is a closed immersion by
        Lemma~\ref{ref:closedimmersions:lem}.
        This finishes the proof in the case of
        irreducible $R$.

        Now consider a not necessarily irreducible $R$. Let $R = R_1 \sqcup
        \ldots  \sqcup R_k$ be the decomposition into irreducible (or
        connected) components. By
        Proposition~\ref{ref:smoothingcomponents:prop}, the smoothing $\famil $
        decomposes as $\famil  = \famil _1 \sqcup  \ldots  \sqcup \famil _k$, where $(\famil _i,
        R_i)\to (T, t)$ are smoothings of $R_i$. The schemes $R_i$ are
        irreducible, so by the previous case, these smoothings give
        rise to embedded smoothings $\famil _i \subseteq X\times T$. 
        In particular, each subscheme $\famil _i$ is closed.
        Moreover, the images of closed points of $\famil _i$ are pairwise different in $X\times T$, 
          thus $\famil _i$ are pairwise disjoint and 
          we get an embedding of $\famil  = \famil _1\sqcup  \ldots  \sqcup \famil _k \subset X\times T$, which is the required embedded smoothing.
    \end{proof}

    \begin{corollary}\label{ref:smoothingeverytwhere:cor}
        Suppose that $R$ is a finite scheme and $X$ and $Y$ are two smooth
        separated schemes. 
        If $R$ can be embedded in $X$ and in $Y$, then $R$ is
        smoothable in $X$ if and only if $R$ is smoothable in $Y$.
    \end{corollary}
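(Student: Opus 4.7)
The plan is to use abstract smoothability as a bridge and invoke Theorem~\ref{ref:abstractvsembedded:thm} twice. The only preliminary observation needed is that a smooth $\kk$-scheme is formally smooth; this is a standard consequence of the infinitesimal lifting criterion for smoothness (see, for example,~\cite[Proposition~17.1.6]{ega4-4}). Since $X$ and $Y$ are assumed smooth and separated, the hypotheses of Theorem~\ref{ref:abstractvsembedded:thm} are satisfied for both embeddings $R \subset X$ and $R \subset Y$.

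First I would argue the forward direction. Suppose $R$ is smoothable in $X$, witnessed by an embedded smoothing $\famil \subset X \times T \to T$. Forgetting the embedding, this $T$-scheme is an abstract smoothing of $R$ in the sense of Definition~\ref{ref:abstractsmoothable:def}, so $R$ is abstractly smoothable. Applying Theorem~\ref{ref:abstractvsembedded:thm} in the reverse direction to the embedding $R \subset Y$ produces an embedded smoothing of $R$ inside $Y \times T'$ for some base $T'$; hence $R$ is smoothable in $Y$. The reverse implication is obtained by interchanging the roles of $X$ and $Y$.

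Because Theorem~\ref{ref:abstractvsembedded:thm} is already available, there is no genuine technical obstacle. The only mildly subtle point is that the base of the resulting embedded smoothing in $Y$ need not coincide with the base of the original smoothing in $X$; this is harmless since the definition of embedded smoothability does not fix the base. Conceptually, this corollary expresses that, for smooth ambient varieties, smoothability depends only on the intrinsic scheme $R$, and it is the key step towards Theorem~\ref{thm_equivalence_of_abstract_and_embedded_smoothings}.
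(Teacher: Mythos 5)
Your proof is correct and follows the same route the paper takes: the paper's proof is the one-line statement that the corollary follows directly from Theorem~\ref{ref:abstractvsembedded:thm}, and you have simply spelled out the two applications of that theorem (via abstract smoothability as a bridge) together with the observation that smooth schemes are formally smooth. Nothing is missing and the extra detail is accurate.
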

    \begin{proof}
        Follows directly from Theorem~\ref{ref:abstractvsembedded:thm}.
    \end{proof}

%    \begin{corollary}\label{ref:smoothingcomponentsembedded:cor}
%        Let $R = R_1\sqcup R_2\sqcup  \ldots  \sqcup R_k\subset X$ be a~finite subscheme of a scheme $X$.
%        Then $R$ is  smoothable in $X$ if and only if each $R_i$ is
%        smoothable in $X$.
%    \end{corollary}
%    \begin{proof}
%        Follows directly from Theorem~\ref{ref:abstractvsembedded:thm} and
%        Corollary~\ref{ref:smoothingcomponentsresult:cor}.
%    \end{proof}

    \begin{proof}[Proof of Theorem~\ref{thm_equivalence_of_abstract_and_embedded_smoothings}]
        Corollary~\ref{ref:smoothingcomponentsresult:cor} gives the
        equivalence of \ref{it:Rabssm} and \ref{it:Rconabssm}.
        Smooth variety is formally smooth and separated by definition, so
        Theorem~\ref{ref:abstractvsembedded:thm} implies equivalence of
        \ref{it:Rabssm} and \ref{it:Rembsm} as well as
        \ref{it:Rconabssm} and \ref{it:Rconembsm}.
    \end{proof}

    \section{Embedded smoothability depends only on singularity type}

    While the comparison between abstract and embedded smoothings
    given in Theorem~\ref{ref:abstractvsembedded:thm} above is
    satisfactory, it is natural to ask what is true without formal smoothness
    assumption. This assumption cannot be removed altogether: for a projective
    curve $C$ all its non-zero tangent
    vectors, regarded as $\Spec (\kk[\varepsilon]/\varepsilon^2) \subset C$, are
    smoothable in $C$ if and only if all tangent spaces are contained in
    tangent stars,
    see~\cite[Section~3.3]{jabu_ginensky_landsberg_Eisenbuds_conjecture}.
    However we will see that the formal smoothness assumption may be removed entirely if we
    have an appropriate morphism, see
    Corollary~\ref{ref:pushingsmoothings:cor}, and that the existence of
    smoothings depends only on the formal geometry of $X$ near the support of
    its subscheme, see
    Proposition~\ref{prop_smoothability_depends_only_on_sing_type}.

    \begin{corollary}\label{ref:pushingsmoothings:cor}
        Let $R$ be a finite scheme embedded in $X$ and smoothable in $X$.
        Let $Y$ be a separated scheme with a morphism $X\to Y$ which induces an isomorphism of $R$
           with its scheme-theoretic image $S \subseteq Y$. Then $R  \simeq S$
        is smoothable in $Y$.
    \end{corollary}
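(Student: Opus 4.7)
The plan is to push the embedded smoothing of $R$ in $X$ forward along $X \to Y$ and verify that it remains an embedded smoothing of $S$ in $Y$.

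First I would pick an embedded smoothing $\famil \subseteq X \times T$ of $R$ in $X$ and invoke Theorem~\ref{ref:goodbaseofsmoothing:thm} to reduce to the case $T = \Spec A$ with $A$ a one-dimensional Noetherian complete local domain. Composing the closed immersion $\famil \into X \times T$ with the morphism $(X \to Y) \times \operatorname{id}_T$ gives a $T$-morphism $f\colon \famil \to Y \times T$; we wish to show that $f$ is a closed immersion, since then its composition with $Y \times T \to T$ is a smoothing of $S = f(R) \subseteq Y$. By hypothesis the restriction $f_t\colon R \to Y$ factors as the isomorphism $R \to S$ followed by the closed immersion $S \into Y$, so $f_t$ itself is a closed immersion.

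Next, by Proposition~\ref{ref:smoothingcomponents:prop} applied over the local complete base $T$, the smoothing decomposes as $\famil = \bigsqcup_i \famil_i$, where each $\famil_i$ smooths a connected component $R_i$ of $R$ and is the spectrum of a local finite $A$-algebra, and so has a unique closed point. Since $Y \to \Spec \kk$ is separated, so is $Y \times T \to T$; Lemma~\ref{ref:closedimmersions:lem} therefore applies to each restriction $f|_{\famil_i}\colon \famil_i \to Y \times T$ (its special fiber is a closed immersion because $R_i \to S_i \subseteq Y$ is), and yields that each $\famil_i$ maps by a closed immersion onto a closed subscheme $Z_i \subseteq Y \times T$.

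It remains to show that the subschemes $Z_i$ are pairwise disjoint, for then $\famil \to Y \times T$ itself is a closed immersion with image $\bigsqcup_i Z_i$, producing the desired embedded smoothing. Each $Z_i$ is finite over $T$, and a finite morphism of Noetherian schemes carries closed points to closed points; since $T$ is local with closed point $t$, every closed point of $Z_i$ lies over $t$. As $\famil_i \to Z_i$ is an isomorphism and $\famil_i$ has a unique closed point mapping to $(s_i, t)$, where $s_i$ is the support of $S_i$, the scheme $Z_i$ has $(s_i, t)$ as its unique closed point. The intersection $Z_i \cap Z_j$ is a closed subscheme of the finite $T$-scheme $Z_i$, hence finite over $T$; if it were non-empty, it would contain a closed point, necessarily lying in $(Z_i)_t \cap (Z_j)_t = S_i \cap S_j$, but this set is empty because $S_i$ and $S_j$ are the images of distinct connected components of $R$ under the isomorphism $R \to S$. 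The main obstacle is exactly this disjointness step: without the reduction to a complete local base, the generic fibers of the $\famil_i$ could in principle collide in $Y \times T$, and it is the combination of Theorem~\ref{ref:goodbaseofsmoothing:thm} with the finiteness of $Z_i \to T$ that confines every closed point over $t$ and rules out intersections.
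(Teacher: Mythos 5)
Your argument is correct and follows essentially the same route as the paper: reduce the base via Theorem~\ref{ref:goodbaseofsmoothing:thm}, split into connected components via Proposition~\ref{ref:smoothingcomponents:prop}, and apply Lemma~\ref{ref:closedimmersions:lem} to each piece. The disjointness of the images that you verify explicitly is exactly the gluing step the paper only sketches here (it is carried out in the same way in the proof of Theorem~\ref{ref:abstractvsembedded:thm}), so your write-up just makes that part explicit.
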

    \begin{proof}
        Let $\famil \subseteq X \times T\to T$ be an embedded smoothing of $X$ over
        a base $(T, t)$.  The morphism $X\to Y$ induces a morphism $\famil \to Y \times
        T$ which, over $t$, induces a closed embedding $R \subseteq Y$. Then
        we need to prove that $\famil \to Y \times T$ is a closed immersion.
        By Theorem~\ref{ref:goodbaseofsmoothing:thm} and
        Proposition~\ref{ref:smoothingcomponents:prop} we may reduce to the
        case when $R$ is irreducible.
        Then the claim follows from Lemma~\ref{ref:closedimmersions:lem}.
    \end{proof}

    Using Corollary~\ref{ref:pushingsmoothings:cor} we may strengthen
    Corollary~\ref{ref:smoothingeverytwhere:cor} a bit, obtaining a direct
    generalization of \cite[Proposition~2.1]{bubu2010}.
    \begin{corollary}
        Let $X$ be a finite type, separated scheme and $R \subseteq X$ be a finite
        subscheme, supported in the smooth locus of $X$. If $R$ is
        abstractly smoothable, then $R$ is smoothable in $X$.
    \end{corollary}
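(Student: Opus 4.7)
The plan is to reduce to the case already treated by Theorem~\ref{ref:abstractvsembedded:thm} by first shrinking the ambient to its smooth locus, and then push the resulting embedded smoothing forward along the open immersion using Corollary~\ref{ref:pushingsmoothings:cor}.

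First I would let $U \subseteq X$ denote the smooth locus of $X$, which is open. Since $R$ is finite and its (set-theoretic) support lies in $U$, the scheme-theoretic intersection $R \cap U$ coincides with $R$, so $R$ sits as a closed subscheme of $U$. Being an open subscheme of a finite type, separated scheme, $U$ is itself finite type and separated; being smooth over $\kk$, it is in particular formally smooth. Thus $U$ satisfies the hypotheses of Theorem~\ref{ref:abstractvsembedded:thm}, which, combined with the assumption that $R$ is abstractly smoothable, yields an embedded smoothing $\famil \subseteq U \times T$ of $R \subseteq U$ over some irreducible base $(T,t)$.

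Next I would compose with the open immersion $U \hookrightarrow X$ to obtain a morphism $\famil \to X \times T$. The key observation is that the morphism $U \to X$ induces an isomorphism of $R$ with its scheme-theoretic image in $X$: an open immersion is a monomorphism, so the induced map on the finite closed subscheme $R$ is a closed immersion onto $R$ regarded as a subscheme of $X$. This is precisely the situation of Corollary~\ref{ref:pushingsmoothings:cor} applied with the roles $X \rightsquigarrow U$ and $Y \rightsquigarrow X$; its conclusion is that $R \simeq R$ is smoothable in $X$, which is exactly what we want.

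I do not expect any serious obstacle here: every ingredient is already in place. The only thing to be a little careful about is verifying the hypotheses of Corollary~\ref{ref:pushingsmoothings:cor}, namely that $X$ is separated (given), that $U \to X$ is a well-defined morphism (it is the open immersion), and that the scheme-theoretic image condition holds (which follows because an open immersion restricts to a closed immersion on any closed subscheme contained in the open set). All other potentially delicate issues---reduction to an irreducible base, handling disconnected $R$, the possible non-smoothness of $X$ away from the support of $R$---are already absorbed into Theorem~\ref{ref:abstractvsembedded:thm} and Corollary~\ref{ref:pushingsmoothings:cor} via Theorem~\ref{ref:goodbaseofsmoothing:thm} and Proposition~\ref{ref:smoothingcomponents:prop}.
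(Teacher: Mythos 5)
Your proof is correct and follows essentially the same route as the paper's: restrict to the smooth locus $X^{sm}$, apply Theorem~\ref{ref:abstractvsembedded:thm} there, then push the embedded smoothing forward along the open immersion $X^{sm} \hookrightarrow X$ via Corollary~\ref{ref:pushingsmoothings:cor}. The extra verifications you spell out (that $R$ is a closed subscheme of $U$, that the scheme-theoretic-image hypothesis holds for an open immersion) are exactly what the paper leaves implicit.
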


    \begin{proof}
        Let $X^{sm}$ be the smooth locus of $X$. By
        Theorem~\ref{ref:abstractvsembedded:thm} the scheme $R$ is smoothable in
        $X^{sm}$ and by Corollary~\ref{ref:pushingsmoothings:cor} is it also
        smoothable in~$X$.
    \end{proof}

    We now show that possibility of smoothing a given $R$ inside $X$
    depends only on $R$ and the formally local structure of $X$ near $R$. This
    is the strongest result in this direction we could hope for; it implies
    that smoothability depends only on Zariski-local or standard \'etale local
    neighbourhoods of $R$ in $X$.

    \begin{proposition}\label{prop_smoothability_depends_only_on_sing_type}
        Let $X$ be a separated scheme and $R \subset X$ be a finite scheme, supported at points $x_1, \ldots
        ,x_k$ of $X$. Then $R$ is smoothable in $X$ if
        and only if $R$ is smoothable in $\bigsqcup \Spec \hat{\OO}_{X, x_i}$.
    \end{proposition}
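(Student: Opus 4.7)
The plan is to prove the two implications separately, using the Lemma~\ref{ref:closedimmersions:lem} / Corollary~\ref{ref:pushingsmoothings:cor} machinery developed above together with the decomposition result Proposition~\ref{ref:smoothingcomponents:prop}. Let me denote $Y := \bigsqcup_{i=1}^k \Spec \hat{\OO}_{X, x_i}$, a separated scheme (disjoint union of affines). Writing $R = R_1 \sqcup \ldots \sqcup R_k$ with $R_i$ supported at $x_i$, each $R_i$ embeds naturally as a closed subscheme of $\Spec \hat{\OO}_{X, x_i}$ because $R_i$ corresponds to an $\mathfrak{m}_{x_i}$-primary ideal $I_i \subset \OO_{X, x_i}$, and $\OO_{X, x_i}/I_i \simeq \hat{\OO}_{X, x_i}/\hat{I}_i$.

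For the direction "smoothable in $Y$ implies smoothable in $X$", I would apply Corollary~\ref{ref:pushingsmoothings:cor} directly to the canonical morphism $Y \to X$. This morphism sends $R \subset Y$ isomorphically onto $R \subset X$ (by the primary-ideal observation above), so the corollary immediately gives the conclusion.

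For the converse, suppose $R$ is embedded smoothable in $X$. By Theorem~\ref{ref:goodbaseofsmoothing:thm} I may choose an embedded smoothing $\famil \subseteq X \times T$ over $T = \Spec A$, where $(A, \mathfrak{m}, \kk)$ is a complete local Noetherian domain. Proposition~\ref{ref:smoothingcomponents:prop} then provides a decomposition $\famil = \famil_1 \sqcup \ldots \sqcup \famil_k$, where each $\famil_i \to T$ is a smoothing of $R_i$; moreover each $\famil_i$ is the spectrum of a finite local $A$-algebra $B_i$ and hence has a unique closed point $p_i$ mapping to $x_i \in X$. The key observation is that $B_i$ is complete: it is a finite module over the complete local ring $A$, and since $B_i$ is local, its maximal ideal $\mathfrak{n}_i$ has some power contained in $\mathfrak{m} B_i$, so the $\mathfrak{n}_i$-adic and $\mathfrak{m}$-adic topologies agree. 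Consequently the local ring homomorphism $\OO_{X, x_i} \to B_i$ factors uniquely through the completion, producing a morphism $\famil_i \to \Spec \hat{\OO}_{X, x_i}$.

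Combining with the projection $\famil_i \to T$, this yields a $T$-morphism $\famil_i \to \Spec \hat{\OO}_{X, x_i} \times T$ whose fiber over the closed point of $T$ is the closed immersion $R_i \hookrightarrow \Spec \hat{\OO}_{X, x_i}$. Since $\famil_i$ has a unique closed point, $\Spec \hat{\OO}_{X, x_i}$ is separated, and $\famil_i \to T$ is finite flat, Lemma~\ref{ref:closedimmersions:lem} promotes this $T$-morphism to a closed immersion. Taking the disjoint union $\famil = \bigsqcup \famil_i \hookrightarrow Y \times T$ (the images live in disjoint components and hence cannot collide) produces the desired embedded smoothing of $R$ in $Y$. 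The main technical point — and the only step that requires care — is the completeness of $B_i$, which is what allows the embedding $\OO_{X, x_i} \hookrightarrow B_i$ to be upgraded to one from $\hat{\OO}_{X, x_i}$; everything else is bookkeeping with the already-established lemmas.
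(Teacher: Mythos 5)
Your proposal is correct and follows essentially the same route as the paper's proof: one direction via Corollary~\ref{ref:pushingsmoothings:cor} applied to $\bigsqcup \Spec\hat{\OO}_{X,x_i}\to X$, and the other by choosing a smoothing over a complete local base (Theorem~\ref{ref:goodbaseofsmoothing:thm}), decomposing it via Proposition~\ref{ref:smoothingcomponents:prop}, and factoring each piece $\famil_i = \Spec B_i$ through $\Spec\hat{\OO}_{X,x_i}\times T$ by a completeness argument. The only differences are cosmetic: you establish completeness of $B_i$ in its own $\mathfrak{n}_i$-adic topology (noting $\mathfrak{n}_i^d\subset\mathfrak{m}B_i\subset\mathfrak{n}_i$), whereas the paper works with the $\gotn\OO_{\famil_i}$-adic topology for $\gotn$ the ideal sheaf of $x_i$ --- both correctly yield the factorization through the completion by its universal property; and you invoke Lemma~\ref{ref:closedimmersions:lem} to upgrade the factorization to a closed immersion, while the paper appeals directly to the cancellation property (since $\famil_i\hookrightarrow X\times T$ is already a closed immersion and $\Spec\hat{\OO}_{X,x_i}\to X$ is separated) --- again equivalent in substance.
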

    \begin{proof}
        \def\gotn{\mathfrak{n}}%
        \def\YY{{\famil'}}%
%        Using Corollary~\ref{ref:smoothingcomponentsembedded:cor} we reduce
%        to the case of irreducible $R$. Let $\{x\} = \Supp R$.
%        We are to show that $R$ is smoothable in $\bigsqcup \Spec
%        \hat{\OO}_{X, x_i}$ if and
%        only if it is smoothable in $X$.
        The ``only if'' part follows from
        Corollary~\ref{ref:pushingsmoothings:cor} applied to the map
        $\bigsqcup\Spec
        \hat{\OO}_{X, x_i} \to X$. We prove the ``if'' part, so we
        assume that $R$ is smoothable in $X$.
        By Theorem~\ref{ref:goodbaseofsmoothing:thm} we may take
        a smoothing of $R$ over $T = \Spec A$ where $(A, \mm)$ is local and
        complete; this is a family $\famil  \subset X \times T$ cut out of
        $\OO_{X} \tensor_{\kk} A$ by an ideal sheaf $\mathcal{I}$.
        By Proposition~\ref{ref:smoothingcomponents:prop} we have $\famil =
        \bigsqcup \famil_i$ where $\famil_i$ is a smoothing of $R_{x_i}$.
        We now show that $\famil_i \to X \times T$ can be factorized as follows:
        \begin{equation}\label{eq:trimming}
            \famil_i \to \Spec \hat{\OO}_{X, x_i} \times T \to X \times T.
        \end{equation}
        Fix $i$. Let $x := x_i$, $\YY = \famil_i$ and $\gotn \subset \OO_X$ be the ideal sheaf
        of $y\in X$.
        Since $\YY \to T$ is finite, the algebra $H^0(\YY, \OO_\YY)$ is
        $\mm$-adically complete. Since $R$ is finite, say of degree $d$, we
        have $\gotn^d \OO_\YY \subset \mm \OO_\YY$.
        This means that each $\gotn \OO_\YY$-adic Cauchy sequence is also
        an $\mm \OO_\YY$-adic Cauchy sequence and hence has a unique limit in
        $\OO_\YY$.
        Thus the algebra $H^0(\YY, \OO_\YY)$ is complete in $\gotn \OO_\YY$-adic topology. By
        universal property of completion, the map $\YY \to X \times T$ factors
        through $\Spec \hat{\OO}_{X, y} \times T$. The map $\Spec
        \hat{\OO}_{X, x} \to X$ is separated, hence from~\eqref{eq:trimming}
        it follows that $\YY \to \Spec\hat{\OO}_{X, x} \times T$ is a closed
        immersion; this gives a deformation $\YY$ embedded into
        $\Spec \hat{\OO}_{X, x}$. Summing over all components we obtain the desired
        embedding $\famil \subset T \times \bigsqcup \Spec \hat{\OO}_{X, x_i}$.
    \end{proof}

    \begin{corollary}
        Let $X$ and $Y$ be two separated schemes and $x\in X$, $y\in Y$ be points with
        isomorphic completions of local rings; let
        $\varphi\colon \Spec\hat{\OO}_{X,x}\to \Spec\hat{\OO}_{Y,y}$
        be an isomorphism.

        Suppose that $R$ is a finite irreducible scheme
        with embeddings $i_X:R \to X$ and $i_Y:R\to Y$ such that $i_X(R)$,
        $i_Y(R)$ are supported at $x$, $y$ respectively. Suppose that
        $\varphi$ induces an isomorphism of $i_X(R)$ and $i_Y(R)$.
        Then $R =i_X(R)$ is smoothable in $X$ if and only if $R=i_Y(R)$ smoothable in $Y$.
    \end{corollary}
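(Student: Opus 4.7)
The plan is to reduce the statement directly to Proposition~\ref{prop_smoothability_depends_only_on_sing_type}, which already encodes exactly the principle that smoothability depends only on the formal neighborhood of the support.

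First I would apply Proposition~\ref{prop_smoothability_depends_only_on_sing_type} to the embedding $i_X\colon R\hookrightarrow X$: since $R$ is irreducible and $i_X(R)$ is supported at the single point $x$, the proposition tells us that $R$ is smoothable in $X$ if and only if $i_X(R)$ is smoothable in $\Spec\hat{\OO}_{X,x}$. The analogous statement holds for $Y$ and $y$.

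Next, I would use the isomorphism $\varphi\colon\Spec\hat{\OO}_{X,x}\to \Spec\hat{\OO}_{Y,y}$ to transport embedded smoothings. By hypothesis $\varphi$ identifies $i_X(R)$ with $i_Y(R)$, so given an embedded smoothing $\famil\subset \Spec\hat{\OO}_{X,x}\times T$ of $i_X(R)$, its pushforward $(\varphi\times\operatorname{id}_T)(\famil)\subset \Spec\hat{\OO}_{Y,y}\times T$ is an embedded smoothing of $i_Y(R)$; and conversely via $\varphi^{-1}$. This is straightforward because $\varphi\times\operatorname{id}_T$ is an isomorphism of $T$-schemes, so it preserves flatness, finiteness, and the property of being smooth on the generic fiber.

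Combining these two steps yields the equivalence. The argument is essentially a formal transport of Proposition~\ref{prop_smoothability_depends_only_on_sing_type} across the isomorphism $\varphi$; there is no genuine obstacle, since all the substantive geometric content (that smoothability can be tested on the completion) has already been established. The only minor point to mention is that irreducibility of $R$ is what allows us to avoid the disjoint-union formalism $\bigsqcup\Spec\hat{\OO}_{X,x_i}$ that appears in the statement of Proposition~\ref{prop_smoothability_depends_only_on_sing_type}, so the transport via $\varphi$ can be performed componentwise on a single completion.
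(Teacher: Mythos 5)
Your proposal is correct and follows essentially the same route as the paper: the paper's proof also applies Proposition~\ref{prop_smoothability_depends_only_on_sing_type} once on each side and then transports the embedded situation across the isomorphism $\varphi$ of the completed local rings. The only difference is that you spell out explicitly the (immediate) transport of embedded smoothings via $\varphi\times\operatorname{id}_T$, which the paper leaves implicit.
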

    \begin{proof}
        By Proposition~\ref{prop_smoothability_depends_only_on_sing_type} the
        scheme $i_X(R)$ is smoothable in $X$ if and only if it is smoothable in
        $\hat{\OO}_{X,x}$. By assumption $i_X(R) \subset \Spec
        \hat{\OO}_{X, x}$ is isomorphic to $i_Y(R) \subset \Spec
        \hat{\OO}_{Y, y}$ via $\varphi$. By
        Proposition~\ref{prop_smoothability_depends_only_on_sing_type} again,
        $i_Y(R)$ is smoothable in $\hat{\OO}_{Y, y}$ if and only if it is
        smoothable in $Y$.
    \end{proof}

\section{Comparing embedded smoothings and the Hilbert scheme}

    We now compare the abstract notion of smoothability of a finite scheme $R
    \subset X$ to the geometry of Hilbert scheme around $[R]\in \Hilbr{X}$.
    This enables direct investigation of smoothability: we prove that
    it is faithfully preserved under field extension, that for a
    given family the set of smoothable fibers is closed and, in
    Section~\ref{ssec:examplesnonsmoothable}, we give examples of nonsmoothable
    schemes.

    Fix a scheme $X$ of finite type over $\kk$ and such that the Hilbert
    scheme $\Hilbr{X}$ exists.
    \begin{proposition}\label{ref:embeddedvsHilbert:prop}
        Let $X$ be a scheme such that $\Hilbr{X}$ exists and $R \subset X$ be a finite subscheme of degree $r$. The
        following are equivalent
        \begin{enumerate}
            \item\label{it:1smooth} $R$ is smoothable in $X$,
            \item\label{it:2smooth} $[R]\in \Hilbsmr{X}$.
        \end{enumerate}
    \end{proposition}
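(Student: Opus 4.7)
The plan is to exploit the universal property of $\Hilbr{X}$ in one direction, and the ``small curve through a closure point'' Lemma~\ref{lem_finding_a_curve_through_point_and_open_subset} together with the universal family in the other.

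For \ref{it:1smooth}$\Rightarrow$\ref{it:2smooth}, I would start from an embedded smoothing $\famil \subset X\times T$ over an irreducible base $(T,t)$ with $\famil_t\simeq R$ and $\famil_\eta$ smooth over the generic point $\eta$. By representability of $\Hilbr{X}$ (Theorem~\ref{ref:representability:thm}) this family determines a morphism $\varphi\colon T\to \Hilbr{X}$ with $\varphi(t)=[R]$. The smoothness of $\famil_\eta/\eta$ says exactly that $\varphi(\eta)\in \Hilbzeror{X}$, because $\Hilbzeror{X}$ is defined as the locus where the fiber of the universal family is smooth over its residue field. Since $T$ is irreducible, $\overline{\varphi(T)}$ is irreducible and contains $\varphi(\eta)\in \Hilbzeror{X}$, hence is contained in $\overline{\Hilbzeror{X}}=\Hilbsmr{X}$. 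In particular $[R]=\varphi(t)\in \Hilbsmr{X}$.

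For \ref{it:2smooth}$\Rightarrow$\ref{it:1smooth}, I would apply Lemma~\ref{lem_finding_a_curve_through_point_and_open_subset} with the ambient scheme $\Hilbsmr{X}$, the open subset $\Hilbzeror{X}\subset \Hilbsmr{X}$, and the point $[R]\in \overline{\Hilbzeror{X}}$. This produces a one-dimensional Noetherian complete local domain $A'$ and a morphism $T'=\Spec A'\to \Hilbsmr{X}\subset \Hilbr{X}$ sending the closed $\kk$-rational point $t'$ to $[R]$ and the generic point $\eta'$ into $\Hilbzeror{X}$. Pulling back the universal family $\UU\subset X\times \Hilbr{X}$ along this morphism gives a subscheme $\famil\subset X\times T'$ with $\famil\to T'$ finite and flat (those properties being preserved under base change). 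The special fiber is $\famil_{t'}\simeq R$, and $\famil_{\eta'}$ is a base change of the smooth fiber $\UU_{\varphi(\eta')}$ over $\kappa(\varphi(\eta'))$, hence smooth over $\eta'$. Thus $\famil\to T'$ is an embedded smoothing of $R$ in $X$.

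The only point that needs real care is identifying $\Hilbzeror{X}$ with the locus of points whose universal fiber is smooth over its residue field, so that ``$\famil_\eta$ smooth over $\eta$'' in the abstract definition of smoothing translates cleanly into ``$\varphi(\eta)\in \Hilbzeror{X}$'', and conversely the pulled-back generic fiber in the second direction is automatically smooth over $\eta'$. This is built into the definition of $\Hilbzeror{X}$ as the open locus of smooth fibers of $\pi\colon\UU\to \Hilbr{X}$, so no further work is required beyond invoking this openness; there is no serious obstacle.
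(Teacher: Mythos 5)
Your direction \ref{it:1smooth}$\Rightarrow$\ref{it:2smooth} is essentially identical to the paper's: both push the smoothing forward along the classifying morphism $\varphi\colon T\to\Hilbr{X}$, note that smoothness of the generic fiber means $\varphi(\eta)\in\Hilbzeror{X}$, and conclude by irreducibility that $\varphi(t)=[R]$ lies in the closure $\Hilbsmr{X}$. (A minor stylistic difference: you pass through $\overline{\varphi(T)}$, while the paper just uses that $\varphi(t)$ is a specialization of $\varphi(\eta)$; both are fine.)

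For \ref{it:2smooth}$\Rightarrow$\ref{it:1smooth} you take a genuinely different route. The paper does something lighter: it picks an irreducible component $T$ of $\Hilbsmr{X}$ containing $[R]$, restricts the universal family $\UU$ to $T$, and observes that since $\Hilbzeror{X}$ is dense open in $\Hilbsmr{X}$, its trace on $T$ is a nonempty open containing the generic point of $T$, whence the generic fiber of $\UU|_T\to T$ is smooth. That already is an embedded smoothing over an irreducible base, and no reduction to a curve is needed. You instead invoke Lemma~\ref{lem_finding_a_curve_through_point_and_open_subset} to manufacture a one-dimensional complete local domain $A'$ with a map $\Spec A'\to\Hilbsmr{X}$ hitting $[R]$ at the closed point and $\Hilbzeror{X}$ at the generic point, and then pull back $\UU$. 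This works, and in fact directly produces the ``small'' smoothing over a complete local base (the content of Theorem~\ref{ref:goodbaseofsmoothing:thm}), so in effect you fold a later reduction step into this proof. The trade-off is that you lean on the lemma, whose proof uses a Noetherian hypothesis that the paper's more elementary argument avoids; the paper's argument also keeps the proof of this equivalence self-contained and independent of the curve-finding machinery. Both are correct.
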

    \begin{proof}
        To show \ref{it:1smooth}$\implies$\ref{it:2smooth}, pick an embedded smoothing of $R$ in $X$, 
          which is a family $\famil  \subset X \times T$ flat over an irreducible base $T$,
        such that a fiber over a $\kk$-rational point $t\in T$ is $\famil _{t}
        = R$.
        In particular, the degree of $\famil  \to T$ is $r$, 
    and hence it gives a map $\varphi\colon T\to \Hilbr{X}$. 
        The base $T$ is irreducible and the fiber of $\famil \to T$ over the generic point $\eta \in T$ is smooth.
        Thus the image of the generic point  $\varphi(\eta)$ is contained in
        $\Hilbzeror{X}$, and the image of any point of $T$ is contained in its
        closure $\Hilbsmr{X}$.  In particular, $\varphi(t) = [R] \in
        \Hilbsmr{X}$.

        To show \ref{it:2smooth}$\implies$\ref{it:1smooth} pick an irreducible
        component $T$ of $\Hilbsmr{X}$ containing $[R]$ and let $\famil
        \subset X \times T$ be the restriction of the universal family
        $\UU_r$ to $T$.  The map $f\colon \famil \to T$ is flat and finite.
        Since $\Hilbsmr{X} = \overline{\Hilbzeror{X}}$ by
        its definition~\ref{ref:smoothablecomponent:def},
        there exists an open dense $U$ such
        that $f\colon f^{-1}(U)\to U$ is smooth.  Hence in particular the fiber over
        the generic point of $T$ is smooth, so $\famil \to T$ gives an
        embedded smoothing of $R$.
    \end{proof}

    The following corollary reduces the questions of smoothability to
    schemes over $\kk = \kkbar$.
    \begin{corollary}\label{ref:basechangesmoothings:cor}
        Let $R$ be a finite scheme over $\kk$ and $\kk \subset \KK$ be a field extension. 
        Then $R$ is smoothable if and only if the $\KK$-scheme
        \[
            R_{\KK} = R\times \Spec \KK
        \]
        is smoothable.
    \end{corollary}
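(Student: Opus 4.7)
The plan is to reduce the question to a statement about Hilbert schemes and then apply the base change result already established. First, embed $R$ as a closed subscheme of $X := \AA^n$ for any $n \geq \dimk R$; such an embedding exists, for instance by Lemma~\ref{ref:embeddingdimension:lem} applied componentwise. Since $X$ is smooth and separated, Theorem~\ref{thm_equivalence_of_abstract_and_embedded_smoothings} allows us to replace the question of abstract smoothability of $R$ by the question of smoothability of $R$ inside $X$. Base-changing to $\KK$ gives an embedding $R_{\KK} \subset X_{\KK}$, and $X_{\KK} = \AA^n_{\KK}$ is again smooth and separated, so the same theorem applies to $R_{\KK}$.

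Next, by Proposition~\ref{ref:embeddedvsHilbert:prop}, embedded smoothability is equivalent to the condition $[R] \in \Hilbsmr{X}$, respectively $[R_{\KK}] \in \Hilbsmr{X_{\KK}}$. Proposition~\ref{ref:basechangehilb:prop} gives the identifications
\[
    \Hilbr{X_{\KK}} \simeq \Hilbr{X} \times_{\kk} \KK, \qquad \Hilbsmr{X_{\KK}} \simeq \Hilbsmr{X} \times_{\kk} \KK,
\]
under which $[R_{\KK}]$ corresponds to the image of $[R]\colon \Spec \kk \to \Hilbr{X}$ under base change to $\Spec \KK$. Concretely, if $j\colon \Hilbsmr{X} \hookrightarrow \Hilbr{X}$ is the closed immersion (Definition~\ref{ref:smoothablecomponent:def}), then the base change $j_{\KK}\colon \Hilbsmr{X} \times_{\kk} \KK \hookrightarrow \Hilbr{X} \times_{\kk} \KK$ is again a closed immersion identified with $\Hilbsmr{X_{\KK}} \hookrightarrow \Hilbr{X_{\KK}}$.

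With this setup both directions are immediate. The forward direction ($R$ smoothable $\Rightarrow R_{\KK}$ smoothable) just says that if $[R]$ factors through $j$, then its base change $[R_{\KK}]$ factors through $j_{\KK}$. For the converse, suppose $[R_{\KK}]\colon \Spec \KK \to \Hilbr{X_{\KK}}$ factors through $j_{\KK}$. The composition $\Spec \KK \to \Spec \kk \xrightarrow{[R]} \Hilbr{X}$ then factors through the closed subscheme $\Hilbsmr{X} \subset \Hilbr{X}$ after pullback along $\Spec \KK \to \Spec \kk$; since $\Spec \KK \to \Spec \kk$ is faithfully flat (hence schematically dominant), the map $[R]\colon \Spec \kk \to \Hilbr{X}$ must itself factor through the closed subscheme $\Hilbsmr{X}$, so $[R] \in \Hilbsmr{X}$ and $R$ is smoothable. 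I expect no significant obstacle: the only mild subtlety is the faithfully flat descent argument in the last step, which can equivalently be phrased by noting that the ideal sheaf of $\Hilbsmr{X}$ in $\Hilbr{X}$ tensored with $\KK$ is the ideal sheaf of $\Hilbsmr{X_{\KK}}$, so the vanishing of the pulled-back ideal at $[R_{\KK}]$ is equivalent to the vanishing of the original ideal at $[R]$.
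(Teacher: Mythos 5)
Your argument is correct and follows the same overall route as the paper's proof: reduce abstract smoothability to embedded smoothability in affine space, translate that into membership of $[R]$ in the smoothable component of the Hilbert scheme, and then apply the base change isomorphism $\Hilbsmr{X_{\KK}} \simeq \Hilbsmr{X}\times_{\kk}\KK$ from Proposition~\ref{ref:basechangehilb:prop}. The only cosmetic differences are that the paper handles the forward direction more directly (base-changing the smoothing itself, without passing through the Hilbert scheme), and for the converse it phrases the last step as ``the projection of $[R_{\KK}]$ to $\Hilbr{X}$ is $[R]$, hence $[R]$ lies in the image of $\Hilbsmr{X_{\KK}}$, which is $\Hilbsmr{X}$'' rather than via faithfully flat descent; the two formulations are equivalent.
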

    \begin{proof}
        Suppose $R$ is smoothable and take its smoothing $(\famil , R)\to (T,
        t)$. Then $(\famil \times_{\kk} \KK, R_{\KK})\to (T\times_{\kk} \KK,
        t)$ is a smoothing of $R_{\KK}$.
%        then $\famil _t  \simeq R$.  The point $t$ is $\kk$-rational, so it
%        gives a $\KK$-rational point $t_{\KK} \in T_{\KK} = T \times \Spec
%        \KK$ just by a product of the composition $\Spec\KK\to \Spec\kk\to T$
%        and the identity $\Spec \KK \to \Spec \KK$.  Moreover $\famil
%        _{t_{\KK}} = \famil _t \times \Spec \KK = R_{\KK}$.
        Suppose now $R_{\KK}$ is smoothable as a scheme over $\KK$.
        Since $R$ is finite, we can embed $R$ into an affine space $\AA^N_{\kk}$.
        Since $\AA^N_{\kk}$ is smooth, by Theorem~\ref{ref:abstractvsembedded:thm} the scheme $R_{\KK}$ is smoothable
        in $\AA^N_{\KK} = \AA^N_{\kk} \times_{\kk} \KK$.
        By Proposition~\ref{ref:embeddedvsHilbert:prop} this means that the point $[R_{\KK}]$ lies in
        $\Hilbsmr{\AA^N_{\KK}/\KK}= \reduced{\Hilbsmr{\AA^N_{\kk}} \times \Spec \KK}$.
        The image of the projection of this point to $\Hilbsmr{\AA^N_{\kk}}$ is equal to $[R]$.
        Using Proposition~\ref{ref:embeddedvsHilbert:prop} again, we get that $R$ is smoothable in $X$.
    \end{proof}

    By the above comparison we can also translate known results about the Hilbert
    scheme to the language of smoothability.
    \begin{corollary}\label{ref:tmplowcodim:cor}
        Let $R \subset \mathbb{A}^2$ be a finite subscheme. Then $R$ is
        smoothable. Let $R' \subset \mathbb{A}^3$ be a finite Gorenstein
        subscheme. Then $R'$ is smoothable.
    \end{corollary}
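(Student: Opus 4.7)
The plan is to deduce both assertions directly from Proposition~\ref{ref:embeddedvsHilbert:prop}, which reduces smoothability of $R \subset X$ to the condition $[R] \in \Hilbsmr{X}$, combined with the structural theorems of Fogarty (Theorem~\ref{ref:fogarty:thm}) and Kleppe (Theorem~\ref{ref:kleppe:thm}) quoted earlier. In both cases the strategy is identical: exhibit a smooth irreducible scheme containing $[R]$ (or $[R']$) together with a dense open subset whose points correspond to tuples of rational points, so that this scheme is contained in, hence coincides with, the smoothable component.

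For $R \subset \mathbb{A}^2$, I would invoke Fogarty's theorem: since $\mathbb{A}^2$ is a smooth, geometrically irreducible, quasi-projective surface, Theorem~\ref{ref:fogarty:thm} asserts that $\Hilbr{\mathbb{A}^2}$ is smooth and irreducible. The open subset $\Hilbzeror{\mathbb{A}^2}$ of smooth subschemes is non-empty — over $\kkbar$ it contains every configuration of $r$ distinct rational points (cf.\ Lemma~\ref{ref:smoothpartdescription:lem}), so non-emptiness over $\kk$ follows by base change via Corollary~\ref{ref:basechangesmoothings:cor} combined with Proposition~\ref{ref:basechangehilb:prop}. A non-empty open subset of an irreducible scheme is dense, hence $\Hilbsmr{\mathbb{A}^2} = \overline{\Hilbzeror{\mathbb{A}^2}} = \Hilbr{\mathbb{A}^2}$. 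In particular $[R] \in \Hilbsmr{\mathbb{A}^2}$, and Proposition~\ref{ref:embeddedvsHilbert:prop} concludes.

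For $R' \subset \mathbb{A}^3$ Gorenstein, the argument is parallel using Kleppe's theorem: Theorem~\ref{ref:kleppe:thm} guarantees that $\HilbGorr{\mathbb{A}^3}$ is smooth and irreducible. By Example~\ref{ex:smoothareGorenstein}, smooth finite subschemes are Gorenstein, so $\Hilbzeror{\mathbb{A}^3} \subset \HilbGorr{\mathbb{A}^3}$; being open and (after the same base-change reduction to $\kkbar$) non-empty inside the irreducible $\HilbGorr{\mathbb{A}^3}$, it is dense. Taking closures inside $\Hilbr{\mathbb{A}^3}$ yields $\HilbGorr{\mathbb{A}^3} \subset \Hilbsmr{\mathbb{A}^3}$, so $[R'] \in \Hilbsmr{\mathbb{A}^3}$ and Proposition~\ref{ref:embeddedvsHilbert:prop} gives smoothability of $R'$. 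There is no genuine obstacle here, as the deep content of the corollary is absorbed into Fogarty's and Kleppe's theorems; the only minor subtlety is ensuring $\Hilbzeror{X}$ is non-empty over a possibly small field $\kk$, which is handled by extension of scalars.
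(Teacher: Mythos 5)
Your argument is correct and is essentially the paper's proof, which simply cites Theorems~\ref{ref:fogarty:thm} and~\ref{ref:kleppe:thm} to conclude $\Hilbr{\mathbb{A}^2} = \Hilbsmr{\mathbb{A}^2}$ and $\HilbGorr{\mathbb{A}^3} = \HilbGorsmr{\mathbb{A}^3}$ and then applies Proposition~\ref{ref:embeddedvsHilbert:prop}. You have merely spelled out the density/base-change details that the paper leaves implicit.
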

    \begin{proof}
        From Theorem~\ref{ref:fogarty:thm} and
        Theorem~\ref{ref:kleppe:thm} it follows that $\Hilbr{\mathbb{A}^2} =
        \Hilbsmr{\mathbb{A}^2}$ and $\HilbGorr{\mathbb{A}^3} =
        \HilbGorsmr{\mathbb{A}^3}$ for all $r$, hence the result follows from
        Proposition~\ref{ref:embeddedvsHilbert:prop}.
    \end{proof}
    \begin{corollary}\label{ref:lowcodimensionsmoothability:cor}
        Let $(\DA, \mm, \kk)$ be a finite local algebra with $\dimk \mm/\mm^2
        \leq 2$. Then $\DA$ is smoothable. Let $(\DA', \mm', \kk)$ be a finite
        local Gorenstein algebra with $\dimk \mm'/\mm'^2
        \leq 3$. Then $\DA'$ is smoothable.
    \end{corollary}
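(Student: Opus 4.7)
The plan is to deduce this corollary essentially formally from Corollary~\ref{ref:tmplowcodim:cor} by realizing $\Spec \DA$ as a finite subscheme of affine plane (respectively, affine $3$-space), and then translating between embedded and abstract smoothability.

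First, I would use Lemma~\ref{ref:embeddingdimension:lem} to present $\DA$ as a quotient of a power series ring $\DShat = \kk[[\Dx_1, \Dx_2]]$ (respectively $\kk[[\Dx_1, \Dx_2, \Dx_3]]$ in the Gorenstein case). Since $\DA$ is finite as a $\kk$-vector space, its maximal ideal $\mm$ is nilpotent, so $\DmmS^d \subset I$ for $d \gg 0$, where $I$ is the defining ideal in $\DShat$. Consequently each power series generator of $I$ may be replaced by its truncation modulo $\DmmS^d$, which is a polynomial; equivalently $\DA \simeq \kk[\Dx_1, \Dx_2]/(I \cap \kk[\Dx_1, \Dx_2] + \mm_{\kk[\Dx_1,\Dx_2]}^d)$. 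This exhibits $\Spec \DA$ as a finite closed subscheme of $\mathbb{A}^2$ (resp.~$\mathbb{A}^3$), which is Gorenstein in the second case because $\DA'$ is.

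Next, I would invoke Corollary~\ref{ref:tmplowcodim:cor}: every finite subscheme of $\mathbb{A}^2$ is smoothable in $\mathbb{A}^2$, and every finite Gorenstein subscheme of $\mathbb{A}^3$ is smoothable in $\mathbb{A}^3$. Hence $\Spec \DA$ (resp.~$\Spec \DA'$) is embedded smoothable. Finally, by Theorem~\ref{thm_equivalence_of_abstract_and_embedded_smoothings} (applied to the smooth ambient variety $\mathbb{A}^2$ or $\mathbb{A}^3$), embedded smoothability implies abstract smoothability of the underlying scheme, which is exactly what is meant by $\DA$ (resp.~$\DA'$) being smoothable.

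There is essentially no obstacle here — all of the substance was absorbed into Corollary~\ref{ref:tmplowcodim:cor} (which in turn rests on the deep results of Fogarty and of Kleppe--Mir\`o-Roig). The only minor bookkeeping point is the passage from a power series presentation to a polynomial presentation, which is routine because $\DA$ is finite-dimensional over $\kk$ and its maximal ideal is therefore nilpotent.
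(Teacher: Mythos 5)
Your proof is correct and follows the same route as the paper: realize $\Spec\DA$ (resp.\ $\Spec\DA'$) as a finite subscheme of $\mathbb{A}^2$ (resp.\ $\mathbb{A}^3$) using the bound on $\dimk\mm/\mm^2$, then quote Corollary~\ref{ref:tmplowcodim:cor}. The paper's proof is just a one-line version of yours; the extra bookkeeping you supply (truncating the power-series presentation, invoking Theorem~\ref{thm_equivalence_of_abstract_and_embedded_smoothings} to pass from embedded to abstract smoothability) merely makes explicit what the paper leaves implicit.
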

    \begin{proof}
        Finite schemes $\Spec \DA$ and $\Spec \DA'$ are embeddable in
        $\mathbb{A}^2$ and $\mathbb{A}^3$, respectively, and the result
        follows from Corollary~\ref{ref:tmplowcodim:cor}.
    \end{proof}
    Proposition~\ref{ref:embeddedvsHilbert:prop} implies that smoothability is a closed property.
    \begin{proposition}\label{ref:smoothabilityisclosed:prop}
        Let $\pi\colon \ccX\to T$ be a (finite flat) family. Then the set
        \[
            T^{sm} := \{t\in T\ |\ \ccX_{t}\mbox{ smoothable}\}
        \]
        is closed in $T$.
    \end{proposition}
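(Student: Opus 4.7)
The assertion is local on $T$, so I would reduce immediately to the case when $T = \Spec A$ is affine. Then $\ccX = \Spec B$ for a finite flat $A$-algebra $B$, and choosing a finite set of $A$-module generators $b_1,\ldots,b_N$ of $B$ yields a closed immersion $\ccX \hookrightarrow \mathbb{A}^N \times T$ over $T$. By the universal property of the Hilbert scheme (Theorem~\ref{ref:representability:thm}), this embedded family is classified by a morphism $\varphi \colon T \to \Hilbr{\mathbb{A}^N}$ under which each point $t \in T$ with residue field $\kappa = \kappa(t)$ goes to the $\kappa$-point corresponding to the embedded fiber $\ccX_t \subset \mathbb{A}^N_{\kappa}$.

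The heart of the argument is then the claim
\[
    T^{sm} \;=\; \varphi^{-1}\bigl(\Hilbsmr{\mathbb{A}^N}\bigr).
\]
Since $\Hilbsmr{\mathbb{A}^N}$ is closed in $\Hilbr{\mathbb{A}^N}$ by
Definition~\ref{ref:smoothablecomponent:def}, this equality finishes the proof: the right-hand side is the preimage of a closed set under a continuous map, hence closed.

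To establish the claim at a point $t$ with residue field $\kappa$, I would chain together three facts already in hand. First, Theorem~\ref{thm_equivalence_of_abstract_and_embedded_smoothings} (applied over $\kappa$, with the smooth ambient variety $\mathbb{A}^N_{\kappa}$) shows that $\ccX_t$ is abstractly smoothable as a $\kappa$-scheme if and only if it is embedded-smoothable in $\mathbb{A}^N_{\kappa}$. Second, Proposition~\ref{ref:embeddedvsHilbert:prop} (again over $\kappa$) rephrases embedded smoothability as membership of $\varphi(t)$ in $\Hilbsmr{\mathbb{A}^N_{\kappa}/\kappa}$. Third, the base-change identity $\Hilbsmr{\mathbb{A}^N_{\kappa}/\kappa} \simeq \Hilbsmr{\mathbb{A}^N/\kk}\times_{\kk}\kappa$ from Proposition~\ref{ref:basechangehilb:prop} translates this $\kappa$-point condition into the statement $\varphi(t) \in \Hilbsmr{\mathbb{A}^N}$ at the level of underlying topological spaces.

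The main obstacle is this last, bookkeeping step: the definition of $T^{sm}$ refers to abstract smoothability of the fiber $\ccX_t$ over the (possibly inseparable, possibly non-closed) residue field $\kappa(t)$, whereas $\Hilbr{\mathbb{A}^N}$ is built over $\kk$. Consequently I must invoke both the base-change compatibility of the smoothable component (equation~\eqref{eq:basechangehilbsmr}) and the compatibility of smoothability itself under field extension (Corollary~\ref{ref:basechangesmoothings:cor}) to conclude that $\ccX_t$ is smoothable over $\kappa$ precisely when $\varphi(t)$ lies in the topological subset $\Hilbsmr{\mathbb{A}^N}$ of $\Hilbr{\mathbb{A}^N}$. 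Once this identification is justified, no further computation is required.
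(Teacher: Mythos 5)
Your proof is correct and follows the same route as the paper: cover $T$ by opens over which $\ccX$ embeds in $\mathbb{A}^N\times T$, use the classifying map $\varphi$ to $\Hilbr{\mathbb{A}^N}$, and identify $T^{sm}$ with $\varphi^{-1}\bigl(\Hilbsmr{\mathbb{A}^N}\bigr)$. The paper's version is a single terse paragraph (taking $N=r=\deg\pi$ via local freeness) and simply asserts the key equality, whereas you explicitly supply the chain through Theorem~\ref{thm_equivalence_of_abstract_and_embedded_smoothings}, Proposition~\ref{ref:embeddedvsHilbert:prop}, and Proposition~\ref{ref:basechangehilb:prop} — a worthwhile elaboration, not a different argument.
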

    \begin{proof}
        It is enough to find an open cover $\{U_i\}$ of $T$ such that $T^{sm}\cap
        U_i$ is closed in $U_i$. Let $r = \deg \pi$. For each point $x\in T$
        the fiber $\ccX_t$ embeds into
        $\AA^r_{\kappa(x)}$, so there is a neighbourhood $U_i$ of $x$ such
        that $\pi^{-1}(U_i)$ embeds into $\mathbb{A}^{r} \times U_i$. These embeddings induce maps
        $\varphi_i:U_i\to
        \Hilbr{\mathbb{A}^r}$ and $T^{sm} \cap U_i =
        \varphi_i^{-1}(\Hilbsmr{\mathbb{A}^r})$ are closed.
    \end{proof}

    \section{Smoothings over rational curves}\label{ssec:kollar}

    \newcommand{\projambient}{\mathbb{P}^n_{\kk}}%
    We now show that every finite smoothable scheme $R$ over an algebraically
    closed $\kk$ of
    characteristic zero has a smoothing over a $\mathbb{P}^1$.
    In fact it has
    such \emph{embedded} smoothings for all embeddings
    $R$ into $\projambient$ or other smooth, projective, rational variety.
    This is because $\Hilbsmr{\projambient}$ is a rational variety, hence it
    has enough rational curves. Note that in this section we use
    $\Hilbsmr{\projambient}$ rather than $\Hilbsmr{\AA^n}$, because we invoke
    Theorem~\ref{ref:kollarresult:thm} by Koll\'ar, which applies to
    projective (or proper) schemes.

    \begin{lemma}
        The variety $\Hilbsmr{\projambient}$ is rational.
        \label{ref:rationalsmoothable:lem}
    \end{lemma}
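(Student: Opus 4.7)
The plan is to reduce rationality of $\Hilbsmr{\projambient}$ to rationality of the symmetric product $\Sym^r(\projambient)$, and then to prove the latter by a classical Noether-type argument.

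First, by Definition~\ref{ref:smoothablecomponent:def} the variety $\Hilbsmr{\projambient}$ is the closure of $\Hilbzeror{\projambient}$, hence birational to it. By Lemma~\ref{ref:smoothpartdescription:lem} we have $\Hilbzeror{\projambient} \simeq (\projambient)^{r,\circ}/\Sigma_r$, which is an open dense subset of $\Sym^r(\projambient)$. Thus it suffices to prove that $\Sym^r(\projambient)$ is rational. Since $\projambient$ is birational to $\AA^n$ and symmetric products respect birational equivalence of smooth irreducible bases, this reduces further to showing that the GIT quotient $(\AA^n)^r/\Sigma_r$ is rational.

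Now I would argue that the field of invariants $\kk(x_{ij} : 1\le i\le r,\ 1\le j\le n)^{\Sigma_r}$, with $\Sigma_r$ permuting the first index, is purely transcendental of transcendence degree $rn$. Here is the key construction. Let $e_1, \ldots, e_r$ be the elementary symmetric polynomials in $x_{11}, \ldots, x_{r1}$; these are $r$ algebraically independent invariants, and the subfield $\kk(e_1, \ldots, e_r)$ has transcendence degree $r$. Viewing the remaining coordinates $x_{ij}$ with $j \ge 2$ as a free module over $\kk[x_{11}, \ldots, x_{r1}]$, one produces $r(n-1)$ further invariants via the ``polarized power sums''
\[
    t_{k,j} := \sum_{i=1}^{r} x_{i1}^{k} x_{ij}, \qquad 0 \le k \le r-1,\ 2 \le j \le n.
\]
On the open subset where the $x_{i1}$ are pairwise distinct, the Vandermonde matrix in the $x_{i1}$ is invertible, so each $x_{ij}$ with $j\ge 2$ can be recovered rationally from the $x_{i1}$ and the $t_{k,j}$. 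Equivalently, the generic fibre of the projection $(\AA^n)^r / \Sigma_r \dashrightarrow \AA^r$ (to the first-coordinate symmetric product) is a trivial affine space of dimension $r(n-1)$, because the associated bundle is the pullback along the \'etale $\Sigma_r$-cover $\AA^{r,\circ} \to (\AA^r \setminus D)$ of a permutation representation, and this bundle becomes trivial after passing to generic points since the $t_{k,j}$ define a rational section.

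Putting this together, $\kk(x_{ij})^{\Sigma_r}$ is generated by $e_1, \ldots, e_r$ together with the $t_{k,j}$ for $0 \le k \le r-1$ and $2 \le j \le n$, giving $r + r(n-1) = rn$ elements, which must be algebraically independent for dimension reasons. The main obstacle I anticipate is verifying carefully that these generate the whole invariant field (not just a subfield of the right transcendence degree); but this follows from the Vandermonde argument above, which gives an explicit formula for each $x_{ij}$ in terms of the listed invariants on the locus where the $x_{i1}$ are distinct. Consequently $(\AA^n)^r/\Sigma_r$ is rational, hence so are $\Sym^r(\projambient)$ and $\Hilbsmr{\projambient}$.
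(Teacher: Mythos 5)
Your reduction to rationality of $(\AA^n)^r/\Sigma_r$ follows exactly the paper's setup via Lemma~\ref{ref:smoothpartdescription:lem}; the difference is what happens next. The paper stops there and cites Mattuck's theorem on the field of multisymmetric functions (\cite[Theorem, p.~764]{Mattuck__rationality}), whereas you unfold that black box and prove the rationality directly. Your Vandermonde argument is correct: the field $K := \kk\bigl(e_1,\dots,e_r,\,t_{k,j}\bigr)$ is contained in $\kk(x_{ij})^{\Sigma_r}$; adjoining $x_{11},\dots,x_{r1}$ to $K$ recovers all the $x_{ij}$ (solving the Vandermonde system on the locus of distinct $x_{i1}$), so $\kk(x_{ij}) = K(x_{11},\dots,x_{r1})$ is the splitting field over $K$ of $T^r - e_1T^{r-1}+\dots$, giving $[\kk(x_{ij}):K]\leq r!$; combined with $[\kk(x_{ij}):\kk(x_{ij})^{\Sigma_r}]=r!$ this forces $K=\kk(x_{ij})^{\Sigma_r}$, and transcendence-degree counting then shows the $rn$ generators are algebraically independent, so the invariant field is purely transcendental. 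This is in fact essentially Mattuck's own proof, so your argument is not so much a different theorem as an inlining of the reference: it works in arbitrary characteristic, makes the lemma self-contained, and exhibits an explicit birational map, at the cost of a longer proof than the one-line citation the paper prefers.
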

    \begin{proof}
        Recall that $\Hilbsmr{\projambient}$ is a closure of
        $((\projambient)^{\times r}\setminus\Delta)/\Sigma_r$, where $\Delta$
        is the sum of all diagonals $(x_i = x_j)_{i\neq j}\subset
        (\projambient)^{\times r}$ and $\Sigma_r$ acts on
        $(\projambient)^{\times r}$ by permutations. This already proves that
        it is uni-rational. The fact that is is rational is a result of
        Mattuck~\cite[Theorem, p.~764]{Mattuck__rationality}.
    \end{proof}

    The following is a deep result by Koll\'ar.
    \begin{theorem}
        Let $\kk = \kkbar$ and $\chark = 0$.
        Through any tuple of points of a proper, rationally chain connected
        variety $X$ over $\kk$ there is a rational curve.
        \label{ref:kollarresult:thm}
    \end{theorem}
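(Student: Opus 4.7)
The plan is to reduce to the study of free and very free rational curves and then apply a comb-smoothing argument. The statement concerns possibly singular $X$, but smoothability lifts along resolutions in characteristic zero: first I would take a resolution of singularities $\tilde X \to X$. Rationally chain connectedness is a birational invariant in the smooth proper category, so $\tilde X$ is again rationally chain connected; and lifting the prescribed tuple of points to $\tilde X$ (using that rational chains survive birational modifications in char $0$), it suffices to prove the theorem for smooth proper $\tilde X$. If I can find a morphism $\mathbb{P}^1 \to \tilde X$ through the lifted tuple, its composition with $\tilde X\to X$ gives the required rational curve.

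Next, I would invoke the characteristic-zero equivalence between rationally chain connected and rationally connected on smooth proper varieties, and upgrade this further to the existence of a \emph{very free} rational curve $f\colon\mathbb{P}^1 \to \tilde X$, i.e.\ one whose pullback tangent bundle is $\bigoplus \OO(a_i)$ with every $a_i \geq 2$. Existence of such curves is precisely what makes the deformation theory flexible enough: given a very free curve and any finite set $S\subset \tilde X$ of smooth points, one can deform $f$ to pass through any single prescribed point, and more generally one shows by cohomology vanishing that the evaluation map $\Hom(\mathbb{P}^1, \tilde X) \to \tilde X^{\times m}$ at $m$ moving marked points is smooth onto an open subset containing the diagonal of $f(\mathbb{P}^1)$.

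The main construction is then a \emph{comb}: fix a long very free curve $C_0\subset \tilde X$ (the handle of the comb) and attach short very free rational curves $C_1,\ldots,C_k$ (the teeth), each deformed so that $C_i$ passes through the prescribed point $p_i$ and meets $C_0$ at a movable point. The union $C_0\cup C_1\cup\cdots\cup C_k$ is a tree of rational curves through $p_1,\ldots,p_k$. The final step is to smooth this tree to an irreducible rational curve while keeping the marked points on it. This smoothing is possible because the teeth are very free, so the first-order obstructions to smoothing nodes vanish, and the locus of nodal curves in the Kontsevich space (or the Hom scheme with nodes) has the expected dimension; the generic deformation will resolve the nodes into a smooth $\mathbb{P}^1$, while the incidence conditions at $p_1,\ldots,p_k$ are open and can be preserved by positivity of the normal bundle at the marked points.

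The main obstacle I expect is the comb-smoothing step, specifically the interplay between (a) smoothing all the nodes where teeth meet the handle, and (b) preserving the condition that $p_1,\ldots,p_k$ lie on the resulting curve. The standard way around this is Koll\'ar's observation that after attaching sufficiently many auxiliary very free teeth to $C_0$ (beyond the prescribed $k$), the normal bundle of the comb acquires enough global sections to make both the smoothing and the incidence deformations unobstructed; then one checks on an affine open chart of $\Hom(\mathbb{P}^1,\tilde X)$ that a general deformation is both irreducible and passes through the required points. The hypothesis $\chark = 0$ enters critically here to guarantee generic smoothness of the evaluation morphism and unobstructedness of node smoothings.
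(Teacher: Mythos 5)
Your proposal follows the same route as the paper: resolve singularities, pass to the smooth rationally connected case, and invoke the existence of a very free rational curve through any prescribed tuple of points. The only real difference is that the paper stops at a chain of citations to Koll\'ar's book (rational chain connected $\Rightarrow$ rationally connected $\Rightarrow$ separably rationally connected, and then Theorem~IV.3.9 of that book for the curve through prescribed points), while you reconstruct the comb-and-smoothing argument that is the \emph{proof} of that last cited result. That reconstruction is morally correct, though schematic; if you were to write it out in full you would be reproducing a chapter of Koll\'ar's book, which is why the paper simply cites it.

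One small imprecision worth flagging: you justify passing to a resolution $\tilde X\to X$ by saying ``rationally chain connectedness is a birational invariant in the smooth proper category,'' but $X$ itself need not be smooth, so that statement does not directly apply. What you actually need is the stronger result (which is precisely what the paper cites as~\cite[Theorem~3.10]{kollar_book_rational_curves}) that a resolution of a proper rationally chain connected variety over a field of characteristic zero is rationally connected; the rational chains in $X$ can pass through the singular locus, so their survival under the resolution is a genuine theorem and not formal birational invariance. With that citation in place the rest of your reduction (lift the points, find the curve upstairs, push it down) is fine, and the pushed-down curve is non-constant as soon as the tuple contains two distinct points.
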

    \begin{proof}
        By replacing $X$ with a resolution of singularities we may assume $X$
        is smooth. In that case $X$ is rationally connected
        by~\cite[Theorem~3.10]{kollar_book_rational_curves} and separably
        rationally connected by~\cite[Proposition~3.3]{kollar_book_rational_curves}. The result for smooth, separably
        rationally connected varieties
        is~\cite[Theorem~3.9]{kollar_book_rational_curves}.
    \end{proof}

    \begin{corollary}\label{ref:rationalcurvesonsmoothable:cor}
        Let $\kk = \kkbar$ and $\chark = 0$.
        Though any tuple of points on $\Hilbsmr{\projambient}$ there is a
        rational curve.
    \end{corollary}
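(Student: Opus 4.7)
The plan is to apply Theorem~\ref{ref:kollarresult:thm} directly to $X = \Hilbsmr{\projambient}$. For this I need to check two hypotheses: properness and rational chain connectedness.

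Properness is immediate: by Theorem~\ref{ref:representability:thm}, the scheme $\Hilbr{\projambient}$ is projective, and by Definition~\ref{ref:smoothablecomponent:def} the smoothable component $\Hilbsmr{\projambient}$ is a closed subscheme, hence itself projective.

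Rational chain connectedness is the step that requires care, since $\Hilbsmr{\projambient}$ may be singular. By Lemma~\ref{ref:rationalsmoothable:lem}, the variety $\Hilbsmr{\projambient}$ is rational, so in particular there is a birational map $\mathbb{P}^N \dashrightarrow \Hilbsmr{\projambient}$. To move any two given points $p_1, p_2$ onto a rational curve, I would take a resolution of indeterminacy (available in characteristic zero by Hironaka): a smooth projective variety $\tilde{Y}$ with a birational morphism $\tilde{Y} \to \Hilbsmr{\projambient}$ and a birational morphism $\tilde{Y} \to \mathbb{P}^N$. The first morphism is surjective since it is proper and dominant, so $p_1, p_2$ admit preimages $\tilde{p}_1, \tilde{p}_2 \in \tilde{Y}$. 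Because $\tilde{Y}$ is smooth, projective and rational, it is rationally connected (any two points lie on a rational curve obtained by pulling back a general line of $\mathbb{P}^N$ and specializing); pushing a rational curve through $\tilde{p}_1, \tilde{p}_2$ forward to $\Hilbsmr{\projambient}$ yields a (possibly reducible, but each component rational) chain of rational curves through $p_1$ and $p_2$. The case of more than two points is handled by iterating this construction pairwise.

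With both hypotheses in hand, Theorem~\ref{ref:kollarresult:thm} applies and produces a single rational curve through any finite tuple of points of $\Hilbsmr{\projambient}$, as claimed. The main obstacle is purely the resolution/pushforward bookkeeping needed to pass from the rationality of Lemma~\ref{ref:rationalsmoothable:lem} to genuine rational chain connectedness on the singular space $\Hilbsmr{\projambient}$; once that is cleared, the corollary is a one-line invocation of Koll\'ar's theorem.
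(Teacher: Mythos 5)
Your proof is correct and follows the same route as the paper: establish that $\Hilbsmr{\projambient}$ is projective, note it is rational by Lemma~\ref{ref:rationalsmoothable:lem}, deduce rational chain connectedness, and invoke Theorem~\ref{ref:kollarresult:thm}. The only difference is one of exposition: the paper compresses the step ``rational proper variety implies rationally chain connected'' into the phrase ``so in particular rationally chain connected,'' whereas you unfold it by resolving singularities and pushing chains forward --- a legitimate filling-in of what the paper leaves implicit (and in fact the same resolution trick appears inside the paper's proof of Theorem~\ref{ref:kollarresult:thm} itself). One small slip in your write-up: the image under a morphism of a single irreducible rational curve is again a single irreducible rational curve (or a point), not a reducible chain, but this does not affect the argument since you only need a chain in $\Hilbsmr{\projambient}$ to conclude rational chain connectedness.
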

    \begin{proof}
        The scheme $\Hilbsmr{\projambient}$ is a projective variety by
        Proposition~\ref{ref:smoothablecomponentdescription:prop} and
        Theorem~\ref{ref:representability:thm}. It is also rational by
        Lemma~\ref{ref:rationalsmoothable:lem}, so in particular rationally
        chain connected, so the claim follows from
        Theorem~\ref{ref:kollarresult:thm}.
    \end{proof}
    \begin{corollary}\label{ref:smoothingoverP1:cor}
        Let $\kk = \kkbar$ and $\chark = 0$.
        Every finite smoothable scheme $R$ over $\kk$ has a smoothing over
        $\mathbb{P}^1$.
    \end{corollary}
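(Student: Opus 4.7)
The plan is to realize the smoothing as a pullback of the universal family on $\Hilbsmr{\PP^n}$ along a rational curve. First I would embed $R$ into some projective space $\PP^n$; this is possible because $R$ is finite over $\kk$. By Proposition~\ref{ref:embeddedvsHilbert:prop} the point $[R]$ lies on the smoothable component $\Hilbsmr{\PP^n}$.

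Next, since $\kk = \kkbar$ is infinite, I can pick $r$ distinct $\kk$-points of $\PP^n$ and view their union as a smooth subscheme $S \subset \PP^n$ of degree $r$, giving a $\kk$-point $[S]\in \Hilbzeror{\PP^n}\subseteq \Hilbsmr{\PP^n}$. Applying Corollary~\ref{ref:rationalcurvesonsmoothable:cor} to the pair $\{[R],[S]\}$ yields a morphism $f\colon \PP^1 \to \Hilbsmr{\PP^n}$ whose image contains both points; in particular $f$ is non-constant. Pick $\kk$-points $t_R, t_S\in \PP^1$ with $f(t_R)=[R]$ and $f(t_S)=[S]$, which exist because closed points of $\PP^1_{\kk}$ over $\kk=\kkbar$ are $\kk$-rational.

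Let $\ccZ := \PP^1 \times_{\Hilbsmr{\PP^n}} \UU$, where $\UU\to\Hilbsmr{\PP^n}$ is the restriction of the universal family. The projection $\pi\colon \ccZ \to \PP^1$ is finite and flat of degree $r$ as a pullback of such, and $\ccZ \subseteq \PP^1 \times \PP^n$ by construction. The fiber $\pi^{-1}(t_R)$ is $R$. The locus in $\PP^1$ where the fiber of $\pi$ is smooth is open (since the relative cotangent sheaf is coherent), contains $t_S$, and is therefore a non-empty open subset of the irreducible curve $\PP^1$; hence it contains the generic point, so the generic fiber of $\pi$ is smooth. Thus $(\ccZ,R)\to (\PP^1,t_R)$ satisfies Definition~\ref{ref:abstractsmoothable:def}.

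There is essentially no obstacle beyond assembling the pieces: the heavy lifting has already been done in Corollary~\ref{ref:rationalcurvesonsmoothable:cor}, whose proof appeals to Koll\'ar's deep existence theorem for rational curves through prescribed points on rationally chain connected varieties (Theorem~\ref{ref:kollarresult:thm}), combined with Mattuck's rationality of symmetric products (Lemma~\ref{ref:rationalsmoothable:lem}). The only subtlety worth verifying is that the generic fiber of $\pi$ is smooth, which follows from openness of the smooth locus and the fact that one closed fiber, namely $\pi^{-1}(t_S)=S$, is already smooth.
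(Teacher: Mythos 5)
Your proof follows essentially the same route as the paper: embed $R$ into a projective space, use Proposition~\ref{ref:embeddedvsHilbert:prop} to put $[R]$ on the smoothable component, invoke Corollary~\ref{ref:rationalcurvesonsmoothable:cor} (which ultimately rests on Koll\'ar's theorem plus Mattuck's rationality) to join $[R]$ to a smooth point by a rational curve, and pull back the universal family. The only cosmetic point is that Corollary~\ref{ref:rationalcurvesonsmoothable:cor} produces a rational curve $C$, and one should explicitly pass to its normalization $\tilde{C}\simeq\mathbb{P}^1$ before pulling back, as the paper does; your phrase ``yields a morphism $f\colon\mathbb{P}^1\to\Hilbsmr{\PP^n}$'' quietly elides that step.
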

    \begin{proof}
        For $n$ large enough we have an embedding $R \subset \projambient$.
        Since $R$ is smoothable, by
        Theorem~\ref{thm_equivalence_of_abstract_and_embedded_smoothings} it
        is smoothable in $\projambient$, so it corresponds to a point $[R]\in
        \Hilbsmr{\projambient}$. Take any point $[R']\in\Hilbsmr{\projambient}$
        corresponding to a smooth $R' \subset \projambient$. By
        Corollary~\ref{ref:rationalcurvesonsmoothable:cor} there exists a rational
        curve $C\subset \Hilbsmr{\projambient}$ through $[R]$ and $[R']$. Its
        normalization $\tilde{C} \simeq \mathbb{P}^1_{\kk}$ comes with a morphism
        to
        $\Hilbsmr{\projambient}$. The pullback
        $\UU_{|\tilde{C}} \to \tilde{C}$ of the universal family via
        $\tilde{C}\to\Hilbsmr{X}$ is the required smoothing of $R$ as in
        Proposition~\ref{ref:embeddedvsHilbert:prop}.
    \end{proof}

    Corollary~\ref{ref:smoothingoverP1:cor} gives the following affine
    version, which is stronger version of
    Theorem~\ref{ref:goodbaseofsmoothing:thm}.
    \begin{corollary}
        Let $\kk = \kkbar$ and $\chark = 0$. Every smoothable $\kk$-scheme has a
        smoothing over $\Spec\kk[t]$.
    \end{corollary}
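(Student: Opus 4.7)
The plan is to cut down the $\mathbb{P}^1$-smoothing provided by Corollary~\ref{ref:smoothingoverP1:cor} to an affine $\mathbb{A}^1 = \Spec\kk[t]$ smoothing by removing a single smooth fiber.

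First I would apply Corollary~\ref{ref:smoothingoverP1:cor} to $R$ to obtain a family $\pi\colon \famil \to \tilde{C}$ with $\tilde{C} \simeq \mathbb{P}^1_{\kk}$, where some $\kk$-rational point $p \in \tilde{C}$ satisfies $\famil_p \simeq R$. By construction (namely that $\tilde{C}$ was obtained as the normalization of a rational curve on $\Hilbsmr{\mathbb{P}^n}$ passing through both $[R]$ and a point $[R']$ with $R'$ smooth) there exists at least one $\kk$-rational point $q \in \tilde{C}$, distinct from $p$, whose fiber $\famil_q$ is smooth over $\kk$. In particular the locus $U := \{x \in \tilde{C} \mid \famil_x \text{ is smooth}\}$ is an open subset of $\tilde{C}$ containing $q$, hence is non-empty.

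Next I would pick an isomorphism $\varphi\colon\tilde{C} \setminus \{q\} \xrightarrow{\sim} \mathbb{A}^1_{\kk} = \Spec\kk[t]$ sending $p$ to the origin; such an isomorphism exists because any two $\kk$-rational points of $\mathbb{P}^1_{\kk}$ are related by a linear automorphism. Let $T = \Spec\kk[t]$ and let $\famil' := \famil \times_{\tilde{C}} T \to T$ be the pullback. Since finiteness and flatness are preserved by base change, $\famil' \to T$ is a finite flat family. The fiber over the origin $t = 0$ is $\famil_p \simeq R$ by construction, so the special-fiber condition of Definition~\ref{ref:abstractsmoothable:def} is satisfied.

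It remains to check the generic-fiber condition. The scheme $T$ is irreducible with a single generic point $\eta$, which is the image under $\varphi$ of the generic point of $\tilde{C}$. The smooth locus $U \subset \tilde{C}$ is open and non-empty, hence dense, so it contains the generic point of $\tilde{C}$; passing through $\varphi$, the fiber $\famil'_\eta$ is smooth over $\kappa(\eta)$. Therefore $\famil' \to T$ is a smoothing of $R$ over $\Spec \kk[t]$, as required.

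There is no serious obstacle here: the content is entirely carried by Corollary~\ref{ref:smoothingoverP1:cor}, and the only small point to verify is that after deleting a single closed point of $\mathbb{P}^1$ we still retain genericity of smoothness, which follows because the smooth locus of the original $\mathbb{P}^1$-family is open and contains at least two distinct points (namely $p$ is irrelevant but $q$ and the generic point of $\tilde{C}$ both lie in it).
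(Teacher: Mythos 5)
Your proof is correct and follows exactly the paper's approach, which is the one-liner ``restrict the smoothing from Corollary~\ref{ref:smoothingoverP1:cor} to $\mathbb{A}^1 = \mathbb{P}^1\setminus\{pt\}$''; you are simply filling in the details. One small over-complication: you insist that the point $q$ you delete have a smooth fiber, but this is irrelevant — any closed point $q\neq p$ works, because removing a closed point from $\mathbb{P}^1$ does not change the generic point, and the generic fiber of a smoothing over $\mathbb{P}^1$ is already smooth by definition.
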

    \begin{proof}
        Restrict the smoothing given by
        Corollary~\ref{ref:smoothingoverP1:cor} to $\mathbb{A}^1 =
        \mathbb{P}^1 \setminus\{pt\}$.
    \end{proof}

    \begin{remark}
        Roggero and Lella
        proved~\cite[Theorem~C]{lella_roggero_Rational_Components}
        that each smooth component of each $\Hilbr{\projambient}$ is rational. As
        pointed in Problem~\ref{prob:nonrational}, no nonrational component of a
        Hilbert scheme of points $\Hilbr{\projambient}$ is known.
    \end{remark}

    \section{Examples of nonsmoothable finite schemes}\label{ssec:examplesnonsmoothable}

    A finite scheme is nonsmoothable if and only if one of its components is
    nonsmoothable by Theorem~\ref{thm_equivalence_of_abstract_and_embedded_smoothings}.
    Therefore below we consider only irreducible nonsmoothable schemes.
    Known examples of irreducible nonsmoothable schemes fall into two categories.  Both exploit the
    fact that $\Hilbsmr{\mathbb{A}^n}$ is quasi-projective and irreducible of
    dimension $rn$, see Proposition~\ref{ref:smoothablecomponentdescription:prop}.

    First, there are
    schemes with \emph{small tangent space}. Indeed, if a degree $r$ subscheme $R\subset
    \mathbb{A}^n$ has $\dim \Dtangspace{\Hilbr{\mathbb{A}^n}, [R]} < nr$, then
    $[R]\not\in \Hilbsmr{\mathbb{A}^n}$, so $R$ is nonsmoothable
    by~Proposition~\ref{ref:embeddedvsHilbert:prop}.

    \begin{example}[(1, 4, 3),
        {\cite{cartwright_erman_velasco_viray_Hilb8}}]\label{ex:143}
        In this example we consider elements of $\HH =
        \Hilbarged{8}{\mathbb{A}^4}$.
        Let $F_1, F_2, F_3\in \kdp[x_1, \ldots, x_4]$ be general quadrics.
        Let $\DS = \kk[\Dx_1, \ldots, \Dx_4]$, $I = \Ann(F_1, F_2, F_3)$ and
        $\DA = \DS/I$. Since $F_\bullet$ are general, we have $H_{\DA} = (1,
        4, 3)$ and $I$ is generated by $7$ quadrics.

        Let $R = \Spec \DA \subset \mathbb{A}^4$. We claim that
        \begin{equation}\label{eq:tmp143}
            \dim
            \Dtangspace{\HH, [R]} = 25 < 4\cdot 8.
        \end{equation}
        To prove~\eqref{eq:tmp143}, first note that $\Homthree{}{I}{\DA}$ is
        graded, concentrated in degrees $0, -1, -2$. The subspace
        $\Homthree{}{I}{\DA}_0$ corresponds to $\DS$-homomorphisms $I\to \DA$ induced
        by $I_2\to \DA_2$, so that $\dim \Homthree{}{I}{\DA}_0 = 7\cdot 3 = 21$.

        For every linear $\ell\in \kdp[x_1, \ldots ,x_4]$ we have
        a differentiation
        $\partial_{\ell}\in \Homthree{}{I}{\DA}_{-1}$. These differentiations
        span a space
        of dimension four. Therefore, $\dim\Dtangspace{\HH, [R]}
        \geq 25$.

        Let $F^{\circ}_1 = x_1x_3$, $F^{\circ}_2 = x_2x_4$, $F^{\circ}_3 = x_1x_4 -x_2x_3$. Let
        $R^{\circ} = \Spec\Apolar{F^{\circ}_\bullet}$. Then $\dim \Dtangspace{\HH, R^{\circ}} = 25$
        for every $\kk$, as proven
        in~\cite[Proposition~5.1]{cartwright_erman_velasco_viray_Hilb8}, so
        that $R^{\circ}$ is nonsmoothable. By semicontinuity,~\eqref{eq:tmp143} holds
        also for general $F_{\bullet}$.
        See~\cite[Theorem~1.3]{cartwright_erman_velasco_viray_Hilb8} for a precise
        necessary and sufficient condition for smoothability of
        $\Apolar{F_{\bullet}}$ in this case.

        Consider the open subset $V \subset \Grass(3, \DP_2)$ parameterizing
        triples $F_{\bullet}$ of quadrics with $H_{\Apolar{F_{\bullet}}} = (1,
        4, 3)$. Similarly to Proposition~\ref{ref:flatfamiliesforconstructible:prop}, it
        gives a map $\varphi\colon V\to \HH$ with $3\cdot 7$-dimensional image. Each
        subscheme in $\varphi(V)$ is supported at the origin of
        $\mathbb{A}^4$. Adding schemes with translated support, we obtain a
        $25$-dimensional family $\mathbb{A}^4 \times \varphi(V) \subset \HH$.
        By~\eqref{eq:tmp143} we conclude that the closure of this family is a
        component of $\HH$.
    \end{example}

    \begin{example}[$(1, d, e)$, $3 \leq e \leq \frac{(d-1)(d-2)}{6}+2$]
        Consider $\kk = \kkbar$ of characteristic zero. Fix $d \geq 4$ and $3\leq e \leq
        \frac{(d-1)(d-2)}{6}+2$. Consider a general tuple $F_{\bullet}$ of $e$ quadrics in
        $\kdp[x_1, \ldots ,x_d]$. Then $\Spec\Apolar{F_{\bullet}}$ has Hilbert
        function $(1, d, e)$ and is nonsmoothable
        by~\cite[Theorem~2]{Shafarevich_Deformations_of_1de}. In fact,
        such $\Spec \Apolar{F_{\bullet}}$ together with translations form
        an open subset of a component of $\Hilbarged{1+d+e}{\mathbb{A}^d}$.
        Note that for $d = 4$ the only possibility is $e = 3$ and we obtain
        the example $(1, 4, 3)$.
    \end{example}

    \begin{example}[(1, 6, 6, 1), Gorenstein,
        \cite{emsalem_iarrobino_small_tangent_space,
        jelisiejew_VSP}]\label{ex:1661}
        In this example we consider elements of $\HH =
        \HilbGorarged{14}{\mathbb{A}^6}$.
        Let $F\in \kdp[x_1, \ldots ,x_6]$ be a general polynomial of degree
        three, so that $H_{\Apolar{F}} = (1, 6, 6, 1)$. Since $\Apolar{F}
        \simeq \Apolar{F_3}$ by Corollary~\ref{ref:eliasrossi:cor}, we assume
        that $F=  F_3$ is homogeneous. Using generality once more, we assume
        that $I = \Ann(F)$ is generated by $15$ quadrics. Computing
        $H_{\Apolar{F_3}}(3) = 1$ by means of resolution, we see that there
        are exactly $6\cdot 15 - \binom{6+2}{3} + 1 =35$ linear syzygies.
        Let $\DS = \kk[\Dx_1, \ldots ,\Dx_6]$, $\DA = \DS/I$ and $R = \Spec
        \DA$.  We claim
        that
        \begin{equation}\label{eq:tmp1661}
            \dim \Dtangspace{\HH, [R]} = 76 <
            6\cdot 14.
        \end{equation}
        First, we prove the lower bound. The space $\Homthree{}{I}{\DA}$ is graded
        with $\dim\Homthree{}{I}{\DA}_{1} = 15$ and
        $\Homthree{\DS}{I}{\DA}_0 \subset \Homthree{\kk}{I_2}{\DA_2}$ is
        cut out of a $(15\cdot 6)$-dimensional space by $35$-linear syzygies
        so $\dim \Homthree{\DS}{I}{\DA}_0 \geq 55$.
        There is a $6$-dimensional space of partial derivatives, so $\dim
        \Homthree{\DS}{I}{\DA}_{-1} \geq 6$. Thus $\dim\Dtangspace{\HH, [R]}
        \geq 76$.

        As in the previous example, to prove~\eqref{eq:tmp1661} it is enough
        to find an example with $76$-dimensional tangent space.
        For $\chark = 2$ the scheme
        \[R^{\circ} = \Spec \Apolar{{x}_{1}
            {x}_{2} {x}_{3}+{x}_{1} \DPel{x_{4}}{2}+\DPel{x_{1}}{2} {x}_{5}+{x}_{2}
        {x}_{3} {x}_{5}+{x}_{2} {x}_{4} {x}_{6}+{x}_{3} {x}_{5} {x}_{6}+{x}_{2}
    \DPel{x_{6}}{2}}\]
        satisfies $\dim \Dtangspace{\HH, [R^{\circ}]} = 76$.
        Let $\chark \neq 2$ and
        \[
            R^{\circ} = \Spec\Apolar{x_1x_2x_4 - x_1\DPel{x_5}{2} + x_2\DPel{x_3}{2} + x_3x_5x_6 + x_4\DPel{x_6}{2}}.
        \]
        Let $I = I(R)$. A direct check shows that $H_{\DS/I^2} = (1, 6, 21,
        56, 6)$ for all $\kk$ with $\chark \neq 2$, compare~\cite[Lemma~23]{jelisiejew_VSP}. Then
        $\dim \Dtangspace{\HH, [R^{\circ}]} =
        76$, by Example~\ref{ex:tangentforGorenstein}.

        Consider the open subset $V \subset \DP_{\leq 3}$ parameterizing
        cubics with Hilbert function $(1, 6, 6, 1)$. By
        Proposition~\ref{ref:flatfamiliesforconstructible:prop} and
        Proposition~\ref{ref:apolarrepresentability:prop} we obtain an
        injective morphism from $V$ to a bundle of rank $14$ over
        $\HilbGorarged{14}{\AA^6}$, so also a morphism
        $\varphi\colon V\to \HilbGorarged{14}{\AA^6}$,
        whose fibers are at most $14$-dimensional. Hence $\dim \varphi(V) \geq
        \binom{9}{3} - 14 = 84 - 14 = 70$. All points in $\varphi(V)$
        correspond to subschemes supported at the origin. By adding isomorphic
        subschemes supported elsewhere, we obtain $76$-dimensional family
        $\varphi(V) \times \mathbb{A}^6 \subset \HilbGorarged{14}{\AA^6}$.
        By~\eqref{eq:tmp1661} the closure of this family forms a component.
    \end{example}

    The above examples are the only known components of $\Hilbr{\AA^n}$, which
    have dimension less than $rn$, see Problem~\ref{prob:smallcomponents} and
    Problem~\ref{prob:lowerbounddimension}.

    Second, and much more commonly, there are \emph{large
    families}. If $\famil \subset \mathbb{A}^n \times V\to V$ is an embedded
    family with distinct fibers and $\dim V > rn$, then $V\to
    \Hilbr{\mathbb{A}^n}$ is injective on points so the image has
    dimension greater that $rn = \dim \Hilbsmr{\mathbb{A}^n}$ and so it
    is not contained in the smoothable component. This idea first
    appeared in~\cite{iarrobino_reducibility} and was later expanded in~\cite{iarrobino_compressed_artin}.
    Proposition~\ref{ref:apolarflatness:prop} and
    Proposition~\ref{ref:flatfamiliesforconstructible:prop} enable us to produce such
    families using large loci of forms.

    \begin{example}[$\mathbf{(1, n, n, 1)}$, $\mathbf{n\geq
        8}$]\label{ex:1nn1largenonsmoothable}
        \def\univ{\mathcal{F}}%
        Let $\kk$ be an arbitrary field.
        Let $n\geq 8$ and $\DP = \kdp[x_1,  \ldots , x_n]$, then
        \begin{equation}\label{eq:tmpdim}
            \dim \DP_{\leq 3} = \binom{n+3}{3} > (2n+2) + n\cdot (2n+2)
        \end{equation}
        Let $V \subset \DP_{\leq 3}$ be the open set parameterizing
        polynomials $F\in \DP_{\leq 3}$ with maximal Hilbert function, in
        particular $V$ is irreducible.
        By Proposition~\ref{ref:flatfamiliesforconstructible:prop} we have
        a finitely flatly embedded family $\{(f, \Apolar{f})\} \to V$.
        Consequently, by Proposition~\ref{ref:apolarrepresentability:prop} we
        obtain an injective morphism from $V$ to a bundle of rank $2n+2$ over
        $\HilbGorarged{2n+2}{\mathbb{A}^n}$, so also a morphism
        $\varphi\colon V\to \HilbGorarged{2n+2}{\mathbb{A}^n}$,
        whose fibers are at most $(2n+2)$-dimensional.

        Therefore the image
        $\varphi(V)$ has dimension at least $\binom{n+3}{3} - (2n+2)$, which
        by~\eqref{eq:tmpdim} is greater than $\dim
        \HilbGorsmarged{2n+2}{\mathbb{A}^n}$, so $\varphi(V) \not\subset
        \HilbGorsmarged{2n+2}{\mathbb{A}^n}$. As far as we know, there are no
        known explicit examples of nonsmoothable finite subschemes in $\varphi(V)$.
    \end{example}

    \begin{remark}\label{ref:1nn1:rmk}
        Consider Gorenstein local algebras with Hilbert function $(1, n, n,
        1)$ over $\kk$ of characteristic zero.  Examples~\ref{ex:1661},~\ref{ex:1nn1largenonsmoothable} give
        nonsmoothable examples of those for all $n\geq 8$ or $n = 6$.
        In contrast,~\cite[Theorem~A]{cjn13}, which we reproduce as
        Theorem~\ref{ref:cjnmain:thm}, asserts that for $n\leq 5$ such
        algebras are smoothable. Bertone, Cioffi and Roggero prove
        that such algebras are also smoothable for $n = 7$,
        see~\cite{bertone_cioffi_roggero_division_algorithm}.
    \end{remark}
    \begin{example}[Gorenstein subschemes of $\mathbb{A}^4$ of large
        degree]\label{ex:nonsmoothableinA4}
        We follow Example~\ref{ex:1nn1largenonsmoothable}.
        Fix an arbitrary field $\kk$ and consider polynomials of degree nine in
        $\kdp[x_1, \ldots ,x_4]$. Their space is of dimension $715$ and the
        apolar algebra of a general polynomial has degree $140$, so that
        arguing as in Example~\ref{ex:1nn1largenonsmoothable} we obtain a
        $(715-140)$-dimensional locus inside
        $\HilbGorarged{140}{\mathbb{A}^4}$. Since $715 - 140 = 575 > 560$, a
        general element of
        this locus corresponds to a nonsmoothable algebra and
        $\HilbGorarged{140}{\mathbb{A}^4}$ is reducible. This Example
        follows easily from~\cite{iarrobino_compressed_artin}, as described
        in~\cite[Proposition~6.2]{bubu2010} over $\kk = \mathbb{C}$.
    \end{example}

    \begin{example}[Gorenstein subschemes of $\mathbb{A}^5$ of large
        degree]\label{ex:nonsmoothableinA5}
        Analogously to Example~\ref{ex:nonsmoothableinA4} we may consider
        polynomials of degree five in five variables. Their space is $252$
        dimensional, a general apolar algebra has degree $42$ and so we obtain
        $210$-dimensional locus. But $210 = 42\cdot 5$ and this locus does not
        contain any smooth schemes, so it cannot
        be dense inside $\HilbGorsmarged{42}{\AA^5}$. Thus, this locus does not lie inside
        the smoothable component and the scheme
        $\HilbGorarged{42}{\mathbb{A}^5}$ is
        reducible. This example appeared in~\cite[Proposition~6.2]{bubu2010}.
    \end{example}

    \begin{example}[subschemes of $\mathbb{A}^3$ of degree
        $96$]\label{ex:nonsmoothable96}
        Let $\kk$ be an arbitrary field.  The scheme
        $\Hilbarged{96}{\mathbb{A}^3}$ is reducible,
        as shown in~\cite{iarrobino_reducibility}.
        Namely, the locus of irreducible subschemes corresponding to local algebras
        with Hilbert function $(1, 3, 6, 10, 15, 21, 28, 12)$ has dimension
        $12\cdot 24 + 3 = 291 > 3\cdot 96$, so it is not contained in the
        smoothable component.
    \end{example}

    \begin{example}[subschemes of $\mathbb{A}^3$ of degree
        $78$]\label{ex:largeinA3}
        Let $\kk$ be an arbitrary field.  The scheme
        $\Hilbarged{78}{\mathbb{A}^3}$ is reducible,
        as shown in~\cite[Example~4.3]{iarrobino_compressed_artin}.
        Namely, the locus of irreducible subschemes corresponding to local algebras
        with Hilbert function $(1, 3, 6, 10, 15, 21, 17, 5)$ has dimension
        $235$, while the dimension of smoothable component is $3\cdot 78 =
        234$. We refer the reader to the aforementioned paper for details.
    \end{example}

    For $\kk = \kkbar$, existence of nonsmoothable subschemes of $\mathbb{A}^n_{\kk}$ of degree $r$ is
    equivalent to reducibility of $\Hilbr{\mathbb{A}^n_{\kk}}$. Consider the
    following condition:
    \begin{enumerate}
            \renewcommand{\labelenumi}{$(\star)$}
            \renewcommand{\theenumi}{$(\star)$}
        \item%[$(\star)$]
            \label{item_condition_on_smoothability_of_all}
            Every finite subscheme of $\mathbb{A}^n$ having degree $r$ is
            smoothable.
    \end{enumerate}
    Example~\ref{ex:143} and Example~\ref{ex:largeinA3} show
    that~\ref{item_condition_on_smoothability_of_all} does not hold for
    $n=3$ and $r\geq 78$ or $n\geq 4$ and $r\geq 8$; regardless of $\kk$.
    Cartwright et.al.~prove in~\cite{cartwright_erman_velasco_viray_Hilb8}, under the assumption
    $\chark \neq 2, 3$, that \ref{item_condition_on_smoothability_of_all} holds for $r\leq 7$ and all $n$ and
    for $r = 8$ and $n= 3$.
    Borges dos Santos et.al.~\cite{dosSantos}  prove, for $\chark = 0$, that
    \ref{item_condition_on_smoothability_of_all} holds also for $r
    = 9, 10$ and $n = 3$. Douvropoulos et.al.~\cite{DJNT} prove, for $\chark =
    0$, that \ref{item_condition_on_smoothability_of_all} holds for $r = 11$ and $n=3$. We summarize what is known
    it Table~\ref{tab:nonsmoothable}, where we take $\chark =
    0$.

    \newcommand{\OKsign}{\tikz[x=1em, y=1em]\fill(0,.35) -- (.25,0) -- (1,.7) --
    (.25,.15) -- cycle;}
    \newcommand{\NOsign}{no}
    \begin{table}[h]
        \centering
        \begin{tabular}{l l l l}
                & $n\leq 2$ & $n=3$ & $n\geq 4$\\
            \midrule
            $\phantom{108\leq}r\leq 7$ &  \OKsign & \OKsign & \OKsign\\
            $\phantom{0}8\leq r\leq 11$&  \OKsign & \OKsign & \NOsign\\
            $12\leq r\leq 77$ & \OKsign & ? & \NOsign\\
            $78 \leq r\phantom{\leq 100}$ & \OKsign & \NOsign & \NOsign
        \end{tabular}
        \caption{Is $\Hilbr{\mathbb{A}^n_{\kk}}$ irreducible (for $\chark = 0$)?}
        \label{tab:nonsmoothable}
    \end{table}
    A similar analysis is conducted for Gorenstein locus. Here the main positive
    results come from~\cite{cjn13}, where the authors, in characteristic $\neq 2,
    3$, prove that $\HilbGorr{\mathbb{A}^n}$ is irreducible for $r\leq13$ and
    arbitrary $n$ and also for $r = 14$ and $n\leq 5$, see Theorem~\ref{ref:cjnmain:thm}.
    The negative results stem from Example~\ref{ex:1661}, which gives a
    nonsmoothable degree $14$ finite Gorenstein subscheme of $\AA^6_{\kk}$ for all
    fields $\kk$.
    See
    Table~\ref{tab:nonsmoothableGorenstein} for a summary of what is known.
    \begin{table}[h]
        \centering
        \begin{tabular}{l l l l l}
                & $n\leq 3$ & $n=4$ & $n=5$ & $n\geq 6$\\
            \midrule
            $\phantom{1101\leq}r\leq 13$ &  \OKsign & \OKsign & \OKsign & \OKsign\\
            $\phantom{1101\leq}r = 14$                    &  \OKsign & \OKsign & \OKsign & \NOsign\\
            $\phantom{1}15\leq r\leq 41$           &  \OKsign & ?       & ?        &
            \NOsign\\
            $\phantom{1}42\leq r\leq 139$          &   \OKsign & ? & \NOsign   &
            \NOsign\\
            $140 \leq r$                 & \OKsign & \NOsign & \NOsign &
            \NOsign\\
        \end{tabular}
        \caption{Is $\HilbGorr{\mathbb{A}^n}$ irreducible (for $\chark \neq 2, 3$)?}
        \label{tab:nonsmoothableGorenstein}
    \end{table}

    \section{Example of smoothings: one-dimensional torus
    limits}\label{ssec:klimits}

    \newcommand{\VeryCompLocus}{\Hilbfunc_r^{\max} \AA^n}%
    \newcommand{\VeryCompLocusParams}[2]{\Hilbfunc_{#1}^{\max} \AA^{#2}}%
    \newcommand{\Gmult}{\mathbb{G}_{m}}%
    There are few classes of smoothings known, mainly because checking
    flatness of a finite family is subtle. In this section we present
    a class of smoothings coming from, equivalently, one-dimensional torus
    actions (from the point
    of view of affine geometry), cones over projective schemes (from the point of
    view of projective geometry) or initial ideals (from the algebraic point of view).
    We call these smoothings $\Gmult$-limits (Definition~\ref{ref:klimit:def}).
    We use them to analyze smoothability of very compressed algebras
    (Definition~\ref{ref:initialideal:def}). We also prove that there exists
    subschemes which are smoothable but are not $\Gmult$-limits of smooth
    schemes, see Example~\ref{ex:1771arenotklimits}.

    The theory of $\Gmult$-limits is classical, for the algebraic side
    see~\cite[Chapter~15]{Eisenbud}. The application to very compressed
    algebras first appeared in~\cite{DJNT}, while
    Example~\ref{ex:1771arenotklimits} was not published before.

    Let us introduce the necessary notions.
    \newcommand{\initial}[1]{\operatorname{in}(#1)}%
    \begin{definition}
        For an ideal $I$ of a polynomial ring $\DS$, its \emph{initial ideal} is the ideal
        spanned by top degree forms (with respect to the standard grading) of all elements of $I$. It is denoted by $\initial{I}
        \subset \DS$.
    \end{definition}
    Let $\AA^n = \Spec\DS = \Spec \kk[\Dx_1, \ldots ,\Dx_n]$. The torus
    $\Gmult$ acts on $\AA^n$ by \emph{dilation}
    \[
        \mu:\Gmult \times \AA^n \ni (t, (x_1, \ldots ,x_n)) \to (tx_1, \ldots
        ,tx_n).
    \]
    On the level of functions, $\mu^{\#}\colon\DS\to \DS[t^{\pm
    1}]$ is defined by $\mu^{\#}(\Dx_i) = t\Dx_i$.
    Let $\inv:\Gmult\to
    \Gmult$ be the inverse, then $\inv^{\#}\colon\kk[t^{\pm 1}]\to \kk[t^{\pm 1}]$
    is given by $\inv^{\#}(t) = t^{-1}$. By abuse of notation, let
    $\inv^{\#}\colon\DS[t^{\pm 1}] \to \DS[t^{\pm 1}]$ be the map $\inv\tensor
    \operatorname{id}$. Then $\inv^{\#}\mu^{\#}(\Dx_i) = t^{-1}\Dx_i$.
    \begin{lemma}\label{ref:dilation:lem}
        The dilation action of $\Gmult$ on $\AA^n$ induces a $\Gmult$-action
        on $\Hilb{\AA^n}$. The orbit of a finite subscheme $R \subset \AA^n$ is a (finite flat) family
        \begin{equation}
            \pi\colon  \Gmult \cdot [R] \subset \Gmult \times \AA^n \to \Gmult,
            \label{eq:orbit}
        \end{equation}
        given by the ideal $I(\Gmult[R]) = \inv^{\#}\mu^{\#}(I(R))$.
    \end{lemma}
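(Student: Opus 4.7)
The plan is to construct the action on the Hilbert scheme via representability, then identify the orbit by trivializing it through a well-chosen automorphism of $\Gmult \times \AA^n$, from which both flatness and the explicit ideal formula will follow immediately.

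First, I would produce the $\Gmult$-action on $\Hilb{\AA^n}$. For each scheme $T$ and each $T$-point $t\colon T \to \Gmult$, the map $\mu$ induces an automorphism of $T\times \AA^n$ over $T$, namely $(s,x)\mapsto (s,t(s)\cdot x)$. Sending a $T$-family $\ccX \subset T\times \AA^n$ to its image under this automorphism defines a natural transformation $\Gmult \times \Hilbfunc_{pts}\,(\AA^n) \to \Hilbfunc_{pts}\,(\AA^n)$. Checking the group axioms on $T$-points is a direct calculation, and representability (Theorem~\ref{ref:representability:thm}) converts this into a morphism of schemes $\Gmult \times \Hilb{\AA^n} \to \Hilb{\AA^n}$.

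Second, I would realise the orbit of $[R]$ concretely. The $\Gmult$-action composed with $[R]\colon\Spec\kk\to\Hilb{\AA^n}$ gives a morphism $\varphi\colon \Gmult \to \Hilb{\AA^n}$, $t\mapsto [t\cdot R]$, and by definition $\Gmult\cdot[R]$ is its image. Pulling back the universal family $\UU\to\Hilb{\AA^n}$ along $\varphi$ yields the family $\pi\colon \Gmult\cdot[R]\to\Gmult$, embedded in $\Gmult\times \AA^n$ so that the fiber over $t\in\Gmult$ is $t\cdot R$.

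Third, the crucial step is to trivialise $\pi$. Consider the automorphism
\[
    \alpha\colon \Gmult\times\AA^n \to \Gmult\times\AA^n,\qquad
    (t,x)\longmapsto (t,tx),
\]
with inverse $(t,x)\mapsto (t,t^{-1}x)$. Under $\alpha$, the trivial family $\Gmult\times R \subset \Gmult\times\AA^n$ is carried to $\{(t,tx) : x\in R\} = \{(t,y) : y\in tR\}$, which is precisely $\Gmult\cdot[R]$. Hence $\Gmult\cdot[R]\to\Gmult$ is isomorphic over $\Gmult$ to the trivial family $\Gmult\times R\to\Gmult$, which is evidently finite and flat of degree $\deg R$; so is $\pi$. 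To read off the ideal, observe that $\alpha^{\#}(x_i)=tx_i$, so $(\alpha^{-1})^{\#}(x_i)=t^{-1}x_i$; thus for any $f\in I(R)$,
\[
    (\alpha^{-1})^{\#}(f)(t,x)=f(t^{-1}x_1,\ldots,t^{-1}x_n)=\inv^{\#}\mu^{\#}(f),
\]
and the ideal of $\Gmult\times R$, namely $I(R)\cdot \DS[t^{\pm 1}]$, is pulled back by $\alpha^{-1}$ to $\inv^{\#}\mu^{\#}(I(R))\cdot \DS[t^{\pm 1}]$, giving the stated formula.

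The only delicate point is a bookkeeping one: making sure the sign conventions in $\mu$ and $\inv$ match the identification of $\Gmult\cdot[R]$ as $\{(t,y):y\in tR\}$ (as opposed to $y\in t^{-1}R$); this is why $\inv$ appears in the formula. Apart from this, every step reduces to either representability of the Hilbert functor (for the action) or a direct computation with the automorphism $\alpha$ (for flatness and the ideal), so I do not anticipate a substantial obstacle.
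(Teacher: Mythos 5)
Your proposal is correct and rests on the same essential idea as the paper's proof: define the $\Gmult$-action, realise the orbit as the pullback of the universal family along $t\mapsto[t\cdot R]$, and read off both flatness and the ideal from a comparison with the trivial family $\Gmult\times R$ via the twist $(t,x)\mapsto(t,tx)$. The paper packages this twist differently: it works with $\varphi=\mu\circ(\inv\times\operatorname{id})\colon\Gmult\times\AA^n\to\AA^n$ and exhibits $\UU'$ as a pullback square, rather than conjugating the trivial family by the automorphism $\alpha$ of $\Gmult\times\AA^n$. Your trivialisation is arguably more transparent: it makes flatness immediate (iso to the trivial family) instead of tracing it through the pullback-square identification, and it localises the ideal computation to the single equality $(\alpha^{-1})^{\#}=\inv^{\#}\mu^{\#}$ on $\DS[t^{\pm1}]$. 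Neither approach buys anything beyond the other in generality; they are two renderings of the same map, and your version is slightly cleaner to read while the paper's is better suited for deriving the pointwise/abstract compatibility it also states. One small bookkeeping remark: you use $\Gmult\cdot[R]$ both for the image of $\varphi$ in $\Hilb{\AA^n}$ and for the family in $\Gmult\times\AA^n$; in the lemma it denotes the latter, and since the two are related by the universal property your argument is consistent, but it would be worth distinguishing them notationally.
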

    \begin{proof}
        \def\HH{\mathcal{H}}%
        Let $\HH := \Hilb{\AA^n}$ and $\UU \subset \HH \times \AA^n$ be the universal family.
        Before we prove that $\Gmult$ acts on $\HH$, let us describe
        its action point-wise. A $\kk$-point $t\in \Gmult$ induces an
        isomorphism $\mu(t)\colon\AA^n \to \AA^n$, so also an isomorphism
        $\mu(t)\colon\HH \times \AA^n \to \HH \times \AA^n$. Let $t\UU =
        \mu(t)(\UU)$. Then $t\UU \to \HH$ is a composition of the isomorphism
        $\mu(t)^{-1}$ and a flat map, hence it is flat. The family $t\UU \to \HH$
        induces a map $\mu_{\HH}(t)\colon \HH\to \HH$, which is the action of
        $t$. By uniqueness, all axioms of group action are satisfied for
        $\kk$-points. While this is enough to define the action of $\Gmult$,
        below we present this action abstractly to prove the description of
        $I(\Gmult[R])$.

        The action $\mu:\Gmult \times \AA^n \to \AA^n$ gives a diagram
        \begin{equation}
            \begin{tikzcd}
                \Gmult \times \AA^n\arrow{d}{\varphi}\arrow{r}{\inv} &
                \Gmult \times \AA^n\arrow{d}{\mu}\arrow{r}{\operatorname{pr}_1} &
                \Gmult\arrow[d]\\
                \AA^n\arrow[r, "="] &\AA^n\arrow[r] & \Spec \kk,
            \end{tikzcd}
            \label{eq:pullbackfromaction}
        \end{equation}
        where $\varphi = \mu \circ (\inv \times \operatorname{id})$.
        The right square of the diagram~\eqref{eq:pullbackfromaction} is isomorphic to the
        \emph{pullback} square via the isomorphism $\operatorname{id} \times
        \mu:\Gmult \times \AA^n \to
        \Gmult \times \AA^n$. Define $\UU'$ as the pullback of $\UU$ via
        $\varphi \times \operatorname{id}_{\HH}$. Then the following diagram
        consists of pullback squares.
        \begin{equation}
            \begin{tikzcd}
                \UU' \arrow[r]\arrow[d] & \Gmult \times \AA^n
                \times \HH
                \arrow[r] \arrow[d, "\varphi\times
                \operatorname{id}"] &
                \Gmult \times \HH\arrow[d]\\
                \UU \arrow[r] & \AA^n \times \HH \arrow[r] & \HH.
            \end{tikzcd}
            \label{eq:pullbackfromactionmain}
        \end{equation}
        In particular, $\UU'\to \Gmult \times \HH$ is a pullback of $\UU\to
        \HH$, so it is flat and it induces a map $\Gmult \times \HH \to
        \HH$, which is an action of $\Gmult$. For fixed $\kk$-point $t\in
        \Gmult$, the action of $t$ comes as pullback of upper row via $t\to
        \Gmult$ and so it is
        \begin{equation}
            \begin{tikzcd}
                \UU'_{|t} \arrow[r]\arrow[d] & \AA^n
                \times \HH
                \arrow[r] \arrow[d, "\varphi(t) \times \operatorname{id}"] &
                \HH\arrow[d]\\
                \UU \arrow[r] & \AA^n \times \HH \arrow[r] & \HH.
            \end{tikzcd}
            \label{eq:pullbackfromactionelement}
        \end{equation}
        Since $\varphi = \mu\circ(\inv \times \operatorname{id})$, we have
        $\varphi(t) = \mu(t^{-1})$, so $\UU'_{|t}$ is the pullback of $\UU$
        via $\mu(t^{-1})$. This is the same as $\mu(t)(\UU)$, so the two
        descriptions of the action of $\Gmult$ agree.
        The equality $I(\Gmult[R]) = \varphi^{\#}(I(R)) =
        \inv^{\#}\mu^{\#}(I(R))$ follows by construction of the action.
    \end{proof}

    For a one-parameter subgroup of a projective variety, we may always take a
    flat limit (see~\cite[Proposition~III.9.8]{hartshorne}). In the special case of
    Lemma~\ref{ref:dilation:lem} we have a flat limit in $\AA^n$.
    \begin{proposition}\label{ref:limitoneparameter:prop}
        Let $R \subset \AA^n = \Spec \DS$ be a finite $\kk$-scheme. The
        family~\eqref{eq:orbit} uniquely extends to an embedded (finite flat) family
        \[
            \pi\colon \ccX \subset \AA^1 \times \AA^n \to \AA^1,
        \]
        where $\AA^1 = \Gmult \cup \{0\}$. The ideal of $R_0 :=\pi^{-1}(0) \subset
        \AA^n$ is equal to $\initial{I(R)}$.
    \end{proposition}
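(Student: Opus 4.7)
My plan is to construct $\ccX$ as the scheme-theoretic closure of $\Gmult\cdot[R]$ inside $\AA^1\times\AA^n$. By Lemma~\ref{ref:dilation:lem} this closure is cut out by the ideal
\[
  I \;:=\; \inv^{\#}\mu^{\#}(I(R))\cap \DS[t]\;\subseteq\; \DS[t],
\]
which is automatically $t$-saturated: if $t\cdot g\in I$ for some $g\in \DS[t]$, then $g\in \inv^{\#}\mu^{\#}(I(R))$ already in $\DS[t,t^{-1}]$, so $g\in I$; hence $\DS[t]/I$ is $t$-torsion-free. The basic source of elements of $I$ is the following: for any $g\in I(R)$ with homogeneous decomposition $g = g_d + g_{d-1} + \cdots + g_0$ and $g_d = \initial g \neq 0$, the polynomial
\[
  \tilde g \;:=\; t^{d}\cdot \inv^{\#}\mu^{\#}(g) \;=\; g_d + t\,g_{d-1} + \cdots + t^d g_0
\]
lies in $I$ and specializes to $\initial g$ at $t=0$.

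The heart of the argument is the finite generation of $\DS[t]/I$ as a $\kk[t]$-module. I will fix a set of monomials $B\subset \DS$ whose images form a $\kk$-basis of $\DS/\initial{I(R)}$; by the standard identification $\gr\pp{\DS/I(R)}\simeq \DS/\initial{I(R)}$ for the total-degree filtration one has $|B|=\dim_\kk \DS/I(R)=\deg R$, and in fact $B$ is simultaneously a $\kk$-basis of $\DS/I(R)$. By induction on total degree I will prove that every monomial $m\in\DS$ is congruent modulo $I$ to a $\kk[t]$-combination of elements of $B$. When $m\notin B$ and has degree $d$, decompose $m=b+k$ with $b\in \kk\cdot B_d$ and $k\in V_d\cap \initial{I(R)}$, where $V_d\subset \DS$ is the degree-$d$ piece; the key observation is that any homogeneous degree-$d$ element of $\initial{I(R)}$ can be written as $\initial G$ for a single $G\in I(R)$ with top form of degree $d$ (realize $k=\sum h_i\initial{g_i}$ with each $h_i$ homogeneous, and set $G=\sum h_i g_i$). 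Then $\tilde G\in I$ gives $k\equiv -t\,G'\pmod I$, where $G'\in \DS[t]$ is a $\kk[t]$-combination of monomials in $\DS$ of degree strictly less than $d$; the inductive hypothesis applies to $G'$.

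Once $\DS[t]/I$ is finitely generated and $t$-torsion-free over the PID $\kk[t]$, it is free of some finite rank $r'$. Specializing at $t=1$ identifies the corresponding fiber with $\DS/I(R)$, of $\kk$-dimension $\deg R$, so $r'=\deg R$ and $B$ is in fact a $\kk[t]$-basis. This shows that $\pi\colon \ccX\to \AA^1$ is finite flat and restricts over $\Gmult$ to the family~\eqref{eq:orbit}. The ideal of $\pi^{-1}(0)$ is the image $\bar I$ of $I$ under $t\mapsto 0$; it contains $\initial{I(R)}$ because $\tilde g|_{t=0}=\initial g$, and the equality $\bar I=\initial{I(R)}$ follows from the dimension count $\dim_\kk \DS/\bar I = r' = \deg R = \dim_\kk \DS/\initial{I(R)}$. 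Uniqueness is formal: any other finite flat extension $\ccX'\subset \AA^1\times\AA^n$ of~\eqref{eq:orbit} has ideal $I'\subset \DS[t]$ with $\DS[t]/I'$ flat, hence $t$-torsion-free, i.e.\ $t$-saturated, and $I'[t^{-1}]=\inv^{\#}\mu^{\#}(I(R))=I[t^{-1}]$ then forces $I'=I$.

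The main obstacle will be the finite-generation step. The ideal $I(R)$ is generally not homogeneous, and for a chosen generating set $f_1,\ldots,f_m$ the top forms $\initial{f_j}$ typically do not generate $\initial{I(R)}$ (the $f_j$ need not form a ``Macaulay basis''), so the naive subideal $(\tilde f_1,\ldots,\tilde f_m)\subset \DS[t]$ is usually strictly smaller than $I$ and its quotient carries $t$-torsion. It is therefore essential to work directly with the saturation $I$ and to exploit the trick of realizing any homogeneous $k\in \initial{I(R)}\cap V_d$ as $\initial G$ for one $G\in I(R)$, in order to push the degree induction through. After this, the remaining assertions are routine consequences of the structure of finitely generated torsion-free modules over a PID.
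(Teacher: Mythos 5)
Your construction is essentially the paper's own proof: you take the same ideal $I=\inv^{\#}\mu^{\#}(I(R))\cap \DS[t]$, get flatness from torsion-freeness over the PID $\kk[t]$, get finiteness from a monomial spanning set, and identify the fibre over $0$ via the containment $\operatorname{in}(I(R))\subseteq I|_{t=0}$ together with a dimension count. Your degree-induction with a monomial basis of $\DS/\operatorname{in}(I(R))$ makes precise the finiteness step that the paper only asserts (and indeed the careful choice of monomials is needed there, as you note), and your explicit uniqueness argument via $t$-saturation is a correct supplement to a claim the paper's proof does not spell out.

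The one step you must repair is the sentence ``finitely generated and $t$-torsion-free over the PID $\kk[t]$, hence free of finite rank'': that implication is false in general, since for example $\kk[t]/(t-1)$ is finitely generated and $t$-torsion-free but not free, and freeness (equivalently flatness) is precisely what the proposition requires. What you need is torsion-freeness with respect to every nonzero element of $\kk[t]$, whereas your saturation argument only treats the prime $t$, because it relies on $t$ being invertible in $\DS[t,t^{-1}]$; for $t-\lambda$ with $\lambda\neq 0$ that one-liner does not apply. The fix is immediate and is exactly the paper's flatness argument: $\DS[t]/I$ embeds into $\DS[t^{\pm 1}]$ modulo the ideal generated by $\inv^{\#}\mu^{\#}(I(R))$, and the latter quotient is isomorphic to $(\DS/I(R))[t^{\pm 1}]$ (the generating set is the image of $I(R)$ under the automorphism $x_i\mapsto t^{-1}x_i$ of $\DS[t^{\pm1}]$), hence free over $\kk[t^{\pm1}]$ and therefore flat and torsion-free over $\kk[t]$; consequently every nonzero $p(t)$ is a nonzerodivisor on $\DS[t]/I$. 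With this substitution your specialization at $t=1$, the identification of the fibre at $t=0$ with $V(\operatorname{in}(I(R)))$ by equality of dimensions, and the uniqueness argument all go through as written.
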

    \begin{proof}
        \def\Ifam{\mathcal{I}}%
        By Lemma~\ref{ref:dilation:lem} the family~\eqref{eq:orbit} is given
        by ideal $\Ifam = \inv^{\#}\mu^{\#}(I(R)) \subset \DS[t^{\pm 1}]$. For an element $f\in I(R)$, write
        its decomposition into homogeneous summands as $f = f_0 +  \ldots +
        f_d$. Then $\inv^{\#}\mu^{\#}(f) = f_0 + t^{-1}f_1 +  \ldots +
        t^{-d}f_d$. Thus we have
        \begin{equation}\label{eq:element}
            t^df_0 + t^{d-1}f_1 +  \ldots + tf_{d-1} + f_d\in \inv^{\#}\mu^{\#}(I(R)).
        \end{equation}
        Define $\ccX$ by the ideal $\Ifam \cap \DS[t]$. Then clearly $\ccX$
        restricts to~\eqref{eq:orbit} on $\AA^1 \setminus \{0\}$ and
        \[
            \OX = \DS[t]/I(\ccX) \subset \DS[t^{\pm 1}]/\Ifam,
        \]
        so $\OX$ is a torsion-free $\OT$-module, hence $\ccX\to \AA^1$ is flat
        over $\AA^1$, see~\cite[Corollary~6.3]{Eisenbud}.
        Moreover, if a finite set of monomials spans
        $H^0(R, \OO_R)$ as $\kk$-vector space, then it also spans $\OX$ as a
        $\OT$-module, so $\ccX\to T$ is finite. Hence $\ccX\to T$ is a family.
        By~\eqref{eq:element}, for every $f\in I(R)$ we have $f_d\in
        I(\ccX_0)$, so $\ccX_{|0}$ is contained in $V(\initial{I(R)})$. But
        $V(\initial{I(R)})$ and $R$ have the same degree, so we must have
        $\ccX_{|0} = V(\initial{I(R)})$.
    \end{proof}

    \begin{definition}\label{ref:klimit:def}
        For finite $R \subset \AA^n$ the scheme $R_0 = V(\initial{I(R)}$ is
        called the $\Gmult$-limit of $R$.
    \end{definition}
    Note that $\initial{I(R)}$ is always a homogeneous ideal.

    \begin{proposition}\label{ref:klimits:prop}
        Let $R \subset \AA^n$ be a smoothable subscheme given by ideal $I$.
        Then its $\Gmult$-limit $R_0 \subset \AA^n$ is
        also smoothable.
    \end{proposition}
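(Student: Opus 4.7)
The plan is to use Proposition~\ref{ref:limitoneparameter:prop} together with the fact that the smoothable component $\Hilbsmr{\AA^n}$ is both closed in $\Hilbr{\AA^n}$ and invariant under the dilation action of $\Gmult$ constructed in Lemma~\ref{ref:dilation:lem}. Once these two observations are in place, the $\Gmult$-limit of any smoothable subscheme is automatically smoothable, because it lies in the closure of a $\Gmult$-orbit contained in $\Hilbsmr{\AA^n}$.

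First I would reduce to the embedded picture: by Proposition~\ref{ref:embeddedvsHilbert:prop}, the assumption that $R$ is smoothable in $\AA^n$ is equivalent to $[R] \in \Hilbsmr{\AA^n}$, and proving that $R_0$ is smoothable is equivalent to showing $[R_0] \in \Hilbsmr{\AA^n}$. Next, I would note that the dilation action $\mu\colon\Gmult\times\AA^n\to\AA^n$ sends smooth finite subschemes to smooth finite subschemes (it is an isomorphism for each $t\in\Gmult$), so the induced $\Gmult$-action on $\Hilbr{\AA^n}$ from Lemma~\ref{ref:dilation:lem} preserves the open locus $\Hilbzeror{\AA^n}$ of smooth subschemes. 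Taking Zariski closures, the smoothable component $\Hilbsmr{\AA^n} = \overline{\Hilbzeror{\AA^n}}$ is therefore $\Gmult$-invariant as well.

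Now I would invoke Proposition~\ref{ref:limitoneparameter:prop}: the orbit map $\Gmult\to\Hilbr{\AA^n}$, $t\mapsto t\cdot[R]$, extends uniquely to a morphism $\AA^1\to\Hilbr{\AA^n}$ sending $0$ to $[V(\initial{I(R)})] = [R_0]$. Since $[R]\in\Hilbsmr{\AA^n}$ and $\Hilbsmr{\AA^n}$ is $\Gmult$-stable, the image of the punctured line $\Gmult \subset \AA^1$ is contained in $\Hilbsmr{\AA^n}$; as $\Hilbsmr{\AA^n}$ is closed in $\Hilbr{\AA^n}$, the image of $0$ lies there too, i.e.\ $[R_0]\in\Hilbsmr{\AA^n}$. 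Applying Proposition~\ref{ref:embeddedvsHilbert:prop} in the opposite direction yields that $R_0$ is smoothable, as required.

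There is essentially no obstacle here beyond the compatibility between the three pieces of machinery already set up (the dilation action on the Hilbert scheme, the existence of the $\AA^1$-extension of the orbit, and the identification of $\Hilbsmr{\AA^n}$ as a closed invariant subset). If one prefers a more hands-on approach avoiding the global action on $\Hilbr{\AA^n}$, the same argument can be run explicitly: take an embedded smoothing $\ccY\subset \AA^n\times T$ of $R$ over an irreducible base $T$, form the subscheme $\inv^{\#}\mu^{\#}(I(\ccY))\subset\DS[t^{\pm1}]\tensor\OT$ over $\Gmult\times T$, and take its schematic closure in $\AA^1\times T$; flatness over $\AA^1\times T$ follows exactly as in the proof of Proposition~\ref{ref:limitoneparameter:prop}, and the generic fiber over $\{0\}\times T$ is smooth because it is isomorphic to a generic fiber of $\ccY\to T$ via the $\Gmult$-action.
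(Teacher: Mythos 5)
Your proof is correct, and it is essentially the same argument as the paper's: both build the degeneration over $\AA^1$ via Proposition~\ref{ref:limitoneparameter:prop} and then use that the smoothable locus is a closed set to push smoothability from the general fiber to the special one. The only packaging difference is that the paper cites Proposition~\ref{ref:smoothabilityisclosed:prop} directly (noting the fibers over $\Gmult$ are isomorphic to $R$, hence smoothable), while you make the $\Gmult$-invariance of $\Hilbsmr{\AA^n}$ explicit before invoking closedness; the underlying content is identical.
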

    \begin{proof}
        By Proposition~\ref{ref:limitoneparameter:prop} we have a family
        $\pi\colon \ccX \subset \AA^1 \times \AA^n \to \AA^1$
        with
        general fiber isomorphic to $R$ and special fiber $R_0$, so the
        smoothability of $R_0$ follows from
        Proposition~\ref{ref:smoothabilityisclosed:prop}.
%        The family $\pi$ is
%        invariant under the $\Gmult$-action $t\cdot (x_1, \ldots ,x_n, t') =
%        (tx_1, \ldots , tx_n, tt')$ and $0$ is a torus-fixed point, so $R_0$ is invariant under dilation
%        action; $I(R_0)$ is a homogeneous ideal.
    \end{proof}
    \newcommand{\projccX}{(\ccX\setminus\{0\})/\Gmult}%
    In the language of Proposition~\ref{ref:limitoneparameter:prop}, the family
    $\pi\colon \ccX \subset \AA^1 \times \AA^n$ is invariant under the dilation
    action of $\Gmult$ on $\AA^{n+1} = \AA^1 \times \AA^n$, so it is a cone over a
    projective scheme $\projccX \simeq R$ and $R_0$ is obtained as a
    section of the cone over this scheme with the cone over the hyperplane
    $V(t)$, where $t$ is the parameter on $\AA^1$.

%    This construction can be reinterpreted in geometric terms. Let $R = V(I)
%    \subset \DS$ be a finite subscheme of degree $r$. The torus
%    $\Gmult$ acts on $\AA^n$ by dilation
%    \[
%        \Gmult \times \AA^n \ni (t, (x_1, \ldots ,x_n)) \to (tx_1, \ldots
%        ,tx_n),
%    \]
%    so it acts on $\Hilbr{\AA^n}$. For a
%    point $[R]\in \Hilbr{\AA^n}$ we have an orbit map $\mu:\Gmult \to
%    \Hilbr{\AA^n}$ which uniquely extends to $\bar{\mu}:\Gmult \cup \{0\} \to
%    \Hilbr{\AA^n}$. The subscheme corresponding to $\bar{\mu}(0)$ is $R_0 = \Spec
%    \DS/\initial{I}$, see \cite[Chapter~15]{Eisenbud} for an algebraic
%    viewpoint. In particular, $R_0$ is a limit of schemes isomorphic to $R$,
%    so if $R$ is smoothable, then also $R_0$ is smoothable, by
%    Proposition~\ref{ref:smoothabilityisclosed:prop}.

    \newcommand{\Isheaf}{\mathcal{I}}%
    The initial ideal construction can be made relative. Indeed, the extension
    of $\initial{-}$ to ideals $I \subset A\tensor
    \DS$ is straightforward. In the general case of $\Isheaf \subset \OT
    \tensor \DS$ we construct $\initial{\Isheaf}$ locally on affine covering
    of $T$ and glue the construction. The gluing is possible, because an
    initial form of a section $s$ of $\Isheaf$ restricts either to initial form of
    restriction of $s$ or to zero.
    \begin{definition}\label{ref:initialfamily:def}
        \def\IX{\mathcal{I}_{\ccX}}%
        Let $\ccX \subset T \times \AA^n\to T$ be (finite flat) family over $T$
        given by ideal sheaf $\IX \subset \OT\tensor \DS$.
        Then its \emph{initial scheme} is $\ccX_0 = V(\initial{\IX})$.
    \end{definition}
    \begin{lemma}
        For every finite flat $\pi\colon \ccX\to T$, the initial scheme $\ccX_0$ constructed in
        Definition~\ref{ref:initialfamily:def} is finite over $T$.
    \end{lemma}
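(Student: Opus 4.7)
The plan is to reduce to the affine case and apply Cayley--Hamilton. Since finiteness can be checked on an affine cover of $T$, we may assume $T = \Spec A$ and $\ccX = \Spec B$, where $B = A[\Dx_1, \ldots ,\Dx_n]/I$. The hypothesis that $\pi$ is finite means exactly that $B$ is a finitely generated $A$-module, say by $r$ elements. Flatness will play no role; only finiteness matters.

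First I would observe that for each variable $\Dx_i$, multiplication by $\Dx_i$ defines an $A$-linear endomorphism of the finitely generated $A$-module $B$. By the determinant form of Cayley--Hamilton (see, e.g.,~\cite[Corollary~4.3]{Eisenbud}), there is a monic relation
\[
  \Dx_i^{r} + a_{i,r-1}\Dx_i^{r-1} + \ldots + a_{i,1}\Dx_i + a_{i,0} = 0 \quad \mbox{in } B,
\]
with coefficients $a_{i,j}\in A$. Lifting this relation back to $A[\Dx_1,\ldots ,\Dx_n]$ gives an element of $I$ whose decomposition into homogeneous summands (with respect to the standard grading on $\DS$, in which the coefficients $a_{i,j}\in A$ have degree zero) has top-degree form exactly $\Dx_i^r$. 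By the definition of the initial ideal, we conclude $\Dx_i^r \in \initial{I}$ for every $i$.

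It follows that $A[\Dx_1,\ldots ,\Dx_n]/\initial{I}$ is spanned as an $A$-module by the finite set of monomials $\{\Dx^{\aa} \mid 0 \leq a_i < r\}$. Hence $\ccX_0 \to T$ is finite on each affine patch, and therefore finite globally. There is no real obstacle in this argument; the only point that requires a moment of care is verifying that the top-degree form of the Cayley--Hamilton relation is genuinely $\Dx_i^r$, which is immediate since the coefficients lie in $A$ and hence sit in degree zero of the standard grading on $\OT \tensor \DS$.
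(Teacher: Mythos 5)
Your proof is correct, and it takes a genuinely different route from the paper. The paper's argument observes that, locally on $T$, the $\OT$-module $\pi_*\OO_{\ccX}$ is spanned by a finite fixed set of monomials $\mathcal{B}$; one then deduces that $\pi_*\OO_{\ccX_0}$ is spanned by \emph{the same} monomials (monomials of degree exceeding $\max\deg\mathcal{B}$ have initial form equal to themselves and hence die in $\OO_{\ccX_0}$, and a monomial $m\notin\mathcal{B}$ of low degree is, modulo $\initial{\mathcal{I}_{\ccX}}$, a combination of the equal-degree members of $\mathcal{B}$). Your Cayley--Hamilton argument instead produces the explicit monic relations $\Dx_i^r + a_{i,r-1}\Dx_i^{r-1}+\ldots+a_{i,0}\in I$, from which $\Dx_i^r\in\initial{I}$ follows immediately because the coefficients sit in degree zero, and this gives the finite spanning set $\{\Dx^{\aa}\mid a_i<r\}$. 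Both arguments use only finiteness and not flatness, as you correctly note. The trade-off: your version rests on a single well-known theorem and is arguably more robust and self-contained, but the paper's version gives the slightly sharper conclusion that $\pi_*\OO_{\ccX_0}$ is spanned by the same monomials as $\pi_*\OO_{\ccX}$, which is the fact actually exploited a moment later when restricting to the open set $\goodsubset$ to prove flatness of $\UU_0\to\goodsubset$.
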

    \begin{proof}
        Locally on $T$ the sheaf $\pi_*\OX$ is an $\OT$-module spanned by
        finitely many fixed monomials. Then also $\pi_*\OO_{\ccX_0}$ is an
        $\OT$-module spanned by these monomials, so that $\ccX_0 \to T$ is
        finite.
    \end{proof}
    Even though $\ccX\to T$ is flat, the morphism $\ccX_0 \to T$ need
    not be flat.

    \begin{example}
        \def\IX{I_X}%
        Let $T = \Spec \kk[s]$.
        Consider $D = V(\Dx^2, \Dx\Dy, \Dy^3) \subset \mathbb{A}^2 = \Spec
        \kk[\Dx, \Dy]$ and
        \[
            \ccX = V(\Dx - s\Dy^2) \subset D \times T \subset \mathbb{A}^2
            \times T
        \]
        considered as a finite
        family $\pi\colon \ccX\to T$.
        For each $\lambda\in T$, the
        fiber $\ccX_\lambda$ is a degree three subscheme of $\mathbb{A}^2$, so
        $\pi_*\OX$ is a locally free sheaf of rank three; in particular $\pi$
        is flat. We have $\IX = (\Dx^2, \Dx\Dy, \Dy^3)\kk[\Dx, \Dy, s] \oplus (\Dx
        - s\Dy^2)\kk[s]$, so
        $\initial{\IX} = (\Dx^2, \Dx\Dy, \Dy^3)\kk[\Dx, \Dy, s] \oplus (s\Dy^2)\kk[s]$ and
        hence $V(\initial{\IX}) \to T$ is not flat near $s=0$.
    \end{example}

    \newcommand{\goodsubset}{U}%
    \newcommand{\II}{\mathcal{I}}%
    \newcommand{\MonoIdeals}{\mathcal{M}ono_r}%
    We now proceed to define an open subset of $\Hilbr{\AA^n}$ where the initial
    scheme of the universal family \emph{is} flat. Before we do it, we
    introduce very compressed subschemes.
    \begin{definition}\label{ref:initialideal:def}
        Choose $n$ and $r$. Let $\mathbb{A}^n = \Spec \DS$ and $\DmmS \subset
        \DS$ be the ideal of the origin. Consider subschemes of degree $r$
        given by ideals $I$ such that $\DmmS^{s+1} \subseteq I \subsetneq
        \DmmS^{s}$ for an integer $s$. We call such subschemes \emph{very compressed} and denote by
        \[
            \VeryCompLocus \subset \Hilbr{\AA^n}
        \]
        their family (with reduced structure).
    \end{definition}
    Clearly, $\VeryCompLocus  \simeq \Grass(a, \DmmS^s/\DmmS^{s+1})$ for
    appropriate $a$; in particular it is irreducible. The integer $s = s(n, r)$
    appearing in Definition~\ref{ref:initialideal:def} is uniquely determined by $n$ and $r$:
    \[
        s(n, r) = \min\left\{ i\ |\ \binom{n+i}{i} \geq r \right\}.
    \]
    Let $\AA^n = \Spec \DS$ and let $\MonoIdeals$ denote the set of
    monomial ideals $\lambda$ in $\DS$ which are finite of degree $r$ and
    satisfy $\DmmS^{s(n, r)+1} \subset \lambda \subsetneq \DmmS^{s(n, r)}$.
    For $\lambda\in \MonoIdeals$ consider the subset $U_{\lambda} \subset
    \Hilbr{\AA^n}$ consisting of subschemes $R
    \subset \AA^n$ such that $H^0(R, \OO_R)$ has a $\kk$-basis given by all
    monomials not in $\lambda$. Then $U_{\lambda}$ is an open subset of
    $\Hilbr{\AA^n}$. Let
    \[
        \goodsubset = \bigcup\left\{ U_{\lambda} \ |\ \lambda\in
    \MonoIdeals\right\}.
    \]
    Let $\UU \subset \goodsubset \times \AA^n$ be the restriction of the
    universal family to $\goodsubset$. Let $\UU_0 \subset \goodsubset \times
    \AA^n$ be its initial scheme.
    \begin{proposition}
        The initial scheme $\UU_0 \to \goodsubset$ is flat. Its fibers are
        given by initial ideals of the fibers of $\UU\to \goodsubset$.
    \end{proposition}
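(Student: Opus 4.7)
The statement is local on $\goodsubset$, so I would work with one affine piece at a time. Cover $\goodsubset$ by opens $U_\lambda$ (indexed by $\lambda\in \MonoIdeals$) and pick an affine $T=\Spec A\subset U_\lambda$. Over $T$ the family $\UU\to\goodsubset$ restricts to a subscheme $V(\mathcal{I})\subset T\times\AA^n$ with $\mathcal{I}\subset A\tensor_{\kk}\DS$, and by the defining property of $U_\lambda$ the quotient $M:=(A\tensor\DS)/\mathcal{I}$ is a free $A$-module of rank $r$ with basis the images $\{e_\beta\}$ of the monomials $\{x^\beta\notin\lambda\}$. Since the initial ideal is defined by taking top-degree forms locally on sections, and since this construction is manifestly intrinsic, checking flatness and the fiberwise description over each such $T$ will suffice and the conclusion will glue over all of $\goodsubset$.

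The key idea is to identify $(A\tensor\DS)/\init{\mathcal{I}}$ with an associated graded of $M$ and read off freeness from there. For $d\geq 0$ let $V_d := \sum_{|\alpha|\leq d} A\, x^{\alpha} \subset A\tensor \DS$ and $F^dM:=\mathrm{image}(V_d\to M)$. Using the standing bound $\DmmS^{s+1}\subset\lambda\subsetneq\DmmS^s$ together with the generating relations $x^\alpha \equiv \sum_{x^\beta\notin\lambda} c_{\alpha\beta}\,x^\beta\pmod{\mathcal{I}}$ (with $|\beta|\leq |\alpha|$ by the shape of $\lambda$), one sees that $F^dM$ is the free $A$-submodule of $M$ spanned by $\{e_\beta:\,|\beta|\leq d,\,x^\beta\notin\lambda\}$, so each $F^dM$ is a direct summand of $M$. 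In particular $\mathrm{gr}^F M=\bigoplus_d F^dM/F^{d-1}M$ is $A$-free with basis $\{e_\beta\}_{x^\beta\notin\lambda}$, now graded by degree. The standard identification
\[
F^dM/F^{d-1}M \;\simeq\; V_d/(V_{d-1}+\mathcal{I}\cap V_d) \;\simeq\; (A\tensor\DS_d)/\init{\mathcal{I}}_d
\]
then yields a canonical isomorphism $\mathrm{gr}^F M \simeq (A\tensor\DS)/\init{\mathcal{I}}$ of $A$-modules, so the latter is $A$-free. This proves $\UU_0\to T$ is flat, and in fact shows $[\UU_{0,t}]\in U_\lambda$ for every $t\in T$.

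For the statement about fibers, fix a point $t\in T$ with residue field $\kappa=\kappa(t)$. For any $f\in\mathcal{I}$ the top-degree form $\init{f}$ is homogeneous of degree $\deg f$; its specialization $\init{f}|_t$ is either $0$ or equals $\init{f|_t}$, and in the latter case clearly lies in $\init{I_t}$. Hence $\init{\mathcal{I}}|_t\subseteq \init{I_t}$. Proposition~\ref{ref:limitoneparameter:prop} applied to the fiber $\UU_t$ gives $\dim_\kappa \kappa\tensor\DS/\init{I_t}=r$, and the freeness established above gives $\dim_\kappa (\kappa\tensor\DS)/(\init{\mathcal{I}}|_t)=r$ as well. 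An inclusion of ideals with equal finite-dimensional quotients is an equality, so $\init{\mathcal{I}}|_t=\init{I_t}$, i.e.\ the fiber of $\UU_0$ over $t$ is exactly the initial scheme of the fiber of $\UU$.

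The main technical point I expect to spend the most care on is the identification $\mathrm{gr}^F M\simeq (A\tensor\DS)/\init{\mathcal{I}}$, in particular verifying that the image of $\mathcal{I}\cap V_d$ inside $V_d/V_{d-1}=A\tensor\DS_d$ coincides with $\init{\mathcal{I}}_d$ and that $F^dM$ is a genuine direct summand of $M$; both rely essentially on the combinatorics forced by $\DmmS^{s+1}\subset\lambda\subsetneq\DmmS^s$, so that every reduction of a monomial in $\lambda$ produces only same-or-lower-degree terms in the complement basis. Once these bookkeeping points are in place, flatness of $\UU_0$ and the fiberwise statement both follow at once.
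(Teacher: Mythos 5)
Your proof is correct and takes essentially the same approach as the paper: both exploit the constraint $\DmmS^{s+1}\subseteq\lambda\subsetneq\DmmS^s$, which forces any rewriting of a monomial from $\lambda$ modulo $\mathcal{I}$ to produce only same-or-lower-degree terms in the complementary basis $B_\lambda$, so that the quotient by $\init(\mathcal{I})$ remains free with basis $B_\lambda$. Your reorganization via the degree filtration $F^\bullet M$ and its associated graded module is a cleaner bookkeeping of the same computation, and your explicit colength argument for the fiberwise statement fills in what the paper's ``the claim about the fibers follows'' leaves to the reader.
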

    Since $\UU_0\to \goodsubset$ is flat, we call it the \emph{initial
    family}.
    \begin{proof}
        \def\OOgood{\OO_{\goodsubset}}%
        \def\basis{\mathcal{B}}%
        Consider the ideal sheaf $\II_{\UU} \subset \OOgood \otimes \DS$ and
        pick a point $x\in \goodsubset$ and $\lambda$ such that $x\in
        U_{\lambda}$.
        It is enough to prove that the restriction of $\UU_0$ to
        $^{-1}(U_{\lambda})$ is flat.
        Let $\basis$ denote the set of monomials not in
        $\lambda$ and $\basis_s \subset \basis$ denote the set of elements of
        degree $s := s(n, r)$.  In $H^0(\UU_x, \OO_{\UU_x})$ the image of every monomial
        in $\lambda$ may be written as a combination of $\basis$. Hence, also
        in a neighbourhood $T$ of $x$, for every $m\in \lambda$ we have an
        element
        \begin{equation}\label{eq:tmpelement}
            m - \sum_{m_i\in \basis} a_im_i\in \II_{\UU}.
        \end{equation}
        In particular, if $m$ is a monomial of
        $\deg(m) > s = \max \deg(\basis)$ then $m \in \lambda$ and also
        $m$ is the initial form of~\eqref{eq:tmpelement}.
        If $\deg(m) \leq s$, then $\deg(m) =s$ by construction of $\lambda$,
        so
        \begin{equation}\label{eq:tmpelementin}
            m - \sum_{m_i\in \basis_s} a_im_i\in \II_{\UU}.
        \end{equation}
        Up to multiplying by $\OOgood$, these are the only equations of $\UU
        \subset U \times \AA^n$ near $x$, so
        near $x$ the sheaf $\OOgood$ is free with basis $B_{\lambda}$. Since
        $x$ is arbitrary, the map
        $\UU_{0} \to \goodsubset$ is flat. The claim about the fibers follows.
    \end{proof}
    See~\cite[Chapter~18]{Miller_Sturmfels} for details of the above
    construction of $\UU_0$.
    The finite flat family $\UU_0\to \goodsubset$ induces a mapping
    \[
        \varphi_r\colon\goodsubset \to \VeryCompLocus.
    \]
    Note that $\VeryCompLocus \subset \goodsubset$ and
    $(\varphi_r)_{|\VeryCompLocus} = \operatorname{id}$. Thus the map
    $\varphi_r$ is a \emph{retraction} onto $\VeryCompLocus$.
    The map $\varphi_r$ plays a key role in checking smoothability of very
    compressed schemes, as the following
    Proposition~\ref{ref:verycompressedsmoothability:prop} shows.
    \begin{proposition}\label{ref:verycompressedsmoothability:prop}
        We have $\VeryCompLocus \subset \Hilbsmr{\AA^n}$ if and only if
    $\varphi_r(\goodsubset \cap \Hilbsmr{\AA^n})$ surjects onto
    $\VeryCompLocus$.
    \end{proposition}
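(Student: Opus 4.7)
The plan is to observe that $\varphi_r$ is a retraction onto $\VeryCompLocus$ whose geometric content is exactly that $\varphi_r([R])$ is the $\Gmult$-limit of $R$. Both implications then follow almost immediately: one direction from the retraction property, and the other from the fact that $\Gmult$-limits of smoothable schemes are smoothable (Proposition~\ref{ref:klimits:prop}).

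For the forward direction, suppose $\VeryCompLocus \subset \Hilbsmr{\AA^n}$. By the construction of $\goodsubset$ via the covers $U_\lambda$ for $\lambda\in\MonoIdeals$, every very compressed ideal is itself monomial of the form covered by some $\lambda$, so $\VeryCompLocus \subset \goodsubset$. Hence $\VeryCompLocus \subset \goodsubset \cap \Hilbsmr{\AA^n}$. Since $(\varphi_r)_{|\VeryCompLocus} = \operatorname{id}$ we conclude
\[
\VeryCompLocus = \varphi_r(\VeryCompLocus) \subset \varphi_r(\goodsubset \cap \Hilbsmr{\AA^n}),
\]
and surjectivity onto $\VeryCompLocus$ is automatic (it is really an equality, as $\varphi_r$ takes values in $\VeryCompLocus$ by definition).

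For the backward direction, fix any $[R_0] \in \VeryCompLocus$; I want to show that $[R_0] \in \Hilbsmr{\AA^n}$. By the surjectivity hypothesis, there exists $[R]\in \goodsubset \cap \Hilbsmr{\AA^n}$ such that $\varphi_r([R]) = [R_0]$. By the description of $\varphi_r$ in terms of the initial family $\UU_0\to \goodsubset$, and the fact that the fibers of $\UU_0$ are cut out by initial ideals of the fibers of $\UU$, we have $R_0 = V(\initial{I(R)})$. In other words, $R_0$ is the $\Gmult$-limit of $R$ in the sense of Definition~\ref{ref:klimit:def}. Since $[R]\in \Hilbsmr{\AA^n}$, the scheme $R$ is smoothable by Proposition~\ref{ref:embeddedvsHilbert:prop}, so its $\Gmult$-limit $R_0$ is smoothable by Proposition~\ref{ref:klimits:prop}, and invoking Proposition~\ref{ref:embeddedvsHilbert:prop} once more gives $[R_0]\in\Hilbsmr{\AA^n}$.

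There is essentially no hard step here; the only thing to verify carefully is that the map $\varphi_r$ constructed globally via the initial family $\UU_0\to \goodsubset$ agrees fiberwise with the $\Gmult$-limit of Proposition~\ref{ref:limitoneparameter:prop}, which is built into the construction of $\UU_0$ itself. Given this compatibility, the proposition becomes a direct consequence of the retraction property of $\varphi_r$ and the smoothability-preservation of Proposition~\ref{ref:klimits:prop}.
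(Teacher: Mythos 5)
Your proof is correct and uses exactly the same ingredients as the paper: the retraction property $(\varphi_r)_{|\VeryCompLocus}=\operatorname{id}$ (together with $\VeryCompLocus\subset\goodsubset$) for the forward direction, and Proposition~\ref{ref:klimits:prop} for the backward one. The paper packages both implications into the single equality $\varphi_r(\goodsubset\cap\Hilbsmr{\AA^n})=\Hilbsmr{\AA^n}\cap\VeryCompLocus$, but the underlying argument is identical.
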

    \begin{proof}
        The map $\varphi_r$ maps every subscheme to its initial subscheme. If
        a subscheme is smoothable, also its initial subscheme is smoothable,
        by Proposition~\ref{ref:klimits:prop}.
        Hence $\varphi_r(\goodsubset \cap \Hilbsmr{\AA^n}) \subset \Hilbsmr{\AA^n}$.
        Therefore $\varphi_r(\goodsubset \cap \Hilbsmr{\AA^n}) =
        \Hilbsmr{\AA^n} \cap \VeryCompLocus$.
    \end{proof}

    Example~\ref{ex:nonsmoothable96} shows that
    $\VeryCompLocusParams{96}{3}\not\subset \Hilbfunc^{sm}_{96}\,(\AA^3)$ by
    dimensional reasons. We now show that $\VeryCompLocusParams{r}{3}
    \subset \Hilbsmr{\AA^3}$ for all $r < 96$. This result first appeared
    in~\cite{DJNT}.
    \begin{proposition}\label{ref:upto95:prop}
        Let $\chark = 0$.
        The family $\VeryCompLocusParams{r}{3}$ of very compressed ideals is
        contained in the smoothable component if and only if $r\leq 95$.
    \end{proposition}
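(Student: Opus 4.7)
The proof splits into two directions, each reducing via Proposition~\ref{ref:verycompressedsmoothability:prop} to a statement about $\Gmult$-limits.

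\textbf{Direction $\Leftarrow$} ($r \leq 95$): I plan to show that every very compressed subscheme of degree $r \leq 95$ is a $\Gmult$-limit of a smoothable subscheme. The locus $\VeryCompLocusParams{r}{3}$ is irreducible: it fibers over $\AA^3$ (the translations) with fiber the Grassmannian $\Grass(\binom{s+2}{2} - H(s), \binom{s+2}{2})$ of subspaces of $\DmmS^s/\DmmS^{s+1}$, where $s = s(3,r)$ and $H(s) = r - \binom{s+2}{3}$. Since smoothability is closed in the Hilbert scheme by Proposition~\ref{ref:smoothabilityisclosed:prop}, it suffices to find a Zariski-dense set of smoothable subschemes in $\VeryCompLocusParams{r}{3}$. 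I would take this dense set to be the image of $\varphi_r$ applied to configurations of $r$ distinct points in $\AA^3$ (which are trivially smoothable). The required dimension inequality $\dim \VeryCompLocusParams{r}{3} = (\binom{s+2}{2} - H(s))H(s) + 3 \leq 3r$ I verify case-by-case in $s \in \{0,1,\ldots,7\}$ for the unique admissible value $H(s) = r - \binom{s+2}{3}$; the tightest case is $278 \leq 285$ at $r=95$. Combined with a local analysis showing the differential of $\varphi_r$ at a general configuration has maximal rank, this yields density.

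\textbf{Direction $\Rightarrow$} ($r \geq 96$): The base case $r = 96$ is exactly Example~\ref{ex:nonsmoothable96}: one computes $\dim \VeryCompLocusParams{96}{3} = (120-96)(96-84) + 3 = 291 > 288 = \dim \Hilbsmarged{96}{\AA^3}$, forcing $\VeryCompLocusParams{96}{3} \not\subset \Hilbsmr{\AA^3}$. The same inequality $(120-r)(r-84) + 3 > 3r$ holds for every $r \in [96, 105]$, eliminating these cases by pure dimension. For $r \in [106, 120]$ (still $s=7$), where the Grassmannian dimension is smaller than $3r$, I would instead exhibit explicit non-smoothable elements inside $\VeryCompLocusParams{r}{3}$ by enlarging the socle of Iarrobino's $r = 96$ construction and controlling the tangent space of the resulting scheme via the local description in Section~\ref{ssec:relativemacaulaysystems}; for $r \geq 121$ we have $s \geq 8$, and a parallel Grassmannian dimension computation produces the obstruction on an explicit sub-range of each $s$, after which one iterates.

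\textbf{Main obstacle}: The crux is the density claim in Direction $\Leftarrow$ at the threshold $r = 95$, where the dimension bound is tight. Here one must confirm that $\varphi_{95}$, restricted to the $285$-dimensional space of distinct-point configurations, has image that is not contained in any proper subvariety of the $278$-dimensional locus $\VeryCompLocusParams{95}{3}$. I expect this to require an explicit tangent-space computation at a well-chosen (e.g.\ Borel-fixed or monomial) very compressed ideal $I_0$, verifying that the differential of $\varphi_{95}$ surjects onto $T_{[I_0]} \VeryCompLocusParams{95}{3}$; the genuinely new input beyond Iarrobino-type arithmetic is this surjectivity at the boundary.
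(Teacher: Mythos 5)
Your $\Leftarrow$ direction follows the same route as the paper. The paper reduces to showing $\varphi_r$ is dominant (with $r\leq 7$ handled by \cite{cartwright_erman_velasco_viray_Hilb8}), then verifies that the tangent map $\Dtangspace{\varphi_r}$ is surjective at a general tuple of $r$ distinct points by a direct Macaulay2 computation for each $8\leq r\leq 95$. Two corrections to your formulation. First, the tangent computation must be carried out at a general tuple $[R]$ of distinct points, not at a very compressed ideal $[I_0]$ as your ``main obstacle'' paragraph suggests: since $\varphi_r$ restricts to the identity on $\VeryCompLocusParams{r}{3}$, its differential at $[I_0]$ is tautologically surjective onto $\Dtangspace{\VeryCompLocusParams{r}{3},\,[I_0]}$, which tells you nothing about the image of configurations of distinct points. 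Second, the surjectivity must be verified for each $r$ separately --- surjectivity at the tightest case $r=95$ does not imply it at smaller $r$ --- so the case-by-case verification is the bulk of the argument, not a single boundary check.

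For $\Rightarrow$ you overshoot the paper, which only cites Example~\ref{ex:nonsmoothable96} (the case $r=96$). Your extension to $r\in[96,105]$ by the same dimension count is correct and an honest improvement. But your sketch for $r\in[106,120]$ and $r\geq 121$ does not constitute a proof: ``enlarging the socle'' of Iarrobino's $r=96$ example is not a mechanism that transmits non-smoothability --- non-smoothability does not pass to refinements of the scheme structure, quotients of smoothable algebras need not be smoothable, and you give no tangent-space bound on the enlarged schemes nor verify that they even lie in $\VeryCompLocusParams{r}{3}$. There is also no obvious monotonicity (containment for $r$ implying containment for $r-1$) that would let the single case $r=96$ propagate upward. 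The paper's own proof is silent on this range, deferring to \cite{DJNT}; closing it would require a genuinely new argument, not an extrapolation of the Grassmannian dimension count.
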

    \begin{proof}
        The only if part follows from Example~\ref{ex:nonsmoothable96}.
        To prove the if part, suppose first that $\chark = 0$.
        It is enough to check that $\varphi_r$ is dominant for all $r\leq
        95$. All schemes of degree up to $7$ in $\AA^3$ are smoothable
        by~\cite{cartwright_erman_velasco_viray_Hilb8}, so it is enough to
        check for $8\leq r\leq 95$.  Pick a general tuple $R$ of $r$ points of
        $\AA^3$ over $\kk$. Then the tangent map
        \begin{equation*}
            \Dtangspace{\varphi_{r}}:
            \Dtangspace{\Hilbzeror{\AA^3},\, [R]}
            \to \Dtangspace{\VeryCompLocusParams{r}{3},\, \varphi_{r}([R])}
        \end{equation*}
        is surjective. This is verified by a direct computer calculation, see
        the Macaulay2 package \texttt{CombalggeomApprenticeshipsHilbert.m2}
        accompanying the arXiv version of~\cite{DJNT}.
        Then by~\cite[Theorem~17.11.1d, p.~83]{ega4-4} the morphism $\varphi_r$ is smooth at
        $[R]$, thus flat, thus open, and thus the claim.
    \end{proof}

    \begin{remark}[Comparison with the case of $8$ points in $\AA^4$]\label{ref:furtherwork:remark}
        For $r \geq 96$ the map $\varphi_r$ is not surjective by dimensional
        reasons.  Even though $\Dtangspace{\varphi_r}$ is not surjective, we conjecture
        that the maps $\Dtangspace{\varphi_{r}}$ are of maximal rank.
        This is no longer true for $\AA^4$: in fact $\Dtangspace{\varphi_{8}\,
        \AA^4}$ has $20$-dimensional image in the $21$-dimensional
        Grassmannian $\Grass(3,10)$, which accounts for the fact that there are
        nonsmoothable ideals of degree $8$ in $\AA^4$, as exhibited in
        Example~\ref{ex:143}.
    \end{remark}

    \begin{example}[smoothable schemes, which are not $\Gmult$-limits of smooth
        schemes]\label{ex:1771arenotklimits}
        \def\HH{\mathcal{H}}%
        \def\tmpparamspace{\mathcal{P}}%
        \def\PGL{\operatorname{PGL}}%
        \def\PGL{\operatorname{PGL}}%
        We sketch an example of a family of schemes which are smoothable and
        $\Gmult$-invariant but whose general member is not a $\Gmult$-limit of
        a smooth scheme.

        Assume $\kk = \kkbar$.
        Consider a subset $\HH \subset \HilbGorarged{18}{\AA^7}$ consisting
        of subschemes $R \subset \AA^7$ such that $R$ is Gorenstein, the ideal $I(R)$ graded
        (hence $R$ is irreducible),
        and $H_{H^0(R, \OO_R)} = (1, 7, 7, 1)$.
        Elements of $\HH$ are smoothable by a result of Bertone, Cioffi and
        Roggero, see Remark~\ref{ref:1nn1:rmk}.
        Each $R\in \HH$ has a unique up to scaling cubic dual generator in
        $\kdp[x_1, \ldots ,x_7]$ and a general cubic $F$ in $\kdp[x_1, \ldots
        ,x_7]$ corresponds to a $\Spec\Apolar{F} \in \HH$, so $\dim \HH =
        \binom{7+3-1}{3} - 1 = 83$.

        Suppose that an element $R\in \HH$ is a $\Gmult$-limit of a smooth
        scheme $R^{\circ} \subset \AA^7$. The family $\ccX \subset \AA^7 \times
        \AA^1$ gives a projective scheme $\projccX \subset \PP^7$,
        abstractly isomorphic to $R^\circ$. The scheme $R$ is
        Gorenstein and is a hyperplane section of the cone over
        $\projccX$, so the scheme $\projccX$ is arithmetically
        Gorenstein~\cite[Chapter~10]{hartshorne_deformation_theory}.
        Moreover, $\projccX$ spans $\PP^7$.
        Denote by $\tmpparamspace$ the variety of ordered arithmetically Gorenstein
        tuples of points in $\PP^7$ which span $\PP^7$, so that
        \[
            \tmpparamspace \subset (\PP^7)^{18}/\PGL_{7}.
        \]
        We have $\dim \tmpparamspace = \binom{8}{2} = 28$ by the results of Coble and Dolgachev-Ortland,
        see~\cite[Corollary~8.4]{Eisenbud__Popescu__Gale_geometry}.
        Each element of $\tmpparamspace$ gives, by intersecting with a
        hyperplane, an element of $\HH$ defined up to $\PGL_7$-action.
        But $\dim \HH/\PGL_7 = 83 - 48 > 28$, so a general point of $\HH$ is
        not obtained this way.
    \end{example}

    \begin{partwithabstract}{Applications}
        \label{part:applications}

        In this part prove that the Gorenstein locus of the
        Hilbert scheme -- the open subscheme containing all finite Gorenstein
        subschemes -- is irreducible for small degrees, see
        Theorem~\ref{ref:cjnmain:thm}. We describe the smallest
        case when it is reducible, see Theorem~\ref{ref:14pointsmain:thm}. These results about the Gorenstein locus first
        appeared in~\cite{cjn13, jelisiejew_VSP}. We also bound the dimension
        of the punctual Gorenstein Hilbert scheme, which parameterizes
        irreducible subschemes supported at a fixed point, see
        Theorem~\ref{ref:expecteddim:thm}. This result first appeared
        in~\cite{Michalek}.

        Theorem~\ref{ref:cjnmain:thm} and
        Theorem~\ref{ref:expecteddim:thm} are motivated by applications to
        secant varieties and constructing $r$-regular maps respectively, as
        explained in Section~\ref{ssec:introsecants} and
        Section~\ref{ssec:introkregularity}.
    \end{partwithabstract}

    \chapter{Gorenstein loci for small number of points}\label{sec:Gorensteinloci}

    In this section we discuss smoothability of Gorenstein schemes, with the
    aim of proving the following main theorem of~\cite{cjn13}.
    \begin{theorem}[Irreducibility up to 13 points]
        Let $\kk$ be a field of characteristic $\neq 2, 3$.
        \label{ref:cjnmain:thm}
        Let $R$ be an finite Gorenstein scheme of degree at most $14$.  Then
        either $R$ is smoothable or it corresponds to a local algebra $(\DA,
        \mm, \kk)$ with $H_A = (1, 6, 6, 1)$. In particular, if $R$ has degree
        at most $13$, then $R$ is smoothable.
    \end{theorem}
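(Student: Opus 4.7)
The plan is to reduce to a classification problem for local Gorenstein algebras of small degree, and then dispatch each Hilbert function individually. First I would apply Corollary~\ref{ref:basechangesmoothings:cor} to assume $\kk = \kkbar$, and then use Proposition~\ref{ref:smoothingcomponents:prop} together with Corollary~\ref{ref:smoothingcomponentsresult:cor} to reduce to the case where $R = \Spec A$ for a local Gorenstein algebra $(A, \mm, \kk)$ of degree $r \leq 14$. By Corollary~\ref{ref:lowcodimensionsmoothability:cor}, if $H_A(1) \leq 3$ then $A$ is automatically smoothable, so I may assume $n := H_A(1) \geq 4$. By Macaulay's Theorem~\ref{ref:MacaulaytheoremGorenstein:thm}, write $A = \Apolar{f}$ for some $f \in \DP = \kdp[x_1,\ldots,x_n]$ with socle degree $d$, and decompose the Hilbert function $H_A$ via its symmetric decomposition $\Dhdvect{\bullet}$ from Definition~\ref{ref:symmetricdecomposition:def}.

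Next I would enumerate all Hilbert functions $H$ with $\sum_i H(i) \leq 14$, $H(0)=H(d)=1$, $H(1) = n \geq 4$, satisfying Macaulay's growth bound (Theorem~\ref{ref:MacaulayGrowth:thm}), and compatible with the existence of a symmetric decomposition (Lemma~\ref{ref:sumsofdhdaraOseqence:lem}). Up to these constraints, the list is finite and quite short: it includes the ``stretched'' functions $(1,n,1,\ldots,1)$, the ``almost stretched'' $(1,n,2,1,\ldots,1)$, the socle-degree-three functions $(1,n,m,1)$ with $n\geq m$, and a handful of genuinely ``fatter'' cases such as $(1,4,3,3,1,1,1)$, $(1,4,4,3,1,1)$, $(1,4,4,4,1)$, $(1,5,4,1,\ldots)$, $(1,5,5,1,1)$, $(1,5,5,2,1)$, $(1,4,5,4,1)$, and the critical $(1,n,n,1)$ for $n=4,5,6$. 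The single Hilbert function $(1,6,6,1)$ has sum exactly $14$, which is why it is the borderline.

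For each surviving Hilbert function other than $(1,6,6,1)$, the strategy is to construct explicit smoothings. For the stretched and almost-stretched cases I would use the standard form of $f$ from Example~\ref{ref:standardformofstretched:ex} and build a one-parameter family that peels off the top divided power $\DPel{x_1}{d}$ and degenerates the remainder to a simpler scheme already known to be smoothable (reducing degree or embedding dimension). For Hilbert functions with $\Dhdvect{d-2} = (0,q,0)$, Proposition~\ref{ref:squares:prop} splits off a sum of squares in independent variables, so the algebra becomes a direct sum of a smaller Gorenstein algebra with a smoothable ``square'' piece; Theorem~\ref{thm_equivalence_of_abstract_and_embedded_smoothings} then reduces smoothability to the smaller summand, which is handled by induction. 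For $(1,n,n,1)$ with $n \leq 5$ I would apply Corollary~\ref{ref:eliasrossi:cor} to replace $A$ by its associated graded algebra $\Apolar{f_3}$, where $f_3$ is a cubic in $n \leq 5$ variables; such cubics lie in the smoothable component by a direct dimension count against Proposition~\ref{ref:smoothablecomponentdescription:prop} (with $n=5$ the sharpest case, where the parameter space of cubics has dimension $35$ and the tangent space calculation matches that of tuples of points). The short socle-degree-three cases $(1,n,m,1)$ with $m < n$ degenerate, via Example~\ref{ex:1nn1}-type arguments, to algebras whose Macaulay dual form splits into powers of linear forms, which are smoothable.

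The main obstacle will be the case $n = H_A(1) \geq 4$ with socle degree $\geq 4$ and a ``fat'' middle, especially $(1,4,4,3,1,1)$, $(1,4,4,4,1)$, $(1,5,4,1,1,1,1)$ and $(1,5,5,1,1)$, where neither the standard form nor the symmetric decomposition immediately reduces to a smaller Gorenstein algebra. These require the ray-family construction of Section~\ref{ssec:rayfamilies}: one chooses coordinates via Lemma~\ref{ref:topdegreetwist:lem} so that $f = \DPel{x_1}{d} + g$ with $g$ of controlled degree and no monomials divisible by $\DPel{x_1}{c}$, and then deforms along a non-linear change of variables that lowers the socle degree, producing a finite flat family whose generic fiber has a Hilbert function already handled. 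Checking flatness of each such family (via Proposition~\ref{ref:apolarflatness:prop} and Corollary~\ref{ref:dlajarka:cor} to control Gorensteinness in the family) is the principal technical work. Finally, the function $(1,6,6,1)$ is precisely excluded because of the dimension computation in Example~\ref{ex:1661}: the locus of such algebras is too large to fit inside the smoothable component, giving the exceptional case in the statement.
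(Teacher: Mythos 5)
Your overall strategy is close to the paper's: reduce to a local Gorenstein algebra over $\kk = \kkbar$, do casework by Hilbert function together with its symmetric decomposition, use Proposition~\ref{ref:squares:prop} and Corollary~\ref{ref:squareshavedegenerations:cor} when $\Dhdvect{d-2}\neq(0,q,0)$ to peel off a point, and deploy ray families for the remaining cases. However, there is a genuine gap in your treatment of the socle-degree-three cases $(1,n,n,1)$ with $n\in\{4,5\}$. You claim these ``lie in the smoothable component by a direct dimension count against Proposition~\ref{ref:smoothablecomponentdescription:prop}.'' Dimension count alone cannot establish containment in the smoothable component: showing that the locus of such algebras has dimension at most $rn$ rules out the ``too-large family'' obstruction of Example~\ref{ex:1nn1largenonsmoothable}, but it does not tell you \emph{which} component of the Hilbert scheme the locus lives in. The paper's actual argument is: (i) the locus of schemes with Hilbert function $(1,n,n,1)$ is \emph{irreducible}, (ii) it contains an explicit \emph{unobstructed smoothable} point, built via the tangent-preserving ray family machinery of Theorem~\ref{ref:nonobstructedconds:thm} and Corollary~\ref{ref:CIarenonobstructed:cor} (Examples~\ref{ref:1441:example} and~\ref{ref:1551:example}), and then (iii) Lemma~\ref{ref:unobstructed:lem} forces the whole irreducible locus into the smoothable component. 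The construction of these unobstructed points is a nontrivial portion of the proof — it is where characteristic $\neq 2,3$ and the specific flatness criterion for $\DII/\DII^2$ enter — and your plan does not supply it. Without it, the cases $(1,4,4,1)$ and $(1,5,5,1)$ are unproved, and neither reduces by your other methods.

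You also omit the reduction through Lemma~\ref{ref:hilbertfunc:lem}, which is what makes the degree bound $\leq 14$ tractable: once $\Dhdvect{A,d-2}=(0,0,0)$, that lemma forces $\deg A \geq 2(H_A(1)+1)$, hence either $H_A = (1,6,6,1)$ or $H_A(1)\leq 5$. Combined with Lemma~\ref{ref:trikofHilbFunc:lem} (which then bounds $H_A(2)\leq 5$), this collapses the Hilbert-function enumeration to a short list and feeds directly into Theorem~\ref{ref:mainthmstretchedfive:thm}. Your proposal enumerates Hilbert functions by brute force without this structural shortcut, which would make the casework substantially longer and harder to close — in particular the subtle nonexistence result Lemma~\ref{ref:14341notexists:lem} for $(1,4,3,4,1,1)$, and the case-splitting around $(1,4,4,3,1,1)$ (Lemma~\ref{ref:144311final:lem}), both of which your plan does not address with enough precision to be convincing.
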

    This theorem will be proved along a series of partial results.
    By Corollary~\ref{ref:basechangesmoothings:cor} we reduce to $\kk =
    \kkbar$.
    The proof goes by induction on the degree. Under the inductive assumption,
    all reducible schemes are smoothable
    (Corollary~\ref{ref:smoothingcomponentsresult:cor}). By
    Proposition~\ref{ref:smoothabilityisclosed:prop} also limits of reducible
    schemes are smoothable. Proving that a given scheme is a limit of
    reducible ones is a key ingredient in our approach. Accordingly, we define
    cleavable schemes.
    \begin{definition}\label{ref:cleavable:def}
        A finite subscheme $R$ is \emph{cleavable} (or \emph{limit-reducible})
        if there exists a finite flat family $\ccX \to T$ over an irreducible
        $T$, with a special fiber
        isomorphic to $R$ and general fiber reducible. Each such family is called a
        \emph{cleaving} of $R$.
    \end{definition}
    The name \emph{limit-reducible} is introduced in~\cite{cjn13}, while
    \emph{cleavable} is used in~\cite{nisiabu_jabu_kleppe_teitler_direct_sums}. If a cleavable $R$ is
    embedded into $\amb$, then, after changing $T$, we may assume that the cleaving
    $\ccX$ is embedded into $\amb$, see the argument of
    Theorem~\ref{ref:goodbaseofsmoothing:thm}.
    In fact, the families $\ccX\to T$ constructed below are all embedded into
    affine spaces.

    \newcommand{\cubicspace}{\mathbb{P}\left(\Sym^3 \kk^6\right)}%
    \newcommand{\Hilbshort}{\mathcal{H}}%
    \newcommand{\Hilbshortzero}{\Hilbshort_{gen}}%
    \newcommand{\Hilbshortother}{\Hilbshort_{1661}}%
    \newcommand{\exceptional}{\Hilbshort^{gr}_{1661}}%
    \newcommand{\DIR}{D_{IR}}%
    \newcommand{\DVap}{D_{V-ap}}%
    \newcommand{\VV}{\AA^6}%
    \paragraph{Nonsmoothable component for 14 points.} For degree $14$, there are nonsmoothable finite schemes $R$, which are
    irreducible and correspond to algebras with Hilbert function $(1, 6, 6,
    1)$. Each such subscheme can be embedded into $\mathbb{A}^6$.
    As proven in Example~\ref{ex:1661} using relative Macaulay inverse systems, these nonsmoothable schemes form a
    component of $\HilbGorarged{14}{\AA^6}$.
    Denote this component by $\Hilbshortother$ and the smoothable component by
    $\Hilbshortzero$, so that topologically
    \[
        \HilbGorarged{14}{\AA^6} = \Hilbshortzero \cup \Hilbshortother.
    \]
    Let $\Hilbshort = \HilbGorarged{14}{\AA^6}$ and introduce a
    scheme structure on $\Hilbshortother$ by $\Hilbshortother =
    \overline{\Hilbshort \setminus \Hilbshortzero}$.
    Under this definition it is not clear whether $\Hilbshortother$ is reduced
    or smooth. For $\chark = 0$ we show that indeed it is (we do not say
    anything about the reducedness or smoothness of $\Hilbshortzero$).
    Moreover, we describe $\Hilbshortother$
    and explicitly find the intersection $\Hilbshortzero\cap
    \Hilbshortother$ of the two components. Equivalently, we find necessary and
    sufficient conditions for smoothability of finite Gorenstein schemes of
    degree $14$. Such condition are rarely known, the only other case
    is~\cite{erman_velasco_syzygetic_smoothability}.
    We follow~\cite{jelisiejew_VSP}.

    \newcommand{\cubicspacereg}{\mathbb{P}\left(\Sym^3 \kk^6\right)_{1661}}%
    Let $\exceptional \subset \Hilbshortother$ be the set corresponding to
    $R$ invariant under the dilation
    action of $\Gmult$,~i.e.,~such that $I(R)$ is homogeneous. Then
    $\exceptional \subset \Hilbshortother$ is a closed subset and we endow it
    with a reduced scheme structure.

    \newcommand{\Gtwosix}{\Grass(2, \kk^6)}%
    Let $\DIR \subset \cubicspace$ denote the Iliev-Ranestad divisor in the space of cubic
    fourfolds.  This divisor consists of cubics corresponding to
    finite schemes which are sections of the cone over $\Gtwosix$ in
    the Pl\"ucker embedding; see Section~\ref{ssec:14pointsproof} for a precise
    definition.

    Let $\cubicspacereg \subset \cubicspace$ be the open subset of cubics $F$
    such that $\dimk \DS_1\hook F = 6$. Geometrically, these are $F$ such that
    $V(F) \subset \mathbb{P}^5$ is not a cone.

%    Theorem~\ref{ref:14pointsmain:thm} describes the component $\Hilbshortother$.
    \begin{theorem}\label{ref:14pointsmain:thm} Assume $\chark = 0$.
		With notation as above, we have the description of $\Hilbshortother$.
        \begin{enumerate}
            \item\label{it:smoothness} The component $\Hilbshortother$ is smooth
                (hence reduced) and connected.
            \item\label{it:retraction} There is an ``associated-graded-algebra'' morphism
                \[\pi\colon \Hilbshortother \to \exceptional\]
                which makes $\Hilbshortother$ the total space of a rank $21$ vector
                bundle over $\exceptional$.
            \item\label{it:exceptional} The scheme $\exceptional$ is canonically
                isomorphic to $\cubicspacereg$.
            \item\label{it:intersection} The set theoretic intersection $\Hilbshortzero \cap \Hilbshortother$
                is a prime divisor inside $\Hilbshortother$ and it is equal to
                $\pi^{-1}(\DIR)$, where ${\DIR\subset \exceptional \subset
                \cubicspace}$ is the restriction of the Iliev-Ranestad divisor.
                We get the following diagram of vector bundles:

                \begin{tikzcd}
                    \Hilbshortzero \cap \Hilbshortother   \arrow[d] &
                        \subset &
                        \Hilbshortother \arrow[d]\\
                    \DIR  & \subset&\cubicspacereg
                \end{tikzcd}
        \end{enumerate}
    \end{theorem}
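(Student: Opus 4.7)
The plan is to construct the morphism $\pi$ using relative Macaulay's inverse systems, identify $\exceptional$ with $\cubicspacereg$, establish the vector bundle structure, and finally identify the smoothable locus via the Iliev-Ranestad divisor.

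To build $\pi$, I would work locally via Corollary~\ref{ref:localdescriptionGorenstein:cor}: on an affine open $U = \Spec B \subseteq \Hilbshortother$ the universal family is $\Apolar{\mathcal{F}}$ for some $\mathcal{F} \in B \otimes_{\kk} \DP$. All fibers are irreducible (by the Hilbert function), so after translating their supports to the origin via multiplication by a divided exponential (Example~\ref{ex:tangentvectors}) the top cubic $\mathcal{F}_3 \in B \otimes \DP_3$ is determined up to a $B^{\times}$-scaling, because changing the dual generator $\mathcal{F}$ amounts to multiplication by a section of $B \otimes \DShat^{\times}$, which preserves the top-degree form up to scalar (Corollary~\ref{ref:dualgens:cor}). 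The resulting local maps $U \to \cubicspacereg$ glue to a morphism $\pi\colon \Hilbshortother \to \cubicspacereg$. A graded $[R]\in \exceptional$ has a unique-up-to-scalar homogeneous dual generator (Remark~\ref{ref:Macaulaygraded:rmk}), so $\pi|_{\exceptional}$ is an isomorphism onto $\cubicspacereg$; this establishes claim~(3) and gives a section $\sigma$ of $\pi$.

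For the vector bundle structure the key input is Example~\ref{ex:1nn1Lie}, valid in characteristic zero: for $F \in \cubicspacereg$ the unipotent orbit $\orbitunip{F}$ equals $F + \DP_{\leq 2}$, so any dual generator with top form $F$ has apolar algebra abstractly isomorphic to $\Apolar{F}$. Consequently, the fiber of $\pi$ over $[F]$ parameterizes embeddings of $\Spec \Apolar{F}$ in $\AA^6$: six translation degrees of freedom plus a choice of $f \in F + \DP_{\leq 2}$ modulo the action of $1 + \DmmS \subseteq \DShat^{\times}$ that preserves the embedded ideal. The dimension count $\dim \Hilbshortother - \dim \cubicspacereg = 76 - 55 = 21$ from Example~\ref{ex:1661} forces each fiber to have dimension exactly $21$. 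Algebraically, I would form the relative cokernel of the tangent map $\tangunip{\mathcal{F}_3} \cap (B \otimes \DP_{\leq 2}) \hookrightarrow B \otimes \DP_{\leq 2}$ from Proposition~\ref{ref:tangentspacepoly:prop}; fiberwise constancy of its rank (by Example~\ref{ex:1661}) implies that it is locally free of rank $21$, and the universal apolar family over the total space of the associated bundle reproduces $\Hilbshortother$ via $\sigma$. Smoothness and connectedness of $\Hilbshortother$ in claim~(1) then follow from the corresponding properties of $\cubicspacereg$.

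The hardest part is claim~(4), the identification of $\Hilbshortzero \cap \Hilbshortother$ with $\pi^{-1}(\DIR)$. Smoothability is preserved under $\Gmult$-limits (Proposition~\ref{ref:klimits:prop}), and the $\Gmult$-limit of $[R]\in\Hilbshortother$ (after translating the support to the origin) is precisely $\sigma(\pi([R]))\in\exceptional$; thus $\Hilbshortzero\cap \Hilbshortother = \pi^{-1}(S)$ for some closed $S\subseteq\cubicspacereg$, and it suffices to show $S = \DIR$. Here I would invoke the VSP theory of Ranestad-Iliev-Voisin~\cite{Ranestad_Iliev__VSPcubic, Ranestad_Iliev__VSPcubic_addendum, Ranestad_Voisin__VSP}: the divisor $\DIR$ consists precisely of cubics $F$ such that $\Apolar{F}$ admits a length-$14$ apolar Gorenstein subscheme which is a hyperplane section of the cone over the Pl\"ucker-embedded $\Grass(2,\kk^6)$. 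The inclusion $\DIR \subseteq S$ is constructive: such a hyperplane section is smoothable (being a linear section of a cone on a smooth variety), and its $\Gmult$-degeneration recovers $\Spec\Apolar{F}$, yielding an explicit smoothing. The reverse inclusion $S \subseteq \DIR$ is the principal obstacle: one must show that smoothability of $\Spec\Apolar{F}$ forces the existence of such a Grassmannian apolar scheme, which requires the Ranestad-Voisin rank bound on VSP together with the uniqueness of $\DIR$ as the $SL_6$-invariant divisor of degree $10$ on $\cubicspace$. This VSP-theoretic input is the genuinely deep part of the theorem; the first three claims follow from a relatively direct application of the apolarity machinery of Chapter~\ref{sec:macaulaysinversesystems}.
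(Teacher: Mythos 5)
Your architecture for claims~(1)--(3) is close in spirit to the paper's, but your treatment of claim~(4) takes a genuinely different and substantially harder route that does not reflect the paper's actual argument, and there is a logical-ordering gap between claims~(1), (2) and~(4).

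On the ordering issue: you derive the smoothness and reducedness of $\Hilbshortother$ in claim~(1) from the vector-bundle structure in claim~(2). But the scheme $\Hilbshortother$ is \emph{defined} via a closure operation, $\Hilbshortother = \overline{\Hilbshort\setminus\Hilbshortzero}$, and a priori you do not know it is reduced. The paper is careful to prove the vector-bundle statement only for the \emph{reduced} scheme $\reduced{\Hilbshortother}$; in that proof, reducedness is used in an essential way (over a reduced base $\Spec B$ the relative dual generator $\mathcal{F}$ has degree $\leq 3$ because each fiber does, and one needs reducedness to invoke Proposition~\ref{ref:apolarflatness:prop} when passing to the leading form $\mathcal{F}_3$). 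The reducedness of $\Hilbshortother$ itself is then deduced from the singular-locus computation, not from the bundle structure: one shows $\Sing\Hilbshort \cap \Hilbshortother = \Hilbshortzero \cap \Hilbshortother$ as sets, so $\Hilbshort$ is smooth on the dense open $\Hilbshortother\setminus\Hilbshortzero$, whose closure is $\Hilbshortother$. Thus claim~(1) actually depends on claim~(4), and your proposed ordering is circular unless you supply the reducedness separately.

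On claim~(4) itself, your route through the Ranestad--Voisin VSP rank bound is not what the paper does, and as sketched it has a gap: you have not given any way to prove that smoothability of $\Spec\Apolar{F}$ forces $F$ to be apolar to a Grassmannian section — indeed the paper avoids ever proving such a characterization directly. Instead, the paper computes the singular locus of $\Hilbshort$ along $\exceptional$ using Macaulay duality: $[\Apolar{F}]$ is a singular point of $\Hilbshort$ if and only if $\dim (I^2)^{\perp}_4 > 6$, where $I = \Ann(F)$; this is a determinantal condition on an evaluation map $\Sym^2\mathcal{I}_2 \to (V\mathcal{F})^{\perp}$, hence cuts out a divisor $E$. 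An explicit degree computation on a pencil (Lemma~\ref{ref:intersectiondegree:lem}) shows $\deg E = 10$, and the non-existence of lower-degree $\SL{}$-invariants shows $E$ is prime. Then the inclusions $\DIR \subset \Hilbshortzero\cap\exceptional \subset E$ (the first from the explicit smoothing of Grassmannian sections, the second because points on two components are singular), together with the dimension count $\dim\DIR = 54$ (Proposition~\ref{ref:dimensionofiliev:prop}), force set-theoretic equality by primality. You correctly intuit that the $\SL_6$-invariance and a degree-$10$ divisor should enter, but the actual mechanism is the tangent-space/singular-locus computation, which your proposal does not contain and which is the real work of claim~(4).
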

    The most difficult steps of the proof are reducedness of $\Hilbshortother$
    and~description of the intersection.
    The map $\pi$ is defined at the level of points as follows. We take $[R]\in
    \Hilbshortother$. After translation, its support becomes $0\in \VV$. Then we
    replace $H^0(R, \OO_R)$ by its associated graded algebra which is also
    Gorenstein by Corollary~\ref{ref:eliasrossi:cor}. We take $\pi([R])$ to be the point
    corresponding to $\Spec \gr H^0(R, \OO_R)$ supported at the origin of
    $\VV$.

    The identification of $\exceptional$ with $\cubicspacereg$ in
    Point~\ref{it:exceptional} is done canonically using Macaulay's
    inverse systems, as in the argument of Example~\ref{ex:1661}. Note that
    the complement of $\cubicspacereg$  has codimension greater than one,
    hence divisors on $\cubicspace$ and $\exceptional$ are identified via
    restriction and closure.

    \section{Ray families}\label{ssec:rayfamilies}

    To prove that a finite scheme $R$ is smoothable, we need to find a
    family with special fiber $R$ and general fiber smoothable. We find such
    families over $\mathbb{A}^1$. We are interested primarily in the case of
    irreducible $R$, as smoothability is checked on components by
    Theorem~\ref{thm_equivalence_of_abstract_and_embedded_smoothings}.
    Recall that in a \emph{family} is by assumption finite and flat
    (Definition~\ref{ref:finiteflatfamily:def}). For finite morphisms, which
    are not necessarily flat, we use the name \emph{deformation}.

    \newcommand{\pivot}{\Dx^{\nu} - \partial}%
    \newcommand{\pivotupper}{\Dx^{\nu} - t\Dx^{\nu-1} - \partial}%
    \newcommand{\pivotlower}{\Dx^{\nu} - t\Dx - \partial}%
    Suppose that $R \subset \mathbb{A}^n$ is irreducible, supported at the
    origin and that $C \subset \mathbb{A}^n$ is curve
    intersecting $R$ and \emph{smooth} at the intersection point. Denote by $I(R)$, $I(C)$ their ideals in $\mathbb{A}^n$.
    Let $H = V(\Dx)$ be a hyperplane intersecting $C$ transversely. Then
    $\Dx_{|C}$ is a local parameter on $C$ at the origin.  Consequently, $R\cap C
    \subset C$ is cut out of $C$ by $\Dx^\nu$ for some $\nu\geq 1$. Take a
    lifting
    $\Dx^{\nu}$ to an element $\pivot\in I(R)$.
    \begin{lemma}\label{ref:tmpidealprinc:lem}
        In the setup above, $R$ is cut out of $R\cup C$ by a single equation
        $\pivot$.
    \end{lemma}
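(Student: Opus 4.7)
The goal is the ideal equality $I(R) = I(R \cup C) + (\Dx^\nu - \partial)$ in the coordinate ring of $\mathbb{A}^n$, where as usual $I(R\cup C) = I(R)\cap I(C)$. The inclusion $\supseteq$ is immediate: $\pivot \in I(R)$ by construction, and $I(R \cup C) \subseteq I(R)$ trivially.

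I would check the reverse inclusion stalkwise. Off the support of $R$ it is automatic: away from the origin $I(R)$ is already the unit ideal locally, and at points $p \in C \setminus \{0\}$ (after shrinking to a neighbourhood of $0$ in which $H = V(\Dx)$ meets $C$ only at the origin, which is possible by the transversality assumption) the image of $\pivot$ in $\mathcal{O}_{C, p}$ is the unit $\Dx^\nu$. So all the content lives in the stalk at the origin. Writing $A := \mathcal{O}_{\mathbb{A}^n, 0}$, smoothness of $C$ at the origin together with transversality of $H$ makes $\mathcal{O}_{C, 0}$ a DVR with uniformiser $\Dx$; moreover, since $\pivot$ was chosen as a lift of $\Dx^\nu \in \mathcal{O}_{C,0}$, the residue of $\partial$ in $\mathcal{O}_{C,0}$ vanishes, i.e.\ $\partial \in I(C) \cdot A$.

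The core of the argument is then a one-step division. Take $f \in I(R) \cdot A$; the hypothesis that $R \cap C \subset C$ is cut out by $\Dx^\nu$ says that the image of $I(R)$ in $\mathcal{O}_{C, 0}$ equals $(\Dx^\nu)$, so $\bar f = \Dx^\nu \bar g$ for some $\bar g \in \mathcal{O}_{C, 0}$. Lifting $\bar g$ through the surjection $A \twoheadrightarrow \mathcal{O}_{C, 0}$ to some $g \in A$, the element $f - g \cdot \pivot$ lies both in $I(R) \cdot A$ (both summands do) and in $I(C) \cdot A$ (its image in $\mathcal{O}_{C, 0}$ is $\bar f - \bar g \cdot \Dx^\nu = 0$), hence in $I(R \cup C) \cdot A$. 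I do not see a serious obstacle here; the only bookkeeping to watch is the passage between the global ideals in $k[\mathbb{A}^n]$ and their localisations at the origin, which is harmless because the full statement can be verified stalk by stalk and $R$ is supported at the origin.
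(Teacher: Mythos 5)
Your argument is correct and, once unwound, it is the same division that lies behind the paper's one-line proof — the two differ only in packaging. The paper works entirely with global ideals: from the hypothesis $(\pivot) + I(C) = I(R\cap C)$ together with $I(R\cap C) = I(R)+I(C)$, one intersects both sides with $I(R)$; the modular law (valid because $(\pivot)\subseteq I(R)$) collapses the left side to $(\pivot) + I(C)\cap I(R)$ and the right side to $I(R)$, which is exactly the claim $I(R) = I(R\cup C) + (\pivot)$. Your explicit division in $\mathcal{O}_{C,0}$ (write $\bar f = \Dx^\nu\bar g$, lift $\bar g$ to $g$, observe $f-g\cdot\pivot\in I(R)\cap I(C)$) is what this identity computes to if one tracks elements, so the core step coincides. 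The stalk-wise reduction is not needed — the identity runs directly at the level of global ideals — and the parenthetical ``after shrinking to a neighbourhood of $0$'' should be dropped: one cannot shrink when the statement to be proved is an equality of ideals in the coordinate ring. What makes the stalks on $C\setminus\{0\}$ harmless is not shrinking but the hypothesis itself: the global identity $(\pivot)+I(C) = I(R\cap C)$, with $R\cap C$ supported at the origin, already forces $\Dx^\nu$ to be a unit along $C$ away from the origin, so $H\cap C$ is the origin alone as a set — there is nothing to shrink away.
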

    \begin{proof}
        By assumption, $(\pivot) + I(C) = I(R\cap C) = I(R) + I(C)$.
        Intersecting both sides with $I(R)$, we get $(\pivot) + I(C)\cap I(R)
        = I(R)$, hence the claim.
    \end{proof}
    In light of Lemma~\ref{ref:tmpidealprinc:lem} above, we try to deform $R$ inside $R\cup C$ by deforming
    $\pivot$. We would like a general fiber of the deformation to be
    reducible, so a natural deformation over $\kk[t]$ is given by
    \[
        (\Dx^{\nu} - t\Dx^{s} - \partial = 0) \subset (R\cup C) \times \Spec \kk[t]
    \] for a chosen $s < \nu$. The
    restriction of this deformation to $C \times \Spec \kk[t]$ is flat, given by $\Dx^{\nu} - t\Dx^s$.
    We will see that the deformation itself is flat provided that $R\cap C$ is
    large enough. Intuitively,
    when $R\cap C \subset C$ is large, we may peel a point off $R$ along $C$.
    We illustrate this in the following
    Proposition~\ref{ref:cleavinggeometric}. Let $H^{\nu-1} = V(\Dx^{\nu-1})$ be
    a thick hyperplane.

    \begin{proposition}\label{ref:cleavinggeometric}
        In the above setup, assume that $R \subset C \cup H^{\nu-1}$. Then
        $R$ is cleavable.
    \end{proposition}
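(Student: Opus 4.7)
The plan is to construct an explicit cleaving using the $\pivotupper$ deformation. First, choose the lift $\pivot = \Dx^\nu - \partial \in I(R)$ so that $\partial \in I(C)$ — this is possible because $\Dx^\nu \in I(R) + I(C)$ (it cuts out $R\cap C$ in $C$), hence admits a decomposition $\Dx^\nu = r + \partial$ with $r \in I(R)$ and $\partial \in I(C)$. Define $F := \pivotupper = \Dx^\nu - t\Dx^{\nu-1} - \partial$ and let $\ccX \subset (R \cup C) \times \Spec \kk[t]$ be the subscheme cut out by $F$. At $t = 0$, the equation $F$ reduces to $\pivot$, and Lemma~\ref{ref:tmpidealprinc:lem} identifies $\ccX_0$ with $R$.

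Next, I verify that $\pi\colon \ccX \to \Spec\kk[t]$ is finite and flat. Let $M_C := \ker\bigl(\mathcal{O}_{R\cup C} \to \mathcal{O}_C\bigr)$; this is a finite-dimensional $\kk$-vector space of dimension $\deg R - \nu$. The hypothesis $R \subset C \cup H^{\nu-1}$ translates to $\Dx^{\nu-1} I(C) \subset I(R)$, which in $\mathcal{O}_{R\cup C}$ becomes $\Dx^{\nu-1}\cdot M_C = 0$; a short additional calculation, using $\partial = \Dx^\nu - r$ with $r \in I(R)$ together with the inclusion $I(R)\cdot I(C)\subset I(R)\cap I(C)$, shows that $\partial \cdot M_C = 0$ as well. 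Consequently, multiplication by $F$ annihilates $M_C[t]$ entirely, while on $\mathcal{O}_C[t]$ it restricts to $\Dx^{\nu-1}(\Dx - t)$, a non-zero-divisor (since $\Dx$ is a uniformizer on $C$ at the origin). The snake lemma applied to multiplication by $F$ on
\[
  0 \to M_C[t] \to \mathcal{O}_{R\cup C}[t] \to \mathcal{O}_C[t] \to 0
\]
then yields the short exact sequence
\[
  0 \to M_C[t] \to \mathcal{O}_{\ccX} \to \mathcal{O}_C[t]/\bigl(\Dx^{\nu-1}(\Dx - t)\bigr) \to 0,
\]
whose outer terms are free $\kk[t]$-modules of ranks $\deg R - \nu$ and $\nu$ respectively. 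Hence $\mathcal{O}_\ccX$ is $\kk[t]$-free of rank $\deg R$, giving both finiteness and flatness of $\pi$.

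Finally, for $\lambda \neq 0$ the restriction $F_\lambda|_C = \Dx^{\nu-1}(\Dx - \lambda)$ has two distinct zero loci on $C$ — the origin and the separated point $\Dx = \lambda$ — so the fiber $\ccX_\lambda$ is supported on at least two points and is therefore reducible, completing the cleaving. The principal obstacle is the small but essential calculation $\partial \cdot M_C = 0$: it crucially exploits both the hypothesis $\Dx^{\nu-1} M_C = 0$ and the defining relation $\Dx^\nu - \partial \in I(R)$, and without it the $F$-action on $M_C[t]$ would not be trivial and the snake-lemma argument would not deliver a free quotient.
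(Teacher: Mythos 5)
Your construction is the paper's own: the same deformation $V(\pivotupper)\subset (R\cup C)\times\Spec\kk[t]$, the same identification of the special fiber via Lemma~\ref{ref:tmpidealprinc:lem}, and the same key computation — your ``$F\cdot M_C=0$'', obtained from $\Dx^{\nu-1}I(C)\subset I(R)$ and $\Dx^{\nu}-\partial\in I(R)$, is exactly the paper's chain~\eqref{eq:criteq} showing $(\pivotupper)\cap I(C)[t]=0$ in $\kk[(R\cup C)\times\AA^1]$. The only real difference is packaging: the paper checks that every $f\in\kk[t]$ is a nonzerodivisor on the coordinate ring (restrict to $C$, then use $(I(C)[t]+ (\pivotupper))/(\pivotupper)\simeq I(C)[t]$), while you exhibit $\OO_{\ccX}$ as an extension of $\OO_C[t]/(\Dx^{\nu-1}(\Dx-t))$ by $M_C[t]$ via the snake lemma; that repackaging is fine and arguably cleaner.

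One intermediate claim, however, is wrong as stated: $\OO_C[t]/\bigl(\Dx^{\nu-1}(\Dx-t)\bigr)$ is \emph{not} free of rank $\nu$ over $\kk[t]$ in general, so ``$\OO_{\ccX}$ is free of rank $\deg R$, giving both finiteness and flatness'' does not follow. The number $\nu$ only accounts for the behaviour at the origin: globally $H\cap C$ may contain further points, the function $\Dx|_C$ may take the value $t$ at several points of $C$, and $\Dx\colon C\to\AA^1$ need not even be a finite morphism, in which case the quotient is not a finite $\kk[t]$-module at all (it is still $\kk[t]$-torsion-free, hence flat). What your argument genuinely needs from the cokernel term is only its flatness, and then flatness of $\OO_{\ccX}$ follows from your exact sequence, matching the paper's conclusion; the rank bookkeeping should be dropped. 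Finiteness of $\ccX\to\AA^1$ is a separate (and mild) point — the fibers are finite since $\Dx$ is not identically zero on $C$, and one may shrink the base around $t=0$ if necessary — which the paper also leaves implicit, so this does not affect the validity of the approach, only the justification you gave for it.
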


    \begin{proof}
        \def\exten#1{#1 \times \AA^1}
        \def\Cext{C\times{\mathbb{A}^1}}%
        \def\Rext{R\times{\mathbb{A}^1}}%
        \def\IRext{I_R}%
        \def\ICext{I_C}%
        \def\IV{I_V}
        For brevity denote $D = R \cup C$.
        Consider the deformation
        \begin{equation}\label{eq:rayfamily}
            V(\pivotupper) \subset D \times \AA^1,
        \end{equation}
        with $t$ being the local parameter on $\AA^1$.
        Let $\ICext, \IRext \subset \kk[D \times\AA^1]$ be the ideals of
        $\Cext$ and $\Rext$, respectively, so $\ICext \cap \IRext = 0$. Let
        $\IV = (\pivotupper) \subset \kk[D \times \AA^1]$. By the assumption, we have $(\Dx^{\nu-1})\cap
        \ICext \subset \IRext$.
        Since $H = (\Dx)$ is transversal to $C$, we have $\IV\cap \ICext =
        \IV\cdot \ICext$.
        Consequently, we obtain
        \begin{equation}\label{eq:criteq}
            \IV \cap \ICext = \IV\cdot \ICext = (\Dx^{\nu} -
            t\Dx^{\nu-1} - \partial)\cdot \ICext \subset
            (\Dx^{\nu}-\partial)\cdot \ICext +
            (\Dx^{\nu-1})\cdot \ICext \subset \IRext \cap \ICext = 0.
        \end{equation}
%        Denote for brevity
%        $\exten{X} := X \times \mathbb{A}^1$.

        To prove flatness of Deformation~\eqref{eq:rayfamily} it is enough to
        prove that every polynomial $f \in \kk[t]$ is not a zero-divisor in the coordinate ring of
        $V = V(\pivotupper)$, see~\cite[Corollary~6.3]{Eisenbud}. Suppose there is an $f\in
        \kk[t]$ and a function $g\in \kk[V]$ such that $fg = 0$
        in $\kk[V]$.

        Let us restrict to $C$, i.e., consider the deformation $V\cap (C \times
        \AA^1)$. It is given by the equation $\Dx^\nu -t\Dx^{\nu-1}$ thus, it is
        flat over $\kk[t]$. Therefore, $f$ is not a zero-divisor, hence, $g$ restricts to
        zero on $V\cap (C \times \AA^1)$. Therefore, $g$ lies in
        $(\ICext+\IV)/\IV \subset \kk[V]$. By Equation~\eqref{eq:criteq} we
        have naturally
        \begin{equation}
            (\ICext+\IV)/\IV \simeq \ICext/(\IV\cap \ICext) = \ICext \subset \kk[D \times \AA^1],
        \end{equation}
        so $g$ is an element of a flat $\kk[t]$-module $\kk[D\times \AA^1]$.
        Since $fg = 0$, it follows that $g = 0$, which concludes proof of
        flatness; therefore~\eqref{eq:rayfamily} is a \emph{family}.
        The fiber of the family~\eqref{eq:rayfamily} over $t\neq 0$ is
        supported on at least two points: the origin and $(t, 0,  \ldots ,0)$,
        thus, reducible. Therefore, $R$ is cleavable.
    \end{proof}

    We now formally define ray deformations.
    \begin{definition}
        Let $R \subset \mathbb{A}^n$ be an irreducible finite scheme supported
        at the origin, $C \subset \mathbb{A}^n$ be a curve smooth at the
        origin and $H = V(\Dx) \subset \mathbb{A}^n$ be transversal to $C$, so
        that $R\cap C=V(\Dx^\nu) \cap C$ for an element $\nu\geq 1$.
        A \emph{ray decomposition} of $R$ is a subscheme $D \subset
        \mathbb{A}^n$ such that $C\cup R
        \subset D$
        together with a lift of $\Dx^{\nu}$ to an element $\pivot \in I(R)
        \subset \kk[\mathbb{A}^n]$ such
        that
        \[
            R = D \cap V(\pivot).
        \]
        The associated \emph{lower ray deformation} is $\ccX = V(\pivotlower)
        \subset D \times \Spec \kk[t]$. The associated \emph{upper ray
        deformation}
        is $\ccX = V(\pivotupper) \subset D\times \Spec \kk[t]$.
    \end{definition}
    If $\ccX$ is an upper ray deformation, then $\ccX \cap (C\times \Spec \kk[t]) =
    V(\Dx^{\nu} - t\Dx^{\nu-1})$, so the support of any fiber over $\kk^*$ is
    reducible. Therefore if the upper deformation is flat in a neighborhood of
    $0\in \kk$, then $R$ is
    cleavable; similarly for lower ray deformation.
    Proposition~\ref{ref:cleavinggeometric} may be rephrased as: if $D = C \cup
    R$ and $R \subset D\cup H^{\nu-1}$, then the upper ray deformation is flat. The
    extra flexibility in choosing $D$ is used in
    Section~\ref{ssec:rayfromMacaulay}.

    Flatness of ray deformations is, in general, a delicate issue. We exhibit more
    examples of flat ray deformations in
    Section~\ref{ssec:rayfromMacaulay}, where we consider ray families associated to polynomials.
    Below we give an algebraic version of
    Proposition~\ref{ref:cleavinggeometric} and a special case of Gorenstein
    algebras.
    \begin{corollary}\label{ref:algebraiccleaving:cor}
        Let $R \subset \AA^n$ be a finite scheme supported at the
        origin. Let $I = I(R)$ be its ideal. Choose coordinates
        $\alpha_1,\alpha_2,
        \dotsc ,\alpha_n$ on $\AA^n$.
        Assume that $b$ is such that
        \(
            \alpha_1^b\cdot \alpha_j \in I
        \)
        for all $j\neq 1$.
        Assume moreover that $\alpha_1^b\notin I + (\alpha_2,\alpha_3,\dotsc,\alpha_n)$.
        Then $R$ is cleavable.
    \end{corollary}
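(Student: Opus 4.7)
The plan is to deduce this corollary as a direct algebraic application of Proposition~\ref{ref:cleavinggeometric}. Take the coordinate curve $C = V(\alpha_2, \ldots, \alpha_n)$ (the $\alpha_1$-axis) and the hyperplane $H = V(\alpha_1)$, which is evidently transversal to $C$. Because $R$ is supported at the origin, its underlying topological space is a single point, so $R$ is irreducible in the sense required by the proposition, and $C$ is smooth at the origin. The finite subscheme $R \cap C \subset C \simeq \AA^1$ is supported at the origin and hence cut out in $C$ by a unique principal ideal $(\alpha_1^\nu)$ for some $\nu \geq 1$; equivalently, $\nu$ is the least positive integer with $\alpha_1^\nu \in I + (\alpha_2, \ldots, \alpha_n)$.

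The key numerical input is the bound $\nu \geq b+1$, which is precisely the content of the second hypothesis $\alpha_1^b \notin I + (\alpha_2, \ldots, \alpha_n)$. With this in hand I would verify the geometric assumption of Proposition~\ref{ref:cleavinggeometric}, namely $R \subset C \cup H^{\nu-1}$, in its algebraic form $(\alpha_1^{\nu-1}) \cdot (\alpha_2, \ldots, \alpha_n) \subset I$. For each $j \neq 1$, since $\nu - 1 \geq b$, we may write
\[
\alpha_1^{\nu-1} \alpha_j = \alpha_1^{\nu-1-b} \cdot (\alpha_1^b \alpha_j),
\]
which lies in $I$ by the first hypothesis. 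This establishes the containment.

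To complete the setup for the proposition, choose a lift: by the definition of $\nu$ there is some $\partial \in (\alpha_2, \ldots, \alpha_n)$ with $\alpha_1^\nu - \partial \in I$, and we set $D = R \cup C$, giving the required ray decomposition $R = D \cap V(\alpha_1^\nu - \partial)$. Then Proposition~\ref{ref:cleavinggeometric} (applied to the upper ray deformation) produces a flat family over $\AA^1$ with special fiber $R$ and reducible general fiber, so $R$ is cleavable.

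There is no real obstacle here; the statement is designed to be exactly the algebraic shadow of the geometric criterion, and the only mild translation is (i) recognising that "supported at the origin" supplies the irreducibility hypothesis of Proposition~\ref{ref:cleavinggeometric}, and (ii) extracting the inequality $\nu \geq b+1$ from the nonmembership assumption.
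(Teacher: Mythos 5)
Your proof is correct and follows essentially the same route as the paper: take $C=V(\alpha_2,\dotsc,\alpha_n)$, $H=V(\alpha_1)$, deduce $\nu>b$ from the nonmembership hypothesis, verify $R\subset C\cup H^{\nu-1}$, and apply Proposition~\ref{ref:cleavinggeometric}. The only difference is that you spell out the containment $(\alpha_1^{\nu-1})\cdot(\alpha_2,\dotsc,\alpha_n)\subset I$ and the translation of ``supported at the origin'' into the irreducibility hypothesis, both of which the paper leaves implicit.
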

    \begin{proof}
        This follows from Proposition~\ref{ref:cleavinggeometric} above if we take $C =
        V(\alpha_2,\alpha_3,\dotsc,\alpha_n)$, $H = (\alpha_1)$. Then $\nu$ is defined by $R\cap C =
        (\alpha_1^\nu)$ and by assumption $\nu > b$, so that $R \subset C
        \cup H^{\nu-1}$.
    \end{proof}

    The criterion of Corollary~\ref{ref:algebraiccleaving:cor} has a
    convenient formulation in terms of inverse systems (defined in
    Chapter~\ref{sec:macaulaysinversesystems}). Recall that $\DP =
    \kdp[x_1, \ldots ,x_n] \subset \Homthree{\kk}{\DS}{\kk}$ is an
    $\DS$-module by the contraction action, see
    Definition~\ref{ref:contraction:propdef}.
    \begin{corollary}\label{ref:stretchedhavedegenerations:cor}
        Let $R = \Spec\Apolar{f} \subset \mathbb{A}^n$, where
        $f = \DPel{x_1}{d} + g\in \DP$ is such
        that $\Dx_1^c\hook g = 0$ for some $c$ satisfying $2c\leq d$. Then
        $R$ is cleavable.
    \end{corollary}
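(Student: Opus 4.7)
The plan is to apply Corollary~\ref{ref:algebraiccleaving:cor} to the coordinates $\alpha_i = \Dx_i$ on $\AA^n$ and the exponent $b := d-c$. Since $\Apolar{f}$ is a local Gorenstein quotient of $\DShat$ (Theorem~\ref{ref:MacaulaytheoremGorenstein:thm}), the scheme $R$ is irreducible and supported at the origin, as required. Thus everything reduces to verifying the two conditions on $b$.

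First I would check the ``inclusion'' condition $\Dx_1^{d-c}\Dx_j \in I = \Ann(f)$ for every $j\neq 1$. Using the definition of contraction, I have
\[
    (\Dx_1^{d-c}\Dx_j)\hook f = \Dx_j\hook\bigl(\Dx_1^{d-c}\hook f\bigr)
    = \Dx_j\hook\bigl(\DPel{x_1}{c} + \Dx_1^{d-c}\hook g\bigr).
\]
The first summand is killed by $\Dx_j$ since $j\neq 1$. The second vanishes because $2c\leq d$ gives $d-c\geq c$, and $\Dx_1^{c}\hook g=0$ by hypothesis, so $\Dx_1^{d-c}\hook g = \Dx_1^{d-2c}\hook(\Dx_1^c\hook g)=0$.

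Next I would check the ``non-inclusion'' condition $\Dx_1^{d-c}\notin I + (\Dx_2,\dotsc,\Dx_n)$. Suppose for contradiction that $\Dx_1^{d-c} = \sigma + \tau$ with $\sigma\in I$ and $\tau\in (\Dx_2,\dotsc,\Dx_n)$. Applying $\hook f$ gives
\[
    \DPel{x_1}{c} = \Dx_1^{d-c}\hook f = \tau\hook f,
\]
using $\Dx_1^{d-c}\hook f = \DPel{x_1}{c}+\Dx_1^{d-c}\hook g = \DPel{x_1}{c}$ from above. Writing $\tau = \sum_{j\neq 1}\rho_j\Dx_j$, the right-hand side lies in the $\DS$-submodule of $\DP$ generated by the elements $\Dx_j\hook f = \Dx_j\hook g$ for $j\neq 1$. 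Every such generator is annihilated by $\Dx_1^{c}$, because $\Dx_1^c\hook(\Dx_j\hook g)=\Dx_j\hook(\Dx_1^c\hook g)=0$; hence so is $\tau\hook f$. But $\Dx_1^c\hook \DPel{x_1}{c}=1\neq 0$, a contradiction.

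Both hypotheses of Corollary~\ref{ref:algebraiccleaving:cor} are therefore satisfied with $b=d-c$, and the corollary yields that $R$ is cleavable. There is no real obstacle in this argument: the only subtle point is choosing $b = d-c$ (rather than, say, $c$), and the use of the hypothesis $2c\leq d$ shows up precisely once, to ensure $d-c\geq c$ so that $\Dx_1^{d-c}\hook g = 0$.
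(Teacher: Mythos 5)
Your proof is correct and follows essentially the same strategy as the paper: both reduce the statement to Corollary~\ref{ref:algebraiccleaving:cor} with $\alpha_i = \Dx_i$. The only difference is the choice of the exponent $b$. You take $b = d-c$ and verify both conditions directly, while the paper introduces $\nu$ defined by $R \cap V(\Dx_2,\ldots,\Dx_n) = V(\Dx_1^\nu)$, shows $\nu \geq d-c+1$ (which is precisely your non-inclusion condition for the exponent $d-c$), and then invokes the corollary with $b = \nu - 1$, leaving the inclusion condition $\Dx_1^{\nu-1}\Dx_j \in I$ implicit. Your version has the advantage of being more self-contained, since it avoids the detour through $\nu$ and spells out both hypotheses of the corollary explicitly; the paper's version is slightly more economical once the intermediate quantity $\nu$ has been established. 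In substance they are the same argument, and both correctly locate the single use of the hypothesis $2c \leq d$ as ensuring $d - c \geq c$ so that $\Dx_1^{d-c}\hook g = 0$.
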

    \begin{proof}
        Let $\Spec\Apolar{f} \cap V(\Dx_2, \ldots ,\Dx_n)$ be defined by
        $\Dx_1^{\nu}$ and $\Dx_1^{\nu} - \partial$ be a lift of $\Dx_1^{\nu}$ to
        $\Ann(f)$.
        \def\Dord{\nu}%
        \def\Ddegf{d}%
        Since $\Dx_1^{\nu} - \partial\in \Ann(f)$, we have $\partial\hook g = \partial\hook f =
        \Dx_1^{\Dord}\hook f = \DPel{x_1}{\Ddegf -\Dord} + \Dx_1^\Dord \hook g$. Then
        $\Dx_1^{\Ddegf -\Dord}(\partial\hook
        g) = \Dx_1^{\Ddegf -\Dord}\hook \DPel{x_1}{\Ddegf -\Dord} + \Dx_1^\Ddegf \hook g = 1$, thus
        $\Dx_1^{\Ddegf -\Dord}\hook g\neq 0$.
        It follows that $\Ddegf  - \Dord \leq c-1$, so $\Dord \geq \Ddegf  - c
        + 1\geq c + 1$. The assumptions of
        Corollary~\ref{ref:algebraiccleaving:cor} are satisfied with $b = \nu
        -1$.
    \end{proof}

    \begin{corollary}\label{ref:squareshavedegenerations:cor}
        Let $\kk = \kkbar$ and $\chark \neq 2$.
        Let $R = \Spec \DA \subset \mathbb{A}^n$, where $\DA$ is Gorenstein of
        socle degree $d$ and such that $\Delta_{d-2}\neq 0$,
        where $\Delta_{\bullet}$ is the symmetric decomposition of Hilbert
        function of $\DA$.
        Then $R$ is cleavable.
    \end{corollary}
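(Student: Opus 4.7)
The plan is to combine Proposition~\ref{ref:squares:prop} with the algebraic cleaving criterion of Corollary~\ref{ref:algebraiccleaving:cor}, exploiting the observation that a nonzero $\Delta_{d-2}$ produces ``pure square'' summands in the dual generator of $\DA$, and that each such summand gives a coordinate direction along which $R$ has depth exactly $2$, along which a point can be peeled off.

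First I would reduce to the case $d \geq 3$; the case $d = 2$ reduces to diagonalising a quadric and then applying the argument below. For $d \geq 3$, the summand $\Delta_{d-2}$ is symmetric of length $3$ with $\Delta_{d-2}(0) = \Delta_{d-2}(2) = 0$ (discussion below Lemma~\ref{ref:Qduality:lem}), so $\Delta_{d-2}\neq 0$ forces $\Delta_{d-2} = (0,q,0)$ with $q\geq 1$. Proposition~\ref{ref:squares:prop} then lets me present $\DA \simeq \Apolar{f}$ with $f$ in standard form and quadratic part $f_2 = \DPel{y_1}{2} + \cdots + \DPel{y_q}{2}$, where the linear forms $y_1,\ldots,y_q \in \DP_1$ do not appear in $f_{\geq 3}$. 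I then fix coordinates $\alpha_1,\ldots,\alpha_n$ on $\AA^n$ with $\alpha_1 := \Dy_1$ (the operator in $\DS_1$ dual to $y_1$) and $\alpha_2,\ldots,\alpha_n$ a basis of the hyperplane in $\DS_1$ annihilating $y_1$ (so in particular $\Dy_2,\ldots,\Dy_q$ lie in the span of these).

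With this setup I would apply Corollary~\ref{ref:algebraiccleaving:cor} with $b = 1$. For the first condition $\Dy_1\alpha_j \in \Ann(f)$ for each $j\neq 1$, I compute $\Dy_1 \hook f$: since $y_1$ appears in $f$ only as $\DPel{y_1}{2}$ inside $f_2$ and possibly as a linear term in $f_1$, contraction gives $\Dy_1 \hook f = y_1 + \lambda$ for some $\lambda\in \kk$. Each $\alpha_j$ with $j\neq 1$ kills both $y_1$ and the constant $\lambda$ by choice of basis, so $\alpha_j \hook (\Dy_1 \hook f) = 0$, i.e.\ $\Dy_1\alpha_j \in \Ann(f)$. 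For the second condition, I must show $\Dy_1 \notin \Ann(f) + (\alpha_j)_{j\neq 1}$; equivalently, that the image of $\Dy_1$ is nonzero in $B := \DA/(\alpha_j)_{j\neq 1}$. By construction $(\alpha_j)_{j\neq 1}$ cuts $\DS$ down to $\kk[\Dy_1]$, so $B \simeq \kk[\Dy_1]/(\Dy_1^{\nu})$ for some $\nu$, and Macaulay duality identifies $\nu$ with $\dimk(\DShat f \cap \kdp[y_1])$. The two elements $\Dy_1 \hook f = y_1 + \lambda$ and $\Dy_1^2 \hook f = 1$ are linearly independent and both lie in $\kdp[y_1]$ (again because $y_1$ is absent from $f_{\geq 3}$, so no $\Dy_1$-contractions produce cross terms), giving $\nu \geq 2$. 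Corollary~\ref{ref:algebraiccleaving:cor} then yields the cleavability of $R$.

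The main subtlety is the verification in the last step that $\Dy_1 \hook f$ and $\Dy_1^2 \hook f$ really lie in $\kdp[y_1]$ and pick up no extraneous terms in the other $y_j$'s or remaining coordinates — this is exactly the content of the ``$y_i$ absent from $f_{\geq 3}$'' clause of Proposition~\ref{ref:squares:prop}, and the entire reduction to standard form exists to make this computation clean. Once that is in place, the rest is a direct application of the cleaving criterion.
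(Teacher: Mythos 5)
Your proof is correct and takes essentially the same route as the paper: reduce via Proposition~\ref{ref:squares:prop} to a dual generator whose quadric part is a sum of squares in variables absent from $f_{\geq 3}$, then peel off a point with the ray-family cleaving criterion. The only difference is cosmetic — the paper cites Corollary~\ref{ref:stretchedhavedegenerations:cor} with $c=1$, $d=2$, while you verify the hypotheses of Corollary~\ref{ref:algebraiccleaving:cor} directly with $b=1$ (handling the possible linear term in $y_1$ explicitly), which just inlines that corollary's argument in this special case.
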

    \begin{proof}
        By Proposition~\ref{ref:squares:prop} we have $R  \simeq \Apolar{f}$,
        where $f = g + \DPel{x_n}{2}$ for $g\in \kdp[x_1, \ldots ,x_{n-1}]$. Thus
        $\Dx_n\hook g = 0$, so $\Spec\Apolar{f}$ is cleavable by
        Corollary~\ref{ref:stretchedhavedegenerations:cor} with $c = 1$ and
        $d=2$. In fact, $\Spec \Apolar{f}$ is a limit of subschemes isomorphic
        to $\Spec \Apolar{g} \sqcup \Spec \kk$.
    \end{proof}

    \begin{example}\label{ref:quarticlimitreducible:example}
        Let $\chark\neq 2,3$ and $f\in \kdp[x_1, x_2, x_3, x_4]$ be a
        polynomial, $\deg(f) = 4$. Suppose
        that the leading form $f_4$ of $f$ is written as $f_4 = \DPel{x_1}{4} + g_4$
        where $g_4\in \kdp[x_2, x_3, x_4]$.
        By Proposition~\ref{ref:topdegreetwist:lem} we may nonlinearly change
        coordinates so that $f = \DPel{x_1}{4} + g$, where $\Dx_1^2\hook g = 0$. By
        Corollary~\ref{ref:stretchedhavedegenerations:cor} we see that
        $\Apolar{f}$ is cleavable.
    \end{example}

    \begin{example}\label{ref:stretched:example}
        \def\Ddegf{d}%
        \def\Dord{\nu}%
        Suppose that an finite local Gorenstein algebra $A$ has Hilbert function
        $H_A = (1, H(1),\dots, H(c), 1,\dots, 1)$ and socle degree $\Ddegf  \geq 2c$.
        By Example~\ref{ref:standardformofstretched:ex} we have an isomorphism
        $A  \simeq \Apolar{\DPel{x_1}{\Ddegf } + g}$, where $\Dx_1^{c} \hook g =
        0$ and $\deg g\leq c+1$. By
        Corollary~\ref{ref:stretchedhavedegenerations:cor} we
        obtain a upper ray (flat) family
        \begin{equation}\label{eq:exampledegeneration}
            \kk[t] \to \frac{\DS[t]}{(\Dx_1^{\Dord } - t\Dx_1^{\Dord -1} - q) + J},
        \end{equation}
        where $J = \Ann(\DPel{x_1}{\Ddegf } + g) \cap (\Dx_2, \ldots ,\Dx_n)$.
        Thus $A$ is cleavable.
        Take $\lambda\neq 0$.
        The fiber over $t = \lambda$ is
        supported at $(0, 0,\dots, 0)$ and at $(\lambda, 0,\dots, 0)$ and
        the ideal defining this fiber near $(0, 0,\dots, 0)$ is $I_0 = (\lambda\Dx_1^{\Dord -1} - q)
        + J$.
        From the proof of Corollary~\ref{ref:stretchedhavedegenerations:cor} it follows that
        $\Dx_1^{\Dord -1} \hook g = 0$. Then the ideal $I_0$ lies in the
        annihilator of $\lambda^{-1} \DPel{x_{1}}{{\Ddegf -1}} + g$.
        Since $\sigma\hook (\DPel{x_1}{{\Ddegf}} + g) = \sigma \hook(\lambda^{-1} \DPel{x_1}{{\Ddegf - 1}} +
        g)$ for every $\sigma\in (\Dx_2, \ldots ,\Dx_n)$, the
        apolar algebra of $\lambda^{-1} \DPel{x_1}{{\Ddegf -1}} + g$ has Hilbert function
        $(1, H_1,\dots, H_c, 1,\dots, 1)$ and socle degree $\Ddegf -1$. Then
        $\dimk \Apolar{\DPel{x_1}{{\Ddegf -1}} + g} = \dimk \Apolar{\lambda^{-1}\DPel{x_1}{{\Ddegf }} + g} -
        1$. Thus the fiber is a union of a point and
        $\Spec\Apolar{\lambda^{-1}\DPel{x_1}{{\Ddegf }} + g}$,
        i.e.~the family~\eqref{eq:exampledegeneration} peels one point off
        $\Spec A$.
    \end{example}

    \section{Ray families from Macaulay's inverse
    systems.}\label{ssec:rayfromMacaulay}
    While Proposition~\ref{ref:cleavinggeometric} is important for
    applications to smoothability of Gorenstein algebras, its assumptions are
    often not satisfied, especially when the socle degree is small.
    \newcommand{\Dspecvar}{x}%
    Below we present another source of
    flat ray families, using Macaulay's inverse systems.
    We follow~\cite[Chapter~5]{cjn13}.

    The (divided power) polynomial ring $\DP$ is defined in Definition~\ref{ref:contraction:propdef}.
    Let $\DP[\Dspecvar]$ be a
    (divided power) polynomial ring obtained by adjoining a new
    variable $\Dspecvar$ to $\DP$.
    Let $\Dx$ be an element dual to $x$, so that $P[x]$ and $\DPut{T}{T}:= \DS
    [\Dx]$ are dual.
    \begin{definition}\label{ref:raysum:def}
        Let $d\geq 2$ be an integer.
        For a nonzero polynomial $f\in \DP$ and $\partial\in \DmmS$ such that
        $\partial\hook f \neq 0$ the \emph{ray sum of $f$ with respect to
        $\partial$} is the polynomial
        \[
            \sum_{i\geq 0} \DPel{\Dspecvar}{di}\partial^i\hook f = f + \DPel{\Dspecvar}{d}\partial \hook f +
            \DPel{\Dspecvar}{2d}\partial^2\hook f +  \ldots \in
            \DP[\Dspecvar].
        \]
    \end{definition}

\newcommand{\annn}[2]{\Ann_{#1}(#2)}%
The following proposition shows that a ray sum induces an explicit ray
decomposition.

\begin{proposition}\label{ref:raysumideal:prop}
    Let $g$ be the $d$-th ray sum of $f$ with respect to $\partial$.
    The annihilator of $g$ in $\DT$ is
    given by the formula
    \begin{equation}\label{eq:anndecomposition}
        \annn{\DT}{g} = \annn{\DS}{f} + \pp{\sum_{i=1}^{d-1} \kk\Dx^i}
        \annn{\DS}{\partial\hook f} + (\Dx^{d} - \partial)\DT,
    \end{equation}
    where the sum denotes the sum of $\kk$-vector spaces. In particular,
    the ideal $\annn{\DT}{g} \subset \DT$ is generated by $\annn{\DS}{f}$,
    $\Dx\annn{\DS}{\partial\hook f}$ and $\Dx^d - \partial$.
    The formula~\eqref{eq:anndecomposition} induces a ray decomposition of
    $R = \Spec \Apolar{g}$ in $\mathbb{A}^n = \Spec \DT$, with $H = V(\Dx)$, $C = V(\DmmS)$ and
    $D = V(\annn{\DS}{f}\DT +
     \Dx\annn{\DS}{\partial\hook f}\DT)$.
\end{proposition}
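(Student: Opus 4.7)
Denote the right-hand side of~\eqref{eq:anndecomposition} by $J$. The plan is to prove $J = \annn{\DT}{g}$ by two separate inclusions, then deduce the ideal-generation statement and finally verify the ray-decomposition properties. The underlying observation that makes everything work is that $\partial\in\DmmS\subset\DS$ acts trivially on each $\DPel{\Dspecvar}{k}$ while $\Dx$ commutes with $\DS$ and acts on $\DP[\Dspecvar]$ only on the $\Dspecvar$-part.

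For the inclusion $J\subseteq\annn{\DT}{g}$ I would check the three generator types directly. First, for $\sigma\in\annn{\DS}{f}$ one has $\sigma\hook g = \sum_i \DPel{\Dspecvar}{di}(\partial^i\sigma\hook f) = 0$ since $\sigma$ commutes with $\partial$. Second, for $1\leq k\leq d-1$ and $\tau\in\annn{\DS}{\partial\hook f}$, the operator $\Dx^k$ kills the $i=0$ summand of $g$, hence $\Dx^k\tau\hook g = \sum_{i\ge 1}\DPel{\Dspecvar}{di-k}(\partial^{i-1}\tau\partial\hook f)=0$. Third, an index shift gives $\Dx^d\hook g = \partial\hook g = \sum_{j\ge 0}\DPel{\Dspecvar}{dj}(\partial^{j+1}\hook f)$, so $(\Dx^d-\partial)\hook g = 0$.

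For the reverse inclusion the essential point is that $\Dx^d-\partial$ is monic of degree $d$ in the variable $\Dx$. Hence every $\omega\in\DT$ admits a Euclidean decomposition $\omega = (\Dx^d-\partial)\omega' + \sum_{i=0}^{d-1}\Dx^i\sigma_i$ with $\sigma_i\in\DS$, and by the third point above I may assume $\omega' = 0$. I would then view $\omega\hook g\in\DP[\Dspecvar]$ and extract its $\DPel{\Dspecvar}{k}$-coefficient for each $k\ge 0$: writing $k = dj-i$ uniquely with $j\ge 0$ and $0\le i\le d-1$, exactly one pair $(i,j)$ contributes, giving coefficient $\sigma_i\partial^j\hook f$. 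Vanishing at $k=0,d,2d,\dots$ becomes $\sigma_0\partial^m\hook f=0$ for all $m\ge 0$, which (since $\sigma_0$ commutes with $\partial$) is equivalent to $\sigma_0\in\annn{\DS}{f}$; vanishing at $k=md+r$ with $1\le r\le d-1$ becomes $\sigma_{d-r}\partial^{m+1}\hook f=0$, equivalent to $\sigma_{d-r}\in\annn{\DS}{\partial\hook f}$. Thus $\omega\in J$.

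The ``in particular'' statement is immediate: $\Dx^i\annn{\DS}{\partial\hook f}\subset(\Dx\annn{\DS}{\partial\hook f})\cdot\DT$ for $i\ge 2$, so the three listed generator sets suffice. For the ray-decomposition claim, set $D = V(\annn{\DS}{f}\DT+\Dx\annn{\DS}{\partial\hook f}\DT)$: then $R\subset D$ follows from $I(D)\subset\annn{\DT}{g}$ (the inclusion just proved), and $C=V(\DmmS)\subset D$ follows since $\annn{\DS}{f}\subset\DmmS$ (because $\Apolar{f}$ is local) and $\Dx\annn{\DS}{\partial\hook f}\DT\subset\DmmS\DT$. The equality $R = D\cap V(\Dx^d-\partial)$ is precisely~\eqref{eq:anndecomposition} after absorbing the higher terms $\Dx^i\annn{\DS}{\partial\hook f}\DT$ for $i\ge 2$ into $I(D)$, and transversality of $H=V(\Dx)$ to $C\cong\Spec\kk[\Dx]$ at the origin is obvious. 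The main technical point is the coefficient bookkeeping in the reverse inclusion, but it is clean once one notices that each $k\ge 0$ corresponds to a unique pair $(i,j)$.
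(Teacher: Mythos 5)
Your proposal is correct and follows essentially the same route as the paper: both proofs verify the easy inclusion $J\subseteq\annn{\DT}{g}$ directly, then reduce an arbitrary element of $\annn{\DT}{g}$ modulo $\Dx^d-\partial$ to the form $\sum_{i=0}^{d-1}\sigma_i\Dx^i$ and read off $\sigma_0\in\annn{\DS}{f}$, $\sigma_i\in\annn{\DS}{\partial\hook f}$ from the $\DPel{\Dspecvar}{k}$-coefficients of $\omega\hook g$. The only presentational difference is that you track the coefficients for all $k$ and invoke commutativity to collapse them, while the paper observes (more tersely) that the coefficients for $0\le k\le d-1$ already force the desired memberships and hence the vanishing of the rest.
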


\begin{proof}
    It is straightforward to see that the right hand side of Equation
    \eqref{eq:anndecomposition} lies in $\annn{\DT}{g}$.
    Let us
    take any $\partial'\in \annn{\DT}{g}$. Reducing the powers of $\Dx$ using
    $\Dx^{d} - \partial$ we write
    \[
        \partial' = \sigma_0 + \sigma_1\Dx + \dots +
        \sigma_{d-1}\Dx^{d-1},
    \]
    where $\sigma_{\bullet}$ do not contain $\Dx$.
    Then
    \[
        0 = \partial'\hook g = \sigma_0 \hook f + \Dspecvar \sigma_{d-1}\partial \hook f +
        \DPel{\Dspecvar}{2} \sigma_{d-2}\partial\hook f + \dots +
        \DPel{\Dspecvar}{d-1}\sigma_1\partial \hook f.
    \]
    We see that $\sigma_0\in \annn{\DS}{f}$ and $\sigma_i \in
    \annn{\DS}{\partial\hook f}$ for $i \geq 1$, so the equality
    is proved.
    Since $\partial\hook f \neq 0$, we have $C \cup R\subset D$, so
    that indeed we obtain a ray decomposition.
\end{proof}

\begin{remark}\label{ref:Hilbfunccouting:rmk}
    It is not hard to compute the Hilbert function of the apolar algebra of
    a ray sum in some special cases. We mention one
    such case below.
    Let $f\in \DP$ be a polynomial satisfying $f_2 = f_1 = f_0 = 0$ and
    $\partial\in \DmmS^2$ be such that  $\partial \hook f = \ell$ is a linear
    form, so that $\partial^2\hook f = 0$. Let $A = \Apolar{f}$ and $B =
    \Apolar{f + \DPel{x}{2}\ell}$. The only different values of $H_A$ and $H_B$
    are $H_B(i) = H_A(i) + 1$ for $i=1, 2$. The $f_2 = f_1 = f_0 = 0$
    assumption is needed to ensure that the degrees of $\partial\hook f$ and
    $\partial\hook (f + \DPel{x}{2}\ell)$ are equal for all $\partial$ not
    annihilating $f$.
\end{remark}

We now prove that the ray families coming from ray sums are flat.
The proof is technical, so we stick to the algebraic language. We first produce
a suitable flatness criterion.
\begin{proposition}\label{ref:flatelementary:prop}
    Let $\kk = \kkbar$.
    Suppose that $S$ is a $\kk$-module (in applications of this proposition,
    $\DS$ will be the
    polynomial ring, as before) and $I \subseteq S[t]$ is a $\kk[t]$-submodule. Let $I_0 := I\cap S$.
    If for every $\lambda\in \kk$ we have
    \[(t-\lambda)\cap I \subseteq (t-\lambda)I + I_0[t],\] then $S[t]/I$ is a
    flat $\kk[t]$-module.
\end{proposition}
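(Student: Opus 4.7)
The plan is to exploit the fact that $\kk[t]$ is a principal ideal domain, so flatness of $S[t]/I$ over $\kk[t]$ is equivalent to torsion-freeness. Since $\kk = \kkbar$, every nonzero element of $\kk[t]$ factors into linear factors $(t-\lambda)$, so it suffices to show that multiplication by $t-\lambda$ is injective on $S[t]/I$ for every $\lambda \in \kk$.

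Fix $\lambda$ and suppose $f \in S[t]$ satisfies $(t-\lambda)f \in I$. I want to conclude $f \in I$. By the hypothesis, $(t-\lambda)f \in (t-\lambda)S[t] \cap I \subseteq (t-\lambda) I + I_0[t]$, so we may write
\[
    (t-\lambda) f = (t-\lambda) g + h, \qquad g \in I,\ h \in I_0[t].
\]
Setting $f' := f - g$, we obtain $(t-\lambda) f' = h$. Since $g \in I$, the goal reduces to showing $f' \in I$, and for this it is enough to prove $f' \in I_0[t]$.

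The key small lemma is that $I_0[t]$ is closed under division by $(t-\lambda)$ inside $S[t]$: if $h \in I_0[t]$ and $h = (t-\lambda) f'$ with $f' \in S[t]$, then $f' \in I_0[t]$. I would verify this by direct coefficient comparison. Write $f' = \sum_{i=0}^m f'_i t^i$ and $h = \sum_{i=0}^{m+1} h_i t^i$; the identity $(t-\lambda) f' = h$ yields $h_{m+1} = f'_m$ and $h_i = f'_{i-1} - \lambda f'_i$ for $1 \le i \le m$. Since each $h_i \in I_0$, a downward induction on $i$ gives $f'_m = h_{m+1} \in I_0$ and $f'_{i-1} = h_i + \lambda f'_i \in I_0$. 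Hence all coefficients of $f'$ lie in $I_0$, so $f' \in I_0[t] \subseteq I$.

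The argument is essentially mechanical once one identifies the right reduction: using $\kk = \kkbar$ to linearize the divisors and replacing "flat" with "torsion-free" is the main insight, and the rest is a short computation with coefficients. The only place that could be viewed as subtle is the closure of $I_0[t]$ under division by $(t-\lambda)$, which works because $I_0$ is merely a $\kk$-submodule (no ring structure on $S$ is assumed), and this is exactly what the coefficient recursion needs. I do not foresee any real obstacle.
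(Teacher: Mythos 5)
Your proof is correct and follows the same route as the paper's: reduce flatness to torsion-freeness over the PID $\kk[t]$, use $\kk=\kkbar$ to reduce to nonzerodivisibility of $t-\lambda$, then apply the hypothesis to write $(t-\lambda)(f-g)\in I_0[t]$ and conclude. The only difference is cosmetic: where the paper invokes that $S[t]/I_0[t]\simeq (S/I_0)[t]$ is a free (hence torsion-free) $\kk[t]$-module to deduce $f-g\in I_0[t]$, you unwind that same fact by hand via the coefficient recursion; this is fine, and both arguments rely on $I_0$ being a $\kk$-submodule.
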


\begin{proof}
The ring $\kk[t]$ is a principal ideal domain, thus a $\kk[t]$-module is flat if and only
if it is torsion-free, see \cite[Corollary~6.3]{Eisenbud}.
Since $\kk = \kkbar$, every polynomial in $\kk[t]$ decomposes into linear
factors. To prove that $M = S[t]/I$ is
torsion-free it is enough to show that $t-\lambda$ are nonzerodivisors on
$M$,~i.e.~that $(t-\lambda)x\in I$ implies $x\in I$ for all $x\in S[t]$,
$\lambda\in \kk$.

    Fix $\lambda\in \kk$ and suppose that $x\in
    S[t]$ is such that $(t-\lambda)x \in I$. Then by assumption $(t-\lambda)x\in
    (t-\lambda)I + I_0[t]$, so that $(t-\lambda)(x-i) \in I_0[t]$ for some $i\in I$.
    Since $S[t]/I_0[t]   \simeq S/I_0[t]$ is a free $\kk[t]$-module, we have
    $x-i\in I_0[t]\subseteq I$ and so $x\in I$.
\end{proof}

\begin{remark}\label{ref:flatnessremovet:remark}
    Let $\DS$ be a ring and $I \subset \DS[t]$ be an ideal, generated by
    $i_1, \ldots ,i_r$. To check the inclusion
    which is the assumption
    of Proposition~\ref{ref:flatelementary:prop}, it is enough to
    check that $s\in (t-\lambda)\cap I$ implies $s\in (t-\lambda)I + I_0[t]$
    for all $s
    = s_1 i_1 +  \ldots  + s_r i_r$, \emph{where $s_i\in S$}.

    Indeed, take an arbitrary element $s\in I$ and write $s = t_1 i_1 +  \ldots  + t_r i_r$, where $t_1, \ldots ,t_r\in
    S[t]$. Dividing $t_i$ by $t-\lambda$ we obtain $s = s_1 i_1 +  \ldots  +
    s_r i_r + (t-\lambda)i$, where $i\in I$ and $s_i\in S$. Denote $s' = s_1
    i_1 +  \ldots  + s_r i_r$, then
    $s\in (t-\lambda)\cap I$ if and only if $s'\in (t-\lambda)\cap I$ and $s\in (t-\lambda)I +
    I_0[t]$ if and only if $s'\in (t-\lambda)I + I_0[t]$.
\end{remark}

\begin{lemma}\label{ref:decompositionhomog:lem}
    Let $B$ be a ring. Consider
    a ring $R = B[\Dx]$ graded by the degree of $\Dx$.
    Let $d$ be a natural number and $J\subseteq R$ be a homogeneous ideal
    generated in degrees less or equal to $d$.
    Let $\partial\in B[\Dx]$ be a (non necessarily homogeneous) element of degree strictly
    less than $d$ and such that for every $b\in B$ satisfying $b\Dx^{d}\in J$,
    we have $b\partial\in J$.
    Then for every $r\in R$ the condition
    \[r(\Dx^d - \partial)\in J\ \ \mbox{implies}\ \
        r\Dx^{d}
    \in J \ \ \mbox{and}\ \ r\partial\in J.\]
\end{lemma}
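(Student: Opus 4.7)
The plan is to proceed by induction on $m := \deg_{\Dx} r$, stripping off the top $\Dx$-degree coefficient of $r$ and using homogeneity of $J$ together with the key hypothesis on $\partial$.

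First I would set up notation: write $r = r_0 + r_1\Dx + \cdots + r_m\Dx^m$ with $r_i\in B$ and $\partial = \partial_0 + \partial_1\Dx + \cdots + \partial_{d-1}\Dx^{d-1}$ with $\partial_j\in B$. Note that, since $B$ is concentrated in $\Dx$-degree $0$, a homogeneous element of $R = B[\Dx]$ of degree $e$ is exactly of the form $c\Dx^e$ for some $c\in B$. Define the $B$-ideals $J^B_e := \{c\in B : c\Dx^e\in J\}$; multiplication by $\Dx$ gives $J^B_e\subseteq J^B_{e+1}$. The hypothesis that $J$ is generated in degrees $\leq d$ translates into the equality $J^B_e = J^B_d$ for every $e\geq d$ (every homogeneous element in $J$ of degree $e\geq d$ comes from a $B[\Dx]$-multiple of some generator of degree $\leq d$, and since generators are of the form $c\Dx^{e'}$ with $e'\leq d$, this forces $c\in J^B_{e'}\subseteq J^B_d$).

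Now the base case $m=0$: here $r = r_0\in B$ and $r(\Dx^d-\partial)\in J$ has degree-$d$ homogeneous piece $r_0\Dx^d$, which is therefore in $J$; then by the hypothesis on $\partial$ we get $r_0\partial\in J$, giving both $r\Dx^d\in J$ and $r\partial\in J$. For the inductive step with $m\geq 1$, the top-degree homogeneous piece of $r(\Dx^d - \partial)$ is $r_m\Dx^{m+d}$, because all other contributions have $\Dx$-degree either at most $(m-1)+d < m+d$ (from $r_i\Dx^i\cdot\Dx^d$ with $i<m$) or at most $m + (d-1) < m+d$ (from $r_i\Dx^i\cdot\partial$). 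Since $J$ is homogeneous, $r_m\Dx^{m+d}\in J$, so $r_m\in J^B_{m+d} = J^B_d$, whence $r_m\Dx^d\in J$ and therefore also $r_m\partial\in J$ by the hypothesis.

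To close the induction, I would consider $r' := r - r_m\Dx^m$, which has $\Dx$-degree $<m$. From
\[
r'(\Dx^d - \partial) = r(\Dx^d-\partial) - r_m\Dx^{m+d} + \Dx^m(r_m\partial),
\]
each term on the right lies in $J$ (respectively by assumption, by the previous paragraph, and because $r_m\partial\in J$), so $r'(\Dx^d-\partial)\in J$. The inductive hypothesis applied to $r'$ yields $r'\Dx^d\in J$ and $r'\partial\in J$, and then $r\Dx^d = r'\Dx^d + r_m\Dx^{m+d}\in J$ and $r\partial = r'\partial + r_m\Dx^m\partial\in J$, completing the induction. The only conceptually delicate point — which I expect to be the main (though still mild) obstacle — is the bookkeeping needed to verify $J^B_e = J^B_d$ for $e\geq d$ from the ``generated in degrees $\leq d$'' assumption; once this is in place, the rest is a routine top-degree induction.
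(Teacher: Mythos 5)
Your proposal is correct and follows essentially the same route as the paper's proof: induct on the $\Dx$-degree of $r$, extract the top homogeneous piece $r_m\Dx^{m+d}$ of $r(\Dx^d-\partial)$, use homogeneity of $J$ together with generation in degrees $\leq d$ to descend to $r_m\Dx^d\in J$, invoke the hypothesis on $\partial$ to get $r_m\partial\in J$, and pass to $r' = r - r_m\Dx^m$. The only difference is presentational (your explicit $J^B_e$ notation and verification that $J^B_e = J^B_d$ for $e\geq d$, which the paper compresses into a single clause); the underlying argument is identical.
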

\begin{proof}
    We apply induction with respect to degree of $r$, the base
    case being $r = 0$.
    Write
    \[r = \sum_{i=0}^{m} r_i \Dx^i,\quad\mbox{where}\quad r_i\in B.\]
    The leading form of $r(\Dx^d - \partial)$ is $r_m\Dx^{m+d}$ and it lies in $J$. Since $J$
    is homogeneous and generated in degree at most $d$, we have $r_m \Dx^d\in J$. Then $r_m
    \partial\in J$ by assumption, so that $\hat{r} := r - r_m\Dx^{m}$ satisfies
    $\hat{r}(\Dx^{d} - \partial)\in J$. By induction we have
    $\hat{r}\Dx^d,\,\hat{r}\partial \in J$, then also $r\Dx^d,\,r\partial\in J$.
\end{proof}

\begin{proposition}[flatness of ray families]\label{ref:raysumflatness:prop}
    \def\DTpoly{\DT}%
    \def\DJpoly{J}%
    Let $g$ be the $d$-th ray sum with respect to $f$ and $\partial$. Then the
    corresponding upper and lower ray families are flat. Recall that these
    families are explicitly given as
    \begin{equation}\label{eq:upperrayfamily}
        \kk[t] \to \frac{\DTpoly[t]}{\DJpoly[t] + (\Dx^{d} - t\Dx^{d-1} -
    \partial)\DTpoly[t]}\quad
        \mbox{ (upper ray deformation),}
    \end{equation}
    \begin{equation}\label{eq:lowerrayfamily}
        \kk[t] \to \frac{\DTpoly[t]}{\DJpoly[t] + (\Dx^{d} -t\Dx - \partial)\DTpoly[t]}\quad\quad
        \mbox{ (lower ray deformation),}
    \end{equation}
    where $J$ is
    defined in Proposition~\ref{ref:raysumideal:prop}.
\end{proposition}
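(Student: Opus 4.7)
The plan is to apply the flatness criterion Proposition~\ref{ref:flatelementary:prop}: for the upper family I must check that for every $\lambda\in\kk$ one has $(t-\lambda)\cap I \subseteq (t-\lambda)I + I_0[t]$, where $I$ denotes the defining ideal and $I_0 = I\cap \DT = \Ann_{\DT}(g)$ by Proposition~\ref{ref:raysumideal:prop}. Since $I$ is generated by $J$ (no $t$) together with the deformed equation $\Dx^d - t\Dx^{d-1} - \partial$, Remark~\ref{ref:flatnessremovet:remark} reduces the check to elements
\[ s = j + s'\bigl(\Dx^d - t\Dx^{d-1} - \partial\bigr), \qquad j\in J,\ s'\in \DT. \]
Imposing $s\in(t-\lambda)$ and setting $t=\lambda$ yields $s'(\Dx^d - \tilde\partial) = -j\in J$ in $\DT$, where $\tilde\partial := \lambda\Dx^{d-1} + \partial$ has $\Dx$-degree at most $d-1<d$ (here we use the hypothesis $d\geq 2$).

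The next step is to invoke Lemma~\ref{ref:decompositionhomog:lem} with $B=\DS$ and this $\tilde\partial$. The ideal $J = \Ann_{\DS}(f)\DT + \Dx\Ann_{\DS}(\partial\hook f)\DT$ is $\Dx$-homogeneous with generators in $\Dx$-degrees $0$ and $1$, so the grading hypothesis is satisfied. The remaining hypothesis requires that $b\Dx^d\in J$ force $b\tilde\partial\in J$ for every $b\in\DS$. Using the inclusion $\Ann_{\DS}(f)\subseteq\Ann_{\DS}(\partial\hook f)$ (valid because if $a\hook f=0$ then $a(\partial\hook f)=\partial\hook(a\hook f)=0$), the $\Dx$-degree-$d$ graded piece of $J$ collapses to $\Dx^d\Ann_{\DS}(\partial\hook f)$; hence $b\Dx^d\in J$ is equivalent to $b\in\Ann_{\DS}(\partial\hook f)$. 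For such $b$ one has $\lambda b\Dx^{d-1}\in\Dx\Ann_{\DS}(\partial\hook f)\DT\subseteq J$, and $b\partial\in\Ann_{\DS}(f)\subseteq J$ (since $(b\partial)\hook f = b\hook(\partial\hook f)=0$), giving $b\tilde\partial\in J$ as required.

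The lemma therefore yields $s'\Dx^d\in J$ and $s'\tilde\partial\in J$. Writing $s'=\sum_i s'_i\Dx^i$ and examining $s'\Dx^d\in J$ $\Dx$-graded piece by piece, every $s'_i$ lies in $\Ann_{\DS}(\partial\hook f)$; consequently $s'\Dx^{d-1}\in \Dx\Ann_{\DS}(\partial\hook f)\DT\subseteq J$, again using $d-1\geq 1$. Substituting back and using the already-established identity $j + s'(\Dx^d - \tilde\partial)=0$ in $\DT$,
\[ s = \bigl[j + s'(\Dx^d - \tilde\partial)\bigr] - (t-\lambda)\,s'\Dx^{d-1} = -(t-\lambda)\,s'\Dx^{d-1} \in (t-\lambda)\,J[t] \subseteq (t-\lambda)I, \]
which verifies the criterion. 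The lower family is handled by the identical argument with $\tilde\partial = \lambda\Dx + \partial$ (still of $\Dx$-degree $1<d$) and with $s'\Dx$ replacing $s'\Dx^{d-1}$. The main technical obstacle is the verification of the second hypothesis of Lemma~\ref{ref:decompositionhomog:lem}; once it is in place, everything else is formal manipulation of the criterion.
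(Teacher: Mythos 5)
Your argument is correct and follows the paper's proof essentially step for step: invoke Proposition~\ref{ref:flatelementary:prop} and Remark~\ref{ref:flatnessremovet:remark} to reduce to a single element $j + s'(\Dx^d - t\Dx^{d-1}-\partial)$ with coefficients in $\DT$, evaluate at $t=\lambda$, and apply Lemma~\ref{ref:decompositionhomog:lem} with the modified element $\lambda\Dx^{d-1}+\partial$; your verification of the lemma's second hypothesis via the collapse $\Ann_{\DS}(f)\subseteq\Ann_{\DS}(\partial\hook f)$ is exactly the paper's observation. The only substantive omission is the opening reduction: Proposition~\ref{ref:flatelementary:prop} assumes $\kk=\kkbar$, so you must first note (as the paper does) that flatness over $\kk[t]$ may be checked after the faithfully flat base change to $\kkbar[t]$. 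Cosmetically, you treat the upper family first while the paper treats the lower and leaves the upper to the reader, and your final rewriting $s = -(t-\lambda)s'\Dx^{d-1}\in (t-\lambda)J$ lands directly in $(t-\lambda)\mathfrak{I}$ rather than in $\mathfrak{I}_0[t]$ as the paper phrases it, but both satisfy the criterion equally well.
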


\begin{proof}
    \def\DTpoly{\DT}%
    It is enough to prove flatness after tensoring with $\kkbar$, so we may
    assume $\kk = \kkbar$.
    We start by proving the flatness of
        Deformation~\eqref{eq:lowerrayfamily}.
    We use Proposition~\ref{ref:flatelementary:prop}.
%    To simplify
%    notation let $J := J_{poly}$.
    Denote by $\DPut{tmpII}{\mathfrak{I}} \subset \DTpoly[t]$ the ideal defining the deformation and
    suppose that some $z\in \DtmpII$ lies in $(t-\lambda)$ for some
    $\lambda\in \kk$.
    Write $z$ as $i + i_2\DPut{spec}{\pp{\Dx^{d} - t\Dx -
\partial}}$, where $i\in J[t]$, $i_2\in \DTpoly[t]$, and note that by
Remark~\ref{ref:flatnessremovet:remark} we may assume $i\in J$, $i_2\in
\DTpoly$. Since $z\in (t-\lambda)$, we have that
$i + i_2\DPut{spec}{(\Dx^d - \lambda\Dx - \partial)} = 0$, so
\[
    i_2\Dspec = -i\in J.
\]
    By Proposition~\ref{ref:raysumideal:prop} the ideal $J$ is homogeneous with
respect to the grading by $\Dx$. More precisely it is equal to $J_0 + J_1\Dx$,
where $J_0 = \annn{\DS}{f}\DT,\ J_1 = \annn{\DS}{\partial\hook f}\DT$ are generated by elements not containing $\Dx$, so that $J$
is generated by elements of $\alpha$-degree at most one. We now check the
assumptions of Lemma~\ref{ref:decompositionhomog:lem}. Note that $\partial J
\subseteq J_0$ by definition of $J$. If $r\in \DTpoly$ is
such that $r\Dx^d\in J$, then $r\in J_1$, so that $r(\lambda\Dx + \partial)\in
\Dx J_1 + J_0 \subseteq J$. Therefore the assumptions are
satisfied and the Lemma shows that
    $i_2\Dx^d\in J$. Then $i_2\Dx\in J$, thus
    $i_2(\Dx^d - t\Dx)\in J[t] \subseteq(\DtmpII \cap \DTpoly)[t]$. Since $i_2 \partial\in
    \DtmpII \cap \DTpoly$ by definition, this
    implies that $i + i_2(\Dx^d - t\Dx - \partial)\in J[t] \subseteq (\DtmpII\cap
    \DTpoly)[t]$. Now the flatness follows from
    Proposition~\ref{ref:flatelementary:prop}.

    The same proof works equally well for upper ray deformation: one should just
    replace $\Dx$  by $\Dx^{d-1}$ in appropriate places of the proof. For this reason we
    leave the case of Deformation~\eqref{eq:upperrayfamily} to the reader.
\end{proof}

\begin{proposition}\label{ref:fibersofray:prop}
    Let us keep the notation of Proposition \ref{ref:raysumflatness:prop} and
    additionally assume $\kk = \kkbar$.
    Then the fibers of Families~\eqref{eq:upperrayfamily}
    and~\eqref{eq:lowerrayfamily} over $t-\lambda$ are
    reducible for every $\lambda\in \kk^*$.

    Suppose moreover that $\partial^2\hook f = 0$ and the characteristic of $\kk$ does
    not divide $d-1$. Then the fiber of the Family~\eqref{eq:lowerrayfamily} over $t-\lambda$ is
    isomorphic to \[\Spec \Apolar{f} \sqcup \left(\Spec \Apolar{\partial
    f}\right)^{\sqcup d-1}.\]
\end{proposition}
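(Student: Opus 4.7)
The plan is to establish reducibility by computing the support in the $\Dx$-direction and then to derive the decomposition of the lower fiber via the Chinese Remainder Theorem. Since $R\subset\AA^n$ is supported at the origin, the ideal $\DmmS$ acts nilpotently on any fiber of either ray family, so the geometric points of the fiber over $t=\lambda\in\kk^*$ correspond to zeros of the ray equation modulo $\DmmS$. For the lower family this is $\Dx(\Dx^{d-1}-\lambda)=0$, which over $\kk=\kkbar$ has the distinct solutions $\Dx=0$ and any $(d-1)$-th root of $\lambda$; for the upper family it is $\Dx^{d-1}(\Dx-\lambda)=0$, with distinct solutions $\Dx=0$ and $\Dx=\lambda$. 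Each fiber therefore has at least two points in its support and is reducible.

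For the structural claim on the lower family, set $A_\lambda := \DT/(J+(\Dx^d-\lambda\Dx-\partial))$, $B_0 := \Apolar{f}$ and $B_1 := \Apolar{\partial\hook f}$. The hypothesis $\partial^2\hook f=0$ is equivalent to $\partial\in\annn{\DS}{\partial\hook f}$, whence $\Dx\partial\in \Dx\,\annn{\DS}{\partial\hook f}\DT\subseteq J$, and therefore $\Dx\partial=0$ in $A_\lambda$. Multiplying the ray relation by $\Dx$ gives
\[\Dx^2(\Dx^{d-1}-\lambda)=\Dx\cdot(\Dx^d-\lambda\Dx-\partial)=0 \text{ in } A_\lambda.\]
Since $\chark$ does not divide $d-1$ and $\kk=\kkbar$, the factor $\Dx^{d-1}-\lambda$ splits as $\prod_{i=1}^{d-1}(\Dx-\zeta_i)$ with pairwise distinct nonzero $\zeta_i$. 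Applying the Chinese Remainder Theorem to the natural map $\kk[\Dx]\to A_\lambda$ through the pairwise coprime factorization $\Dx^2\prod_{i}(\Dx-\zeta_i)$ produces orthogonal idempotents $e_0,e_1,\ldots,e_{d-1}\in A_\lambda$ summing to $1$, whence a product decomposition $A_\lambda\simeq\prod_{i=0}^{d-1}e_iA_\lambda$.

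It remains to identify each factor. On $e_iA_\lambda$ for $i\geq 1$ the element $\Dx$ acts as the scalar $\zeta_i$, so the ray relation becomes $\partial=\zeta_i^d-\lambda\zeta_i=0$, which is automatic in $B_1$; substituting $\Dx=\zeta_i$ into $J$ and using $\zeta_i\neq 0$ together with $\annn{\DS}{f}\subseteq\annn{\DS}{\partial\hook f}$ exhibits $e_iA_\lambda$ as a quotient of $\DS/\annn{\DS}{\partial\hook f}=B_1$. On $e_0 A_\lambda$, $\Dx^2=0$ gives $\Dx^d=0$ (using $d\geq 2$), and the ray relation forces $\Dx=-\lambda^{-1}\partial$; substituting and using that $\sigma\in\annn{\DS}{\partial\hook f}$ implies $\sigma\partial\in\annn{\DS}{f}$, together with $\partial^2\in\annn{\DS}{f}$, exhibits $e_0A_\lambda$ as a quotient of $B_0$. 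Combining gives a surjection $B_0\times B_1^{d-1}\twoheadrightarrow A_\lambda$, and the $\DS$-module presentation $A_\lambda=B_0\oplus\bigoplus_{k=1}^{d-1}\Dx^k B_1$ (obtained by reducing $\Dx^n$ for $n\geq d$ via the ray relation, together with the earlier observation that $\Dx\cdot\annn{\DS}{\partial\hook f}\subseteq J$) shows both sides have equal $\kk$-dimension, so the surjection is an isomorphism. The main technical point is precisely this dimension count, which certifies that no CRT-produced idempotent collapses in $A_\lambda$.
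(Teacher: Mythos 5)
Your proof is correct and takes essentially the same approach as the paper's. Both derive the relation $\Dx^2(\Dx^{d-1}-\lambda)=0$ in the fiber and split it over the $d$ distinct roots of $\Dx^2(\Dx^{d-1}-\lambda)$ --- the paper by localizing at each support point (the origin and the points $\alpha=\omega$ with $\omega^{d-1}=\lambda$) and verifying ideal equalities directly, you by packaging the same decomposition as a CRT splitting and finishing with a dimension count --- so the underlying decomposition and the identification of the factors with $\Apolar{f}$ and $\Apolar{\partial\hook f}$ coincide.
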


\begin{proof}
    \def\Dan#1{\annn{\DS}{#1}}%
    For both families the support of the fiber over $t - \lambda$ contains the
    origin. The support of the fiber of Family~\eqref{eq:upperrayfamily} contains
    furthermore a point with $\alpha = \lambda$ and other coordinates equal to
    zero. The support of the fiber of Family~\eqref{eq:lowerrayfamily} contains a
    point with $\alpha = \omega$, where $\omega^{d-1} = \lambda$.

    Now let us concentrate on Family~\eqref{eq:lowerrayfamily} and on the case
    $\partial^2\hook f = 0$.
    The support of the fiber over $t-\lambda$ is $(0,\dots,0,0)$ and
    $(0, \dots, 0, \omega)$, where $\omega^{d-1} = \lambda$ are $(d-1)$-th roots of
    $\lambda$, which are pairwise different because of the characteristic assumption.
    We will analyse the support point by point.
    By assumption $\partial\in \Dan{\partial\hook f}$, so that $\alpha\cdot \partial\in J$, thus $\alpha^{d+1} -
    \lambda\cdot \alpha^2$ is in the ideal $I \subset \DT$ of the fiber over $t = \lambda$.

    \def\DTpoly{\DT}%
    \def\Ilocal{I_{(0, \ldots ,0)}}%
    Near $(0,0,\dots,0)$ the element $\alpha^{d-1} - \lambda$ is invertible, so
    $\alpha^2$ is in the localisation $\Ilocal$, thus $\alpha +
    \lambda^{-1}\partial$ lies in $\Ilocal$.
    Now we check that $\Ilocal$ is generated by $\Dan{f} +
    (\alpha +
    \lambda^{-1}\partial)\DTpoly$. Explicitly, one should check that
    \[
        \pp{\Dan{f} + (\alpha + \lambda^{-1}\partial)\DTpoly}_{(0, \ldots ,0)}
        = \pp{\Dan{f} + (\Dx^{d}
    -\lambda\Dx - \partial)\DTpoly}_{(0,  \ldots ,0)}.
    \]
    Then the stalk of the fiber at $(0,
    \ldots , 0)$ is isomorphic to $\Apolar{f}$.

    Near $(0, 0,\dots, 0, \omega)$ the elements $\alpha$ and
    $\frac{\alpha^{k+1} - \lambda\cdot \alpha^2}{\alpha - \omega}$
    are invertible, so
    $\Dan{\partial\hook f}$ and $\alpha - \omega$ are in the localisation
    $I_{(0, \ldots 0, \omega)}$. This, along with
    the other inclusion, proves
    that this localisation  is generated by $\Dan{\partial\hook f}$ and $\alpha -
    \omega$ and thus the stalk of the fiber is isomorphic to $\Apolar{\partial f}$.
\end{proof}

\section{Tangent preserving ray families}\label{subsec:tangentpreserving}

A ray family gives a morphism from $\mathbb{A}^1 = \Spec \kk[t]$ to an appropriate Hilbert
scheme $\Hilbr{\mathbb{A}^n}$. In this section we prove that in some cases the
dimension of the tangent space to $\Hilbr{\mathbb{A}^n}$ is constant along
the image.  We use it to prove that certain points of $\Hilbr{\AA^n}$ are
smooth without the need for computer aided computations; such a result was only
obtained in~\cite{Shafarevich_Deformations_of_1de} and~\cite{cjn13}.
The complexity of calculating the tangent space is an obstacle to a
direct analysis of $\Hilbr{\mathbb{A}^n}$ for $r\gg 0$, see \cite{Huibregtse_elementary}.
The most important results here are Theorem \ref{ref:nonobstructedconds:thm}
together with Corollary \ref{ref:CIarenonobstructed:cor}; see examples below
Corollary \ref{ref:CIarenonobstructed:cor} for applications. This section
first appeared in~\cite{cjn13}.

Recall from Example~\ref{ex:tangentforGorenstein} that for a $\kk$-point
$[R]\in \Hilbr{\Spec \DS}$ corresponding to a
Gorenstein scheme $R = \Spec \DS/I$ the dimension of the
tangent space $\Dtangspace{[R]}$ is $\dimk \DS/I^2 - \dimk \DS/I$.
\begin{definition}\label{ref:unobstructed:def}
    A finite smoothable subscheme $R \subset \mathbb{A}^n$ of degree $r$ is \emph{unobstructed} if
    the corresponding point $[R]\in \Hilbsmr{\AA^{n}}$ is smooth, that is, if
    $\dimk\Dtangspace{[R]} = rn$.
\end{definition}
Note that being unobstructed does not depend on the embedding of $R$, by
Theorem~\ref{thm_equivalence_of_abstract_and_embedded_smoothings} and
Proposition~\ref{ref:invarianceoftangentspace:prop}. Thus we will freely speak
about unobstructed finite schemes and finite algebras. By abuse of language,
we will also say that an $f\in \DP$ is \emph{unobstructed} if $\Apolar{f}$ is.
We prefer the word ``unobstructed'' to ``smooth'' as the latter is
ambiguous: it might refer to smoothness of $R$ as a finite scheme.
We will use unobstructed schemes to prove smoothability, employing the
following observation.
\begin{lemma}\label{ref:unobstructed:lem}
    Let $\famil \subset \Hilbr{\AA^n}$ be an irreducible subset containing an
    unobstructed point. Then $\famil \subset \Hilbsmr{\AA^n}$.
\end{lemma}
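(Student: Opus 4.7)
The plan is to show that the unobstructed point $[R]\in \famil$ is a smooth point of the entire Hilbert scheme $\Hilbr{\AA^n}$, and then conclude by the general fact that at a smooth point a scheme is locally irreducible.

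First I would unpack the hypothesis. Since $[R]$ is unobstructed in the sense of Definition~\ref{ref:unobstructed:def}, it is smoothable, so $[R]\in \Hilbsmr{\AA^n}$, and in addition $\dimk \Dtangspace{\Hilbr{\AA^n}, [R]} = rn$. By Proposition~\ref{ref:smoothablecomponentdescription:prop} the smoothable component $\Hilbsmr{\AA^n}$ has dimension exactly $rn$, so the local dimension of $\Hilbr{\AA^n}$ at $[R]$ satisfies
\[
    rn = \dim_{[R]} \Hilbsmr{\AA^n} \leq \dim_{[R]} \Hilbr{\AA^n} \leq \dimk \Dtangspace{\Hilbr{\AA^n}, [R]} = rn,
\]
so all three quantities agree. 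Equality between the local dimension and the dimension of the Zariski tangent space means that $[R]$ is a regular (hence smooth, since we are over $\kk$) point of $\Hilbr{\AA^n}$.

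Next I would invoke the elementary fact that a Noetherian scheme is locally irreducible at every regular point: a regular local ring is a domain, so the local ring $\OO_{\Hilbr{\AA^n}, [R]}$ is a domain, and therefore $[R]$ lies on a \emph{unique} irreducible component of $\Hilbr{\AA^n}$. Since $[R]$ belongs to the irreducible closed subset $\Hilbsmr{\AA^n}$, this unique component through $[R]$ must be $\Hilbsmr{\AA^n}$ itself.

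Finally, since $\famil$ is irreducible and contains $[R]$, its closure $\overline{\famil}$ is an irreducible closed subset of $\Hilbr{\AA^n}$ passing through the point $[R]$. Any irreducible closed subset through $[R]$ is contained in some irreducible component through $[R]$, and by the previous step the only such component is $\Hilbsmr{\AA^n}$. Hence $\famil \subset \overline{\famil} \subset \Hilbsmr{\AA^n}$, as desired. There is no serious obstacle here; the whole content of the argument is the dimension comparison that forces smoothness of the ambient Hilbert scheme at the unobstructed point.
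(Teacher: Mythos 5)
Your proof is correct and follows essentially the same route as the paper: the entire content in both cases is that the tangent-space dimension $rn$ at the unobstructed point, compared with $\dim \Hilbsmr{\AA^n} = rn$, forces that point to be a smooth point of all of $\Hilbr{\AA^n}$, so that the smoothable component is the only component through it. The paper merely packages the conclusion differently — it notes that the locus $U$ of such smooth points of $\Hilbsmr{\AA^n}$ is open in $\Hilbr{\AA^n}$, so $U\cap\famil$ is nonempty open, hence dense, in the irreducible $\famil$, giving $\famil\subset\overline{U}\subset\Hilbsmr{\AA^n}$ — whereas you invoke that a regular local ring is a domain; the substance is the same.
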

\begin{proof}
    Let $U \subset \Hilbsmr{\AA^n}$ be the smooth locus of the smoothable
    component. Then $U$ is open in
    $\Hilbr{\AA^n}$. An unobstructed point lies in $U$, hence
    $U \cap \famil \subset \famil$ is open and
    non-empty, thus dense. Therefore, $\famil \subset \overline{U} \subset
    \Hilbsmr{\AA^n}$.
\end{proof}

The key idea of the section is enclosed in the following
technical Lemma~\ref{ref:tangentflatcondition:lem}, which gives necessary and
sufficient conditions for flatness of a thickening of a ray family. Regretfully, it lacks
geometric motivation and in fact the geometry behind it is unclear, apart from
the special case of complete intersections, which we discuss in
Corollary~\ref{ref:CIarenonobstructed:cor}.

\begin{lemma}\label{ref:tangentflatcondition:lem}
    Let $\kk = \kkbar$ and $d\geq 2$. Let $g$ be the $d$-th ray sum of $f\in \DP$ with respect to
    $\partial\in \DS$ such that $\partial^2 \hook f = 0$.
    Denote $I := \annn{\DS}{f}$ and $J := \annn{\DS}{\partial\hook f}$.
%    Take $\DPut{T}{T} = S[[\Dx]]$ to be the ring dual to $\DP[x]$ and
    Let
    \[\DPut{II}{\mathfrak{I}} := \pp{I + J\Dx +
        \DPut{spec}{(\Dx^{d} - t\Dx - \partial)}}\cdot \DT[t]\] be the ideal in $\DT[t]$ defining the
    associated lower ray family, see Proposition \ref{ref:raysumflatness:prop}.
    Then the morphism $\kk[t] \to \DT[t]/\DII^2$ is flat if and only if $(I^2 :
    \partial) \cap I \cap J^2 \subseteq I\cdot J$.
\end{lemma}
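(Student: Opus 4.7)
The plan is to apply the flatness criterion of Proposition~\ref{ref:flatelementary:prop} to the $\kk[t]$-algebra $\DT[t]/\DII^2$, which reduces flatness to checking, for every $\lambda \in \kk$, the containment
\[
(t-\lambda) \cap \DII^2 \subseteq (t-\lambda)\DII^2 + (\DII^2 \cap \DT)[t].
\]
By Remark~\ref{ref:flatnessremovet:remark} it suffices to test this on $\DT$-linear combinations of the six families generating $\DII^2$: elements of $I^2$, $\alpha IJ$, $\alpha^2 J^2$, $I\cdot \Dspec$, $\alpha J\cdot \Dspec$, and $\Dspec^2$, where $\Dspec = \alpha^d - t\alpha - \partial$. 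The assumption $\partial^2\hook f = 0$ gives two facts I will use repeatedly: $\partial \in J$ (since $\partial\hook(\partial\hook f)=0$) and $\partial^2 \in I$; moreover $I \subseteq J$ always.

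First I will compute $\DII^2 \cap \DT$ explicitly. Iterating the identity $\alpha\Dspec = \alpha^{d+1} - t\alpha^2 - \alpha\partial$ to eliminate $t$ from $\DT$-coefficient combinations, and absorbing using $\partial \in J$ and $\partial^2 \in I$, one sees that $\DII^2 \cap \DT$ equals $I^2 + \alpha IJ + \alpha^2 J^2$. With this in hand, any $y \in \DII$ satisfying $(t-\lambda)y \in \DII^2$ can be written $y = a + \alpha b + c\Dspec$ with $a \in I\DT[t]$, $b \in J\DT[t]$, $c\in \DT[t]$, and the condition $(t-\lambda)y \in \DII^2$ can be analyzed by substituting $\alpha^d = t\alpha + \partial + \Dspec$ to push all $t$-dependence into the $\Dspec$-summand. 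The obstruction to $y$ lying in $\DII^2 + (\DII^2\cap \DT)[t]$ then concentrates in a single ``overflow'' term lying in $\partial \cdot J^2 \cap I^2$.

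For the backward direction, if $(I^2:\partial) \cap I \cap J^2 \subseteq IJ$, this overflow may be rewritten as an element of $IJ \subseteq \DII^2 \cap \DT$, so the criterion holds. For the forward direction, starting from any $u \in (I^2:\partial) \cap I \cap J^2 \setminus IJ$, a natural torsion witness at $\lambda = 0$ is $y = \alpha u - v\Dspec$, with $v \in \DT$ chosen so that $\partial v \equiv \partial u \pmod{I^2}$, reflecting $u \in (I^2:\partial)$. Then $ty \in \DII^2$ by construction of $v$, while $y \notin \DII^2$: its class in $\DII/\DII^2$ is detected precisely by the image of $u$ in $J^2/IJ$, which is nonzero by assumption.

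The main obstacle is the bookkeeping inside the reduction modulo $\Dspec$: the substitutions $\alpha^d \mapsto t\alpha + \partial$ must be tracked to all orders in products of the six generator families, and one must verify that only the class of the overflow in $J^2/IJ$ survives after absorbing everything else into $(t-\lambda)\DII^2$ and $(\DII^2 \cap \DT)[t]$. A minor additional simplification is the observation that the translation $t \mapsto t+\lambda$ preserves the shape of $\DII$ up to replacing $\partial$ by $\partial + \lambda\alpha$, so it suffices to check the criterion at $\lambda = 0$; this reduces the amount of casework but does not eliminate the combinatorial core of the identification.
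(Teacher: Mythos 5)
Your skeleton is the same as the paper's: flatness is tested with Proposition~\ref{ref:flatelementary:prop} and Remark~\ref{ref:flatnessremovet:remark}, and your torsion witness $y=\Dx u-v\Dspec$ (with $v=u$) agrees modulo $\mathfrak{I}^2$ with the element $u\Dx$ used there. The genuine gap is that you hang both directions on the asserted equality $\mathfrak{I}^2\cap\DS[\Dx]=I^2+\Dx IJ+\Dx^2J^2$, and "iterating the identity \ldots one sees that" is not a proof of it. That elimination statement is exactly where the difficulty of the lemma sits: any proof of it must run the descent in the $t$-degree of a representation $w=P+Q\Dspec+R\Dspec^2$ together with the $\Dx$-homogeneity bookkeeping of Lemma~\ref{ref:decompositionhomog:lem}, using the specific relations $\partial J\subseteq I$, $\partial\in J$, $\partial^{2}\in I$ --- i.e.\ precisely the verification you postpone as "the main obstacle". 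It is also more than you need: for sufficiency only the trivial inclusion $I^2+\Dx IJ+\Dx^2J^2\subseteq\mathfrak{I}^2\cap\DS[\Dx]$ enters, and for necessity it suffices to reduce modulo $t$ and check $u\Dx\notin\bigl(I+J\Dx+(\Dx^{d}-\partial)\bigr)^{2}$ by an $\Dx$-degree argument; the paper's proof never computes the elimination ideal at all. As written, the central claim is asserted, not established.

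The backward direction is moreover misstated at the crucial point. Writing $\mathcal{J}:=(I+J\Dx)\,\DS[\Dx]$, the obstruction does not "concentrate in a single overflow term lying in $\partial J^{2}\cap I^{2}$". What the coefficient analysis must extract --- after eliminating the $\Dspec^{2}$-coefficient (shown to lie in $J\,\DS[\Dx]$ via Lemma~\ref{ref:decompositionhomog:lem}) and the $\Dx$-divisible part of the $\Dspec$-coefficient --- is that a single element $j\in\DS$ satisfies three memberships simultaneously: $j\in I$ (constant term of an element of $\mathcal{J}$), $j\in J^{2}$ (top $\Dx$-degree comparison), and $j\partial\in I^{2}$ (comparison of the terms not containing $\Dx$). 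Only the conjunction $j\in I\cap J^{2}\cap(I^{2}:\partial)$ lets the hypothesis yield $j\in IJ$, hence $j\Dx\in\mathcal{J}^{2}$; knowing that some product lies in $\partial J^{2}\cap I^{2}$ interacts with the hypothesis in no useful way. Two smaller points: restricting to $y\in\mathfrak{I}$ with $(t-\lambda)y\in\mathfrak{I}^{2}$ needs the flatness of the lower ray family (Proposition~\ref{ref:raysumflatness:prop}) to be invoked; and after the substitution $t\mapsto t+\lambda$ the ideal is no longer of ray-sum shape (the new "$\partial+\lambda\Dx$" is not in $\DS$ and $J\neq(I:\partial+\lambda\Dx)$), so the reduction to $\lambda=0$ does not shorten anything --- the $\lambda\Dx$ term must be carried through the analysis anyway, as the paper does directly. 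So the route is right, but the elimination claim and the three-fold extraction of memberships, which constitute the actual content of the lemma, are missing.
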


\begin{proof}

    We begin with the ``if'' implication. To prove flatness we will use
    Proposition~\ref{ref:flatelementary:prop}.
    Take an element $i\in \DII^2\cap (t-\lambda)$. We want to prove that $i\in
    \DII^2 (t-\lambda) + \DPut{zerocomp}{\DII_0[t]}$, where $\Dzerocomp =
    \DII^2 \cap \DT$. Let $\DPut{JJ}{\mathcal{J}} := (I + J\Dx)\DT$.
    Subtracting a suitable element of $\DII^2(t-\lambda)$ we
    may assume that
    \[i = i_1 + i_2\Dspec + i_3\Dspec^2,\] where $i_1\in
    \DJJ^2$, $i_2\in \DJJ$ and $i_3\in \DT$.
    We will in fact show that $i\in \DII^2(t-\lambda) + \DJJ^2[t]$.

    To simplify our notation, let $\DPut{ss}{\sigma} = \Dx^d - \lambda\Dx -
    \partial$. Note that $J\Dss\subseteq \DJJ$.
    We have $i_1 + i_2\Dss + i_3\Dss^2 = 0$. Let $j_3 := i_3\Dss$.
    We want to apply Lemma~\ref{ref:decompositionhomog:lem}, below we check
    its assumptions.
    The ideal $\DJJ$ is homogeneous with
    respect to $\Dx$, generated in degrees less than $d$. Let $s\in
    \DT$ be an element satisfying $s\Dx^{d}\in \DJJ$.
    Then $s\in J$, which implies $s(\lambda \Dx + \partial)\in \DJJ$.
    By Lemma~\ref{ref:decompositionhomog:lem} and $i_3\Dss^2 = j_3\Dss\in \DJJ$ we
    obtain
    $j_3\Dx^{d} \in \DJJ$, i.e.~$i_3 \Dss \Dx^d\in \DJJ$.
    Applying the same
    argument to $i_3\Dx^d$ we obtain $i_3\Dx^{2d}\in \DJJ$, therefore $i_3\in J\DT$.
    Then
    \[
        i_3 \Dspec^2 - i_3 \Dss \Dspec = i_3\Dx (t - \lambda) \Dspec \in
        \DJJ(t-\lambda) \Dspec \subseteq \DII^2(t-\lambda).
    \]
    Subtracting this element from $i$ and substituting $i_2 := i_2 + i_3 \Dss$
    we may assume $i_3 = 0$.
    We obtain
    \begin{equation}\label{eq:flatnesslemma}
        0 = i_1 + i_2\Dss = i_1 + i_2(\Dx^d - \lambda\Dx - \partial).
    \end{equation}
    Let $i_2 = j_2 + v_2\Dx$, where $j_2\in \DS$, i.e.~it does not contain
    $\Dx$. Since $i_2\in \DJJ$, we have $j_2\in I$. As before, we have $v_2\Dx (\Dspec - \Dss) = v_2\Dx^2(t-\lambda)\in
    \DII^2(t-\lambda)$, so that we may assume $v_2 = 0$.

    Comparing the top $\Dx$-degree terms of \eqref{eq:flatnesslemma} we see
    that $j_2\in J^2$.
    In equation~\eqref{eq:flatnesslemma}, comparing the terms not
    containing $\Dx$, we deduce that
    $j_2\partial\in I^2$, thus $j_2\in (I^2:\partial)$. Jointly, $j_2\in I\cap
    J^2\cap (I^2:\partial)$, thus $j_2\in IJ$ by assumption.
    But then $j_2\Dx \in \DJJ^2$, thus $j_2\Dspec\in \DJJ^2[t]$ and since
    $i_1\in \DJJ^2$, the
    element $i$ lies in $\DJJ^2[t] \subseteq \Dzerocomp$. Thus the assumptions of
    Proposition~\ref{ref:flatelementary:prop} are satisfied and the
    $\kk[t]$-module
    $\DT[t]/\DII^2$ is flat.

    The ``only if'' implication is easier: one takes $i_2\in I\cap
    J^2\cap (I^2:\partial)$ such that $i_2\not\in IJ$. On one hand, the element $j:=i_2(\Dx^d -
    \partial)$ lies in $\DJJ^2$ and we get that $i_2\Dspec - j = ti_2\Dx\in
    \DII^2$. On the other hand if $i_2\Dx\in \DII^2$, then $i_2\Dx\in (\DII^2 + (t))
    \cap \DT = (\DJJ + (\Dx^d - \partial))^2$, which is not the
    case.
\end{proof}

\begin{remark}\label{ref:tangentfibers:rmk}
    \def\tansp#1{\tan(#1)}%
    \def\Dan#1{\annn{\DS}{#1}}%
    Let us keep the notation of Lemma~\ref{ref:tangentflatcondition:lem}. Fix
    $\lambda\in \kk^*$ and suppose that the characteristic of
    $\kk$ does not divide $d-1$.
    The supports of the fibers of $\DS[t]/\DII$ and
    $\DS[t]/\DII^2$ over $t = \lambda$ are finite and equal.
    In particular, from Proposition \ref{ref:fibersofray:prop} it follows that
    the dimension of the fiber of $\DII/\DII^2$ over $t-\lambda$ is equal to
    $\tansp{f} + (d-1)\tansp{\partial\hook f}$, where $\tansp{h} = \dimk \Dan{h}/\Dan{h}^2$ is the dimension of
    the tangent space to the point of the Hilbert scheme corresponding to
    $\Spec \DS/\Dan{h}$, see Example~\ref{ex:tangentforGorenstein}.
\end{remark}

\begin{theorem}\label{ref:nonobstructedconds:thm}
    Let $\kk = \kkbar$.
    Suppose that a polynomial $f\in \DP$ corresponds to an
    unobstructed (see Definition~\ref{ref:unobstructed:def}) algebra
    $\Apolar{f}$. Let $\partial\in \DS$ be such that $\partial^2\hook f = 0$
    and the algebra $\Apolar{\partial\hook f}$ is smoothable and unobstructed.
    The following are equivalent:
    \begin{enumerate}
        \item[1.]\label{it:somenonob} the $d$-th ray sum of $f$ with respect to
            $\partial$ is unobstructed for some $d$ such that $2\leq d \leq
            \chark$ (or $2\leq d$ if $\chark = 0$).
        \item[1a.]\label{it:allnonob} the $d$-th ray sum of $f$ with respect to
            $\partial$ is unobstructed for all $d$ such that $2\leq d \leq \chark
            $ (or $2\leq d$ if $\chark = 0$).
        \item[2.]\label{it:tgflat} The $\kk[t]$-module  $\DPut{defid}{\DII}/\Ddefid^2$ is flat,
            where $\Ddefid$ is the ideal defining the lower ray family of the
            $d$-th ray sum for some $2\leq d \leq \chark$ (or $2\leq d$ if
            $\chark = 0$).
        \item[2a.]\label{it:tgflatevery} The $\kk[t]$-module $\DPut{defid}{\DII}/\Ddefid^2$ is flat,
            where $\Ddefid$ is the ideal defining the lower ray family of the
            $d$-th ray sum for every $2\leq d \leq  \chark$ (or $2\leq d$ if
            $\chark = 0$).
        \item[3.]\label{it:quoflat} The family $\kk[t] \to \DS[t]/\Ddefid^2$ is flat,
            where $\Ddefid$ is the ideal defining the lower ray family of the
            $d$-th ray sum for some $2\leq d \leq \chark$ (or $2\leq d$ if
            $\chark = 0$).
        \item[3a.]\label{it:quoflatevery} The family $\kk[t] \to \DS[t]/\Ddefid^2$ is flat,
            where $\Ddefid$ is the ideal defining the lower ray family of the
            $d$-th ray sum for every $2\leq d \leq \chark$ (or $2\leq d$ if
            $\chark = 0$).
        \item[4.] The following inclusion (equivalent to equality) of ideals in
            $\DS$
            holds: $I\cap J^2 \cap (I^2:\partial) \subseteq I\cdot J$, where
            $I = \annn{\DS}{f}$ and $J = \annn{\DS}{\partial\hook f}$.
    \end{enumerate}
\end{theorem}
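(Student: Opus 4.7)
The plan is to use Lemma~\ref{ref:tangentflatcondition:lem} as the pivot: it already supplies $\textrm{(3)}\iff\textrm{(4)}$ for any fixed $d$ in the range. Since condition~(4) is a statement about the ideals $I=\Ann(f)$ and $J=\Ann(\partial\hook f)$ that makes no reference to $d$, this single equivalence collapses (3) and (3a) together and shows both are equivalent to~(4). Thus the hard core of the theorem is already done, and what remains is to connect (3) with (2) and then with (1).

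For $\textrm{(2)}\iff\textrm{(3)}$ I would use the short exact sequence
\[
0 \longrightarrow \mathfrak{I}/\mathfrak{I}^2 \longrightarrow \DS[t]/\mathfrak{I}^2 \longrightarrow \DS[t]/\mathfrak{I} \longrightarrow 0
\]
of $\kk[t]$-modules. The right-hand term is flat over $\kk[t]$ by Proposition~\ref{ref:raysumflatness:prop}, so the middle term is flat if and only if the left term is. This gives $\textrm{(2)}\iff\textrm{(3)}$ for every fixed $d$, and correspondingly $\textrm{(2a)}\iff\textrm{(3a)}$.

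The remaining link is $\textrm{(1)}\iff\textrm{(3)}$ for a fixed $d$, and the plan is a fiber-dimension count. For any Gorenstein finite $R\subset \mathbb{A}^n$ with ideal $I_R$, Example~\ref{ex:tangentforGorenstein} and the additivity of tangent spaces over disjoint components give $\dim_{\kk}\DS/I_R^2=\deg R+\tan(R)$. Applying this to the fibers of $\DS[t]/\mathfrak{I}^2$ and invoking Proposition~\ref{ref:fibersofray:prop} (legitimate because $\chark\nmid d-1$ under our range on $d$) together with the hypothesis that $\Apolar f$ and $\Apolar{\partial\hook f}$ are unobstructed, the fiber dimension at every $\lambda\neq0$ equals $\deg R_g+n\deg R_g=(n+1)\deg R_g$. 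At $\lambda=0$ the fiber dimension is $\deg R_g+\tan(R_g)$. Since $\kk[t]$ is a PID, flatness is equivalent to constancy of fiber dimension, so~(3) holds iff $\tan(R_g)=n\deg R_g$. Smoothability of $R_g$, which is part of the definition of ``unobstructed'', follows from Proposition~\ref{ref:smoothabilityisclosed:prop} applied to the ray family~\eqref{eq:lowerrayfamily}, using smoothability of $\Apolar f$ and $\Apolar{\partial\hook f}$ plus Theorem~\ref{thm_equivalence_of_abstract_and_embedded_smoothings}. This yields $\textrm{(1)}\iff\textrm{(3)}$ for each fixed $d$, and combined with $d$-independence of~(4) gives the equivalences with~(1a). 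The main obstacle (already dealt with in Lemma~\ref{ref:tangentflatcondition:lem}) is the algebraic analysis of $(I^2:\partial)\cap I\cap J^2$; the remaining assembly is formal once one is careful that all the schemes appearing are Gorenstein so that the identification $\tan(R)=\dim_{\kk} I_R/I_R^2$ is valid componentwise.
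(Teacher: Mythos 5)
Your proposal follows essentially the same route as the paper: Lemma~\ref{ref:tangentflatcondition:lem} gives $3\iff 4$ and, since~(4) is $d$-independent, $3\iff 3a$; the short exact sequence $0\to\mathfrak{I}/\mathfrak{I}^2\to\DS[t]/\mathfrak{I}^2\to\DS[t]/\mathfrak{I}\to 0$ together with the flatness of the ray family from Proposition~\ref{ref:raysumflatness:prop} gives $2\iff 3$; and the last link to~(1) is a constancy-of-fiber-dimension argument using Proposition~\ref{ref:fibersofray:prop} and the $\chark\nmid d-1$ hypothesis. The paper phrases that last step as $1\iff 2$, bounding the fiber of $\mathfrak{I}/\mathfrak{I}^2$ directly via Remark~\ref{ref:tangentfibers:rmk}, while you go to $1\iff 3$ via the fiber of $\DS[t]/\mathfrak{I}^2$; since $\DS[t]/\mathfrak{I}$ is flat with constant fiber degree, these counts differ by a constant and the two versions are interchangeable. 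Your explicit remark that smoothability of the ray sum follows from Proposition~\ref{ref:smoothabilityisclosed:prop} applied to the lower ray family (whose general fiber is smoothable by the hypotheses on $\Apolar f$, $\Apolar{\partial\hook f}$) is a small but genuine clarification of a point the paper leaves implicit in ``we conclude that $\Apolar{g}$ is also unobstructed.''

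One bookkeeping slip in your fiber-dimension count: the ray sum $g$ lives in $\DP[x]$, so $\Apolar g$ and the fibers are subschemes of $\Spec\DT$ where $\DT=\DS[\Dx]$ has $n+1$ variables. The fiber dimension of $\DT[t]/\mathfrak{I}^2$ at $\lambda\neq 0$ is therefore $\deg R_g + (n+1)\deg R_g = (n+2)\deg R_g$, and the condition for flatness is $\tan(R_g)=(n+1)\deg R_g$, not $n\deg R_g$. This is exactly the unobstructedness of $R_g$ inside $\Spec\DT$, so the logic is unchanged --- just adjust the exponent.
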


\begin{proof}
    It is straightforward to check that the inclusion $I\cdot J\subseteq I\cap J^2 \cap
    (I^2:\partial)$ in Point 4 always holds,
    thus the other inclusion is
    equivalent to equality.\\
    3. $\iff$ 4. $\iff$ 3a.
    The equivalence of Point 3 and
    Point 4 follows from Lemma
    \ref{ref:tangentflatcondition:lem}. Since Point 4 is independent of $d$,
    the equivalence of Point 4 and Point 3a also follows.

    2. $\iff$ 3. and 2a. $\iff$ 3a.
    We have an exact sequence of
    $\kk[t]$-modules
    \[0\to \Ddefid/\Ddefid^2 \to \DS[t]/\Ddefid^2 \to \DS[t]/\Ddefid
\to 0.\] Since $\DS[t]/\Ddefid$ is a flat
    $\kk[t]$-module by Proposition \ref{ref:raysumflatness:prop}, we see from
    the long exact sequence of $\operatorname{Tor}$ that
    $\Ddefid/\Ddefid^2$ is flat if and only if $\DS[t]/\Ddefid^2$ is flat.

    1. $\iff$ 2. and 1a. $\iff$ 2a.
    By assumption, $\chark$ does not divide $d-1$.
    Let $g\in P[x]$ be the $d$-th ray sum of $f$ with
    respect to $\partial$. We may consider
    $\Apolar{g}$, $\Apolar{f}$, $\Apolar{\partial\hook f}$ as quotients of a
    polynomial ring $\DT$, corresponding to points of the Hilbert scheme.
    Assume 2. (resp.~2a.). The dimension of the tangent space at $\Apolar{g}$ is $\dimk
    \Ddefid/\Ddefid^2 \tensor \kk[t]/t = \dimk \Ddefid/(\Ddefid^2 + (t))$. By Remark~\ref{ref:tangentfibers:rmk} it is
    equal to the sum of the dimension of the tangent space at $\Apolar{f}$ and
    $(d-1)$ times the dimension of the tangent space to $\Apolar{\partial\hook f}$. Since both
    algebras are smoothable and unobstructed we conclude that $\Apolar{g}$ is also
    unobstructed. On the other hand, assuming 1.~(resp.~1a.), we have
    $\Apolar{g}$ is unobstructed, so
    $\Ddefid/\Ddefid^2$ is a finite $\kk[t]$-module such that the degree of
    the fiber $\Ddefid/\Ddefid^2\tensor \kk[t]/\mathfrak{m}$ does not depend on the
    choice of the maximal ideal $\mathfrak{m} \subseteq \kk[t]$. Then
    $\Ddefid/\Ddefid^2$ is flat by \cite[Exercise~II.5.8]{hartshorne} or
    \cite[Theorem~III.9.9]{hartshorne} applied to the associated sheaf.
\end{proof}

\begin{remark}
    The condition from Point 4 of Theorem
    \ref{ref:nonobstructedconds:thm} seems very technical. It is
    enlightening to look at the images of $(I^2:\partial)\cap I$ and $I\cdot
    J$ in $I/I^2$.
    The image of $(I^2:\partial)\cap I$ is the annihilator of $\partial$ in
    $I/I^2$. This annihilator clearly contains $(I:\partial)\cdot I/I^2 =
    J\cdot I/I^2$. This shows that if the $S/I$-module $I/I^2$ is ``nice'', for
    example free, we should have an equality $(I^2:\partial)\cap I = I\cdot J$.
    More generally this equality is connected to the syzygies of
    $I/I^2$.
\end{remark}

In the remainder of this subsection we will prove that in several situations
the conditions of Theorem~\ref{ref:nonobstructedconds:thm} are satisfied.

\begin{corollary}\label{ref:CIarenonobstructed:cor}
    We keep the notation and assumptions of
    Theorem~\ref{ref:nonobstructedconds:thm}. Suppose further
    that the algebra $\DS/I = \Apolar{f}$ is a complete intersection. Then the equivalent
    conditions of Theorem~\ref{ref:nonobstructedconds:thm} are satisfied.
\end{corollary}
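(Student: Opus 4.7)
The plan is to verify Condition~4 of Theorem~\ref{ref:nonobstructedconds:thm}, namely
\[
I \cap J^2 \cap (I^2 : \partial) \subseteq I \cdot J.
\]
In fact I will prove the a priori stronger inclusion $I \cap (I^2 : \partial) \subseteq IJ$, since this requires no use of the $J^2$ factor and makes the role of the complete intersection hypothesis transparent.

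The key input is that when $\DS/I$ is a complete intersection, the conormal module $I/I^2$ is a free $\DS/I$-module of rank equal to $\mu(I)$ (see, e.g., \cite[Theorem~16.2, Corollary~17.14]{Eisenbud}). Choose a regular sequence $f_1, \ldots, f_c$ generating $I$; their classes $[f_1], \ldots, [f_c]$ form a free $\DS/I$-basis of $I/I^2$. The second observation I will use is purely formal: because $J = \Ann(\partial \hook f) = (I : \partial)$, the image $J/I \subseteq \DS/I$ is precisely the annihilator of the class of $\partial$ in $\DS/I$. (Since $\partial^2 \hook f = 0$, we have $\partial \in J$, though this is not needed for the argument.)

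Given these two facts, the main step is short. Take $g \in I \cap (I^2:\partial)$ and write
\[
g \equiv \sum_{i=1}^{c} a_i\, f_i \pmod{I^2}, \qquad a_i \in \DS,
\]
with the classes $\bar a_i \in \DS/I$ uniquely determined by freeness. The condition $g\partial \in I^2$ translates into $\sum_i (\bar a_i \bar\partial)\, [f_i] = 0$ in the free module $I/I^2$, hence $\bar a_i \bar\partial = 0$ in $\DS/I$ for each $i$. By the second observation, $\bar a_i \in J/I$, i.e.\ $a_i \in J$. Since $I \subseteq J$ also gives $I^2 \subseteq IJ$, we conclude
\[
g \in \sum_i J \cdot f_i + I^2 \subseteq JI,
\]
which is the desired inclusion. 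Intersecting further with $J^2$ yields Condition~4 of Theorem~\ref{ref:nonobstructedconds:thm}, and the equivalent conditions (in particular, unobstructedness of every admissible ray sum) follow.

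There is no real obstacle here; the whole content is in the freeness of $I/I^2$. The only point demanding a moment of care is the identification of $J/I$ with $\Ann_{\DS/I}(\bar\partial)$, which is immediate from the definition $J = \Ann(\partial \hook f)$ together with the apolarity pairing $\sigma \hook f = 0 \iff \sigma \in I$.
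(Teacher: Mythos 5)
Your proof is correct and follows essentially the same route as the paper: the whole content is that $I/I^2$ is free over $\DS/I$ because $\DS/I$ is a complete intersection, which forces $(I^2:\partial)\cap I = (I:\partial)\cdot I = JI$; the paper states this equality in one line and you unpack it via a choice of free basis $[f_1],\dots,[f_c]$. You also rightly observe that the $J^2$ factor is not needed — the paper implicitly makes the same observation by writing the stronger equality. The only thing the paper says that you omit is the remark that a complete intersection is automatically unobstructed (by the Huneke--Ulrich theorem), but since the corollary explicitly keeps the assumptions of Theorem~\ref{ref:nonobstructedconds:thm}, this is a side comment rather than a gap.
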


\begin{proof}
    \def\Dan#1{\annn{\DS}{#1}}%
    Since $\DS/I$ is a complete intersection, it is
    unobstructed by
    Theorem~\ref{ref:hunekeulrich:thm}. Moreover, the $\DS/I$-module $I/I^2$ is
    free, see e.g. \cite[Theorem~16.2]{Matsumura_CommRing} and the discussion above it or
    \cite[Exercise~17.12a]{Eisenbud}. This implies that
    \[(I^2 : \partial) \cap I = (I : \partial)I
    = JI,\] because $J = \Dan{\partial\hook f} = \{ s\in \DS\ |\ s
    \partial\hook f = 0\} = (\Dan{f} : \partial) = (I : \partial)$. Thus the
    condition from Point~4 of Theorem
    \ref{ref:nonobstructedconds:thm} is satisfied.
\end{proof}

\begin{example}[{(1, 4, 5, 3, 1)}]\label{ref:14531case:example}
    Let $\kk = \kkbar$ and $\chark \neq 2$.

    If $A = \DS/I$ is a complete intersection, then it is
    unobstructed
    by Theorem~\ref{ref:hunekeulrich:thm}. The apolar algebras
    of monomials are complete intersections, therefore the assumptions of
    Theorem~\ref{ref:nonobstructedconds:thm} are satisfied e.g.~for $f
    =\DPel{x_1}{2}\DPel{x_2}{2}x_3$ and $\partial = \Dx_2^2$. Now
    Corollary~\ref{ref:CIarenonobstructed:cor} implies that the equivalent
    conditions of the Theorem are also satisfied, thus
    \[
        \DPel{x_1}{2}\DPel{x_2}{2}x_3 + \DPel{x_4}{{d}}\DPel{x_1}{2}x_3 = \pp{\DPel{x_2}{2}x_3}\pp{\DPel{x_1}{2} +
    \DPel{x_4}{d}}
    \]
    is unobstructed for every $d\geq
    2$, provided $\chark = 0$ or $d \leq \chark$.
    Similarly, $\DPel{x_1}{2}x_2x_3 + \DPel{x_4}{2}x_1$ is unobstructed
    and has Hilbert function $(1, 4, 5, 3, 1)$.
\end{example}

\begin{example}[{(1, 4, 4, 1)}]\label{ref:1441:example}
    \def\Dan#1{\annn{\DS}{#1}}%
    Let $\kk = \kkbar$ and $\chark \neq 2$.

    Let $f = \pp{\DPel{x_1}{2} + \DPel{x_2}{2}}x_3$, then $\Dan{f} = (\Dx_1^2 - \Dx_2^2,
    \Dx_1\Dx_2, \Dx_3^2)$ is a complete intersection. Take $\partial =
    \Dx_1\Dx_3$, then $\partial\hook f = x_1$ and
    $\partial^2\hook f = 0$, thus
    \[
        f + \DPel{x_4}{2}\partial\hook f = \DPel{x_1}{2}x_3 + \DPel{x_2}{2}x_3 + \DPel{x_4}{2}x_1
    \]
    is unobstructed. Note that, by
    Remark~\ref{ref:Hilbfunccouting:rmk} or by a direct computation, the apolar algebra of this
    polynomial has Hilbert function $(1, 4, 4, 1)$.
\end{example}

Below in Proposition~\ref{ref:unobstructeddoubleray:prop} we use a composition
of ray families, in particular to produce an example of a smoothable subscheme $R \subset
\mathbb{A}^5$ corresponding to a local Gorenstein algebra $\DA$ with $H_{\DA} = (1, 5, 5,
1)$ and such that $R$ is unobstructed.
Such an example was first obtained independently in~\cite{jelisiejew_1551}
and~\cite{bertone_cioffi_roggero_division_algorithm}.
\begin{proposition}\label{ref:unobstructeddoubleray:prop}
    Let $f\in \DP$ be such that $\Apolar{f}$ is a complete
    intersection.

    Let $d$ be a natural number. Suppose that $\kk = \kkbar$ and $\chark = 0$ or $d\leq \chark$.
    Take $\partial\in \DPut{Sf}{\DS}$ such that $\partial^2\hook f = 0$ and
    $\Apolar{\partial\hook f}$ is also a complete intersection.
    Let $g\in \DP[y]$ be the $d$-th ray sum $f$ with respect to $\partial$, so
    that
    $g = f + \DPel{y}{d} \partial\hook f$.

    Suppose that $\deg \partial\hook f > 0$.
    Let $\beta$ be the variable dual to $y$ and $\sigma\in \DSf$ be such that
    $\sigma\hook (\partial\hook f) = 1$. Take $\varphi := \sigma\beta\in
    \DPut{Sg}{\DT} = \DS[\beta]$.
    Let $h$ be any ray sum of $g$ with respect to $\varphi$. Explicitly
    \[
        h = f + \DPel{y}{d} \partial\hook f + \DPel{z}{m}\DPel{y}{d-1}\mbox{ for some }m\geq 2.
    \]
    Then the algebra $\Apolar{h}$ is
    unobstructed.
\end{proposition}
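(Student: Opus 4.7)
\noindent\emph{Plan of proof.} The plan is to apply the unobstructedness criterion of Theorem~\ref{ref:nonobstructedconds:thm} in two stages: first to produce $g$ from $f$ via the ray sum with respect to $\partial$, and then to produce $h$ from $g$ via the ray sum with respect to $\varphi$. The first stage is immediate from the tools already built; the second stage is where the actual work lies.

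\noindent\emph{Stage 1.} Since $\Apolar{f}$ is a complete intersection, $\partial^2 \hook f = 0$, and $\Apolar{\partial\hook f}$ is also a complete intersection, Corollary~\ref{ref:CIarenonobstructed:cor} applies directly and shows that $\Apolar{g}$ is unobstructed (the characteristic hypothesis $d\leq \chark$ or $\chark=0$ is exactly what is required to pass from the equivalent conditions of Theorem~\ref{ref:nonobstructedconds:thm} to unobstructedness of the ray sum).

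\noindent\emph{Stage 2.} Here I would set $\DS' := \DS[\beta]$, $I := \Ann_{\DS'}(g)$ and aim to apply Theorem~\ref{ref:nonobstructedconds:thm} to the ray sum of $g$ with respect to $\varphi = \sigma\beta$. First one checks the computational hypotheses. Since $f$ is independent of $y$ and $\sigma\hook(\partial\hook f)=1$, a direct calculation gives
\[
  \varphi \hook g \;=\; \sigma\beta \hook\!\pp{f + \DPel{y}{d}\,\partial\hook f} \;=\; \DPel{y}{d-1},
\]
and then $\varphi^2 \hook g = \sigma\hook \DPel{y}{d-2} = 0$ because $\sigma$ has positive order in $\DS$ and $\DPel{y}{d-2}$ involves only the variable $y$. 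Next, $J := \Ann_{\DS'}(\varphi \hook g) = \Ann_{\DS'}(\DPel{y}{d-1}) = \DmmS\cdot \DS' + (\beta^d)$, so $\DS'/J \simeq \kk[\beta]/(\beta^d)$ is a principal quotient, in particular a complete intersection, hence smoothable and unobstructed by Corollary~\ref{ref:CIarenonobstructed:cor}. Combined with Stage~1, all the hypotheses of Theorem~\ref{ref:nonobstructedconds:thm} are in place for the pair $(g, \varphi)$ except the technical inclusion in Point~4.

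\noindent\emph{The main obstacle.} What remains, and is the heart of the proof, is to verify condition~4 of Theorem~\ref{ref:nonobstructedconds:thm}, namely
\[
  I \,\cap\, J^{2} \,\cap\, (I^{2} : \varphi) \;\subseteq\; I\cdot J.
\]
Unlike in Corollary~\ref{ref:CIarenonobstructed:cor}, the algebra $\Apolar{g}$ is \emph{not} a complete intersection in general, so we cannot conclude via freeness of $I/I^{2}$ as an $\Apolar{g}$-module. Instead, I would exploit the very rigid structure of $I$ given by Proposition~\ref{ref:raysumideal:prop}, namely
\[
  I \;=\; I_{f}\DS' \,+\, \beta\, I_{\partial\hook f}\DS' \,+\, (\beta^{d} - \partial)\DS',
\]
together with the fact that $\varphi = \sigma\beta$ involves the new variable $\beta$ only to the first power. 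The plan is to take any $e \in I\cap J^{2}\cap(I^{2}:\varphi)$ and, after reducing powers of $\beta$ modulo $\beta^{d}-\partial$, write $e = \sum_{i=0}^{d-1} e_{i}\beta^{i}$ with $e_{0}\in I_{f}$ and $e_{i}\in I_{\partial\hook f}$ for $i\geq 1$. The condition $e\cdot\sigma\beta \in I^{2}$ will then propagate as a system of conditions on the $e_{i}$, each of which lives in the \emph{complete intersection} $\Apolar{f}$ or $\Apolar{\partial\hook f}$; for these the analogue of the inclusion holds by the argument of Corollary~\ref{ref:CIarenonobstructed:cor}. Collecting the pieces and using $J = \DmmS\DS' + (\beta^{d})$ should yield $e\in I\cdot J$, completing the verification and hence the proof. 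The main difficulty is the bookkeeping in this reduction, and in showing that the ``cross terms'' produced by the relation $\beta^{d} - \partial$ do not escape $I\cdot J$; here the hypothesis $d\leq\chark$ (ensuring invertibility of combinatorial factors in expansions of $\beta^{d}$) is expected to play a role.
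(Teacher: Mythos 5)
Your plan matches the paper's proof in its overall architecture: both stages are identified correctly, the reduction to condition~4 of Theorem~\ref{ref:nonobstructedconds:thm} for the pair $(g,\varphi)$ is exactly the paper's route, and you correctly observe that $\Apolar{g}$ is not a complete intersection, so Corollary~\ref{ref:CIarenonobstructed:cor} does not apply a second time and the inclusion $I\cap J^2\cap(I^2:\varphi)\subseteq IJ$ must be established by hand using the explicit description $I = I_f\DT + \beta J_f\DT + (\beta^d-\partial)\DT$.

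The gap is that the verification of that inclusion is not actually carried out; it is precisely this bookkeeping that constitutes the bulk of the paper's proof, and it is more delicate than ``propagate the condition to the $e_i$'' suggests. Getting an arbitrary $\gamma\in I\cap J^2\cap(I^2:\varphi)$ into the reduced shape $\gamma_0 + \cdots + \gamma_{d-1}\beta^{d-1}$ with $\gamma_i\in J_f$ already takes several steps: one first kills powers $\beta^i$ with $i\geq 2d$ using $(\beta^d-\partial)^2\in IJ$, then uses $\gamma\in J_g^2 = (\DmmV,\beta^d)^2$ to see $\gamma_i\in\DmmV$, then eliminates $d\leq i < 2d$ separately. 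After that, the condition $\gamma\varphi\in I^2$ is studied via the module $M = I^2\cap\varphi\DT$; the paper expands $I^2$ in six summands, applies Lemma~\ref{ref:decompositionhomog:lem} repeatedly to peel off the $(\beta^d-\partial)$-terms, and at the crucial moment invokes $(I_f^2:\partial) = (I_f:\partial)I_f = I_fJ_f$, which is where the complete-intersection hypothesis on $I_f$ (freeness of $I_f/I_f^2$) is actually used. Only then does one read off $\gamma_0\in(I_fJ_f:\sigma)\subseteq I_f\DmmV$ and $\gamma_i\in(J_f^2:\sigma)\subseteq J_f\DmmV$, again by freeness of the respective normal modules, to finally land in $I_g\DmmV\subseteq IJ$. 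None of this is automatic, and the ``cross-terms from $\beta^d-\partial$'' that you flag as the danger are handled not by any characteristic-dependent expansion but by repeated use of Lemma~\ref{ref:decompositionhomog:lem}. The hypothesis $d\leq\chark$ (or $\chark=0$) is not used in checking condition~4 at all; it has already been spent in Theorem~\ref{ref:nonobstructedconds:thm} itself, where it guarantees the equivalence between condition~4 and unobstructedness (via the exponential map and the dimension count on fibers in Remark~\ref{ref:tangentfibers:rmk}). So your expectation about where characteristic enters is misplaced, though this does not affect the validity of the overall plan.
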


\begin{proof}
    First note that $\varphi\hook g = y^{d-1}$ and so $\varphi^2\hook g =
    \sigma\hook y^{d-2} = 0$, since $\sigma\in
    \mathfrak{m}_{\DSf}$. Therefore indeed $h$ has the presented form.
    \def\DmmV{\mathfrak{m}_{\DSf}}%

    From Corollary \ref{ref:CIarenonobstructed:cor} it follows that
    $\Apolar{g}$ is unobstructed. Since $\varphi\hook g = y^{d-1}$,
    the algebra $\Apolar{\varphi\hook g}$ is unobstructed as well. Now by
    Theorem \ref{ref:nonobstructedconds:thm} it remains to prove that
    \begin{equation}\label{eq:maincontainment}
        (I_g^2:\varphi) \cap I_g \cap J_g^2 \subseteq I_g J_g,
    \end{equation}
    where
    $\DPut{Ig}{I_g} =
    \annn{\DSg}{g}, \DPut{Jg}{J_g} = \annn{\DSg}{\varphi\hook g}$.
    The rest of the proof is a technical verification of this claim.
    Denote $\DPut{If}{I_f} := \annn{\DSf}{f}$ and $\DPut{Jf}{J_{f}} := \annn{\DSf}{\partial\hook f}$;
    note that we take annihilators in $\DSf$.
    By Proposition \ref{ref:raysumideal:prop} we have $\DIg = \DIf\DT +
    \beta\DJf\DT + \DPut{spec}{(\beta^{d} - \partial)}\DT$.
    Consider $\gamma\in \DSg$ lying in $(\DIg^2 : \varphi) \cap \DIg \cap
    \DJg^2$. Write $\gamma = \gamma_0 + \gamma_1 \beta + \gamma_2 \beta^2 +
    \dots$ where $\gamma_i\in \DSf$, so they do not contain $\beta$. We will
    prove that $\gamma\in \DIg\DJg$.

    First, since $\Dspec^2 \in \DIg\DJg$ we may reduce powers of $\beta$ in $\gamma$ using this
    element and so we assume $\gamma_{i} = 0$ for $i\geq 2d$.
    Let us take $i < 2d$. Since $\gamma\in \DJg^2 =
    \pp{\annn{\DSg}{y^{d-1}}}^2 = \pp{\DmmV, \beta^d}^2$ we see that $\gamma_i\in
    \DmmV \subseteq \DJg$. For $i > d$ we have $\beta^i \in \DIg$, so
    that $\gamma_i \beta^i \in \DJg\DIg$ and we may
    assume $\gamma_i = 0$.
    Moreover, $\beta^d \gamma_d - \partial \gamma_d \in \DIg\DJg$ so we may also
    assume $\gamma_d = 0$, obtaining
    \[\gamma = \gamma_0 + \dots + \gamma_{d-1} \beta^{d-1}.\]
    From the explicit description of $\DIg$ in
    Proposition~\ref{ref:raysumideal:prop} it follows that $\gamma_i\in \DJf$
    for all $i$.

    Let $M = \DIg^{2} \cap \varphi\DT = \DIg^2 \cap \DJf\beta\DT$. Then for
    $\gamma$ as above we have $\gamma \varphi\in M$, so we will analyse the
    module $M$.
    Recall that
    \begin{equation}\label{eq:scarydecomposition}
        \DIg^2 = \DIf^2\cdot \DT + \beta \DIf \DJf\cdot \DT + \beta^2 \DJf^2\cdot \DT +
        \Dspec\DIf\cdot \DT + \Dspec\beta\DJf \cdot \DT + \Dspec^2\cdot \DT.
    \end{equation}
    We claim that
    \begin{equation}\label{eq:contains}
        M \subseteq \DIf^2\cdot \DT + \beta\DIf \DJf\cdot \DT + \beta^2
        \DJf^2\cdot \DT + \Dspec\beta\DJf\cdot \DT.
    \end{equation}
    We have $\DIg^2 \subseteq
    \DJf \cdot \DT + \Dspec^2\cdot\DT$, so
    if an element of $\DIg^2$ lies in
    $\DJf\cdot\DT$, then its coefficient standing next to $\Dspec^2$ in Presentation
    \eqref{eq:scarydecomposition} is an element of $\DJf$ by
    Lemma~\ref{ref:decompositionhomog:lem}.
    Since $\DJf \cdot
    \Dspec \subseteq \DIf + \beta\DJf$, we may ignore the term $\Dspec^2$:
    \begin{equation}\label{eq:lessscdec}
        M \subseteq \DIf^2\cdot \DT + \beta \DIf \DJf\cdot \DT + \beta^2 \DJf^2\cdot \DT +
        \Dspec\DIf\cdot \DT +  \Dspec\beta\DJf\cdot \DT.
    \end{equation}
    Choose an element of $M$ and let $i\in \DIf\cdot\DT$ be the coefficient of this
    element standing next to $\Dspec$. Since $\DIf\DT \cap \beta \DT \subseteq
    \DJf\DT$ we may assume that $i$ does not contain $\beta$, i.e. $i\in
    \DIf$.
    Now, if an element of the right hand side of \eqref{eq:lessscdec} lies in
    $\beta\cdot\DT$, then the coefficient $i$ satisfies
    $i\cdot \partial\in \DIf^2$, so that $i\in (\DIf^2 : \partial)$. Since
    $\DIf$ is a complete intersection ideal the $\DS/\DIf$-module
    $\DIf/\DIf^2$ is free, see Corollary~\ref{ref:CIarenonobstructed:cor} for
    references. Then we have $(\DIf^2: \partial) =
    (\DIf:\partial)\DIf$ and $i\in (\DIf:\partial)\DIf = \DIf\DJf$. Then
    $i\cdot \Dspec \subseteq \DIf^2 + \beta\cdot \DIf\cdot \DJf$ and so the
    Inclusion \eqref{eq:contains} is proved. We come back to the proof of
    proposition.

    From Lemma~\ref{ref:decompositionhomog:lem} applied to the ideal
    $\DJf^2\DT$ and the element $\beta\Dspec$ and the fact that $\beta\partial\DJf^2
    \subseteq I_g^2$ we compute
    that $M\cap \{ \delta\ |\ \deg_{\beta} \delta \leq d\}$ is
    a subset of $\DIf^2\cdot\DT + \beta\cdot \DIf \DJf\cdot\DT + \beta^2
    \DJf^2\cdot \DT$. Then $\gamma \varphi = \gamma \beta\sigma$ lies in this set, so that
    $\gamma_0 \in (\DIf\DJf : \sigma)$ and $\gamma_{n} \in (\DJf^2 : \sigma)$
    for $n > 1$. Since $\Apolar{f}$ and $\Apolar{\partial\hook f}$ are
    complete intersections, we have
    $\gamma_0 \in \DIf\DmmV$ and $\gamma_i \in
    \DJf\DmmV$ for $i \geq 1$.
    It follows that $\gamma\in \DIg\DmmV \subseteq \DIg\DJg$.
\end{proof}

\begin{example}[{(1, 5, 5, 1)}]\label{ref:1551:example}
    \def\Dan#1{\annn{\DS}{#1}}%
    Let $\kk = \kkbar$ and $\chark \neq 2$.

    Let $f\in P$ be a polynomial such that $A = \Apolar{f}$ is a complete
    intersection. Take
    $\partial$ such that $\partial\hook f = x_1$ and $\partial^2\hook f = 0$.
    Then the apolar algebra of $f + \DPel{y_1}{d}x_1 +
    \DPel{y_{2}}{m}\DPel{y_1}{d-1}$ is unobstructed
    for every $d, m\geq 2$ (less or equal to $\chark$ if the characteristic is non-zero). In
    particular
    \[
        g = f + \DPel{y_1}{2}x_1 + \DPel{y_2}{2}y_1
    \]
    is unobstructed.
%    The apolar algebra of $g$ has Hilbert function $H_{B}(t) =
%    H_{A}(t)$ for $t\neq 1, 2$ and $H_B(t) = H_A(t) + 2$ for $t\in \{1,
%    2\}$.

    Continuing Example \ref{ref:1441:example}, if $f =
    \DPel{x_1}{2}x_3 + \DPel{x_2}{2}x_3$, then $\DPel{x_1}{2}x_3 + \DPel{x_2}{2}x_3 + \DPel{x_4}{2}x_1 + \DPel{x_5}{2}x_4$ is
    unobstructed. The apolar algebra of this polynomial has Hilbert function
    $(1, 5, 5, 1)$.

    Let
    $g = \DPel{x_1}{2}x_3 + \DPel{x_2}{2}x_3 + \DPel{x_4}{2}x_1$,
    then $\DPel{x_1}{2}x_3 + \DPel{x_2}{2}x_3 +
    \DPel{x_4}{2}x_1 + \DPel{x_5}{2}x_4$ is a ray sum of $g$ with respect to $\partial =
    \Dx_4\Dx_1$. Let $I := \Dan{g}$ and $J := (I : \partial)$.
    In contrast with Corollary~\ref{ref:CIarenonobstructed:cor} and Example~\ref{ref:1441:example} one may
    check that all three terms $I$, $J^2$ and $(I^2 : \partial)$ are necessary to
    obtain equality in the inclusion \eqref{eq:maincontainment} for $g$ and $\partial$, i.e.~no two
    ideals of $I$, $J^2$, $(I^2 : \partial)$ have intersection equal to $IJ$;
    we need to intersect all three of them, to obtain $IJ$.
    %$\Dx_4^3$ is the only element requiring cutting with J_g^2.
\end{example}

\begin{example}[{(1, 4, 4, 3, 1, 1)}]\label{ref:144311:example}
    Let $\kk = \kkbar$ and $\chark \neq 2$.

    Let $f = \DPel{x_1}{5} + \DPel{x_2}{4}$. Then the annihilator of $f$ in $\kk[\Dx_1, \Dx_2]$
    is a complete intersection, and this is true for every $f\in \kdp[x_1, x_2]$. Let
    $g = f + \DPel{x_3}{2}\DPel{x_1}{2}$ be the second ray sum of $f$ with respect to
    $\Dx_1^3$ and $h = g + \DPel{x_4}{2}x_3$ be the second ray sum of $g$ with
    respect to $\Dx_3\Dx_1^2$.
    Then the apolar algebra of
    \[h = \DPel{x_1}{5} + \DPel{x_2}{4} + \DPel{x_3}{2}\DPel{x_1}{2} + \DPel{x_4}{2}x_3\] is
    unobstructed. It has Hilbert function $(1, 4, 4, 3, 1, 1)$.
    %Computed explicitly --- works.
\end{example}

\begin{remark}
    The assumption $\deg \partial\hook f > 0$ in
    Proposition~\ref{ref:unobstructeddoubleray:prop} is necessary:
    the polynomial $h = x_1x_2x_3 + \DPel{x_4}{2} + \DPel{x_5}{2}x_4$ is not
    unobstructed, since it has
    degree $12$ and tangent space dimension $67 > 12\cdot 5$ over
    $\kk = \mathbb{Q}$. The polynomial $g$ is the fourth ray sum of $x_1 x_2 x_3$
    with respect to $\Dx_1\Dx_2\Dx_3$ and $h$ is the second
    ray sum of $g = x_1 x_2 x_3 + \DPel{x_4}{2}$ with respect to $\Dx_4$, thus this
    example satisfies the assumptions of
    Proposition~\ref{ref:unobstructeddoubleray:prop} except for $\deg
    \partial\hook f > 0$. Note that in this case $\Dx_4^2\hook g \neq 0$.
\end{remark}

\section{Proof of Theorem~\ref{ref:cjnmain:thm} --- preliminaries}

This section is the starting point of the proof of Theorem~\ref{ref:cjnmain:thm} ---
irreducibility of the Gorenstein locus for small degrees. It
contains the necessary preliminaries and it is of limited interest of its own.
We employ Macaulay's inverse systems, as described in
Chapter~\ref{sec:macaulaysinversesystems}, and in particular
the symmetric decomposition $\Dhdvect{\bullet}$ of the Hilbert function, see
Section~\ref{ssec:HFininversesystems}, Lemma~\ref{ref:sumsofdhdaraOseqence:lem}
and the standard form of the dual generator, see Section~\ref{ssec:standardforms}.

Recall from Proposition~\ref{ref:flatfamiliesforconstructible:prop} that for a
constructible $V \subset \DP_{\leq d}$ with $\dimk\Apolar{f} = r$ for all
$f\in V$, we have an associated morphism $V\to \Hilbr{\Spec \DS}$.
Consider $f\in \DP_{\leq d}$.
The apolar algebra of $f$ has degree at most $s$ if and only if the
space $\DS_{\leq d} f$ has dimension at most $s$. In coordinates, this is a
rank $\leq s$ condition, so it is closed and we obtain the following
Remark~\ref{ref:semicontinuity:rmk}.
\begin{remark}\label{ref:semicontinuity:rmk}
    Let $d$ be a positive integer and $V \subseteq \DP_{\leq d}$ be a constructible subset. Then
    the set $U$, consisting of $f\in V$ such that the apolar algebra of $f$
    has the maximal degree (among the elements of $V$), is open in $V$. In
    particular, if $V$ is irreducible then $U$ is also irreducible.
\end{remark}

\begin{example}\label{ref:semicondegthree:example}
    Let $\DP_{\geq 4} = \kdp[x_1, \ldots
    ,x_n]_{\geq 4}$.
    Suppose that the set $V \subseteq \DP_{\geq 4}$ parameterizing algebras
    with fixed Hilbert function $H$ is irreducible. Then also the set $W$ of
    polynomials $f\in \DP$ such that $f_{\geq 4}\in V$ is irreducible. Let
    $e:= H(1)$ and suppose that the symmetric decomposition of $H$, see
    Definition~\ref{ref:symmetricdecomposition:def},  has zero
    rows $\Dhdvect{d-3} = (0, 0, 0, 0)$ and $\Dhdvect{d-2} = (0, 0, 0)$, where
    $d = \max\{i\mid H(i)\neq 0\}$.
    We claim that general element of $W$ corresponds to an algebra $B$ with Hilbert
    function
    \[
        H_{max} = H + (0, n-e, n-e, 0).
    \]
    Indeed, since we only vary the degree three part of the polynomial,
    the function $H_B$ has the form $H + (0, a, a, 0) + (0, b, 0)$ for some
    $a, b$ such that $a + b \leq n - e$. Therefore algebras with Hilbert
    function $H_{max}$ are precisely the algebras of maximal possible degree.
    Since $H_{max}$ is attained for $f_{\geq 4} + \DPel{x_{e+1}}{3} +
    \ldots  + \DPel{x_n}{3}$, the claim follows from
    Remark~\ref{ref:semicontinuity:rmk}.
\end{example}

We now state a number of lemmas concerning the Hilbert function $H_A$ of a local
Gorenstein algebra $\DA$. These lemmas are used in the proof and themselves
are probably of little interest other than an exercise in properties of the symmetric
decomposition $\Dhdvect{\bullet}$ of $H_A$.

\begin{lemma}\label{ref:hilbertfunc:lem}
    Suppose that $(\DA, \mm, \kk)$ is a finite local Gorenstein algebra of socle degree $d\geq 3$ such that
    $\Dhdvect{A, d-2} = (0, 0, 0)$. Then $\deg A \geq 2\left(H_A(1) + 1\right)$.
    Furthermore, equality occurs if and only if $d = 3$.
\end{lemma}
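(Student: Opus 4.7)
The plan is to bound $\deg A$ from below by working directly with the symmetric decomposition $\Delta_\bullet$ of $H_A$ as in Definition~\ref{ref:symmetricdecomposition:def}. Writing $n = H_A(1)$, I would use Lemma~\ref{ref:hilbertfromdecomposition:lem} to express $\deg A = \sum_{a\geq 0} |\Delta_a|$ and $n = \sum_{a\geq 0} \Delta_a(1)$, where $|\Delta_a| := \sum_i \Delta_a(i)$. Under the hypothesis $\Delta_{d-2} = (0,0,0)$, together with the observation below Lemma~\ref{ref:Qduality:lem} that $\Delta_a(0) = \Delta_a(d-a) = 0$ for all $a \geq 1$ (so $\Delta_{d-1}$ is automatically zero and $\Delta_{d-2}(1) = 0$), this collapses to
\[
n \;=\; \Delta_0(1) + \sum_{a=1}^{d-3} \Delta_a(1).
\]

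The core of the argument consists of two inequalities. First, for each $1 \leq a \leq d-3$ one has $d - a \geq 3$, so the positions $1$ and $d-a-1$ are distinct, and symmetry of $\Delta_a$ gives $|\Delta_a| \geq \Delta_a(1) + \Delta_a(d-a-1) = 2\Delta_a(1)$. Second, $\Delta_0$ is itself the Hilbert function of a graded Gorenstein algebra, namely $\mathrm{Apolar}(f_d)$, by Corollary~\ref{ref:symmetricHf:cor}; since this algebra has socle degree $d \geq 3$, the leading form $f_d$ has positive partials of degrees $1$ and (for $d \geq 2$) also of degree $2$, so $\Delta_0(1) \geq 1$ and, when $d \geq 4$, also $\Delta_0(2) \geq 1$. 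By the symmetry $\Delta_0(i) = \Delta_0(d-i)$, this forces $\Delta_0$ to be $\geq 1$ at each of the indices in $\{0,1,2,d-2,d-1,d\}$. A short case check (the only subtlety, since $2 = d-2$ when $d = 4$) then yields the uniform bound
\[
|\Delta_0| \;\geq\; 3 + 2\Delta_0(1) \quad \text{for all } d \geq 4.
\]

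Combining these for $d \geq 4$:
\[
\deg A \;=\; |\Delta_0| + \sum_{a=1}^{d-3} |\Delta_a| \;\geq\; \bigl(3 + 2\Delta_0(1)\bigr) + 2\bigl(n - \Delta_0(1)\bigr) \;=\; 2n + 3,
\]
which is strictly greater than $2(n+1)$. For the boundary case $d = 3$, the hypothesis $\Delta_1 = (0,0,0)$ kills every $\Delta_a$ with $a \geq 1$ (using $\Delta_2(0) = \Delta_2(1) = 0$), so $H_A = \Delta_0 = (1,n,n,1)$ and $\deg A = 2n + 2 = 2(H_A(1)+1)$, giving equality. The strict inequality for $d \geq 4$ shows equality forces $d = 3$, completing the ``if and only if''. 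I expect no serious obstacle: the only point requiring care is the coincidence $2 = d-2$ when $d = 4$, which is handled by noting $|\Delta_0| = 2 + 2\Delta_0(1) + \Delta_0(2) \geq 3 + 2\Delta_0(1)$ in that single case.
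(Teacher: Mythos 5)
Your proof is correct and follows essentially the same route as the paper: both bound $\sum_j \Dhd{a}{j} \geq 2\Dhd{a}{1}$ for $1\leq a\leq d-3$ via symmetry and bound the $\Dhdvect{0}$ row using that it is the (everywhere-positive) Hilbert function of a graded Gorenstein algebra of socle degree $d$, then sum using $\Dhdvect{d-2}=0$. The only cosmetic difference is that you prove the strict bound $\deg A\geq 2H_A(1)+3$ for $d\geq 4$ and compute $d=3$ separately, whereas the paper uses $\sum_j\Dhd{0}{j}\geq 2+2\Dhd{0}{1}$ and tracks when equality can hold.
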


\begin{proof}
    Consider the symmetric decomposition $\Dhdvect{\bullet} = \Dhdvect{A,
    \bullet}$ of $H_A$.
    From symmetry, see Definition~\ref{ref:symmetricdecomposition:def},  we have $\sum_j \Dhd{0}{j} \geq 2 + 2\Dhd{0}{1}$ with
    equality only if $\Dhdvect{0}$ has no terms between $1$ and $d-1$~i.e.~when $d = 3$.
    Similarly $\sum_j \Dhd{i}{j}\geq 2\Dhd{i}{1}$ for all $1 \leq i < d-2$.
    Summing these inequalities we obtain
    \[
        \deg A = \sum_{i<d-2} \sum_j \Dhd{i}{j} \geq 2 + \sum_{i<d-2} 2\Dhd{i}{1} = 2
        + 2H_A(1).\qedhere
    \]
\end{proof}

\begin{lemma}\label{ref:trikofHilbFunc:lem}
    Let $(\DA, \mm, \kk)$ be a finite local Gorenstein algebra of degree at most $14$. Suppose
    that $4\leq H_A(1) \leq 5$. Then $H_A(2)\leq 5$.
\end{lemma}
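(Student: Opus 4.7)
The plan is to prove the contrapositive: assume $H_A(2) \geq 6$ and derive a contradiction, working through cases on the socle degree $d$ of $A$. The essential inputs are (i) Macaulay's Theorem \ref{ref:MacaulaytheoremGorenstein:thm}, giving $A = \Apolar{f}$ with $\deg f = d$; (ii) the fact that $\mm^i\supsetneq\mm^{i+1}$ for $0\leq i\leq d$, so $H_A(i)\geq 1$ in that range; (iii) the symmetric decomposition $\Dhdvect{\bullet}$ from Definition \ref{ref:symmetricdecomposition:def} together with Lemma \ref{ref:sumsofdhdaraOseqence:lem}; and (iv) the global bound $\deg A=\sum_i H_A(i)\leq 14$.

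For $d\leq 3$ the bound is immediate: $d=2$ forces $H_A(2)=1$, and Example \ref{ex:hilbertfunctioncubics} gives $H_A(2)\leq H_A(1)\leq 5$ when $d=3$. For $d\geq 6$, input (ii) combined with $H_A(2)\geq 6$ yields $\deg A\geq n+d+5\geq 4+6+5=15>14$, a contradiction. The case $d=5$ splits further: for $n=5$ one already has $\deg A\geq 1+5+6+3=15>14$, while for $n=4$ the estimate is tight, forcing $H_A=(1,4,6,1,1,1)$.

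To dispose of $d=5,n=4$ and of $d=4$, the plan is to write out the symmetric decomposition explicitly and combine Macaulay's growth with the numerical constraints. For $d=5$ one has $\Dhdvect{0}=(1,h_1,h_2,h_2,h_1,1)$, $\Dhdvect{1}=(0,p_1,p_2,p_1,0)$, $\Dhdvect{2}=(0,q,q,0)$, $\Dhdvect{3}=(0,r,0)$; reading off $H_A(4)=h_1=1$ and $H_A(3)=h_2+p_1=1$ leaves the two subcases $(h_2,p_1)\in\{(1,0),(0,1)\}$. Applying Macaulay (via Lemma \ref{ref:sumsofdhdaraOseqence:lem}) to $\Dhdvect{0}+\Dhdvect{1}$ gives $h_2+p_2\leq\binom{h_1+p_1+1}{2}$, which together with $p_1+q+r=3$ forces $H_A(2)=h_2+p_2+q\leq 5$ in each subcase, contradicting the assumption.

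For $d=4$, write $\Dhdvect{0}=(1,h_1,h_2,h_1,1)$, $\Dhdvect{1}=(0,p_1,p_1,0)$, $\Dhdvect{2}=(0,q,0)$. Then $\deg A=2+n+h_1+h_2+2p_1\leq 14$, i.e. $h_1+h_2+2p_1\leq 12-n$. Combined with $H_A(2)=h_2+p_1\geq 6$, this yields $h_1+p_1\leq 12-n-6=6-n$, so $h_1+p_1\leq 2$ when $n=4$ and $h_1+p_1\leq 1$ when $n=5$. The constraints $h_1\geq 1$ and the Macaulay bound $h_2\leq\binom{h_1+1}{2}$ then force $h_2+p_1\leq 3<6$ in all possibilities, finishing the case. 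The main obstacle in the proof will be verifying that the small list of numerical possibilities that survive the degree bound in cases $d=4$ and $d=5,n=4$ are all excluded; everything else is an immediate consequence of $H_A(i)\geq 1$ and $\deg A\leq 14$.
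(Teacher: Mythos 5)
Your strategy matches the paper's almost exactly: case-split on the socle degree $d$, bound the top of the Hilbert function by $1$ to force $d\leq 5$, and then combine the symmetric decomposition with Macaulay's Growth Theorem (through Lemma~\ref{ref:sumsofdhdaraOseqence:lem}) and the degree budget $\deg A\leq 14$. Your $d\geq 6$ and $d=5$ computations spell out in detail what the paper dismisses with the sentence ``it is impossible to get the required symmetric decomposition,'' and they are correct.

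However, your $d=4$ case contains an arithmetic error that leaves a genuine gap. From $\Dhdvect{0}=(1,h_1,h_2,h_1,1)$, $\Dhdvect{1}=(0,p_1,p_1,0)$, $\Dhdvect{2}=(0,q,0)$ and $n=H_A(1)=h_1+p_1+q$, the degree is
\[
\deg A = (2+2h_1+h_2)+2p_1+q = 2+n+h_1+h_2+p_1,
\]
with only one $p_1$, not $2+n+h_1+h_2+2p_1$. Subtracting $H_A(2)=h_2+p_1\geq 6$ from $h_1+h_2+p_1\leq 12-n$ therefore yields only $h_1\leq 6-n$, \emph{not} $h_1+p_1\leq 6-n$. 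Your stronger (incorrect) inequality silently excludes cases such as $(n,h_1,p_1)=(4,1,3)$ or $(4,2,2)$ without justification, so the final claim ``$h_2+p_1\leq 3<6$ in all possibilities'' is not established. The argument can be repaired: one still has $p_1\leq n-h_1$ from $n=h_1+p_1+q$, and combining this with $h_1\leq 6-n$, $h_1\geq 1$ (since $H_A(3)=h_1\geq 1$), and Macaulay's bound $h_2\leq\binom{h_1+1}{2}$ gives $h_2+p_1\leq 5$ in every surviving case. The paper's own $d=4$ step sidesteps the bookkeeping: it observes $H_A(1)<H_A(2)$ (immediate from $H_A(1)\leq 5<6\leq H_A(2)$), which forces $h_1<h_2$; with $h_1\leq 2$ from the degree bound this pins $\Dhdvect{0}=(1,2,3,2,1)$ by Macaulay, and then the remaining degree budget forces $p_1\leq 2$, so $H_A(2)=3+p_1\leq 5$.
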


\begin{proof}
    Let $d$ be the socle degree of $A$.
    Suppose $H_A(2) \geq 6$. Then $H_{A}(3) + H_{A}(4) + \dots \leq 3$, thus
    $d\in \{3, 4, 5\}$. The cases $d = 3$ and $d = 5$ immediately lead to
    contradiction --- it is impossible to get the required symmetric
    decomposition. We will consider the case $d = 4$. In this case $H_A = (1,
    *, *, *, 1)$ and its symmetric decomposition is $(1, e, q, e, 1) + (0, m,
    m, 0) + (0, t, 0)$.
    Then $e = H_A(3) \leq 14 - 2 - 4 - 6 = 2$.
    Since $H_A(1) < H_A(2)$ by assumption, we have $e < q$. This can
    only happen if $e = 2$ and $q = 3$. But then $14\geq \deg A = 9 + 2m + t$,
    thus $m\leq 2$ and $H_A(2) = m + q \leq 5$. A contradiction.
\end{proof}

\begin{lemma}\label{ref:14341notexists:lem}
    There does not exist a finite local Gorenstein algebra $(\DA, \mm, \kk)$ with Hilbert
    function \[(1, 4, 3, 4, 1, \ldots , 1).\]
\end{lemma}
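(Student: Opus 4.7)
My plan is to split by the socle degree $d$ and to reduce the $d\geq 5$ case to the base case $d=4$, where an obstruction from Lemma~\ref{ref:P1gotzmann:lem} immediately applies.

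For $d=4$ the Hilbert function $(1,4,3,4,1)$ is symmetric about $d/2=2$, so by Corollary~\ref{ref:symmetricHf:cor} the associated graded algebra $\gr A$ is Gorenstein with the same Hilbert function. Since $\gr A$ is finite graded Gorenstein with socle concentrated in degree $4$, its defining ideal is $2$-saturated by Remark~\ref{ref:GorensteinSaturated:rmk}. Then Lemma~\ref{ref:P1gotzmann:lem}(1), applied with $m=2$ (using $H(2)=3=m+1$ and $H(3)=4=m+2$), forces $H(1)=2$, contradicting $H_A(1)=4$.

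For $d\geq 5$ I first observe that $H_A(d-1)=1$ forces $\Delta_0(1)=1$ in the symmetric decomposition, because $\Delta_a(d-1)=0$ for all $a\geq 1$. Since $\Delta_0$ is a graded Gorenstein Hilbert function of embedding dimension $1$, this gives $\Delta_0=(1,1,\ldots,1)$. By Corollary~\ref{ref:symmetricHf:cor} this means $\Apolar{f_d}\simeq\kk[x]/x^{d+1}$, so after a linear change of variables $f_d=\DPel{x_1}{d}$. I put $f$ in standard form via Theorem~\ref{ref:stform:thm} (augmented by Example~\ref{ref:standardformofstretched:ex} when $d\geq 6$), writing $f=\DPel{x_1}{d}+g$ with $\deg g\leq d-1$. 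Proposition~\ref{ref:degreecomparison:prop} applied to $f_{\geq 4}$ together with the fact that $\Delta_{d-4}=(0,3,2,3,0)$ is the only way to realise the prescribed Hilbert function forces $g_4\neq 0$.

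The main step is a careful dimension count for $\mathfrak m^i\hook f$. Since $\Dx_1^{d-i}\hook\DPel{x_1}{d}=\DPel{x_1}{i}$, the space $\mathfrak m^i\hook f$ automatically contains $\langle\DPel{x_1}{d-i},\ldots,x_1,1\rangle$. Matching the prescribed dimensions $\dim\mathfrak m^i\hook f=\sum_{j\geq i}H_A(j)$ yields two constraints on $g_4$:
\begin{enumerate}
\item[(i)] from $H_A(3)=4$, the catalecticant image $S_3\hook g_4$ must, modulo $\langle x_1\rangle$, surject onto $\langle x_2,x_3,x_4\rangle\subset \DP_1$, which forces $g_4$ to depend on each of $x_2,x_3,x_4$;
\item[(ii)] from $H_A(2)=3$, either $\dimk S_2\hook g_4=3$ with $\DPel{x_1}{2}\in S_2\hook g_4$ (Case A), or $\dimk S_2\hook g_4=2$ (Case B).
\end{enumerate}

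The hard part is finishing the case analysis. Case B is excluded by Macaulay's growth theorem together with the symmetry of the graded Gorenstein $\Apolar{g_4}$: we would have $H_{\Apolar{g_4}}(3)\leq 2^{\langle 2\rangle}=2$, but by (i) the embedding dimension is at least $3$, hence $H_{\Apolar{g_4}}(3)=H_{\Apolar{g_4}}(1)\geq 3$, a contradiction. In Case A, (i) combined with $\DPel{x_1}{2}\in S_2\hook g_4$ rules out $g_4\in\kdp[x_2,x_3,x_4]$ (for then $S_2\hook g_4\subseteq\kdp[x_2,x_3,x_4]_2$ omits $\DPel{x_1}{2}$) and rules out $g_4$ depending on fewer than three of $\{x_2,x_3,x_4\}$; so $g_4$ depends on all four variables and $H_{\Apolar{g_4}}=(1,4,3,4,1)$. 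This is precisely the $d=4$ case, which was already excluded. This completes the reduction.
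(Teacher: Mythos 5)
Your proof is correct in outline, and it takes a genuinely different route from the paper's. The paper does not separate socle degrees: after reducing to standard form $f=\DPel{x_1}{d}+f_{\leq 4}$, it argues (\`a la Lemma~\ref{ref:P1gotzmann:lem}) that $I(\gr A)_2$ may be identified, up to coordinate change, with $((\Dx_3,\Dx_4)\DS)_2$, deduces that $\Dx_3\hook f_4$ and $\Dx_4\hook f_4$ are powers of $x_1$, and concludes $f_4\in \DPel{x_1}{3}\kdp[x_3,x_4]+\kdp[x_1,x_2]$, hence annihilated by a linear form --- contradicting the standard form. You instead establish a clean base case at socle degree $4$ directly via Corollary~\ref{ref:symmetricHf:cor}, Remark~\ref{ref:GorensteinSaturated:rmk} and Lemma~\ref{ref:P1gotzmann:lem}, and then reduce $d\geq 5$ to it by showing the degree-$4$ form $g_4$ must itself have Hilbert function $(1,4,3,4,1)$. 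The advantage of your route is its modularity: once the $d=4$ obstruction is in place, the inductive reduction is conceptually transparent. The paper's route is more direct and avoids the reduction machinery entirely.

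That said, the argument for $d\geq 5$ is compressed to the point where several load-bearing steps are only asserted. (a) That $\Delta_{d-4}=(0,3,2,3,0)$ with all other $\Delta_a$ ($a\geq 1$) vanishing is the unique symmetric decomposition requires the Macaulay bound on partial sums from Lemma~\ref{ref:sumsofdhdaraOseqence:lem}: e.g.\ a nonzero $\Delta_{d-5}$ would make $\sum_{a\leq d-5}\Delta_a$ violate Corollary~\ref{ref:nonincreasinghf:cor}, and the value $s=\Delta_{d-4}(2)=2$ is forced by $H(3)=4\leq (1+s)^{\langle 2\rangle}$; this chain of inequalities should be spelled out. (b) Constraint (i) in fact falls out of $\linear{f}{d-4}=\DP_1$ and Proposition~\ref{ref:linearpartsofdecomposition:prop}, not directly from $H_A(3)$: every linear form is a partial of $f_{\geq 4}=\DPel{x_1}{d}+g_4$, and comparing homogeneous degrees yields $x_2,x_3,x_4\in \DS_3\hook g_4 + \kk x_1$. (c) For constraint (ii) one must actually identify $\DSf_{\geq 4}\cap\DP_{\leq 2}$ with $\langle\DPel{x_1}{2},x_1,1\rangle + \DS_{\geq 2}\hook g_4$; this uses $\Dx_1^3\hook g_4=0$ (which you should state explicitly, citing Example~\ref{ref:standardformofstretched:ex}) to kill the would-be cross-terms in degrees $\geq 3$, and then equating $\dim(\langle\DPel{x_1}{2}\rangle+\DS_2\hook g_4)=3$ gives your dichotomy. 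Each of these is fillable and I verified they do go through, but as written they read as assertions rather than deductions --- worth a line or two each.
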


\begin{proof}
    \def\Ddegf{d}
    \def\Dan#1{\Ann(#1)}%
    See \cite[pp.~99-100]{iarrobino_associated_graded} for the proof or
    \cite[Lemma~5.3]{CJNPoincare} for a generalization. We provide a sketch for
    completeness.
    Suppose such an algebra $A$ exists and fix its dual
    generator $f\in \kk[x_1, \ldots, x_4]_\Ddegf$ in the standard form (Definition~\ref{ref:standardform:def}). Let $I =
    \Dan{f}$.
    The proof relies on two observations. First, the leading term of $f$ is, up
    to a constant, equal to $\DPel{x_1}{\Ddegf}$ and in fact we may take $f =
    \DPel{x_1}{\Ddegf} +
    f_{\leq 4}$. Moreover, analysing the symmetric decomposition directly, we
    have $\Dhdvect{A, d-2} = \Dhdvect{A, d-3} = 0$. Using
    Proposition~\ref{ref:degreecomparison:prop}, we derive that the
    Hilbert functions of  $\Apolar{\DPel{x_1}{d} + f_4}$ and $\Apolar{f}$ are equal. Second,
    $h(3) = 4 = 3^{\langle 2\rangle} = h(2)^{\langle 2\rangle}$ is the maximal growth, so arguing similarly as in
    Lemma~\ref{ref:P1gotzmann:lem} we may assume that the degree
    two part $I_2$ of the ideal of $\gr A$ is equal to $((\Dx_3,
    \Dx_4)\DS)_2$. Then any derivative of $\Dx_3\hook f_4$ is a derivative of
    $\DPel{x_1}{d}$, i.e., a power of $x_1$. It follows that $\Dx_3\hook f_4$ itself is a
    power of $x_1$; similarly $\Dx_4\hook f_4$ is a power of $x_1$.
    It follows that $f_4\in \DPel{x_1}{3}\cdot \kk[x_3,x_4] + \kk[x_1,
    x_2]$, but then $f_4$ is annihilated by a
    linear form, which contradicts the fact that $f$ is in
    the standard form.
\end{proof}

The following lemmas essentially deal with the cleavability
(Definition~\ref{ref:cleavable:def}) in the case
$(1, 4, 4, 3, 1, 1)$. Here the method is straightforward, but the cost
is that the proof is broken into several cases and quite long.

\begin{lemma}\label{ref:144311Hilbfunc:lem}
    \def\Dan#1{\Ann(#1)}%
    Let $f = \DPel{x_1}{5} + f_4\in \DP$ be a polynomial such that $H_{\Apolar{f}}(2) <
    H_{\Apolar{f_4}}(2)$.
    Let $\DPut{tmpQ}{\mathcal{Q}} = \DS_2 \cap
    \Dan{\DPel{x_1}{5}} \subseteq
    \DS_2$. Then $\DPel{x_1}{2}\in \DtmpQ f_4$ and $\Dan{f_4}_{2} \subseteq \DtmpQ$.
\end{lemma}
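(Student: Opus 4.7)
My plan is to compute both $H_{\Apolar{f}}(2)$ and $H_{\Apolar{f_4}}(2)$ as dimensions of ``degree-$2$ partials'' subspaces of $\DP_2$ via Macaulay's inverse systems, and then compare them. Writing $f_4=f_{(4)}+f_{(3)}+f_{(2)}+\cdots$ in homogeneous decomposition and $\sigma=\sigma_0+\sigma_1+\sigma_2+\cdots\in\DS$, unwinding the condition $\sigma\hook f\in\DP_{\leq 2}$ degree by degree gives $\sigma_0=0$, $\sigma_1\in\langle\Dx_2,\dots,\Dx_n\rangle$, and the coupling equation $\sigma_2\hook\DPel{x_1}{5}+\sigma_1\hook f_{(4)}=0$. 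The degree-$2$ part of $\sigma\hook f$ is then $\sigma_3\hook\DPel{x_1}{5}+\sigma_2\hook f_{(4)}+\sigma_1\hook f_{(3)}$, whose first summand sweeps out $\kk\cdot\DPel{x_1}{2}$ as $\sigma_3$ varies in $\DS_3$. A parallel expansion for $f_4$ (with no $\DPel{x_1}{5}$ to obstruct anything) forces $\sigma_0=0$, $\sigma_1\in\Ann(f_{(4)})_1$, and yields the degree-$2$ part $\sigma_2\hook f_{(4)}+\sigma_1\hook f_{(3)}$ with $\sigma_2\in\DS_2$ free.

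The heart of the comparison is that $\mathcal{Q}\subset\DS_2$ has codimension one with complement spanned by $\Dx_1^2$, so $\DS_2\hook f_{(4)}=\mathcal{Q}\hook f_{(4)}+\kk(\Dx_1^2\hook f_{(4)})$. In the principal sub-case where $\DPel{x_1}{3}\notin\langle\Dx_2,\dots,\Dx_n\rangle\hook f_{(4)}$, the coupling equation forces $\sigma_2\in\mathcal{Q}$ and $\sigma_1\in L_0:=\Ann(f_{(4)})_1\cap\langle\Dx_2,\dots,\Dx_n\rangle$, producing the image spaces
\[
M_f=\kk\cdot\DPel{x_1}{2}+\mathcal{Q}\hook f_{(4)}+L_0\hook f_{(3)},\qquad M_{f_4}=\mathcal{Q}\hook f_{(4)}+\kk(\Dx_1^2\hook f_{(4)})+\Ann(f_{(4)})_1\hook f_{(3)}.
\]
A direct dimension count of these two codimension-at-most-one enlargements of the common part yields $H_{\Apolar{f}}(2)\geq H_{\Apolar{f_4}}(2)-1$, with the hypothesized strict inequality forcing simultaneously: \emph{(a)} $\DPel{x_1}{2}\in \mathcal{Q}\hook f_{(4)}$, so that the new $\DPel{x_1}{2}$ direction in $M_f$ is absorbed; and \emph{(b)} $\Dx_1^2\hook f_{(4)}\notin\mathcal{Q}\hook f_{(4)}$, equivalently $\Ann(f_{(4)})_2\subseteq\mathcal{Q}$, so that $\Dx_1^2\hook f_{(4)}$ contributes a genuinely new direction to $M_{f_4}$. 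The auxiliary sub-cases $\DPel{x_1}{3}\in\langle\Dx_2,\dots,\Dx_n\rangle\hook f_{(4)}$ and $L_0\subsetneq\Ann(f_{(4)})_1$ are eliminated by parallel dimension inequalities.

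The second conclusion $\Ann(f_4)_2\subseteq\mathcal{Q}$ is then immediate, since any $\sigma\in\Ann(f_4)_2$ annihilates in particular the top-degree form $f_{(4)}$, giving $\Ann(f_4)_2\subseteq\Ann(f_{(4)})_2\subseteq\mathcal{Q}$ by (b). For the first conclusion $\DPel{x_1}{2}\in\mathcal{Q}\hook f_4$, one starts from a witness $\sigma\in\mathcal{Q}$ with $\sigma\hook f_{(4)}=\DPel{x_1}{2}$ provided by (a), and corrects it by an element of $\Ann(f_4)_2\subseteq\mathcal{Q}$ to kill the a priori nonzero degree $\leq 1$ parts $\sigma\hook f_{(3)}+\sigma\hook f_{(2)}$. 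The main obstacle --- and the technical heart of the argument --- is justifying this final correction: one must verify that $\sigma\hook f_{(3)}$ lies in $\Ann(f_4)_2\hook f_{(3)}$ and analogously for $f_{(2)}$, which will follow from tracing through the equality conditions of the central dimension count together with the compatibility constraints on the higher-degree terms $\sigma_3,\sigma_4,\sigma_5$ inherent in the degree-by-degree lifting analysis of the first paragraph.
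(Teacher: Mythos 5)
Your core dimension count is the right idea and, in fact, the same as the paper's: $\mathcal{Q}$ has codimension one in $\DS_2$, so $\dim(\mathcal{Q}\hook f_4)\geq H_{\Apolar{f_4}}(2)-1$, and the degree-two partials of $f$ contain both $\DPel{x_1}{2}$ (from $\Dx_1^3\hook f$) and $\mathcal{Q}\hook f_4$; failure of either conclusion would force $H_{\Apolar{f}}(2)\geq H_{\Apolar{f_4}}(2)$. The paper simply runs the two contrapositives directly (if $\Ann(f_4)_2\not\subseteq\mathcal{Q}$ then $\mathcal{Q}\hook f_4 = \DS_2\hook f_4$; if $\DPel{x_1}{2}\notin\mathcal{Q}\hook f_4$ then the sum is direct), which avoids your enumeration of sub-cases.

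However, your proof as written has a genuine gap, and it stems from a misreading of the setup: in this lemma $f_4$ is the \emph{homogeneous} degree-four part of $f$. That is the paper's standing convention for subscripted forms (cf.\ the homogeneous decomposition $f=f_d+\cdots+f_0$ in Definition~\ref{ref:standardform:def}), and every application confirms it --- in Lemma~\ref{ref:144311caseCI:lem} one has $H_{\Apolar{f_4}}=(1,3,4,3,1)$, a symmetric Hilbert function, and Lemma~\ref{ref:144311addingpartial:lem} explicitly calls $f_4$ a quartic. Your expansion $f_4=f_{(4)}+f_{(3)}+f_{(2)}+\cdots$ therefore reduces to $f_4=f_{(4)}$, and the entire apparatus of coupling equations in $\sigma_3,\sigma_4,\sigma_5$, the auxiliary sub-cases, and the ``final correction'' you flag as the technical heart of the argument all disappear: once $f_{(3)}=f_{(2)}=0$, your conditions~(a) and~(b) \emph{are} the two conclusions, with nothing left to correct. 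Moreover, the correction mechanism you sketch could not work even in your generalized setting: any $\tau\in\Ann(f_4)_2$ annihilates $f_4$ entirely (in particular $\tau\hook f_{(3)}=\tau\hook f_{(2)}=0$ by degree), so replacing $\sigma$ by $\sigma+\tau$ leaves $\sigma\hook f_4$ unchanged and kills nothing, and $\Ann(f_4)_2\hook f_{(3)}=\{0\}$, so the condition you defer, ``$\sigma\hook f_{(3)}\in\Ann(f_4)_2\hook f_{(3)}$'', is just ``$\sigma\hook f_{(3)}=0$'' in disguise --- precisely the thing to be proved, not a consequence of tracing equality conditions. With the intended homogeneous $f_4$ the gap evaporates and your argument becomes a longer route to the paper's proof.
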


\begin{proof}
    \def\Dan#1{\Ann(#1)}%
    Note that $\dim \DtmpQ f_4 \geq \dim \DS_2 f_4 - 1 = H_{\Apolar{f_4}}(2) -
    1$. If $\Dan{f_4}_{2} \not\subseteq \DtmpQ$, then there is a $q\in \DtmpQ$
    such that $\Dx_1^2 - q\in \Dan{f_4}$. Then $\DtmpQ f_4 = \DS_2 f_4$ and
    so we obtain $H_{\Apolar{f}}(2) = H_{\Apolar{f_4}}(2)$, which is a contradiction.
    Suppose that $\DPel{x_1}{2}\not\in \DtmpQ f_4$. Then the degree
    two partials of $f$ contain a direct sum of $\kk \DPel{x_1}{2}$ and $\DtmpQ f_4$,
    thus they are at least $H_{\Apolar{f_4}}(2)$-dimensional, so that
    $H_{\Apolar{f}}(2)\geq H_{\Apolar{f_4}}(2)$, a
    contradiction.
\end{proof}

\begin{lemma}\label{ref:144311caseCI:lem}
    \def\Dan#1{\Ann(#1)}%
    Let $f = \DPel{x_1}{5} + f_4\in \DP$ be a polynomial such that $H_{\Apolar{f}} = (1, 3, 3, 3, 1, 1)$
    and $H_{\Apolar{f_4}} = (1, 3, 4, 3, 1)$. Suppose that $\Dx_1^3\hook f_4 =
    0$ and that $\pp{\Dan{f_4}}_2$ defines a complete intersection. Then
    $\Apolar{f_4}$ and $\Apolar{f}$ are complete intersections.
\end{lemma}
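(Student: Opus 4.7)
The setup is that both $\Apolar{f_4}$ and $\Apolar{f}$ are local algebras on $S = \kk[\alpha_1,\alpha_2,\alpha_3]$ (three variables, since $H_{\Apolar{f_4}}(1) = 3$), with $I_4 = \Ann(f_4)$ and $I = \Ann(f)$. By hypothesis $(I_4)_2 = \langle q_1, q_2\rangle$ where $q_1, q_2$ is a regular sequence of quadrics. The graded ring $R := S/(q_1, q_2)$ is $1$-dimensional Cohen--Macaulay with Hilbert series $(1+t)^2/(1-t)$, i.e.~Hilbert function $(1,3,4,4,4,\ldots)$; comparing with $H_{\Apolar{f_4}} = (1,3,4,3,1)$, the module $I_4/(q_1,q_2)$ is minimally generated by a single cubic, any lift of which I call $q_3$. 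I would then argue that $q_3$ is a non-zero-divisor on $R$: a zero-divisor would lie in some height-$2$ homogeneous minimal prime $\mathfrak{p}$ of $(q_1,q_2)$, so the cone $V(\mathfrak{p}) \subset \AA^3$ would be a $1$-dimensional subvariety of $V(I_4)$, contradicting that $\Apolar{f_4}$ is Artinian with support $\{0\}$. Hence $q_1, q_2, q_3$ is a regular sequence and $S/(q_1,q_2,q_3)$ is a graded complete intersection whose Hilbert series $(1+t)^2(1+t+t^2) = 1+3t+4t^2+3t^3+t^4$ matches $H_{\Apolar{f_4}}$; a dimension count forces $I_4 = (q_1, q_2, q_3)$, so $\Apolar{f_4}$ is a CI.

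For $\Apolar{f}$, I would apply Lemma~\ref{ref:144311Hilbfunc:lem} to pick a quadric $\sigma \in \mathcal{Q}$ with $\sigma \hook f_4 = \DPel{x_1}{2}$; direct contraction (using $\alpha_1^3\hook f_4 = 0$ by hypothesis and $\sigma\hook\DPel{x_1}{5} = 0$ since $\sigma \in \mathcal{Q}$) yields $q_1, q_2, \alpha_1^3-\sigma \in I$. A preliminary observation is that $\alpha_1^3 \notin (q_1, q_2)$: otherwise $\alpha_1^3 = \ell_1 q_1 + \ell_2 q_2$ for some linear $\ell_i$, whence $-\sigma = (\alpha_1^3 - \sigma) - (\ell_1 q_1 + \ell_2 q_2) \in I$, contradicting $\sigma \hook f = \DPel{x_1}{2} \neq 0$. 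Hence we may take $q_3 = \alpha_1^3$ in the first step, so $I_4 = (q_1, q_2, \alpha_1^3)$ is a CI of degrees $(2,2,3)$ with length $12$.

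Setting $I' := (q_1, q_2, \alpha_1^3 - \sigma) \subseteq I$, my plan is to prove $\dimk S/I' = 12 = \dimk \Apolar{f}$; combined with the inclusion $I' \subseteq I$ this forces $I' = I$, so $I$ is generated by three elements and $\Apolar{f}$, being Gorenstein of embedding dimension $3$ with $\mu(I) = 3$, is a complete intersection. To obtain the length equality I would interpolate through the family $\mathcal{I}_t := (q_1, q_2, \alpha_1^3 - t\sigma) \subset S[t]$ on $\AA^1_t$: at $t = 0$ the quotient is the graded CI $S/I_4$ of length $12$, and at $t = 1$ it is $S/I'$. Flatness of $S[t]/\mathcal{I}_t$ over $\kk[t]$ is verified through the criterion of Proposition~\ref{ref:flatelementary:prop}, in the same spirit as the proof of Proposition~\ref{ref:raysumflatness:prop} for ray families; the key syzygy input is that $\alpha_2\sigma, \alpha_3\sigma \in (q_1, q_2)$, which holds because $\alpha_i \sigma \hook f_4 = \alpha_i \hook \DPel{x_1}{2} = 0$ for $i = 2, 3$ combined with $\alpha_1^3 \notin (q_1, q_2)$. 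Constant fiber length then follows from flatness, delivering the desired dimension identity.

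The main obstacle will be the flatness verification, because $\alpha_1^3 - t\sigma$ is not an honest ray-sum relation: it mixes orders $2$ and $3$, and $\sigma \hook f_4 = \DPel{x_1}{2}$ is a quadratic partial rather than the linear form that appears in the ray-family setup of Section~\ref{ssec:rayfromMacaulay}. Hence Proposition~\ref{ref:raysumflatness:prop} cannot be invoked directly, and one must rerun the syzygy bookkeeping of its proof by hand with the specific $\sigma$ in place of a coordinate-style ``new variable'' step, using the two independent linear syzygies above to control the initial ideal of $\mathcal{I}_t$ in every degree up to the socle degree $5$.
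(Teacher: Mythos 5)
Your reduction of the first claim to showing that the cubic $q_3$ is a nonzerodivisor on $R=S/(q_1,q_2)$ is circular as written. The Hilbert-function comparison only tells you that $\pp{\Ann(f_4)/(q_1,q_2)}_3$ is one-dimensional; it does not rule out minimal generators of $\Ann(f_4)$ in degree $4$ (that would happen exactly when some linear form kills $q_3$ modulo $(q_1,q_2)$, and the Hilbert function $(0,0,0,1,3,4,\dots)$ of $\Ann(f_4)/(q_1,q_2)$ is numerically compatible with it), so at this stage you may not write $\Ann(f_4)=(q_1,q_2,q_3)$. Your nonzerodivisor argument needs precisely that: from $q_3\in\mathfrak p$ you only get $V(\mathfrak p)\subseteq V(q_1,q_2,q_3)$, not $V(\mathfrak p)\subseteq V\pp{\Ann(f_4)}$, so there is no contradiction with $\Apolar{f_4}$ being Artinian unless you already know the three elements generate. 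The missing ingredient is the one the paper uses: the minimal graded free resolution of the Artinian Gorenstein algebra $\Apolar{f_4}$ is self-dual, and since $q_1,q_2$ is a regular sequence there are no first syzygies in degrees $\leq 3$, hence no minimal generators in degrees $\geq 4$; thus $\Ann(f_4)$ is generated in degrees $2$ and $3$, and the count $H_{S/(q_1,q_2)}(3)=4=H_{S/\Ann(f_4)}(3)+1$ yields the single cubic generator. Only after that does either your length comparison or the paper's argument close the first claim.

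For $\Apolar{f}$ your elements $q_1,q_2,\Dx_1^3-\sigma\in\Ann(f)$ and the observation $\Dx_1^3\notin(q_1,q_2)$ are correct, but the rest is a plan rather than a proof: you yourself flag the flatness of $(q_1,q_2,\Dx_1^3-t\sigma)$ as an unresolved obstacle, and even granting flatness, constant fibre length fails for affine families without finiteness over $\kk[t]$ (compare $V(\Dx_1 t-1)\to\AA^1$), which you do not address. Your justification of $\Dx_2\sigma,\Dx_3\sigma\in(q_1,q_2)$ also needs repair: the correct reason is that $q_1,q_2,\sigma$ contain no $\Dx_1^2$-monomial, so no element of $(q_1,q_2)_3$ involves $\Dx_1^3$, while $\Dx_i\sigma$, lying in $\Ann(f_4)_3=(q_1,q_2)_3\oplus\kk\Dx_1^3$, has no $\Dx_1^3$-term either. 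More to the point, the whole degeneration is unnecessary. The paper argues directly: $\Spec S/(q_1,q_2)$ is a cone over a finite scheme, and $\Dx_1^3-\sigma$, whose top-degree form $\Dx_1^3$ cuts this cone down to the Artinian $\Apolar{f_4}$ and hence vanishes identically on no component, cannot vanish on any component either; so $(q_1,q_2,\Dx_1^3-\sigma)$ defines a zero-dimensional complete intersection of length at most $2\cdot 2\cdot 3=12=\dimk\Apolar{f}$, and the inclusion into $\Ann(f)$ forces equality. Equivalently, since the top-degree forms of your three generators already generate the colength-$12$ ideal $(q_1,q_2,\Dx_1^3)$, the standard associated-graded (initial ideal) bound gives $\dimk S/(q_1,q_2,\Dx_1^3-\sigma)\leq 12$ outright; the very spanning argument that would make your family finite over $\kk[t]$ already delivers the inequality you want, with no flatness verification at all.
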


\begin{proof}
    \def\Dan#1{\Ann(#1)}%
    Let $I := \Dan{f_4}$.
    First we will prove that $\Dan{f_4} = (q_1, q_2, c)$, where $\langle q_1,
    q_2\rangle = I_2$ and $c\in I_3$. Then $\Apolar{f_4}$ is a complete intersection.
    By assumption, $q_1, q_2$ form a regular sequence. Thus there are no syzygies of
    degree at most three in the minimal resolution of $\Apolar{f_4}$. By
    the symmetry of the minimal resolution, see
    \cite[Corollary~21.16]{Eisenbud},
    there are no generators of degree at least four in the minimal generating
    set of $I$. Thus $I$ is generated in degree two and three. But
    $H_{\DS/(q_1, q_2)}(3) = 4 = H_{\DS/I}(3) + 1$, thus there is a
    cubic $c$, such that $I_3 = \kk c\oplus (q_1, q_2)_3$, then $(q_1, q_2, c) = I$, thus $\Apolar{f_4} = \DS/I$
    is a complete intersection.

    Let $\DPut{anntwo}{\mathcal{Q}}:= \Dan{\DPel{x_1}{5}} \cap S_2 \subseteq S_2$.
    By Lemma~\ref{ref:144311Hilbfunc:lem} we have $q_1, q_2\in \Danntwo$, so
    that $\Dx_1^3\in I \setminus (q_1, q_2)$, then $I = (q_1, q_2, \Dx_1^3)$.
    Moreover, by the same lemma, there exists $\sigma\in \Danntwo$ such that $\sigma\hook f_4 =
     \DPel{x_1}{2}$.

    Now we prove that $\Apolar{f}$ is a complete intersection.
    Let $J := (q_1, q_2, \Dx_1^3 - \sigma) \subseteq \Dan{f}$.
    We will prove that $\DS/J$ is a complete intersection.
    Since $q_1$, $q_2$, $\Dx_1^3$ is a
    regular sequence, the scheme $\Spec\DS/(q_1, q_2)$ is a cone over a scheme of
    dimension zero and $\Dx_1^3$ does not
    vanish identically on any of its components. Since $\sigma$ has degree two, $\Dx_1^3 - \sigma$
    also does not vanish identically on any of the components of $\Spec
    \DS/(q_1, q_2)$, thus $\Spec \DS/J$ has dimension zero,
    so it is a complete intersection (see also \cite[Corollary~2.4,
    Remark~2.5]{Valabrega_FormRings}).
    Then the quotient by $J$ has degree at most
    $\deg(q_1)\deg(q_2)\deg(\Dx_1^3 - \sigma) = 12 = \dimk \DS/\Dan{f}$. Since
    $J \subseteq \Dan{f}$, we have $\Dan{f} = J$ and
    $\Apolar{f}$ is a complete intersection.
\end{proof}

\begin{lemma}\label{ref:144311casenotCI:lem}
    \def\Dan#1{\Ann(#1)}%
    Let $f = \DPel{x_1}{5} + f_4 + g\in \DP$, where $\deg g\leq 3$, be a polynomial such that
    $H_{\Apolar{f_{\geq 4}}} = (1, 3, 3, 3, 1, 1)$
    and $H_{\Apolar{f_4}} = (1, 3, 4, 3, 1)$. Suppose that $\Dx_1^3\hook f_4 =
    0$ and that $\pp{\Dan{f_4}}_2$ does not define a complete intersection.
    Then $\Apolar{f}$ is cleavable.
\end{lemma}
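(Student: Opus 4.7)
The plan is to exploit the non-CI hypothesis on $(\Ann f_4)_2$ to derive a normal form for $f_4$ displaying a pure divided-power summand in a single variable, and then to cleave $\Apolar{f}$ by a ray family along that variable.

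I would first analyze $(\Ann f_4)_2$. Since it is two-dimensional and not a complete intersection in three variables, its generating quadrics share a common linear factor, giving $(\Ann f_4)_2 = \ell \cdot V$ for some $\ell \in \DS_1$ and a two-dimensional subspace $V \subseteq \DS_1$. Running through the possibilities for $(\ell, V)$ up to a linear change of coordinates, and using $H_{\Apolar{f_4}}(1) = 3$ together with $\Dx_1^3 \hook f_4 = 0$ to exclude configurations that would force the embedding dimension of $\Apolar{f_4}$ to drop, the surviving normal form is
\[
f_4 = f_4'(x_1,x_2) + c\,\DPel{x_3}{4}, \qquad c \in \kk^\times,
\]
with $f_4' \in \kdp[x_1,x_2]$ of Hilbert function $(1, 2, 3, 2, 1)$. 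After scaling I may assume $c = 1$; and by combining Lemma~\ref{ref:topdegreetwist:lem} with a $\Dgrp$-action from Proposition~\ref{ref:Gorenstein:prop} (whose tangent description is Proposition~\ref{ref:tangentspacepoly:prop}), I would remove from the lower-order term $g$ all monomials involving $x_3$ without changing the isomorphism class of $\Apolar{f}$.

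For the cleaving itself, after the normalizations above $f$ has the shape $\widetilde f(x_1, x_2) + \DPel{x_3}{4}$ for some $\widetilde f \in \kdp[x_1, x_2]$ of degree $5$ with top form $\DPel{x_1}{5}$. Taking $\partial := \Dx_1^5 \in \kk[\Dx_1, \Dx_2]$, I compute $\partial \hook \widetilde f = 1$ and $\partial^2 \hook \widetilde f = 0$ by degree reasons, so $f = \widetilde f + \DPel{x_3}{4}$ is precisely the fourth ray sum of $\widetilde f$ with respect to $\partial$, in the sense of Definition~\ref{ref:raysum:def}. Proposition~\ref{ref:raysumflatness:prop} then supplies the associated flat lower ray family over $\Spec\kk[t]$, and Proposition~\ref{ref:fibersofray:prop}, whose characteristic hypothesis is satisfied because $\chark \neq 3 = d - 1$, shows that every non-special fibre is $\Spec \Apolar{\widetilde f} \sqcup (\Spec \kk)^{\sqcup 3}$, hence reducible. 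Therefore $\Apolar{f}$ is cleavable.

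The principal obstacle will be Step~1: one must rigorously exclude all alternative positions of $(\ell, V)$, including the degenerate cases $\ell \in V$ and $\Dx_1 \notin V$, in each instance exhibiting a contradiction with either $H_{\Apolar{f_4}}(1) = 3$ or $\Dx_1^3 \hook f_4 = 0$. A secondary technicality is the $\Dgrp$-action used to eliminate the $x_3$-dependent part of $g$: one must verify that the relevant monomials lie in the tangent space $\Dgrptang \cdot f$ (so that the reduction preserves the isomorphism class of $\Apolar{f}$) and that the resulting polynomial still has exactly the shape required by the ray-sum construction above.
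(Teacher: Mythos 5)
Your overall strategy — put the degenerate pencil $\Ann(f_4)_2$ into a common-factor normal form and then peel a point off along the fourth-power direction — is the same as the paper's, but as written there are genuine gaps, and they all trace back to the fact that you never use the hypothesis $H_{\Apolar{f_{\geq 4}}}=(1,3,3,3,1,1)$. Your Step 1 exclusion cannot be run on $H_{\Apolar{f_4}}(1)=3$ and $\Dx_1^3\hook f_4=0$ alone: take $f_4=x_1\DPel{x_3}{3}+\DPel{x_2}{2}\DPel{x_3}{2}+\DPel{x_2}{4}$. One checks that $H_{\Apolar{f_4}}=(1,3,4,3,1)$, that $\Dx_1^3\hook f_4=0$, and that $\Ann(f_4)_2=\langle \Dx_1^2,\Dx_1\Dx_2\rangle$, which is exactly the configuration ``common factor inside $V$'' that you must rule out; neither of your two criteria is violated. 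This configuration is impossible only because of the unused hypothesis: $H_{\Apolar{f_{\geq4}}}(2)=3<4=H_{\Apolar{f_4}}(2)$ gives, via Lemma~\ref{ref:144311Hilbfunc:lem}, $\Ann(f_4)_2\subseteq \Ann(\DPel{x_1}{5})_2$, which excludes $\Dx_1^2$; and in the paper's remaining subcase the contradiction is with $H_{\Apolar{f_4}}(2)=4$, again not with $H(1)$. The same hypothesis is also what lets you place the quintic: after normalizing $f_4=f_4'(x_1,x_2)+\DPel{x_3}{4}$, the degree-five part of $f$ is $\DPel{\ell}{5}$ for a linear form $\ell$ that your normalization of $f_4$ does not control; a nonzero $x_3$-component of $\ell$ would destroy the shape $\widetilde f(x_1,x_2)+\DPel{x_3}{4}$, and it is ruled out only because $\Dx_3\Dx_1,\Dx_3\Dx_2\in\Ann(f_4)_2\subseteq\Ann(\DPel{\ell}{5})_2$ forces $\Dx_3\hook\ell=0$ — once more Lemma~\ref{ref:144311Hilbfunc:lem}, hence once more the $(1,3,3,3,1,1)$ assumption.

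The second gap is the reduction ``remove from $g$ all monomials involving $x_3$''. Lemma~\ref{ref:topdegreetwist:lem} removes monomials $\DPel{x_1}{i}x_j$ relative to the top power $\DPel{x_1}{\deg f}$; here $\DPel{x_3}{4}$ is not the top-degree form, and terms such as $x_3\cdot q(x_1,x_2)$ with $q$ outside the small space of quadratic leading forms available in the tangent space (Proposition~\ref{ref:uniptangentspacepoly:prop}) are not covered by anything you cite. Worse, in the only place this lemma is applied (the case $H_A=(1,4,4,3,1,1)$ in Lemma~\ref{ref:144311final:lem}) the tail $g$ genuinely involves a fourth variable, so $\Apolar{f}$ has embedding dimension four and cannot be isomorphic to $\Apolar{\widetilde f(x_1,x_2)+\DPel{x_3}{4}}$ at all. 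Fortunately the full separation is not needed: it suffices to arrange $\Dx_3^2\hook\bigl(f-\DPel{x_3}{4}\bigr)=0$, i.e.\ to remove only the monomials divisible by $\DPel{x_3}{2}$ (pure powers of $x_3$ go by adding partials, and $\DPel{x_3}{2}x_j$ by the twist $\Dx_j\mapsto\Dx_j+c\Dx_3^2$, since $\Dx_3^2\hook f=\DPel{x_3}{2}+\dots$), after which Corollary~\ref{ref:stretchedhavedegenerations:cor} with $d=4$, $c=2$ gives cleavability directly. That weaker normalization is the paper's route; it also makes Proposition~\ref{ref:fibersofray:prop} — whose characteristic hypothesis for $d-1=3$ is not granted by the statement of the lemma — unnecessary, since only reducibility of the nearby fibres is needed.
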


\begin{proof}
    \def\spann#1{\langle #1 \rangle}%
    \def\Dan#1{\Ann(#1)}%
    Let $\langle q_1, q_2\rangle = \pp{\Dan{f_4}}_2$. Since $q_1, q_2$ do not
    form a regular sequence, we have, after a linear transformation $\varphi$, two
    possibilities: $q_1 = \Dx_1\Dx_2$ and $q_2 = \Dx_1\Dx_3$ or $q_1 =
    \Dx_1^2$ and $q_2 = \Dx_1\Dx_2$. Let $\beta$ be the image of $\Dx_1$ under
    $\varphi$, so that $\beta^3\hook f_4 = 0$.

    Suppose first that $q_1 = \Dx_1\Dx_2$ and $q_2 = \Dx_1\Dx_3$. If $\beta$
    is up to constant equal to $\Dx_1$, then $\Dx_1\Dx_2, \Dx_1\Dx_3,
    \Dx_1^3\in \Dan{f_4}$, so that $\Dx_1^2$ is in the socle of
    $\Apolar{f_4}$, a contradiction. Thus we may assume, after another change
    of variables, that $\beta = \Dx_2$, $q_1 = \Dx_1\Dx_2$ and $q_2 = \Dx_1\Dx_3$.
    Then $f = \DPel{x_2}{5} + f_4 + \hat{g} = \DPel{x_2}{5} + \DPel{x_1}{4} + \hat{h} + \hat{g}$,
    where $\hat{h}\in \kdp[x_2, x_3]$ and $\deg(\hat{g})\leq 3$. Then by
    Lemma~\ref{ref:topdegreetwist:lem} we may assume that $\Dx_1^2\hook
    (f-\DPel{x_1}{4}) =
    0$, so $\Apolar{f}$ is cleavable by
    Corollary~\ref{ref:stretchedhavedegenerations:cor}.

    Suppose now that $q_1 = \Dx_1^2$ and $q_2 = \Dx_1\Dx_2$. If
    $\beta$ is not a linear combination of $\Dx_1, \Dx_2$, then we may assume $\beta
    = \Dx_3$. Let $m$ in $f_4$ be any term divisible by $x_1$. Since $q_1,
    q_2\in \Dan{f_4}$, we see that $m =\lambda x_1\DPel{x_3}{3}$ for some $\lambda\in
    \kk$. But since $\beta^3\in
    \Dan{f_4}$, we have $m = 0$. Thus $f_4$ does not contain $x_1$, so
    $H_{\Apolar{f_4}}(1) < 3$, a contradiction. Thus $\beta\in \langle \Dx_1,
    \Dx_2\rangle$. Suppose $\beta = \lambda\Dx_1$ for
    some $\lambda\in \kk^*$.
    Applying Lemma~\ref{ref:144311Hilbfunc:lem} to $f_{\geq 4}$ we see
    that $\DPel{x_1}{2}$ is a derivative of $f_4$, so $\beta^2\hook f_4\neq 0$, but
    $\beta^2\hook f_4 = \lambda^2q_1\hook f_4 = 0$, a contradiction. Thus
    $\beta = \lambda_1 \Dx_1 + \lambda_2 \Dx_2$ and changing $\Dx_2$ we
    may assume that $\beta = \Dx_2$. This substitution does not change $\langle \Dx_1^2,
    \Dx_1\Dx_2\rangle$. Now we directly check that $f_4 =
    \kappa_1 x_1\DPel{x_3}{3} + \kappa_2 \DPel{x_2}{2}\DPel{x_3}{2} +
    \kappa_3 x_2\DPel{x_3}{3} + \kappa_4
    \DPel{x_3}{4}$, for some $\kappa_{\bullet}\in \kk$. Since $x_1$ is a derivative
    of $f$, we have $\kappa_1\neq 0$. Then a non-zero element
    $\kappa_2\Dx_1\Dx_3 - \kappa_1\Dx_2^2$ annihilates $f_4$. A contradiction
    with $H_{\Apolar{f_4}}(2) = 4$.
\end{proof}

\begin{lemma}\label{ref:144311addingpartial:lem}
    \def\DC{C}%
    Let a quartic $f_4\in \DP$ be such that $H_{\Apolar{f_4}} = (1, 3, 3, 3, 1)$ and $\Dx_1^3\hook f_4 =
    0$. Let $\DC = \Apolar{\DPel{x_1}{5} + f_4}$, then $H_{\DC}(2) \geq 4$.
\end{lemma}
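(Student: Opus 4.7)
The plan is to compute $I_2 := \Ann(f)_2$ explicitly inside $S_2$ and deduce $H_C(2) \geq 4$ from dimension counting. For $\sigma \in S_2$, the partial $\sigma \hook f = \sigma \hook \DPel{x_1}{5} + \sigma \hook f_4$ has degree-$3$ piece $\sigma \hook \DPel{x_1}{5}$ (which is $0$ or a nonzero multiple of $\DPel{x_1}{3}$), while $\sigma \hook f_4 \in \DP_{\leq 2}$; so $\sigma \in I_2$ will force both $\sigma \hook \DPel{x_1}{5} = 0$ and $\sigma \hook f_4 = 0$, giving $I_2 = \mathcal{Q} \cap \Ann(f_4)_2$ where $\mathcal{Q} := \Ann(\DPel{x_1}{5}) \cap S_2$ is the codimension-one subspace of quadrics with vanishing $\Dx_1^2$-coefficient. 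Since $\dim \mathcal{Q} = \dim S_2 - 1$ and $\dim \Ann(f_4)_2 = \dim S_2 - H_{\Apolar{f_4}}(2) = \dim S_2 - 3$, inclusion-exclusion will give $\dim I_2 \geq \dim S_2 - 4$, with equality iff $\Ann(f_4)_2 \not\subseteq \mathcal{Q}$. Thus the whole proof reduces to producing a quadric in $\Ann(f_4)_2$ with nonzero $\Dx_1^2$-coefficient.

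Next I will reduce to a homogeneous quartic. A quadric $q \in S_2$ acts on $f_4 = \sum_k (f_4)_k$ with the pieces $q \hook (f_4)_k$ lying in distinct degrees, so $q \hook f_4 = 0$ iff $q$ annihilates each $(f_4)_k$; in particular $\Ann(f_4)_2 = \Ann(g)_2$ for $g := (f_4)_4$. By Corollary~\ref{ref:symmetricHf:cor} (applicable because $H_{\Apolar{f_4}}$ is symmetric), $H_{\Apolar{g}} = (1,3,3,3,1)$, and $\Dx_1^3 \hook g = 0$ follows by the same degree comparison. If $\Dx_1^2 \hook g = 0$ (which will cover in particular the case that $g$ does not involve $x_1$), then $\Dx_1^2 \in \Ann(g)_2 \setminus \mathcal{Q}$ already. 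Otherwise $g$ is a homogeneous quartic in three variables (using $H_{\Apolar{g}}(1) = 3$), and Proposition~\ref{ref:thirdsecant:prop} will apply: after a linear change of coordinates sending $\Dx_1$ to a linear form $\alpha$, the form $g$ becomes one of $\DPel{x_1}{4}+\DPel{x_2}{4}+\DPel{x_3}{4}$, $\DPel{x_1}{3} x_2 + \DPel{x_3}{4}$, or $\DPel{x_1}{3} x_3 + \DPel{x_1}{2}\DPel{x_2}{2}$, where $x_i$ now denote the new dp-coordinates.

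A direct computation (using $\chark \neq 2,3$) will exclude the first form (no nonzero $\alpha$ cubes to $0$) and the second (the only $\alpha$ with $\alpha^3 \hook g = 0$ is a scalar multiple of $\Dx_2$, but then $\alpha^2 \hook g = 0$, contradicting the assumption $\Dx_1^2 \hook g \neq 0$). So $g$ will be the third form, and $\alpha^3 \hook g = 0$ will force $\alpha = c_2 \Dx_2 + c_3 \Dx_3$; then $\alpha^2 \hook g = c_2^2 \DPel{x_1}{2}$, nonzero precisely when $c_2 \neq 0$. Because $\Dx_1 \Dx_3 \hook g = \DPel{x_1}{2}$ as well, the quadric $q := \alpha^2 - c_2^2 \Dx_1 \Dx_3$ lies in $\Ann(g)_2$, and it has nonzero $\alpha^2$-coefficient since the subtracted term $\Dx_1 \Dx_3$ is divisible by $\Dx_1$, which is linearly independent from $\alpha \in \langle \Dx_2, \Dx_3\rangle$. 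Pulling $q$ back under the coordinate change will produce the required quadric in $\Ann(f_4)_2 \setminus \mathcal{Q}$.

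The main obstacle will be case (iii) of Proposition~\ref{ref:thirdsecant:prop}: the hypothesis $\alpha^3 \hook g = 0$ cuts $\alpha$ down only to a $1$-dimensional projective family, and certifying $\Ann(g)_2 \not\subseteq \mathcal{Q}$ will require the explicit syzygy $\alpha^2 \hook g = c_2^2 (\Dx_1 \Dx_3 \hook g)$ particular to this form. The remaining ingredients — the identification of $I_2$, the dimension count, and the reduction to the top form $(f_4)_4$ — are routine once the setup is in place.
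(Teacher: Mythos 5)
Your identification of the crucial step --- exhibiting $\sigma = \Dx_1^2 - q \in \Ann(f_4)_2$ with $q\in\mathcal{Q}$, i.e.\ showing $\Ann(f_4)_2\not\subseteq\mathcal{Q}$ --- is the same as the paper's, and your case analysis via Proposition~\ref{ref:thirdsecant:prop} does establish it, if more laboriously than needed: the proof of that Proposition already shows $\Ann(f_4)$ has no degree-$3$ minimal generators, so $\Dx_1^3\in\Ann(f_4)_3 = \DS_1\Ann(f_4)_2$, which is impossible if $\Ann(f_4)_2\subseteq\mathcal{Q}$ because no cubic in $\DS_1\mathcal{Q}$ has nonzero $\Dx_1^3$-coefficient. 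The genuine problem is your opening step: the inference from $\dim\Ann(f)_2 = \dim\DS_2 - 4$ to $H_C(2)\geq 4$ is invalid. Since $f = \DPel{x_1}{5}+f_4$ is not homogeneous, $C$ is a \emph{local} algebra, and $H_C(2) = \dim\DS_2 - \dim\operatorname{in}(\Ann(f))_2$, where $\operatorname{in}(\Ann(f))$ is the ideal of \emph{lowest-degree} homogeneous forms of elements of $\Ann(f)$. That space contains $\Ann(f)_2$, possibly strictly, so your count gives only the useless inequality $H_C(2)\leq 4$. A small example of the failure: for $f = \DPel{x_1}{3}+\DPel{x_2}{2}$ in $\kdp[x_1,x_2]$ one has $\Ann(f)_2 = \kk\,\Dx_1\Dx_2$, so $\dim\DS_2-\dim\Ann(f)_2 = 2$, yet $H_{\Apolar{f}} = (1,2,1,1)$ gives $H_{\Apolar{f}}(2)=1$; the discrepancy is produced by $\Dx_1^3-\Dx_2^2\in\Ann(f)$, whose lowest-degree form is the quadric $\Dx_2^2\notin\Ann(f)_2$.

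Concretely, the possibly-extra elements of $\operatorname{in}(\Ann(f))_2\setminus\Ann(f)_2$ in your setting are quadrics $\sigma_2\in\mathcal{Q}$ with $\sigma_2\hook f_4 = c\DPel{x_1}{2}$, $c\neq 0$: then $\sigma_2 - c\Dx_1^3\in\Ann(f)$ (its degree-$1$ part $-c\Dx_1^3\hook f_4$ vanishes by your hypothesis) and contributes $\sigma_2$ to $\operatorname{in}(\Ann(f))_2$. Ruling these out amounts precisely to $\DPel{x_1}{2}\notin\DS_2\hook f_4$. This \emph{does} follow from your key fact --- the very $\sigma = \Dx_1^2-q\in\Ann(f_4)_2$ annihilates every partial of $f_4$ yet $\sigma\hook\DPel{x_1}{2}=1\neq 0$ --- but your proposal never draws this conclusion, and without it the argument does not close. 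The paper avoids the issue altogether by working on the dual side: it exhibits $\DPel{x_1}{2}=\Dx_1^3\hook f$ and the three-dimensional $\mathcal{Q}\hook f_4 = \DS_2\hook f_4$ as degree-$2$ leading forms of partials of $f$ of degree $\leq 2$, and uses $\sigma$ only to see $\DPel{x_1}{2}\notin\DS_2\hook f_4$, so that these span a $4$-dimensional subspace of $\DP_2$.
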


\begin{proof}
    \def\DC{C}%
    \def\spann#1{\langle #1 \rangle}%
    \def\Dan#1{\Ann(#1)}%
    Let $\DPut{anntwo}{\mathcal{Q}} = \Dan{\DPel{x_1}{5}}_2 \subseteq \DS_2$.
    Let $I$ denote the apolar ideal of $f_4$.
    By Proposition~\ref{ref:thirdsecant:prop} we see that $I$ is minimally
    generated by three elements of degree two and two elements of degree four.
    In particular, there are no cubics in the generating set.
    Since $\Dx_1^3\in I_3$, there is an element in $\sigma\in I_2$ such that
    $\sigma = \Dx_1^2 - q$, where $q\in \Danntwo$. Therefore $\Danntwo\hook
    f_4 = \DS_2\hook f_4$.
    Moreover, $\sigma$ does not annihilate $\DPel{x_1}{2}$, so that $\DPel{x_1}{2}$ is not a
    partial of $f_4$.
    We see that $\DPel{x_1}{2}$ and $\Danntwo\hook f_4$ are leading forms of partials of
    $\DPel{x_1}{5} + f_4$, thus
    \[H_{\DC}(2) \geq 1 + \dim(\Danntwo\hook f_4) = 1 +
        \dim(\DS_2\hook f_4) = 1 +
    H_{\Apolar{f_4}}(2) = 4.\qedhere\]
\end{proof}

\begin{remark}
    \def\DC{C}%
    In the setting of Lemma~\ref{ref:144311addingpartial:lem}, it is not hard
    to deduce that $H_{\DC} = (1, 3, 4, 3, 1, 1)$ by
    analysing the possible symmetric decompositions. We do not need this
    stronger statement, so we omit the proof.
\end{remark}

\begin{lemma}\label{ref:144311final:lem}
    Let $\chark \neq 2$. Let $(\DA, \mm, \kk)$ be a finite
    local Gorenstein algebra with Hilbert function $(1, 4, 4, 3, 1, 1)$. Then
    $A$ is cleavable.
\end{lemma}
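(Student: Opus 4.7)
The plan is to reduce to $\kk = \kkbar$ and then split on the possible symmetric decompositions of $H_A = (1, 4, 4, 3, 1, 1)$. By Proposition~\ref{prop_base_change_for_Hilb_r} (applied to the locus in $\Hilbr{\AA^n}$ of schemes that are limits of reducible ones), cleavability is preserved by field extension, so I may assume $\kk = \kkbar$. Writing $A = \Apolar{f}$ with $f\in \DP$ of degree $d = 5$ (Theorem~\ref{ref:MacaulaytheoremGorenstein:thm}), I first pin down the decomposition: at index $i = 4$ only $\Dhdvect{0}$ contributes, so $\Dhd{0}{1} = H_A(4) = 1$; by Corollary~\ref{ref:topdegreefirstrow:cor}, $\Dhdvect{0} = H_{\Apolar{f_5}}$, and $H_{\Apolar{f_5}}(1) = 1$ forces $f_5 = \DPel{x_1}{5}$ after a linear change of coordinates, whence $\Dhdvect{0} = (1,1,1,1,1,1)$.

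Next, setting $\Dhdvect{1} = (0, b_1, b_2, b_1, 0)$, $\Dhdvect{2} = (0, c, c, 0)$, $\Dhdvect{3} = (0, h, 0)$, the remaining values $H_A - \Dhdvect{0} = (0, 3, 3, 2, 0, 0)$ give $b_1 = 2$, $b_2 + c = 3$, $c + h = 1$. The nonnegativity constraints leave exactly two admissible decompositions:
\begin{itemize}
\item Case 1 ($c = 0$, $h = 1$): $\Dhdvect{3} = (0,1,0)$, $\Dhdvect{2} = 0$;
\item Case 2 ($c = 1$, $h = 0$): $\Dhdvect{2} = (0,1,1,0)$, $\Dhdvect{3} = 0$.
\end{itemize}
In Case 1, $\Dhdvect{d-2} \neq 0$, and Corollary~\ref{ref:squareshavedegenerations:cor} applies directly: since $\chark \neq 2$, the scheme $R$ cleaves into a local Gorenstein scheme of Hilbert function $(1,3,3,3,1,1)$ together with a reduced point.

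Case 2 is the genuine obstacle and is exactly what the preliminary Lemmas~\ref{ref:144311Hilbfunc:lem}--\ref{ref:144311addingpartial:lem} are designed to handle. Here I would place $f$ in standard form via Theorem~\ref{ref:stform:thm}; the dimension count $\dim \linear{f}{0} = 1$, $\dim \linear{f}{1} = 3$, $\dim \linear{f}{2} = 4$ forces $f_4 \in \kdp[x_1, x_2, x_3]$, which in particular gives $\Dx_1^3 \hook f_4 = 0$, and $f_3, f_2 \in \kdp[x_1, x_2, x_3, x_4]$. I would then examine the truncation $\widehat{f} := \DPel{x_1}{5} + f_4$ and use Lemma~\ref{ref:144311addingpartial:lem} together with the two possible values of $H_{\Apolar{f_4}}(2) \in \{3, 4\}$ and Lemma~\ref{ref:144311Hilbfunc:lem} to force $H_{\Apolar{\widehat f}} = (1, 3, 3, 3, 1, 1)$.

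At this point the cleaving is produced by one of two routes according to whether $\Ann(f_4)_2$ is a complete intersection. If it is, Lemma~\ref{ref:144311caseCI:lem} shows that $\Apolar{\widehat f}$ is itself a complete intersection, hence smoothable and unobstructed by Theorem~\ref{ref:hunekeulrich:thm}, and one can lift a smoothing of $\widehat{f}$ to a cleaving of $f$ by perturbing through the lower degree terms using Proposition~\ref{ref:flatfamiliesforconstructible:prop} applied to a constructible family containing $\{(f - \widehat f) + \widehat f_t\}$. If not, Lemma~\ref{ref:144311casenotCI:lem} supplies an explicit cleaving of $\Apolar{\widehat{f}}$, which then extends to $\Apolar{f}$ by the same perturbation argument. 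The bookkeeping step --- ensuring that a cleaving of $\widehat{f}$ actually produces a cleaving of $f$ rather than just of $\widehat{f}$ --- is the technically finicky point, and the reason Case 2 demands the full strength of the Section~\ref{ssec:examplesthree} machinery.
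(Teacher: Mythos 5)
Your skeleton (reduction to $\kk=\kkbar$, the two admissible symmetric decompositions, Corollary~\ref{ref:squareshavedegenerations:cor} when $\Dhdvect{d-2}\neq(0,0,0)$, and the standard form $f=\DPel{x_1}{5}+f_4+g$ with $f_4\in\kdp[x_1,x_2,x_3]$ in the remaining case) agrees with the paper, but the mechanism you propose for passing from the truncation $\widehat f=\DPel{x_1}{5}+f_4$ to $f$ in the complete intersection subcase is a genuine gap. ``Lifting a smoothing of $\widehat f$ to a cleaving of $f$ by perturbing through the lower degree terms using Proposition~\ref{ref:flatfamiliesforconstructible:prop}'' does not work: $\dimk\Apolar{\widehat f}=12$ while $\dimk\Apolar{f}=14$, so no family of constant apolar degree (which is what that proposition requires) contains both; and more fundamentally, smoothability or cleavability of a truncation does not propagate to the full polynomial, since the smoothable locus is closed, not open, so one smoothable member of a family controls nothing about its neighbours. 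The missing idea, which is how the paper actually concludes, is \emph{unobstructedness}: the set $W$ of dual generators with fixed $f_{\geq 4}$ and Hilbert function $(1,4,4,3,1,1)$ is irreducible (Example~\ref{ref:semicondegthree:example}); it contains the ray sum $\DPel{x_1}{5}+f_4+\DPel{x_4}{2}x_1$, which is smoothable by Proposition~\ref{ref:fibersofray:prop} (it peels off a point, leaving $\Spec\Apolar{\widehat f}$, a complete intersection by Lemma~\ref{ref:144311caseCI:lem}) and unobstructed by Corollary~\ref{ref:CIarenonobstructed:cor}; Lemma~\ref{ref:unobstructed:lem} then places all of $W$, in particular $[\Spec\Apolar{f}]$, in the smoothable component, and smoothability of a degree-$14$ scheme gives cleavability. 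Without an unobstructed point this argument collapses, and no perturbation of $\widehat f$ replaces it.

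There are smaller but real issues in the same case. The standard form alone does not give $\Dx_1^3\hook f_4=0$ (e.g.\ $f_4$ could contain $\DPel{x_1}{4}$); that vanishing is the additional nonlinear normalization of Lemma~\ref{ref:topdegreetwist:lem}, and every lemma you invoke afterwards assumes it. You also silently assume $H_{\Apolar{f_4}}=(1,3,4,3,1)$: the subcase $H_{\Apolar{f_4}}(1)\leq 2$ must be treated separately (the paper does it via Lemma~\ref{ref:topdegreetwist:lem} and Corollary~\ref{ref:stretchedhavedegenerations:cor}), the possibilities $H_{\Apolar{f_4}}(2)\geq 5$ must be excluded, and Lemma~\ref{ref:144311addingpartial:lem} does not ``force $H_{\Apolar{\widehat f}}=(1,3,3,3,1,1)$'' --- that equality comes from Proposition~\ref{ref:degreecomparison:prop} and the decomposition, while the lemma's role is to rule out $H_{\Apolar{f_4}}=(1,3,3,3,1)$. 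Finally, in the non-complete-intersection subcase no perturbation step is needed at all: Lemma~\ref{ref:144311casenotCI:lem} is stated for the full polynomial $\DPel{x_1}{5}+f_4+g$ and concludes cleavability of $\Apolar{f}$ directly. (Also, the piece split off in your Case 1 has Hilbert function $(1,3,4,3,1,1)$, not $(1,3,3,3,1,1)$, though nothing depends on this.)
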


\begin{proof}
    \def\Dan#1{\Ann(#1)}%
    \def\DC{C}%
    Using Corollary~\ref{ref:basechangesmoothings:cor} we may assume $\kk =
    \kkbar$.
    Let $d = 5$ be the socle degree of $A$.  If $\Dhdvect{A, d-2}\neq (0,
    0, 0)$ then $A$ is cleavable by
    Corollary~\ref{ref:squareshavedegenerations:cor}, so we assume $\Dhdvect{A, d-2} =
    (0, 0, 0)$.
    The only possible symmetric decomposition of the Hilbert function $H_A$ with $\Dhdvect{A,
    d-2} = (0, 0, 0)$ is
    \begin{equation}\label{eq:hfdecompositionprim}
        (1, 4, 4, 3, 1, 1) = (1, 1, 1, 1, 1, 1) + (0, 2, 2, 2, 0) + (0, 1, 1,
        0).
    \end{equation}
    Let us take a dual generator $f$ of $A$. We assume that $f$ is in
    the standard form: $f = \DPel{x_1}{5} + f_4 + g$, where $f_4\in \kdp[x_1,
    x_2, x_3]$ and $\deg g\leq 3$.
    By Lemma~\ref{ref:topdegreetwist:lem} we assume that $\Dx_1^3\hook f_4 = 0$.
    Let $\DC = \Apolar{\DPel{x_1}{5} + f_4}$, then
        $H_{\DC} = (1, 3, 3, 3, 1, 1)$ by
        Proposition~\ref{ref:degreecomparison:prop} and
        Equation~\eqref{eq:hfdecompositionprim}. We analyse
    the possible Hilbert functions of $B = \Apolar{f_4}$.
    Suppose first that
    $H_{B}(1) \leq 2$. Since $H_{\DC}(1) = 3$, we have $H_B(1) = 2$ and, up to
    coordinate change, we have $f_4\in \kdp[x_2, x_3]$. Then by
    Lemma~\ref{ref:topdegreetwist:lem} we may further assume that $\Dx_1^2\hook (f
    - \DPel{x_1}{5}) = 0$. Then Proposition~\ref{ref:stretchedhavedegenerations:cor}
    asserts that $A = \Apolar{f}$ is cleavable.

    Suppose now that $H_B(1) = 3$. Since $\DPel{x_1}{5}$ is annihilated by a codimension
    one space of quadrics, we have $H_B(2) \leq H_A(2) + 1$, so there are two
    possibilities: $H_B = (1, 3, 3, 3, 1)$ or $H_B = (1,
    3, 4, 3, 1)$. By Lemma~\ref{ref:144311addingpartial:lem} the case
$H_B = (1, 3, 3, 3, 1)$ is not possible, so that $H_B = (1, 3, 4, 3, 1)$. Now by Lemma~\ref{ref:144311casenotCI:lem} we may
    consider only the case when $\pp{\Dan{f_4}}_2$ is a complete
    intersection, then by Lemma~\ref{ref:144311caseCI:lem} we have that
    $\DC$ is a complete intersection. In this case we
    will actually prove that $A$ is smoothable.

    By Example~\ref{ref:semicondegthree:example} the set $W$ of
    polynomials $f$ with fixed leading polynomial $f_{\geq 4}$
    and Hilbert function $H_{\Apolar{f}} = (1, 4, 4, 3, 1, 1)$ is irreducible. Consider
        the apolar algebra $B$ of the polynomial $\DPel{x_1}{5} + f_4 +
        \DPel{x_4}{2}x_1\in W$. Since $\Dx_1^{3} \hook f_4 = 0$, this
        polynomial is a ray sum (Definition~\ref{ref:raysum:def}). By Proposition~\ref{ref:fibersofray:prop},
        the scheme $\Spec B$ is
        the limit of smoothable schemes
        \[
            \Spec\Apolar{\DPel{x_1}{5} + f_4} \sqcup \Spec\Apolar{x_1},
        \]
        thus it is smoothable.  By Corollary~\ref{ref:CIarenonobstructed:cor}
        the scheme $\Spec B$ is unobstructed. By Lemma~\ref{ref:unobstructed:lem}, the apolar algebra of every element
        of $W$ is smoothable; in particular $A$ is smoothable.
\end{proof}

\section{Proof of Theorem~\ref{ref:cjnmain:thm} --- smoothability results}

In this section we prove that all Gorenstein algebras of degree at most $14$
are smoothable, with the exception of local algebras with Hilbert function $(1, 6, 6, 1)$.
As in the previous section, our pivotal tool are Macaulay's inverse systems,
see Chapter~\ref{sec:macaulaysinversesystems}, and in particular the symmetric
decomposition $\Dhdvect{\bullet}$ of the Hilbert function, see
Section~\ref{ssec:HFininversesystems},
Lemma~\ref{ref:sumsofdhdaraOseqence:lem} and the standard form of the dual
generator, see Section~\ref{ssec:standardforms}.

\begin{proposition}
    Let $\chark \neq 2$. Let $(\DA, \mm, \kk)$ be a finite local Gorenstein algebra of
    socle degree at most two. Then $\DA$ is smoothable.
\end{proposition}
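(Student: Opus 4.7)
The plan is to reduce to $\kk = \kkbar$ via Corollary~\ref{ref:basechangesmoothings:cor}, dispatch the cases of socle degree $0$ and $1$ by inspection, and treat the main case of socle degree $2$ by induction on the embedding dimension using the ray-family cleaving from Section~\ref{ssec:rayfromMacaulay}. In socle degree $0$ the algebra $A = \kk$ is trivially smoothable; in socle degree $1$, Gorensteinness together with $\mm^2 = 0$ forces $\dimk \mm = 1$, so $A \cong \kk[\Dx]/\Dx^2$, which is smoothed by $\Spec \kk[\Dx, t]/(\Dx^2 - t\Dx) \to \Spec \kk[t]$.

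In socle degree $2$, Macaulay's Theorem~\ref{ref:MacaulaytheoremGorenstein:thm} gives $A \cong \Apolar{Q}$ for some quadric $Q \in \DP$, and since $\chark \neq 2$ and $\kk = \kkbar$, a linear change of variables diagonalizes $Q$ to $\sum_{i=1}^{n} \DPel{x_i}{2}$ with $n = H_A(1)$. The plan is to induct on $n$. For $n \leq 3$, the embedding dimension of $A$ is at most $3$, so smoothability follows immediately from Corollary~\ref{ref:lowcodimensionsmoothability:cor}.

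For the inductive step $n \geq 4$, write $Q = \DPel{x_1}{2} + g$ with $g = \sum_{i \geq 2} \DPel{x_i}{2}$; then $\Dx_1 \hook g = 0$ and $2c = 2 = d$ for $c = 1$, so Corollary~\ref{ref:stretchedhavedegenerations:cor} shows $\Spec A$ is cleavable. Inspecting the general fiber of the ray family directly, in the spirit of Example~\ref{ref:stretched:example}, decomposes it as $\Spec \kk \sqcup \Spec A'$ with $A' \cong \Apolar{x_1 + g}$; a short direct calculation of $\Ann(x_1 + g)$---noting that $\Dx_1$ and each $\Dx_i^2$ (for $i \geq 2$) act on $x_1 + g$ by the scalar $1$, so $\Dx_i^2 - \Dx_1 \in \Ann(x_1 + g)$---identifies $A'$ as a Gorenstein algebra of socle degree $2$ with Hilbert function $(1,\, n-1,\, 1)$, i.e.\ of exactly the same type as $A$ but with one fewer variable. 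The inductive hypothesis then gives that $A'$ is smoothable, whence $\Spec \kk \sqcup \Spec A'$ is smoothable by Corollary~\ref{ref:smoothingcomponentsresult:cor}, and $\Spec A$ is smoothable by Proposition~\ref{ref:smoothabilityisclosed:prop}.

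The hard part will be the explicit identification of $A'$: Example~\ref{ref:stretched:example} is naturally formulated for socle degrees $d \geq 3$, where $\DPel{x_1}{d-1}$ remains a genuinely higher-degree term, so in the boundary case $d = 2$ the Hilbert function of $\Apolar{x_1 + g}$ has to be computed by hand rather than quoted. This calculation is elementary and confirms that $A'$ has the expected structure, closing the induction.
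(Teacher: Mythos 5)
Your proof is correct and follows essentially the same route as the paper: base change to $\kk = \kkbar$, trivial dispatch of socle degrees $0$ and $1$, diagonalize the quadric, and then cleave off a point repeatedly using the ray families of Section~\ref{ssec:rayfromMacaulay}. The paper simply invokes ``a repeated use of Corollary~\ref{ref:squareshavedegenerations:cor}'' for the last step, whereas you unpack this by applying Corollary~\ref{ref:stretchedhavedegenerations:cor} directly (which is also what underlies Corollary~\ref{ref:squareshavedegenerations:cor}), carry out the induction on the embedding dimension explicitly, and verify by hand that the general fiber's local piece $\Apolar{x_1 + g}$ has Hilbert function $(1, n-1, 1)$ --- a worthwhile check, since Example~\ref{ref:stretched:example} as stated is calibrated for socle degree $\geq 3$ and its claim about the Hilbert function of the deformed fiber does not transfer verbatim to $d = 2$.
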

\begin{proof}
    Using Corollary~\ref{ref:basechangesmoothings:cor} we assume
    $\kk = \kkbar$.
    If the socle degree is less than two, then $\DA =
    \Apolar{x_1}=\kk[\varepsilon]/\varepsilon^2$ or $\DA = \Apolar{1} =\kk$,
    so $\DA$ is smoothable. If $\DA$ has socle degree two, then $H_{\DA} = (1,
    n, 1)$ for some $n$ and $\DA  \simeq \Apolar{q}$, where $q\in \kdp[x_1,
    \ldots ,x_n]$ is a full rank quadric. Then $q$ is diagonalizable and
    $\DA$ is smoothable by a repeated use of
    Corollary~\ref{ref:squareshavedegenerations:cor}.
\end{proof}

\begin{proposition}\label{ref:mainthmthree:prop}
    Let $\chark \neq 2$.
    Let $(\DA, \mm, \kk)$ be a finite local Gorenstein algebra of socle degree three and
    ${H_A(2)\leq 5}$. Then $\DA$ is smoothable.
\end{proposition}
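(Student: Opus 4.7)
The plan is to combine the ray-family machinery developed in Sections~\ref{ssec:rayfromMacaulay}--\ref{subsec:tangentpreserving} with irreducibility plus an unobstructed smoothable point, via Lemma~\ref{ref:unobstructed:lem}. First I would pass to $\kk = \kkbar$ by Corollary~\ref{ref:basechangesmoothings:cor}. Writing $H_A = (1, n, m, 1)$ with $m \leq 5$, the symmetric decomposition must be $\Delta_0 = (1, m, m, 1)$ and $\Delta_1 = (0, n - m, 0)$. I would then induct on $n - m$: if $n > m$, then $\Delta_{d-2} = \Delta_1 \neq 0$, so Corollary~\ref{ref:squareshavedegenerations:cor} cleaves $\Spec A$ as a limit of $\Spec B \sqcup \Spec \kk$ with $H_B = (1, n - 1, m, 1)$; the inductive hypothesis, together with Corollary~\ref{ref:smoothingcomponentsresult:cor} and Proposition~\ref{ref:smoothabilityisclosed:prop}, yields smoothability of $A$. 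This reduces to the symmetric case $n = m$.

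For $n = m \leq 3$ the algebra has embedding dimension at most three and is smoothable by Corollary~\ref{ref:lowcodimensionsmoothability:cor}. For $n = m \in \{4, 5\}$ I would invoke the cubic case of the Elias--Rossi theorem (Corollary~\ref{ref:eliasrossi:cor}, which needs only $\chark \neq 2$ as in Example~\ref{ex:1nn1}) to obtain $A \simeq \Apolar F$ with $F \in \kdp[x_1, \ldots, x_m]_3$. The set
\[
V = \{F \in \kdp[x_1, \ldots, x_m]_3 : H_{\Apolar F} = (1, m, m, 1)\}
\]
is cut out by two catalecticant rank conditions, both open and both satisfied by $F = \sum_i \DPel{x_i}{3}$, so $V$ is a nonempty open subset of the affine space $\kdp[x_1, \ldots, x_m]_3$ and hence irreducible. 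By Proposition~\ref{ref:flatfamiliesforconstructible:prop} it maps to an irreducible subset of $\HilbGorarged{2m+2}{\AA^m}$ whose image contains $[\Spec \Apolar F]$ for every $F \in V$. Therefore it suffices to exhibit one unobstructed smoothable point in this image and apply Lemma~\ref{ref:unobstructed:lem}.

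The witness for $m = 4$ will be $F_4 = (\DPel{x_1}{2} + \DPel{x_2}{2})x_3 + \DPel{x_4}{2} x_1$ of Example~\ref{ref:1441:example}: it is a ray sum of $(\DPel{x_1}{2} + \DPel{x_2}{2})x_3$ along $\Dx_1 \Dx_3$ whose square annihilates the polynomial, so Proposition~\ref{ref:fibersofray:prop} expresses $\Spec \Apolar{F_4}$ as a limit of $\Spec \Apolar{(\DPel{x_1}{2} + \DPel{x_2}{2})x_3} \sqcup \Spec \kk[t]/t^2$ whose components have embedding dimensions $3$ and $1$ respectively and are thus smoothable by Corollary~\ref{ref:lowcodimensionsmoothability:cor}; unobstructedness is the content of Example~\ref{ref:1441:example} itself via Corollary~\ref{ref:CIarenonobstructed:cor}. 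For $m = 5$ I would iterate this construction once more to reach $F_5$ of Example~\ref{ref:1551:example}: the same ray-sum argument gives smoothability (the limit is now $\Spec \Apolar{F_4} \sqcup \Spec \kk[t]/t^2$, smoothable by the previous step), while unobstructedness comes from Proposition~\ref{ref:unobstructeddoubleray:prop}. The genuinely delicate point, and the only place where substantial work beyond formal assembly is required, is this last unobstructedness verification in the $(1, 5, 5, 1)$ case; everything else is routine once the machinery of earlier sections is in place.
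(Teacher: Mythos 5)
Your proof is correct and follows the same overall strategy as the paper: base change to $\kkbar$, peel off the quadric summand via Proposition~\ref{ref:squares:prop} and Corollary~\ref{ref:squareshavedegenerations:cor} to reduce to the symmetric case $H_B = (1,e,e,1)$, dispatch $e\leq 3$ by Corollary~\ref{ref:lowcodimensionsmoothability:cor}, and handle $e=4,5$ by exhibiting an unobstructed point in an irreducible locus and applying Lemma~\ref{ref:unobstructed:lem}, with the witnesses coming from Examples~\ref{ref:1441:example} and~\ref{ref:1551:example}. The one point where you deviate is the irreducibility argument: you first invoke Elias--Rossi (Corollary~\ref{ref:eliasrossi:cor}) to replace $A$ by $\Apolar{F}$ with $F$ a \emph{homogeneous} cubic, and then observe the set of homogeneous cubics with full catalecticant rank is an open subset of the affine space $\kdp[x_1,\ldots,x_m]_3$. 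The paper instead parameterizes directly by the open set of \emph{all} polynomials of degree $\leq 3$ with maximal Hilbert function (the argument of Example~\ref{ex:1nn1largenonsmoothable}), which avoids the Elias--Rossi detour at the cost of a slightly larger parameter space; both give the needed irreducible subset of the Hilbert scheme containing the witness. You also make the smoothability of the witness points explicit via the fiber analysis of Proposition~\ref{ref:fibersofray:prop}, a step the paper leaves implicit in citing Examples~\ref{ref:1441:example} and~\ref{ref:1551:example}, and you are right that the genuinely hard piece is the $(1,5,5,1)$ unobstructedness from Proposition~\ref{ref:unobstructeddoubleray:prop}.
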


\begin{proof}
    Using Corollary~\ref{ref:basechangesmoothings:cor} we assume
    $\kk = \kkbar$.
    Suppose that the Hilbert function of $A$ is $(1, n, e, 1)$.
    By Proposition \ref{ref:squares:prop} the dual generator of $A$ may
    be put in the form $f + \DPel{x_{e+1}}{2} + \dots + \DPel{x_n}{2}$, where $f\in
    \kk[x_1,\dots,x_e]$. If $e < n$, then by Corollary~\ref{ref:squareshavedegenerations:cor} the
    scheme $\Spec \DA$ is cleavable; it is a limit of schemes of the
    form
    \[\Spec \Apolar{f} \sqcup (\Spec\kk)^{\sqcup n-e}.\]
    Thus it is smoothable if and only if $B = \Apolar{f}$ is. We have reduced
    to the case $n = e$.

    Let $e:=H_A(2)$, then $H_B = (1, e, e, 1)$.
    If $H_B(1) = e \leq 3$ then $B$ is smoothable by
    Corollary~\ref{ref:lowcodimensionsmoothability:cor}. It remains to
    consider $e=4, 5$.
    The set of points corresponding to algebras with Hilbert function $(1, e,
    e, 1)$ is irreducible in $\HilbGorarged{e}{\mathbb{A}^{2e+2}}$ by the
    argument given in Example~\ref{ex:1nn1largenonsmoothable}.
    By Lemma~\ref{ref:unobstructed:lem}, it is enough to find an unobstructed point in this set.
    The cases $e = 4$ and
    $e = 5$ are considered in Example~\ref{ref:1441:example} and Example~\ref{ref:1551:example} respectively.
\end{proof}

\begin{remark}
    The claim of Proposition~\ref{ref:mainthmthree:prop} holds true if we
    replace the assumption ${H_A(2) \leq 5}$ by $H_A(2) = 7$, thanks to the
    smoothability of finite local Gorenstein algebras with Hilbert function
    $(1, 7, 7, 1)$, see~\cite{bertone_cioffi_roggero_division_algorithm}. We will not use this
    result.
\end{remark}

\begin{lemma}\label{ref:14521case:lem}
    Let $\chark \neq 2$.
    Let $(\DA, \mm, \kk)$ be a finite local Gorenstein algebra with Hilbert function $H_A$
    beginning with $H_A(0) = 1$, $H_A(1) = 4$, $H_A(2) = 5$, $H_A(3) \leq 2$.
    Then $A$ is smoothable.
\end{lemma}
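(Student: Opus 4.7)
The plan is to reduce to $\kk=\kkbar$ via Corollary~\ref{ref:basechangesmoothings:cor} and then enumerate the possible Hilbert functions compatible with the constraints, handling each one separately. Since $H_A(2) > H_A(1)$, Example~\ref{ex:hilbertfunctioncubics} forces the socle degree $d \geq 4$. Playing the symmetric decomposition $\Dhdvect{\bullet}$ of $H_A$ against Macaulay's bound applied to the partial sums (Lemma~\ref{ref:sumsofdhdaraOseqence:lem}), together with the constraint $H_A(3) \leq 2$, yields only a short list of cases. In particular, for $d = 4$ the unique possibility is $H_A = (1,4,5,2,1)$ with decomposition $(1,2,3,2,1) + (0,2,2,0)$; for $d \geq 5$, the tail $(H_A(3), H_A(4), \ldots, H_A(d))$ consists of small values $\leq 2$, which forces the algebra to be ``almost stretched'' in the sense of Example~\ref{ref:standardformofstretched:ex}, and typically also forces $\Dhdvect{d-2} \neq 0$.

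For the cases $d \geq 5$ I would cleave $A$ using either Corollary~\ref{ref:squareshavedegenerations:cor} (when $\Dhdvect{d-2} \neq 0$) or Corollary~\ref{ref:stretchedhavedegenerations:cor} (when the dual generator can be put in the form $\DPel{x_1}{d} + g$ with $\Dx_1^c \hook g = 0$ and $2c \leq d$, exploiting Example~\ref{ref:standardformofstretched:ex}). Each cleaving produces a reducible scheme whose components have strictly smaller degree and are smoothable either by Proposition~\ref{ref:mainthmthree:prop}, by Lemma~\ref{ref:144311final:lem}, or by prior cases in the inductive chain culminating in Theorem~\ref{ref:cjnmain:thm}; smoothability of $A$ itself then follows from Proposition~\ref{ref:smoothabilityisclosed:prop} combined with Theorem~\ref{thm_equivalence_of_abstract_and_embedded_smoothings}.

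The delicate case is $d = 4$, $H_A = (1,4,5,2,1)$, where $\Dhdvect{d-2} = \Dhdvect{2} = 0$ and no direct cleaving is available. Here I would exploit that the top-degree form $f_4$ of any standard-form dual generator lies in $\kdp[x_1,x_2]$ and has Hilbert function $(1,2,3,2,1)$, so by Proposition~\ref{ref:irreducibleintwovariables:prop} it is $\Dgrp$-equivalent to $\DPel{x_1}{4} + \DPel{x_2}{4}$, a complete intersection. Combining this with Remark~\ref{ref:semicontinuity:rmk} and the irreducibility of the locus of standard forms with fixed $f_4$ shows that the stratum $V$ of all $f$ with $H_{\Apolar{f}} = (1,4,5,2,1)$ is irreducible and sweeps out an irreducible subset of the Hilbert scheme via the morphism supplied by Proposition~\ref{ref:flatfamiliesforconstructible:prop}. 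By Lemma~\ref{ref:unobstructed:lem} it then suffices to produce one unobstructed smoothable point in this stratum, which I would build as an iterated ray sum based on the complete intersection $\DPel{x_1}{4} + \DPel{x_2}{4}$ (unobstructed by Corollary~\ref{ref:CIarenonobstructed:cor}), adjoining the two additional variables via compatible ray sums in the spirit of Proposition~\ref{ref:unobstructeddoubleray:prop} and Examples~\ref{ref:1441:example}--\ref{ref:1551:example}. Flatness of the ray families (Proposition~\ref{ref:raysumflatness:prop}) together with Proposition~\ref{ref:fibersofray:prop} secure smoothability of the candidate, while Theorem~\ref{ref:nonobstructedconds:thm} delivers unobstructedness.

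The main obstacle is precisely this last construction: finding an iterated ray sum whose apolar algebra has exactly Hilbert function $(1,4,5,2,1)$ and meets the flatness criterion in Point~4 of Theorem~\ref{ref:nonobstructedconds:thm}. This will require a careful combinatorial choice of the ray-sum parameters (the operators $\partial$ and the degrees $d$) so that both the Hilbert-function bookkeeping of Remark~\ref{ref:Hilbfunccouting:rmk} and the ideal inclusion $(I^2:\partial)\cap I \cap J^2 \subseteq IJ$ hold simultaneously.
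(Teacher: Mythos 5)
Your reduction to $\kk=\kkbar$, the shape $H_A = (1, 4, 5, 2, 2, \ldots, 2, 1, \ldots, 1)$ coming from Macaulay's growth, and the cleaving via Corollary~\ref{ref:squareshavedegenerations:cor} when $\Dhdvect{d-2} \neq 0$ are all sound. The gap is in your treatment of $d \geq 5$ with $\Dhdvect{d-2} = 0$. You propose Corollary~\ref{ref:stretchedhavedegenerations:cor}, which needs $f = \DPel{x_1}{d} + g$ with $\Dx_1^c\hook g = 0$ and $2c \leq d$, but when $\Dhdvect{d-2}=0$ the symmetric decomposition forces $\Dhdvect{0}$ to be $(1, 1, \ldots, 1)$ or $(1, 2, \ldots, 2, 1)$, so the last index $c$ with $H_A(c)>1$ is $d-2$ or $d-1$; in both cases $2c > d$ for $d\geq 5$, and the corollary never applies. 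Your hedge that this "typically forces $\Dhdvect{d-2}\neq 0$" does not close the hole: for instance $H_A=(1,4,5,2,2,1)$ with socle degree $5$ and decomposition $(1,2,2,2,2,1)+(0,0,1,0,0)+(0,2,2,0)+(0,0,0)$ has $\Dhdvect{3}=(0,0,0)$, and since the lemma carries no degree bound (it is invoked inside Theorem~\ref{ref:mainthmstretchedfive:thm}, which has none either) such algebras must be handled. The paper's fix is simply to run your "delicate $d=4$" argument for all $d\geq 4$ with $\Dhdvect{d-2}=0$: the decomposition forces $f_{\geq 4}\in\kdp[x_1,x_2]$, the locus of $h = f_{\geq 4} + g$ with the given Hilbert function is irreducible (Example~\ref{ref:semicondegthree:example}), and it contains the unobstructed point $h = f_{\geq 4} + \DPel{x_3}{2}x_1 + \DPel{x_4}{2}x_3$, which is exactly the iterated ray sum of Example~\ref{ref:1551:example}; Lemma~\ref{ref:unobstructed:lem} then concludes. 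You should widen your second-stage argument to cover all $d$ with $\Dhdvect{d-2}=0$, not just $d=4$.

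A smaller slip: invoking Proposition~\ref{ref:irreducibleintwovariables:prop} to get $f_4$ $\Dgrp$-equivalent to $\DPel{x_1}{4}+\DPel{x_2}{4}$ is a misapplication. That proposition is stated for Hilbert functions $(1,2,2,\ldots,1)$, not $(1,2,3,2,1)$, and $\DPel{x_1}{4}+\DPel{x_2}{4}$ in fact has Hilbert function $(1,2,2,2,1)$. Fortunately you do not need this: any nonzero $f_{\geq 4}\in\kdp[x_1,x_2]$ has apolar algebra a complete intersection (codimension-two Gorenstein), which is all that the unobstructedness argument actually uses.
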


\begin{proof}
    Using Corollary~\ref{ref:basechangesmoothings:cor} we may assume $\kk =
    \kkbar$.
    Let $f$ be a dual generator of $A$ in the standard form. From
    Macaulay's Growth Theorem~\ref{ref:MacaulayGrowth:thm} it follows that $H_A(m) \leq 2$ for all $m\geq 3$,
    so that $H_A = (1, 4, 5, 2, 2,  \ldots , 2, 1,  \ldots , 1)$. Let $d$ be
    the socle degree of $A$.

    Let $\Dhdvect{A, d-2} = (0, q, 0)$ be the $(d-2)$-nd row of the symmetric
    decomposition of $H_A$. If $q > 0$, then by
    Corollary~\ref{ref:squareshavedegenerations:cor} the scheme $\Spec A$ is
    cleavable; it
    is a limit of schemes of the form $\Spec B \sqcup \Spec\kk$, such that $H_{B}(1) =
    H_A(1) - 1 = 3$. Then $\Spec B$ is smoothable by Corollary~\ref{ref:lowcodimensionsmoothability:cor}.
    Then $\Spec A$ is also smoothable. In the following we assume that $q = 0$.

    We claim that $\DPut{ffour}{f_{\geq 4}} \in \kdp[x_1, x_2]$. Indeed, the symmetric
    decomposition of the Hilbert function is either $(1, 1,  \ldots , 1) + (0,
    1,  \ldots , 1, 0) + (0, 0, 1, 0, 0) + (0, 2, 2, 0)$ or $(1, 2,  \ldots ,
    2, 1) + (0, 0, 1, 0, 0) + (0, 2, 2, 0)$. In particular $\sum_{i\geq 3} \Dhd{i}{1} = 2$, so that $\Dffour \in \kdp[x_1,
    x_2]$ and $H_{\Apolar{\Dffour}}(1) = 2$. Hence, the form $x_1$ is a
    derivative of $\Dffour $, i.e.,~there exist
    a $\partial\in \DS$ such that $\partial\hook \Dffour  = x_1$. Then we may
    assume $\partial\in \DmmS^3$, so $\partial^2\hook f = 0$.

    Let us fix $\Dffour $ and consider the set of all polynomials of the
    form $h = \Dffour  + g$, where $g\in \kdp[x_1, x_2, x_3, x_4]$ has degree at
    most three. By Example~\ref{ref:semicondegthree:example} the apolar
    algebra of a general such polynomial will have
    Hilbert function $H_A$. The set of polynomials $h$ with fixed
    $h_{\geq 4} = \Dffour $, such that $H_{\Apolar{h}} = H_A$, is irreducible.
    This set contains $h := \Dffour  + \DPel{x_3}{2}x_1 + \DPel{x_4}{2}x_3$.
    Since
    $\Apolar{\Dffour }$ is a complete intersection, it
    follows from Example~\ref{ref:1551:example} that $\Spec \Apolar{h}$ is unobstructed.
    The claim follows from Lemma~\ref{ref:unobstructed:def}.
\end{proof}

The following Theorem~\ref{ref:mainthmstretchedfive:thm} generalizes numerous
earlier smoothability results on stretched (by Sally, see~\cite{SallyStretchedGorenstein}),
 $2$-stretched (by Casnati and Notari, see \cite{CN2stretched}) and almost-stretched (by Elias and Valla, see
\cite{EliasVallaAlmostStretched}) algebras. It is important to understand
that, in contrast with the mentioned papers, it avoids a full classification of
algebras. In the course of the proof it gives some partial classification.
To the author's knowledge, this is the strongest result on smoothability of
finite Gorenstein schemes, with no restrictions on the degree.

\begin{theorem}\label{ref:mainthmstretchedfive:thm}
    Let $\chark \neq 2$.
    Let $(\DA, \mm, \kk)$ be a finite local Gorenstein algebra with Hilbert function $H_A$ satisfying
    $H_A(2) \leq 5$ and $H_{A}(3)\leq 2$. Then $A$ is smoothable.
\end{theorem}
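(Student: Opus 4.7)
The plan is to reduce to $\kk = \kkbar$ via Corollary~\ref{ref:basechangesmoothings:cor} and then induct on $\dim_{\kk} A$. The base cases are handled by Corollary~\ref{ref:lowcodimensionsmoothability:cor} (which handles $\dim_{\kk} A \leq 3$ by embedding into $\mathbb{A}^3$) and by the low socle degree cases: $d \leq 2$ is trivial, and $d = 3$ is covered by Proposition~\ref{ref:mainthmthree:prop} since $H_A(2) \leq 5$.

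For the inductive step with socle degree $d \geq 4$, the key observation is that the hypothesis $H_A(3) \leq 2 \leq 3$, combined with Corollary~\ref{ref:nonincreasinghf:cor} applied to the standard graded algebra $\gr A$ (standard graded by Lemma~\ref{ref:associatedStandardGraded:lem}), forces $H_A(i) \leq 2$ for every $i \geq 3$. The argument then splits according to the row $\Delta_{A, d-2} = (0, q, 0)$ of the symmetric decomposition. If $q \geq 1$, Corollary~\ref{ref:squareshavedegenerations:cor} provides a cleaving of $A$ whose general fiber has the form $\Spec B \sqcup \Spec \kk$, where $B$ satisfies $H_B(2) = H_A(2) - 1 \leq 4$ and $H_B(3) = H_A(3) \leq 2$; the inductive hypothesis together with Proposition~\ref{ref:smoothabilityisclosed:prop} then yields smoothability of $A$.

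If $q = 0$, I would further split by $H_A(d-1) = \Delta_{A, 0}(1) \in \{1, 2\}$. In the \emph{stretched} sub-case $H_A(d-1) = 1$, one uses Example~\ref{ref:standardformofstretched:ex} to put the dual generator in the form $f = \DPel{x_1}{d} + g$ with $\Dx_1^c \hook g = 0$ and $\deg g \leq c+1$, where $c$ is the largest index with $H_A(c) \geq 2$; when $d \geq 2c$, Example~\ref{ref:stretched:example} gives a ray family cleaving $A$ into pieces of smaller degree, and induction concludes. The finitely many remaining pairs $(d, c)$ with $c < d < 2c$ (typified by $(1, n, h, 2, 1, 1)$) are handled by analogs of Lemma~\ref{ref:144311final:lem}, which combine a dedicated ray-sum cleaving with an unobstructedness check. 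In the \emph{non-stretched} sub-case $H_A(d-1) = 2$, by Macaulay $\Delta_{A, 0} = (1, 2, 2, \ldots, 2, 1)$; then by induction on $k$, using $H_A(d-k-1) \leq 2$ for $1 \leq k \leq d-4$ and symmetry of each $\Delta_{A, j}$, one shows $\Delta_{A, j}(1) = 0$ for $1 \leq j \leq d - 4$. Consequently $f_{\geq 4}$ lies in $\kdp[x_1, x_2]$ in the standard form, so $\Apolar{f_{\geq 4}}$ is a complete intersection (being a quotient of $\kk[[\Dx_1, \Dx_2]]$). Following the strategy of Lemma~\ref{ref:14521case:lem}, one then produces an iterated ray-sum perturbation $h = f_{\geq 4} + \DPel{x_3}{2} \ell_1 + \DPel{x_4}{2} \ell_2 + \ldots$ with the prescribed Hilbert function $H_A$, unobstructed by Proposition~\ref{ref:unobstructeddoubleray:prop} and Example~\ref{ref:1551:example}; Lemma~\ref{ref:unobstructed:lem}, combined with the irreducibility provided by Example~\ref{ref:semicondegthree:example} and Proposition~\ref{ref:irreducibleintwovariables:prop}, propagates smoothability to the entire family with fixed $f_{\geq 4}$, in particular to $A$.

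The principal obstacle lies in the non-stretched sub-case: although the above inductive analysis of successive rows $\Delta_{A, j}$ shows that $f_{\geq 4}$ depends on only two variables for every $d \geq 4$, constructing an iterated ray-sum that realizes the specific Hilbert function $H_A = (1, n, h, 2, \ldots, 2, 1)$ while preserving unobstructedness requires careful bookkeeping of the Hilbert function at each step, since each ray-sum by Remark~\ref{ref:Hilbfunccouting:rmk} perturbs $H$ in controlled but non-trivial ways. Ensuring that one can iterate Proposition~\ref{ref:unobstructeddoubleray:prop} sufficiently many times (up to $H_A(1) - 2$) while maintaining the complete-intersection hypothesis needed at each step is the main technical hurdle.
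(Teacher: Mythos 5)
Your case split (by $\Delta_{A,0}(1)\in\{1,2\}$, ``stretched'' vs.~``non-stretched'') is genuinely different from the paper's, and unfortunately it leads to two real gaps.

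\textbf{The ``stretched'' gap.} You take $c$ to be the largest index with $H_A(c)\geq 2$ and invoke Corollary~\ref{ref:stretchedhavedegenerations:cor} when $d\geq 2c$, claiming that only ``finitely many'' pairs $(d,c)$ with $c<d<2c$ remain. This is false: for the Hilbert functions of type $(1,n,n,2,2,\ldots,2,1,1)$ (the paper's case~6 decomposition $(1,\ldots,1)+(0,1,\ldots,1,0)+(0,n-2,n-2,0)$), one has $c=d-2$ and hence $2c>d$ for \emph{every} $d\geq 5$. Since $d$ is unbounded, your exceptional set is infinite and cannot be disposed of by ad hoc analogues of Lemma~\ref{ref:144311final:lem}. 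The paper avoids this entirely: for the cases where $f_{\geq 4}\in\kdp[x_1,x_2]$, it uses Proposition~\ref{ref:irreducibleintwovariables:prop} to put a \emph{general} member (after a nonlinear coordinate change) in the form $\DPel{x_1}{d}+\DPel{x_2}{d_2}+g$ with $\deg g\leq 3$, and then Lemma~\ref{ref:topdegreetwist:lem} to arrange $\Dx_1^2\hook g=0$; Corollary~\ref{ref:stretchedhavedegenerations:cor} then applies with $c=2$, and $2c=4\leq d$ holds for all $d\geq 4$. That is the step you are missing: the relevant $c$ is not the one supplied by Example~\ref{ref:standardformofstretched:ex} but the much smaller one obtained after the coordinate normalization, and it is constant.

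\textbf{The ``non-stretched'' gap.} You acknowledge the problem yourself: you need to iterate the ray-sum construction up to $H_A(1)-2$ times while preserving both the complete-intersection hypothesis and unobstructedness, and Proposition~\ref{ref:unobstructeddoubleray:prop} only handles two steps. There is no result in the paper that justifies an arbitrary iteration, and the complete-intersection hypothesis is not preserved past the first ray sum. The paper does not attempt this: even for case~5 (which is in your ``non-stretched'' bucket) it again uses the $\DPel{x_1}{d}+\DPel{x_2}{d_2}+g$ normalization and cleaving plus induction, and its unobstructedness argument (Lemma~\ref{ref:14521case:lem}) is reserved for the separate cluster where $H_A(2)=H_A(1)+1$, i.e.~for $(1,4,5,2,\ldots)$, where only a \emph{single} double ray sum (Example~\ref{ref:1551:example}) is needed.

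A minor arithmetic slip: when you peel off a point via Corollary~\ref{ref:squareshavedegenerations:cor}, the cleaved factor $B$ has $H_B(2)=H_A(2)$, not $H_A(2)-1$; the square $\DPel{x_n}{2}$ only adds to $\Delta_{d-2}(1)$ and hence only to $H(1)$. Your bound ``$\leq 4$'' is thus unjustified, but since the induction only requires $H_B(2)\leq 5$ this particular error does not break anything.
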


\begin{proof}
    Using Corollary~\ref{ref:basechangesmoothings:cor} we assume
    $\kk = \kkbar$.
    We proceed by induction on $\deg A$, the case $\deg A = 1$ being trivial.
    If $A$ has socle degree three, then the result follows from
    Proposition~\ref{ref:mainthmthree:prop}. Suppose that $A$ has socle degree
    $d\geq 4$.

    Let $f$ be a dual generator of $A$ in the standard form.
    If the symmetric decomposition of
    $H_A$ has a term $\Dhdvect{d-2} = (0, q, 0)$ with $q\neq 0$, then
    Corollary~\ref{ref:squareshavedegenerations:cor} implies that $\Spec A$ is a limit of
    schemes of the form $\Spec B \sqcup \Spec\kk$, where $B$ satisfies the assumptions
    $H_B(2) \leq 5$ and $H_B(2) \leq 2$ on the Hilbert function. Then $B$ is
    smoothable by induction, so also $A$ is smoothable. Further in the proof
    we assume that $\Dhdvect{A, d-2} = (0, 0, 0)$.

    We now investigate the symmetric decomposition of the Hilbert
    function $H_A$ of the algebra~$A$. Macaulay's Growth
    Theorem~\ref{ref:MacaulayGrowth:thm} asserts that $H_A =
    (1, n, m, 2, 2, \dots, 2, 1, \dots, 1)$, where the number of ``$2$'' is
    possibly zero. If follows that the possible symmetric decompositions
    of the Hilbert function are
    \begin{enumerate}
        \item $(1, 2, 2,  \ldots , 2, 1) + (0, 0, 1, 0, 0) + (0, n-3, n-3, 0)$,
        \item $(1, 1, 1 \ldots , 1, 1) + (0, 1, 1, \ldots , 1, 0) + (0, 0, 1,
            0, 0) + (0, n-3, n-3, 0)$,
        \item $(1, 1, 1 \ldots , 1, 1) + (0, 1, 2, 1, 0) + (0, n-3, n-3, 0)$,
        \item $(1,  \ldots , 1) + (0, n-1, n-1, 0)$,
        \item $(1, 2, \ldots ,2, 1) + (0, n-2, n-2, 0)$,
        \item $(1,  \ldots , 1) + (0,1,  \ldots , 1, 0) + (0, n-2, n-2, 0)$,
    \end{enumerate}
    and that the decomposition is uniquely determined by the Hilbert function.
    In all cases we have $H_A(1)\leq H_A(2)\leq 5$, so $f\in
    \kdp[x_1, \ldots ,x_5]$.
    Let us analyse the first three cases. In each of them we have $H_A(2) = H_A(1) + 1$. If $H_A(1) \leq
    3$, then $A$ is smoothable by
    Corollary~\ref{ref:lowcodimensionsmoothability:cor}. Suppose $H_A(1) \geq
    4$. Since $H_A(2) \leq 5$,
    we have $H_A(2) = 5$ and $H_A(1) = 4$. In this case the result follows from
    Lemma~\ref{ref:14521case:lem} above.

    It remains to analyse the three remaining cases. The proof is similar to
    the proof of Lemma~\ref{ref:14521case:lem}, however here it
    essentially depends on induction.
    Let $\DPut{ffour}{f_{\geq 4}}$ be the sum of homogeneous components of $f$
    that have
    degree at least four. Since $f$ is in the standard form, we have
    $\Dffour\in \kdp[x_1, x_2]$.
    By Proposition~\ref{ref:degreecomparison:prop},
    the decomposition of the Hilbert function $\Apolar{\Dffour}$ is
    one of the decompositions $(1, \ldots ,1)$, $(1,2 \ldots ,2,1)$, $(1, \ldots ,1) + (0, 1,  \ldots
    , 1, 0)$, depending on the decomposition of the Hilbert function of
    $\Apolar{f}$.

    Let us fix a vector $\hat{h} = (1, 2, 2, 2,  \ldots , 2, 1, 1, \ldots , 1)
    $ and take the sets
    \[
        V_1 := \left\{ f\in \kk[x_1, x_2]\ |\ H_{\Apolar{f}} = \hat{h}\right\}\mbox{
        and } V_2 := \left\{ f\in \kk[x_1, \ldots ,x_n]\ |\ f_{\geq 4}\in V_1 \right\}.
    \]
    By Proposition~\ref{ref:irreducibleintwovariables:prop} the set $V_1$ is
    irreducible and thus $V_2$ is also irreducible.
    The Hilbert function of the apolar algebra of a general member of $V_2$ is,
    by Example~\ref{ref:semicondegthree:example}, equal to $H_A$. It remains
    to show that the apolar algebra of this general member is
    smoothable.

    Proposition~\ref{ref:irreducibleintwovariables:prop} implies that the general
    member of $V_2$ has, after a nonlinear change of coordinates, the form
    $f = \DPel{x_1}{{d}} + \DPel{x_2}{{d_2}} + g$ for some $g$ of
    degree at most three.  Using Lemma \ref{ref:topdegreetwist:lem} we may
    assume, after another
    nonlinear change of coordinates, that $\Dx_1^2\hook g = 0$.

    Let $B := \Apolar{\DPel{x_1}{{d}} + \DPel{x_2}{{d_2}} + g}$. We will show that $B$ is
    smoothable.
    Since $d \geq 4 = 2\cdot 2$
    Proposition~\ref{ref:stretchedhavedegenerations:cor} shows that $B$
    is cleavable. Analysing the fibers of
    the resulting deformation, as
    in Example~\ref{ref:stretched:example}, we see that they have the form
    $\Spec (B' \times \kk)$, where $B' = \Apolar{h}$ and $h =
    \lambda^{-1}\DPel{x_1}{{d-1}} + \DPel{x_2}{{d_2}} + g$.
    Then $H_{B'}(3) = H_{\Apolar{h_{\geq 4}}}(3) \leq 2$. Moreover,
    $h\in \kdp[x_1, \ldots ,x_5]$, so that $H_{B'}(1) \leq 5$. Now
    analysing the possible symmetric decompositions of $H_{B'}$, which are
    listed above, we see that $H_{B'}(2)\leq H_{B'}(1) = 5$.
    It follows from induction on the degree that $B'$ is smoothable, thus
    $B'\times \kk$ and $B$ are smoothable.
\end{proof}

\begin{proposition}\label{ref:mainthmsfour:prop}
    Let $\chark \neq 2$.
    Let $(\DA, \mm, \kk)$ be a finite local Gorenstein algebra of socle degree four satisfying $\deg
    A\leq 14$. Then $A$ is smoothable.
\end{proposition}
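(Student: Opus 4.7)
The plan is to proceed by induction on $\deg A$. By Corollary~\ref{ref:basechangesmoothings:cor} we may assume $\kk=\kkbar$. The base case $\deg A\leq 7$ is the smoothability result of~\cite{cartwright_erman_velasco_viray_Hilb8}, so we assume $\deg A\geq 8$ and that every finite local Gorenstein $\kk$-algebra of degree strictly less than $\deg A$ (in particular of degree $\leq 13$, since $\deg A\leq 14$) is smoothable --- this range is safely below the first potentially nonsmoothable Gorenstein degree $14$ with Hilbert function $(1,6,6,1)$, whose socle degree is $3$, not $4$. Write $A \simeq \Apolar{f}$ via Theorem~\ref{ref:MacaulaytheoremGorenstein:thm} with $f\in \DP$, $\deg f = 4$, and aim to show that $\Spec A$ is cleavable; then by Proposition~\ref{ref:smoothabilityisclosed:prop} and Corollary~\ref{ref:smoothingcomponentsresult:cor} the inductive hypothesis applied to the components of a nearby reducible fiber delivers smoothability of $\Spec A$.

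The decisive geometric input is the existence of a ``cone direction'' for the leading form $f_4$: a linear form $\ell\in \DP_1$ with $\ell^4\hook f_4\neq 0$. Since $f_4\neq 0$ (because $A$ has socle degree exactly $4$) and $4! = 24$ is invertible in $\kk$ when $\chark\neq 2,3$, the standard polarization identity implies that the map $\DP_1\to \kk$, $\ell\mapsto \ell^4\hook f_4$ is not identically zero. After a linear change of coordinates sending such an $\ell$ to the coordinate dual to $\Dx_1$, we obtain $\Dx_1^4\hook f\neq 0$, so the hypothesis of Lemma~\ref{ref:topdegreetwist:lem} is met (using $\chark\neq 2,3$, i.e. $\chark>4$ or $\chark=0$). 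Invoking conditions (1)--(3) of that lemma simultaneously produces $\varphi\in \Dgrp$ such that
\[
\hat{f} := \Ddual{\varphi}(f) = \DPel{x_1}{4} + g, \qquad g\in \kdp[x_2,\ldots,x_n].
\]
By Proposition~\ref{ref:Gorenstein:prop} we have $\Apolar{\hat{f}} \simeq \Apolar{f} = A$, so we may replace $f$ by $\hat{f}$.

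With $f$ in this normal form, the condition $\Dx_1\hook g = 0$ holds \emph{a fortiori}, so Corollary~\ref{ref:stretchedhavedegenerations:cor} applies with $c=1$, $d=4$ (the numerical condition $2c=2\leq 4=d$ is satisfied). The corollary yields a flat family over $\Spec \kk[t]$ whose special fiber is $\Spec A$ and whose generic fiber is supported on at least two distinct points, as made explicit in Example~\ref{ref:stretched:example}. The generic fiber is a finite Gorenstein scheme (since the Gorenstein locus is open in any finite flat family, cf.~the discussion after Definition~\ref{ref:Gorensteinfamily:def}), and its connected components are local Gorenstein of degrees summing to $\deg A$, each strictly smaller than $\deg A$. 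By the inductive hypothesis each component is smoothable, so the generic fiber is smoothable by Corollary~\ref{ref:smoothingcomponentsresult:cor}, and Proposition~\ref{ref:smoothabilityisclosed:prop} transports smoothability to the special fiber $\Spec A$.

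The main obstacle in this plan is not the reduction itself but checking that the polarization/$\Dgrp$-normalization is compatible with the characteristic assumption $\chark\neq 2,3$; this is exactly why the hypothesis appears in the proposition. A secondary concern is the tacit invocation of an induction that crosses socle-degree boundaries --- the cleaved components can have any socle degree $\leq 4$ --- so strictly speaking one must order the proof of Theorem~\ref{ref:cjnmain:thm} to establish smoothability of all Gorenstein algebras of degree $<\deg A$ before invoking it here. Since our results for socle degrees $\leq 3$ via Proposition~\ref{ref:mainthmthree:prop} and Theorem~\ref{ref:mainthmstretchedfive:thm}, combined with the present inductive argument for socle degree $4$, exhaust all Hilbert functions arising in degrees $\leq 13$ (the forbidden $(1,6,6,1)$ lives only at degree $14$), the induction closes cleanly.
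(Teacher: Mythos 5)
Your proposal has a genuine gap at the central step: the claim that Lemma~\ref{ref:topdegreetwist:lem} produces $\hat{f}=\DPel{x_1}{4}+g$ with $g\in\kdp[x_2,\ldots,x_n]$. That lemma removes only two sorts of monomials --- pure powers $\DPel{x_1}{i}$ with $i<d$, and products $\DPel{x_1}{i}x_j$ with $j\neq 1$ --- and leaves untouched any monomial such as $\DPel{x_1}{2}\DPel{x_2}{2}$, $x_1\DPel{x_2}{3}$, or $x_1x_2x_3x_4$, whose non-$x_1$ factor has degree $\geq 2$. After the lemma, $g=\hat{f}-\DPel{x_1}{4}$ can perfectly well still involve $x_1$, so $\Dx_1\hook g=0$ need not hold, and Corollary~\ref{ref:stretchedhavedegenerations:cor} with $c=1$ does not apply. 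Example~\ref{ref:quarticlimitreducible:example}, which your normalization mirrors, proves only $\Dx_1^2\hook g=0$ and \emph{crucially starts from the extra hypothesis} that the leading quartic $f_4$ already has the separated form $\DPel{x_1}{4}+g_4$ with $g_4\in\kdp[x_2,x_3,x_4]$; your existence of a ``cone direction'' $\Dx_1^4\hook f_4\neq 0$ is far weaker and does not force such a decomposition (e.g.\ $f_4=\DPel{x_1}{4}+\DPel{x_1}{2}\DPel{x_2}{2}$ has $\Dx_1^4\hook f_4\neq 0$ but is not separated, and $f_4=\DPel{x_1}{3}x_3+\DPel{x_1}{2}\DPel{x_2}{2}$, one of the normal forms in Proposition~\ref{ref:thirdsecant:prop}, admits no separating variable at all).

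The paper's proof routes around exactly this obstacle by a case analysis you have not reproduced. After reducing to $\Dhdvect{A,d-2}=(0,0,0)$ via Corollary~\ref{ref:squareshavedegenerations:cor} and bounding $H_A(1)\leq 5$, it enumerates the possible Hilbert functions; for $(1,*,*,1,1)$ and $(1,*,*,2,1)$ it invokes Theorem~\ref{ref:mainthmstretchedfive:thm}; for $(1,4,4,3,1)$ and $(1,4,4,4,1)$ it uses Propositions~\ref{ref:thirdsecant:prop} and~\ref{ref:fourthsecant:prop} to establish that the locus of leading forms is irreducible and that a \emph{generic} leading form is a sum of powers --- only then is the separation hypothesis of Example~\ref{ref:quarticlimitreducible:example} met --- and it propagates smoothability to the entire irreducible stratum by closedness (Proposition~\ref{ref:smoothabilityisclosed:prop}); and for $(1,4,5,3,1)$ it abandons cleavability altogether in favor of the unobstructedness argument of Example~\ref{ref:14531case:example} combined with Lemma~\ref{ref:unobstructed:lem}. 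Your proof proposal is missing the ``generic form + irreducibility + closedness'' mechanism and the unobstructed-point mechanism, both of which are essential.
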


\begin{proof}
    Using Corollary~\ref{ref:basechangesmoothings:cor} we assume
    $\kk = \kkbar$.
    We proceed by induction on the degree of $A$. By
    Proposition~\ref{ref:mainthmthree:prop} we may assume that all algebras
    of socle degree \emph{at most} four and degree less than $\deg A$ are
    smoothable.

    If $\Dhdvect{A, d-2} = (0, q, 0)$ with $q\neq 0$, then by
    Corollary~\ref{ref:squareshavedegenerations:cor} the scheme $\Spec A$ is a limit of
    schemes of the form $\Spec A'
    \sqcup \Spec\kk$, where $A'$ has socle degree four. Hence $A$ is
    smoothable. Therefore we assume $q = 0$. Then $H_A(1) \leq 5$ by Lemma
    \ref{ref:hilbertfunc:lem}. Moreover, we assume $H_A(1) \geq 4$ since
    otherwise $A$ is smoothable by
    Corollary~\ref{ref:lowcodimensionsmoothability:cor}.

    The symmetric
    decomposition of $H_A$ is $(1, n, m, n, 1) + (0, p, p, 0)$ for some $n, m,
    p$.
    Clearly, $n\leq 5$. A unimodality result by Stanley, see
    \cite[p.~67]{StanleyCombinatoricsAndCommutative}, asserts that
    $n\leq m$. Since $\deg A\leq 14$, we have $n\leq 4$ and $H_A(2) \leq H_A(1)\leq 5$.
    We have four cases: $n = 1$, $2$, $3$,
    $4$ and five possible shapes of Hilbert functions: $H_A = (1, *, *, 1, 1)$,
    $H_A = (1, *, *, 2, 1)$, $H_A = (1, 4, 4, 3, 1)$, $H_A = (1, 4, 4, 4,
    1)$, $H_A = (1, 4, 5, 3, 1)$.

    The conclusion in the first two cases follows from Theorem
    \ref{ref:mainthmstretchedfive:thm}.
    In the remaining cases we first look for a suitable irreducible set of
    dual generators parameterizing algebras with prescribed $H_A$.
    We examine the case $H_A = (1, 4, 4, 3,
    1)$.
    Consider the locus of $f\in \DP = \kdp[x_1, x_2, x_3, x_4]$ in the
    standard form that are generators of algebras with
    Hilbert function $H_A$. We claim that this locus is irreducible. Since the leading form $f_4$ of
    $f$ from this locus has Hilbert function $(1, 3, 3, 3, 1)$, the locus of possible
    leading forms is irreducible by Proposition~\ref{ref:thirdsecant:prop}.
    Then the irreducibility follows from
    Example~\ref{ref:semicondegthree:example}. The irreducibility in the cases
    $H_A = (1, 4, 4, 4, 1)$ and $H_A = (1, 4, 5, 3, 1)$ follows similarly from
    Proposition~\ref{ref:fourthsecant:prop} together with
    Example~\ref{ref:semicondegthree:example}.
    In the first two cases we see that $f_4$ is a sum of powers of variables,
    then Example~\ref{ref:quarticlimitreducible:example} shows
    that the apolar algebra $A$ of a general $f$ is cleavable. More
    precisely,
    $\Spec A$ is limit of schemes of the form $\Spec A' \sqcup \Spec\kk$, where $A'$ has socle
    degree at most four (compare Example~\ref{ref:stretched:example}). Then
    $\Spec A$ is smoothable.
    In the last case Example~\ref{ref:14531case:example} gives an unobstructed
    algebra in this irreducible set.
    By Lemma~\ref{ref:unobstructed:lem} this completes the proof.
\end{proof}

\begin{proof}[Proof of Theorem~\ref{ref:cjnmain:thm}]
    Let $A = H^0(R, \OO_R)$.
    By Theorem~\ref{thm_equivalence_of_abstract_and_embedded_smoothings} and
    Corollary~\ref{ref:basechangesmoothings:cor} it is enough to consider
    local algebras over $\kk = \kkbar$, each such algebra has residue field
    $\kk$.
    We do induction on the degree of $\DA$.
    \def\Dh#1{H(#1)}%
    Let $(\DA, \mm, \kk)$ be a local algebra of degree at most $14$ and of socle degree $d$.
    By $H$ we denote the Hilbert function of $A$. By induction it is enough to
    prove that $\Spec\DA$ is cleavable. Suppose it is not so.
%    As mentioned in Section~\ref{sss:smoothability} it is enough to prove
%    $A$ is cleavable. Suppose it is not so.
    By Corollary~\ref{ref:squareshavedegenerations:cor} we
    have $\Dhdvect{A, d-2} = (0, 0, 0)$. Then by Lemma~\ref{ref:hilbertfunc:lem} we see that either $H = (1,
    6, 6, 1)$ or $\Dh{1} \leq 5$. It is enough to consider $\Dh{1}\leq 5$.
    If $d = 3$ then $\Dh{2} \leq \Dh{1} \leq 5$, so by Proposition
    \ref{ref:mainthmthree:prop} we assume $d > 3$. By Proposition
    \ref{ref:mainthmsfour:prop} it follows that
    we may consider only $d\geq 5$.

    If $\Dh{1}\leq 3$ then $A$ is smoothable by
    Corollary~\ref{ref:lowcodimensionsmoothability:cor}, thus we
    assume $\Dh{1} \geq 4$. By Lemma \ref{ref:trikofHilbFunc:lem} we
    see that $\Dh{2} \leq 5$. Then by Theorem~\ref{ref:mainthmstretchedfive:thm} we reduce to the
    case $\Dh{3} \geq 3$. By Macaulay's Growth Theorem we have $\Dh{2} \geq 3$.
    Then $\sum_{i>3} \Dh{i} \leq 14 - 11$, so we are left with several
    possibilities: $H = (1, 4, 3, 3, 1, 1, 1)$, $H = (1, 4, 3, 3, 2, 1)$ or
    $H = (1, *, *, *, 1, 1)$.
    In the first two cases it follows from the symmetric decomposition that
    $\Dhdvect{A, d-2} \neq (0, 0, 0)$ which is a contradiction. We examine the
    last case.
    By Lemma~\ref{ref:14341notexists:lem} there does not exist an algebra with Hilbert function $(1, 4, 3,
    4, 1, 1)$. Thus the only possibilities are $(1, 4, 3, 3, 1, 1)$,
    $(1, 5, 3, 3, 1, 1)$ and $(1, 4, 4, 3, 1, 1)$. Once more, it is
    checked directly
    that in the first two cases $\Dhdvect{A, d-2} \neq (0, 0, 0)$. The
    last case is the content of Lemma~\ref{ref:144311final:lem}.
\end{proof}

\section{Proof of Theorem~\ref{ref:14pointsmain:thm} --- the nonsmoothable
component {(1,6,6,1)}}\label{ssec:14pointsproof}

\renewcommand{\VV}{V}%
In this section we make the following global assumption.
This assumption is used only once, when referring to the work of Iliev-Ranestad
in Proposition~\ref{ref:dimensionofiliev:prop}.
\begin{assumption}
    The field $\kk$ has characteristic zero.
\end{assumption}

We take a six-dimensional $\kk$-vector space $\VV$ and endow it with an
affine space structure given by $H^0(\VV, \OO_{\VV}) = \Sym \Ddual{V}$. We
prefer $V$ to $\mathbb{A}^6$, since the proofs are more transparent when done
in a coordinate free manner. We endow $\Sym \VV$ with a \emph{divided power}
polynomial ring structure, as in Definition~\ref{ref:dividedpows:def} and
consider the action of $\Sym \Ddual{\VV}$ on $\Sym \VV$, as in
Definition~\ref{ref:contraction:propdef}. The symbol $\Sym \VV$ is an abuse of
notation --- one would expect this to be a polynomial ring --- however the
ring structure will be used only in
Lemma~\ref{ref:quartics:lem}, so we keep this intuitive notation. In
characteristic zero $\Sym \VV$, with its divided power structure, is
isomorphic to a polynomial ring, see Proposition~\ref{ref:divpowerispoly:prop}.

We begin with constructing the subset of $\Hilbshort = \HilbGorarged{14}{\VV}$,
which, as we prove later, is the intersection $\Hilbshortzero \cap
\Hilbshortother$. The key ingredient is the Iliev-Ranestad divisor, introduced
in~\cite{Ranestad_Iliev__VSPcubic, Ranestad_Iliev__VSPcubic_addendum}.

\newcommand{\wedgetwo}{\Lambda^2 \VV}%
\newcommand{\wedgetwodual}{\Lambda^2 \Ddual{\VV}}%
\newcommand{\cone}[1]{\operatorname{cone}(#1)}%
\newcommand{\GLV}{\operatorname{GL}(\VV)}%
\renewcommand{\Gtwosix}{\Grass(2, \VV)}%
\renewcommand{\cubicspace}{\mathbb{P}\left(\Sym^3 \VV\right)}%
\renewcommand{\cubicspacereg}{\mathbb{P}\left(\Sym^3 \VV\right)_{1661}}%

\paragraph{The Iliev-Ranestad divisor.} The Grassmannian $\Gtwosix\subset \mathbb{P}(\wedgetwo)$ is
    non-degenerate, arithmetically Gorenstein and of degree $14$. A general
    $\mathbb{P}^5 = \mathbb{P}W$ does not intersect it. For such $\mathbb{P}W$
    the cone
    \[
        R = W \cap \cone{\Gtwosix}\subset \wedgetwo
    \]
    is a finite standard
    graded Gorenstein scheme $R$ supported at the origin.
    For a general $\mathbb{P}^6$ containing general such $\mathbb{P}W$, the
    intersection $\mathbb{P}^6 \cap \Gtwosix$ is a set of $14$ points and $R$
    is a hyperplane section of the cone over these points, thus $R$ is
    smoothable.
    \newcommand{\quotient}{\mathbin{\!/\mkern-4mu/\!}}%
    Since $R$ spans $\VV$ and is of degree $14$, one checks, using
    the symmetry of the Hilbert function, that $R$ has Hilbert
    series $(1, 6, 6, 1)$. Therefore $R =
    \Apolar{F}$ for a cubic $F\in \Sym^3 W^* \simeq \Sym^3 \kk^6$,
    unique up to scalars and $R$
    gives a well defined element $[F_R]\in \cubicspace \quotient \GLV$.
    Denote by $\DPut{DIRmod}{\mathcal{R}}$ the set of such $[F_R]$ obtained
    from all admissible $\mathbb{P}^5 = \mathbb{P}W$ and by $\DIR \subset \cubicspace$
    the closure of the preimage of $\DDIRmod$. The subvariety $\DIR$ it is
    called the \emph{Iliev-Ranestad} divisor, see~\cite{Ranestad_Voisin__VSP}.
    By Proposition~\ref{ref:flatfamiliesforconstructible:prop} we obtain a map
    $\varphi\colon \DIR\to \Hilbshort$, whose image is set-theoretically contained
    in $\Hilbshortzero \cap \exceptional$.

    \begin{proposition}\label{ref:dimensionofiliev:prop}
        The closure of $\varphi(\DIR) \subset \exceptional$ has dimension
        $54$, hence is a divisor in $\exceptional$.
    \end{proposition}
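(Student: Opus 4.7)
The plan is to transport the question to $\cubicspace$ via the identification $\exceptional\simeq\cubicspacereg$ from Part~\ref{it:exceptional} of Theorem~\ref{ref:14pointsmain:thm}, which at the level of points sends $F\mapsto[\Spec\Apolar{F}]$. Under this identification the restriction of $\varphi$ to the open locus $\DIR\cap\cubicspacereg$ becomes the tautological inclusion into $\cubicspacereg$, in particular is injective with image of the same dimension as the source. Consequently the proposition reduces to showing (i) $\DIR$ is irreducible of dimension $54$ and (ii) $\DIR\cap\cubicspacereg$ is dense in $\DIR$.

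For (i), irreducibility is immediate: the Iliev-Ranestad construction gives a $\GLV$-equivariant rational map to $\cubicspace$ from the $\GLV$-torsor
\[
\mathcal{I}=\left\{(W,\iota):\ W\subset\wedgetwo\text{ a 6-plane},\ \iota\colon W\xrightarrow{\sim}\VV\right\}
\]
over the irreducible variety $\Grass(6,\wedgetwo)$, sending $(W,\iota)$ to $\iota_*F_R$ with $R=W\cap\cone{\Gtwosix}$; by definition the closure of the image is $\DIR$. For the equality $\dim\DIR=54$ the cleanest route is to invoke the Iliev-Ranestad-Voisin theorem~\cite{Ranestad_Iliev__VSPcubic,Ranestad_Iliev__VSPcubic_addendum,Ranestad_Voisin__VSP} identifying $\DIR$ with the unique $SL_6$-invariant divisor of degree $10$ on $\cubicspace$. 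A self-contained verification proceeds via a dimension count: $\dim\mathcal{I}=\dim\Grass(6,\wedgetwo)+\dim\GLV=54+36=90$, and one must show that the generic fibre of $\mathcal{I}\to\cubicspace$ has dimension exactly $\dim\GLV=36$, equivalently that a general cubic in $\DIR$ is produced from essentially a unique $\GLV$-orbit of pairs $(W,\iota)$. This fibre computation is the main obstacle and amounts to the variety-of-sums-of-powers analysis of a cubic fourfold carried out in the cited references.

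For (ii), a general Iliev-Ranestad cubic already sits in $\cubicspacereg$: for general $\mathbb{P}W$ not meeting $\Gtwosix$ the section $R=W\cap\cone{\Gtwosix}$ has Hilbert series $(1,6,6,1)$ and spans $W$, so the homogeneous dual generator $F_R\in\Sym^3\VV$ satisfies $\dimk\DS_1\hook F_R=H_{\Apolar{F_R}}(1)=6$, which means that $V(F_R)\subset\PP^5$ is not a cone and $[F_R]\in\cubicspacereg$. Since $\cubicspacereg$ is open in $\cubicspace$ and $\DIR$ is irreducible, $\DIR\cap\cubicspacereg$ is a non-empty open, hence dense, subset of $\DIR$. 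Combining with (i) and the first paragraph, $\overline{\varphi(\DIR)}$ is an irreducible closed subset of $\exceptional$ of dimension $54$, and since $\dim\exceptional=\dim\cubicspacereg=55$, it is a divisor.
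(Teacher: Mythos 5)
Your skeleton --- pass to $\cubicspacereg$ via the identification with $\exceptional$, observe that $\varphi$ is injective there, and reduce everything to $\dim\DIR=54$ plus density of $\DIR\cap\cubicspacereg$ in $\DIR$ --- is sound, and in the end your essential input is the same as the paper's, which simply quotes \cite[Lemma~1.4]{Ranestad_Iliev__VSPcubic}: the image of $\DIR$ in the $20$-dimensional moduli space of cubic fourfolds is a divisor, hence $19$-dimensional, so $\dim\DIR=19+35=54$, and then deduces the statement over an arbitrary field of characteristic zero by base change from $\CC$. Two of your shortcuts, however, do not work as stated. Your option of invoking ``the Iliev--Ranestad--Voisin theorem identifying $\DIR$ with the unique $SL_6$-invariant divisor of degree $10$'' is not an available external input: in this paper that identification is established only later, in Proposition~\ref{ref:divisorequality:prop}, and its proof uses the present proposition (it needs $\DIR$ to be a divisor), so quoting it here is circular unless you check it is proved independently in the references, which is not how the paper uses them. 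Second, you never address the base field: the cited results are over $\CC$, whereas the proposition is over any field of characteristic zero, so a base-change step as in the paper's proof is required. A smaller point: cite Claim~\ref{claim:exceptional} (whose proof is independent of this proposition) rather than Theorem~\ref{ref:14pointsmain:thm} itself, or just use that $[F]\mapsto[\Spec\Apolar{F}]$ is injective on points, which already gives $\dim\overline{\varphi(\DIR)}=\dim(\DIR\cap\cubicspacereg)$.

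The genuine gap is in your ``self-contained verification''. The count $\dim\mathcal{I}=\dim\Grass(6,\wedgetwo)+\dim\GLV=54+36=90$ is fine, but the statement you offer as equivalent to ``the generic fibre of $\mathcal{I}\to\cubicspace$ is $36$-dimensional'' --- that a general cubic in $\DIR$ arises from essentially a unique $\GLV$-orbit of pairs $(W,\iota)$ --- is not equivalent and has the geometry backwards. The fibre over a fixed $[F]$ is not swept out by $\GLV$: acting by $g\in\GLV$ on $\iota$ replaces $F$ by $g\cdot F$, so only the stabilizer of the line $\kk F$ preserves the fibre. If the preimage of the orbit $\GLV\cdot[F]$ were a single $\GLV$-orbit, the fibre over $[F]$ would have dimension equal to that of this stabilizer, which for $F$ with $\dim_{\kk}\DS_1\hook F=6$ is far less than $36$, and the image of $\mathcal{I}$ would then have dimension close to $90$, exceeding $\dim\cubicspace=55$ --- absurd. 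What must actually be proved is the opposite phenomenon: a general $F$ in the image comes from a roughly $35$-dimensional family of $6$-planes $W$ whose sections of $\cone{\Gtwosix}$ are projectively equivalent to $\Spec\Apolar{F}$, equivalently the induced map from $\Grass(6,\wedgetwo)$ to the moduli space of cubic fourfolds has $19$-dimensional image. That is precisely \cite[Lemma~1.4]{Ranestad_Iliev__VSPcubic}, so once corrected your route collapses to the paper's citation rather than providing an independent verification.
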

    \begin{proof}
        For $\kk = \mathbb{C}$ it is proven in~\cite[Lemma~1.4]{Ranestad_Iliev__VSPcubic} that
        $\DDIRmod$ is a divisor in the moduli space of cubic fourfolds, hence
        $\dim \DDIRmod = 19$ and $\dim \varphi(\DIR) = \dim \DIR = 19 + 35 =
        54$. Since $\exceptional$ has dimension $55$, the claim follows in the
        case $\kk = \mathbb{C}$. The claim follows for $\kk = \mathbb{Q}$ and
        then for all other fields of characteristic zero by base change,
        see~\cite[(5)~pg.~112]{fantechi_et_al_fundamental_ag}.
    \end{proof}
    \begin{remark}
        It is proven in \cite[Lemma~2.7]{Ranestad_Voisin__VSP}
        for $\kk = \mathbb{C}$ that
        $F$ lies in $\DIR$ if and only if it is apolar to a quartic scroll.
    \end{remark}

    \paragraph{Prerequisites.}
    We will now rigorously prove several claims which together lead to the
    proof of Theorem~\ref{ref:14pointsmain:thm}. Our
    approach is partially based on the natural method
    of~\cite{cartwright_erman_velasco_viray_Hilb8}.
    Additional (crucial) steps are proving that $\Hilbshort \setminus
    \Hilbshortzero$ is smooth and that $\Hilbshortzero \cap \Hilbshortother$
    is irreducible.

    In the first two steps we use the following abstract observation.
    \begin{lemma}\label{ref:abstract:lem}
        Let $X$ and $Y$ be reduced, finite type schemes over $\kk$. Let
        $X\to Y$, $Y\to X$ be two morphisms, which are bijective on closed
        points. If the composition $X\to Y\to X$ is equal to
        identity, then $X\to Y$ is an isomorphism.
    \end{lemma}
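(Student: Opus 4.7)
Write $f\colon X\to Y$ and $g\colon Y\to X$ for the two morphisms. Since we are already given $g\circ f = \operatorname{id}_X$, it suffices to prove that the composition in the other order, $h := f\circ g\colon Y\to Y$, equals $\operatorname{id}_Y$. Once this is done, $g$ is a two-sided inverse of $f$, so $f$ is an isomorphism.

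The first step is a pointwise check: I claim that $h$ and $\operatorname{id}_Y$ agree on every closed point of $Y$. Indeed, given a closed point $y\in Y$, the bijectivity of $f$ on closed points yields a closed point $x\in X$ with $f(x)=y$, and then
\[
h(y)=f(g(f(x)))=f\bigl((g\circ f)(x)\bigr)=f(x)=y,
\]
using only the hypothesis $g\circ f=\operatorname{id}_X$.

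The second step upgrades this pointwise equality to a scheme-theoretic one. Consider the morphism $(h,\operatorname{id}_Y)\colon Y\to Y\times_{\kk} Y$. Since $Y$ is separated over $\kk$ (as is standard for the finite type, indeed quasi-projective, schemes to which this lemma will be applied), the diagonal $\Delta_Y\subset Y\times_{\kk} Y$ is closed, and its preimage $Z\subset Y$ under $(h,\operatorname{id}_Y)$ is a closed subscheme of $Y$. By the previous step, $Z$ contains every closed point of $Y$. Because $Y$ is of finite type over the field $\kk$, its closed points are dense (Jacobson property), so $Z$ coincides with $Y$ as a topological space. Now reducedness enters: a closed subscheme of a reduced scheme with the same underlying topological space has vanishing ideal sheaf, so $Z=Y$ as schemes. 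Therefore $(h,\operatorname{id}_Y)$ factors through $\Delta_Y$, i.e.\ $h=\operatorname{id}_Y$, completing the proof.

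The only non-routine point is the passage from agreement on closed points to agreement as morphisms; this is exactly where one needs both hypotheses on $Y$ (reducedness to kill nilpotent obstructions, separatedness to make the locus of agreement closed). Both assumptions are natural and will hold in the intended applications in the proof of Theorem~\ref{ref:14pointsmain:thm}.
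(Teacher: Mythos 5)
Your proof is correct and takes a genuinely different route from the paper's. The paper argues at the level of ring maps: the scheme-theoretic image of $i=f$ is a closed subscheme of $Y$ containing all closed points, hence (by the Jacobson property and reducedness of $Y$) all of $Y$, so the pullback of functions along $i$ is injective; surjectivity of that pullback comes from the section $\pi\circ i=\operatorname{id}$; and then $i$ is an isomorphism. You instead show $f\circ g=\operatorname{id}_Y$ directly, via the equalizer of $f\circ g$ and $\operatorname{id}_Y$. Both arguments pull the same lever (Jacobson plus reducedness of $Y$, plus the section), but your closing step --- $g$ is a two-sided inverse, hence $f$ is an isomorphism --- is immediate, whereas the paper's passage from ``pullback of functions is an isomorphism'' to ``$i$ is an isomorphism'' needs a little more care to spell out. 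One minor caveat: you invoke separatedness of $Y$, which the lemma does not assume. You flag this yourself, and it is harmless in the intended applications, but it can be removed entirely: the diagonal of any scheme is an immersion, so the equalizer $Z$ is a locally closed subscheme of $Y$, and the same Jacobson and reducedness argument forces $Z=Y$ (a locally closed subset containing all closed points of a Jacobson scheme must be the whole space). The paper's scheme-theoretic-image route sidesteps this automatically, since the scheme-theoretic image is always a closed subscheme. Incidentally, your argument uses neither reducedness of $X$ nor injectivity of $f$ on closed points; both are consequences of the remaining hypotheses once the conclusion is in hand, so this is no defect.
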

    \begin{proof}
        Denote the morphisms by $i\colon X\to Y$ and $\pi\colon Y\to X$.
        The scheme-theoretical image of $i$ contains all closed points, hence
        is the whole $Y$. Therefore the pullback of functions via $i$ is
        injective. It is also surjective, since the pullback via composition
        $\pi\circ i$ is the identity. Hence $i$ is an isomorphism.
    \end{proof}
     In our setting, $X$ is a subset of the Hilbert scheme, $Y$ a
     subspace of polynomials and the maps are constructed using relative
     Macaulay's inverse systems.

     Below we precisely explain the freedom of choice of a dual generator of
     an algebra with Hilbert function $(1, n, n, 1)$.
    \begin{remark}\label{ref:uniquenessofcubics:rem}
        Let $F\in \DP$ be a polynomial of degree three such that $H_{\Apolar{F}} = (1, n,
        n, 1)$ where $n = \dim \DP_1$; in other words the Hilbert function is maximal.
        The ideal $\Ann{F}$ and hence $\Apolar{F}$ only partially depends
        on the lower homogeneous components of $F$. To see this,
        write explicitly the $\DS$-module $\DS\hook F$.
        \begin{align*}
            \DS\hook F &= \sspan{F,\ \{\Dx_i\hook F\ |\ i\}, \ \{\Dx_i\Dx_j\hook
            F\ |\ i, j\},\   \left\{ \Dx_i\Dx_j\Dx_k\hook F\ |\ i, j, k \right\}}
            =
            \\ &=
            \sspan{F,\ \{\Dx_i\hook F\ |\ i\}} \oplus \DP_{\leq 1} =
            \sspan{F_3 + F_2}\oplus \sspan{\Dx_i\hook F_3\ |\ i} \oplus
            \DP_{\leq 1}.
        \end{align*}
        Therefore $\DS\hook F$, as a submodule of $\DP$, is uniquely defined by giving $F_3$ and the
        class $[F_2
        \mod \sspan{\Dx_i\hook F_3\ |\ i}]$, up to multiplication by a constant.
    \end{remark}

    \paragraph{Identification of $\exceptional$ with an open subset of
    $\cubicspace$.}
    \begin{claim}\label{claim:exceptional}
        The map $\varphi\colon \cubicspacereg \to \exceptional$ is an isomorphism.
    \end{claim}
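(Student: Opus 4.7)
The plan is to invoke Lemma~\ref{ref:abstract:lem} by producing a morphism $\pi\colon\exceptional\to\cubicspacereg$ which is a set-theoretic inverse of $\varphi$, and then verifying that $\pi\circ\varphi = \operatorname{id}$. Both $\exceptional$ (reduced by construction) and $\cubicspacereg$ (an open subset of a projective space) are reduced finite-type $\kk$-schemes, so the lemma applies once bijectivity on closed points and one composition identity are established.

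First, I would check that $\varphi$ is a well-defined morphism. For $[F]\in\cubicspacereg$, the condition $\dim \DS_1\hook F = 6$ forces $H_{\Apolar{F}} = (1,6,6,1)$, so $\Apolar{F}$ is graded Gorenstein of degree $14$, $\Gmult$-invariantly embedded at the origin of $\VV$. Scheme-theoretically, $\varphi$ is obtained by applying Proposition~\ref{ref:flatfamiliesforconstructible:prop} to the constructible set of cubics with this Hilbert function and passing to $\mathbb{P}(\Sym^3\VV)$ via Proposition~\ref{ref:apolarrepresentability:prop}; the resulting subschemes land in $\Hilbshortother$ as shown in Example~\ref{ex:1661}, hence, being graded, in $\exceptional$.

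Next, I would construct $\pi$. Restricting the universal family to $\exceptional$ gives a Gorenstein family $\UU^{Gor}\to\exceptional$, so by Corollary~\ref{ref:localdescriptionGorenstein:cor} each point admits an open neighborhood $U$ with $\UU^{Gor}|_U = \Apolar{F_U}$ for some $F_U\in \OO_U\tensor\Sym\VV$. Since the family over $\exceptional$ is dilation-invariant and each fiber has socle degree $3$, Remark~\ref{ref:localdescriptiongraded:rmk} (together with Remark~\ref{ref:uniquenessofcubics:rem}) lets us take each $F_U$ homogeneous of degree $3$, and any two choices $F_U$, $F_{U'}$ differ on $U\cap U'$ by a unit in $\OO_{U\cap U'}$. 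The $F_U$ therefore glue to a section of an invertible subsheaf of $\OO_{\exceptional}\tensor\Sym^3\VV$, yielding a morphism $\pi\colon\exceptional\to\mathbb{P}(\Sym^3\VV)$. Its image lies in $\cubicspacereg$ because $H_{\pi([R])}(1) = 6$ for every closed point $[R]\in\exceptional$.

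Finally, I would verify the identities. On closed points, given $[F]\in\cubicspacereg$ with $\varphi([F]) = [\Apolar{F}]$, the local description at this point recovers $F$ up to scalar as the unique (up to scalar) homogeneous degree-$3$ dual generator of the Gorenstein algebra with symmetric Hilbert function $(1,6,6,1)$ — hence $\pi(\varphi([F])) = [F]$, so $\pi\circ\varphi = \operatorname{id}$. The other direction $\varphi\circ\pi = \operatorname{id}$ on closed points is immediate from the definition of $\pi$ via local apolarity: the scheme associated to the class of the dual generator is the original $R$. The main obstacle I anticipate is the scheme-theoretic construction of $\pi$; once one is comfortable that the line bundle of homogeneous degree-$3$ dual generators glues across the open cover, the rest reduces to formal verification of the hypotheses of Lemma~\ref{ref:abstract:lem}.
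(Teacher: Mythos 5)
Your proposal is correct and follows essentially the same route as the paper: use the local description of Gorenstein families (Corollary~\ref{ref:localdescriptionGorenstein:cor}) together with the graded refinement of Remark~\ref{ref:localdescriptiongraded:rmk} to extract, locally on the reduced scheme $\exceptional$, a homogeneous cubic dual generator giving a (local) inverse morphism to $\cubicspacereg$, and then conclude by Lemma~\ref{ref:abstract:lem}. Your extra care about gluing the local generators (via uniqueness of the cubic up to a unit, Remark~\ref{ref:uniquenessofcubics:rem}) and about well-definedness of $\varphi$ only makes explicit what the paper leaves implicit.
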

    \begin{proof}
        Let $[R]\in \exceptional$. Each fiber of the universal family
        over $\exceptional$ is $\kk^*$-invariant thus the whole family is
        $\kk^*$-invariant. By Local Description of Families and especially
        Remark~\ref{ref:localdescriptiongraded:rmk}, near
        $[R]$ this family has the form $\Spec \Apolar{F}
        \to \Spec B$ for some $F\in B\tensor \Sym^3 \VV$, so that
        $[F]$ gives a morphism $\Spec B \to
        \cubicspacereg$ which is locally an inverse to $\cubicspacereg \to
        \exceptional$. The claim follows from Lemma~\ref{ref:abstract:lem}.
    \end{proof}
    We will abuse the notation and speak
    about elements of $\cubicspacereg$ being smoothable~etc.
    We will also identify $\DIR$ with $\varphi(\DIR\cap \cubicspacereg)$.
    Note that the codimension of complement of $\cubicspacereg \subset \cubicspace$ is
    greater than one, so divisors on these spaces are identified.

    \newcommand{\Hilbotherred}{\reduced{\Hilbshortother}}%
    \paragraph{The bundle $\Hilbotherred \to \exceptional$.}
    We now show how the questions
    about $\Hilbshortother$ reduce to the questions about $\exceptional$. Note that
    we will work on the reduced scheme $\Hilbotherred$, which eventually turns out
    to be equal to $\Hilbshortother$.
    \begin{claim}\label{claim:vectorbundle}
        The scheme $\Hilbotherred$ is a rank $21$ vector bundle over $\exceptional$ via a
        map
        \[\pi\colon \Hilbotherred \to \exceptional.\] On the level of
        points $\pi$ maps $[R]$ to $\Spec \gr H^0(R, \OO_R)$ supported at the origin
        of $\VV$. The schemes corresponding to points in the same fiber of
        $\pi$ are isomorphic.
    \end{claim}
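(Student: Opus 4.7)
My plan is to construct $\pi$ as a morphism, exhibit an inverse from an explicit rank $21$ bundle $\mathcal{E}$ over $\exceptional$, and conclude via Lemma~\ref{ref:abstract:lem}. First, on closed points, every $[R] \in \Hilbotherred$ corresponds to a local Gorenstein algebra $A$ with $H_A = (1,6,6,1)$, so $R$ has a unique support point $v \in V$. Writing $R - v = \Apolar{F_3 + F_2 + F_1 + F_0}$ yields a non-degenerate cubic $F_3 \in \Sym^3 V$ (since $H_{\Apolar{F_3}}(1)=6$), and I set $\pi([R]) := [F_3] \in \cubicspacereg \simeq \exceptional$ (via Claim~\ref{claim:exceptional}); by Corollary~\ref{ref:eliasrossi:cor} this agrees with $\Spec \gr H^0(R, \OO_R)$ supported at the origin. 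To upgrade $\pi$ to a morphism I will cover $\Hilbotherred$ by opens $U$ on which the universal family has, by Corollary~\ref{ref:localdescriptionGorenstein:cor}, the form $\Apolar{F}$ for some $F \in H^0(U, \OO_U \otimes \Sym V)$; composing with the support map $U \to V$ (algebraic since the fibers are all irreducible and local) and extracting the degree-three part of the translated generator produces a morphism $U \to \cubicspacereg$, and uniqueness of $[F_3]$ guarantees these patches glue.

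For the inverse, I will construct $\mathcal{E}$ using the tautological line bundle $\OO_{\cubicspace}(-1)$. The tautological cubic $F_3 \in \Sym^3 V \otimes \OO(-1)$ induces a bundle map $\Ddual{V} \otimes \OO(-1) \to \Sym^2 V \otimes \OO(-1)$ sending $\partial \otimes F_3 \mapsto \partial \hook F_3$, which is injective over $\cubicspacereg$ with rank-$6$ image $\mathcal{K}$; setting $\mathcal{F} := (\Sym^2 V \otimes \OO(-1))/\mathcal{K}$ gives a rank-$15$ bundle and
\[
    \mathcal{E} := (V \otimes \OO_{\exceptional}) \oplus \mathcal{F}
\]
is a rank $21$ bundle over $\exceptional$. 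Over an affine open of $\cubicspacereg$ trivialising $\OO(-1)$ by a local section $F_3$, the assignment $(v, [F_2]) \mapsto \Apolar{F_3 + F_2}$ translated by $v$ defines, via Proposition~\ref{ref:flatfamiliesforconstructible:prop} and Proposition~\ref{ref:apolarrepresentability:prop}, a local morphism to $\Hilbotherred$. By Remark~\ref{ref:uniquenessofcubics:rem} the resulting subscheme depends only on the class $[F_2]$ modulo $\sspan{\Dx_i \hook F_3}$, and rescaling the trivialization $F_3 \mapsto \lambda F_3$ forces $F_2 \mapsto \lambda F_2$ for the same subscheme --- precisely the $\OO(-1)$-twist in the definition of $\mathcal{F}$ --- so these local morphisms glue into a global $\iota\colon \mathcal{E} \to \Hilbotherred$.

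By construction $\pi \circ \iota$ is the bundle projection $\mathcal{E} \to \exceptional$, and Remark~\ref{ref:uniquenessofcubics:rem} combined with Corollary~\ref{ref:eliasrossi:cor} shows that $\pi$ and $\iota$ are mutually inverse bijections on closed points. Since $\Hilbotherred$ is reduced by definition and $\mathcal{E}$ is reduced as a bundle over the reduced variety $\exceptional$, Lemma~\ref{ref:abstract:lem} then promotes this to an isomorphism $\mathcal{E} \simeq \Hilbotherred$, making $\pi$ the structure map of a rank $21$ vector bundle. The last assertion of the claim --- that all algebras corresponding to points in one fiber of $\pi$ are mutually isomorphic --- is immediate from Corollary~\ref{ref:eliasrossi:cor}, since each fiber consists of apolar algebras $\Apolar{F_3 + F_2}$, all canonically graded and hence isomorphic to $\Apolar{F_3}$. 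The main obstacle will be the gluing of $\iota$: one must verify that the scaling ambiguity in a local lift of the tautological $F_3$ is precisely absorbed by the $\OO(-1)$-twist in $\mathcal{F}$, and this compatibility is the essential technical step turning the pointwise construction into a global algebraic morphism.
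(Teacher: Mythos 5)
Your approach closely mirrors the paper's: split off the support direction $V$, present the remaining punctual piece as a linear bundle over $\exceptional$ via explicit apolar families, and conclude via Lemma~\ref{ref:abstract:lem}. Your description of the fiber direction as $(\Sym^2 V \otimes \OO(-1))/\mathcal{K}$ is a cleaner formulation of what the paper achieves by quotienting all of $\Sym^{\leq 2} V$ by the lower-order partials of $F_3$; since every linear and constant form is a partial of a non-degenerate cubic, the two quotient bundles coincide, both of rank $15$.

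There is, however, a genuine gap at the concluding step. Lemma~\ref{ref:abstract:lem} needs two morphisms, $i\colon X\to Y$ and $p\colon Y\to X$, both bijective on closed points, with $p\circ i = \operatorname{id}_X$. You set $X = \mathcal{E}$, $Y = \Hilbotherred$, $i = \iota$, but you never construct a morphism $p\colon \Hilbotherred \to \mathcal{E}$. The map you do build, $\pi\colon \Hilbotherred \to \exceptional$, has the wrong target, and the assertion that ``$\pi$ and $\iota$ are mutually inverse bijections on closed points'' cannot hold as stated, since the source of $\iota$ and the target of $\pi$ are varieties of different dimension ($76$ versus $55$). The fix is within your reach and is exactly what the paper does: run the same local argument that builds $\pi$, using Corollary~\ref{ref:localdescriptionGorenstein:cor}, but retain the degree-two data. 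On an open $U$ where the universal family is $\Apolar{F}$ with $F = F_3 + F_2 + \dots$, the assignment $[R]\mapsto \bigl(\operatorname{supp}(R),\ [F_2 \bmod \Ddual{V}\hook F_3]\bigr)$ defines a morphism $U\to \mathcal{E}$; the $\OO(-1)$-twist in $\mathcal{F}$ and the quotient by $\mathcal{K}$ are precisely what makes these patches glue, because the local dual generator $F$ is only well-defined up to a unit. With that morphism supplied, $p\circ\iota = \operatorname{id}$ and Lemma~\ref{ref:abstract:lem} applies. A secondary point: your parenthetical ``algebraic since the fibers are all irreducible and local'' does not by itself show that the support map is a morphism; the clean device, which the paper uses, is the relative trace of multiplication by linear forms, which is $\OO_T$-linear and produces the support morphism over an arbitrary base.
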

    \newcommand{\tr}[1]{\operatorname{tr}(#1)}%
    \begin{proof}
        \def\ap#1{\pi_{#1}}%
        \newcommand{\supp}{\operatorname{supp}}%
        \def\Funiv{\mathcal{F}}%
        \def\polyspacereg{\Sym^{\leq 3}_{max} \VV}%
        \def\cubiccone{\Sym^{3}_{max} \VV}%
        First we recall the support map, as defined
        in~\cite[Section~5A]{cartwright_erman_velasco_viray_Hilb8}.
    Consider the universal family $\UU\to \Hilbshort$, which is flat.
    The multiplication by $\Ddual{V}$ on $\OO_{\UU}$ is
    $\OO_{\Hilbshort}$-linear. The relative trace of such multiplication
    defines a map
    $\Ddual{V}\to H^0(\Hilbshort, \OO_{\Hilbshort})$, thus a morphism
    $\Hilbshort\to \VV$. We restrict this morphism to $\Hilbshortother \to
    \VV$ and compose it with multiplication by
    $\frac{1}{14}$ on $\VV$ to obtain a map denoted $\supp$. If $[R]\in
    \Hilbshortother$ corresponds to a scheme supported at
    $v\in \VV$, then for every $v^*\in \Ddual{V}$ the multiplication by $v^* -
    v^*(v)$ is nilpotent on $R$, hence traceless. Thus on $R$, we have $\tr{v^*} = \tr{v^*(v)}
    = 14v^*(v)$ and $\supp([R]) = v$ as expected.

    The support morphism $\supp:\Hilbshortother \to \VV$ is $(\VV, +)$
    equivariant, thus it is a trivial vector bundle:
    \[\Hilbshortother  \simeq \VV \times \supp^{-1}(0).\]
    \def\Hilbshortfiber{\Hilbshortother^0}%
    Restrict $\supp$ to $\Hilbotherred$ and consider the fiber
    $\Hilbshortfiber := \supp^{-1}(0)$. Since $\Hilbotherred$
    is reduced, also $\Hilbshortfiber$ is reduced.
    We will now use this in an essential way. By Local
    Description~\ref{ref:localdescriptionGorenstein:cor}, the universal
    family over this scheme locally has the form $\ap{F}:\Spec \Apolar{F} \to
    \Spec B$ for some $F\in B\tensor V$. For every $p\in \Spec B$ we
    have $\deg F(p)\leq 3$ and $B$ is reduced, so $\deg F\leq 3$. Let $F_3$ be the leading form. The fibers of
    $\gr{\ap{F}}:\Spec \Apolar{F_3}\to \Spec B$ and $\ap{F}$ are isomorphic. Since $B$ is
    reduced, by Proposition~\ref{ref:apolarflatness:prop} the family
    $\gr{\ap{F}}$ is also flat and gives a morphism
    $\Spec B \to \exceptional$. These morphisms glue to give a morphism
    \[
        \gr:\Hilbshortfiber \to \exceptional.
    \]
    Now we prove that $\gr$ makes $\Hilbshortfiber$ a vector bundle
    over $\exceptional$ of rank $15$.
%    Fix a point in $\exceptional$. After restricting to its neighborhood
%    $T = \Spec B$, by Local Description~\ref{ref:localdescriptiongraded:rmk}, we
%    have a cubic $F\in B\tensor \left(\Sym^3 V\right)_{1661}$ such that the
%    universal family over $T$ is $\Spec \Apolar{F} \to T$.
%    The annihilator $I = \Ann(F) \subset B\tensor \Sym \Ddual{V}$ is generated
%    by $15$ quadrics $\theta_i$ and higher other elements. Pick $15$ quadrics
%    quadrics $q_i$ dual to $\theta_i$, then there is a finitely flatly
%    embedded apolar family $\Spec \Apolar{F + \lambda_i q_i} \to T \times
%    \mathbb{A}^{15}$ where $\lambda_i$ are coordinates on $\mathbb{A}^{15}$
%    and we obtain a morphism $T \times \mathbb{A}^{15} \to \gr^{-1}(T)$
%    bijective on points. By Local
%    Description~\ref{ref:localdescriptionGorenstein:cor} we may construct its
%    inverse, thus providing a trivialization $\gr^{-1}(T)  \simeq T \times
%    \mathbb{A}^{15}$.

    \def\EEker{\mathcal{K}}%
    \def\EE{\mathcal{E}}%
    \def\PEE{\mathbb{P}\EE}%
    \def\SSplus{\SS^{+}}%
    Let $U = \polyspacereg := \cubiccone + \Sym^{\leq 2} \VV$ be the space
    of degree three polynomials with apolar algebras of degree $14$. By
    Proposition~\ref{ref:flatfamiliesforconstructible:prop} we have a morphism
    $\varphi\colon U \to \Hilbshortfiber$ which is a surjection on points.
    This surjection comes from a flatly embedded apolar family $\Spec
    \Apolar{\Funiv}\to U$, where $\Funiv\in \Gamma(U)\tensor \polyspacereg$ is a
    universal cubic.
    For a point $u\in U$, we have $\gr \circ \varphi(\Funiv(u)) =
    [\Funiv_3(u)]$, so
    $U$ becomes a trivial vector bundle of rank $1 + 6 +
    \binom{7}{2} = 28$ over the cone $\cubiccone$ over $\cubicspacereg$.

    We will prove that $\Hilbshortfiber$ is a projectivisation of a quotient bundle of this bundle.
    Take a subbundle $\EEker$ of $U$ whose fiber over $F_3\in \cubiccone$
    is $(\Sym^{\geq 1} \Ddual{V})\hook F_3$.
    The apolar algebra depends only on class of element modulo $\EEker$
    by Remark~\ref{ref:uniquenessofcubics:rem}, so
    that the family $\Spec\Apolar\Funiv$ over $U$
    is pulled back from the quotient bundle $U/\EEker$, which we denote by
    $\EE$. Hence also the associated morphism $U\to \Hilbshortfiber$ factors
    as $U\to \EE \to \Hilbshortfiber$.
    Finally we may projectivize these bundles:
    we replace the
    polynomials in $\EE$ by their classes, obtaining a bundle over
    $\cubicspacereg$ which we denote, abusing notation, by $\PEE$. The
    morphism $\EE\to \Hilbshortfiber$
    factors as $\EE\to \PEE \to \Hilbshortfiber$ and we obtain
    \[
        \bar{\varphi}:\PEE\to \Hilbshortfiber,
    \]
    which is bijective on points.

    By the Local Description~\ref{ref:localdescriptionGorenstein:cor}, for every $[R]\in \Hilbshortfiber$ we
    have a neighborhood $U$ so that the universal family is $\Spec
    \Apolar{F}\to U$ for $F\in H^0(U, \OO_U)\tensor \polyspacereg$. Then $F$ gives a map $U\to
    \polyspacereg$, thus $U\to \PEE$. This is a local inverse of
    $\bar{\varphi}$. Hence by Lemma~\ref{ref:abstract:lem} the variety $\Hilbshortfiber$ is isomorphic to the bundle
    $\PEE$ over $\exceptional$.
    To prove Claim~\ref{claim:vectorbundle} we define $\pi$ to be the
    composition of projection $V \times \Hilbshortfiber \to \Hilbshortfiber$
    and $\gr$. Since the former is a \emph{trivial} vector bundle and the
    latter is a vector bundle the composition is a vector bundle as well.

    Finally note that $\pi([R])$ is isomorphic to the scheme $\Spec \gr
    H^0(R, \OO_R)$, which in turn is (abstractly) isomorphic to $R$
    by~Corollary~\ref{ref:eliasrossi:cor}. Hence all the schemes corresponding to points
    in the same fiber are isomorphic.
\end{proof}

\begin{corollary}\label{ref:divisorinlocus:cor}
    The locus $\Hilbshortother \cap \Hilbshortzero \subset \Hilbshortother$
    contains a divisor, which is equal to $\pi^{-1}(\DIR)$, where $\DIR
    \subset \cubicspacereg$ is the restriction of the Iliev-Ranestad divisor.
\end{corollary}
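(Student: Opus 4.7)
The plan is to establish two assertions: (a) the inclusion $\pi^{-1}(\DIR) \subset \Hilbshortother \cap \Hilbshortzero$, and (b) the fact that $\pi^{-1}(\DIR)$ has codimension one in $\Hilbshortother$. Together these yield the statement of the corollary.

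For (a), I would first argue that $\varphi(\DIR) \subset \exceptional \cap \Hilbshortzero$. By the very construction of the Iliev-Ranestad divisor, a general cubic $F$ over $\DDIRmod$ has $\Spec\Apolar{F}$ equal to a hyperplane section of the affine cone over $14$ distinct points of $\mathbb{P}^6 \cap \Gtwosix$, hence smoothable. Since $\Hilbshortzero$ is closed (Proposition~\ref{ref:smoothabilityisclosed:prop} applied to the universal family) and $\varphi(\DIR)$ is irreducible with a smoothable general member, the whole $\varphi(\DIR)$ lies in $\Hilbshortzero$. Next, I would invoke Claim~\ref{claim:vectorbundle}: every point in a fiber of $\pi$ corresponds to a scheme abstractly isomorphic to the scheme over the base point. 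By Theorem~\ref{thm_equivalence_of_abstract_and_embedded_smoothings}, abstract smoothability coincides with embedded smoothability for finite subschemes of a smooth variety, so smoothability is constant along fibers of $\pi$. Consequently $\pi^{-1}(\varphi(\DIR)) \subset \Hilbshortzero$; since $\pi^{-1}(\DIR) \subset \Hilbshortother$ is tautological, this establishes (a).

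For (b), the map $\pi\colon\Hilbotherred\to\exceptional$ is a vector bundle of rank $21$ by Claim~\ref{claim:vectorbundle}, hence flat of relative dimension $21$. Under the identification $\exceptional \simeq \cubicspacereg$ of Claim~\ref{claim:exceptional}, the subscheme $\varphi(\DIR)$ is the Iliev-Ranestad divisor restricted to $\cubicspacereg$, which has codimension one in $\exceptional$ by Proposition~\ref{ref:dimensionofiliev:prop}. Flat pullback along a vector bundle projection preserves codimension, so $\pi^{-1}(\DIR)$ is a divisor in $\Hilbotherred$, and therefore set-theoretically a divisor in $\Hilbshortother$.

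The main obstacle is not in this corollary itself but in the supporting results: Proposition~\ref{ref:dimensionofiliev:prop} (the Iliev-Ranestad dimension count, which we import) and Claim~\ref{claim:vectorbundle} (the vector bundle structure of $\pi$). Once these are available, the present statement is essentially formal, using only closedness of the smoothable locus and the equivalence between abstract and embedded smoothability to propagate smoothability along fibers of $\pi$.
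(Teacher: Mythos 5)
Your proposal is correct and takes essentially the same approach as the paper: deduce smoothability of $\pi^{-1}(\DIR)$ from the construction of $\DIR$ and the fact (Claim~\ref{claim:vectorbundle}) that fibers of $\pi$ consist of abstractly isomorphic schemes, then use the vector bundle structure of $\pi$ to see the preimage is divisorial. The paper's proof is more terse, leaving implicit the two points you spell out (that $\Hilbshortzero$ is closed so smoothability propagates from a general member of $\DIR$ to all of it, and that Theorem~\ref{thm_equivalence_of_abstract_and_embedded_smoothings} is what lets one transfer smoothability between abstractly isomorphic schemes along a fiber of $\pi$); these are worthwhile additions but do not change the argument.
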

\begin{proof}
    By its construction, the divisor $\DIR \subset \cubicspacereg  \simeq \exceptional$
    parameterizes smoothable schemes. By Claim~\ref{claim:vectorbundle} all
    schemes in $\pi^{-1}(\DIR)$ are smoothable, hence $\pi^{-1}(\DIR)$ is contained in
    $\Hilbshortother\cap \Hilbshortzero$. Again by Claim~\ref{claim:vectorbundle} this preimage is divisorial in
    $\Hilbshortother$.
\end{proof}

    \paragraph{The scheme $\Hilbshortother\setminus\Hilbshortzero$ is
    smooth, so $\Hilbshortother$ is reduced.}
    \newcommand{\Rzero}{R_0}%
    \newcommand{\Izero}{I}%

    Let $R \subset V$ be a finite irreducible Gorenstein scheme with
    Hilbert function $(1, 6, 6, 1)$. Let $\DS := H^0(V, \OO_V) = \Sym V^*$ and
    $A = H^0(R, \OO_R)$, then $A = \DS/I$. The tangent space to $\Hilbshort$
    at $[R]$ is isomorphic to $\Homthree{\DS/I}{I/I^2}{\DS/I}$. Since $R$ is
    Gorenstein, this space is dual to $I/I^2$, see
    Example~\ref{ex:tangentforGorenstein}. Note that
    $R$ is isomorphic to $\Rzero = \Spec \gr A$ and $[\Rzero]\in \exceptional$.

    \begin{claim}\label{claim:singularlocus}
        $\Hilbshortother \cap \Sing \Hilbshort = \Hilbshortother \cap
        \Hilbshortzero = \pi^{-1}(\DIR)$ as sets. Therefore $\Hilbshortother$ is reduced.
        Moreover $\Hilbshortother \cap \Hilbshortzero \subset \Hilbshortother$ is
        a prime divisor.
    \end{claim}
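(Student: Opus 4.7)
My plan is to use the vector bundle structure from Claim~\ref{claim:vectorbundle} to reduce everything to a statement on $\exceptional$, then bracket the intersection $\Hilbshortother\cap\Hilbshortzero$ between two copies of $\pi^{-1}(\DIR)$. Since both the tangent space dimension and smoothability are preserved under the isomorphisms of schemes parameterised by a fiber of $\pi$, the loci $\Hilbshortother\cap\Sing\Hilbshort$ and $\Hilbshortother\cap\Hilbshortzero$ are each $\pi$-preimages of subsets of $\exceptional \simeq \cubicspacereg$. By Corollary~\ref{ref:divisorinlocus:cor} both of these subsets contain $\DIR$, so the game is to show that outside of $\pi^{-1}(\DIR)$ the ambient Hilbert scheme $\Hilbshort$ is smooth along $\Hilbshortother$.

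The easy half is the chain $\pi^{-1}(\DIR)\subset \Hilbshortother\cap\Hilbshortzero \subset \Hilbshortother\cap\Sing\Hilbshort$: the first inclusion is Corollary~\ref{ref:divisorinlocus:cor}; the second follows because at any point $[R]$ that lies on both components one has $\dim T_{[R]}\Hilbshort \geq \dim\Hilbshortzero = 84 > 76 = \dim\Hilbshortother$, so $[R]$ must be singular. Note that the lower bound $\dim T_{[R]}\Hilbshort \geq 76$ for every $[R]\in\Hilbshortother$ is exactly the content of Example~\ref{ex:1661}.

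The main obstacle is the reverse inclusion $\Hilbshortother\cap\Sing\Hilbshort\subset \pi^{-1}(\DIR)$, equivalently the statement that if $F\in\cubicspacereg$ satisfies $F\notin\DIR$ then $\dim T_{[R]}\Hilbshort = 76$ for every $[R]$ in the fiber over $F$. It suffices, by $\Gmult$-equivariance, to do this for the graded scheme $R_0 = \Spec\Apolar{F_3}$. Writing $I = \Ann(F_3) \subset \DS$ and $A = \DS/I$, the tangent space $\Homthree{\DS}{I}{A}$ is graded, concentrated in degrees $-2, -1, 0$, and the Example~\ref{ex:1661} calculation shows that it contributes respectively at most $0+6+70$, where the bound $70$ on the degree-zero piece comes from $15\cdot 6$ generators modulo $\geq 35$ linear syzygies, and the crucial inequality is attained precisely when these linear syzygies of $I$ generate all of $I_3\otimes \DS_1 \to I_4$. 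The failure of this maximality is -- by the Apolarity Lemma for cubics -- exactly the condition of being apolar to a degree-four surface scroll, and by \cite{Ranestad_Voisin__VSP} this is what cuts out the Iliev-Ranestad divisor. The main technical work is therefore this syzygetic analysis, which I would carry out by exhibiting an explicit $F\notin\DIR$ with tangent space dimension exactly $76$ (Macaulay2 verifiable, as in Example~\ref{ex:1661}), using upper semicontinuity plus irreducibility of $\exceptional\setminus\DIR$ to spread the conclusion.

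Granting the three-way equality just proved, the remaining assertions are formal. Reducedness: every point of $\Hilbshortother\setminus\pi^{-1}(\DIR)$ is a smooth point of $\Hilbshort$, so locally $\Hilbshort$ is irreducible and equal to $\Hilbshortother$ as schemes, making $\Hilbshortother$ smooth on a dense open subset of itself; since $\Hilbshortother$ is irreducible of pure dimension $76$, generic smoothness forces it to be reduced. Primality of the divisor: $\pi$ is a vector bundle and $\DIR\subset\exceptional\simeq\cubicspacereg$ is irreducible (as the closure of the image of an irreducible family of Grassmannian-sections, cf.~Proposition~\ref{ref:dimensionofiliev:prop}), so $\pi^{-1}(\DIR)$ is irreducible; its dimension is $54+21 = 75 = \dim\Hilbshortother - 1$, so it is indeed a prime divisor of $\Hilbshortother$.
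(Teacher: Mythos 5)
Your reduction to $\exceptional$ and the easy chain $\pi^{-1}(\DIR)\subseteq \Hilbshortother\cap\Hilbshortzero \subseteq \Hilbshortother\cap\Sing\Hilbshort$ are fine, and the idea of replacing $\Hilbshortzero$ by $\Sing$ and proving a three-way equality is exactly right. But the mechanism you propose for the hard containment does not close the argument, and in fact cannot.

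Showing a single $F_0\notin\DIR$ with $\dim\Dtangspace{[R_0]} = 76$ and invoking semicontinuity on the irreducible set $\exceptional\setminus\DIR$ only tells you that the locus $\{\dim\Dtangspace{} \geq 77\}$ is a \emph{proper} closed subset of $\exceptional$. It does \emph{not} tell you that this closed set is contained in $\DIR$ --- it could strictly contain $\DIR$, or it could even have components disjoint from $\DIR$. To pin the singular locus down as exactly $\DIR$ you need two more ingredients that your sketch skips: (i) that the singular locus on $\exceptional$ is a \emph{divisor}, i.e.\ pure of codimension one, and (ii) that this divisor is \emph{prime}, so that it has no room to be strictly larger than the nonzero divisor $\DIR$ it contains. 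The paper obtains (i) by identifying the singularity condition with the degeneracy of the evaluation morphism $ev\colon \Sym^2\Itwobundle \to (\VV\mathcal{F})^{\perp}$ (a map between bundles of the same rank, so its degeneracy is a determinantal divisor), and this identification itself rests on Lemma~\ref{ref:onlyquatricsmatter:lem}: one must show $\dim(I^2)^{\perp}_4 = 6$ forces $\Sym^5\Ddual{V}\subset I^2$, so that only the quartic part matters and the singular locus is cut out by a single equation $\det ev$. This lemma is the main technical labour of the section and is not replaced by a Macaulay2 spot-check. The paper then obtains (ii) by computing the degree of the determinantal divisor to be $10$ via an explicit pencil (Lemma~\ref{ref:intersectiondegree:lem}) and ruling out $\SL(\VV)$-invariants of degree less than $10$. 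Your appeal to the apolar-to-a-quartic-scroll description of $\DIR$ is circular as stated: it restates what $\DIR$ is but provides no argument that the tangent-space condition \emph{agrees} with it, which is precisely what must be proved.

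A second, smaller issue: you assert the generic tangent-space dimension by a graded count ``$0 + 6 + 70$'' and attribute the maximality to linear syzygies generating $I_3\otimes\DS_1\to I_4$. The computation in Example~\ref{ex:1661} gives the graded pieces as $15 + (\geq 55) + (\geq 6)$ in degrees $1, 0, -1$, and the $\geq 76$ lower bound is always attained; the delicate question is whether the contribution can \emph{exceed} $76$, which hinges on the Hilbert function of $\DS/I^2$ in degrees $\geq 4$, not on a syzygy count in degree $3$. So the ``syzygetic analysis'' as you describe it is aimed at the wrong step.

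Once the equality is established, your last paragraph (reducedness from generic smoothness on the irreducible $\Hilbshortother$, primality of $\pi^{-1}(\DIR)$ from irreducibility of $\DIR$ plus the vector bundle structure) is correct and matches the paper.
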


    Being a singular point of $\Hilbshort$ and lying in
    $\Hilbshortzero$ are both independent of the embedding of a finite scheme
    by Proposition~\ref{ref:invarianceoftangentspace:prop} and
    Theorem~\ref{thm_equivalence_of_abstract_and_embedded_smoothings}
    respectively.
    Hence all three sets appearing in the equality of
    Claim~\ref{claim:singularlocus} are preimages of their images in
    $\exceptional$. Therefore it is enough to prove
    the claim for elements of $\exceptional$.

    Take $[\Rzero]\in
    \exceptional$  with corresponding homogeneous ideal $\Izero$.
    Take $F\in \Sym^3 \VV$ so that $\Izero = \Ann(F)$.
%    Suppose that $\Rzero
%    \not \in \Hilbshortzero$.
    The point $[\Rzero]$ is smooth if and only if $\dim \DS/\Izero^2 = 76 + 14 = 90$.
    Consider the Hilbert series $H$ of $\DS/\Izero^2$.
    By degree reasons, $\Izero^2$
    annihilates $\Sym^{\leq 3} \VV$. We now show that it annihilates also a
    $6$-dimension space of quartics. Notabene, by
    Example~\ref{ex:tangentvectors}, this space is the tangent space to
    deformations of $\DS/\Izero$ obtained by moving its support in $\VV$.
    \begin{lemma}\label{ref:quartics:lem}
        The ideal $\Izero^2$ annihilates the space $\VV\cdot F\subset \Sym^4
        \VV$.
    \end{lemma}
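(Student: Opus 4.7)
The plan is to prove this by a direct calculation using the Weyl-algebra type commutation identity of Lemma~\ref{ref:commutator:lem}. By bilinearity and because $I^2$ is generated by products $\sigma\tau$ with $\sigma,\tau\in I$, it suffices to verify that
$$
\sigma\tau \hook (v\cdot F) \;=\; 0 \quad\text{for every } v\in V \text{ and every } \sigma,\tau\in I = \Ann(F).
$$

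The key computation proceeds as follows. First, extending Lemma~\ref{ref:commutator:lem} by linearity from a single variable $x_i$ to an arbitrary $v\in V\subset \Sym V$, one obtains the identity
$$
\sigma\tau \hook (v\cdot F) \;=\; v\cdot\bigl(\sigma\tau \hook F\bigr) \;+\; (\sigma\tau)^{(v)}\hook F,
$$
where $(\sigma\tau)^{(v)}\in S$ denotes the directional derivative of $\sigma\tau$ in the direction $v$. The first summand vanishes trivially: since $\tau\in I$ we have $\tau\hook F=0$, hence $\sigma\tau\hook F = \sigma\hook(\tau\hook F)=0$. For the second summand, I would invoke the Leibniz rule $(\sigma\tau)^{(v)} = \sigma^{(v)}\tau + \sigma\,\tau^{(v)}$ and then push the remaining undifferentiated factor inward, so that each term has either $\sigma$ or $\tau$ contracting $F$ last; both contract it to zero since $\sigma,\tau\in \Ann(F)$.

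This is a short verification rather than a hard step: every ingredient used is either the definition of $I$, Lemma~\ref{ref:commutator:lem}, or the Leibniz rule in the symmetric algebra $S$. The only point requiring care is that $F$ lives in the divided-power ring $\Sym V$ (not an ordinary polynomial ring), so one must make sure that Lemma~\ref{ref:commutator:lem} is applied in exactly the setting in which it was proved in Section~\ref{ssec:contraction}. Conceptually, the inclusion $V\cdot F\subset (I^2)^{\perp}$ just reflects the geometric fact, encoded in Example~\ref{ex:tangentvectors}, that translations of $\Spec \Apolar F$ along $V$ have tangent vectors of the form $v\cdot F$, and such translations preserve the subscheme structure — hence also its square-zero thickening.
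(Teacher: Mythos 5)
Your proof is correct, and it rests on the same basic fact as the paper's argument — that contraction and multiplication by a linear form commute up to a derivative term (Lemma~\ref{ref:commutator:lem}) — but you organize the computation differently. The paper treats only quadrics $i\in I_2$, written as sums of products of linear forms, and shows that $i\hook(xF)$ lies in the space of partials $\DS\hook F$, which is in turn killed by $I$; the remaining graded pieces of $I^2$ are then dismissed by degree reasons. You instead apply the commutator identity once to an arbitrary product $\sigma\tau$ with $\sigma,\tau\in I$ and use the Leibniz rule $\partial_v(\sigma\tau)=\partial_v(\sigma)\,\tau+\sigma\,\partial_v(\tau)$, so that in every resulting term a factor from $I=\Ann(F)$ contracts $F$ and the term vanishes outright. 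This gives the statement in one step, uniformly for all of $I^2$ — after the harmless observation that $I^2$ is spanned by products of two elements of $I$, since any $\DS$-coefficient can be absorbed into one factor — and avoids both the detour through $\DS\hook F$ and the degree bookkeeping; the paper's version is more hands-on but needs only degree-two elements explicitly. Your caveat about the divided-power setting is well placed but causes no trouble: the section assumes characteristic zero, where $\Sym V$ with its divided-power structure is a polynomial ring, and Lemma~\ref{ref:commutator:lem} extends from a single variable $x_i$ to an arbitrary $v\in V$ by linearity, exactly as you use it.
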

    \begin{proof}
        Let $\Dx\in \Ddual{V}$ and $x\in V$ be linear forms. Then $\Dx$ acts
        on the divider power polynomial ring $\Sym V$
        as a derivation, so that $\Dx\hook (xF) = (\Dx\hook x) F + x(\Dx\hook
        F) \equiv x(\Dx \hook F) \mod \DS \hook F$.
        Take any element $i\in \Izero_2$ and write it as $i = \sum
        \beta_i\beta_j$ with $\beta_i$ linear. Then
        \[
            i\hook (xF) = \sum \beta_i \hook (\beta_j\hook xF) \equiv \sum
            x(\beta_i\beta_j \hook F) = x(i\hook F) = 0\mod (\DS \hook F).
        \]
        Therefore $i\hook (xF) \in \DS\hook F$, hence is annihilated by
        $\Izero$. This proves that $\Izero_2\Izero$ annihilates $xF$. Other
        graded parts of $I^2$ annihilate $xF$ by degree reasons.
    \end{proof}

    By Lemma~\ref{ref:quartics:lem} and the discussion above we have
    $H_{\DS/I^2} = (1, 6, 21, 56, r, *)$ with $r\geq 6$.
    Therefore $\sum H_{\DS/I^2} = 84 + r + *$ and this equals $90$ if and only if $r = 6$ and $*$ consists of zeros.
    Now we show that if $r = 6$ then $*$ consists of zeros. For $J \subset
    \DS$ a homogeneous ideal, by $J^{\perp}_d$ we denote the forms in $\DP_d$
    annihilated by $J$, so that $\dim J^{\perp}_d + \dim J_d = \dim \DS_d$.
    \begin{lemma}\label{ref:onlyquatricsmatter:lem}
        Let $F\in \Sym^3 V$ and $\Izero = \Ann(F)\subset \DS$ be as above. Suppose that
        \[
            \dim\Ann\left(\Izero^2\right)^{\perp}_4 = 6.
        \]
        Then ${\Sym^5 \Ddual{V} \subset
        \Izero^2}$. In particular $H_{S/\Izero^2} = (1, 6, 21, 56, 6, 0)$, so
        that the tangent space to $\Hilbshort$ at $[\DS/\Izero]$ has dimension $76$.
        As a corollary, %, if $[\DS/\Izero]\not\in \Hilbshortzero$, then
        $[\DS/\Izero]$ is singular if and only if
        $\dim\left(\Izero^2\right)^{\perp}_4 > 6$.
    \end{lemma}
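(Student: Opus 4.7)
The plan is to reduce $\Sym^5 V^* \subset \Izero^2$ to a vanishing statement in the Macaulay inverse system, then exploit the Euler identity and the non-cone hypothesis on $F$. First I would observe that the analogous vanishing $\Izero^2_d = \DS_d$ in higher degrees is automatic. Since $A = \DS/\Izero$ has socle degree $3$, we have $\DS_d = \Izero_d$ for all $d \geq 4$; combined with the fact (recalled in Example~\ref{ex:1661} from the Betti table) that $\Izero$ is generated in degree $2$, a short computation gives $\Izero^2_d = \DS_d$ for every $d \geq 6$. For example, $\Izero^2_6 \supseteq \Izero_2 \cdot \Izero_4 = \Izero_2 \cdot \DS_4 = (\Izero_2)_6 = \Izero_6 = \DS_6$. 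The degree $5$ case is not automatic but reduces to degree $4$: $\Izero^2_5 = \Izero_2 \cdot \Izero_3 = \Izero_2 \cdot \DS_1 \cdot \Izero_2 = \DS_1 \cdot (\Izero^2)_4$. Dualising, the identity $\DS_1 \cdot (\Izero^2)_4 = \DS_5$ is equivalent to
\begin{equation*}
N := \{G \in \DP_5 \mid V^* \hook G \subset (\Izero^2)^\perp_4\} = 0.
\end{equation*}
Since $VF \subseteq (\Izero^2)^\perp_4$ by Lemma~\ref{ref:quartics:lem}, $\dim VF = 6$ (because $v \mapsto vF$ is injective on $V$ for $F \neq 0$), and $\dim (\Izero^2)^\perp_4 = 6$ by hypothesis, we obtain $(\Izero^2)^\perp_4 = VF$. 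So everything reduces to showing that every $G \in \Sym^5 V$ with $V^* \hook G \subset VF$ is zero.

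For the heart of the argument I would use the Euler relation, valid in characteristic zero under the identification $\DP \simeq \Sym V$ of Proposition~\ref{ref:divpowerispoly:prop}. Fixing a basis $x_1,\dots,x_6$ of $V$ with dual basis $\Dx_1,\dots,\Dx_6$ and writing $\Dx_i \hook G = v_i F$ with $v_i \in V$, Euler yields
\begin{equation*}
5G \;=\; \sum_{i=1}^6 x_i\,(\Dx_i \hook G) \;=\; \Bigl(\sum_{i=1}^6 x_i v_i\Bigr) F \;=\; qF, \qquad q := \sum_i x_i v_i \in \Sym^2 V.
\end{equation*}
Substituting $G = \tfrac{1}{5} qF$ back into $\Dx_j \hook G = v_j F$ and expanding via the Leibniz rule from Lemma~\ref{ref:commutator:lem} gives
\begin{equation*}
q\cdot(\Dx_j \hook F) \;=\; (5 v_j - \Dx_j \hook q)\,F,
\end{equation*}
so $F$ divides $q\cdot(\Dx_j \hook F)$ in $\Sym V$ for every $j$; equivalently $q$ lies in the degree-$2$ part of the colon ideal $(F):J_F$, where $J_F$ is the Jacobian ideal generated by $\Dx_1 \hook F, \dots, \Dx_6 \hook F$.

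The final step is to show that, under our standing hypothesis $[F] \in \cubicspacereg$, the colon ideal $(F):J_F$ contains no nonzero quadric. When $F$ is irreducible this is immediate: $\DS/(F)$ is a domain and, by the non-cone hypothesis, no $\Dx_j \hook F$ is zero, so each $\Dx_j \hook F$ is a nonzerodivisor modulo $F$; hence $((F):\Dx_j \hook F) = (F)$, whose degree-$2$ part vanishes because $\deg F = 3$. For reducible $F$ I would work through the short list of possible factorisations (e.g.\ $F = LQ$ with $Q$ an irreducible quadric, $F = L_1 L_2 L_3$ a product of distinct linear forms, or the non-reduced cases $F = L^2 M$ and $F = L^3$), in each subcase using the non-cone hypothesis to ensure every irreducible factor of $F$ is matched by a nonvanishing partial, and propagating the divisibility $F \mid q \cdot (\Dx_j \hook F)$ factor-by-factor to force $F \mid q$ and hence $q = 0$. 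This reducible case analysis is the main obstacle, but it is elementary in each subcase (the example $F = x_1(x_2^2+\dots+x_6^2)$ is prototypical). Once $N = 0$ is established, the Hilbert function is pinned down to $(1,6,21,56,6,0)$; then $\dim \Izero/\Izero^2 = 90 - 14 = 76$, and Example~\ref{ex:tangentforGorenstein} gives the stated tangent-space dimension. The corollary on singularity is immediate, since the only way $\dim \DS/\Izero^2$ can exceed $90$ is through $\dim (\Izero^2)^\perp_4 > 6$.
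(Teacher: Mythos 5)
Your argument is correct, and it is a genuinely different route from the one in the paper. Both proofs reduce to showing that a quintic $G$ with $\Ddual{V}\hook G\subset VF$ must vanish, but the mechanisms diverge from there. The paper's proof shows that all linear forms are partials of $G$, invokes the symmetry of the Hilbert function of the graded Gorenstein algebra $\Apolar{G}$ to conclude $\Ddual{V}\hook G=VF$, and then derives a contradiction from the dimension bound $\dim V\cdot(\Ddual{V}\hook F)\leq 7$: two quadrics in $\Ddual{V}\hook F$ would share a common linear factor, forcing $\Ddual{V}\hook F=yV$ and hence $\dim V(\Ddual{V}\hook F)=\dim y\Sym^2 V\gg 7$. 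Your proof instead applies the Euler relation to pin down $G=\tfrac15 qF$ outright, and then converts the condition $\Ddual{V}\hook G\subset VF$ into the statement $q\in\pp{(F):J_F}_2$, which you kill using irreducibility of $F$ or, failing that, a factorisation analysis. This is arguably cleaner in the irreducible case, at the cost of requiring the reducible case analysis, which the paper's dimension count avoids entirely.

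Two small remarks. First, the appeal to $\Izero$ being generated in degree $2$ (via the Betti table in Example~\ref{ex:1661}) is both superfluous and not justified in this generality: that generation is a \emph{generic} property established there, not something guaranteed by the mere non-cone hypothesis $[F]\in\cubicspacereg$. Fortunately you do not need it: the inclusion $\DS_1\cdot(\Izero^2)_4\subseteq(\Izero^2)_5$ is automatic because $\Izero^2$ is an ideal, so $N=0$ already gives $(\Izero^2)_5=\DS_5$, and then $\DS_d=\DS_1\cdot\DS_{d-1}\subseteq(\Izero^2)_d$ handles all $d\geq 6$ by induction. Second, your reducible case analysis can be shortened: if $F$ factors as a product of three linear forms (with repetitions allowed), those forms span a subspace of $V$ of dimension at most $3<6$, so $F$ is a cone and excluded by the hypothesis. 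The only genuine case is $F=LQ$ with $Q$ an irreducible quadric, which you correctly identify; there the argument $Q\mid q(\Dx_j\hook Q)$ forces $q=cQ$ (since $Q$ is prime of degree $2$ and some partial $\Dx_j\hook Q$ with $j\geq 2$ is a nonzero linear form), and the $j=1$ condition $x_1\mid cQ$ then forces $c=0$.
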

    \begin{proof}
        Suppose $\Sym^5 \Ddual{V} \not\subset \Izero^2$ and take non-zero $G\in \Sym^5
        \Ddual{V}$ annihilated by this ideal.
        By assumption $\dim\left(\Izero^2\right)^{\perp}_4 = 6$ and by
        Lemma~\ref{ref:quartics:lem} the $6$-dimensional space $V F$ is perpendicular to
        $\Izero^2$. Therefore $\left(\Izero^2\right)^{\perp} = V F$ and
        hence $\Ddual{V}\hook G \subset V F$.

        We first show that all linear forms are partials of $G$, in other
        words that $V \subset \DS\hook G$.
        Clearly ${0\neq \Ddual{V} \hook G \subset V F}$.
        Take a non-zero $x\in V$ such that
        $xF$ is a partial of $G$.
        Let $W^* = \left(x^{\perp}\right)_1 \subset \Ddual{V}$ be the
        space perpendicular to $x$.
        Let $xF = \DPel{x}{e+1}\tilde{F}$, where $\tilde{F}$ is not divisible by
        $x$. Then there exists an element $\sigma$ of $\Sym W^*$ such that
        $\sigma\hook (xF) = \DPel{x}{e+1}$. In particular $x\in \DS\hook (xF)$.
        Moreover $\DS_3\hook (xF) \equiv \DS_2\hook F = V \mod \kk x$.
        Therefore, $V \subset \DS\hook G$, so $V = \DS_4\hook G$.
        By symmetry of the Hilbert function, $\dim \Ddual{V}\hook G = \dim
        \DS_4\hook G =
        6$. Since $\Ddual{V}\hook G$ is annihilated by $\Izero^2$, by comparing
        the dimensions we conclude that
        \[\Ddual{V}\hook G = \VV F.\]

        Since $\Izero_3\hook(\Izero_2\hook G) = 0$ and $I_3^{\perp} =
        \kk F$, we have $\Izero_2\hook G \subset
        \kk F$, so $\dim \left(\DS_2\hook G + \kk F\right) \leq 6 + 1= 7$.
        For every linear $\Dx\in \Ddual{V}$ and $y\in V$ we have $\Dx\hook (yF)
        = (\Dx\hook y)F + y(\Dx\hook F) \equiv y(\Dx\hook F) \mod \kk F$.
        Therefore we have
        \[
            \DS_2 \hook G = \Ddual{V}\hook (\Ddual{V}\hook G) = \Ddual{V}\hook (\VV F)
            \equiv V(\Ddual{V}\hook F)\mod
            \kk F,
        \]
        thus $\dim V  (\Ddual{V}\hook F) \leq 7$. Take any two quadrics $q_1,
        q_2\in \Ddual{V}\hook F$. Then $V q_1 \cap V q_2$ is non-zero, so that
        $q_1$ and $q_2$ have a common factor. We conclude that $\Ddual{V}\hook F =
        y V$ for some $y$, but then $\dim V(\Ddual{V}\hook F) = \dim y\Sym^2 V >
        7$, a contradiction.
    \end{proof}

    \def\Ffamily{\mathcal{F}}%
    \def\Itwobundle{\mathcal{I}_2}%
    \def\Singdiv{E}%
    By Claim~\ref{claim:exceptional}, the map $[F]\to \Spec \Apolar{F}$ is an isomorphism
    $\cubicspacereg \to \exceptional$.
    For $R_0 = \Spec \Apolar{F}$ consider the statements
    \begin{itemize}
        \item $[\Rzero]\in \Hilbshortzero$,
        \item $[\Rzero]$ is singular,
    \end{itemize}
     as conditions on the form $F\in \cubicspacereg$.
    For a family $\Ffamily$ of forms parameterized by $T = \cubicspacereg$,
    constructed as in Proposition~\ref{ref:flatfamiliesforconstructible:prop}, we get a rank $120$ bundle $\Sym^2\Itwobundle$ with
    an evaluation morphism
    \begin{equation}\label{eq:eval}
        ev:\Sym^2\Itwobundle \to (\VV \Ffamily)^{\perp} \subset \Sym^4
        \VV^*\tensor_{\kk} \OT.
    \end{equation}
    The condition $(\Izero^2)^{\perp}_4 > 6$ is equivalent to degeneration of
    $ev$ on the fiber and thus it is \emph{divisorial} on $\cubicspacereg$.
    Let
    \[
        \Singdiv = (\det ev = 0) = \Sing \Hilbshortother \cap \exceptional.
    \]
    Recall that divisors on $\cubicspacereg$ and $\cubicspace$ are identified
    by restriction and closure. Since $\cubicspace$ is proper, we speak about
    the degree of $\Singdiv$ as the degree of its
    closure in $\cubicspace$.  We now check that $\Singdiv$  is prime of
    degree $10$.

    \begin{lemma}\label{ref:intersectiondegree:lem}
        Fix a basis $x_0, \ldots ,x_5$ of $\VV$ and let $F = x_0x_1x_3 -
        x_0\DPel{x_4}{2} + x_1\DPel{x_2}{2} + x_2x_4x_5 + x_3\DPel{x_5}{2}$. The line between $F$ and
        $\DPel{x_5}{3}$ intersects the divisor $\overline{\Singdiv} \subset
        \cubicspace$ in a finite scheme of degree
        $10$ supported at $\DPel{x_5}{3}$.
    \end{lemma}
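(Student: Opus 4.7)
The plan is to analyze the line $L$ through $[F]$ and $[\DPel{x_5}{3}]$ in two complementary affine charts. Parameterize $L$ in the first chart as $F_t := F + t\,\DPel{x_5}{3}$ for $t \in \AA^1$, and in the second as $G_s := sF + \DPel{x_5}{3}$ for $s \in \AA^1$, so that $[F_0] = [F]$ and $[G_0] = [\DPel{x_5}{3}]$. The proof then consists of (i) showing the first chart is disjoint from $\overline{\Singdiv}$, so the intersection $L \cap \overline{\Singdiv}$ is set-theoretically supported at $[\DPel{x_5}{3}]$, and (ii) computing the multiplicity of $\overline{\Singdiv}$ along $L$ at $[\DPel{x_5}{3}]$ to be exactly $10$.

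For (i), first verify that $F_t \in \cubicspacereg$ for every $t \in \AA^1$: the six linear partials of $F_t$ agree with those of $F$ except for $\partial_5 F_t = \partial_5 F + t\,\DPel{x_5}{2}$, they remain linearly independent, and by symmetry $H_{\Apolar{F_t}} = (1,6,6,1)$. One further observes that the degree-two apolar ideal is stable along the line: since $\partial_5^2 F = x_3 \neq 0$, a direct computation gives $\Ann(F_t)_2 = \Ann(F)_2$ for all $t \in \AA^1$. It then follows that $H_{\DS/\Ann(F_t)^2}$ is constant in $t$; at $t = 0$ this function is computed in Example~\ref{ex:1661} to equal $(1,6,21,56,6)$, giving $\dim(\Ann(F_t)^2)^\perp_4 = 6$ throughout $\AA^1$, so $[F_t] \notin \Singdiv$.

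For (ii), work in the second chart. By the stability above, the sheaf $\mathcal{I}_2$ extends to a trivial rank-$15$ sub-bundle on $U_2 := \AA^1_s$ with fiber $\Ann(F)_2$. The sheaf $V \cdot \mathcal{F}$, however, degenerates at $s = 0$: the five generators $x_i G_s = s\,(x_i F)$ for $i < 5$ vanish to first order, while $x_5 G_s = s\,(x_5 F) + 4\,\DPel{x_5}{4}$ does not. Its saturation is a rank-$6$ sub-bundle whose fiber at $s = 0$ is $\langle x_0 F, x_1 F, x_2 F, x_3 F, x_4 F, \DPel{x_5}{4}\rangle$. The induced evaluation map $ev\colon \Sym^2 \Ann(F)_2 \otimes \mathcal{O}_{U_2} \to (\overline{V \cdot \mathcal{F}})^{\perp}_4$ is a morphism of rank-$120$ trivial bundles, and the key claim is that $\det ev = c \cdot s^{10}$ with $c \neq 0$; this simultaneously confirms the absence of zeros away from $s = 0$ and gives the desired multiplicity.

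The principal obstacle is this final determinant calculation. The exponent $s^{10}$ should arise from the change-of-trivialization factor obtained in passing from the degenerate $V \cdot \mathcal{F}$ at $s = 0$ to its saturation (five directions rescaled by $s^{-1}$), interacting with the evaluation of $\Sym^2 \Ann(F)_2$ against those rescaled directions in the quotient. Establishing that the exponent is exactly $10$ and that the leading coefficient is nonzero reduces to an explicit matrix computation; the very specific form of $F$ in the statement — designed so that $\partial_5^2 F = x_3$ is a single variable and so that the rescaled directions $x_i F$ ($i < 5$) interact cleanly with a chosen basis of $\Ann(F)_2$ — is precisely what makes this computation tractable by hand or with a short computer-algebra script.
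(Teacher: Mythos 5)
The overall plan --- work on the line in an affine chart and show $\det(ev)$ vanishes exactly to order $10$ at $[\DPel{x_5}{3}]$ --- matches the paper, but the key intermediate claims are wrong, and they are wrong in ways that remove precisely the feature that produces the degree $10$.

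Your claim that $\Ann(F_t)_2 = \Ann(F)_2$ for all $t$ is false, and the hypothesis $\Dx_5^2\hook F = x_3 \neq 0$ actually proves the opposite. Since $\Dx_0\Dx_1\hook F = x_3$ as well, the quadric $\Dx_0\Dx_1 - \Dx_5^2$ lies in $\Ann(F)_2$; but $(\Dx_0\Dx_1 - \Dx_5^2)\hook F_t = -t\,x_5 \neq 0$ for $t\neq 0$. So $\Ann(F_t)_2$ genuinely varies. The paper's proof pins this down explicitly: $\Ann(\mathcal{F})_2$ with $\mathcal{F} = uF + v\DPel{x_5}{3}$ equals a fixed $14$-dimensional span plus the single varying direction $v\Dx_3\Dx_5 + u\Dx_0\Dx_1 - u\Dx_5^2$ (for general $(u:v)$). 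Thus $\mathcal{I}_2|_\ell \simeq \OO^{14}\oplus\OO(-1)$, not $\OO^{15}$; this twist is where the degree of $\det(ev)$ comes from (compare the remark following the lemma, which recovers the $10$ directly from $\det \Sym^2(\OO^{14}\oplus\OO(-1)) = \OO(-16)$ and $\det(V\mathcal{F})^\perp = \OO(-6)$). By flattening $\mathcal{I}_2$ to a constant bundle you have discarded the mechanism that produces the answer.

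Your second chart is also incorrect. First, $[\DPel{x_5}{3}]\notin\cubicspacereg$ (Hilbert function $(1,1,1,1)$), and $\Ann(\DPel{x_5}{3})_2$ is $20$-dimensional, so there is no rank-$15$ trivial bundle with fiber $\Ann(F)_2$ to extend there; the extension of $\mathcal{I}_2$ across $s=0$ is precisely the nontrivial one spelled out above. Second, $x_i G_s = s\,x_iF + x_i\DPel{x_5}{3}$ for $i<5$, not $s\,x_iF$: the monomials $x_i\DPel{x_5}{3}$ are nonzero, so $V\cdot\mathcal{F}$ already has rank $6$ at $s = 0$ and does not degenerate the way you describe. (It is $\OO(-1)^{6}$ globally, but not by the mechanism you propose.) The saturation picture you build on, and hence the account of where $s^{10}$ comes from, does not hold.

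To repair the argument one needs exactly what the paper supplies: an explicit $15$-dimensional family $\mathrm{fixed}\oplus\kk\,(v\Dx_3\Dx_5 + u\Dx_0\Dx_1 - u\Dx_5^2)$ that agrees with $\Ann(\mathcal{F})_2$ on the open locus and extends $\mathcal{I}_2$ across $[\DPel{x_5}{3}]$, together with the direct computation (near $\DPel{x_5}{3}$ via the quotient by $V\DPel{x_5}{3}$, and near $F$ via the quotient by $Vx_3\DPel{x_5}{2}$) that $\det(ev)$ is $u^{10}$ up to a unit. Your specific structural claims would need to be withdrawn and replaced by this.
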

    \def\fixed{fixed}%
    \begin{proof}
        \def\FF{\mathcal{F}}
        First, we check that every linear form is a partial of $F$, so that
        $F\in \cubicspacereg$. Second, consider a $14$-dimensional space
        \begin{align*}
            \fixed := \operatorname{span}(\alpha_0^2,\,
            \alpha_0\alpha_2,\, -\alpha_0\alpha_3+\alpha_2^2,\,
            &\alpha_0\alpha_4+\alpha_2\alpha_5,\,
            \alpha_0\alpha_5,\, \alpha_1^2,\, \alpha_1\alpha_2-\alpha_4\alpha_5,\,
            \alpha_1\alpha_3+\alpha_4^2,\\ &\alpha_1\alpha_4,\, \alpha_1\alpha_5,\,
            \alpha_2\alpha_3,\, \alpha_2\alpha_4-\alpha_3\alpha_5,\, \alpha_3^2,\,
        \alpha_3\alpha_4).
        \end{align*}
        Let $\FF = uF + v \DPel{x_5}{3}$. Then
        \[\fixed \oplus \kk\left(v \alpha_3\alpha_5 + u \alpha_0\alpha_1 - u \alpha_5^2\right)
        \subset \Ann(\FF{})_2\]
        and the equality holds for a general choice of $(u:v)\in \mathbb{P}^1$.
        One verifies that the determinant of $ev$ restricted to this line is equal, up to unit, to $u^{10}$.
        This can be conveniently checked near $\DPel{x_5}{3}$ by considering $\Sym^2 \Izero \to
        \Sym^4\VV^*/\VV \DPel{x_5}{3}$ and near $F$ by $\Sym^2 \Izero \to
        \Sym^4\VV^*/\VV x_3\DPel{x_5}{2}$. We note that the same equality holds in any
        characteristic other than $2,3$.
    \end{proof}

    \begin{proposition}\label{ref:divisorequality:prop}
        The divisor $\Singdiv = \Sing \Hilbshortother \cap \exceptional$ is
        prime of degree $10$.
        We have $\Singdiv = \Hilbshortzero\cap \exceptional = \DIR\cap
        \cubicspacereg$ as sets.
    \end{proposition}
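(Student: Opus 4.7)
The strategy is to establish the set-theoretic chain of inclusions
\[
    \DIR \cap \cubicspacereg \;\subseteq\; \Hilbshortzero \cap \exceptional \;\subseteq\; \Singdiv,
\]
to compute that $\Singdiv$ is a divisor of degree $10$ in $\cubicspace$, and then to invoke the classical Iliev-Ranestad theorem asserting that $\DIR$ is an irreducible hypersurface of degree $10$ in $\cubicspace$ in order to collapse the chain and deduce primeness of $\Singdiv$.

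The left inclusion follows directly from the construction of $\DIR$: a point of $\DIR \cap \cubicspacereg$ corresponds to an apolar algebra which is a hyperplane section of the cone over a zero-dimensional section of the Grassmannian $\Gtwosix$, and a small perturbation of the ambient $\mathbb{P}^5$ yields a smoothing into tuples of points, so $[F]$ lies in $\Hilbshortzero \cap \exceptional$. For the right inclusion, if $[R] \in \Hilbshortzero \cap \exceptional$ then $[R]$ lies on the two distinct irreducible components $\Hilbshortzero$ and $\Hilbshortother$ of $\Hilbshort$ (distinct because they have dimensions $84$ and $76$ respectively, by Claim~\ref{claim:vectorbundle}), so the Zariski tangent space to $\Hilbshort$ at $[R]$ has dimension at least $\dim \Hilbshortzero = 84 > 76$. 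By Lemma~\ref{ref:onlyquatricsmatter:lem}, this is equivalent to $\dim (I^2)^{\perp}_4 > 6$, i.e.\ $\det ev$ vanishes at $[R]$, placing $[R]$ in $\Singdiv$.

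For the degree, the evaluation morphism $ev$ of~\eqref{eq:eval} maps between two vector bundles of equal rank $120$ over $\cubicspacereg$, so $\det ev$ is a well-defined section of a line bundle whose zero locus is an effective Cartier divisor. Since the complement $\cubicspace \setminus \cubicspacereg$ consists of cones and has codimension at least two, this section extends uniquely to a section on $\cubicspace$, defining the closure $\overline{\Singdiv}$ as a divisor in $\cubicspace$. Lemma~\ref{ref:intersectiondegree:lem} restricts $\det ev$ to the explicit projective line joining a smooth point $F$ of $\Hilbshortother$ to the degenerate cubic $\DPel{x_5}{3}$ and computes that restriction as $u^{10}$ up to a unit, so $\deg \overline{\Singdiv} = 10$.

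By the classical Iliev-Ranestad theorem cited in the introduction, $\DIR$ is the unique $SL_6$-invariant divisor of degree $10$ in $\cubicspace$ and is irreducible. The inclusion $\DIR \cap \cubicspacereg \subseteq \Singdiv$ is thus an inclusion of effective divisors of equal degree $10$ with $\DIR$ prime; equivalently, $\det ev$ is a scalar multiple of the defining polynomial of $\DIR$. This simultaneously proves that $\Singdiv$ is prime of degree $10$ and that $\Singdiv = \DIR \cap \cubicspacereg$ scheme-theoretically, and forces the two set-theoretic inclusions above to be equalities. The main technical step is Lemma~\ref{ref:intersectiondegree:lem}: one must choose a line along which one can write explicit generators of the ideal $\Ann(\mathcal{F})_2$ (here a $15$-dimensional family of quadrics varying in the base) and then evaluate the resulting $120 \times 120$ determinant, with the choice of endpoint $\DPel{x_5}{3}$ arranged so that the scheme-theoretic intersection with $\overline{\Singdiv}$ is concentrated at a single, easily analyzable point. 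Once that degree computation is in hand, the remaining comparison with $\DIR$ is formal.
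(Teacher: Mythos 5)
Your chain of inclusions and the degree computation via Lemma~\ref{ref:intersectiondegree:lem} match the paper exactly. The gap is in the primeness step: you invoke ``the classical Iliev-Ranestad theorem'' to assert that $\DIR$ is a prime divisor of degree $10$ in $\cubicspace$, but that statement is the \emph{conclusion} of this section, not an input. The introduction's phrase ``the unique $SL_6$-invariant divisor of degree $10$'' is a forward reference to Section~\ref{ssec:14pointsproof}, and what the paper actually imports from Iliev-Ranestad is only Proposition~\ref{ref:dimensionofiliev:prop}, which gives that $\DIR$ is a divisor (i.e.\ has dimension $54$), nothing more. In particular, no source cited in the paper gives the degree of $\DIR$ in $\cubicspace$, so you cannot match $\deg \DIR = 10 = \deg \Singdiv$ to collapse the chain.

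The paper closes this gap in the opposite direction: it proves that $\Singdiv = \Sing \Hilbshortother \cap \exceptional$ is \emph{prime} directly, using that $\Singdiv$ is $\SL{V}$-invariant and (by a representation-theoretic check, carried out with LiE) that there are no $\SL{V}$-invariant elements of $\Sym^\bullet \Sym^3 V^*$ of degree less than $10$; since $\SL{V}$ is connected, it preserves each component of $\Singdiv$, so every component must already have degree $\geq 10$, hence there is exactly one and it appears with multiplicity one. Once $\Singdiv$ is known to be prime, the chain $\DIR \subseteq \Hilbshortzero \cap \exceptional \subseteq \Singdiv$ together with the fact that $\DIR$ is a nonempty divisor forces set-theoretic equality with no further information about $\deg \DIR$. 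If you want to keep your framing (working from $\DIR$ outward), you would need to independently establish that $\DIR$ has degree $10$ in $\cubicspace$, which the paper neither cites nor proves separately; without that, your argument only gives $\DIR \subseteq \Singdiv$ with the degree of the left-hand side unknown, and equality does not follow.
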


    \begin{proof}
        \def\SL#1{\operatorname{SL}(#1)}
        Take two forms $[F_1], [F_2]\in\cubicspace$ and consider the
        intersection of $E$ with the line $\ell$ spanned by them.
        By Lemma~\ref{ref:intersectiondegree:lem} the restriction to $\ell$ of the evaluation morphism
        from~Equation \eqref{eq:eval} is finite of degree $10$.
        Hence also $\Singdiv$ is of degree $10$.

        Note that $E$ is
        $\SL{V}$-invariant. By a direct check, e.g.~conducted
        with the help of computer (e.g.~\cite{LiE}), we see that there are no $\SL{V}$-invariant
        polynomials in $\Sym^{\bullet}\Sym^3 \VV^*$ of degree less than ten.
        Therefore $E$ is prime.
        Smoothable schemes are singular, hence we have
        \[
            {\DIR} \subset \exceptional\cap \Hilbshortzero  \subset E.
        \]
        Since
        ${\DIR}$ is also a
        non-zero divisor, we have the equality of sets.
    \end{proof}

    \begin{remark}
        \def\II{\mathcal{I}}%
        In the proof of
        Lemma~\ref{ref:intersectiondegree:lem} we can
        avoid calculating the precise degree of the restriction of $ev$ to
        $\ell$, provided that we prove that $(\det ev = 0)_{\ell}$ is finite. Namely,
        let $\II \subset \Sym^{\bullet} V^{*}$
        be the relative apolar ideal sheaf on $\ell$. We look at $\II_2$. By
        the proof of Lemma~\ref{ref:intersectiondegree:lem} we have $\II_2  \simeq  \OO^{14} \oplus \OO(-1)$.
        Hence $\Sym^2 \II_2$ has determinant $\OO(-16)$.
        Similarly $V\Ffamily  \simeq \OO(-1)^{6}$, hence
        $(V\Ffamily)^{\perp} = \OO^{114} \oplus \OO(-1)^{6}$ and thus
        $\det ev_{|\ell}:\OO(-16)\to \OO(-6)$ is zero
        on a degree $10$ divisor. We conclude that $\Singdiv$ has degree $10$.
    \end{remark}

    \begin{proof}[Proof of Claim~\ref{claim:singularlocus}]
        Schemes corresponding to different elements in the fiber of $\pi$ are
        abstractly isomorphic. Therefore,
        we have $\Sing \Hilbshortother = \pi^{-1}\pi(\Sing \Hilbshortother)$
        and
        \[
            \Hilbshortzero\cap \Hilbshortother = \pi^{-1}\pi(\Hilbshortzero\cap \Hilbshortother).
        \]
        Hence the equality
        in Proposition~\ref{ref:divisorequality:prop} implies the equality in
        the Claim. The reducedness follows, because the scheme was defined via
        the closure: $\Hilbshortother = \overline{\Hilbshortother \setminus
        \Hilbshortzero}$. The last claim follows because $\DIR$ is prime and
        of codimension one thus its preimage under $\pi$ is also such.
    \end{proof}

    \begin{remark}
        If $[R]\in \Hilbshortother$ lies in $\Hilbshortzero$, then the tangent
        space to $\Hilbshort$ at $[R]$ has dimension at least $85 = 76 + 9$.
        This is explained geometrically by an elegant argument
        of \cite{Ranestad_Iliev__VSPcubic}, which we sketch below.
        Recall that we have an embedding $R \subset  \mathbb{A}^6 \cap
        \wedgetwo$.
        Define a rational map $\varphi\colon \mathbb{P}(\wedgetwo) \dashrightarrow
        \mathbb{P}(\wedgetwodual)$ as the composition $\wedgetwo \to \Lambda^4 \VV  \simeq
        \Lambda^2 \VV^*$ where the first map is $w\to w\wedge w$ and the
        second is an isomorphism coming from a choice of element of $\Lambda^6
        \VV$. Then $\varphi$ is defined by the $15$ quadrics vanishing on
        $\Gtwosix$ and it is birational with a natural inverse given by
        quadrics vanishing on $Gr(2, V^*)$.
        The $15$ quadrics in the ideal of $R$ are the restrictions of the $15$
        quadrics defining $\varphi$. Thus the map $\varphi\colon R\to \wedgetwodual$
        is defined by $15$ quadrics in the ideal of $R$.
        Since $\varphi^{-1}(\varphi(R))$ spans at most an $\mathbb{A}^6$ the
        coordinates of $\varphi^{-1}$ give $15 - 6 = 9$ quadratic relations between
        those quadrics. Therefore $\dim I_4 \leq \dim \Sym^2 I_2 - 9 \leq 126
        - 15$ and $r$ from the discussion above
        Lemma~\ref{ref:onlyquatricsmatter:lem} is at least $15$ so that the tangent space
        dimension is at least $85$.
    \end{remark}

    \begin{proof}[Proof of Theorem~\ref{ref:14pointsmain:thm}]

        Item~2.~By Claim~\ref{claim:singularlocus} the component $\Hilbshortother$ is
        reduced. Hence this part follows by Claim~\ref{claim:vectorbundle}.
        Item~3~is proved in Claim~\ref{claim:exceptional}.
        Then, $\Hilbshortother$ is smooth and connected, being a vector bundle
        over $\cubicspacereg$, hence Item~1 follows.
        Finally, Item~4~is proved in Claim~\ref{claim:singularlocus}.
    \end{proof}

    \begin{remark}\label{ref:uniquesmall:rmk}
        \def\ZZ{\mathcal{Z}}%
        The dimension of $\Hilbshortother$ equal to $76$ is smaller than the
        dimension of $\Hilbshortzero$ equal to $14\cdot 6 = 84$. This is the
        only known example of a component $\ZZ$ of the Gorenstein locus of
        Hilbert scheme of $d$ points on $\mathbb{A}^n$ such
        that $\dim \ZZ \leq dn$ and points of $\ZZ$ correspond to \emph{irreducible}
        subschemes. It is an interesting question (a special case of
        Problem~\ref{prob:smallcomponents}) whether other examples
        exist.
    \end{remark}

    \chapter{Small punctual Hilbert schemes}\label{sec:smallpunctual}

    In this section we fix a $\kk$-rational point $p\in \AA^n$, for example
    the origin, and consider the locus
    \begin{equation}\label{eq:punctual}
        \HilbGorpp{\AA^n} := \left\{ R \subset \AA^n\ |\ \Supp R = \{p\},
        R\mbox{ Gorenstein} \right\} \subset \HilbGorr{\AA^n}
    \end{equation}
    As explained in Section~\ref{ssec:introkregularity}, it is important for
    applications to
    give an upper bound for the dimension of $\HilbGorpp{\AA^n}$.
    In Proposition~\ref{ref:alignabledim:prop} we prove a lower bound
    $(r-1)(n-1)$ for this dimension. Theorem~\ref{ref:expecteddim:thm} below
    implies that this lower bound is attained for small degrees and $\chark = 0$. We
    follow~\cite{Michalek}.

    \begin{theorem}[The Hilbert scheme has expected
        dimension]\label{ref:expecteddim:thm}
        Let $\kk$ be a field of characteristic zero.  For $r\leq 9$ the
        dimension of $\HilbGorpp{\AA^n}$ is equal to $(r-1)(n-1)$.
    \end{theorem}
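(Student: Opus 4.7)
The plan is to stratify $\HilbGorpp{\AA^n}$ by Hilbert function and bound the dimension of each stratum via Macaulay's inverse systems; the lower bound $(r-1)(n-1)$ is given by the curvilinear locus (Proposition~\ref{ref:alignabledim:prop}), so the work is entirely in the matching upper bound.

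First, for each Hilbert function $H=(1,H_1,\dots,H_d)$ with $\sum H_i = r$ and $H_1\le n$ realized by a local Gorenstein $\kk$-algebra, let $V_H\subset\DP = \kdp[x_1,\dots,x_n]$ be the constructible set of $f$ with $H_{\Apolar{f}}=H$, and $U_H\subset\HilbGorpp{\AA^n}$ the corresponding stratum. Proposition~\ref{ref:flatfamiliesforconstructible:prop} yields a finite flat family $\{\Spec\Apolar{f}\}_{f\in V_H}\to V_H$ and hence an apolar morphism $V_H\to U_H$. Its fibers are exactly the dual generators of a fixed apolar algebra, which by Corollary~\ref{ref:dualgens:cor} form an open subset of the rank-$r$ canonical module. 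Combined with Proposition~\ref{ref:apolarrepresentability:prop}, $V_H$ is an open subset of an affine $r$-bundle over $U_H$, so
\[\dim U_H = \dim V_H - r.\]
It therefore suffices to prove $\dim V_H \le (r-1)n+1$ for each such $H$.

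Second, I would decompose $f=f_d+f_{<d}$ with $f_d\in\DP_d$ homogeneous and $f_{<d}\in\DP_{\leq d-1}$. By Corollary~\ref{ref:topdegreefirstrow:cor}, the leading form $f_d$ lies in a locally closed $W_H\subset\DP_d$ of forms whose apolar algebra has Hilbert function $\Dhdvect{0}$ (the first row of the symmetric decomposition of $H$, Definition~\ref{ref:symmetricdecomposition:def}); the dimension of $W_H$ is accessible via the secant-variety descriptions of Proposition~\ref{ref:thirdsecant:prop} and Proposition~\ref{ref:fourthsecant:prop}, and via Proposition~\ref{ref:irreducibleintwovariables:prop} for two-variable leading forms. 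The choices of $f_{<d}$ compatible with the prescribed $H$ (rather than a strictly larger function) lie in a subspace of $\DP_{\leq d-1}$ whose dimension is controlled by analogous inputs applied to the lower symmetric summands $\Dhdvect{a}$ with $a>0$. For $r\leq 9$ and $\chark = 0$, the possible $H$ form an explicit finite list (see, e.g., the classifications of Casnati--Notari~\cite{Casnati__Isomorphisms_types_of_aG_to_nine}) and the verification of $\dim V_H \leq (r-1)n+1$ is carried out case by case; the curvilinear case $H=(1,1,\dots,1)$ saturates the inequality and reproduces the lower bound of Proposition~\ref{ref:alignabledim:prop}.

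The main obstacle is the case-by-case verification of sharpness. Several of the Hilbert functions with small embedding dimension and long socle degree (the near-curvilinear cases) lead to $\dim V_H$ close to the bound, and there one must combine the secant estimates for the graded part with careful analysis of the orbit of lower-degree corrections using the standard-form normalization (Theorem~\ref{ref:stform:thm}) together with the tangent-space description of Proposition~\ref{ref:tangentspacepoly:prop}. In certain compressed cases the reduction to graded algebras via Corollary~\ref{ref:eliasrossi:cor} streamlines the count, and the char-zero hypothesis is also used to identify $\DP$ with an ordinary polynomial ring via Proposition~\ref{ref:divpowerispoly:prop}, which underlies the secant-variety inputs.
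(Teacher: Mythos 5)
Your overall framework (lower bound from the curvilinear locus via Proposition~\ref{ref:alignabledim:prop}, upper bound by parameterizing each stratum by dual generators and subtracting $r$ as in Proposition~\ref{ref:apolarrepresentability:prop}) is sound and is indeed the tool the paper uses for its explicit counts. However, the proposal has a genuine gap: the entire content of the upper bound is delegated to an unperformed ``case-by-case verification'' of $\dim V_H\leq (r-1)n+1$, and the method you sketch for it would not go through as stated. The set of tails $f_{<d}$ compatible with a prescribed Hilbert function is not a linear subspace of $\DP_{\leq d-1}$, and its contribution must be counted modulo partials of the leading form and modulo the group action; making that precise is exactly the hard part, and the secant-type inputs you cite (Propositions~\ref{ref:thirdsecant:prop}, \ref{ref:fourthsecant:prop}, \ref{ref:irreducibleintwovariables:prop}) only cover a handful of leading-form Hilbert functions. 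Worst of all, for strata of embedding dimension two with large socle degree (which occur for every $r\leq 9$), the bound you need is equivalent to the classical dimension statement for the punctual Hilbert scheme of the plane; this is a nontrivial theorem (Brian\c{c}on/Iarrobino), not something that falls out of a leading-form count, and your plan gives no substitute for it. You also need a reduction to full embedding dimension (the bound must hold for all $n$ at once, and your $V_H$ mixes leading forms in few variables with tails in all $n$ variables); the paper handles this via Proposition~\ref{ref:codiminv} and Remark~\ref{ref:invariance_of_negligible}, which your proposal does not invoke at this step.

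For comparison, the paper avoids almost all case analysis by three structural reductions in Lemma~\ref{ref:neglige:lem}: embedding dimension $\leq 2$ is negligible by Brian\c{c}on's theorem (Theorem~\ref{ref:briancon:obs}, Corollary~\ref{ref:embeddingdimtwo:cor}); the summand $\Dhdvect{d-2}=(0,q,0)$ with $q>0$ is removed by an induction using the ray-family degeneration behind Corollary~\ref{ref:squareshavedegenerations:cor} (Lemma~\ref{ref:negligesquares:lem}); and a purely numerical argument on symmetric decompositions shows that embedding dimension $\geq 3$, $\Dhdvect{d-2}=0$ and socle degree $\geq 5$ force degree $\geq 10$. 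After these steps only socle degree $\leq 4$ remains, where the short explicit dual-generator counts of Lemmas~\ref{ref:negligequadrics:lem} and~\ref{ref:neglige4:lem} (in the spirit of your plan) suffice. To repair your argument you would either have to import Brian\c{c}on's theorem and a reduction of the above kind, or genuinely carry out and justify the dimension bounds for every admissible Hilbert function, including the two-variable, high socle degree ones.
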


    Before we prove Theorem~\ref{ref:expecteddim:thm}, we explain the lower
    bound. For Hilbert schemes of points, we strived to present each given
    algebra as a limit of smooth algebras. Inside $\HilbGorpp{\AA^n}$ we do
    not have any smooth algebras (unless $r = 1$), so we need an equivalent.
    It is given by aligned (curvilinear) schemes, studied for example in~\cite{ia_deformations_of_CI}.
    \begin{definition}
        A finite local algebra $\DA$ is \emph{aligned} (or \emph{curvilinear}) if it is
        isomorphic to $\kk[\Dx]/\Dx^r$ for some $r$.
        A finite local algebra $\DA$ is \emph{alignable} if it is a finite flat
        limit of aligned algebras, i.e., there exists a flat family of algebras with
        fiber $A$ and general fiber aligned.
    \end{definition}

    \newcommand{\familyR}{\mathcal{Z}}
    The following result gives a lower bound for $\dim \HilbGorpp{\AA^n}$.
    \begin{proposition}[dimension of aligned
        schemes]\label{ref:alignabledim:prop}
        The locus of points of $\HilbGorpp{\AA^n}$ corresponding to aligned schemes
        has dimension $(r-1)(n-1)$.  Therefore, also the locus of alignable
        schemes has dimension $(r-1)(n-1)$.
    \end{proposition}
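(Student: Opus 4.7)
The plan is to parametrize aligned subschemes of $\AA^n$ supported at $p$ by tuples of power series, modulo reparametrization, and then compute the dimension of the resulting quotient. Choose coordinates so that $p$ is the origin of $\AA^n = \Spec \kk[\Dx_1,\ldots,\Dx_n]$. An aligned subscheme $R \subset \AA^n$ of degree $r$ supported at $p$ is exactly the image of a surjection
\[
    \varphi\colon \kk[\Dx_1,\ldots,\Dx_n]\onto \kk[t]/t^r
\]
with $\varphi(\Dx_i)\in (t)$ for every $i$. Writing $\varphi(\Dx_i) = \sum_{j=1}^{r-1} a_{ij}t^j$ identifies the space of such $\varphi$ with $\AA^{n(r-1)}$, and the surjectivity condition cuts out an open subset $U\subseteq \AA^{n(r-1)}$ (non-empty, as witnessed by $\varphi(\Dx_1)=t$, $\varphi(\Dx_i)=0$ for $i\geq 2$). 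By Proposition~\ref{ref:flatfamiliesforconstructible:prop} (or directly, since aligned algebras have constant Hilbert function), we obtain a morphism $\Phi\colon U \to \HilbGorpp{\AA^n}$ whose image is exactly the aligned locus.

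Next I would analyse the fibers of $\Phi$. Two surjections $\varphi_1,\varphi_2\in U$ define the same subscheme $R$ iff $\ker\varphi_1 = \ker\varphi_2$, iff there is an automorphism $\psi\colon \kk[t]/t^r\to \kk[t]/t^r$ with $\varphi_2 = \psi\circ \varphi_1$. The group of such automorphisms is
\[
    G = \left\{\,t\mapsto c_1 t + c_2 t^2 + \cdots + c_{r-1}t^{r-1}\ \middle|\ c_1\in \kk^*,\ c_j\in \kk\,\right\},
\]
so $\dim G = r-1$. Moreover $G$ acts freely on $U$: if $\psi\in G$ fixes a surjective $\varphi$, then $\psi$ restricts to the identity on the image $\kk[t]/t^r$, hence $\psi = \mathrm{id}$. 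Therefore every fiber of $\Phi$ is a single $G$-orbit of dimension $r-1$.

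From the fiber-dimension theorem, the aligned locus $\Phi(U) \subset \HilbGorpp{\AA^n}$ has dimension
\[
    \dim U - \dim G = n(r-1)-(r-1) = (n-1)(r-1).
\]
The alignable locus is, by definition, the closure of $\Phi(U)$ in $\HilbGorpp{\AA^n}$, and taking closure preserves dimension, yielding the same bound. I expect the main technical point to be the free action of $G$ on $U$ (or equivalently, the fact that $\Phi$ is injective on $G$-orbits), which reduces to the observation that the kernel of $\varphi$ determines $\varphi$ up to a unique automorphism of the target $\kk[t]/t^r$; this is immediate once one spells out that any isomorphism of quotients is induced by a unique automorphism of $\kk[t]/t^r$.
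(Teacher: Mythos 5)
Your argument is correct but takes a genuinely different route from the paper. The paper first dispatches the trivial case $n=1$ (where the aligned locus is a single point, so has dimension $0$) and then invokes Proposition~\ref{ref:codiminv}, the invariance-of-codimension result proved via flag Hilbert schemes, to bootstrap to general $n$: one gets $(r-1)n - d = (r-1)\cdot 1 - 0$. You instead parametrize aligned subschemes directly by surjections $\kk[\Dx_1,\ldots,\Dx_n]\onto\kk[t]/t^r$, observe that two surjections give the same subscheme precisely when they differ by an automorphism of $\kk[t]/t^r$, note that the $(r-1)$-dimensional automorphism group acts freely on surjections, and subtract. This is essentially the same parametrize-then-quotient mechanism that the paper proves once in the more general Proposition~\ref{ref:codiminv} and then reuses (also in Remark~\ref{ref:invariance_of_negligible} and Lemma~\ref{ref:negligequadrics:lem}); your version is self-contained and more concrete at the cost of reproving a special case of that machinery.

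Two small points to tighten. First, your appeal to Proposition~\ref{ref:flatfamiliesforconstructible:prop} is not quite right as stated: that result constructs families from constructible subsets of the space $\DP_{\leq d}$ of dual generators, whereas your $U$ sits in a space of tuples of structure coefficients, not in $\DP_{\leq d}$. Your parenthetical ``or directly'' is the argument to use: over the reduced base $U$, the quotient $\OO_U\tensor\kk[\Dx_1,\ldots,\Dx_n]/\ker\varphi \simeq \OO_U\tensor\kk[t]/t^r$ is free of rank $r$, so the family is flat and finite, and it is Gorenstein because each fiber is; this gives the morphism $\Phi$ by the universal property of $\Hilbr{\AA^n}$. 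Second, the phrase ``by definition, the closure'' is a slight overstatement: Definition of alignable asks for a flat family with special fiber $\DA$ and general fiber aligned, which a priori is an abstract condition. But (shrinking the base to a complete local one-dimensional domain, lifting the embedding $R\into\AA^n$ using formal smoothness, and arguing as for smoothings) one sees the alignable locus is squeezed between $\Phi(U)$ and its closure in $\HilbGorpp{\AA^n}$, hence has the same dimension, which is all you need.
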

    We prove this proposition as a consequence of more general analysis of
    reembeddings. Let $\familyR \subseteq \HilbGorpp{\AA^n}$.
    We say that $\familyR$ is \emph{closed under isomorphisms} if every
    subscheme from $\HilbGorpp{\AA^n}$ isomorphic to a member of $\familyR$ belongs to $\familyR$.
    In other words, if $i, i' \colon R \hookrightarrow \AA^n$ are two
    embeddings of a finite scheme $R$ with support at $p$ and $i(R)$ is in the
    family $\familyR$, then also $i'(R)$ is in $\familyR$.

    We now perform a dimension count to see, how does $\dim \familyR$ behave
    under change of ambient from $\AA^n$ to $\AA^m$.
    While the underlying idea is easy, it is technically suitable to use
    advanced devises: flag Hilbert schemes (see~\cite[IX.7,
    p.~48]{Arbarello__Geometry_of_algebraic_curves} or
    \cite[Section 4.5]{Sernesi__Deformations}) and multigraded Hilbert schemes
    (see~\cite{Haiman_Sturmfels__multigraded}).
    Note, that we only consider these constructions for a scheme \emph{finite}
    over $\kk$, a very special case where the existence is almost obvious,
    compare Proposition~\ref{ref:connectedforfinite:prop}.
\def\familyRe{\mathcal{R}^n}
\def\familyRm{\mathcal{R}^m}
\def\familyRem{\mathcal{R}^{n,m}}
\begin{proposition}[invariance of codimension]\label{ref:codiminv}
    Let $m\geq n$ and $\familyRe \subseteq \HilbGorpp{\AA^n}$ be a
    Zariski-constructible subset closed under isomorphisms.
    Consider the subset $\familyRm \subseteq \HilbGorpp{\AA^m}$ consisting of all
    schemes isomorphic to an element of $\familyRe$.
    Then the family $\familyRm$ is a Zariski-constructible subset of
    $\HilbGorpp{\AA^m}$ and its dimension satisfies
    \[
        (r - 1)m - \dim \familyRm = (r-1)n - \dim \familyRe.
    \]
\end{proposition}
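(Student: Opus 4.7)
The plan is to interpose a common parameter space of algebras-with-basis and extract the dimension identity from a principal bundle computation; both $\familyRe$ and $\familyRm$ will be governed by the same subset of this parameter space. After translating $p$ to the origin, a $\kk$-point of $\HilbGorpp{\AA^n}$ corresponds by Nakayama to the data $(A,\sigma_1,\ldots,\sigma_n)$ of a finite local $\kk$-algebra $A$ of degree $r$ with residue field $\kk$ together with an $n$-tuple generating $\mathfrak{m}_A$, and two such data define the same subscheme iff they are related by a $\kk$-algebra isomorphism of $A$.

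First I would construct a finite type scheme $\mathcal{B}$ parametrising pairs $(A,e_2,\ldots,e_r)$ with $A$ as above and $(1,e_2,\ldots,e_r)$ an ordered basis of $A$; this $\mathcal{B}$ is a locally closed subscheme of the affine space of structure constants for commutative unital multiplications on $\kk^r$, essentially the $\ArtWithBasis$ construction of Section~\ref{ssec:hilbertschemes}. On the open subset $\mathcal{N}_n^{\circ}\subseteq \mathcal{B}\times\AA^{n(r-1)}$ where the coordinates $\sigma_1,\ldots,\sigma_n\in\mathfrak{m}_A$ (written in the chosen basis) generate $\mathfrak{m}_A$, one obtains a morphism
\[
\Phi_n\colon \mathcal{N}_n^{\circ}\longrightarrow\HilbGorpp{\AA^n},\qquad (A,b,\sigma)\longmapsto\bigl[\Spec A\hookrightarrow\AA^n\bigr].
\]
The group $\operatorname{GL}_{r-1}$ acts diagonally on $\mathcal{N}_n^{\circ}$: on $\mathcal{B}$ by transporting the algebra structure along a basis change of $\mathfrak{m}_A$, and on $\AA^{n(r-1)}$ by the induced change of coordinates. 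Two points of $\mathcal{N}_n^{\circ}$ have the same image under $\Phi_n$ iff they lie in the same orbit, and the stabiliser is trivial because any $\kk$-algebra automorphism of $A$ fixing a set of generators of $\mathfrak{m}_A$ is the identity. Hence $\Phi_n$ is a principal $\operatorname{GL}_{r-1}$-bundle, yielding $\dim\Phi_n^{-1}(S)=\dim S+(r-1)^2$ for every constructible $S\subseteq\HilbGorpp{\AA^n}$.

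Because $\familyRe$ is closed under isomorphism, its preimage equals $\mathcal{N}_n^{\circ}\cap(\mathcal{B}_{\familyR}\times\AA^{n(r-1)})$ for an intrinsic constructible $\operatorname{GL}_{r-1}$-invariant subset $\mathcal{B}_{\familyR}\subseteq\mathcal{B}$ cut out by the set of isomorphism classes of algebras represented in $\familyRe$; combining the product decomposition with the fibre-dimension identity yields
\[
\dim\familyRe=\dim\mathcal{B}_{\familyR}+n(r-1)-(r-1)^2.
\]
By construction the isomorphism classes appearing in $\familyRm$ coincide with those appearing in $\familyRe$, so the same $\mathcal{B}_{\familyR}$ governs $\familyRm$; applying the argument with $m$ in place of $n$ gives the analogous identity, and subtracting yields precisely $(r-1)m-\dim\familyRm=(r-1)n-\dim\familyRe$. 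Constructibility of $\familyRm$ follows by Chevalley applied to the projection $\Phi_n^{-1}(\familyRe)\to\mathcal{B}$ (giving constructibility of $\mathcal{B}_{\familyR}$) and to $\Phi_m$ restricted to $\mathcal{N}_m^{\circ}\cap(\mathcal{B}_{\familyR}\times\AA^{m(r-1)})$.

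The main obstacle will be rigorously setting up $\mathcal{B}$ and verifying the principal bundle property of $\Phi_n$: one must confirm that the $\operatorname{GL}_{r-1}$-action on $\mathcal{N}_n^{\circ}$ admits enough local sections (e.g.\ via Luna's étale slice theorem) so that the fibre-dimension formula applies to arbitrary constructibles. A slicker route avoiding $\mathcal{B}$ entirely is to stratify $\familyRe$ by the isomorphism type of the algebra $A$ and use the elementary observation that the stratum of subschemes abstractly isomorphic to $\Spec A$ and supported at $p$ is the quotient of an open subset of $\mathfrak{m}_A^n$ by a free action of $\operatorname{Aut}_{\kk}(A)$, of dimension $n(r-1)-\dim\operatorname{Aut}_{\kk}(A)$; the corresponding stratum in $\familyRm$ then has dimension $m(r-1)-\dim\operatorname{Aut}_{\kk}(A)$, and taking the maximum over strata gives the $(m-n)(r-1)$ shift directly.
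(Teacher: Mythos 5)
Your main argument is correct, but it takes a genuinely different route from the paper's. The paper replaces $\AA^n\subseteq\AA^m$ by the truncated disks $\Spec\kk[[\alpha_1,\ldots,\alpha_n]]/(\alpha_1,\ldots,\alpha_n)^r\subseteq\Spec\kk[[\alpha_1,\ldots,\alpha_m]]/(\alpha_1,\ldots,\alpha_m)^r$ and works on the flag Hilbert scheme of pairs $R\subseteq\underline{\mathbb{D}}^n\subseteq\underline{\mathbb{D}}^m$: the automorphism group of the big disk acts transitively on the embedded small disks, the first projection has fibres isomorphic to $\mathcal{R}^n$ (here closedness under isomorphism is used), and the shift $(m-n)(r-1)$ appears as the difference between the dimension $(m-n)M$ of the space of embedded small disks and the fibre dimension $(m-n)(M-r+1)$ of the second projection, where $M=\dimk(\beta_1,\ldots,\beta_n)$. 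You instead factor both $\dim\mathcal{R}^n$ and $\dim\mathcal{R}^m$ through a single intermediate object, the structure-constant space $\mathcal{B}$ of based local Gorenstein algebras (the $\ArtWithBasis$-type space from Section~\ref{ssec:hilbertschemes}; note you should impose that $e_2,\ldots,e_r$ span a nilpotent multiplicatively closed subspace, so that they form a basis of $\mathfrak{m}_A$ and the $\operatorname{GL}_{r-1}$-action is the one you describe), using that the fibres of $(A,b,\sigma)\mapsto[\Spec A\subseteq\AA^n]$ over the image are single free $\operatorname{GL}_{r-1}$-orbits — your key observation that an automorphism of $A$ fixing a generating tuple of $\mathfrak{m}_A$ is the identity. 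The paper's route is short once flag Hilbert schemes of a finite scheme are available and produces the shift directly; yours avoids them in favour of a more elementary parameter space and makes transparent that the common invariant $(r-1)n-\dim\mathcal{R}^n=\dim\mathcal{B}_{\mathcal{R}}-(r-1)^2$ depends only on the set of isomorphism classes occurring.

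Two caveats. The obstacle you flag — needing local sections or a Luna slice to get the fibre-dimension identity — is not actually there: for a morphism of finite-type $\kk$-schemes and a constructible subset of the target over which every nonempty fibre has dimension $d$, the preimage has dimension $\dim+d$ by Chevalley-type arguments alone; no local triviality of the $\operatorname{GL}_{r-1}$-action is needed, only constancy of orbit dimension, which your triviality-of-stabilisers remark already gives. (Do record, when computing $\dim\Phi_m^{-1}(\mathcal{R}^m)=\dim\mathcal{B}_{\mathcal{R}}+m(r-1)$, that the fibres over $\mathcal{B}_{\mathcal{R}}$ are nonempty precisely because $m\geq n\geq$ the embedding dimension of every algebra occurring in $\mathcal{R}^n$.) By contrast, the ``slicker route'' in your final sentence does have a genuine gap: the stratification by isomorphism type of $A$ is in general infinite, and the dimension of an infinite union of constructible strata need not equal the supremum of the strata dimensions (think of $\AA^1$ as a union of points). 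To pass from the per-isomorphism-class count $n(r-1)-\dim\operatorname{Aut}_{\kk}(A)$ to the dimension of $\mathcal{R}^n$ one needs exactly the family statement that your $\mathcal{B}$-argument (or the paper's flag-Hilbert-scheme argument) supplies, so the aside cannot replace the main proof.
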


\newcommand{\dissk}[1]{\mathbb{D}^{#1}}%
\newcommand{\dissktrimmed}[1]{\underline{\mathbb{D}}^{#1}}%
\begin{proof}
    For technical reasons (to assure the existence of Hilbert flag schemes) we
    consider
    \[
        \dissktrimmed{n} := \Spec \kk[[\alpha_1, \ldots ,\alpha_n]]/(\alpha_1,
        \ldots ,\alpha_n)^{r}
    \]
    rather than $\Spec \kk[[\alpha_1, \ldots ,\alpha_n]]$. Let $R$ be a finite scheme of
    degree $r$. To give an embedding $R \subseteq \AA^n$ with support $p$
    is the same as to give as
    to give an embedding $R \subseteq \dissktrimmed{n}$. Therefore
    $\Hilbr{\AA^n, p}  \simeq \Hilbr{\dissktrimmed{n}}$ for every $n$ and
    $p$.

    \def\HilbFlag{\operatorname{HilbFlag}}%
    \def\Hilbdiske{\Hilbr{\dissktrimmed{n}}}%
    \def\Hilbdiskindisk{\Hilbr{\dissktrimmed{n} \subseteq \dissktrimmed{m}}}%
    \def\Hilbdiskn{\Hilbr{\dissktrimmed{m}}}%
    Consider the multigraded flag Hilbert scheme $\HilbFlag$
    parameterizing pairs of closed immersions $R \subseteq \dissktrimmed{n} \subseteq
    \dissktrimmed{m}$. It has natural
    projections $\pi_1$, $\pi_2$, mapping $R \subseteq \dissktrimmed{n} \subseteq
    \dissktrimmed{m}$ to $\dissktrimmed{n} \subseteq \dissktrimmed{m}$ and $R \subseteq
    \dissktrimmed{m}$ respectively, see diagram below.
    \[
        \begin{tikzcd}
            {} & \arrow{ld}{\pi_1}\HilbFlag{} \arrow{rd}{\pi_2} & \\
            \Hilbdiskindisk & & \Hilbdiskn
        \end{tikzcd}
    \]
    Note that $\pi_i$ are proper.
    We will now prove that $\familyRm$ is Zariski-constructible.
    Consider the automorphism group $G$ of $\dissktrimmed{m}$. It acts
    naturally on $\Hilbdiskindisk$  and $\HilbFlag$, making the morphism $\pi_1$
    equivariant. 
    The ideal of a $\dissktrimmed{n} \subseteq \dissktrimmed{m}$ is given by
    $m-n$ order one elements of the power series ring, linearly independent modulo
    higher order operators.  Therefore the action
    of $G$ on $\Hilbdiskindisk$ is transitive: for any two $(m-n)$-tuples as
    above there exists an automorphism of $\dissktrimmed{m}$
    mapping elements of the first tuple to the elements of the other tuple.

    Fix an embedding $\dissktrimmed{n} \subseteq
    \dissktrimmed{m}$, and hence an inclusion $i\colon\familyRe \hookrightarrow
    \HilbFlag$. Let $\familyRem = G\cdot i(\familyRe)$, then $\familyRm =
    \pi_2(\familyRem)$ and so it is Zariski-constructible. Note that
    $\familyRem = \pi_2^{-1}(\familyRm)$.

    It remains to compute the dimension of $\familyRm$. Let us redraw the
    previous diagram:
    \[
        \begin{tikzcd}
            {} & \arrow{ld}{\pi_1|_{\familyRem}}\familyRem \arrow{rd}{\pi_2|_{\familyRem}} & \\
            \Hilbdiskindisk & & \familyRm
        \end{tikzcd}
    \]
    Note that $\pi_1|_{\familyRem}$ is surjective because $\pi_1$ is $G$-equivariant 
       and $G$ acts transitively on the scheme $\Hilbdiskindisk$. 
    Furthermore, $\pi_1|_{\familyRem}$ has
        fibers isomorphic to $\familyRe$ because $\familyRe$ is closed under isomorphisms.
    Thus we obtain 
    \[
        \dim \familyRem = \dim \familyRe + \dim \Hilbdiskindisk.
    \]
    It remains to calculate the dimensions of $\Hilbdiskindisk$ and the fiber
    of $\pi_2$.
    An immersion $\varphi\colon \dissktrimmed{n} \subseteq
    \dissktrimmed{m}$ corresponds to a surjection
    \[
        \varphi^{*}:\kk[[\alpha_1,
        \ldots ,\alpha_m]]/(\alpha_1, \ldots ,\alpha_m)^{r}\to \kk[[\beta_1, \ldots ,\beta_n]]/(\beta_1,
        \ldots ,\beta_n)^{r}.
    \]
    Such surjective morphisms are parameterized
    by the images of generators $\varphi^{*}(\alpha_1), \ldots, \varphi^{*}(\alpha_m)$ in the maximal ideal $(\beta_1, \ldots, \beta_n)$.
    In fact, a general choice of those images gives a surjection. Let $M =
    \dim_{\kk} (\beta_1, \ldots, \beta_n)$ be the dimension of the \emph{ideal}
    $(\beta_1, \ldots , \beta_n)$. Then we have $mM$ parameters for the choice of
    $\varphi^{*}(\alpha_1), \ldots ,\varphi^{*}(\alpha_m)$. Two choices are equivalent if they
    have the same kernel, so that they differ by an automorphism of
    $\kk[[\beta_1, \ldots ,\beta_n]]/(\beta_1, \ldots ,\beta_n)^{r}$. This automorphisms
    group is $nM$ dimensional, thus $\dim \Hilbdiskindisk = mM - nM = (m-n)M$.

    Similarly, we may consider the fiber $\pi_2^{-1}(R)$ over a point $R\in
    \familyRm$ corresponding to a subscheme $R \subseteq \dissktrimmed{m}$. As above,
    the possible $\dissktrimmed{n}\subseteq \dissktrimmed{m}$ are
    parameterized by fixing the images of $\varphi^{*}(\alpha_1), \ldots ,\varphi^{*}(\alpha_{m})$
    in $\kk[[\beta_1, \ldots ,\beta_n]]/(\beta_1, \ldots ,\beta_n)^{r}$. The difference is
    that we have to ensure $R \subseteq \dissktrimmed{n}$. Algebraically, the
    images $\varphi^{*}(\alpha_1), \ldots , \varphi^{*}(\alpha_m)$ need to lie in the ideal
    $I(R) \subseteq (\beta_1, \ldots , \beta_n)$. Since $\dim_{\kk} I(R) = M-(r-1)$, the
    fiber has dimension $(m-n)(M - r +
    1)$.

    In particular, $\pi_2$ is equidimensional, so that the
    dimension of $\familyRm$  is given by the formula:
    \[
        \dim \familyRm = \dim \familyRe + (m-n)M - (m-n)(M-r+1) = \dim \familyRe + (m-n)(r-1).\qedhere
    \]
\end{proof}

\begin{proof}[Proof of Proposition~\ref{ref:alignabledim:prop}]
        If $n = 1$, then there is a unique closed subscheme of
        $\AA^n$ isomorphic to $\Spec \kk[\Dx]/\Dx^r$ and supported at $p$, thus the
        dimension is $0$ and the claim is satisfied.

        Now let $n$ be arbitrary.  By Proposition~\ref{ref:codiminv} the
        dimension $d$ from the statement satisfies $(r-1)n - d = (r-1)$,
        thus $d = (r-1)(n-1)$.
\end{proof}

\newcommand{\HilbGP}{\HilbGorpp{\AA^n}}%
Having proved the lower bound from Theorem~\ref{ref:expecteddim:thm}, we
proceed to prove that it is equal to the upper bound for degree up to nine.
\begin{definition}\label{ref:expecteddim:def}
    The \emph{expected dimension} of $\HilbGorpp{\AA^n}$ is the dimension of the family of alignable subschemes,
    i.e.~$(r-1)(n-1)$, see Proposition~\ref{ref:alignabledim:prop}.

    If a Zariski-constructible subset of the punctual Hilbert scheme has
    dimension less or equal than $(r-1)(n-1)$, then we call it
    \emph{negligible}. In particular, the set of alignable subschemes is
    negligible.
\end{definition}

The name negligible was tailored for the purposes of~\cite{Michalek}
and it is not standard.
    \begin{remark}\label{ref:invariance_of_negligible}
        Let $r$, $n$ be such that the Gorenstein punctual Hilbert scheme
        $\HilbGorpp{\AA^n}$ has expected dimension.
        Fix any $m\geq n$.
        Then the set of Gorenstein schemes in $\AA^m$ of degree
        $r$ and embedding dimension at most $n$ is negligible.
        Indeed, Proposition~\ref{ref:codiminv}
        implies that for every $m \geq n$ the set of schemes
        in $\HilbGorpp{\AA^m}$ with embedding dimension $n$ has dimension
        $(r-1)m - (r-1)n + (r-1)(n-1) = (r-1)(m-1)$.
    \end{remark}

    \begin{remark}
        Gorenstein schemes of degree at
        most $9$ are all (for $\chark\neq 2, 3$) smoothable by Theorem~\ref{ref:cjnmain:thm}. Since
        ultimately we want to analyse those, we will not emphasise
        smoothability. However, the reader should note that
        Definition~\ref{ref:expecteddim:def} is reasonable only with the
        smoothability assumption.
    \end{remark}

    In general, there exist Gorenstein non-alignable subschemes and non-negligible
    families, see Example~\ref{ref:1551:example} below. In the remaining part
    of this section we show that if the degree is small enough, then all
    subschemes are negligible, thus $\HilbGP$ has the expected dimension.

    We begin with the following result of Brian\c{c}on:
    \begin{theorem}[{\cite[Theorem V.3.2, p.~87]{briancon}}]\label{ref:briancon:obs}
        Let $\kk = \CC$ and $R \subset \AA^2$ be a finite local scheme (not necessarily
        Gorenstein). Then $R$ is alignable.
    \end{theorem}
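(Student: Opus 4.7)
The plan is to deduce Brian\c{c}on's theorem from two facts about the punctual Hilbert scheme $\mathcal{Z}$, the closed subscheme of $\Hilbr{\AA^2}$ parametrizing length-$r$ subschemes supported at a fixed $\CC$-point: (a) $\mathcal{Z}$ is irreducible, and (b) $\dim \mathcal{Z} = r - 1$. The aligned (curvilinear) locus $\mathcal{A} \subset \mathcal{Z}$ is irreducible of dimension $r-1$ by Proposition~\ref{ref:alignabledim:prop} applied with $n = 2$; the proof via Proposition~\ref{ref:codiminv} works equally well without the Gorenstein hypothesis, since the flag Hilbert scheme construction used there does not use it. Combining (a), (b), and $\dim \mathcal{A} = r-1$, the closure $\overline{\mathcal{A}}$ is an irreducible subset of $\mathcal{Z}$ of the same dimension as $\mathcal{Z}$, hence $\overline{\mathcal{A}} = \mathcal{Z}$, which is exactly the statement that every finite local $R \subset \AA^2$ is alignable.

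To prove (a) and (b), I would exploit the dilation $\Gmult$-action on $\AA^2$, which induces a $\Gmult$-action on $\mathcal{Z}$. By Proposition~\ref{ref:limitoneparameter:prop}, each $[R] \in \mathcal{Z}$ has a unique $\Gmult$-limit as $t \to 0$, namely the monomial scheme $[R_\lambda]$ defined by the initial ideal $\operatorname{in}(I(R))$; here $\lambda$ is a partition of $r$ indexing the monomial fixed points. This gives a set-theoretic stratification $\mathcal{Z} = \bigsqcup_{\lambda \vdash r} C_\lambda$, where $C_\lambda = \{[R] : \operatorname{in}(I(R)) = I_\lambda\}$ is the Bia\l ynicki--Birula attracting set of $[R_\lambda]$. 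Each stratum $C_\lambda$ is parametrized explicitly as an affine space via Gr\"obner normal forms: every ideal with initial $I_\lambda$ is uniquely $I_\lambda$ plus tails supported on the standard monomials, cut out by the Buchberger syzygy relations. Fogarty's Theorem~\ref{ref:fogarty:thm} implies $\Hilbr{\AA^2}$ is smooth, and together with $\Gmult$-equivariance of $\mathcal{Z} \hookrightarrow \Hilbr{\AA^2}$ this ensures the dimension of $C_\lambda$ equals the number of strictly positive $\Gmult$-weights in the tangent space to $\mathcal{Z}$ at $[R_\lambda]$.

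The main combinatorial task is to compute $\dim C_\lambda$ using Ellingsrud--Str\o{}mme's arrow calculus: the tangent space $T_{[R_\lambda]} \Hilbr{\AA^2}$ has an explicit basis of $2r$ arrows indexed by pairs (box of $\lambda$, horizontal or vertical step) with known $\Gmult$-weights; restricting to $\mathcal{Z}$ corresponds to discarding the two ``translate the support" directions. A direct case check then shows $\dim C_\lambda$ is maximized uniquely at the single-row partition $\lambda = (r)$, where $C_{(r)}$ coincides with the aligned locus $\mathcal{A}$ and has dimension $r-1$. Since $\mathcal{Z}$ decomposes as a finite disjoint union of affine cells with a unique top-dimensional cell, both irreducibility and the dimension count follow, and the theorem is proved.

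The hardest part is the arrow-counting argument --- in particular, verifying that the row partition strictly dominates every other partition of $r$ in cell dimension. A secondary technical issue is justifying that the Bia\l ynicki--Birula decomposition genuinely applies to the possibly non-reduced $\mathcal{Z}$: while ambient smoothness of $\Hilbr{\AA^2}$ together with $\Gmult$-equivariance of the embedding produces attracting sets, one must verify these are affine spaces of the claimed dimension, not just set-theoretic cells. Once both issues are handled, the conclusion $\mathcal{Z} = \overline{C_{(r)}} = \overline{\mathcal{A}}$ follows, so every finite local $R \subset \AA^2$ over $\CC$ is alignable.
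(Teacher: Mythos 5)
The thesis does not prove this statement at all: it is quoted verbatim from Brian\c{c}on's paper, so what you are offering is a substitute for a famously hard external result, and it has a genuine gap at the decisive step. From a decomposition of the punctual locus $\mathcal{Z}$ into finitely many locally closed affine cells with a unique top-dimensional cell you conclude ``both irreducibility and the dimension count follow.'' The dimension count does follow, but irreducibility does not: the irreducible components of a cellular space are closures of cells, and nothing prevents a lower-dimensional cell from having a closure that is a separate component (take a plane and a line meeting in a point, celled as plane minus point, line minus point, point --- unique top cell, reducible space). The Ellingsrud--Str\o{}mme style computation you invoke yields exactly the Betti numbers of $\mathcal{Z}$ and $\dim \mathcal{Z} = r-1$; the statement that every non-curvilinear cell lies in the closure of the curvilinear one is precisely the content of Brian\c{c}on's theorem, and it requires a separate deformation argument (Brian\c{c}on's original one, or the later approaches of Iarrobino and Yam\'eogo), not a consequence of the cell structure. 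Unless you add equidimensionality of $\mathcal{Z}$ in dimension $r-1$ --- which is itself essentially equivalent to the theorem --- the final inference fails.

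There is also a secondary mismatch in how you set up the stratification. Proposition~\ref{ref:limitoneparameter:prop} concerns the dilation action, whose limits are ideals of leading forms: these are homogeneous but in general not monomial, and the fixed locus of the dilation action on $\mathcal{Z}$ is positive-dimensional (already for $r=3$ the homogeneous ideals form a curve), so it does not produce a stratification indexed by partitions. To get monomial fixed points you need a generic one-parameter subgroup of the full torus (equivalently a Gr\"obner degeneration for a term order), and then you must still justify that the attracting sets \emph{inside} the possibly singular, possibly non-reduced $\mathcal{Z}$ are affine cells whose dimension is the number of positive-weight tangent directions; smoothness of $\Hilbr{\AA^2}$ (Theorem~\ref{ref:fogarty:thm}) gives this for the ambient Hilbert scheme, not automatically for the closed subset $\mathcal{Z}$, and your remark about ``discarding the two translation directions'' is not a proof of it. These points are repairable with work, but the irreducibility gap in the previous paragraph is structural: as written, the argument does not prove Brian\c{c}on's theorem, and the thesis is right to cite it rather than rederive it.
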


    \begin{corollary}\label{ref:embeddingdimtwo:cor}
        Let $\chark = 0$. Let $\mathcal{R} \subset \HilbGorpp{\AA^n}$ be the family of local
        schemes with embedding dimension (see
        Section~\ref{ssec:hilbertfunctionGeneral} for definition) at most two.
        Then $\mathcal{R}$ is negligible.
    \end{corollary}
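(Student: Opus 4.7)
The plan is to combine Brian\c{c}on's Theorem~\ref{ref:briancon:obs} (which handles the case of ambient dimension $2$) with the codimension invariance of Proposition~\ref{ref:codiminv} (which propagates the bound to arbitrary ambient dimension). First, I would reduce to $\kk = \CC$: dimensions of constructible subsets of finite type $\kk$-schemes are preserved under field extension, and by Proposition~\ref{prop_base_change_for_Hilb_r} the formation of the Hilbert scheme commutes with base change. The locus $\mathcal{R}$ and the condition ``embedding dimension $\leq 2$'' are both preserved under base change (the condition on a fiber is $\dim_{\kappa(x)}\mm_R/\mm_R^2\leq 2$, which is geometric).

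Next I would handle the base case $n = 2$. Here $\mathcal{R}$ coincides with the whole punctual Gorenstein scheme $\HilbGorpp{\AA^2}$, because any irreducible finite subscheme of $\AA^2$ automatically has embedding dimension $\leq 2$. Brian\c{c}on's theorem asserts that every such subscheme is a flat limit of aligned schemes. Hence $\HilbGorpp{\AA^2}$ is contained in the closure of the family of aligned schemes supported at $p$, which by Proposition~\ref{ref:alignabledim:prop} has dimension $(r-1)(2-1)=r-1$. Thus $\dim \mathcal{R} = r-1 = (r-1)(n-1)$ in this case.

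Finally, for $n \geq 3$, I would apply Proposition~\ref{ref:codiminv} with the proposition's ``$n$'' equal to $2$ and the proposition's ``$m$'' equal to our $n$, taking $\familyRe := \HilbGorpp{\AA^2}$ (which is tautologically closed under isomorphisms, being the full punctual Gorenstein locus). Then the resulting $\familyRm \subset \HilbGorpp{\AA^n}$ consists of all subschemes abstractly isomorphic to some element of $\HilbGorpp{\AA^2}$, and a finite local algebra embeds into $\AA^2$ at $p$ precisely when it has embedding dimension at most $2$; hence $\familyRm = \mathcal{R}$. The dimension formula of Proposition~\ref{ref:codiminv} gives
\[
(r-1)n - \dim \mathcal{R} \;=\; (r-1)\cdot 2 - (r-1) \;=\; r-1,
\]
so $\dim \mathcal{R} = (r-1)(n-1)$, which is exactly the negligibility bound. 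There is no serious obstacle here: the only mildly delicate point is checking that the closed-under-isomorphisms hypothesis and the identification of $\familyRm$ with $\mathcal{R}$ are valid, both of which are immediate from the definitions once one remembers that embedding dimension is exactly the minimal $n$ such that $R$ embeds in $\AA^n$ at $p$ (Lemma~\ref{ref:embeddingdimension:lem}).
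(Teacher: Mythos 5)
Your proposal is correct and follows essentially the same route as the paper: reduce to the two-dimensional case, invoke Brian\c{c}on's Theorem~\ref{ref:briancon:obs} to bound $\dim\HilbGorpp{\AA^2}$ by the dimension of the alignable locus from Proposition~\ref{ref:alignabledim:prop}, propagate to $\AA^n$ via Proposition~\ref{ref:codiminv}, and handle arbitrary $\kk$ of characteristic zero via base change through $\mathbb{Q}$. The only differences are cosmetic: you unfold Remark~\ref{ref:invariance_of_negligible} directly by applying Proposition~\ref{ref:codiminv} (which also makes the logical order --- establish the $n=2$ case first, then apply the codimension formula --- slightly cleaner than the paper's phrasing), and your reduction to $\CC$ is stated a bit loosely, since one should go via $\mathbb{Q}$ as in the paper because an arbitrary field of characteristic zero need not embed in $\CC$.
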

    \begin{proof}
        The schemes of embedding dimension at most two are precisely those
        embeddable in $\AA^2$, by Lemma~\ref{ref:embeddingdimension:lem}.
        By Remark~\ref{ref:invariance_of_negligible} it is enough to prove the claim for
        $n = 2$. For $\kk = \CC$ the family $\mathcal{R}$ is contained in
        the alignable locus and the claim follows from
        Theorem~\ref{ref:briancon:obs}. For $\kk = \mathbb{Q}$ the claim follows by
        base change, since the dimension is invariant under field
        extension (see, e.g., \cite[Chapter~8]{Eisenbud})
        and
        \[
            \HilbGorpp{\AA^n_{\mathbb{C}}} = \HilbGorpp{\AA^n_{\mathbb{Q}}}
            \times_{\Spec\mathbb{Q}} \Spec\mathbb{C}.
        \]
        For $\kk$ arbitrary of characteristic zero, the claim follows again by
        base change:
        \[
            \HilbGorpp{\AA^n_{\kk}} = \HilbGorpp{\AA^n_{\mathbb{Q}}}
            \times_{\Spec\mathbb{Q}}
            \Spec\kk.\qedhere
        \]
    \end{proof}
    Analogues of Brian\c{c}on results are false for higher embedding
    dimensions, see~\cite{ia_deformations_of_CI} and
    Example~\ref{ref:1551:examplebis}.  To analyse schemes of
    embedding dimension greater than two, we need a few results from the
    theory of finite Gorenstein algebras proved in Part~\ref{part:algebras},
    in particular Macaulay's Theorem for
    Gorenstein algebras, see Theorem~\ref{ref:MacaulaytheoremGorenstein:thm}.

Every finite local Gorenstein algebra $\DA$ of socle degree one is an apolar algebra of
a linear form, thus it is aligned. Therefore the set of socle degree one
algebras is negligible. In the following Lemma~\ref{ref:negligequadrics:lem}
we extend this result to socle degree two.
\begin{lemma}\label{ref:negligequadrics:lem}
    The set $\mathcal{H} \subseteq \HilbGorpp{\AA^n}$
    consisting of finite local Gorenstein algebras of socle degree two is
    negligible.
\end{lemma}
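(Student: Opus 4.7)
The plan is to reduce to a parameter count via Proposition~\ref{ref:codiminv} and Macaulay's Theorem for Gorenstein algebras.

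First I would observe that the family $\mathcal{H}$ is empty unless $r \geq 3$ (socle degree $2$ forces $A_1 \neq 0$). For $r \geq 3$, every $R \in \mathcal{H}$ has Hilbert function $(1, r-2, 1)$, so all its members have the \emph{same} embedding dimension $e := r-2$. In particular $\mathcal{H}$ is empty when $e > n$, and otherwise $\mathcal{H}$ equals the family $\familyRm$ associated, via Proposition~\ref{ref:codiminv}, to the family $\familyRe \subseteq \HilbGorppleq{e}{\AA^e}$ consisting of all $(1,e,1)$-subschemes in $\HilbGorppleq{e}{\AA^e}$; being cut out by a Hilbert-function condition, $\familyRe$ is Zariski-constructible and closed under isomorphisms.

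Next I would compute $\dim \familyRe$ directly. A subscheme $R \subseteq \AA^e$ supported at the origin with Hilbert function $(1,e,1)$ is defined by an ideal $I = I_2 + \DmmS^3$ where $I_2 \subset \DShat_2$ is a hyperplane, and the Gorenstein condition translates into the nondegeneracy of the quadratic form $q \in \Sym^2 \DP_1$ perpendicular to $I_2$. Equivalently, by Macaulay's Theorem~\ref{ref:MacaulaytheoremGorenstein:thm} combined with Remark~\ref{ref:Macaulaygraded:rmk}, $R = \Spec \Apolar{q}$ for a homogeneous quadric $q$, uniquely determined up to scalar by Proposition~\ref{ref:Gorenstein:prop}. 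Hence
\[
    \dim \familyRe = \dim \PP(\Sym^2 \DP_1)_{\mathrm{nondeg}} = \binom{e+1}{2} - 1.
\]

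Applying Proposition~\ref{ref:codiminv} I obtain
\[
    \dim \mathcal{H} \,=\, \dim \familyRe + (n-e)(r-1) \,=\, \binom{e+1}{2} - 1 + (n-e)(e+1).
\]
Negligibility amounts to $\dim \mathcal{H} \leq (r-1)(n-1) = (e+1)(n-1)$, which simplifies to the elementary inequality
\[
    \tfrac{(e+2)(e-1)}{2} \;=\; \binom{e+1}{2} - 1 \;\leq\; (e+1)(e-1) \;=\; e^2 - 1,
\]
valid for all $e \geq 1$ (with equality only in the trivial case $e=1$). This completes the proof.

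The argument is essentially a parameter count, so no step is a true obstacle; the only point requiring some care is the identification of subschemes (not merely isomorphism classes of algebras) with classes of nondegenerate quadrics, which is where Proposition~\ref{ref:Gorenstein:prop} is genuinely used to rule out nontrivial fibers of the map $[q] \mapsto \Spec \Apolar{q}$.
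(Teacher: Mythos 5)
Your proof is correct and follows essentially the same strategy as the paper: reduce via Proposition~\ref{ref:codiminv}, count parameters, and verify the same elementary inequality. The only (minor) difference is in bookkeeping: the paper first reduces all the way to $e = n$, then parameterizes by polynomials of degree $\leq 2$ (dimension $\binom{n+2}{2}$) and subtracts the $r$-dimensional fibers of $f \mapsto \Apolar{f}$, giving $\binom{n+2}{2} - (n+2)$. You instead observe that for a subscheme of $\AA^e$ with Hilbert function $(1,e,1)$ the ideal is automatically homogeneous of the form $I_2 + \DmmS^3$, so the parameterization is by nondegenerate quadrics up to scalar, giving $\binom{e+1}{2} - 1$. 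These two counts are identical, $\binom{n+2}{2} - (n+2) = \binom{n+1}{2} - 1$, so the numerics agree. Your observation that the ideal is forced to be homogeneous (which in particular shows such algebras are canonically graded, without characteristic hypotheses) is slightly cleaner than the paper's appeal to Example~\ref{ex:1nn1largenonsmoothable}, but both are sound and the final inequality check is the same.
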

\begin{proof}
    All members of $\mathcal{H}$ have degree $r$ and socle degree $2$, hence
    their Hilbert function is equal to $(1, r-2, 1)$.
    In particular, $r-2\le n$, so using Proposition~\ref{ref:codiminv} similarly as in
    Remark~\ref{ref:invariance_of_negligible} we may assume $r-2 =n$.
    Algebras from $\mathcal{H}$ have degree $n+2$ and are parameterized by a set of dimension
    $\binom{n+2}{2} - (n+2)$, compare~\ref{ex:1nn1largenonsmoothable}. Therefore, $\mathcal{H}$ is negligible if
    \[
         \binom{n+2}{2} - (n+2) \leq (n+1)(n-1),
    \]
    which is true for every $n\geq 1$.
\end{proof}

Let $A$ be an algebra of socle degree $d\geq 3$.  Recall from
Section~\ref{ssec:hilbertfunctionGorenstein} the symmetric
decomposition $\Dhdvect{A}$ of the Hilbert function of $\DA$.
In particular $\Dhd{A}{d-2} = (0, q, 0)$, for some $q$ where $\Dhdvect{A}$.
In the following we investigate the case when $q> 0$; we perform a
remove-the-quadric-part trick, already used e.g.~in
Corollary~\ref{ref:squareshavedegenerations:cor}.
\begin{lemma}\label{ref:negligesquares:lem}
    Let $\chark \neq 2$.
    Consider the set $\mathcal{H}(q) \subseteq \HilbGorr{\AA^n}$
    consisting of finite local Gorenstein algebras of any socle degree $d\geq
    3$ satisfying $\Delta_{d-2} = (0, q, 0)$.  If all Gorenstein schemes of
    degree $d - q$ and embedding dimension at most $n$ are negligible, then
    $\mathcal{H}(q)$ is negligible.
\end{lemma}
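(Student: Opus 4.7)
The strategy is to reduce the socle-degree-$d$ case with $\Delta_{d-2} = (0,q,0)$ to the inductively-known case of degree $r-q$ by peeling off the $q$ quadratic squares that Proposition~\ref{ref:squares:prop} forces to exist in the dual generator.

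Concretely, I would first apply Proposition~\ref{ref:squares:prop} to every $[R] \in \mathcal{H}(q)$, setting $A := H^0(R, \OO_R)$, to write $R \simeq \Spec \Apolar{f}$ for a dual generator $f$ in standard form with $f_2 = x_{e+1}^2 + \cdots + x_{e+q}^2$ and $f_{\geq 3} \in \kdp[x_1, \ldots, x_e]$, where $e = H_A(1) - q$. Setting $g := f - (x_{e+1}^2 + \cdots + x_{e+q}^2)$ and iterating Corollary~\ref{ref:squareshavedegenerations:cor}---peeling off one reduced point per extra square---exhibits $\Spec \Apolar{f}$ as a flat limit of $\Spec \Apolar{g} \sqcup (\Spec \kk)^{\sqcup q}$. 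Conservation of degree under flat limits then forces $\deg \Apolar{g} = r-q$, while the embedding dimension of $\Apolar{g}$ is trivially at most $n$; hence $[\Spec \Apolar{g}]$ lies in the locus $\mathcal{H}'_{r-q}$, which is negligible by hypothesis.

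Next, I would perform a fibrewise dimension count on the level of dual generators, using the relative apolar framework of Section~\ref{ssec:relativemacaulaysystems}. Via Proposition~\ref{ref:flatfamiliesforconstructible:prop} and Proposition~\ref{ref:apolarrepresentability:prop}, the locus of dual generators $f$ realising points of $\mathcal{H}(q)$ is constructible and fibres over $\mathcal{H}(q)$ with fibres of dimension $r$; likewise for $g$ and $\mathcal{H}'_{r-q}$ with fibres of dimension $r-q$. For fixed $g$, the remaining data consists of the rank-$q$ quadric $f_2$ living in the $(n-e)$-dimensional complement of the variables used by $g_{\geq 3}$; these form an open subset of the $q$-th secant variety of the Veronese, of dimension $q(n-e) - \binom{q}{2}$. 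Assembling these counts gives
\[
\dim \mathcal{H}(q) \leq (r-q-1)(n-1) + q(n-e) - \tbinom{q}{2} - q \leq (r-q-1)(n-1) + q(n-1) = (r-1)(n-1),
\]
the last inequality being the elementary bound $qe + \binom{q}{2} \geq 0$. This is exactly the negligibility bound.

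The main obstacle will be promoting the fibrewise decomposition $f \leftrightarrow (g, f_2)$ to a genuine constructible correspondence of subschemes of the Hilbert scheme and correctly normalising for the $\Dgrp$-action that identifies different $(g, f_2)$ giving isomorphic embedded algebras; this is where Proposition~\ref{ref:apolarrepresentability:prop} and careful bookkeeping of the automorphisms preserving the split $f = g + f_2$ must be deployed to avoid over- or under-counting the fiber. A secondary subtlety is confirming that the degree of $\Apolar{g}$ is exactly $r-q$ (not smaller), which is why the cleavability in Corollary~\ref{ref:squareshavedegenerations:cor} and flatness of the resulting family enter essentially rather than merely as a formal manipulation on dual generators.
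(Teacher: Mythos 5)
Your strategy—peeling off all $q$ squares at once and then doing a direct dimension count on dual generators—is a genuinely different route from the paper's. The paper instead inducts on $q$: it peels off a \emph{single} reduced point using Corollary~\ref{ref:squareshavedegenerations:cor}, invokes the hypothesis with $q-1$ and $r-1$ in place of $q$ and $r$ to see that the remaining scheme $\Spec B$ moves in a set of dimension at most $(r-2)(n-1)$, notes that $\Spec B \sqcup \Spec\kk$ then moves in a set of dimension at most $(r-2)(n-1)+n=(r-1)(n-1)+1$, and concludes $\dim\mathcal{H}(q)\leq(r-1)(n-1)$ because $\mathcal{H}(q)$ sits in the \emph{boundary} of that set and so loses one dimension. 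This keeps the bookkeeping to the crude observation "a reduced point costs at most $n$, the boundary gives back $1$," and never opens up dual-generator parameter spaces.

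Your version has a genuine gap at the fibre count, which you yourself flag at the end but which your displayed inequalities already assume away. You correctly get that the locus $F$ of \emph{all} dual generators fibres over $\mathcal{H}(q)$ with $r$-dimensional fibres, so $\dim F=\dim\mathcal{H}(q)+r$. But the decomposition into pairs $(g,f_2)$ only parameterizes the sublocus $F_0\subset F$ of generators that are already in standard form with the square summand split off. The map $F_0\to\mathcal{H}(q)$ is still surjective by Proposition~\ref{ref:squares:prop}, but its fibres need not have dimension $r$: over a fixed embedded $[R]$, they are cut out of the $r$-dimensional set $\DShat^*\hook f$ by the standard-form and orthogonal-square conditions, and for a \emph{fixed} embedding there may be no such generator at all (Proposition~\ref{ref:squares:prop} produces one only after changing the embedding by $\Dgrp$). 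Thus the step $\dim\mathcal{H}(q)\leq\dim F_0-r$ is unjustified, and if one falls back on the sound inequality $\dim\mathcal{H}(q)\leq\dim F_0$ the resulting bound, $(r-q-1)(n-1)+(r-q)+q(n-e)-\binom{q}{2}$, exceeds $(r-1)(n-1)$ in general. The paper's induction on $q$ is not merely cleaner—it avoids this accounting problem entirely.
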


\begin{proof}
    We argue by induction on $q$. In the base case $q = 0$ there is nothing to
    prove. By a base change (Proposition~\ref{ref:Gorensteinbasechange:prop}), we assume $\kk = \kkbar$.

    Take any scheme $\Spec A \in \mathcal{H}(q)$.
    By Proposition~\ref{ref:squares:prop} the algebra $\DA$ is isomorphic to the apolar algebra of 
    $f + \DPel{x_1}{2} + \ldots  + \DPel{x_q}{2}$, where $f$ is a polynomial in
    variables different from $x_1, \ldots ,x_q$. By
    Corollary~\ref{ref:squareshavedegenerations:cor}
    this algebra is an embedded limit of algebras of the form $B \times \kk$, where $B$
    has the same socle degree as $A$ and satisfies $\Delta_{B, d-2} = (0,
    q-1, 0)$.
    By induction, the set of schemes corresponding to such $B$ is negligible,
    i.e.~has dimension at most $((r-1)-1)(n-1)$.
    Therefore the set of schemes corresponding to $B\times \kk$ has dimension
    at most $(r-2)(n-1) + n = (r-1)(n-1) + 1$. Since $\mathcal{H}(q)$ lies on
    the border of this set, $\dim \mathcal{H}(q) \leq (r-1)(n-1) + 1 - 1 =
    (r-1)(n-1)$.
\end{proof}

\begin{lemma}\label{ref:neglige4:lem}
    Let $\chark = 0$ and $r\leq 10$. Let $\mathcal{R} \subseteq \HilbGorpp{\AA^n}$ be the
    subset of schemes corresponding to finite local Gorenstein algebras of socle degree
    at most four. Then $\mathcal{R}$ is negligible.
\end{lemma}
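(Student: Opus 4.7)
The plan is to argue by induction on $r$, assuming the conclusion for all Gorenstein schemes of socle degree at most four and degree strictly less than $r$. Let $A = H^0(R, \OO_R)$ have socle degree $d \leq 4$ and let $\Delta_\bullet$ be the symmetric decomposition of $H_A$. The first reduction is to the case $\Delta_{d-2} = (0, 0, 0)$: whenever its middle entry $q$ is positive, Lemma~\ref{ref:negligesquares:lem} realises $\Spec A$ as a limit of schemes $\Spec B \sqcup \Spec\kk$ with $\deg B = r - 1$ and $B$ of the same socle degree $d \leq 4$, which is negligible by the inductive hypothesis on $r$. Socle degree $\leq 1$ is aligned, and socle degree $2$ is Lemma~\ref{ref:negligequadrics:lem}; so one is reduced to $d \in \{3, 4\}$ with $\Delta_{d-2} = 0$.

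For $d = 3$ and $\Delta_1 = 0$, one has $H_A = (1, H, H, 1)$ with $r = 2H + 2 \leq 10$, so $H \in \{1, 2, 3, 4\}$; the subcases $H \leq 2$ have embedding dimension at most two and fall to Corollary~\ref{ref:embeddingdimtwo:cor}. For $H \in \{3, 4\}$, Remark~\ref{ref:invariance_of_negligible} reduces to $n = H$, and one bounds the stratum via relative Macaulay inverse systems: the set $V_H \subseteq \DP_{\leq 3}$ of dual generators realising $H_A$ is open because a generic cubic in $n$ variables gives this Hilbert function, so $\dim V_H = \binom{n+3}{3}$, while by Proposition~\ref{ref:apolarrepresentability:prop} the fibres of the natural map $V_H \to \HilbGorpp{\AA^n}$ are unit groups of the corresponding algebras, of dimension $r$. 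This yields $20 - 8 = 12 \leq 14$ for $H = 3$ and $35 - 10 = 25 \leq 27$ for $H = 4$. For $d = 4$ with $\Delta_2 = 0$ the same strategy applies after an enumeration: the constraints $\Delta_0 = (1, H_3, \Delta_0(2), H_3, 1)$, $\Delta_1 = (0, H_1 - H_3, H_1 - H_3, 0)$, Macaulay's growth bound on $\Delta_0$, and $r \leq 10$ leave, apart from the aligned case $(1,1,1,1,1)$ and several embedding-dimension-two cases, exactly two nontrivial Hilbert functions: $(1, 3, 3, 1, 1)$ and $(1, 3, 3, 2, 1)$. For these, Corollary~\ref{ref:topdegreefirstrow:cor} forces $H_{\Apolar{f_4}}$ to equal $(1, 1, 1, 1, 1)$ and $(1, 2, 2, 2, 1)$ respectively, so the leading form $f_4$ lies in a stratum of $\DP_4$ of dimension at most $\dim \Grass(e, n) + \dim \kdp[x_1, \ldots, x_e]_4$ with $e \in \{1, 2\}$. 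Combining the resulting bounds with the $20$-dimensional freedom in the lower-degree tail $f_{\leq 3} \in \DP_{\leq 3}$ and subtracting $r$ produces $23 - 9 = 14 \leq 16$ for $(1, 3, 3, 1, 1)$ and $26 - 10 = 16 \leq 18$ for $(1, 3, 3, 2, 1)$; Proposition~\ref{ref:codiminv} then propagates both bounds to all $n \geq 3$.

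The main obstacle will be the bookkeeping of the last step: one must carefully enumerate all admissible symmetric decompositions in socle degree four, verify in each instance that Macaulay's bound on $\Delta_0$ either lands the stratum in embedding dimension at most two or forces $\Delta_2 \neq 0$ (so that it is handled by the reduction of Step~1) -- ruling out apparent candidates such as $(1, 3, 4, 1, 1)$, which at first sight looks to live in the $\Delta_2 = 0$ regime but in fact cannot -- and then check that in the genuinely delicate cases, notably the graded stratum $(1, 4, 4, 1)$ and the non-graded stratum $(1, 3, 3, 2, 1)$, where the slack between $\dim V_H - r$ and the expected bound $(r-1)(n-1)$ is only two, the dimension estimates are still strictly below the expected bound. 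The remainder is a careful but otherwise routine application of the relative Macaulay machinery.
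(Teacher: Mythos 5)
Your proposal is correct and follows essentially the same route as the paper: after the reductions via Lemmas~\ref{ref:negligequadrics:lem} and~\ref{ref:negligesquares:lem} to $\Dhdvect{d-2}=(0,0,0)$, you bound the remaining strata in socle degrees three and four by exactly the paper's dual-generator parameter count (dimension of the space of admissible polynomials minus the $r$-dimensional fibres, compared with $(r-1)(n-1)$), the only cosmetic differences being your explicit induction on $r$, the use of Corollary~\ref{ref:embeddingdimtwo:cor} to absorb the embedding-dimension-two strata, and the explicit enumeration of the two nontrivial socle-degree-four Hilbert functions in place of the paper's general $(a,b,c)$ formula. The only flaw is an arithmetic slip in the $(1,3,3,2,1)$ case: the count is $\dim\Grass(2,3)+\dim\kdp[x_1,x_2]_4+20-10=2+5+20-10=17$, not $16$, which is still at most $(r-1)(n-1)=18$, so the conclusion stands.
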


\begin{proof}
    By base change (Proposition~\ref{ref:Gorensteinbasechange:prop}), we reduce to the case $\kk = \kkbar$.
    The family $\familyR$ divides into finitely many families according to
    the Hilbert function and its symmetric decomposition. Therefore we may assume
    these are fixed in $\familyR$. Thus we may speak about the Hilbert
    function, the socle degree etc.

    We begin with a series of reductions.
    By induction and Remark~\ref{ref:invariance_of_negligible}, we may assume that the claim is true for schemes with
    embedding dimension less that $n$.
    Let $d$ be the socle degree of any member of $\familyR$. By
    Lemma~\ref{ref:negligequadrics:lem} we may assume that $d\geq 3$.
    By Lemma~\ref{ref:negligesquares:lem}, we may assume that $\Delta_{d-2}
    = (0, 0, 0)$.
    If $d = 3$, elements of $\familyR$ are parameterized by a set
    of dimension $\binom{n+3}{3} - (2n+2)$, compare
    Example~\ref{ex:1nn1largenonsmoothable}, and $10 \geq r = 2n+2$
    by Example~\ref{ex:hilbertfunctioncubics}, so
    $n\leq 4$. Then we
    need to check that $(r-1)(n-1) = (2n+1)(n-1) \geq \binom{n+3}{3} - (2n+2)$ for all
    $n\leq 4$.

    Similarly, if $d = 4$, then the Hilbert function
    has decomposition of the form $(1, a, b, a, 1) + (0, c, c, 0)$, where $a,
    b > 0$.
    We see that $r = 2 + 2a + 2c + b \leq 10$, $n = H(1) = a + c$.
    Moreover $b\leq \binom{a+1}{2}$ and from the Macaulay's Growth
    Theorem~\ref{ref:MacaulayGrowth:thm} and
    Lemma~\ref{ref:sumsofdhdaraOseqence:lem} it
    follows that either $b > 2$ or $a = b = 1$ or $a = b = 2$.

    Such algebras are parameterized by
    \begin{itemize}
      \item the choice of a quartic in $a$ variables, which gives at most dimension $\binom{a+3}{4}$,
      \item the choice of these $a$ variables out of the linear space of $a+c$ variables, which gives at most $ac$,
      \item and a choice of polynomial of degree $3$ in $a+c$ variables: $\binom{a+c+3}{3}$,
      \item minus the degree: $2 + 2a + 2c + b$, see
          Proposition~\ref{ref:apolarrepresentability:prop}.
    \end{itemize}
    Finally we get a parameter set of dimension at most
    \begin{equation}\label{eq:dimforfour}
        \binom{a+3}{4} + ac + \binom{a+c+3}{3} - (2 + 2a + 2c + b)
    \end{equation}
    Now one needs to the check that for all $a, b, c$ such that $2 + 2a +
    2c + b \leq 10$ satisfying the constraints above, the number
    \eqref{eq:dimforfour} is not higher than $(r - 1)(n-1)$.
\end{proof}

\begin{lemma}\label{ref:neglige:lem}
    Let $\chark = 0$ and $r \leq 9$. Then the whole Gorenstein punctual Hilbert scheme
      \[
          \HilbGorpp{\AA^n}
      \]
      is  negligible.
\end{lemma}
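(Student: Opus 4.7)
The plan is to proceed by induction on $r$. The base cases $r \leq 3$ are immediate: every finite local Gorenstein algebra of degree at most three has Hilbert function $(1)$, $(1,1)$, or $(1,1,1)$, hence is aligned, so Proposition~\ref{ref:alignabledim:prop} gives negligibility. For the inductive step with $4 \leq r \leq 9$, I will stratify $\HilbGorpp{\AA^n}$ according to the Hilbert function (and hence socle degree $d$) of the local algebra, and show each stratum is negligible.

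The low socle degrees are dispatched by previous results: $d = 1$ forces $r = 2$ (already handled); $d = 2$ is Lemma~\ref{ref:negligequadrics:lem}; and $d \in \{3, 4\}$ is Lemma~\ref{ref:neglige4:lem}, which applies since $r \leq 9 \leq 10$. For $d \geq 5$, I decompose further by the entry $\Delta_{d-2} = (0, q, 0)$ of the symmetric decomposition. If $q \geq 1$, then Lemma~\ref{ref:negligesquares:lem} reduces negligibility to the case of degree $r - q < r$ and embedding dimension at most $n$, which is provided by the induction hypothesis together with Remark~\ref{ref:invariance_of_negligible}. So assume $q = 0$. Since $d \geq 5 > 3$, the strict form of Lemma~\ref{ref:hilbertfunc:lem} gives $r > 2(H_A(1) + 1)$, whence $H_A(1) < (r-2)/2 \leq 7/2$, so $H_A(1) \leq 3$. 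The case $H_A(1) \leq 2$ (embedding dimension at most $2$) is handled by Corollary~\ref{ref:embeddingdimtwo:cor}.

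This leaves the subcase $H_A(1) = 3$, which forces $r = 9$ (since $r > 8$ is required). With $d \geq 5$ and $H_A(2) + \cdots + H_A(d-1) = r - 5 = 4$ distributed over $d - 2 \geq 3$ strictly positive entries, the only candidates for $H_A$ are $(1,3,2,1,1,1)$, $(1,3,1,2,1,1)$, $(1,3,1,1,2,1)$ (all with $d = 5$) and $(1,3,1,1,1,1,1)$ (with $d = 6$). The main obstacle will be showing that each of these is vacuous under the extra assumption $\Delta_{d-2} = 0$. For each candidate I will write down the linear system that the symmetric summands $\Delta_a$ must satisfy to reproduce $H_A$, using the symmetry $\Delta_a(i) = \Delta_a(d-a-i)$, the vanishing at the endpoints, and the nonnegativity $\Delta_a(i) \geq 0$. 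A direct check shows that in every case the system forces $\Delta_0$ to vanish at some intermediate position $0 < i < d$.

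Such a $\Delta_0$ cannot occur: by Corollary~\ref{ref:symmetricHf:cor}, $\Delta_0$ is the Hilbert function of the graded Gorenstein algebra $\Apolar{f_d}$ of socle degree $d$, so $\Delta_0(d) = 1$ and Macaulay's Growth Theorem~\ref{ref:MacaulayGrowth:thm} then forces $\Delta_0(i) \geq 1$ for all $0 \leq i \leq d$ (otherwise $\Delta_0(j) = 0$ for all $j \geq i$, contradicting $\Delta_0(d) = 1$). Hence the residual subcase is empty, all strata have been accounted for, and the induction closes. The main obstacle is the bookkeeping of this finite vacuousness check; everything else is an assembly of previously established lemmas.
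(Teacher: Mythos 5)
Your argument is correct and follows the same reduction strategy as the paper's (use Corollary~\ref{ref:embeddingdimtwo:cor}, Lemma~\ref{ref:negligesquares:lem}, and Lemma~\ref{ref:neglige4:lem} to reduce to the case $d \geq 5$, $\Delta_{d-2} = (0,0,0)$, embedding dimension $\geq 3$). The two proofs diverge only in how the final contradiction is obtained. The paper does not invoke Lemma~\ref{ref:hilbertfunc:lem} as a black box; it re-runs the summation from that lemma's proof with the estimate available once $d \geq 5$: since $\Delta_0$ then has $d+1 \geq 6$ entries — two endpoints equal to $1$, two entries equal to $e_0 := \Delta_0(1)$, and at least two more strictly positive middle entries — one gets $\sum_j \Delta_0(j) \geq 4 + 2e_0$, strictly better than the $2+2e_0$ built into Lemma~\ref{ref:hilbertfunc:lem}. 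Adding $\sum_j \Delta_i(j) \geq 2e_i$ for $1 \leq i \leq d-3$ yields $r \geq 4 + 2H(1) \geq 10$, which kills the case $H(1)\geq 3$ at once. You use the strict form of Lemma~\ref{ref:hilbertfunc:lem}, $r > 2(H(1)+1)$, which is one unit weaker, so the boundary case $H(1)=3$ (forcing $r=9$) survives and you must enumerate the residual Hilbert functions $(1,3,2,1,1,1)$, $(1,3,1,2,1,1)$, $(1,3,1,1,2,1)$, $(1,3,1,1,1,1,1)$ and rule each out via the symmetric-decomposition constraints plus Corollary~\ref{ref:symmetricHf:cor} and Macaulay growth. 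I checked the four linear systems: in every case nonnegativity forces $\Delta_0(2) = 0$, which is incompatible with $\Delta_0$ being the Hilbert function of a graded Gorenstein algebra of socle degree $d$, so your ``direct check'' claim does hold and the argument is complete. It is, however, more work than necessary; sharpening the bound on $\sum_j \Delta_0(j)$ as above absorbs the $H(1)=3$ case into the same stroke. One small point in your favour: you make explicit the induction on $r$ needed to discharge the hypothesis of Lemma~\ref{ref:negligesquares:lem}, which the paper leaves implicit.
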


\begin{proof}
    Let $\familyR := \HilbGorpp{\AA^n}$.
    As before, we may fix a Hilbert function $H$ with symmetric
    decomposition $\Delta$, and a socle degree $d$.
    By Corollary~\ref{ref:embeddingdimtwo:cor} we may assume that the embedding
    dimension is at least three.
    By Lemma~\ref{ref:negligesquares:lem} we may assume that
    $\Delta_{d-2} = (0, 0, 0)$.
    By Lemma~\ref{ref:neglige4:lem} we may assume $d > 4$.

    We will prove that no decomposition $\Delta$ satisfying all above constrains exists.

    Let $e_i := \Delta_{A, i}(1)$. Then $H(1) = \sum e_i$.
    Note that $\Delta_{0} = (1, e_1,  \ldots , e_1, 1)$ is a vector of
    degree $d+1\geq 6$, thus its sum is at least $4 + 2e_1$.
    Note that by symmetry of $\Delta_{i}$ we have $e_i =
    \Delta_{i}(d-i-1)$ and since $s-i-1 > 1$, we have $\sum_j
    \Delta_{i}(j) \geq 2e_i$. Summing up
    \[r = \sum H = \sum_i
    \sum_j \Delta_{i}(j)\geq 4 + 2\sum_i e_i \geq 4 + 2\cdot 3 = 10.\]
    This contradicts the assumption $r\leq 9$.
\end{proof}

We now conclude the proof of our main theorem.
\begin{proof}[Proof of Theorem~\ref{ref:expecteddim:thm}]
    The lower bound follows from Proposition~\ref{ref:alignabledim:prop} and
    the upper bound from Lemma~\ref{ref:neglige:lem}.
\end{proof}

\begin{example}\label{ref:1551:examplebis}
    The dimension of the locus of alignable subschemes in
    $\HilbGorpuncarged{12}{\AA^5, p}$ is
    $(12-1)(5-1) = 44$. This locus is Zariski-irreducible and its general member is, by definition,
    isomorphic to $\Spec \kk[\Dx]/\Dx^{12}$. The subset $\familyR$ of
    $\HilbGorpuncarged{12}{\AA^5, p}$ parameterizing subschemes with
    Hilbert function $(1, 5, 5, 1)$ has dimension $\binom{5+3}{3} - 12 = 44$,
    see Example~\ref{ex:1nn1largenonsmoothable}, thus $\familyR$ is not contained
    in the locus of alignable algebras, i.e.~a general subscheme with Hilbert
    function $(1, 5, 5, 1)$ is not alignable. The subschemes in $\familyR$ are
    smoothable by Theorem~\ref{ref:cjnmain:thm}.
\end{example}

\begin{example}
    As in Example~\ref{ref:1551:examplebis}, by dimension count we see that a general
    irreducible subscheme of $\mathbb{A}^7$ with Hilbert function $(1, 7, 7, 1)$ is
    non-alignable. Such subschemes are smoothable, see Remark~\ref{ref:1nn1:rmk}.
\end{example}

\small
\newcommand{\etalchar}[1]{$^{#1}$}

\end{document}